\@ifdefinable\equationname{\let\equationname\equationautorefname}
\def\equationautorefname~#1\@empty\@empty\null{(#1\@empty\@empty\null)}%
\@ifdefinable\AMSname{\let\AMSname\AMSautorefname}
\def\AMSautorefname~#1\@empty\@empty\null{(#1\@empty\@empty\null)}%
\@ifdefinable\itemname{\let\itemname\itemautorefname}
\def\itemautorefname~#1\@empty\@empty\null{(#1\@empty\@empty\null)%
}
\renewcommand{\theenumi}{\alph{enumi}}
\renewcommand{\theenumii}{\roman{enumii}}
\renewcommand{\p@enumii}{\theenumi$\m@th\vert$}
\renewcommand{\p@enumiii}{\theenumi.\theenumii.}
\renewcommand{\labelitemi}{$\m@th\circ$}
\renewcommand{\labelitemii}{$\m@th\diamond$}
\renewcommand{\labelitemiii}{$\m@th\star$}
\renewcommand{\labelitemiv}{$\m@th\cdot$}
\newcommand{\basetheorem}[3]{
    \newtheorem{#1}{#2}[#3]
    \newtheorem*{#1*}{#2}
    \expandafter\def\csname #1autorefname\endcsname{#2}
}%
\newcommand{\maketheorem}[3]{
    \newaliascnt{#1}{#3}
    \newtheorem{#1}[#1]{#2}
    \aliascntresetthe{#1}
    \expandafter\def\csname #1autorefname\endcsname{#2}
    \newtheorem*{#1*}{#2}
}
\theoremstyle{plain}  
\theoremstyle{definition}   
\theoremstyle{remark}   
\newcommand{\Perv}{\operatorname{Perv}}
\newcommand{\Spec}{\operatorname{Spec}}
\newcommand{\Coh}{\operatorname{Coh}}
\newcommand{\RSHom}{\underline{\operatorname{RHom}}}
\newcommand{\SHom}{\underline{\operatorname{Hom}}}
\newcommand{\RHom}{\operatorname{RHom}}
\newcommand{\Hom}{\operatorname{Hom}}
\newcommand{\Ext}{\operatorname{Ext}}
\newcommand{\ExtS}{\underline{\operatorname{Ext}}}
\newcommand{\G}{\operatorname{G}}
\newcommand{\Sol}{\operatorname{Sol_{\kappa}}}
\newcommand{\Solu}{\operatorname{Sol}}
\newcommand{\M}{\operatorname{M}}
\newcommand{\derotimes}{\overset{\LL}{\otimes}}
\newcommand{\CA}{\mathcal{A}}
\newcommand{\CB}{\mathcal{B}}
\newcommand{\CC}{\mathcal{C}}
\newcommand{\CO}{\mathcal{O}}
\newcommand{\CM}{\mathcal{M}}
\newcommand{\CN}{\mathcal{N}}
\newcommand{\CE}{\mathcal{E}}
\newcommand{\CF}{\mathcal{F}}
\newcommand{\FF}{\mathbb{F}}
\newcommand{\CG}{\mathcal{G}}
\newcommand{\CI}{\mathcal{I}}
\newcommand{\CJ}{\mathcal{J}}
\newcommand{\CK}{\mathcal{K}}
\newcommand{\CL}{\mathcal{L}}
\newcommand{\LL}{\mathbb{L}}
\newcommand{\et}{\text{\'et}}
\newcommand{\LX}{\overline{X}}
\newcommand{\ZZ}{\mathbb{Z}}
\newcommand{\coh}{\operatorname{coh}}
\newcommand{\crys}{\operatorname{crys}}
\newcommand{\id}{\operatorname{id}}
\newcommand{\me}{_{!*}}
\newcommand{\CohC}{\operatorname{Coh}_{\kappa}}
\newcommand{\QCohC}{\operatorname{QCoh}_{\kappa}}
\newcommand{\Crys}{\operatorname{Crys}}
\newcommand{\CrysC}{\operatorname{Crys}_{\kappa}}
\newcommand{\QCrys}{\operatorname{QCrys}}
\newcommand{\QCrysC}{\operatorname{QCrys}_{\kappa}}
\newcommand{\CohG}{\operatorname{Coh}_{\gamma}}
\newcommand{\QCohG}{\operatorname{QCoh}_{\gamma}}
\newcommand{\CrysG}{\operatorname{Crys}_{\gamma}}
\newcommand{\QCrysG}{\operatorname{QCrys}_{\gamma}}
\newcommand{\LNilCohC}{\operatorname{LNilCoh}_\kappa}
\newcommand{\LNilCrysC}{\operatorname{{LNilCrys}}_\kappa}
\newcommand{\LNilC}{\operatorname{LNil}_\kappa}
\newcommand{\NilC}{\operatorname{Nil}_\kappa}
\newcommand{\Gen}{\operatorname{Gen}}
\newcommand{\Neg}{\operatorname{Neg}}
\newcommand{\lfgu}{\operatorname{lfgu }}
\newcommand{\ad}{\operatorname{ad}}
\newcommand{\coker}{\operatorname{coker}}
\newcommand{\Res}{\operatorname{Res}}
\newcommand{\tr}{\operatorname{tr}}
\newcommand{\ctr}{\operatorname{ctr}}
\newcommand{\can}{\operatorname{can}}
\newcommand{\ev}{\operatorname{ev}}
\newcommand{\qc}{\text{qc}}
\newcommand{\pr}{\operatorname{pr}}
\newcommand{\proj}{\operatorname{proj}}
\newcommand{\bc}{\operatorname{bc}}
\newcommand{\usc}[1][m]{\underline{\phantom{#1}}}
\renewcommand{\phi}{\varphi}
\renewcommand{\theta}{\vartheta}
\renewcommand{\epsilon}{\varepsilon}
\renewcommand{\to}[1][]{\xrightarrow{\ #1\ }}
\newcommand{\into}[1][]{\xhookrightarrow{\ #1\ }}
    \newcounter{themargin}
    \def\q?#1{\textcolor{Mahogany}{\textbf{???$^{\text{\arabic{themargin}}}$}}{\marginpar{\footnotesize\color{Mahogany}\fbox{\parbox{\marginparwidth}{\textbf{? --- \arabic{themargin} ---}\addtocounter{themargin}{1}\\ #1}}} \immediate\write16{}%
    \immediate\write16{Warning: There was still a question mark . . . }%
    \immediate\write16{}}}
\title{Cartier crystals and perverse constructible \'etale $p$-torsion sheaves}
\author{Tobias Schedlmeier}
\begin{document}
\title{Cartier crystals and perverse constructible \'etale $p$-torsion sheaves}
\author{Tobias Schedlmeier}
\begin{titlepage}
	\begin{center}
		\vspace*{22mm}\huge \textbf{Cartier crystals}\\ \textbf{and perverse constructible \'etale} \\ \textbf{$\boldsymbol{p}$-torsion sheaves}\\
		\large 
		\vspace{35mm}
		Dissertation\\
		\vspace{3mm}zur Erlangung des Grades\\
		\vspace{3mm} \emph{\glqq Doktor der Naturwissenschaften\grqq} \\
		\vspace{3mm}am Fachbereich Physik, Mathematik und Informatik\\
		\vspace{3mm}der Johannes Gutenberg-Universit\"at\\
		\vspace{3mm}in Mainz\\
		\vspace{35mm}
		vorgelegt von\\
		\Large
		\vspace{3mm} \textbf{Tobias Daniel Schedlmeier}\\
		\large
		\vspace{3mm}geboren in Deggendorf\\
		\vspace {13mm}
		Mainz, im August 2016
	\end{center}
\end{titlepage}
\selectlanguage{english}
\newpage 
\thispagestyle{empty}
\quad 
\newpage
\thispagestyle{empty}
\addto\captionsUKenglish{%
	\renewcommand{\contentsname}{Contents}}
\tableofcontents
\setcounter{page}{1}
\newpage
\quad
\newpage
\addsec{Introduction}

At the International Congress of Mathematicians in 1900, David Hilbert formulated 10 of his famous 23 problems. The 21st problem grew out of Riemann's work on linear differential equations. Hilbert's formulation was the following: ``Aus der Theorie der linearen Differentialgleichungen mit einer unabh\"angigen Ver\"anderlichen z m\"ochte ich auf ein wichtiges Problem hinweisen, welches wohl bereits Riemann im Sinne gehabt hat, und welches darin besteht, zu zeigen, da{\ss} es stets eine lineare Differentialgleichung der Fuchs-schen Klasse mit gegebenen singul\"aren Stellen und einer gegebenen Monodromiegruppe giebt. Die Aufgabe verlangt also die Auffindung von n Functionen der Variabeln z, die sich \"uberall in der complexen z-Ebene regul\"ar verhalten, au{\ss}er etwa in den gegebenen singul\"aren Stellen: in diesen d\"urfen sie nur von endlich hoher Ordnung unendlich werden und beim Umlauf der Variabeln z um dieselben erfahren sie die gegebenen linearen Substitutionen. [...]'' (\cite{Hilbert})

After removing the set $S$ of singular points, a linear differential equation 
\[
	\frac{dY}{dz} = A(z)Y,
\] 
with $A(z) \in \operatorname{GL}_n(\mathbb C(z))$ for some integer $n$, defines a representation of the fundamental group of $\mathbb P^1(\mathbb C) \backslash S$ by monodromy: For a fixed point $a \in \mathbb P^1(\mathbb C) \backslash S$ there is a fundamental solution matrix $Z$ of functions which are analytic in a neighborhood of $a$ by existence theorems. Continuing $Z$ analytically along a closed path $\gamma$ in $\mathbb P^1(\mathbb C) \backslash S$ yields another fundamental matrix $Z'$ such that $Z'=ZM_{\gamma}$ for a matrix $M_{\gamma} \in \operatorname{GL}_n(\mathbb C)$. The assignment $\gamma \mapsto M_{\gamma}$ defines a homomorphism from the fundamental group $\pi_1(\mathbb P^1(\mathbb C) \backslash S;a)$ to $\operatorname{GL}_n(\mathbb C)$. Hilbert's 21st problem asks whether every representation of the fundamental group of $\mathbb P^1(\mathbb C)$ without given singular points occurs as the monodromy of a linear differential equation. 

The solution of the original problem has a history of its own, lasting almost the entire 20th century. Here we are interested in generalizations in the context of algebraic geometry. Following the introduction of \cite{Hotta}, we explain the connection between linear differential equations and $D$-modules. Every linear differential equation on an open subset $X$ of $\mathbb C^n$ with coordinates $(x_1,x_2,\dots,x_n)$ can be written in the form $Pu=0$, where $P$ is an operator of the form 
\[
	\sum_{i_1,\dots,i_n} f_{i_1,\dots,i_n}(\frac{\partial}{\partial x_1})^{i_1}(\frac{\partial}{\partial x_2})^{i_2} \cdots (\frac{\partial}{\partial x_n})^{i_n}
\]
for coefficients $f_{i_1,\dots,i_n}$ in the sheaf $\CO_X$ of holomorphic functions. In other words, we have to study the action of the sheaf $D_X$ of the non-commutative rings of linear partial differential operators with coefficients in $\CO_X$ given by differentiation. This action turns $\CO_X$ into a left $D_X$-module. One immediately checks that the set of solutions of $Pu=0$ is naturally identified with the sheaf $\SHom_{D_X}(D_X/D_XP,\CO_X)$ by the assignment 
\begin{align*}
	\SHom_{D_X}(D_X/D_XP,\CO_X) &\to \CO_X \\
	\phi &\mapsto \phi(1).
\end{align*}
More generally, systems of linear differential equations 
\[
	\sum_{j=1}^l P_{ij}u_j = 0
\]
with $P_{ij} \in D_X$ for all $1 \leq i \leq k$ correspond to coherent left $D_X$-module $M$ defined as the cokernel of the map $\phi\colon D_X^k \to D_X^l$ with
\[
	\phi(Q_1,Q_2,\dots,Q_k)_l = \sum_{i=1}^k Q_iP_{ij}.
\]
The sheaf of solutions of such a system is $\SHom_{D_X}(M,\CO_X)$. This shows how the study of solutions of partial linear differential equations is connected with the theory of $D$-modules.   

We turn to the more general situation where $X$ is a smooth variety over the complex numbers. A fundamental step to the modern version of the Riemann-Hilbert correspondence was the result by Pierre Deligne in 1970 (\cite{Deligne}), which states that there is an equivalence 
\[
	\operatorname{Conn}^{\operatorname{reg}} \to \operatorname{Loc}(X^{\operatorname{an}})
\]
between the categories of regular integrable connections on $X$ and local systems on $X^{\operatorname{an}}$, i.e.\ $\mathbb C_X$-modules which are locally free of finite rank for the analytic topology. Here an integrable connection is a $D_X$-module $M$ which is locally free of finite rank as an $\CO_X$-module. This is nothing but a locally free $\CO_X$-module of finite rank together with a $\mathbb C$-linear map $\nabla\colon M \to \Omega_X^1 \otimes_{\CO_X} M$. For the above equivalence, we have to pass to a certain subcategory, namely the regular integrable connections, and the underlying functor of the equivalence is given by taking the kernel of $\nabla$. Note that if $d_X$ denotes the dimension  of $X$, this is the cohomology in degree $-d_X$ of the de Rham complex 
\[
	0 \longrightarrow \Omega_X^1 \otimes_{\CO_X} M \longrightarrow \Omega_X^2 \otimes_{\CO_X} M \longrightarrow \cdots \longrightarrow \Omega_X^{d_X} \otimes_{\CO_X} M \longrightarrow 0
\]
located between the degrees $-d_X$ and $0$ and whose differentials are induced by $\nabla$. Let $DR_X(M)$ denote this complex.

Both categories $\operatorname{Conn}^{\operatorname{reg}}$ and $\operatorname{Loc}(X^{\operatorname{an}})$ are not closed under push-forwards. For instance, the push-forward of a local system on the origin to the affine line is obviously not a local system. The correct extensions are (regular holonomic) $D_X$-modules on the left and constructible $\mathbb C_X$-sheaves on the right. However, the functor $H^{-d_X}(DR_X(\usc))$ does not yield an equivalence between these larger categories. Again considering the example of the inclusion $i\colon \{0\} \to \mathbb A_{\mathbb C}^1$, we see that $H^{-1}(DR_{\mathbb A^1}(i_*\mathbb C)) = 0$. This is due to the fact that we loose to much information by only taking into account the $-d_X$-th cohomology of $DR_X(\usc)$. To avoid this problem, one considers the derived functor $DR_X(\usc)=\Omega_X \derotimes_{\CO_X} \usc$ between the derived categories of $D_X$-modules and constructible $\mathbb C_X$-sheaves. In the context of complex manifolds, Kashiwara (\cite{Kashiwara1} and \cite{Kashiwara2}) passed to a suitable subcategory called regular holonomic $D$-modules -- more precisely the full subcategory of the bounded derived category $D^b(D_X)$ consisting of complexes whose cohomology sheaves are regular holonomic -- and proved that the de Rham functor is an equivalence 
\[
	D_{rh}^b(D_X) \to D_c^b(\mathbb C_X),
\]  
which is compatible with the six operations $f_*$, $f_!$, $f^*$, $f^!$, $\RSHom^{\bullet}$ and $\derotimes$. Here $D_c^b(\mathbb C_X)$ denotes the full subcategory of $D(\mathbb C_X)$ of bounded complexes with constructible cohomology sheaves. This result from 1980 and 1984 is known as the Riemann-Hilbert correspondence. Around the same time, Mebkhout (\cite{Mebkhout1} and \cite{Mebkhout2}) gave a proof, which is independent of Kashiwara's work. Later on, Beilinson and Bernstein developed the Riemann-Hilbert correspondence for algebraic $D$-modules on complex algebraic varieties. Their work is explained in the unpublished notes (\cite{Bernstein}). The Riemann-Hilbert correspondence is a very general answer to Hilberts 21st problem, because $DR_X(\usc)$ is closely related to the so-called solution functor $\Solu_X = \RSHom_{D_X}^{\bullet}(\usc,\CO_X)$: for every bounded complex $M^{\bullet}$ of $D_X$-modules, we have  
\[
	DR_X(M^{\bullet}) \cong \Solu_X(\mathbb D_X M^{\bullet})[d_X],
\]
where $\mathbb D_X$ is a certain duality. As explained above, for a coherent $D_X$-module $M$, the sheaf $\SHom_{D_X}(M,\CO_X)$ can be identified with the solutions of the system of differential equations corresponding to $M$. Furthermore, there is an equivalence between representations of the fundamental group of $X$ and locally constant $\mathbb C_X$-sheaves.   

Of course the essential image of the abelian category of regular holonomic $D_X$-modules under the equivalence $DR_X$ is an abelian category inside $D_c^b(\mathbb C_X)$, but it turns out that this category differs from the category of constructible $\mathbb C_X$-sheaves. The example of the immersion of the origin into the affine line from above already is a first sign of this phenomenon. The abelian subcategory of $D_c^b(\mathbb C_X)$ given by the essential image of regular holonomic $D_X$-modules under the de Rham functor is called \emph{perverse sheaves}. There is a general tool for describing abelian subcategories of triangulated categories: the theory of $t$-structures. A $t$-structure on a triangulated category $D$ consists of two subcategories $D^{\leq 0}$ and $D^{\geq 0}$ with certain properties. The intersection $D^{\leq 0} \cap D^{\geq 0}$ is called the \emph{heart of the $t$-structure}. It is an abelian category. For example, the so-called canonical $t$-structure of $D_{rh}^b(D_X)$ is given by the two subcategories $D_{rh}^{\leq 0}(D_X)$ and $D_{rh}^{\geq 0}(D_X)$ of complexes whose cohomology is zero in positive or negative degrees. In the same way, the category of perverse sheaves on $X$ is obtained as the heart of a $t$-structure on $ D_c^b(\mathbb C_X)$ which is called the perverse $t$-structure. Indeed, the development of the theory of perverse sheaves by Beilinson, Bernstein, Deligne and Gabber was motivated by the Riemann-Hilbert correspondence. A standard reference for this is \cite{BBD}.          

At the beginning of the 21st century, the time was right for a positive characteristic version of the Riemann-Hilbert correspondence. The de Rham theory for varieties over a field of positive characteristic $p$ differs strongly from the one on complex varieties. Instead of the Poincar\'e lemma, we have the Cartier isomorphism and as a consequence, for a smooth variety $X$, the kernel of the map $\CO_X \to \Omega_X^1$ is not a locally constant $\ZZ/p\ZZ$-sheaf but given by the $p$-th powers $(\CO_X)^p$. Therefore, one has to find a different approach. The Frobenius endomorphism $F$ is a major tool in characteristic $p$. Especially sheaves with an action of the Frobenius turned out to be very useful. The starting point of these objects is the sheaf $\CO_{F,X} = \CO_X[F]$ of non-commutative rings given on an affine open subset $U \subseteq X$ by the polynomial ring $\CO_X(U)[F]$ with the relation $Fr=r^pF$ for local sections $r \in \CO_X(U)$. A simple calculation shows that left $\CO_X[F]$-modules are identified with $\CO_X$-modules $\CF$ together with a morphism $F^*\CF \to \CF$. In \cite[Proposition 4.1.1]{Katz}, Katz proved that there is an equivalence between the category of locally free \'etale $\FF_p$-sheaves and the category of coherent, locally free $\CO_X$-modules $\CE$ together with an isomorphism $F_\CE^* \to \CE$ of $\CO_X$-modules. This may be considered as an analogue of Deligne's result that there is a natural equivalence $\operatorname{Conn}^{\operatorname{reg}} \to \operatorname{Loc}(X^{\operatorname{an}})$.  

It is this result of Katz that motivated Emerton and Kisin to consider left $\CO_{F,X}$-modules for establishing an analogue of the Riemann-Hilbert correspondence for smooth varieties over a field $k$ of positive characteristic $p$. As Katz' work already suggested, certain unit left $\CO_{F,X}$-modules, i.e.\ $\CO_{F,X}$-modules $\CF$ whose structural morphism $F^*\CF \to \CF$ is an isomorphism together with some finiteness condition, is the subcategory to look at. In 2004, Emerton and Kisin published \cite{EmKis.Fcrys}, where they proved that the functor $\Solu= \RSHom_{\CO_{F,X_{\et}}}^{\bullet}(\usc_{\et},\CO_{X_{\et}})[d_X]$ yields an anti-equivalence
\[
	D_{\lfgu}^b(\CO_{F,X}) \to D_c^b(X_{\et},\ZZ/p\ZZ) 
\]
between the bounded derived categories of locally finitely generated unit (lfgu for short) left $\CO_{F,X}$-modules on the one hand, and the bounded derived category of constructible $\ZZ/p\ZZ$-sheaves on the \'etale site $X_{\et}$ of $X$ on the other hand. Their correspondence is shown to be compatible with half of the six cohomological operations, namely $f^!$, $f_+$ and $\derotimes$. They also prove that under the correspondence the abelian category $\mu_{\lfgu}(X)$ of locally finitely generated unit modules corresponds to the category of perverse sheaves $\Perv(X_{\et},\ZZ/p\ZZ)$ defined by Gabber in \cite{Gabber.tStruc} on $D_c^b(X_{\et},\ZZ/p\ZZ)$. In this Riemann-Hilbert type correspondence, the sheaf of partial differential operators is substituted by the sheaf $\CO_{F,X}$. Every $\CO_{F,X}$-module naturally has the structure of a $D_X$-module. The crucial point is that the ring $D_X$ of arithmetic differential operators introduced by Berthelot equals the union $\bigcup \operatorname{End}_{{\CO_X}^{p^e}}(\CO_X)$ (\cite{Berthelot1}, \cite{Berthelot2}). The details of the $D_X$-module structure of an $\CO_{F,X}$-module are explained in \cite{BliDMod}. It follows that the category considered by Emerton and Kisin is a subcategory of the category of left modules over the sheaf of rings of differential operators.

The sequence 
\[
	0 \longrightarrow \CO_{X_{\et}} \xrightarrow{1-F} \CO_{X_{\et}} \longrightarrow 0
\]
in some sense plays the role of the de Rham complex for varieties over $\mathbb C$. For instance, we can compute $\Solu(\CO_X) = \RSHom_{\CO_{F,X_{\et}}}^{\bullet}(\CO_{X_{\et}},\CO_{X_{\et}})[d_X]$ using the resolution
\[
	0 \longrightarrow \CO_{F,X_{\et}} \xrightarrow{1-F} \CO_{F,X_{\et}}
\]
of $\CO_{X_{\et}}$ by free left $\CO_{F,X_{\et}}$-modules. As a consequence of Artin-Schreier theory, the sequence
\[
	0 \longrightarrow (\ZZ/p\ZZ)_X \longrightarrow \CO_{X_{\et}} \xrightarrow{1-F} \CO_{X_{\et}} \longrightarrow 0
\]
is exact and therefore $\Sol(\CO_X) \cong (\ZZ/p\ZZ)_X[d_X]$. This observation is fundamental in the proof of Emerton and Kisin's Riemann-Hilbert correspondence.  

In \cite{BliBoe.CartierFiniteness}, Blickle and B\"ockle show that if $X$ is smooth and $F$-finite (i.e.\ the Frobenius morphism is a finite map), then Emerton-Kisin's category $\mu_{\lfgu}(X)$ is equivalent to their category $\CrysC(X)$ of Cartier crystals on $X$. This category is obtained by localizing the category of coherent sheaves $M$ on $X$ equipped with a right action by Frobenius, i.e.\ a map $F_*M \to M$, at the Serre subcategory consisting of those $M$ where the structural map is nilpotent. 

The category of Cartier crystals is also defined on singular schemes, and a Kashiwara type equivalence holds in this context \cite[Theorem 4.1.2]{BliBoe.Cartier}, showing that Cartier crystals on a closed subscheme $Z \subseteq X$ are ``the same'' as Cartier crystals on $X$ supported in $Z$. This suggests that for singular schemes, the category of Cartier crystals should be a reasonable replacement for Emerton-Kisin's theory, which was only developed for $X$ smooth. Hence one expects a natural equivalence of categories
\[
	\CrysC(X) \to \Perv(X_{\et},\ZZ/p\ZZ)
\]
for any $F$-finite scheme $X$. In this paper we show this result under the assumption that $X$ is embeddable into a smooth $F$-finite variety. The closed immersion of $X$ into a smooth, $F$-finite scheme $Y$ enables us to employ the Kashiwara equivalence to show that the category of Cartier crystals on $X$ is equivalent to the category of lfgu modules on $Y$ supported in $X$. This equivalence on the level of abelian categories then extends to a derived equivalence 
\[
	D_{\crys}^b(\QCrysC(X)) \cong D_{\lfgu}^b(\CO_{F,Y})_X.
\]
The details of this equivalence are worked out in Section 2 and involve showing that the equivalence sketched by Blickle and B\"ockle between Cartier crystals and $\mu_{\lfgu}(X)$ alluded to above is compatible with pull-back functors for immersions of smooth, $F$-finite schemes and push-forward functors for arbitrary morphisms between smooth, $F$-finite schemes. 

In Section 4 we give an intrinsic proof of the fact that the category $D_{\lfgu}^b(\CO_{F,Y})_X$ is independent of the embedding of $X$ into a smooth scheme $Y$. If one had resolution of singularities in characteristic $p$, one would have natural isomorphisms of functors $\Solu f_+ \cong f_! \Solu$ for every morphism $f$ between smooth $k$-schemes \cite[Theorem 9.7.1]{EmKis.Fcrys}. This would enable us to work with derived categories of constructible \'etale sheaves, which are defined on singular schemes as well. Consequently, the independence of a chosen embedding is an easy exercise. As resolution of singularities is an open problem in higher dimensions, we are required to extend the adjunction between the functors $f^!$ and $f_+$ for proper $f$ from Emerton-Kisin to the case that $f$ is proper over some closed subset, which is somewhat technical. The source of this is a general adjunction statement for quasi-coherent sheaves, which we show in section 3, and which we believe to be of independent interest:
\begin{theorem*} 
Let $f\colon X \to Y$ be a separated and finite type morphism of Noetherian schemes and let $i\colon Z \to Y$ and $i'\colon Z' \to X$ be closed immersions with a proper morphism $f'\colon Z' \to Z$ such that the diagram
\[
	\xymatrix{
		Z' \ar[r]^-{i'} \ar[d]^-{f'} & X \ar[d]^-f \\
		Z \ar[r]^-i & Y
	}
\]
commutes. Then there is a natural transformation $\tr_f\colon Rf_*R\Gamma_{Z'}f^! \to \id$ such that, for all $\CF^{\bullet} \in D_{\qc}^-(\CO_X)_{Z'}$ and $\CG^{\bullet} \in D_{\qc}^+(\CO_Y)_Z$, the composition 
\begin{align*}
	\xymatrix{
		Rf_* \RSHom_{\CO_X}^{\bullet}(\CF^{\bullet},R\Gamma_{Z'}f^!\CG^{\bullet}) \ar[r] &  \RSHom_{\CO_Y}^{\bullet}(Rf_* \CF^{\bullet},Rf_*R\Gamma_{Z'}f^! \CG^{\bullet}) \ar[d]^{\tr_f} \\
		& \RSHom_{\CO_Y}^{\bullet}(Rf_* \CF^{\bullet}, \CG^{\bullet})}
\end{align*}
is an isomorphism. Here the first arrow is the natural map. In particular, taking global sections, the functor $Rf_*$ is left adjoint to the functor $R\Gamma_{Z'}f^!$. 
\end{theorem*}
Combining these steps, the following theorem summarizes the main results in this paper:
\begin{theorem*} 
Let $X$ be a $k$-scheme embeddable into a smooth, $F$-finite $k$-scheme. Then there are natural equivalences of categories
\[
	D_{\crys}^b(\QCrysC(X)) \overset{\sim}{\longrightarrow} D_{\lfgu}^b(\CO_{F,Y})_X \overset{\sim}{\longrightarrow} D_c^b(X_{\et},\ZZ/p\ZZ),
\]
where $Y$ is a smooth, $F$-finite $k$-scheme with a closed immersion $X \to Y$. Here the middle category is independent of the embedding. These equivalences are compatible with the respectively defined push-forward and pull-back functors for immersions. Furthermore, the standard $t$-structure on the left corresponds to Gabber's perverse $t$-structure on the right.
\end{theorem*}
\begin{corollary*}
The abelian category $\CrysC(X)$ of Cartier crystals on a variety $X$ embeddable into a smooth, $F$-finite variety is naturally equivalent to the category $\Perv(X_{\et},\ZZ/p\ZZ)$ of perverse constructible \'etale $p$-torsion sheaves. 
\end{corollary*}
While in the final stages of writing up these results, the preprint \cite{Ohkawa} appeared. Therein the author shows that Emerton-Kisin's Riemann-Hilbert correspondence can be extended to the case that $X$ is embeddable into a proper smooth $W_n$-scheme. The case $n=1$ hence also implies the right half of the just stated theorem in the case that $X$ is embeddable into a proper smooth scheme.
 
Finally, in section 6 we define the intermediate extension functor $j\me\colon \CrysC(U) \to \CrysC(C)$ of Cartier crystals for a non-empty open subset $U \subseteq X$. For a Cartier crystal $\CM$ on $U$, its intermediate extension $j\me \CM$ is the smallest subcrystal $\CN$ of $j_*\CM$ such that the restriction of $\CN$ to $U$ is naturally isomorphic to $\CM$. This is equivalent to being an extension of $\CM$ to $X$ without non-trivial subobjects or quotients supported on the complement of $U$. The latter property is a characterization of the intermediate extension defined in \cite{BBD} for a recollement situation. In fact, we show that $j\me \CM$ is naturally isomorphic to the intermediate extension of $\Sol(\CM)$ in the context of perverse sheaves. 

To illustrate the importance of the intermediate extension in general, we turn back to algebraic varieties over $\mathbb C$. In order to recover Poincar\'e duality for singular projective complex algebraic varieties, Goresky and MacPherson considered the cohomology groups $IH^i(X)=H^i(X,j\me \mathbb C_{X_{\text{reg}}}[-d_X])$ instead of $H^i(X,\mathbb C_X)$ for the inclusion $j\colon X_{\text{reg}} \into X$ of the regular locus of $X$ (\cite{GoreskyMacPherson}). For $0 \leq i \leq 2d_X$, there is an isomorphism 
\[
	IH^i(X) = [IH^{2d_X-i}(X)]^*,
\]
which generalizes the Poincar\'e duality
\[
	H^i(X,\mathbb C_X) = [H^{2d_X-i}(X,\mathbb C_X)]^*
\]
for smooth projective varieties.

\section*{Acknowledgments} 

I cordially thank my supervisor, Manuel Blickle, for his excellent guidance and various inspiring conversations. I also thank Kay R\"ulling, who explained to us how the trace map of section 3 could be constructed. Moreover, I thank Gebhard B\"ockle for many useful comments, Axel St\"abler for advancing discussions and Sachio Ohkawa for a careful reading and commenting on an earlier draft of parts of this thesis. The author was partially supported by SFB / Transregio 45 Bonn-Essen-Mainz financed by Deutsche Forschungsgemeinschaft.

\section*{Notation and conventions}

Unless otherwise stated, all schemes are locally Noetherian and separated over the field $\FF_p$ for some fixed prime number $p > 0$. For such a scheme $X$, we let $F_X$ or $F$, if no ambiguity is possible, denote the Frobenius endomorphism $X \to X$ which is the identity on the underlying topological space and which is given by $r \mapsto r^p$ on local sections. Often we will assume that our scheme is $F$-finite, i.e.\ that $F$ is a finite morphism.

Working with Emerton and Kisin's category of locally finitely generated unit modules forces us at some points to restrict to varieties, i.e. to schemes which are of finite type over a field $k$ containing $\FF_p$. With ``schemes over $k$'' or ``$k$-scheme'' we always mean schemes which are separated and of finite over $k$ and we will assume that $k$ is perfect. For a smooth scheme $X$ over a perfect field $k$, the sheaf of top differential forms $\omega_X$ is an invertible sheaf with a canonical morphism $\omega_X \to F^! \omega_X$ of $\CO_X$-modules given by the Cartier operator, see \autoref{ex.StandardCartierOnOmega} for the affine space. One can check that it is an isomorphism. In general, if $X$ is regular and $F$-finite, we will assume that there is a dualizing sheaf $\omega_X$ with an isomorphism $\kappa_X\colon \omega_X \to F^!\omega_X$. For example, this assumption holds if $X$ is a scheme over a local Gorenstein scheme $S = \Spec R$ (\cite[Proposition 2.20]{BliBoe.CartierFiniteness}). Moreover, we assume that $\omega_X$ is invertible. 

As in \cite{EmKis.Fcrys}, for a smooth $k$-scheme $X$, we let $d_X$ denote the function
\[
	x \mapsto \text{dimension of the component of } X \text{ containing } x.
\]
If $f\colon X \to Y$ is a morphism of smooth $k$-schemes, the relative dimension $d_{X/Y}$ is given by $d_{X/Y} = d_X - d_Y \circ f$. 

\setcounter{section}{0}
\section{Review of Cartier crystals and locally finitely generated unit modules}

We begin by reviewing the definitions and results from the theory of Cartier crystals as developed by Blickle and B\"ockle in \cite{BliBoe.CartierFiniteness} and \cite{BliBoe.Cartier}. In short, a coherent Cartier module $M$ on $X$ is a coherent $\CO_X$-module together with a \emph{right action} of the Frobenius $F$. These form an abelian category and the category of Cartier crystals is obtained by localizing at the full Serre subcategory of those $M$ on which $F$ acts nilpotently. The resulting localized category is an abelian category, which has been shown in \cite{BliBoe.CartierFiniteness} to enjoy strong finiteness properties: All objects have finite length and all endomorphism sets are finite dimensional $\FF_p$-vector spaces.

\subsection{Cartier modules and Cartier crystals}

\begin{definition}
A \emph{Cartier module} on $X$ is a quasi-coherent $\CO_X$-module $M$ together with a morphism of $\CO_X$-modules
 \[
	\kappa\colon F_*M \to M.
\]		
\end{definition}
Equivalently, a Cartier module $M$ is a sheaf of right $\CO_{F,X}$-modules whose underlying sheaf of $\CO_X$-modules is quasi-coherent. Here $\CO_{F,X}$ is the sheaf of (non-commutative) rings $\CO_X[F]$, defined affine locally on $\Spec R$ as the ring
\[
	R[F]:=R \{ F \} / \langle Fr - r^pF \ | \ r \in R\rangle.
\]
On the level of abelian sheaves, $M$ and $F_*M$ are equal, hence we may view the structural map $\kappa$ of a Cartier module $M$ as an additive map $\kappa \colon M \to M$ which satisfies $\kappa(r^p \cdot m) = r\kappa(m)$ for all local sections $r \in \CO_X$ and $m \in M$. In this way it is clear that defining the right action of $F$ on $M$ via $\kappa$ defines a right action of $\CO_X[F]$ on $M$, and vice versa. Iterations of $\kappa$ are defined inductively: $\kappa^n:=\kappa \circ F_*\kappa^{n-1}$. Considering $\kappa$ as an additive map of abelian sheaves, $\kappa^n$ is the usual $n$-th iteration.

For a finite morphism $f\colon X \to Y$ of schemes, the functor $f_*$ is left adjoint to the functor $f^{\flat} := \overline{f}^*\SHom_{\CO_Y}(f_*\CO_X,\usc)$, where $\overline{f}$ is the flat morphism of ringed spaces $(X,\CO_X) \to (Y,f_*\CO_X)$, see \cite[III. 6]{HartshorneRD}. Hence the structural morphism of a Cartier module $M$ on an $F$-finite scheme may also be given in the form $\tilde{\kappa}\colon M \to F^{\flat}M$. 
\begin{example}\label{ex.StandardCartierOnOmega}
The prototypical example of a Cartier module is the sheaf $\omega_X$ of top differential forms on a smooth variety over a perfect field $k$. If $X = \Spec k[x_1,\ldots,x_n]$, then $\omega_X$ is the free $k[x_1,\dots,x_n]$-module of rank $1$ generated by $dx_1 \wedge \dots \wedge dx_n$. This module has a natural homomorphism $\kappa\colon F_*\omega_X \to \omega_X$ called the \emph{Cartier operator} given by the formula
\[
	x_1^{i_1}\cdot \dots \cdot x_n^{i_n} dx_1 \wedge \dots \wedge dx_n \mapsto x_1^{\frac{(i_1+1)}{p}-1}\cdot \dots \cdot x_n^{\frac{(i_n+1)}{p}-1} dx_1 \wedge \dots \wedge dx_n
\]
where a non-integral exponent anywhere renders the whole expression zero. 
\end{example}

A morphism $\phi\colon M \to N$ of Cartier modules is a morphism of the underlying quasi-coherent sheaves making the following diagram commutative:
\[
	\xymatrix{
		F_*M \ar[r]^{F_* \phi} \ar[d]_{\kappa_M} & F_*N \ar[d]^{\kappa_N} \\
		M \ar[r]^{\phi} & N.
	}
\]
As $F_*$ is exact, one immediately verifies that \emph{Cartier modules form an abelian category}, the kernels and cokernels being just the underlying kernels and cokernels in $\CO_X$-modules with the induced structural morphism. We denote the category of Cartier modules on $X$ by $\QCohC(X)$. The full subcategory of \emph{coherent Cartier modules} $\CohC(X)$ consists of those Cartier modules whose underlying $\CO_X$-module is coherent. A Cartier module $(M,\kappa)$ is called \emph{nilpotent} if some power of $\kappa$ is zero; $(M,\kappa)$ is called \emph{locally nilpotent} if it is the union of its nilpotent Cartier submodules. By $\LNilC(X)$ we denote the full subcategory of $\QCohC(X)$ consisting of locally nilpotent Cartier modules, and $\NilC(X)$ denotes the intersection $\CohC(X) \cap \LNilC(X)$. The full subcategory of $\QCohC(X)$ consisting of extensions of coherent and locally nilpotent Cartier modules (in either order) we denote by $\LNilCohC(X)$. One has the following inclusions
\begin{equation*}\label{CSh-CCatDiag}
{\parbox{0cm}{{
\xymatrix@C-12pt@R-30pt{
&\LNilC(X)\ar@{^{ (}->}[dr]&&\\
\NilC(X) \ar@{^{ (}->}[ur] \ar@{_{ (}->}[dr] &
&\LNilCohC(X) \ar@{^{ (}->}[r]
&\QCohC(X)\rlap{,}\\
&\CohC(X) \ar@{_{ (}->}[ur] &&\\}
}}}
\end{equation*}
and each of the full subcategories are Serre subcategories in their ambient category\footnote{A Serre subcategory is a full abelian subcategory which is closed under extensions.}. This leads us to our key construction.
\begin{definition} \label{defCartCrys}
The category of \emph{Cartier quasi-crystals} is the localization of the category of quasi-coherent Cartier modules $\QCohC(X)$ at its Serre subcategory $\LNilC(X)$. It is an abelian category, which we denote by $\QCrysC(X)$.
 
Similarly, the category of \emph{Cartier crystals} on $X$ is the localization of the category $\CohC(X)$ of coherent Cartier modules at its Serre subcategory $\NilC(X)$. It is an abelian category, which we denote by $\CrysC(X)$. Cartier crystals also can be obtained by localizing $\LNilCohC(X)$ at the subcategory $\LNilC(X)$.
\end{definition}
In order to define derived functors we have to make sure that the considered categories have enough injectives.
\begin{proposition} \label{enoughinjectiveskappa}
The category $\QCohC(X)$ is a Grothendieck category with enough injectives whose underlying $\CO_X$-module is injective. Its Serre subcategory $\LNilC(X)$ is localizing and hence $\QCrysC(X)$ has enough injectives.
\end{proposition}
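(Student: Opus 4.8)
The plan is to regard $\QCohC(X)$ as the category of quasi-coherent right modules over the (quasi-coherent) sheaf of rings $\CO_{F,X}$ and to deduce all three assertions from standard facts about Grothendieck categories via the free--forgetful adjunction, using only that $F_*$ is exact. Concretely, I would introduce the forgetful functor $U\colon \QCohC(X) \to \QCoh(X)$ together with its left adjoint $L$, the \emph{free Cartier module} functor: affine-locally on $\Spec R$ it sends an $R$-module $V$ to $V \otimes_R R[F] = \bigoplus_{n\ge 0}(F^n)_* V$, so globally $L\CF = \bigoplus_{n \ge 0}(F^n)_*\CF$ with structural map $\kappa$ the shift $F^n \mapsto F^{n+1}$. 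Since $F$ is affine, each $(F^n)_*$ preserves quasi-coherence and the direct sum does too, so $L$ lands in $\QCohC(X)$ and the adjunction $\Hom_{\QCohC(X)}(L\CF, M) \cong \Hom_{\QCoh(X)}(\CF, U M)$ is immediate. I would also observe that kernels, cokernels, and all (co)limits in $\QCohC(X)$ are computed on underlying $\CO_X$-modules; this uses the exactness of $F_*$ and its commutation with colimits of quasi-coherent sheaves (again because $F$ is affine), and shows that $U$ is exact, faithful, and colimit-preserving.

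Granting that $\QCoh(X)$ is a Grothendieck category (which holds for every scheme), I would conclude that $\QCohC(X)$ is Grothendieck: property AB5 is inherited because filtered colimits and kernels are both computed on underlying modules, and if $\CG$ generates $\QCoh(X)$ then $L\CG$ generates $\QCohC(X)$, the functor $\Hom_{\QCohC(X)}(L\CG, -) \cong \Hom_{\QCoh(X)}(\CG, U(-))$ being faithful since $U$ is. By the general theorem that Grothendieck categories have enough injectives, $\QCohC(X)$ has enough injectives. For the refinement that these injectives have injective underlying module, the key point is that $L$ is \emph{exact}: $F_*$ is exact and direct sums are exact, so $L$ preserves short exact sequences. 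A right adjoint of an exact functor preserves injectives, hence $U$ carries every injective object of $\QCohC(X)$ to an injective object of $\QCoh(X)$; as $X$ is locally Noetherian, such an object is moreover injective as an $\CO_X$-module. Combined with the existence of enough injectives, this gives enough injectives with injective underlying $\CO_X$-module.

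It remains to see that the Serre subcategory $\LNilC(X)$ is localizing, i.e.\ closed under arbitrary direct sums. Given locally nilpotent Cartier modules $M_i$, a local section of $\bigoplus_i M_i$ is supported in finitely many summands and in each lies in a nilpotent Cartier submodule $N_i$ with $\kappa^{r_i} = 0$; the finite sum of these $N_i$ is then a nilpotent submodule containing the section, so $\bigoplus_i M_i$ is again locally nilpotent. Thus $\LNilC(X)$ is a localizing subcategory of the Grothendieck category $\QCohC(X)$, whence the quotient $\QCrysC(X) = \QCohC(X)/\LNilC(X)$ is again Grothendieck and in particular has enough injectives.

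The main obstacle is the middle assertion on underlying modules. The naive attempt to manufacture injectives by coinduction, $\CG \mapsto \SHom_{\CO_X}(\CO_{F,X},\CG) = \prod_{n\ge 0}(F^n)^{\flat}\CG$, founders on the fact that infinite products of quasi-coherent sheaves need not be quasi-coherent. The point of the approach above is to avoid constructing any quasi-coherent coinduction and instead exploit the exactness of the free functor $L$, so that the statement follows formally once one has checked that $F_*$ is exact and compatible with the infinite direct sum defining $L$.
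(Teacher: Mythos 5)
Your proof is correct, and it is genuinely more self-contained than the paper's, which disposes of the first assertion entirely by citation (Theorem 2.0.9 and Proposition 3.3.17 of Blickle--B\"ockle's \emph{Cartier crystals}) and deduces that $\LNilC(X)$ is localizing from Gabriel's Corollaire 1 together with the existence of a maximal locally nilpotent subsheaf $M_{nil}\subseteq M$. Your route replaces all of this with the free--forgetful adjunction $L\dashv U$, $L\CF=\bigoplus_{n\ge 0}(F^n)_*\CF$: exactness and colimit-compatibility of $F_*$ give the transfer of AB5 and the generator $L\CG$, and the standard fact that a right adjoint of an exact functor preserves injectives yields the (stronger) statement that \emph{every} injective of $\QCohC(X)$ has injective underlying module, with local Noetherianity converting ``injective in $\QCoh(X)$'' into ``injective $\CO_X$-module''. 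For the localizing step you invoke the other standard criterion --- a Serre subcategory of a Grothendieck category is localizing iff it is closed under coproducts --- which in fact also produces $M_{nil}$ as the sum of all locally nilpotent subobjects, so the two criteria are close cousins. One small point of hygiene: the phrase ``a local section of $\bigoplus_i M_i$ is supported in finitely many summands'' is loose over non-quasi-compact opens; the clean statement is that $\bigoplus_i M_i$ is the filtered union of the finite sub-sums $\bigoplus_{i\in S}M_i$, each locally nilpotent as a finite iterated extension, and a filtered union of locally nilpotent submodules is locally nilpotent because every nilpotent submodule of a member is one of the ambient sheaf. What your approach buys is an embedding-free, citation-free argument that also explains \emph{why} injectives restrict to injectives; what it costs is the appeal to $\QCoh(X)$ being Grothendieck, which is harmless under the paper's standing locally Noetherian hypothesis.
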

\begin{proof}
The first statements were shown in \cite[Theorem 2.0.9 and Proposition 3.3.17]{BliBoe.Cartier}. That $\LNilC(X)$ is localizing now follows from Corollaire 1 on p. 375 of \cite{Gabrielthesis} and from the fact that each $M \in \QCohC(X)$ has a maximal locally nilpotent $\kappa$-subsheaf $M_{nil}$, see \cite[Lemma 2.1.3]{BliBoe.Cartier}. Then Corollaire 2 of \cite{Gabrielthesis} shows that the associated quotient category $\QCrysC(X)$ has enough injectives.

Concretely, if $T \colon \QCohC(X) \to \QCrysC(X)$ denotes the exact localization functor, then the fact that $\LNilC(X)$ is localizing asserts the existence of a right adjoint $V \colon \QCrysC(X) \to \QCohC(X)$. If $M/M_{nil} \into I$ is an injective hull in $\QCohC(X)$, then it is shown in op.\ cit.\ that $TI$ is an injective hull of $T(M/M_{nil})$.
\end{proof}

The following finiteness statements are the main results of \cite{BliBoe.CartierFiniteness}:
\begin{theorem}[\protect{\cite[Corollary 4.7, Theorem 4.17]{BliBoe.CartierFiniteness}}] \label{crystalfinite}
Let $X$ be a locally Noetherian, $F$-finite scheme of positive characteristic $p$.
\begin{enumerate}
\item Every object in the category of Cartier crystals $\CrysC(X)$ satisfies the ascending \emph{and descending} chain condition on its subobjects.
\item The $\Hom$-sets in $\CrysC(X)$ are finite dimensional $\FF_p$-vector spaces.
\end{enumerate}
\end{theorem}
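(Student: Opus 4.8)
The plan is to reduce the two finiteness statements, via a structure theorem for coherent Cartier modules, to a computation over the residue fields of $X$, and then to globalize by Noetherian induction on the support.

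First I would replace an arbitrary coherent Cartier module $(M,\kappa)$ by a canonical representative of its crystal. Consider the descending chain of coherent subsheaves
\[
	M \supseteq \kappa(F_*M) \supseteq \kappa^2(F^2_*M) \supseteq \cdots ,
\]
where $\kappa^e$ is the $e$-fold iterate. Because $X$ is $F$-finite and $M$ is coherent, one shows that this chain stabilizes; write $\underline{M}$ for its stable term. Then $\kappa$ is surjective on $\underline{M}$ and the quotient $M/\underline{M}$ is locally nilpotent, so $M \cong \underline{M}$ in $\CrysC(X)$. After dually discarding the maximal nilpotent subsheaf I obtain a minimal representative with surjective $\kappa$ and no nilpotent sub- or quotient sheaves; for such a representative the lattice of Cartier submodules matches the lattice of subobjects of the crystal, so I may assume $\kappa$ surjective from now on.

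Next I would treat the base case over a field. If $K$ is an $F$-finite field, a coherent Cartier module over $K$ is a finite-dimensional $K$-vector space, and surjectivity of the structural map brings it into the range of a Lang/Artin-Schreier descent (the field version of Katz's equivalence): such modules correspond to finite-dimensional $\FF_p$-vector spaces carrying a continuous action of $\operatorname{Gal}(K^{\mathrm{sep}}/K)$, Cartier submodules corresponding to subrepresentations and $\dim_K M$ to the $\FF_p$-dimension. Hence over $K$ a Cartier crystal has finite length and its $\Hom$-groups are finite-dimensional over $\FF_p$. To globalize part (a) I run Noetherian induction on $\overline{\Supp M}$: restricting a chain of subcrystals to a generic point $\eta$ of the support yields a chain of subrepresentations whose length is bounded by $\dim_{\kappa(\eta)}M_\eta$, and subcrystals agreeing at every generic point differ only on a proper closed subscheme $Z$, where by the Kashiwara-type equivalence \cite[Theorem 4.1.2]{BliBoe.Cartier} they are controlled by the induction hypothesis on $Z$. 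Summing the finitely many generic ranks with the inductive bound on the complement gives a finite length bound, hence the ascending and descending chain conditions.

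For part (b) the same d\'evissage applies: restriction to the generic points of $X$ gives an $\FF_p$-linear map
\[
	\Hom_{\CrysC(X)}(M,N) \longrightarrow \prod_{\eta} \Hom(M_\eta,N_\eta)
\]
into a finite product of finite-dimensional $\FF_p$-vector spaces, whose kernel consists of morphisms vanishing on a dense open and therefore factoring through crystals supported on a proper closed subset, which is finite-dimensional by induction; this forces $\Hom_{\CrysC(X)}(M,N)$ to be finite-dimensional over $\FF_p$. The main obstacle is the very first step: coherent sheaves need not satisfy the descending chain condition, so the stabilization of the images $\kappa^e(F^e_*M)$ is a genuine theorem (Gabber's nilpotence lemma), provable either by a local argument or by Matlis duality, which turns the decreasing image chain into an increasing chain of submodules of a Noetherian object. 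Once this and the field case are in hand the globalization is routine, the only subtlety being the bookkeeping that identifies subobjects in the localized category $\CrysC(X)$ with honest Cartier submodules of the minimal representative.
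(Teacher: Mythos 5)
First, a remark on the comparison itself: the paper does not prove this statement but quotes it verbatim from \cite[Corollary 4.7, Theorem 4.17]{BliBoe.CartierFiniteness}, so the only proof to measure yours against is the one in that reference. Your overall strategy does track it: passing to a minimal representative via the stabilization of the images $\kappa^e(F^e_*M)$ (which you correctly flag as a genuine theorem, provable by local/Matlis duality), identifying the generic behaviour with finite-dimensional $\FF_p$-representations of the Galois group of the residue field via the unit-module/Artin--Schreier correspondence, and running a Noetherian induction on the support through the Kashiwara equivalence. Your outline for the $\Hom$-finiteness in (b) is likewise in the right spirit, granted the finite-length statement.

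The gap is in the descending chain condition, which is the hard half of (a). Given a chain $M\supseteq M_1\supseteq M_2\supseteq\cdots$ of subcrystals whose generic stalks have stabilized, you say the terms ``differ only on a proper closed subscheme $Z$'' and invoke the inductive hypothesis there. But the closed set $\Supp(M_{i_0}/M_i)$ depends on $i$ and a priori grows with $i$; an ascending chain of proper closed subsets of a Noetherian space need not stabilize, so there is no single $Z$ to which Kashiwara and the induction apply, and even for a fixed $Z$ the quotients $M_{i_0}/M_i$ form an inverse system of surjections rather than a chain of subobjects of one fixed crystal on $Z$. Closing this requires the two ingredients that form the technical core of the cited proof and are absent from your sketch: (i) the generic trivialization must be spread out to a dense open $U$ of the support, so that $\kappa$-surjective submodules agreeing at the generic points already agree on all of $U$ (a sub-local-system is determined by one fibre); this fixes $Z=\Supp M\setminus U$ once and for all. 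And (ii) a discreteness lemma: if $N\subseteq M$ has surjective structural map and $N\supseteq\mathcal{I}_Z^{\,n}M$ for some $n$, then $N\supseteq\mathcal{I}_Z M$ (apply $\kappa^e$ to $F^e_*(\mathcal{I}_Z^{[p^e]}M)\subseteq F^e_*(\mathcal{I}_Z^{\,n}M)$ for $p^e\geq n$ and use surjectivity on $M$). This traps the entire tail of the chain inside the single coherent Cartier module $M_{i_0}/\mathcal{I}_Z M_{i_0}$ supported on $Z$, where the induction hypothesis finally applies. Without (i) and (ii) the Noetherian induction does not close up, and the phrase ``summing the generic ranks with the inductive bound on the complement'' has no well-defined complement to refer to.
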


These finiteness properties are precisely the ones one expects from a category of perverse constructible sheaves in the topological context. It is this result (and the related statement \cite[Theorem 11.5.4]{EmKis.Fcrys} in the smooth case) that prompted our investigation of a connection between Cartier crystals and Gabber's category of perverse constructible $\ZZ/p\ZZ$-sheaves on $X_{\et}$, which is the content of this article.

\subsection{Cartier crystals and morphisms of schemes}

Up to now, we studied different categories stemming from quasi-coherent sheaves on a single scheme $X$. In this subsection, we consider morphisms $f\colon X \to Y$ of schemes and construct functors between the categories of Cartier (quasi-)crystals on $X$ and on $Y$. With the notation $D^*(\CA)$ for an abelian category $\CA$ and $* \in \{+,-,b\}$, we mean the subcategory of the derived category $D(\CA)$ of bounded below, bounded above or bounded complexes.

The first result is concerned with the derived functor $Rf_*$ for quasi-coherent $\CO_X$-modules. For a large class of morphisms it behaves well with the additional structure of Cartier modules and with localization at nilpotent objects. In principle, to any quasi-coherent Cartier module $M$ with structural map $\kappa_M$, we assign the quasi-coherent $\CO_Y$-module $f_*M$ together with the composition
\[
	F_{Y*}f_*M \overset{\sim}{\longrightarrow} f_*F_{X*}M \xrightarrow{f_*\kappa_M} f_*M.
\]
This is the underived functor $f_*\colon \QCohC(X) \to \QCohC(Y)$. 
\begin{theorem}[\protect{\cite[Corollary 3.2.12]{BliBoe.Cartier}}]
Let $f \colon  X \to Y$ be a morphism of $F$-finite schemes. Suppose $* \in \{+,-,b \}$. The functor $Rf_*$ on quasi-coherent sheaves induces a functor
\[
    Rf_* \colon D^*(\QCohC(X)) \to D^*(\QCohC(Y)).
\]
It preserves local nilpotence and hence induces a functor
\[
    Rf_* \colon D^*(\QCrysC(X)) \to D^*(\QCrysC(Y)).
\]
If $f$ is of finite type (but not necessarily proper!) then it restricts to a functor
\[
    Rf_* \colon D^*_{\crys}(\QCrysC(X)) \to D^*_{\crys}(\QCrysC(Y))
\]
where the subscript $\crys$ indicates that the cohomology lies in $\LNilCrysC$. 
\end{theorem}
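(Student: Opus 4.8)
\emph{Lifting $Rf_*$ to Cartier modules.} My plan is to first lift the quasi-coherent $Rf_*$, then check it annihilates the localizing subcategory, and finally control coherence up to nilpotence. The essential input for the lift is the naturality of the Frobenius: for any morphism $f$ of $\FF_p$-schemes one has $f\circ F_X = F_Y\circ f$, since $F$ is the identity on spaces and the $p$-th power on functions. As $F$ is finite, $F_*$ is exact and coincides with its derived functor, so comparing the two Leray factorizations of $f\circ F_X = F_Y\circ f$ yields a canonical isomorphism
\[
    F_{Y*}\,Rf_* \;\cong\; Rf_*\,F_{X*}
\]
of functors $D^*(\QCoh(X))\to D^*(\QCoh(Y))$. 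To carry a complex of Cartier modules along, I would resolve it: by \autoref{enoughinjectiveskappa} every object of $\QCohC(X)$ embeds into an injective whose underlying $\CO_X$-module is injective, and injective $\CO_X$-modules are flasque, hence $f_*$-acyclic. Thus for $M^\bullet \in D^*(\QCohC(X))$ an injective resolution $M^\bullet \to I^\bullet$ in $\QCohC(X)$ computes $Rf_*M^\bullet = f_*I^\bullet$ on underlying sheaves, while the underived $f_*\colon \QCohC(X)\to\QCohC(Y)$ described before the statement equips $f_*I^\bullet$ with its Cartier structure. This defines $Rf_*\colon D^*(\QCohC(X))\to D^*(\QCohC(Y))$; functoriality and independence of the resolution are then formal.

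\emph{Local nilpotence and descent.} If $\kappa^n=0$ on a Cartier module $M$, then under the base-change isomorphism above the $n$-th iterate of the structural map of $Rf_*M$ corresponds to $Rf_*(\kappa^n)=0$, so $R^if_*M$ is again nilpotent for every $i$. For a locally nilpotent $N=\varinjlim N_j$ (a filtered union of nilpotent submodules) I would use that, the schemes being locally Noetherian, $R^if_*$ commutes with filtered colimits of quasi-coherent sheaves; hence $R^if_*N=\varinjlim R^if_*N_j$ is a filtered colimit of nilpotent Cartier modules and is therefore locally nilpotent. Thus $Rf_*$ sends complexes with cohomology in $\LNilC(X)$ to complexes with cohomology in $\LNilC(Y)$. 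Since $\LNilC$ is localizing (\autoref{enoughinjectiveskappa}), $D^*(\QCrysC)$ is the Verdier quotient of $D^*(\QCohC)$ by the complexes with $\LNilC$-cohomology, and the universal property of this quotient produces the induced functor $Rf_*\colon D^*(\QCrysC(X))\to D^*(\QCrysC(Y))$.

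\emph{The finite type case (the main obstacle).} Here I must show that for $f$ of finite type and $M$ a coherent Cartier module each $R^if_*M$ lies in $\LNilCohC(Y)$, equivalently that its image in $\QCrysC(Y)$ lies in $\LNilCrysC(Y)$. I would reduce by Nagata compactification, factoring $f=\bar f\circ j$ with $j$ an open immersion and $\bar f$ proper over $Y$. For the proper factor $R\bar f_*$ preserves coherent cohomology by Grothendieck finiteness and the Cartier structure by the first step, so the whole problem reduces to the open immersion. For $j$ with closed complement $Z$, the local-cohomology triangle $R\Gamma_Z \to \id \to Rj_*j^* \to$ shows that the failure of $Rj_*$ to preserve coherence is governed entirely by the local cohomology $R\Gamma_Z$. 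This is where I expect the real work to lie: unlike the locally nilpotent case, these local-cohomology sheaves are genuinely non-coherent, yet as Cartier modules they ought to be nil-coherent, their stable (non-nilpotent) part being finitely generated while the complement is locally nilpotent. The toy computation $f\colon \mathbb A^1 \to \Spec k$, where $f_*\omega_{\mathbb A^1}\cong k[t]\,dt$ is an infinite-dimensional but locally nilpotent Cartier module over $\Spec k$ (the Cartier operator of \autoref{ex.StandardCartierOnOmega} strictly lowers degrees when nonzero and annihilates $dt$, so a power of it kills every element), illustrates exactly this mechanism. To turn this into a proof I would invoke the structure and finiteness theory of coherent Cartier modules from \cite{BliBoe.CartierFiniteness} together with \autoref{crystalfinite}, in particular the stabilization of the images $\kappa^e(F^e_*M)$ and the resulting splitting into a part on which $\kappa$ is a nil-isomorphism and a locally nilpotent part, to conclude that each $R\Gamma_Z M$, and hence each $R^if_*M$, lies in $\LNilCohC(Y)$. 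This yields the asserted restriction $Rf_*\colon D^*_{\crys}(\QCrysC(X))\to D^*_{\crys}(\QCrysC(Y))$.
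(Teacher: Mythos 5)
The paper does not prove this statement at all --- it is imported verbatim from \cite[Corollary 3.2.12]{BliBoe.Cartier} --- so your proposal has to be measured against the argument in that reference. Your first two steps are correct and are essentially the standard argument: the lift of $Rf_*$ rests on $f\circ F_X=F_Y\circ f$ together with \autoref{enoughinjectiveskappa}, whose purpose is exactly to supply injective resolutions in $\QCohC(X)$ that are flasque, hence $f_*$-acyclic, on underlying sheaves; and local nilpotence is preserved because a nilpotent module pushes forward to nilpotent modules while $R^if_*$ commutes with the filtered colimit exhibiting a locally nilpotent module as the union of its nilpotent submodules (for this step, and for $*\in\{-,b\}$, you do need $f$ quasi-compact and $f_*$ of finite cohomological dimension, but these are standard).

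The gap is in the finite-type case, at two concrete points. First, the triangle $R\Gamma_Z\to\id\to Rj_*j^*$ lives on the compactification $\LX$ and only controls $Rj_*$ of complexes of the form $j^*\overline{\CM}$; your coherent Cartier module lives on the open piece $X$, so before the triangle says anything you must extend it to a \emph{coherent Cartier module} on $\LX$. This is a genuine lemma, not bookkeeping: a coherent $\CO_{\LX}$-extension $\overline{N}_0\subseteq j_*N$ of the underlying sheaf always exists, but its Cartier closure $\sum_e\kappa^e(F^e_*\overline{N}_0)$ is an increasing chain of coherent subsheaves of the non-coherent $j_*N$ and does not obviously stabilize, so the existence of a coherent Cartier extension has to be quoted or proved separately. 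Second, the assertion that $R\Gamma_Z$ of a coherent Cartier module has cohomology in $\LNilCohC$ is precisely the central finiteness theorem of \cite{BliBoe.CartierFiniteness}; it does not follow from \autoref{crystalfinite}, and the logical dependence in the source runs the other way (the descending chain condition for crystals is deduced from the nil-coherence of local cohomology, which in turn rests on the stabilization of $\kappa^e(F^e_*M)$). Your toy computation of $f_*\omega_{\mathbb A^1}$ over a point correctly illustrates the mechanism, but as written the proposal reduces the hardest third of the theorem to an unproved (though true and citable) statement rather than proving it.
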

For essentially \'etale morphisms and for closed immersions there are pull-back functors. 
\begin{theorem}
Let $f\colon X \to Y$ be a morphism of schemes.
\begin{enumerate}
\item Suppose $* \in \{+,-,b \}$. If f is essentially \'etale, the exact functor $f^*$ induces a functor 
\[
	f^!\colon D_{\crys}^*(\QCrysC(Y)) \to D_{\crys}^*(\QCrysC(X)),
\]
which is left adjoint to $Rf_*$. 
\item Suppose $* \in \{+,b \}$. If f is a closed immersion of $F$-finite schemes, the functor $f^{\flat} = \overline{f}^*\SHom_{\CO_Y}(f_*\CO_X,\usc)$, where $\overline{f}$ denotes the flat morphism $(X,\CO_X) \to (Y, f_*\CO_X)$ of ringed spaces, induces a functor 
\[
	f^!\colon D_{\crys}^*(\QCrysC(Y)) \to D_{\crys}^*(\QCrysC(X)), 
\]
which is right adjoint to $Rf_*$. 
\end{enumerate} 
\end{theorem}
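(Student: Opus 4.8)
The plan is to build each functor in three stages: first at the level of the abelian category $\QCohC$, by equipping the quasi-coherent pull-back with a natural Cartier structure; then to check the exactness and finiteness properties that let it descend to the localized categories $\QCrysC$, $\CrysC$ and pass to derived categories; and finally to upgrade the underlying sheaf-theoretic adjunction to the derived categories of crystals, using the functor $Rf_*$ established in the preceding theorem. The distinction in the permitted range of $*$ is forced by exactness: the pull-back $f^*$ of an (essentially) \'etale morphism is exact, whereas $f^\flat$ is only left exact, so in (b) one must right-derive and restrict to $* \in \{+,b\}$.

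For (a), the key input is that for $f$ essentially \'etale the Frobenius square
\[
\xymatrix{ X \ar[r]^-{F_X} \ar[d]_-f & X \ar[d]^-f \\ Y \ar[r]^-{F_Y} & Y }
\]
is cartesian, which yields a canonical base-change isomorphism $f^* F_{Y*} \cong F_{X*} f^*$. Conjugating the structural map $\kappa_M\colon F_{Y*}M \to M$ through this isomorphism equips $f^*M$ with a Cartier structure $\kappa_{f^*M}\colon F_{X*}f^*M \to f^*M$, functorially in $M$. Since $f^*$ is exact (flatness), commutes with colimits, and sends $\kappa_M^n$ to $\kappa_{f^*M}^n$, it preserves coherence and local nilpotence, hence descends to an exact functor $\QCrysC(Y) \to \QCrysC(X)$ and induces the claimed $f^!$ on $D^*$ for every $* \in \{+,-,b\}$; as $d_{X/Y}=0$ no shift intervenes and $f^! = f^*$. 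It then remains to lift the sheaf-level adjunction $f^* \dashv Rf_*$: one checks that its unit and counit are morphisms of Cartier modules and that the localization functor carries them to the required adjunction on $D^*_{\crys}(\QCrysC)$.

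For (b), observe that $F_Y \circ f = f \circ F_X$, so the contravariant functoriality of $(\usc)^\flat$ supplies a canonical isomorphism $F_X^\flat f^\flat \cong f^\flat F_Y^\flat$. Writing the structural map in the adjoint form $\tilde\kappa_N\colon N \to F_Y^\flat N$ and applying $f^\flat$, the composite $f^\flat N \to f^\flat F_Y^\flat N \cong F_X^\flat f^\flat N$ is the Cartier structure on $f^\flat N$. Being a right adjoint, $f^\flat$ is left exact, so we form $Rf^\flat$ using the injectives provided by \autoref{enoughinjectiveskappa} and work with $* \in \{+,b\}$. For a closed immersion $f^\flat N = \overline{f}^* \SHom_{\CO_Y}(f_*\CO_X, N)$ preserves coherence, and since $f_*\CO_X$ is a finitely presented $\CO_Y$-module the functor $\SHom_{\CO_Y}(f_*\CO_X,\usc)$ commutes with filtered colimits, whence $f^\flat$ preserves local nilpotence and descends to the crystal categories. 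Finally one lifts the finite-morphism adjunction $f_* = Rf_* \dashv Rf^\flat$ to $D^*_{\crys}(\QCrysC)$.

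The \textbf{main obstacle} is not the sheaf theory but the bookkeeping that makes everything compatible with the Cartier (Frobenius) structure and with localization. Concretely, one must verify that the base-change isomorphism in (a) and the duality isomorphism $F_X^\flat f^\flat \cong f^\flat F_Y^\flat$ in (b) are natural, and that the unit and counit of each underlying sheaf-theoretic adjunction are $\kappa$-linear, i.e. commute with the structural maps; only then does the adjunction descend along the exact localization $T\colon \QCohC \to \QCrysC$ and subsequently along the derived functors. A secondary technical point is the filtered-colimit argument guaranteeing that the right adjoint $f^\flat$ preserves \emph{local} (and not merely honest) nilpotence, together with the check that $Rf^\flat$, computed via injectives, is compatible with the localizing subcategory $\LNilC$.
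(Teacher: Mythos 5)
Your proposal follows essentially the same route as the paper: the Cartier structure on $f^*M$ is defined by conjugating $\kappa_M$ through the base-change isomorphism $F_{X*}f^* \cong f^*F_{Y*}$, the structure on $f^\flat N$ by composing $f^\flat\tilde\kappa$ with $f^\flat F_Y^\flat \cong F_X^\flat f^\flat$, and the descent to crystals and the adjunctions are then deduced from exactness (resp.\ left exactness), preservation of coherence and (local) nilpotence, and the underlying quasi-coherent adjunctions. The extra justifications you supply (the cartesian Frobenius square, the filtered-colimit argument for local nilpotence, the $\kappa$-linearity of unit and counit) are exactly the details the paper delegates to the cited results of Blickle--B\"ockle.
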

\begin{proof}
Let $M$ be a quasi-coherent Cartier module on $Y$. For essentially \'etale $f$, there is a canonical isomorphism $\bc\colon F_{X*}f^* \overset{\sim}{\longrightarrow} f^*F_{Y*}$. Hence we may equip $f^*M$ with the structural morphism given by the composition
\[
	F_{X*}f^*M \xrightarrow{\bc} f^*F_{Y*}M \xrightarrow{f^*\kappa_M} f^*M.
\]
As $f^*$ preserves coherence, we obtain a functor $\CohC(Y) \to \CohC(X)$. It is easy to see that $f^*$ preserves nilpotency. Therefore, and by exactness of $f^*$, we obtain the desired functor $D_{\crys}^b(\QCrysC(Y)) \to D_{\crys}^b(\QCrysC(Y))$. 

In the case of a closed immersion $f$, the composition
\[
	f^{\flat}M \xrightarrow{f^{\flat}\tilde{\kappa}} f^{\flat}F_Y^{\flat}M \overset{\sim}{\longrightarrow} F_X^{\flat}f^{\flat}M,
\]
where $\tilde{\kappa}$ is the adjoint of $\kappa$, is a natural Cartier structure for the $\CO_X$-module $f^{\flat}M$. Once again it remains to check that it gives rise to a functor $f^!D_{\crys}^b(\QCrysC(Y)) \to D_{\crys}^b(\QCrysC(X))$. The adjunctions of $Rf_*$ and $f^*$ or $f^!$ follow from the corresponding adjunctions for quasi-coherent sheaves. For more details see \cite[Proposition 3.3.19]{BliBoe.Cartier} and \cite[Corollary 3.3.24]{BliBoe.Cartier}.
\end{proof}
Now let $i\colon Z \to X$ be a closed immersion and $j\colon U \to X$ the open immersion of the complement $X \backslash Z$. Note that for a closed immersion $i$, the functor $i_*$ is exact and therefore we drop the $R$ indicating derived functors. The units and counits of the adjunctions between $i_*$ and $i^!$ and between $Rj_*$ and $j^*$ lead to a familiar distinguished triangle.
\begin{theorem}[\protect{\cite[Theorem 4.1.1]{BliBoe.Cartier}}] \label{Cartiertriangle}
In $D_{\crys}^+(\QCrysC(X))$ there is a distinguished triangle
\[
	i_*i^! \longrightarrow \id \longrightarrow Rj_*j^* \longrightarrow i_*i^![1]. 
\]
\end{theorem}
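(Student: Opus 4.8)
The plan is to realize the asserted triangle as the localization (local cohomology) triangle, transported from quasi-coherent $\CO_X$-modules to Cartier crystals. The first arrow $i_*i^! \to \id$ is the counit of the adjunction $(i_*,i^!)$ from the preceding theorem, and the second arrow $\id \to Rj_*j^*$ is the unit of the adjunction $(j^*,Rj_*)$; the connecting map is then produced formally once the middle is identified. The backbone is the classical distinguished triangle
\[
	R\Gamma_Z \CF^{\bullet} \longrightarrow \CF^{\bullet} \longrightarrow Rj_*j^* \CF^{\bullet} \longrightarrow R\Gamma_Z \CF^{\bullet}[1]
\]
for $\CF^{\bullet} \in D^+_{\qc}(\CO_X)$, where $R\Gamma_Z$ denotes derived sections with support in $Z$. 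First I would check that every functor in this triangle lifts to $\QCohC(X)$: the subsheaf $\Gamma_Z M \subseteq M$ of sections supported on $Z$ is stable under the structural map $\kappa$, since for a local section $m$ with $\CI_Z^n m = 0$ the relation $\kappa(r^p\usc)=r\kappa(\usc)$ gives $r^k\kappa(m)=\kappa(r^{kp}m)$, so $\kappa(m)$ is again $\CI_Z$-power-torsion; and $j^*$, $Rj_*$ are already functors on Cartier (quasi-)crystals by the earlier two theorems. Thus the whole triangle upgrades to one of functors on $D^+(\QCohC(X))$ and descends to $D^+_{\crys}(\QCrysC(X))$.

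It then remains to identify $R\Gamma_Z$ with $i_*i^!$ as functors on crystals. There is a canonical map $i_*i^! = i_* Ri^{\flat} = \RSHom_{\CO_X}(i_*\CO_Z,\usc) \to R\Gamma_Z$, given on $H^0$ by the inclusion of the $\CI_Z$-torsion $\SHom_{\CO_X}(\CO_X/\CI_Z,\usc)$ into the $\CI_Z$-power-torsion $\Gamma_Z$. The crux is that the cone of this map is locally nilpotent, so that it becomes an isomorphism in $\QCrysC(X)$. This rests on the same $\kappa$-contraction: iterating $r^k\kappa(m)=\kappa(r^{kp}m)$ yields $r\,\kappa^e(m)=\kappa^e(r^{p^e}m)$, so if $\CI_Z^n m = 0$ then $\kappa^e(m)$ is annihilated by $\CI_Z$ as soon as $p^e \geq n$; hence $\kappa$ moves the higher torsion into the bottom layer, and a power of $\kappa$ vanishes on the quotient $\Gamma_Z M / \SHom_{\CO_X}(\CO_X/\CI_Z,M)$, which is locally nilpotent for coherent $M$. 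Modeling $R\Gamma_Z$ locally by the stable Koszul complex on generators of $\CI_Z$ and tracking the induced Cartier action upgrades this to the derived statement. Substituting the resulting isomorphism $i_*i^!\CF^{\bullet} \xrightarrow{\sim} R\Gamma_Z\CF^{\bullet}$ into the backbone triangle, and checking that its first map goes over to the counit (both restrict to $0$ on $U$ via $j^*i_*=0$ and to the identity on $Z$ via $i^!i_*\cong\id$), gives the asserted triangle.

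The main obstacle is exactly this derived identification $i_*i^! \cong R\Gamma_Z$ in the crystal category: at the level of $H^0$ the $\kappa$-contraction is elementary, but controlling the higher cohomology and the filtered colimit over infinitesimal neighborhoods uniformly — so that the \emph{entire} cone, not merely $H^0$, is locally nilpotent — requires coherence of the input to bound the torsion order and a careful analysis of the Cartier action on the Koszul model. An alternative, purely formal route would complete the counit $i_*i^! \to \id$ to a triangle and use $j^*i_*=0$ to factor the unit through the cone; but verifying that the comparison map is an isomorphism needs the vanishing $i^!Rj_*=0$ together with conservativity of the pair $(j^*,i^!)$, and the latter is essentially the Kashiwara equivalence, which in this development is only available \emph{after} the present triangle. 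This circularity is what makes the reduction to the classical local cohomology triangle the safer path.
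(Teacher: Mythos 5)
Your proposal matches the paper's treatment: the theorem is imported from \cite[Theorem 4.1.1]{BliBoe.Cartier}, and the paper's own commentary immediately after the statement explains that the proof rests on the natural isomorphism $i_*i^! \cong R\Gamma_Z$ for crystals (via \cite[Proposition 2.5]{BliBoe.CartierFiniteness}), identifying the asserted triangle with the fundamental triangle of local cohomology --- exactly your backbone argument, including the $\kappa$-contraction of higher $\CI_Z$-torsion that makes the cone of $i_*i^! \to R\Gamma_Z$ locally nilpotent. Your observation that the purely formal route via $i^!Rj_*=0$ and conservativity would be circular (since Kashiwara is deduced from this triangle) is also consistent with the logical order in the paper.
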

This theorem shows the equivalence mentioned in the following definition. 
\begin{definition} \label{Cartiersupport}
A complex $\CM^{\bullet}$ of $D_{\crys}^b(\QCrysC(X))$ is \emph{supported on $Z$} if $j^!\CM^{\bullet}=0$ or, equivalently, if the natural morphism $i_*i^!\CM^{\bullet} \to \CM^{\bullet}$ is an isomorphism. We let $D^b_{\crys}(\QCrysC(X))_Z$ denote the full triangulated subcategory consisting of complexes supported in $Z$.
\end{definition}

For Cartier crystals, there is a natural isomorphism of functors $i_*i^! \cong R\Gamma_Z$ where $R\Gamma_Z$ is the local cohomology functor, see \cite[Proposition 2.5]{BliBoe.CartierFiniteness} for the basic result concerning the abelian categories of Cartier modules and the proof of \cite[Theorem 4.1.1]{BliBoe.Cartier}. This isomorphism identifies the distinguished triangle of \autoref{Cartiertriangle} with the fundamental triangle of local cohomology. The following theorem is a formal consequence of \autoref{Cartiertriangle}:   
\begin{theorem}[\protect{\cite[Theorem 4.1.2]{BliBoe.Cartier}}] \label{Kashiwara}
Let $i\colon Z \into X$ be a closed immersion. The functors $i_*$ and $i^!$ are a pair of inverse equivalences 
\[
  \xymatrix{
     D_{\crys}^b(\QCrysC(Z))) \ar@<.5ex>[r]^-{i_*} & D_{\crys}^b(\QCrysC(X))_Z \, . \ar@<.5ex>[l]^-{i^!}
   }
\]
\end{theorem}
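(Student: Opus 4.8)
The plan is to deduce the equivalence purely formally from the adjunction $i_* \dashv i^!$ provided by the preceding theorem, together with the distinguished triangle of \autoref{Cartiertriangle} and the characterization of support in \autoref{Cartiersupport}. Write $\eta\colon \id \to i^!i_*$ and $\varepsilon\colon i_*i^! \to \id$ for the unit and counit of the adjunction; recall that the counit is precisely the first map $i_*i^! \to \id$ appearing in the triangle. What has to be shown is that $\eta$ is an isomorphism on all of $D_{\crys}^b(\QCrysC(Z))$ and that $\varepsilon$ is an isomorphism on the subcategory $D_{\crys}^b(\QCrysC(X))_Z$; by the standard criterion for an adjunction to restrict to an adjoint equivalence, this is exactly what is needed.

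First I would check that $i_*$ indeed takes values in the subcategory of complexes supported on $Z$. Since $Z$ and $U = X \setminus Z$ are disjoint, the composite $j^*i_*$ vanishes already on the underlying quasi-coherent complexes, and as $j^! = j^*$ for the open immersion $j$ we get $j^!i_*\CM^{\bullet} = 0$ for every $\CM^{\bullet} \in D_{\crys}^b(\QCrysC(Z))$, which is the defining condition of \autoref{Cartiersupport}. The counit half is then immediate: by the very definition of being supported on $Z$, a complex $\CM^{\bullet}$ lies in $D_{\crys}^b(\QCrysC(X))_Z$ precisely when $\varepsilon_{\CM^{\bullet}}\colon i_*i^!\CM^{\bullet} \to \CM^{\bullet}$ is an isomorphism, so $\varepsilon$ is a natural isomorphism on that subcategory. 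Concretely this is the triangle of \autoref{Cartiertriangle} with the term $Rj_*j^*\CM^{\bullet}$ killed.

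For the unit I would invoke the triangle identity $\varepsilon_{i_*\CM^{\bullet}} \circ i_*(\eta_{\CM^{\bullet}}) = \id_{i_*\CM^{\bullet}}$. Since $i_*\CM^{\bullet}$ is supported on $Z$ by the first step, the counit $\varepsilon_{i_*\CM^{\bullet}}$ is an isomorphism by the second step, whence $i_*(\eta_{\CM^{\bullet}})$ is an isomorphism as well. It then remains to descend from $i_*(\eta_{\CM^{\bullet}})$ to $\eta_{\CM^{\bullet}}$ itself, and for this I would use that $i_*$ is conservative. The key input is that for a closed immersion $i_*$ is exact and faithful on the abelian category of Cartier crystals (faithfulness coming from $i^*i_* \cong \id$ on the underlying $\CO$-modules), and a faithful exact functor between abelian categories reflects isomorphisms. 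Applying this to cohomology crystals, exactness gives $h^n(i_*\eta_{\CM^{\bullet}}) = i_*h^n(\eta_{\CM^{\bullet}})$, so each $h^n(\eta_{\CM^{\bullet}})$ is an isomorphism, and therefore $\eta_{\CM^{\bullet}}$ is an isomorphism. I expect this conservativity — equivalently, the full faithfulness of $i_*$ already on the underlying abelian categories — to be the one input that is not pure diagram chasing and hence the main point to pin down carefully; everything else is the formal yoga of adjunctions.

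Assembling the three steps, $i_*$ lands in $D_{\crys}^b(\QCrysC(X))_Z$, the unit $\eta$ is a natural isomorphism on $D_{\crys}^b(\QCrysC(Z))$, and the counit $\varepsilon$ is a natural isomorphism on $D_{\crys}^b(\QCrysC(X))_Z$. This exhibits $i_*$ and $i^!$ as a pair of mutually inverse equivalences between the two categories, as claimed.
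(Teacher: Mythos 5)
Your argument is correct and is precisely the deduction the paper has in mind: the text gives no proof beyond remarking that the theorem ``is a formal consequence of \autoref{Cartiertriangle}'' (deferring otherwise to \cite[Theorem 4.1.2]{BliBoe.Cartier}), and your three steps --- $i_*$ lands in the supported subcategory since $j^*i_*=0$, the counit is invertible there by the very definition in \autoref{Cartiersupport}, and the unit is invertible via the triangle identity together with conservativity of $i_*$ --- spell out exactly that formal argument. The one input you rightly flag does need a touch more than ``$i^*i_*\cong\id$ on the underlying $\CO$-modules'': faithfulness at the module level does not automatically survive the localization defining crystals, and what one actually uses is that $i_*$ is exact and \emph{reflects local nilpotence} (subsheaves of $i_*M$ are all of the form $i_*N$, and nilpotence of the structural map is preserved and reflected), so that $i_*$ reflects zero objects in $\QCrysC(Z)$ and hence reflects isomorphisms on $D^b_{\crys}$.
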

We call this equivalence the \emph{Kashiwara equivalence}. If $Z$ is a singular scheme which is embeddable into a smooth scheme $X$, the Kashiwara equivalence enables us to work with objects in $D_{\crys}^b(\QCrys(X))$ instead of $D_{\crys}^b(\QCrys(Z))$.

\subsection{Review of locally finitely generated unit modules}

In \cite{EmKis.Fcrys}, Emerton and Kisin consider left $\CO_{F,X}$-modules, i.e.\ $\CO_X$-modules $\CM$ with a structural morphism $F^*\CM \to \CM$. Instead of localizing, they pass to a certain subcategory. If we speak of $\CO_{F,X}$-modules we mean \emph{left} $\CO_{F,X}$-modules. In this subsection, all schemes are separated and of finite type over a field $k$ containing $\FF_p$.  
\begin{definition}
Let $X$ be a variety over $k$. A quasi-coherent $\CO_{F,X}$-module is an $\CO_{F,X}$-module whose underlying $\CO_X$-module is quasi-coherent. If the structural morphism $F^*\CM \to \CM$ of a quasi-coherent $\CO_{F,X}$-module $\CM$ is an isomorphism, then $\CM$ is called \emph{unit}. We let $\mu(X)$ and $\mu_{\operatorname{u}}(X)$ denote the abelian categories of quasi-coherent and quasi-coherent unit $\CO_{F,X}$-modules.  
\end{definition}
The term ``locally finitely generated'' for an $\CO_{F,X}$-module $\CM$ means that $\CM$ is locally finitely generated as a left $\CO_{F,X}$-module. Emerton and Kisin's focus is on locally finitely generated unit modules, lfgu for short, on smooth schemes, where they form an abelian category.
\begin{definition}
We let $\mu_{\lfgu}(X)$ denote the abelian category of locally finitely generated unit $\CO_{F,X}$-modules. We let $D_{\lfgu}(\CO_{F,X})$ denote the derived category of complexes of $\CO_{F,X}$-modules whose cohomology sheaves are lfgu.
\end{definition}
\begin{proposition} \label{EmKisPullBack}
Let $f\colon X \to Y$ be a morphism of smooth $k$-schemes. The functor $f^!\colon D(\CO_{F,Y}) \to D(\CO_{F,X})$ defined by
\[
	f^!\CM^{\bullet} = \CO_{F,X \rightarrow Y} \derotimes_{f^{-1}\CO_{F,Y}} f^{-1}\CM^{\bullet}[d_{X/Y}]
\]
restricts to a functor 
\[
	f^!\colon D_{\lfgu}^b(\CO_{F,Y}) \to D_{\lfgu}^b(\CO_{F,X}).
\]
Here $\CO_{F,X \rightarrow Y}$ denotes $\CO_{F,X}$ with the natural $(\CO_{F,X},f^{-1}\CO_{F,Y})$-bimodule structure.
\end{proposition}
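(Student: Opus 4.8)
The plan is to factor $f$ into two geometrically simple pieces and to propagate the assertion along compositions. The first ingredient is the transitivity isomorphism $(g \circ f)^! \cong f^! \circ g^!$ for composable morphisms of smooth $k$-schemes; it follows formally from the associativity of $\derotimes$ together with the canonical isomorphism $\CO_{F,X \rightarrow Z} \cong \CO_{F,X \rightarrow Y} \derotimes_{f^{-1}\CO_{F,Y}} f^{-1}\CO_{F,Y \rightarrow Z}$ of transfer bimodules, the shifts being compatible because $d_{X/Z} = d_{X/Y} + d_{Y/Z} \circ f$. Since $Y$ is separated over $k$, I factor $f = \pr \circ \Gamma_f$, where $\Gamma_f \colon X \to X \times_k Y$ is a closed immersion into a smooth $k$-scheme and $\pr \colon X \times_k Y \to Y$ is the smooth second projection. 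By transitivity, and by d\'evissage along the truncation triangles---which is legitimate because $f^!$ is triangulated and the lfgu modules form a Serre subcategory---it suffices to show that $f^!\CM$ has lfgu cohomology sheaves for a single lfgu module $\CM$, in the two cases that $f$ is smooth and that $f$ is a closed immersion.

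In the smooth case the transfer module $\CO_{F,X \rightarrow Y}$ is flat as a right $f^{-1}\CO_{F,Y}$-module, so the derived tensor product is underived and $f^! = f^*(\usc)[d_{X/Y}]$ is exact up to a shift. One checks that $f^*$ carries unit modules to unit modules by means of the base-change isomorphism $\bc \colon F_X^* f^* \cong f^* F_Y^*$, and that it preserves local finite generation over $\CO_{F,\usc}$. Hence $f^!$ sends the lfgu module $\CM$ to a single lfgu module placed in degree $-d_{X/Y}$, as required.

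For a closed immersion $i \colon X \to Y$ the functor $i^!$ is no longer exact, and I must show that each cohomology sheaf $\CH^j(i^!\CM)$ is lfgu. As $i$ is a regular immersion, it factors locally on $Y$ into codimension-one regular immersions, so by transitivity I may assume that $X = V(t) \subseteq Y$ is a smooth divisor. In that situation $\CO_{F,X \rightarrow Y}$ carries a two-term free resolution as a right $f^{-1}\CO_{F,Y}$-module, $i^!\CM$ is computed by the associated two-term complex, and I would identify its cohomology with a local-cohomology-type construction attached to the divisor.

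I expect this last identification, together with the verification that $\CH^j(i^!\CM)$ is lfgu, to be the main obstacle. Controlling local finite generation as a left $\CO_{F,X}$-module requires the local structure theory of lfgu modules---their presentation as the unit module generated by a coherent, Frobenius-stable $\CO$-submodule---and one must moreover check that the Frobenius structure induced on each cohomology sheaf is again an isomorphism, so that these sheaves are genuinely unit and not merely finitely generated. This delicate analysis of $i^!$ for closed immersions is precisely the one carried out by Emerton and Kisin, and I would adapt their argument to conclude.
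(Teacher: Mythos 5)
Your proposal is correct and follows essentially the same route as the paper, which simply cites Lemma 2.3.2 and Propositions 6.7/6.8 of Emerton--Kisin for this statement: the graph factorization into a closed immersion followed by a smooth projection, transitivity of $f^!$, d\'evissage to a single lfgu module, and the Koszul-type analysis of $i^!$ for a divisor are exactly the ingredients of the cited proof. Since you explicitly defer the hard step (lfgu-ness of the cohomology sheaves of $i^!\CM$ for a closed immersion) back to Emerton--Kisin, your argument is no less complete than the paper's own one-line citation.
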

\begin{proof}
This is Lemma 2.3.2 and Proposition 6.7 of \cite{EmKis.Fcrys}.
\end{proof}
\begin{example}
Let $f\colon U \to X$ be an open immersion of smooth $k$-schemes. Then we have $d_{U/X} = 0$ and the inverse image of $\CO_{F,X}$ is the restriction to $U$: 
\[
	f^{-1}\CO_{F,X} = \CO_{F,X}|_U = \CO_{F,U}. 
\]
Hence we regard $\CO_{F,U \rightarrow X}$ as $\CO_{F,U}$ with the usual $(\CO_{F,U},\CO_{F,U})$-bimodule structure. It follows that $f^!\CM=f^*\CM$ with the natural structure as a left $\CO_{F,U}$-module for every left $\CO_{F,X}$-module $\CM$. 
\end{example}
The construction of the push-forward is more involved. Emerton and Kisin first show that $\CO_{F,Y \leftarrow X} = f^{-1}\CO_{F,Y} \otimes_{f^{-1}\CO_Y} \omega_{X/Y}$ is naturally an $(f^{-1}\CO_{F,Y},\CO_{F,X})$-bimodule. We summarize the construction of the right $\CO_{F,X}$-module structure from Proposition-Definition 1.10.1, Proposition-Definition 3.3.1 and Appendix A.2 of \cite{EmKis.Fcrys}: The relative Frobenius diagram is the diagram
\begin{align} \label{relativeFrobenius}
	\xymatrix{
		X \ar[r]^{F_{X/Y}} \ar[dr]_f & X' \ar[r]^{F_Y'} \ar[d]^-{f'} & X \ar[d]^-f \\
		& Y \ar[r]^{F_Y} & Y.
	}
\end{align}
Here $X'$ is the fiber product of $X$ and $Y$ considered as a $Y$-scheme via the Frobenius and $F_{X/Y}$ is the map obtained from the Frobenius $F_X\colon X \to X$ and the morphism $f$. We call $F_{X/Y}$ the relative Frobenius. Unlike $F_X$, it is a morphism of $Y$-schemes. Locally, for $X=\Spec S$ and $Y=\Spec R$, the structure sheaf of $X'$ is given by the tensor product $R \otimes_R S$, where $R$ is viewed as an $R$-module via the Frobenius $F_R$. Globally we have an isomorphism $\CO_{X'} \cong f^{-1}\CO_X F \otimes_{f^{-1}\CO_Y} \CO_Y$ where $\CO_X F$ denotes the submodule of $\CO_X[F]=\CO_{F,X}$ generated as a left $\CO_X$-module by $F$. Consequently, for any $\CO_X$-module $M$, $F_Y'^*M$ may be viewed as $f^{-1}\CO_X F \otimes_{f^{-1}\CO_Y} M$.  

Let $\gamma\colon \CO_Y \to F_Y^*\CO_Y$ be the canonical isomorphism. The adjoint of the composition 
\[
	f^!\CO_Y \overset{\sim}{\longrightarrow} F_{X/Y}^!f'^! \CO_Y \xrightarrow{F_{X/Y}^!f'^!\gamma} F_{X/Y}^!f'^!F_Y^*\CO_Y \overset{\sim}{\longrightarrow} F_{X/Y}^!F_Y'^*f^!\CO_Y 
\]
yields a morphism $C_{X/Y}\colon F_{X/Y*}\omega_{X/Y} \to F_Y'^*\omega_{X/Y}$ called the \emph{relative Cartier operator}. Note that $F_{X/Y}$ is the identity on the underlying topological spaces of $X$ and $X'$. Therefore $C_{X/Y}$ defines a map of abelian sheaves $\omega_{X/Y} \to F_Y'^*\omega_{X/Y}$. Together with the identification $F_Y'^*\omega_{X/Y} \cong f^{-1}\CO_X F \otimes_{f^{-1}\CO_Y} \omega_{X/Y}$ and the inclusion $\CO_X F \subset \CO_{F,X}$ the relative Cartier defines a map $\omega_{X/Y} \to f^{-1}\CO_{F,Y} \otimes_{f^{-1}\CO_Y} \omega_{X/Y}$. Now we can state the structure of $f^{-1}\CO_{F,Y} \otimes_{f^{-1}\CO_Y} \omega_{X/Y}$ as a right $\CO_{F,X}$-module. The endomorphism on $f^{-1}\CO_{F,Y} \otimes_{f^{-1}\CO_Y} \omega_{X/Y}$ induced by multiplication with $F \in \CO_{F,X}$ on the right is given by the composition
\begin{align*}
	f^{-1}\CO_{F,Y} \otimes_{f^{-1}\CO_Y} \omega_{X/Y} &\xrightarrow{C_{X/Y}} f^{-1}\CO_{F,Y} \otimes_{f^{-1}\CO_Y} f^{-1}\CO_{F,Y} \otimes_{f^{-1}\CO_Y} \omega_{X/Y} \\
	&\overset{m}{\longrightarrow} f^{-1}\CO_{F,Y} \otimes_{f^{-1}\CO_Y} \omega_{X/Y},
\end{align*}  
where $m$ is the the multiplication $a \otimes b \mapsto ab$ in the sheaf of rings $f^{-1}\CO_{F,Y}$.

The functor $f_+\colon D(\CO_{F,X}) \to D(\CO_{F,Y})$ is defined by
\[
	f_+\CM^{\bullet}=Rf_*(\CO_{F,Y \leftarrow X} \derotimes_{\CO_{F,X}} \CM^{\bullet}).
\]
\begin{proposition}
The functor $f_+\colon D(\CO_{F,X}) \to D(\CO_{F,Y})$ restricts to a functor 
\[
	f_+\colon D_{\lfgu}^b(\CO_{F,X}) \to D_{\lfgu}^b(\CO_{F,Y}).
\]
\end{proposition}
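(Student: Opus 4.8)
The plan is to combine a d\'evissage on the complex with a factorization of $f$ into morphisms for which the statement can be checked directly. Since $f_+$ is an exact functor of triangulated categories and every object of $D_{\lfgu}^b(\CO_{F,X})$ is built, via the truncation triangles, from its cohomology sheaves placed in single degrees, it suffices to prove that $f_+\CM$ lies in $D_{\lfgu}^b(\CO_{F,Y})$ for a single lfgu module $\CM$. Boundedness is the easy half: the relative tensor $\CO_{F,Y \leftarrow X}\derotimes_{\CO_{F,X}}\CM$ has amplitude bounded in terms of the relative dimension $d_{X/Y}$, because a unit module admits a finite flat resolution over $\CO_{F,X}$, and $Rf_*$ has cohomological dimension bounded by the fibre dimension since $f$ is separated and of finite type; hence $f_+\CM \in D^b(\CO_{F,Y})$.

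The substance is the lfgu property. Here I would use that $f_+$ is compatible with composition, $(p \circ i)_+ \cong p_+ \circ i_+$, to reduce to elementary cases: working locally on $X$, where local finite generation may be tested, one factors $f$ as a closed immersion $i\colon X \into \mathbb{A}^N_Y$ followed by the projection $p\colon \mathbb{A}^N_Y \to Y$, so that it is enough to treat closed immersions and projections separately.

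The closed immersion case is formal. For a closed immersion $i$ of smooth schemes $i_*$ is exact and $i_+\CM$ is concentrated in a single degree; invoking the Kashiwara-type equivalence for lfgu modules of Emerton--Kisin identifies $i_+\CM$ with a locally finitely generated unit module on $\mathbb{A}^N_Y$ supported on the image of $i$, so the claim holds in this case.

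The projection case is where the real difficulty lies, and I expect it to be the main obstacle. Factoring $p$ into relative-dimension-one projections reduces to $p\colon \mathbb{A}^1_Y \to Y$. There $p_+\CM$ is computed by a two-term relative de Rham complex whose differential is assembled from the relative Cartier operator and the unit structure of $\CM$ --- the characteristic-$p$ counterpart of the $1-F$ map --- and one must show that its cohomology sheaves are again locally finitely generated unit $\CO_{F,Y}$-modules. This is precisely the point at which the finiteness theory of unit modules becomes unavoidable: controlling the generators of the pushforward comes down to a coherence statement for the relative Frobenius pushforward of $\CM$ together with the propagation of the unit structure to cohomology. Establishing this local finite generation, rather than boundedness or the closed immersion step, is the crux of the proof.
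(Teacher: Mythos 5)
The paper disposes of this proposition by citing Emerton--Kisin (Theorem 3.5.3 for boundedness and quasi-coherence, Proposition 6.8.2 for preservation of the lfgu property), so the question is whether your sketch is a viable reconstruction of that argument. Your d\'evissage to a single lfgu module, the boundedness discussion, and the closed-immersion case via the Kashiwara equivalence are fine. The first gap is in the reduction: local finite generation of $f_+\CM$ is a condition local on $Y$, not on $X$, and $f_+$ does not commute with restriction to open subsets of the source, so ``working locally on $X$'' to obtain a factorization $X \into \mathbb{A}^N_Y \to Y$ is not available as stated. One can localize on $Y$ and then run a \v{C}ech argument over an affine cover $\{U_i\}$ of $X$, whose terms are $(f\circ j_{i_0\cdots i_p})_+$ applied to restrictions of $\CM$; but this silently requires knowing that push-forward along the open immersions $U_{i_0\cdots i_p}\into X$ interacts well with $f_+$ and preserves lfgu-ness, which is itself a nontrivial theorem you never invoke.

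Second, and more seriously, the case you correctly identify as the crux --- the projection $\mathbb{A}^1_Y\to Y$ --- is exactly where your sketch stops containing an idea. You propose to deduce local finite generation of the cohomology of the two-term complex from ``a coherence statement for the relative Frobenius pushforward,'' but the relative Frobenius is finite and preserves coherence trivially; the obstruction is that $p_*$ for $p\colon\mathbb{A}^1_Y\to Y$ destroys coherence, so no coherence statement is available and the finiteness must come entirely from the unit structure. The actual proof supplies this in two pieces that your sketch omits: (i) the proper case, where $Rf_*$ of coherent sheaves is coherent and one checks that a coherent generator of a finitely generated unit module pushes forward to a coherent generator of $f_+\CM$; and (ii) Emerton--Kisin's localization theorem (the unit-module analogue of Lyubeznik's finiteness of local cohomology), which says that push-forward along the open complement of a divisor preserves lfgu modules. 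The non-proper projection is then handled as $\mathbb{A}^N_Y\into\mathbb{P}^N_Y\to Y$, an open immersion followed by a proper map. Without (ii), neither your patching over an affine cover nor your projection step can be completed; this is the missing ingredient you would need to name and prove or cite.
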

\begin{proof}
This is Theorem 3.5.3 and Proposition 6.8.2 of \cite{EmKis.Fcrys}.
\end{proof}
\begin{example} \label{openforward}
Once again, let $f\colon U \to X$ be an open immersion. Then $F_{U/X}$ is an isomorphism, identifying $X'$ with the open subset $U$ of $X$, and $\omega_{U/X} = f^!\CO_X = f^*\CO_X = \CO_U$. Therefore we have 
\[
	\CO_{F,X \leftarrow U} = f^{-1}\CO_{F,X} \otimes_{f^{-1}\CO_X} \CO_U = \CO_{F,U} \otimes_{\CO_U} \CO_U.
\]
The left $\CO_{F,U}$-module structures of $\CO_{F,U} \otimes_{\CO_U} \CO_U$ and $\CO_{F,U}$ are obviously compatible with the natural isomorphism $\CO_{F,U} \otimes_{\CO_U} \CO_U \cong \CO_{F,U}$. We verify that this isomorphism identifies the right $\CO_{F,U}$-module structure on $\CO_{F,U} \otimes_{\CO_U} \CO_U$ with the natural one on $\CO_{F,U}$. Identifying $X'$ with $U$ via $F_{U/X}$, the relative Frobenius diagram reads as follows:
\[
	\xymatrix{ 
		U \ar[r]^{\id} \ar[dr]_j & U \ar[r]^{F_U} \ar[d]^-j & U \ar[d]^-j \\
		& X \ar[r]^{F_X} & X.
	}
\]
The relative Cartier operator $\id_*\CO_U = \CO_U \to F_X^*\CO_U$ is just the usual isomorphism. On global sections $R = \CO_V(V)$ of an affine open $V = \Spec R$ of $U$, its inverse is the map $R' \otimes_R R \to R$ given by $r \otimes s \mapsto rs^p$. Here $R'$ denotes $R$ viewed as an $R$-module via the Frobenius. Locally on $V$, the right action of $F$ on $\CO_{F,U} \otimes_{\CO_U} \CO_U$ is given by the composition 
\[
	\sigma \otimes r \mapsto \sigma \otimes rF \mapsto \sigma r F \otimes 1
\]
since, under the isomorphism $F_U^*\CO_U \overset{\sim}{\longrightarrow} \CO_U F \otimes_{\CO_U} \CO_U$, an element $r \otimes s \in R' \otimes_R R$ corresponds to $rF \otimes s \in \CO_U F \otimes_{\CO_U} \CO_U$. On the other hand, the natural isomorphism $\CO_{F,U} \otimes_{\CO_U} \CO_U \cong \CO_{F,U}$ maps $\sigma \otimes r$ to $\sigma r$. Multiplication by $F$ on the right yields $\sigma rF$, which corresponds to $\sigma rF \otimes 1$ in $\CO_{F,U} \otimes_{\CO_U} \CO_U \cong \CO_{F,U}$. This shows that the diagram
\[
	\xymatrix{
		\CO_{F,U} \otimes_{\CO_U} \CO_U \ar[r]^-{\sim} \ar[d]^F & \CO_{F,U} \ar[d]^F \\
		\CO_{F,U} \otimes_{\CO_U} \CO_U \ar[r]^-{\sim} & \CO_{F,U} 
	}
\]  
commutes, where $F$ denotes multiplication by $F$ on the right.
\end{example}
Depending on $f$, there are adjunction relations between $f^!$ and $f_+$. If $f$ is a closed immersion, a Kashiwara-type equivalence for unit modules holds.
\begin{lemma}[\protect{\cite[Lemma 4.3.1.]{EmKis.Fcrys}}]
If $f\colon X \to Y$ is an open immersion of smooth $k$-schemes, then, for any $\CM^{\bullet} \in D^-(\CO_{F,Y})$ and any $\CN^{\bullet} \in D^+(\CO_{F,X})$, there is a natural isomorphism
\[
	\RSHom_{\CO_{F,Y}}^{\bullet}(\CM^{\bullet},f_+\CN^{\bullet}) \overset{\sim}{\longrightarrow} Rf_*\RSHom_{\CO_{F,X}}^{\bullet}(f^!\CM^{\bullet},\CN^{\bullet})
\]
in $D^+(X,\ZZ/p\ZZ)$.
\end{lemma}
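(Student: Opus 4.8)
The plan is to exploit the drastic simplifications an open immersion forces on both functors, reducing the assertion to the standard compatibility of internal $\SHom$ with direct image. Since $f$ is an open immersion of smooth schemes, the example following \autoref{EmKisPullBack} gives $f^!\CM^{\bullet} = f^*\CM^{\bullet}$, where $f^* = f^{-1}$ is exact restriction, and \autoref{openforward} gives $\CO_{F,Y \leftarrow X} \cong \CO_{F,X}$ as a bimodule, so that $f_+\CN^{\bullet} = Rf_*(\CO_{F,X} \derotimes_{\CO_{F,X}} \CN^{\bullet}) = Rf_*\CN^{\bullet}$. Hence the claimed isomorphism is precisely the derived sheaf-Hom/direct-image adjunction
\[
	\RSHom_{\CO_{F,Y}}^{\bullet}(\CM^{\bullet}, Rf_*\CN^{\bullet}) \overset{\sim}{\longrightarrow} Rf_*\RSHom_{\CO_{F,X}}^{\bullet}(f^*\CM^{\bullet}, \CN^{\bullet}),
\]
and the natural map of the statement is identified with the one adjoint to the composition $f^*\RSHom_{\CO_{F,Y}}^{\bullet}(\CM^{\bullet}, Rf_*\CN^{\bullet}) \to \RSHom_{\CO_{F,X}}^{\bullet}(f^*\CM^{\bullet}, f^*Rf_*\CN^{\bullet}) \to \RSHom_{\CO_{F,X}}^{\bullet}(f^*\CM^{\bullet}, \CN^{\bullet})$, the last arrow using the counit $f^*Rf_* \to \id$, which is an isomorphism for an open immersion. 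In this way the $F$-linear structure enters only through the two examples; the remaining content is topos-theoretic.

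At the underived level this is the familiar natural isomorphism $f_*\SHom_{\CO_{F,X}}(f^*\CM, \CN) \cong \SHom_{\CO_{F,Y}}(\CM, f_*\CN)$, valid for modules over any sheaf of (not necessarily commutative) rings by the $(f^*, f_*)$-adjunction together with the tensor--hom formula; over $\CO_{F,X}$ these sheaf-Homs are sheaves of $\ZZ/p\ZZ$-modules, which is what the target category records. To derive it I would fix an injective resolution $\CN^{\bullet} \to \CI^{\bullet}$ in the Grothendieck category of left $\CO_{F,X}$-modules. Two acyclicity facts then make the termwise underived isomorphism compute both derived functors. First, $f_*$ sends injective $\CO_{F,X}$-modules to injective $\CO_{F,Y}$-modules, since its left adjoint $f^* = f^{-1}$ is exact; hence $Rf_*\CN^{\bullet} = f_*\CI^{\bullet}$ is a bounded-below complex of injectives and the left-hand side is computed by the total complex $\SHom_{\CO_{F,Y}}^{\bullet}(\CM^{\bullet}, f_*\CI^{\bullet})$. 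Second, $\SHom_{\CO_{F,X}}(f^*\CM^{\bullet}, \CI^{\bullet})$ is a complex of flasque sheaves, hence $f_*$-acyclic, so the right-hand side is computed by $f_*\SHom_{\CO_{F,X}}^{\bullet}(f^*\CM^{\bullet}, \CI^{\bullet})$. Because $\CM^{\bullet}$ is bounded above and $\CI^{\bullet}$ bounded below, each total degree involves only finitely many summands, so all complexes are bounded below and the constructions converge. Applying the underived isomorphism in each bidegree, naturally in both arguments, then yields the desired isomorphism of complexes, and one checks that it represents the natural map above.

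The main obstacle is the flasqueness lemma, the one place where the non-commutativity of $\CO_{F,X}$ must be taken seriously. For an open $W \subseteq X$ with inclusion $j_W$, extension by zero $j_{W!}$ on $\CO_{F,X}$-modules is left adjoint to restriction $j_W^*$, and $j_{W!}(\usc|_W) \hookrightarrow \usc$ is a subobject; a section of $\SHom_{\CO_{F,X}}(f^*\CM, \CI)$ over $W$ is a map $j_{W!}((f^*\CM)|_W) \to \CI$, which injectivity of $\CI$ in $\CO_{F,X}$-modules extends over $X$, proving surjectivity of the restriction and hence flasqueness. I would verify that the adjunction $j_{W!} \dashv j_W^*$ and the inclusion $j_{W!}(\usc|_W) \hookrightarrow \usc$ persist verbatim for modules over the sheaf of rings $\CO_{F,X}$, so that this classical argument applies unchanged; this is the crux, since it is exactly what guarantees that the internal $\SHom$ into an injective is $Rf_*$-acyclic. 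The remaining points --- that $f_*$ preserves injectives and that the termwise isomorphism is compatible with the differentials and with the natural transformation of the statement --- are then formal.
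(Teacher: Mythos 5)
Your argument is correct. Note that the paper itself gives no proof of this statement --- it is quoted verbatim as Lemma 4.3.1 of \cite{EmKis.Fcrys} --- so there is no internal proof to compare against; but your reduction is exactly the expected one: for an open immersion the paper's two examples give $f^!=f^*=f^{-1}$ and $f_+=Rf_*$ (the bimodule $\CO_{F,Y\leftarrow X}$ collapsing to $\CO_{F,X}$ and $d_{X/Y}=0$), after which the claim is the sheafified $(f^{-1},f_*)$-adjunction, derived via an injective resolution of $\CN^{\bullet}$ using that $f_*$ preserves injectives and that $\SHom_{\CO_{F,X}}(\usc,\CI)$ into an injective is flasque. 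Your identification of the flasqueness lemma as the one point where the non-commutativity of $\CO_{F,X}$ must be checked is apt, and the extension-by-zero argument you sketch does go through verbatim for modules over an arbitrary sheaf of rings, so the proof is complete.
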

\begin{theorem}[\protect{\cite[Theorem 4.4.1]{EmKis.Fcrys}}] \label{EmKisadj}
Let $f\colon X \to Y$ be a proper morphism of smooth $k$-schemes. For every $\CM^{\bullet}$ in $D_{\qc}^b(\CO_{F,X})$ and every $\CN^{\bullet}$ in $D_{\qc}^b(\CO_{F,Y})$, there is a natural isomorphism in $D^+(X,\ZZ/p\ZZ)$:
\[
	\RSHom_{\CO_{F,Y}}^{\bullet}(f_+\CM^{\bullet},\CN^{\bullet}) \overset{\sim}{\longrightarrow} Rf_*\RSHom_{\CO_{F,X}}^{\bullet}(\CM^{\bullet},f^!\CN^{\bullet}).
\]
Here $D_{\qc}^b(\CO_{F,X})$ denotes the subcategory of $D^b(\CO_{F,X})$ of complexes whose cohomology sheaves are quasi-coherent and analogously for $D_{\qc}^b(\CO_{F,Y})$.  
\end{theorem}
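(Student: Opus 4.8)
The plan is to deduce this Frobenius-equivariant duality from ordinary Grothendieck--Serre duality for the proper morphism $f$, keeping track of the additional $F$-structure by means of a two-term ``$1-F$'' presentation of the $\CO_F$-linear $\RSHom$. Since $\CO_{F,X}$ is free as a left $\CO_X$-module on the powers of $F$, resolving the trivial unit module by $\CO_{F,X}\xrightarrow{1-F}\CO_{F,X}$ (the algebraic incarnation of the Artin--Schreier sequence used already in the introduction) produces, for $\CO_F$-modules $\CM^{\bullet}$ and $\CN^{\bullet}$, a functorial distinguished triangle
\[
\RSHom_{\CO_{F,X}}^{\bullet}(\CM^{\bullet},\CN^{\bullet}) \longrightarrow \RSHom_{\CO_X}^{\bullet}(\CM^{\bullet},\CN^{\bullet}) \xrightarrow{1-F} \RSHom_{\CO_X}^{\bullet}(\CM^{\bullet},\CN^{\bullet}) \longrightarrow \RSHom_{\CO_{F,X}}^{\bullet}(\CM^{\bullet},\CN^{\bullet})[1],
\]
in which the middle map combines the unit structure $F^*\CM^{\bullet}\to\CM^{\bullet}$ with the $F$-action on $\CN^{\bullet}$. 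I would apply this triangle on both sides of the asserted isomorphism, so that the statement splits into: (i) an isomorphism of the underlying $\CO$-linear complexes, and (ii) the compatibility of the two resulting $1-F$ endomorphisms under that isomorphism. By functoriality of the triangle and the five-lemma in a triangulated category, (i) and (ii) together yield the $\CO_F$-linear claim.

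Part (i) is classical. On underlying $\CO$-modules, $f_+\CM^{\bullet}$ is $Rf_*$ of $\CM^{\bullet}$ twisted by the relative dualizing sheaf $\omega_{X/Y}[d_{X/Y}]$, while $f^!$ for $\CO_F$-modules agrees, after forgetting the Frobenius, with the ordinary twisted inverse image; the twists match up so that the desired isomorphism of $\CO$-complexes is exactly Grothendieck--Serre duality $Rf_*\RSHom_{\CO_X}^{\bullet}(\CM^{\bullet},f^!\CN^{\bullet})\cong\RSHom_{\CO_Y}^{\bullet}(Rf_*\CM^{\bullet},\CN^{\bullet})$ for proper $f$, mediated by the Grothendieck trace $\tr_f$. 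To reduce $D_{\qc}^b$ to manageable input I would use the way-out (boundedness) lemmas to replace $\CM^{\bullet}$, $\CN^{\bullet}$ by single quasi-coherent sheaves, and then the pseudofunctoriality of $f^!$, $f_+$ and of the trace to build up a general proper $f$ from its local structure as a closed immersion of smooth schemes followed by a smooth morphism.

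The real content is part (ii): that the Grothendieck duality isomorphism of (i) is in fact $\CO_F$-linear, i.e. intertwines the two $1-F$ operators. This reduces to showing that the trace $\tr_f$ is compatible with the Frobenius actions, and by transitivity of the trace it suffices to check this in the two model cases. For a closed immersion $i$ it amounts to compatibility of the adjoint $\tilde{\kappa}$ of the Cartier structure with the canonical isomorphism $i^{\flat}F_Y^{\flat}\cong F_X^{\flat}i^{\flat}$ used to define $i^!$ on Cartier data. For a smooth morphism it is governed by the relative Cartier operator $C_{X/Y}\colon F_{X/Y*}\omega_{X/Y}\to F_Y'^*\omega_{X/Y}$ that enters the right $\CO_{F,X}$-module structure on $\CO_{F,Y\leftarrow X}$ (see \autoref{relativeFrobenius}): concretely I would verify that the square relating the trace $Rf_*\omega_{X/Y}[d_{X/Y}]\to\CO_Y$ to the canonical isomorphism $\gamma\colon\CO_Y\to F_Y^*\CO_Y$ through $C_{X/Y}$ commutes, i.e. that the classical trace/residue is Cartier-equivariant. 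I expect this cochain-level diagram chase to be the main obstacle, since it requires reconciling an explicit model of the Grothendieck trace (a Koszul residue for $i$, the projective-space residue for the smooth factor) with the explicit construction of $C_{X/Y}$. Granting this equivariance, the triangle argument of the first paragraph closes the proof.
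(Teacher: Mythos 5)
You have correctly located the two pillars of the argument: the isomorphism ultimately comes from classical Grothendieck--Serre duality for the proper map $f$, and the genuinely new content is that the residual-complex trace $f_*f^{\Delta}E^{\bullet}\to E^{\bullet}$ is compatible with the Frobenius structure $E^{\bullet}\to F_X^*E^{\bullet}$, checked via the relative Cartier operator after factoring $f$ into a closed immersion and a smooth morphism. This is exactly how Emerton--Kisin proceed, and how the sketch following the theorem (and the proof of the generalization in \autoref{Theorem}) is organized.

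However, the mechanism you propose for splicing these together has a gap. Applying your $1-F$ triangle in the first variable of $\RSHom_{\CO_{F,Y}}^{\bullet}(f_+\CM^{\bullet},\CN^{\bullet})$ produces as middle term $\RSHom_{\CO_Y}^{\bullet}(f_+\CM^{\bullet},\CN^{\bullet})$ with $f_+\CM^{\bullet}$ regarded as an $\CO_Y$-complex; but the underlying $\CO_Y$-complex of $f_+\CM^{\bullet}=Rf_*(\CO_{F,Y\leftarrow X}\derotimes_{\CO_{F,X}}\CM^{\bullet})$ is \emph{not} $Rf_*(\omega_{X/Y}\otimes\CM^{\bullet}[d_{X/Y}])$ --- already for an induced module $\CM=\CO_{F,X}\otimes_{\CO_X}M$ it is $\CO_{F,Y}\otimes_{\CO_Y}Rf_*(\omega_{X/Y}\otimes_{\CO_X}M[d_{X/Y}])$, an infinite direct sum of Frobenius twists. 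Hence your step (i) is not ``exactly Grothendieck--Serre duality'': the two middle terms of your triangles are not matched by the classical theorem, and the five-lemma step has nothing to run on. (A smaller issue: a triangle with identical second and third terms and a $1-F$ differential presupposes that $\CM^{\bullet}$ is unit; for general objects of $D_{\qc}^b(\CO_{F,X})$, as in the statement, the third term must be $\RSHom_{\CO_X}^{\bullet}(F^*\CM^{\bullet},\CN^{\bullet})$.) The correct reduction --- the one carried out in the cited proof and reproduced in the proof of \autoref{Theorem} --- is to resolve $\CM^{\bullet}$ \emph{upstairs} by induced modules and push the resolution through $f_+$: since $f_+(\CO_{F,X}\otimes_{\CO_X}M)\cong\CO_{F,Y}\otimes_{\CO_Y}Rf_*(\omega_{X/Y}\otimes_{\CO_X}M)$ and $\RSHom_{\CO_{F,\ast}}^{\bullet}(\CO_{F,\ast}\otimes A,\usc)\cong\RSHom_{\CO_{\ast}}^{\bullet}(A,\usc)$, both sides collapse directly to the classical duality isomorphism for the pair $(\omega_{X/Y}\otimes M[d_{X/Y}],\CN^{\bullet})$, with no cone bookkeeping; the Frobenius-equivariance of the trace then upgrades this to the asserted $\CO_{F}$-linear isomorphism. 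With that substitution for your step (i), your outline matches the actual proof.
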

For the proof, Emerton and Kisin show that the trace map $f_*f^{\Delta}E^{\bullet} \to E^{\bullet}$ for the residual complex $E^{\bullet}$ of $\CO_X$ is compatible with the natural map $E^{\bullet} \to F_X^*E^{\bullet}$. Here $f^{\Delta}$ denotes the functor $f^!$ for residual complexes, see \cite[VI.3]{HartshorneRD}. Thus it induces a morphism $f_+\CO_{F,X}[d_{X/Y}] \to \CO_{F,Y}$, and with the isomorphisms
\[
	f_+f^!\CF^{\bullet} \to f_+(\CO_{F,X} \otimes_{\CO_{F,X}} f^!\CF^{\bullet}) \to f_+\CO_{F,X}[d_{X/Y}] \derotimes_{\CO_{F,Y}} \CF^{\bullet},
\]
the second one being a projection formula (\cite[Lemma 4.4.7]{EmKis.Fcrys}), we obtain a trace map $\tr\colon f_+f^!\CF^{\bullet} \to \CF^{\bullet}$ for every $\CF^{\bullet} \in D_{\lfgu}^b(\CO_{F,Y})$. Similarly, as in the case of the adjunction between $Rf_*$ and $f^!$ in Grothendieck-Serre duality, the natural transformation of the theorem is obtained by the composition
\[
	\xymatrix{
		Rf_*\RSHom_{\CO_{F,X}}^{\bullet}(\CM^{\bullet},f^!\CN^{\bullet}) \ar[r] & \RSHom_{\CO_{F,Y}}^{\bullet}(f_+\CM^{\bullet},f_+f^!\CN^{\bullet}) \ar[d]^{\tr} \\
		& \RSHom_{\CO_{F,Y}}^{\bullet}(f_+\CM^{\bullet},\CN^{\bullet}),
	}
\]
where the horizontal arrow is a natural transformation constructed in \cite[Proposition 4.4.2]{EmKis.Fcrys}. It is this adjunction between $f_+$ and $f^!$ that we want to extend to morphisms which are only proper over the support of the considered complexes. This will be done in section 4.

Finally, for a closed immersion of smooth varieties, we have a Kashiwara type equivalence.
\begin{theorem}[\protect{\cite[Theorem 5.10.1]{EmKis.Fcrys}}]
If $f\colon X \to Y$ is a closed immersion of smooth $k$-schemes, then the adjunction of \autoref{EmKisadj} provides an equivalence between the category of unit $\CO_{F,X}$-modules and the category of unit $\CO_{F,Y}$-modules supported on $X$. The fact that the natural map $f_+f^!\CM \to \CM$ is an isomorphism implies that $H^0(f^!)\CM \cong f^!\CM$.  
\end{theorem}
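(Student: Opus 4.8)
The plan is to prove this Kashiwara-type equivalence for locally finitely generated unit modules by exhibiting $f_+$ and $H^0(f^!)$ as mutually inverse functors between $\mu_{\lfgu}(X)$ and the full subcategory of $\mu_{\lfgu}(Y)$ of objects supported on the closed subscheme $X$. The starting point is the adjunction isomorphism from \autoref{EmKisadj}, specialized to the proper (indeed closed) immersion $f$. First I would check that for a closed immersion the functor $f_+$ is exact on unit modules and lands in objects supported on $X$: since $F_{X/Y}$ is an isomorphism for a closed immersion and $f$ is affine, $Rf_* = f_*$, so $f_+\CM = f_*(\CO_{F,Y \leftarrow X} \otimes_{\CO_{F,X}} \CM)$ is concentrated in a single degree, and the underlying sheaf is supported on $X$ by construction. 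Combined with \autoref{EmKisPullBack}, both functors preserve the lfgu condition.

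Next I would establish the two natural transformations. The counit $f_+f^!\CN^{\bullet} \to \CN^{\bullet}$ is exactly the trace map $\tr$ recalled in the text, built from the residual-complex trace and the projection formula. The unit $\CM^{\bullet} \to f^!f_+\CM^{\bullet}$ comes from the adjunction of \autoref{EmKisadj} itself (apply the isomorphism with $\CM^{\bullet}$ in the source and $\CN^{\bullet} = f_+\CM^{\bullet}$, then chase the identity). The heart of the argument is then the claim that on a unit module $\CN$ supported on $X$, the counit $f_+f^!\CN \to \CN$ is an isomorphism, and dually that on any unit module $\CM$ on $X$ the unit $\CM \to H^0(f^!)f_+\CM$ is an isomorphism. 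For the last sentence of the statement, once we know $f_+f^!\CN \to \CN$ is an isomorphism for $\CN$ supported on $X$ — applying this to $\CN = f_+\CM$ — one gets that $f^!f_+\CM$ must be concentrated in degree $0$ (since $f_+$ is exact and $f_+$ of anything in positive or negative degrees is again in those degrees, but the composite is the identity on $\CM$ up to iso), whence $H^0(f^!)\CM \cong f^!\CM$ and indeed $H^0(f^!)f_+\CM \cong \CM$.

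I would prove the isomorphism of the counit by reduction to the local / affine-space case: the question is local on $Y$, and closed immersions of smooth $k$-schemes locally factor through coordinate hyperplane inclusions $\mathbb A^{n-1} \hookrightarrow \mathbb A^n$, so by induction it suffices to treat a single hypersurface section, where one can compute $\CO_{F,Y \leftarrow X}$ and $f^!$ explicitly (as in \autoref{openforward} for the open case, but now the closed case — here $\omega_{X/Y}$ is a shift of the conormal line and $d_{X/Y} = -1$). Alternatively, and probably more cleanly, one invokes Emerton--Kisin's own Kashiwara equivalence for unit $\CO_{F,Y}$-modules at the level of $D^b$ — but since the theorem as stated is about the abelian categories, I would instead argue that $f_+$ and $H^0 f^!$ are exact, that $f_+$ is faithful (it is $f_*$ of a faithfully flat base change followed by an equivalence of module categories, using that $\CO_{F,Y \leftarrow X}$ is locally free of the expected rank over $\CO_{F,X}$), and that the essential image is precisely the supported objects, using the distinguished triangle / recollement $i_*i^! \to \id \to Rj_*j^*$ restricted to unit modules to see that a unit module killed by $j^*$ is in the image of $f_+$.

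The main obstacle I anticipate is verifying that the counit $\tr\colon f_+f^!\CN \to \CN$ is an isomorphism on unit modules supported on $X$ rather than merely on $\CO_X$ itself — i.e.\ upgrading the trace-map computation from the structure sheaf to a general supported unit module. This requires knowing that $f^!$ of a supported unit module is again a unit module in a single degree (no higher $\mathrm{Tor}$), which in turn rests on a local freeness / finite-Tor-dimension statement for $\CO_{F,Y \leftarrow X}$ over $\CO_{F,X}$ that I would extract from Emerton--Kisin's Appendix A and §3.3. Once that input is in hand, the rest is a formal adjunction argument: a fully faithful exact functor with a right adjoint whose counit is invertible and whose image is the supported objects gives the asserted equivalence.
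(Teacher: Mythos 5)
The paper does not prove this statement: it is quoted directly from Emerton--Kisin (\cite[Theorem 5.10.1]{EmKis.Fcrys}) in the review section, so there is no in-paper argument to compare against. Your outline --- exhibit $f_+$ and $H^0(f^!)$ as inverse functors via the adjunction of \autoref{EmKisadj}, prove the counit is invertible on supported unit modules by reduction to the affine hypersurface case, and deduce the concentration of $f^!$ in degree $0$ from exactness and faithfulness of $f_+$ --- does match the strategy of the original proof. But several concrete points are wrong or missing.

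First, $F_{X/Y}$ is an isomorphism for \emph{open} immersions (as in \autoref{openforward}), not for closed ones: for $X=\Spec R/I \hookrightarrow Y = \Spec R$ the relative Frobenius is the surjection $R/I^{[p]} \to R/I$, which is injective only in trivial cases. The single-degree concentration of $f_+$ must instead come from the local freeness of $\CO_{F,Y \leftarrow X}$ as a right $\CO_X$-module together with the two-term resolution of a unit module by induced modules; relatedly, your faithfulness argument leans on $\CO_{F,Y \leftarrow X}$ being ``locally free of the expected rank over $\CO_{F,X}$'', which is not what is proved in \cite{EmKis.Fcrys} (only local freeness over $\CO_X$) and is not obviously true. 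Second, the alternative route via the triangle $i_*i^! \to \id \to Rj_*j^*$ is circular: identifying $R\Gamma_X$ with $i_*i^!$ on unit modules is itself a consequence of the Kashiwara equivalence in \cite{EmKis.Fcrys}, so it cannot be used to establish essential surjectivity. Third, and most importantly, the crux of the hypersurface case is never named. For $X = V(t) \subset Y$ one has $d_{X/Y}=-1$ and $H^1(f^!\CN) \cong \CN/t\CN$, so invertibility of the counit on a supported unit module $\CN$ requires the lemma that $t$ acts \emph{surjectively} on $\CN$. This is precisely where the unit hypothesis enters (the structural isomorphism $F_Y^*\CN \cong \CN$ relates $\CN/t\CN$ to $\CN/t^p\CN$ and forces $t\CN = \CN$), and it fails for non-unit $\CO_{F,Y}$-modules supported on $X$, e.g.\ for $f_*\CO_X$ itself. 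Writing ``one can compute $f^!$ explicitly'' skips the one step that carries the mathematical content of the theorem.
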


\section{From Cartier crystals to locally finitely generated unit modules}

In order to construct an equivalence between Cartier crystals and locally finitely generated unit modules, one uses an equivalence between Cartier modules and so-called $\gamma$-sheaves. It will induce an equivalence between Cartier crystals and $\gamma$-crystals. The latter in turn are known to be equivalent to lfgu modules. 

\subsection{Cartier modules and $\gamma$-sheaves}

We note that for a regular scheme $X$, the Frobenius $F_X\colon X \to X$ is a flat morphism and hence $F_X^*$ is exact (\cite[Theorem 2.1]{Kunz}). 
\begin{definition}
A \emph{$\gamma$-sheaf} on a regular, $F$-finite scheme $X$ is a quasi-coherent $\CO_X$-module $N$ together with a morphism $\gamma_N\colon N \to F^*N$.
\end{definition} 
The theory of $\gamma$-sheaves is very similar to that of Cartier modules. With the obvious morphisms, $\gamma$-sheaves form an abelian category with the nilpotent $\gamma$-sheaves being a Serre subcategory. We obtain $\gamma$-crystals in the same way as we obtained Cartier crystals and so forth. In this section we revisit the connection between Cartier modules and $\gamma$-sheaves as explained in section 5.2.1 of \cite{BliBoe.CartierFiniteness} and give some details of the proof.

\begin{definition}
For any isomorphism $\phi\colon \CE_1 \to \CE_2$ of invertible $\CO_X$-modules, while $\phi^{-1}\colon \CE_2 \to \CE_1$ denotes the inverse, let $\phi^{\vee}$ denote the induced isomorphism $\CE_2^{-1} \to \CE_1^{-1}$ between the duals.   

If we speak of the $\gamma$-sheaf $\CO_X$ we mean the structure sheaf of $X$ together with the natural isomorphism $\gamma_X\colon \CO_X \to F^*\CO_X$. By abuse of notation we call this isomorphism the \emph{Frobenius}.

For a regular, $F$-finite scheme $X$, let $\kappa_X$, or $\kappa_R$ if $X = \Spec R$ is affine, denote the natural isomorphism $\omega_X \overset{\sim}{\longrightarrow} F^{\flat}\omega_X$, which is the adjoint of the Cartier operator if $X$ is a smooth variety. 
\end{definition}  
The next lemma makes explicit a fundamental isomorphism, which will be used repeatedly. 
\begin{lemma}[\protect{\cite[Lemma 5.7]{BliBoe.CartierFiniteness}}] \label{flatdecompose}
Let $f\colon X \to Y$ be a finite and flat morphism of schemes. For every quasi-coherent $\CO_Y$-module	$\CF$, there is a natural isomorphism
\[
	\can\colon f^{\flat}\CO_X \otimes_{\CO_X} f^*\CG \overset{\sim}{\longrightarrow} f^{\flat} \CG.
\]
\end{lemma}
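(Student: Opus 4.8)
The plan is to reduce the lemma to a standard fact from commutative algebra: for a finitely generated projective module $S$ over a ring $R$, the natural map $\Hom_R(S,R)\otimes_R N\to\Hom_R(S,N)$ sending $\phi\otimes n$ to $(s\mapsto\phi(s)\,n)$ is an isomorphism for every $R$-module $N$. Since $f$ is finite it is affine, so both $\CO_X$-modules appearing in the statement are quasi-coherent and their formation is local on $Y$: for an affine open $V\subseteq Y$ with $U=f^{-1}(V)$ one has $(f^{\flat}\CG)|_U\cong(f|_U)^{\flat}(\CG|_V)$, and $f^{*}$ and $\otimes_{\CO_X}$ commute with restriction as well. (Recall that $f^{\flat}$ is a functor from $\CO_Y$-modules to $\CO_X$-modules, so the source of $\can$ is $f^{\flat}\CO_Y\otimes_{\CO_X}f^{*}\CG$, the relative dualizing module twisted by $f^{*}\CG$.) First I would therefore pass to the affine situation $Y=\Spec R$, $X=\Spec S$ with $S$ finite and flat over $R$, and $\CG=\widetilde{N}$ for an $R$-module $N$.

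Next I would translate the two sheaves into modules. Under the equivalence $\QCoh(\Spec R)\simeq R\text{-Mod}$ (and its analogue over $S$), the sheaf $f^{\flat}\CG$ corresponds to the $S$-module $\Hom_R(S,N)$, the sheaf $f^{*}\CG$ to $S\otimes_R N$, and $f^{\flat}\CO_Y$ to $\Hom_R(S,R)$; hence the source of $\can$ becomes $\Hom_R(S,R)\otimes_S(S\otimes_R N)\cong\Hom_R(S,R)\otimes_R N$. Chasing through the definition of $f^{\flat}$ and of the canonical evaluation map, $\can$ is identified with the elementary map $\theta\colon\Hom_R(S,R)\otimes_R N\to\Hom_R(S,N)$ described above.

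Then comes the key point: because $R$ is Noetherian and $S$ is finite and flat over $R$, the $R$-module $S$ is finitely generated projective (a finitely generated flat module over a Noetherian ring is projective; equivalently, $f_*\CO_X$ is locally free of finite rank). The map $\theta$ is natural in the variable $S$, is obviously bijective for $S=R$, and turns finite direct sums in that variable into direct sums; hence $\theta$ is an isomorphism for finite free $S$, and since a finitely generated projective module is a direct summand of a finite free one, $\theta$ is an isomorphism for our $S$. This proves the lemma locally; as $\can$ is natural in $\CG$ by construction and the local isomorphisms are compatible with restriction to affine opens of $Y$, they glue to the asserted global isomorphism $\can\colon f^{\flat}\CO_Y\otimes_{\CO_X}f^{*}\CG\overset{\sim}{\longrightarrow}f^{\flat}\CG$.

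I do not expect a genuine obstacle here. The two places that need attention are, first, verifying that $f^{\flat}$ is local on the target — so that the reduction to $\Spec R$ is legitimate and the glued map is exactly $\can$ — and, second, matching the globally defined canonical map with $\theta$ under the module dictionary, which is a short diagram chase. The mathematical substance, namely that finite flat over a Noetherian ring is projective and that $\Hom_R(S,-)$ commutes with $-\otimes_R N$ for such $S$, is entirely standard.
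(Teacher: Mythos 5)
Your proof is correct, and you are right to flag (and silently correct) the typo in the statement: the source of $\can$ should be $f^{\flat}\CO_Y\otimes_{\CO_X}f^{*}\CG$ (in the paper's main application $f$ is the Frobenius, so $X=Y$ and the slip is invisible). The reduction to the affine case and the translation into the map $\Hom_R(S,R)\otimes_S(M\otimes_R S)\to\Hom_R(S,M)$ is exactly what the paper does. Where you diverge is in the key algebraic step: the paper assumes outright that $S$ is a \emph{free} $R$-module (justified by further localization, since finite flat over a Noetherian ring is finitely generated projective, hence locally free), chooses a basis $s_1,\dots,s_n$ with dual basis $\phi_1,\dots,\phi_n$, and writes down the explicit inverse $\phi\mapsto\sum_i\phi_i\otimes\phi(s_i)$; you instead argue abstractly that $\theta$ is natural in $S$, an isomorphism for $S=R$, compatible with finite direct sums, and hence an isomorphism for any finitely generated projective $S$. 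Your route is cleaner and avoids choosing a basis, but note that the paper's explicit dual-basis formula is not mere pedantry: it is quoted and used later (in the remark after \autoref{CartierGammaDef} and in the proof of \autoref{localGammapullback}) to write down the induced Cartier structure on $N\otimes\omega_R$ in coordinates. So if you adopt your argument you would still want to record the explicit local description of $\can^{-1}$ in the free case for those downstream computations.
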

\begin{proof}
It suffices to construct a natural isomorphism locally and therefore we can identify $f$ with a ring homomorphism $R \to S$ and $\CF$ with an $R$-module $M$. Define the homomorphism
\[
	\can\colon \Hom_R(S,R) \otimes_S (M \otimes_R S) \to \Hom_R(S,M)
\]
of $S$-modules by mapping $\alpha \otimes (m \otimes t)$ to the homomorphism $s \mapsto \alpha(st)m$. Since $f$ is finite flat, we can assume that $S$ is a free $R$-module and choose a basis $s_1,s_2, \dots,s_n$. Let $\phi_1, \dots,\phi_n$ be the dual basis, i.e.\ $\phi_i \in \Hom_R(S,R)$ and $\phi_i(j) = \delta_{ij}$. One easily checks that the map
\begin{align*}
	\Hom_R(S,M) &\to \Hom_R(S,R) \otimes_S (M \otimes_R S) \\
	\phi &\mapsto \sum_{i=1}^n \phi_i \otimes \phi(s_i).
\end{align*}
is inverse to $\can$. 
\end{proof}
The following definition is extracted from \cite[Theorem 5.9]{BliBoe.CartierFiniteness}.
\begin{definition} 
\label{CartierGammaDef}
Let $X$ be a regular, $F$-finite scheme. 
\begin{enumerate} 
	\item For every Cartier module $M$ with structural morphism $\kappa$, the sheaf $M \otimes \omega_X^{-1}$ has a natural $\gamma$-structure given by the composition
	\[
		\xymatrix@C70pt{
			M \otimes \omega_X^{-1} \ar[d]^{\kappa_M \otimes (\kappa_X^{\vee})^{-1}} & \\
			F^{\flat}M \otimes (F^{\flat}\omega_X)^{-1} \ar[r]^-{\can^{-1} \otimes \can^{\vee}} \ar[d]^{\sim} & F^{\flat}\CO_X \otimes F^*M \otimes (F^{\flat}\CO_X)^{-1} \otimes F^*\omega_X^{-1} \ar[d]^{\sim} \\
			F^*(M \otimes \omega_X^{-1}) & F^{\flat}\CO_X \otimes (F^{\flat}\CO_X)^{-1} \otimes F^*M \otimes F^*\omega_X^{-1}, \ar[l]^-{\ev}
			}
	\]
	where the vertical arrow on the right is the permutation and $\ev_{\CL}\colon \CL \otimes_{\CO_X} \CL^{-1} \overset{\sim}{\longrightarrow} \CO_X$ is the evaluation map $l \otimes \phi \mapsto \phi(l)$. This morphism is called the $\gamma$-structure of $M \otimes \omega_X^{-1}$ induced by $\kappa_M$.
	\item For every $\gamma$-module $N$ with structural morphism $\gamma_N$, the sheaf $N \otimes \omega_X$ has a natural Cartier structure given by the composition
	\[
		\xymatrix@C70pt{
			N \otimes \omega_X \ar[d]^{\gamma_N \otimes \kappa_X} & \\
			F^*N \otimes F^{\flat}\omega_X \ar[r]^-{id \otimes \can^{-1}} \ar[d]^{\sim} & F^*N \otimes F^{\flat}\CO_X \otimes F^*\omega_X \ar[d]^{\sim} \\
			F^{\flat}(N \otimes \omega_X) & F^{\flat}\CO_X \otimes F^*N \otimes F^*\omega_X, \ar[l]^-{\can}
		}
	\]
where the vertical arrow on the right is the permutation. This morphism is called the Cartier structure of $N \otimes \omega_X$ induced by $\gamma_N$.
\end{enumerate}
\end{definition}
\begin{remark}
Thanks to the fact that the proof of \autoref{flatdecompose} contains explicit formulas for the isomorphism $\can$ and its inverse, we can concretely describe the induced Cartier structure of $N \otimes \omega_R$ for a $\gamma$-module $N$ over a regular ring $R$ such that $F_*R$ is free with basis $s_1,\dots ,s_r$. For $m \in \omega_R$ set $\phi_m := \kappa_R(m)$ and let $\phi_1, \dots , \phi_r \in \Hom_R(F_*R,R)$ be the dual basis of $s_1, \dots , s_r$, this means $\phi_i(s_j)=\delta_{ij}$. Following the arrows of \autoref{CartierGammaDef}, we see that the Cartier structure $N \otimes \omega_X \to F^{\flat}(N \otimes \omega_X)$ is given by
\begin{align*}
	n \otimes m &\mapsto \gamma(n) \otimes \phi_m \\
	&\mapsto \sum_i \gamma(n) \otimes \phi_i \otimes \phi_m(s_i) \\
	&\mapsto (s \mapsto \sum_i \gamma(n) \otimes \phi_i(s) \otimes \phi_m(s_i)).
\end{align*}
We will need this concrete version later on to prove that, for affine schemes, assigning a Cartier module to a $\gamma$-sheaf commutes with certain pullbacks, see \autoref{localGammapullback}. The use of the isomorphism $F^{\flat}\CO_X \otimes (F^{\flat}{\CO_X})^{-1} \cong \CO_X$ involves the concrete formula for the structural morphism of the $\gamma$-sheaf associated to a Cartier module.
\end{remark}
\begin{lemma}
Let $(N,\gamma_N)$ be a $\gamma$-sheaf on a regular, $F$-finite scheme $X$. The adjoint $F_*(N \otimes \omega_X) \to N \otimes \omega_X$ of the structural morphism of the Cartier module $N \otimes \omega_X$ is given by the composition
\[
	F_*(N \otimes \omega_X) \xrightarrow{\gamma_N} F_*(F^*N \otimes \omega_X) \overset{\sim}{\longrightarrow} N \otimes F_*\omega_X \xrightarrow{\tilde{\kappa}_X} N \otimes \omega_X,
\]
\end{lemma}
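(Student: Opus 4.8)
The two descriptions of a Cartier structure on a quasi-coherent sheaf $M$—a map $\kappa\colon F_*M \to M$ or its adjoint $\tilde{\kappa}\colon M \to F^{\flat}M$—correspond to one another under the adjunction $F_* \dashv F^{\flat}$ of the finite morphism $F$. The plan is therefore to start from the formula $\kappa = \tr_M \circ F_*\tilde{\kappa}$, where $\tr\colon F_*F^{\flat} \to \id$ is the counit of this adjunction and $M = N \otimes \omega_X$, to insert the definition of $\tilde{\kappa}$ from the second part of \autoref{CartierGammaDef}, and to match the outcome with the asserted composition. The whole argument rests on the naturality of $\tr$, of the projection-formula isomorphism, and of the isomorphisms $\can$ of \autoref{flatdecompose}, together with the concrete dual-basis description of $\can$ in the proof of that lemma.

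First I would split off the $\gamma$-structure of $N$. In the second part of \autoref{CartierGammaDef} the map $\tilde{\kappa}$ opens with $\gamma_N \otimes \kappa_X = (\id_{F^*N}\otimes \kappa_X)\circ(\gamma_N \otimes \id_{\omega_X})$, so $F_*\tilde{\kappa}$ opens with $F_*(\gamma_N \otimes \id_{\omega_X})$—precisely the first arrow of the target composition. Since both composites factor through this common first map, it suffices to identify the two remaining composites
\[
	\tr\circ F_*\can\circ F_*(\mathrm{perm})\circ F_*\big(\id_{F^*N}\otimes(\can^{-1}\circ \kappa_X)\big) \quad\text{and}\quad (\id_N\otimes \tilde{\kappa}_X)\circ b,
\]
where $b\colon F_*(F^*N\otimes\omega_X)\overset{\sim}{\longrightarrow} N\otimes F_*\omega_X$ is the projection-formula isomorphism. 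At this stage I would also use that the Cartier operator $\tilde{\kappa}_X$ is by definition the adjoint of $\kappa_X$, that is $\tilde{\kappa}_X = \tr_{\omega_X}\circ F_*\kappa_X$, which re-expresses the right-hand composite in terms of $\kappa_X$ and $\tr$ alone.

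The resulting identity no longer involves the $\gamma$-structure of $N$, and by naturality of $\can$ and of the projection formula in their module arguments the factor $N$ may be treated as a spectator; the identity thus reduces to the commutativity of a single square, expressing that $\can$ carries the trace $\tr_{N\otimes\omega_X}$ over to the evaluation $\ev$ on the factor $F^{\flat}\CO_X$ (the permutation merely reorders the factors $F^*N$, $F^{\flat}\CO_X$, $F^*\omega_X$ and is harmless). I expect this compatibility to be the main obstacle, since it demands reconciling the projection-formula isomorphism $b$ with the $\can$-based identification $F^{\flat}\CO_X\otimes F^*(\usc)\cong F^{\flat}(\usc)$ built into \autoref{CartierGammaDef}, while keeping track of the counit being ``evaluation at $1$.'' I would dispatch it by a local computation on an affine $\Spec R$ with $F_*R$ free on a basis $s_1,\dots,s_r$ with dual basis $\phi_1,\dots,\phi_r$: feeding the explicit formulas for $\can$ and $\can^{-1}$ from the proof of \autoref{flatdecompose} into both composites, one finds that each sends a local section $n\otimes m$ to $\sum_i \gamma(n)\otimes\phi_i(1)\otimes\phi_m(s_i)$, which is exactly the value at $1\in F_*R$ of the explicit description of $\tilde{\kappa}$ recorded in the preceding Remark. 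As $\can$, $b$, and $\tr$ are natural and $X$ is covered by such affine opens, this local agreement globalizes and the proof is complete.
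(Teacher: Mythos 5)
Your proof is correct and follows essentially the same route as the paper's: both unwind the adjunction between $F_*$ and $F^{\flat}$, dispose of most of the comparison by functoriality/naturality of $\can$, the projection formula and the adjunction morphisms, and settle the one non-formal square by a local computation on affine opens where $F_*R$ is free with a chosen dual basis. The only difference is cosmetic — the paper transports the claimed composition to the $F^{\flat}$-side via the unit $\id \to F^{\flat}F_*$ and compares with the structural morphism there, while you transport the structural morphism to the $F_*$-side via the counit $F_*F^{\flat} \to \id$.
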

where the isomorphism in the middle is given by the projection formula.
\begin{proof}
By construction, the structural morphism of $N \otimes \omega_X$ is the composition of the upper horizontal and the rightmost vertical arrow of the following diagram: 
\[
	\xymatrix{
		N \otimes \omega_X \ar[r]^-{\gamma_N} \ar[d]^{\text{adj}} & F^*N \otimes \omega_X \ar[r]^-{\text{adj}} \ar[d]^{\text{adj}} & F^*N \otimes F^{\flat}F_*\omega_X \ar[d]^{\sim} \ar[r]^-{\tilde{\kappa}_X} & F^*N \otimes F^{\flat}\omega_X \ar[d]^{\sim} \\
		F^{\flat}F_*(N \otimes \omega_X) \ar[r]^-{\gamma_N} & F^{\flat}F_*(F^*N \otimes \omega_X) \ar[r]^-{\text{proj}^{-1}} & F^{\flat}(N \otimes F_*\omega_X) \ar[r]^-{\tilde{\kappa}_X} & F^{\flat}(N \otimes \omega_X).
	}
\]
Here $\text{adj}$ denotes the respective adjunction morphism and $\text{proj}$ is the isomorphism from the projection formula. The third and the fourth vertical morphism are isomorphisms stemming from $\can$. For example, the morphism $F^*N \otimes F^{\flat}\omega_X \to F^{\flat}(N \otimes F_*\omega_X)$ is the composition  
\[
	F^*N \otimes F^{\flat}\omega_X \xrightarrow{\id \otimes \can^{-1}} F^*N \otimes F^*\omega_X \otimes F^{\flat} \CO_X \xrightarrow{\can} F^{\flat}(N \otimes \omega_X).
\]
Following the leftmost vertical and the lower horizontal arrows we obtain the adjoint of the morphism which is claimed to be the adjoint of the Cartier structure of $N \otimes \omega_X$. Hence it suffices to show that the diagram above is commutative. 

The first and the last square commute by functoriality. The commutativity of the square in the middle can be checked locally on affine open subsets of $X$ because $F$ is an affine morphism.
\end{proof} 
For the proof of \autoref{CartierGamma}, we need the isomorphism $\omega_X \otimes \omega_X^{-1} \cong \CO_X$ of $\gamma$-sheaves, which is a consequence of the following general lemma.  
\begin{lemma} \label{ExampleBasic}
Let $f\colon X \to Y$ be a morphism of schemes and $\CL$ an invertible $\CO_Y$-module.
\begin{enumerate}
	\item If $\rho\colon \CL \overset{\sim}{\longrightarrow} \CL_1 \otimes_{\CO_X} \CL_2$ is an isomorphism with invertible $\CO_X$-modules $\CL_1$ and $\CL_2$, then the diagram
	\[
		\xymatrix@C50pt@R30pt{
			\CL \otimes \CL^{-1} \ar[r]^-{\rho \otimes (\rho^{\vee})^{-1}} \ar[d]_{\ev_{\CL}} & \CL_1 \otimes \CL_2 \otimes \CL_1^{-1} \otimes \CL_2^{-1} \ar[r]^-{\id \otimes \ev_{\CL_1}} & \CL_2 \otimes \CL_2^{-1} \ar[dll]^{\ev_{\CL_2}} \\
			\CO_X & &
		}
	\]
	commutes.
	\item The diagram of canonical isomorphisms
	\[
		\xymatrix{
			 f^*(\CL \otimes_{\CO_Y} \CL^{-1}) \ar[d] \ar[r]& f^*\CL \otimes_{\CO_X}(f^*\CL)^{-1}  \ar[d] \\
			f^*\CO_X \ar[r] & \CO_Y
		}
	\]
	commutes.
\end{enumerate} 		
\end{lemma}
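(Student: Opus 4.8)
The plan is to verify both diagrams by reducing to explicit local computations on invertible modules, exploiting the fact that the evaluation maps $\ev_{\CL}$ and the pullback comparison isomorphisms are all determined by their behaviour on local generators. For part~(i), I would first observe that all sheaves involved are invertible, so the diagram commutes if and only if it commutes after restricting to an open subset over which $\CL_1$ and $\CL_2$ (hence $\CL$, via $\rho$) are free of rank one. Fix local generators $e_1$ of $\CL_1$ and $e_2$ of $\CL_2$, with dual generators $e_1^{\vee}, e_2^{\vee}$ of $\CL_1^{-1}, \CL_2^{-1}$ characterized by $\ev_{\CL_i}(e_i \otimes e_i^{\vee}) = 1$; then $e := \rho^{-1}(e_1 \otimes e_2)$ generates $\CL$, and by the definition of $\rho^{\vee}$ as the induced map on duals we get $(\rho^{\vee})^{-1}(e_1^{\vee} \otimes e_2^{\vee}) = e^{\vee}$, the dual generator of $\CL^{-1}$ with $\ev_{\CL}(e \otimes e^{\vee}) = 1$. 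Chasing $e \otimes e^{\vee}$ through the top row sends it to $e_1 \otimes e_2 \otimes e_1^{\vee} \otimes e_2^{\vee} \mapsto e_2 \otimes e_2^{\vee} \mapsto 1$, while the left arrow sends it directly to $1$; since both composites are $\CO_X$-linear and agree on a generator, they coincide.

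For part~(ii), I would again work locally, this time over an open $U \subseteq Y$ trivializing $\CL$ and its preimage $f^{-1}(U)$; pick a generator $e$ of $\CL$ over $U$ with dual $e^{\vee}$. The point is simply that the canonical isomorphism $f^*(\CL \otimes \CL^{-1}) \cong f^*\CL \otimes (f^*\CL)^{-1}$ identifies $f^*(e \otimes e^{\vee})$ with $f^*e \otimes f^*(e^{\vee})$, and under the standard identification $(f^*\CL)^{-1} \cong f^*(\CL^{-1})$ the element $f^*(e^{\vee})$ is exactly the dual generator of $f^*e$. Hence going down the left side, $f^*(e \otimes e^{\vee}) \mapsto f^*(\ev_{\CL}(e \otimes e^{\vee})) = f^*(1) = 1$ under $f^*\CO_Y \cong \CO_X$ (note: the target should read $\CO_X$, with $f^*\CO_Y$ on the lower left), whereas going across and then down gives $f^*e \otimes f^*(e^{\vee}) \mapsto \ev_{f^*\CL}(f^*e \otimes f^*(e^{\vee})) = 1$. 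Both composites are $\CO_X$-linear and agree on the generator $f^*(e \otimes e^{\vee})$, so the square commutes.

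In both parts the only genuinely delicate point — and the step I expect to be the main obstacle — is pinning down the precise definition of $\phi^{\vee}$ from \autoref{CartierGammaDef} and of the canonical isomorphism $(f^*\CL)^{-1} \cong f^*(\CL^{-1})$, and checking that these are compatible with the chosen dual generators rather than differing by a sign or by the inverse of the intended map; once one fixes the conventions (dual basis determined by $\ev$, and $\phi^{\vee}$ the unique isomorphism making the evaluation pairings correspond), everything is forced. Since all maps in sight are isomorphisms of invertible sheaves and therefore $\CO_X$-linear, there is no cocycle or higher coherence issue: agreement on a single local generator suffices, and local generators always exist, so no gluing argument is needed beyond the remark that commutativity is a local condition.
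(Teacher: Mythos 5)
Your proof is correct and takes essentially the same route as the paper's: both reduce to a local explicit computation, the paper chasing arbitrary elementary tensors through the affine-local diagrams while you chase a single local generator and invoke $\CO_X$-linearity (equivalent here since all sheaves involved are invertible), and you correctly note that the bottom row of the second diagram should read $f^*\CO_Y \to \CO_X$. The only cosmetic slip is the direction of the dual map: since $\rho^{\vee}$ goes from $(\CL_1\otimes\CL_2)^{-1}$ to $\CL^{-1}$, the relation you need is $\rho^{\vee}(e_1^{\vee}\otimes e_2^{\vee})=e^{\vee}$, equivalently $(\rho^{\vee})^{-1}(e^{\vee})=e_1^{\vee}\otimes e_2^{\vee}$.
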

\begin{proof}
It suffices to verify the claims for an affine scheme $X = \Spec R$ and, for (b), for a morphism of affine schemes $\Spec S \to \Spec R$ and an $R$-module $M$.

(a) Let $l_1$, $l_2$, $\phi_1$, $\phi_2$ be elements of $\CL_2(X)$, $\CL_2(X)$, $\CL_1^{-1}(X)$, $\CL_2^{-1}(X)$. For $l := \rho^{-1}(l_1 \otimes l_2)$ and $\Phi := \rho^{\vee}(\phi_1 \otimes \phi_2)$ we have
\[
	(\ev_{\CL} \circ (\rho \otimes (\rho^{\vee})^{-1})^{-1})(l_1 \otimes l_2 \otimes \phi_1 \otimes \phi_2) = \ev_{\CL}(l \otimes \Phi) = \Phi(l) = \phi_1(l_1) \cdot \phi_2(l_2)
\]
because $\Phi$ is given by the composition
\[
	\CL \xrightarrow{\rho} \CL_1 \otimes \CL_2 \xrightarrow{\phi_1 \cdot \phi_2} R.
\]
On the other hand 
\[
	(\ev_{\CL_2} \circ (\id \otimes \ev_{\CL_1}))(l_1 \otimes l_2 \otimes \phi_1 \otimes \phi_2) = \ev_{\CL_2}(\phi_1(l_1) \cdot (l_2 \otimes \phi_2)) = \phi_1(l_1) \cdot \phi_2(l_2).
\]
(b) We have to show that the diagram  
\[
	\xymatrix{
		(M \otimes_R \Hom_R(M,R)) \otimes_R S \ar[d]^{\beta} \ar[r]^-{\alpha} & (M \otimes_R S) \otimes_S \Hom_S(M \otimes_R S,S) \ar[d]^{\gamma} \\
		R \otimes_R S \ar[r]^-{\delta} & S
	}
\]
with the maps
\begin{align*}
	&\alpha\colon (m \otimes \phi) \otimes s \mapsto (m \otimes s) \otimes (n \otimes t \mapsto t \cdot \phi(n)) \\
	&\beta\colon (m \otimes \phi) \otimes s \mapsto \phi(m) \otimes s \\
	&\gamma\colon (m \otimes s) \otimes \psi \mapsto \psi(m \otimes s) \\
	&\delta\colon r \otimes s \mapsto rs
\end{align*}
commutes. This follows from
\[
	(\gamma \circ \alpha)((m \otimes \phi) \otimes s) = \gamma((m \otimes s) \otimes (n \otimes t \mapsto t \phi(n))) = s\phi(m)
\]
and
\[
	(\delta \circ \beta)((m \otimes \phi) \otimes s) = \delta(\phi(m) \otimes s) = s\phi(m).
\]
\end{proof}   
\begin{example} \label{GenGamma}
The $\gamma$-sheaf $\omega := \omega_X \otimes \omega_X^{-1}$ on a regular, $F$-finite scheme $X$ is canonically isomorphic to the structure sheaf $\CO_X$ equipped with the natural morphism $\CO_X \to F^*\CO_X$. We have to show that the diagram
\[
	\xymatrix{
		\omega_X \otimes \omega_X^{-1} \ar[d]_{(\can^{-1} \circ \kappa_X) \otimes (\can^{-1} \circ (\kappa_X^{\vee})^{-1})} \ar[r]^-{\ev_{\omega_X}} & \CO_X \ar[dd]^{\id} \\
		F^{\flat} \CO_X \otimes F^* \omega_X \otimes (F^{\flat}\CO_X)^{-1} \otimes F^*\omega_X^{-1} \ar[d]_{\ev_{F^{\flat}\CO_X} \otimes \id}^{\sim} & \\
		F^*\omega_X \otimes F^*\omega_X^{-1} \ar[r]_-{\ev_{F^*\omega_X}}^-{\sim} \ar[d]^{\sim}  & \CO_X \ar[d]^{\gamma_X}  \\
		F^*(\omega_X \otimes \omega_X^{-1}) \ar[r]_-{F^*\ev_{\omega_X}}^-{\sim} & F^*\CO_X  
	}
\]
commutes. The commutativity of both the top and the bottom rectangle follows from \autoref{ExampleBasic}. Note that the horizontal isomorphisms of the lower square are the inverses of the natural isomorphisms of part (b) of \autoref{ExampleBasic}. By definition, the $\gamma$-structure of $\omega_X \otimes \omega_X^{-1}$ is given by the composition of the vertical arrows on the right. Hence the $\gamma$-structure of $\CO_X$ with respect to the natural isomorphism $\CO_X \xrightarrow{\ev_{\omega_X}^{-1}} \omega_X \otimes \omega_X^{-1}$ is the Frobenius.

Similarly, starting with the $\gamma$-module $\CO_X$, the induced Cartier structure of $\CO_X \otimes \omega_X$ is compatible with $\kappa_X$ with respect to the isomorphism $\CO_X \otimes \omega_X \cong \omega_X$.       
\end{example}
\begin{definition}
Let $\QCohG(X)$ denote the category of $\gamma$-sheaves and let $\CohG(X)$ denote the category of $\gamma$-sheaves whose underlying $\CO_X$-module is coherent. We let $\QCrysG(X)$ and $\CrysG(X)$ denote the corresponding categories of crystals, see \autoref{defCartCrys}.
\end{definition}
\begin{proposition} \label{CartierGamma}
If $X$ is a regular, $F$-finite scheme, then tensoring with $\omega_X$ and its inverse induces inverse equivalences of categories between Cartier modules and $\gamma$-sheaves on $X$:
\[
	\xymatrix@C70pt{
	\QCohC(X) \ar@<0.1cm>[r]^-{\usc \otimes_{\CO_X}\omega_X^{-1}} & \QCohG(X) \ar@<0.1cm>[l]^-{\usc \otimes_{\CO_X}\omega_X}
	}
\]
and
\[
	\xymatrix@C70pt{
	\CohC(X) \ar@<0.1cm>[r]^-{\usc \otimes_{\CO_X}\omega_X^{-1}} & \CohG(X) \ar@<0.1cm>[l]^-{\usc \otimes_{\CO_X}\omega_X}.
	}
\]
In terms of this equivalence, the Cartier module $(\omega_X,\kappa_X)$ corresponds to the $\gamma$-sheaf $(\CO_X,\gamma_X)$.
\end{proposition}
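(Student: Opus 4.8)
The plan is to verify that the two assignments of \autoref{CartierGammaDef} are functorial and mutually quasi-inverse, and that they respect the coherence condition; the concluding statement that $(\omega_X,\kappa_X)$ corresponds to $(\CO_X,\gamma_X)$ will then be read off from \autoref{GenGamma}. Functoriality is formal: if $\phi\colon M\to M'$ is a morphism of Cartier modules, then every arrow in the diagram of \autoref{CartierGammaDef}(a) is natural in its $\CO_X$-module argument, and the defining property of $\phi$ is precisely compatibility with $\kappa_M$ and $\kappa_{M'}$, so chasing the naturality squares shows that $\phi\otimes\id_{\omega_X^{-1}}$ is a morphism of $\gamma$-sheaves; \autoref{CartierGammaDef}(b) is handled identically. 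Since $\omega_X$ is invertible, tensoring with $\omega_X$ or $\omega_X^{-1}$ is exact and preserves quasi-coherence and coherence, so the resulting functors $\usc\otimes_{\CO_X}\omega_X$ and $\usc\otimes_{\CO_X}\omega_X^{-1}$ restrict between $\CohC(X)$ and $\CohG(X)$ as well.

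The substance of the proposition is that the two constructions invert one another on structural morphisms. Given a $\gamma$-sheaf $(N,\gamma_N)$, form the Cartier module $N\otimes_{\CO_X}\omega_X$ via \autoref{CartierGammaDef}(b) and then the $\gamma$-sheaf $(N\otimes_{\CO_X}\omega_X)\otimes_{\CO_X}\omega_X^{-1}$ via \autoref{CartierGammaDef}(a); one must check that, under the canonical isomorphism $(N\otimes_{\CO_X}\omega_X)\otimes_{\CO_X}\omega_X^{-1}\cong N$ provided by associativity of $\otimes$ together with $\ev_{\omega_X}\colon\omega_X\otimes_{\CO_X}\omega_X^{-1}\cong\CO_X$, the induced $\gamma$-structure is again $\gamma_N$; and symmetrically, starting from a Cartier module $(M,\kappa_M)$, one recovers $\kappa_M$ on $(M\otimes_{\CO_X}\omega_X^{-1})\otimes_{\CO_X}\omega_X\cong M$. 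I would carry this out by composing the two diagrams of \autoref{CartierGammaDef}. Every arrow occurring there is assembled from $\kappa_X$, the isomorphism $\can$ of \autoref{flatdecompose}, permutations of tensor factors, and evaluation maps $\ev$; upon composition the two occurrences of $\can$ (respectively, $\can^{-1}$ and $\can^\vee$) cancel and the two applications of $\ev$ collapse, leaving precisely $\gamma_N$ (respectively, $\kappa_M$). The only non-formal inputs are that $\ev_{F^\flat\CO_X}$ applied after $\can^{-1}\otimes\can^\vee$ reproduces $\ev_{F^\flat\omega_X}$, and that $F^*$ is compatible with the evaluation $\omega_X\otimes_{\CO_X}\omega_X^{-1}\cong\CO_X$; these are exactly parts (a) and (b) of \autoref{ExampleBasic}. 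With those in hand the calculation is pure bookkeeping, the canonical isomorphisms above become isomorphisms of Cartier modules, respectively $\gamma$-sheaves, and — all the isomorphisms involved being natural — this exhibits $\usc\otimes_{\CO_X}\omega_X^{-1}$ and $\usc\otimes_{\CO_X}\omega_X$ as mutually inverse equivalences.

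For the last assertion one applies $\usc\otimes_{\CO_X}\omega_X^{-1}$ to $(\omega_X,\kappa_X)$: the result is $\omega_X\otimes_{\CO_X}\omega_X^{-1}$ carrying the $\gamma$-structure induced by $\kappa_X$, which \autoref{GenGamma} identifies, via $\ev_{\omega_X}$, with $(\CO_X,\gamma_X)$. I expect the main obstacle to be purely organizational: tracking, through all the permutations, which tensor factor each arrow acts on, and ensuring that the identifications $F^\flat\CF\cong F^\flat\CO_X\otimes_{\CO_X}F^*\CF$, $F^*(\CL^{-1})\cong(F^*\CL)^{-1}$, and $\omega_X^{-1}\otimes_{\CO_X}\omega_X\cong\CO_X$ are used in their canonical forms, so that no spurious twist by an invertible sheaf creeps in. Everything is genuinely routine once \autoref{ExampleBasic} and \autoref{GenGamma} are invoked, but this is the one point where a real verification is required.
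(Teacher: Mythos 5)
Your proposal follows the same route as the paper: compose the two structure-transfer diagrams of \autoref{CartierGammaDef}, cancel the $\can$'s and evaluations, and reduce the remaining non-formal content to \autoref{ExampleBasic} and \autoref{GenGamma} (the paper's own proof does exactly this, carrying out the bookkeeping you defer in the two large diagrams \autoref{Car} and \autoref{Gam}, and likewise reads off the correspondence $(\omega_X,\kappa_X)\leftrightarrow(\CO_X,\gamma_X)$ from \autoref{GenGamma}). The plan is sound and identifies the correct key inputs; the only work left is the diagram chase you already acknowledge.
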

\begin{proof}
Let $\omega$ denote the $\gamma$-sheaf $\omega_X^{-1} \otimes \omega_X$ and $\gamma_{\omega}$ its structural morphism. (Note that there is no considerable difference between $\omega_X \otimes \omega_X^{-1}$ and $\omega_X^{-1} \otimes \omega_X$.) We start with a Cartier module $(M,\kappa_M)$. Consider the diagram \autoref{Car} on page \pageref{Car}. Passing through the top arrow we follow the construction of the structural morphism of $M \otimes \omega_X^{-1} \otimes \omega_X$ while the structural morphism of $\kappa_M$ is given by the composition of the horizontal arrows on the bottom: $\kappa_M = \can \circ (\can^{-1} \circ \kappa_M)$. Hence we have to show that \autoref{Car} is commutative. Here $\mu$ denotes a permutation of the tensor product followed by the evaluation map, similar to the top most horizontal arrow. More precisely, it is the composition
\[
	\xymatrix{
		F^{\flat}\CO_X \otimes F^*M \otimes (F^{\flat}\CO_X)^{-1} \otimes F^*\omega_X^{-1} \otimes F^{\flat}\CO_X \otimes F^*\omega_X \ar[d]^{\sim} \\
		F^{\flat}\CO_X \otimes F^*M \otimes F^*\omega_X^{-1} \otimes ((F^{\flat}\CO_X)^{-1} \otimes F^{\flat}\CO_X) \otimes F^*\omega_X \ar[d]^{\ev} \\
		F^{\flat}\CO_X \otimes F^*M \otimes F^*\omega_X^{-1} \otimes F^*\omega_X \ar[d]^{\sim} \\
		F^{\flat}\CO_X \otimes F^*(M \otimes \omega_X^{-1} \otimes \omega_X)
	}
\]
of natural isomorphisms and $\ev$. For simplicity, we will not distinguish between $F^*(M \otimes \omega_X^{-1} \otimes \omega_X)$ and $F^*M \otimes F^*\omega_X^{-1} \otimes F^*\omega_X$. Consider the upper square. The map through the top arrow is given by
\[
	a \otimes m \otimes \beta \otimes \sigma \otimes b \otimes t \mapsto \beta(a) \cdot (b \otimes m \otimes \sigma \otimes t)
\]
and $\mu$ is the map
\[
	a \otimes m \otimes \beta \otimes \sigma \otimes b \otimes t \mapsto \beta(b) \cdot (a \otimes m \otimes \sigma \otimes t).
\]
Because 
\[
	\beta(a) \cdot b = \beta(ab) = \beta(b) \cdot a,
\]
these two maps are equal. In the lower left square the map from $M \otimes \omega_X \otimes \omega_X^{-1}$ to $ F^{\flat}\CO_X \otimes F^*(M \otimes \omega_X^{-1} \otimes \omega_X)$ is the composition 
\[
	M \otimes \omega_X \otimes \omega_X^{-1} \xrightarrow{\can^{-1} \circ \kappa_M} F^{\flat}\CO_X \otimes F^*M \otimes \omega_X^{-1} \otimes \omega_X \xrightarrow{\id \otimes \gamma_{\omega}} F^{\flat}\CO_X \otimes F^*(M \otimes \omega_X^{-1} \otimes \omega_X).
\]
Hence it suffices to show that 
\begin{align} \label{Carout}
	\xymatrix@C60pt{
		F^*M \otimes \omega_X^{-1} \otimes \omega_X \ar[d]^{F^*M \otimes \gamma_{\omega}} & F^*M \otimes \CO_X \ar[l]_-{\id \otimes \delta}^-{\sim} \ar[r]_-{\sim} \ar[d]_{\id \otimes F} & F^*M \ar[d]_{\sim} \\
		F^*M \otimes F^*(\omega_X^{-1} \otimes \omega_X) & F^*M \otimes F^*\CO_X \ar[l]_-{\id \otimes F^*\delta}^-{\sim} \ar[r]_-{\sim} & F^*(M \otimes \CO_X)
	}
\end{align}
is commutative. The commutativity of the left square is \autoref{GenGamma} tensored with $F^*M$. That the right square commutes can easily be checked by hand: For an arbitrary commutative ring $R$, an $R$-algebra $S$ and an $R$-module $M$, the diagram
\[
	\xymatrix{
		(M \otimes_R S) \otimes_S S \ar[r] \ar[d] & M \otimes_R S \ar[d] \\
		(M \otimes_R S) \otimes_S (R \otimes_R S) \ar[r] & (M \otimes_R R) \otimes_R S
	}
\]
of natural homomorphisms is commutative. Along both ways an element $(m \otimes s_1) \otimes s_2$ is mapped to $(m \otimes 1) \otimes s_1s_2$. Locally the right square of \autoref{Carout} is just a special case of this diagram. 

Now let $(N,\gamma_N)$ be a $\gamma$-sheaf. By definition, the structural morphism $\gamma_N'$ of $N \otimes \omega_X \otimes \omega_X^{-1}$ is the line in the middle of the diagram \autoref{Gam}. The upper squares of this diagram commute by construction. Here the horizontal morphism to the top right corner is given by $\id \otimes (\can^{-1} \circ \tilde{\kappa}_X)$ and the horizontal morphism below is the unique morphism making the upper right square commute. Therefore we see that $\gamma_N'$ is the tensor product of $\gamma_N$ and $\gamma_{\omega}$, i.e.\ the bottom rectangle of \autoref{Gam} is commutative. 

Now consider the diagram
\[
	\xymatrix@C50pt@R30pt{
		N \otimes \omega \ar[r]^-{\id \otimes \gamma_{\omega}} & N \otimes F^* \omega \ar[r]^-{\gamma_N \otimes \id} & F^*N \otimes F^* \omega \ar[r]^-{\sim} & F^*(N \otimes \omega) \\
		N \otimes \CO_X \ar[u]^{\id \otimes \delta} \ar[r]^-{\id \otimes \gamma_X} & N \otimes F^*\CO_X \ar[u]^{\id \otimes F^*\delta} \ar[r]^-{\gamma_N \otimes \id} & F^*N \otimes F^*\CO_X \ar[u]^{\id \otimes F^*\delta} \ar[r]^-{\sim} & F^*(N \otimes \CO_X) \ar[u]^{F^*(\id \otimes \delta)} \\
		N \ar[u]^{\sim} \ar[rr]^-{\gamma_N} & & F^*N \otimes \CO_X \ar[u]^{\id \otimes \gamma_X} \ar[r]^-{\sim} & F^*N. \ar[u]^{\sim} 
	}
\]
The upper left square is the commutative diagram of \autoref{GenGamma} tensored with $N$. The upper square in the middle and the bottom left rectangle are clearly commutative. The square to the left of it commutes because of the naturality of the isomorphism $F^*N \otimes F^*(\usc) \overset{\sim}{\longrightarrow} F^*(N \otimes \usc)$. We already have seen that the bottom right square commutes: It is the same square as the left one of diagram \autoref{Carout} with $M$ replaced by $N$. Moreover, the composition of the leftmost arrow is $N \otimes \delta$ and the composition of the rightmost arrow is $F^*(N \otimes \delta)$.

Hence the structural morphism of $N$ is compatible with $\gamma_N \otimes \gamma_{\omega}$, which turned out to be compatible with the structural morphism induced from the Cartier module $N \otimes \omega_X$. Thus we can extract the commutative diagram
\[
	\xymatrix@C50pt{
		N \otimes \omega_X \otimes \omega_X^{-1} \ar[r]^-{\gamma_N'} & F^*(N \otimes \omega_X \otimes \omega_X^{-1}) \\
		N \ar[u]^{\id \otimes \delta} \ar[r]^-{\gamma_N} & F^*N. \ar[u]^{F^*(\id \otimes \delta)}
	}
\]
It follows that the functors $\usc \otimes \omega_X^{-1}$ and $\usc \otimes \omega_X$ are inverse equivalences. From \autoref{GenGamma} we know that $\usc \otimes \omega_X^{-1}$ maps $\omega_X$ with the structural morphism $\kappa_X$ to $\CO_X$ with the structural morphism $\gamma_X$. Consequently, $\usc \otimes \omega_X$ maps the $\gamma$-sheaf $(\CO_X, \gamma_X)$ to the Cartier module $(\omega_X, \kappa_X)$. 
\end{proof}
\begin{corollary} \label{QCrysCartierGamma}
Tensoring with $\omega_X$ and with $\omega_X^{-1}$ induces equivalences of categories
\[
	\xymatrix@C70pt{
	\QCrysC(X) \ar@<0.1cm>[r]^-{\usc \otimes_{\CO_X}\omega_X^{-1}} & \QCrysG(X) \ar@<0.1cm>[l]^-{\usc \otimes_{\CO_X}\omega_X}
	}
\]
and
\[
	\xymatrix@C70pt{
	\CrysC(X) \ar@<0.1cm>[r]^-{\usc \otimes_{\CO_X}\omega_X^{-1}} & \CrysG(X) \ar@<0.1cm>[l]^-{\usc \otimes_{\CO_X}\omega_X}
	}
\]
for every regular, $F$-finite scheme $X$. 
\end{corollary}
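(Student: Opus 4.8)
The plan is to deduce this corollary from \autoref{CartierGamma} by passing to Serre quotients. Recall that, by \autoref{defCartCrys} and the paragraph preceding \autoref{GenGamma}, the category $\QCrysC(X)$ is the localization of $\QCohC(X)$ at the Serre subcategory $\LNilC(X)$, while $\QCrysG(X)$ is the localization of $\QCohG(X)$ at the Serre subcategory of locally nilpotent $\gamma$-sheaves; similarly $\CrysC(X)$ and $\CrysG(X)$ are the localizations of $\CohC(X)$ and $\CohG(X)$ at their subcategories of nilpotent objects. Since an equivalence of abelian categories that carries one Serre subcategory \emph{onto} another induces an equivalence of the associated quotient categories, and the two induced functors remain quasi-inverse, it suffices to check that the inverse equivalences $\usc \otimes_{\CO_X} \omega_X^{-1}$ and $\usc \otimes_{\CO_X} \omega_X$ of \autoref{CartierGamma} match up the respective nilpotence subcategories. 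Here I use that tensoring with the invertible sheaf $\omega_X^{\pm 1}$ is exact, commutes with arbitrary colimits, sends subobjects to subobjects and preserves coherence; hence it is enough to prove that a Cartier module $(M,\kappa)$ is nilpotent if and only if the associated $\gamma$-sheaf $(M \otimes \omega_X^{-1},\gamma)$ of \autoref{CartierGammaDef} is nilpotent. Indeed, local nilpotence of a Cartier module (resp.\ a $\gamma$-sheaf) is the property of being the filtered union of its nilpotent submodules, which is then automatically preserved, and $\NilC(X) = \CohC(X) \cap \LNilC(X)$ corresponds to the nilpotent coherent $\gamma$-sheaves because the equivalence restricts to $\CohC(X)$ and $\CohG(X)$.

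So the heart of the matter is a comparison of iterates of the structural maps. First I would record that the construction of \autoref{CartierGammaDef} is compatible with iteration: the $e$-fold iterate $\gamma^e$ of the $\gamma$-structure on $M \otimes \omega_X^{-1}$ is obtained from the $e$-fold iterate $\kappa^e$ of $\kappa$ by tensoring with the isomorphism built from the $e$-fold iterate of $\kappa_X$ (again an isomorphism, since $\kappa_X$ is one) and composing with the canonical isomorphisms $\can$ of \autoref{flatdecompose} and the evaluation isomorphisms $\ev$ occurring in the diagram of \autoref{CartierGammaDef}. Put differently, tensoring a Cartier module with the invertible Cartier module $(\omega_X^{-1},(\kappa_X^{\vee})^{-1})$ merely twists the structural map by that of $\omega_X^{-1}$, and by \autoref{GenGamma} the corresponding $\gamma$-sheaf $\omega_X \otimes \omega_X^{-1} \cong \CO_X$ carries the \emph{Frobenius} — an isomorphism — as structural map; hence $\gamma^e$ agrees with $\kappa^e$ up to precomposition and postcomposition with isomorphisms. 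Consequently $\gamma^e = 0$ precisely when $\kappa^e = 0$, which gives one implication; the other follows by applying $\usc \otimes_{\CO_X} \omega_X$ and the first part of \autoref{CartierGamma}. Combined with the reductions above, this shows that $\usc \otimes_{\CO_X} \omega_X^{-1}$ and $\usc \otimes_{\CO_X} \omega_X$ descend to inverse equivalences between $\QCrysC(X)$ and $\QCrysG(X)$ and between $\CrysC(X)$ and $\CrysG(X)$.

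The main obstacle is precisely the compatibility asserted at the start of the second paragraph: verifying that the rather elaborate composition defining the $\gamma$-structure in \autoref{CartierGammaDef} interacts correctly with iteration, so that $\gamma^e$ really is $\kappa^e$ twisted by isomorphisms and not something more intricate. This is a routine but somewhat lengthy diagram chase; it is made tractable by the explicit local data already available, namely the formulas for $\can$ and $\can^{-1}$ from the proof of \autoref{flatdecompose}, the concrete description of the induced Cartier structure in the Remark following \autoref{CartierGammaDef}, and the compatibilities of evaluation maps in \autoref{ExampleBasic}. Once this is settled, the remaining arguments are formal.
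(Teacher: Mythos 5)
Your proof is correct and follows the route the paper leaves implicit: the corollary is stated there without proof, resting on exactly the observation you make, namely that the inverse equivalences of \autoref{CartierGamma} identify the (locally) nilpotent subcategories on both sides and therefore descend to the Serre quotient categories. The only point deserving care is the one you yourself flag as the main obstacle: in the composite $\gamma^e = F^{(e-1)*}\gamma \circ \cdots \circ \gamma$ the canonical isomorphisms of \autoref{CartierGammaDef} are \emph{interleaved} between the successive occurrences of $\kappa_M$, and a chain $B A_{e-1} B \cdots A_1 B$ with the $A_i$ invertible can vanish without $B^e$ vanishing, so the vague slogan ``each factor is $\kappa$ up to isomorphism'' is not by itself enough --- one genuinely needs the iteration-compatibility you state first (that $\gamma^e$ is $\kappa_M^e$ tensored with the iterate of the invertible structural map of $\omega_X^{-1}$, conjugated by the canonical identifications), which is indeed verifiable by the local formulas for $\can$ and the computations of \autoref{GenGamma}.
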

\begin{corollary}
If $X$ is regular and $F$-finite, the categories $\QCohG(X)$ and $\QCrysG(X)$ have enough injectives.
\end{corollary}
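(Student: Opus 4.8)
The plan is to transport the corresponding statements for Cartier modules, already recorded in \autoref{enoughinjectiveskappa}, across the equivalences of \autoref{CartierGamma} and \autoref{QCrysCartierGamma}. The point is that ``having enough injectives'' is an intrinsic property of an abelian category: it only asks that every object admit a monomorphism into an injective object, and both monomorphisms and injective objects are preserved and reflected by any equivalence of categories. So there is essentially nothing to prove beyond invoking the earlier results.

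Concretely, I would first note that a regular, $F$-finite scheme is in particular locally Noetherian and $F$-finite, and carries an invertible dualizing sheaf $\omega_X$ with an isomorphism $\kappa_X\colon \omega_X \overset{\sim}{\longrightarrow} F^{\flat}\omega_X$ by the conventions fixed above, so \autoref{enoughinjectiveskappa} applies and tells us that $\QCohC(X)$ and $\QCrysC(X)$ have enough injectives. By \autoref{CartierGamma} the functor $\usc \otimes_{\CO_X} \omega_X\colon \QCohG(X) \to \QCohC(X)$ is an equivalence, with quasi-inverse $\usc \otimes_{\CO_X} \omega_X^{-1}$; since tensoring with the invertible sheaf $\omega_X^{\pm 1}$ is exact, it preserves monomorphisms and injective objects in both directions. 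Hence, given $N \in \QCohG(X)$, a monomorphism $N \otimes_{\CO_X} \omega_X \hookrightarrow I$ with $I$ injective in $\QCohC(X)$ yields, after tensoring with $\omega_X^{-1}$ and using the canonical identification $(N \otimes_{\CO_X} \omega_X) \otimes_{\CO_X} \omega_X^{-1} \cong N$, a monomorphism $N \hookrightarrow I \otimes_{\CO_X} \omega_X^{-1}$ with injective target. Running the identical argument with the equivalence $\QCrysG(X) \to \QCrysC(X)$ of \autoref{QCrysCartierGamma} gives enough injectives in $\QCrysG(X)$.

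There is no genuine obstacle; the only thing worth double-checking is that the equivalences of \autoref{CartierGamma} and \autoref{QCrysCartierGamma} are equivalences of abelian categories whose functors in both directions are exact --- which is immediate, since $\usc \otimes_{\CO_X} \omega_X^{\pm 1}$ is exact as $\omega_X$ is invertible --- so that injectives do correspond to injectives across them.
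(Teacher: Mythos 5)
Your proposal is correct and is essentially the paper's own argument: the paper's proof simply invokes \autoref{enoughinjectiveskappa} together with the equivalences given by tensoring with $\omega_X^{\pm 1}$, and you have merely spelled out the (standard) fact that an equivalence of abelian categories transports injective objects and monomorphisms. The only cosmetic difference is that you explicitly cite \autoref{CartierGamma} for the module-level equivalence in addition to \autoref{QCrysCartierGamma} for crystals, which is if anything slightly more careful than the paper's one-line proof.
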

\begin{proof}
This follows from \autoref{enoughinjectiveskappa} and \autoref{QCrysCartierGamma}.
\end{proof}
\begin{landscape}
\begin{align} \label{Car} 
	\xymatrix{
		(F^{\flat}\CO_X \otimes (F^{\flat}\CO_X)^{-1}) \otimes F^*M \otimes F^*\omega_X^{-1} \otimes F^{\flat}\CO_X \otimes F^*\omega_X \ar[r]^-{\ev} & F^*M \otimes F^*\omega_X^{-1} \otimes F^{\flat}\CO_X \otimes F^*\omega_X \ar[d]_{\sim} & \\
		F^{\flat}\CO_X \otimes F^*M \otimes (F^{\flat}\CO_X)^{-1} \otimes F^*\omega_X^{-1} \otimes F^{\flat}\CO_X \otimes F^*\omega_X \ar[u]_{\sim} \ar[r]^-{\mu} & F^{\flat}\CO_X \otimes F^*(M \otimes \omega_X^{-1} \otimes \omega_X) \ar[r]^-{\can} & F^{\flat}(M \otimes \omega_X^{-1} \otimes \omega_X) \\
		M \otimes \omega_X^{-1} \otimes \omega_X \ar[u]^{\can^{-1}(\kappa_M \otimes ({\kappa}_X^{\vee})^{-1} \otimes \kappa_X)} & & \\
		M \ar[u]_{\sim}^{\id \otimes \delta} \ar[r]^-{\can^{-1} \circ \kappa_M} & F^{\flat}\CO_X \otimes F^*M \ar[uu]^{F^{\flat}\CO_X \otimes F^*(\id \otimes \delta)} \ar[r]^-{\can} & F^{\flat}M \ar[uu]_{F^{\flat}\phi}
	}
\end{align}
\vspace{2cm}
\begin{align} \label{Gam}
	\xymatrix{
		F^*N \otimes F^{\flat}\CO_X \otimes F^*\omega_X \otimes \omega_X^{-1} \ar[r]^-{\sim} & F^{\flat}\CO_X \otimes F^*N \otimes F^*\omega_X^{-1} \otimes \omega_X^{-1} \ar[r] \ar[d]_{\can}^{\sim} & F^{\flat}\CO_X \otimes F^*N \otimes F^*\omega_X^{-1} \otimes (F^{\flat}\CO_X)^{-1} \otimes F^*\omega_X^{-1} \ar[d]_{\sim}^{\ev_{F^{\flat}\CO_X} \otimes \id} \\
		N \otimes \omega_X \otimes \omega_X^{-1} \ar[u]^{\gamma_N \otimes (\can^{-1} \circ \kappa_X) \otimes \id} \ar[r]^-{\kappa_{N \otimes \omega_X}} \ar@{=}[d]& F^{\flat}(N \otimes \omega_X) \otimes \omega_X^{-1} \ar[r] & F^*(N \otimes \omega_X \otimes \omega_X^{-1}) \ar@{=}[d] \\
		N \otimes \omega_X \otimes \omega_X^{-1} \ar[r]^-{\gamma_N \otimes \gamma_{\omega}} & F^*N \otimes F^*(\omega_X \otimes \omega_X^{-1}) \ar[r]^-{\sim} & F^*(N \otimes \omega_X \otimes \omega_X^{-1})
	}
\end{align}
\end{landscape}

\subsection{Compatibility with pull-back}

The pull-back of quasi-coherent sheaves defines a \emph{pull-back functor on $\gamma$-sheaves}:
\begin{definition}
Let $f\colon X \to Y$ be a morphism of regular schemes and $N$ a $\gamma$-sheaf on $Y$ with structural morphism $\gamma_N$. The $\gamma$-structure for $f^*N$ is defined as the composition 
\[
	f^*N \xrightarrow{f^*\gamma_N} f^*F_Y^*N \overset{\sim}{\longrightarrow} F_Y^*f^* N.
\]
\end{definition} 
First we consider a closed immersion $i\colon X \to Y$ of regular $F$-finite schemes. The aim is to prove the following theorem:
\begin{theorem} \label{pullbackcomp}
Let $i\colon X \to Y$ be a closed immersion of regular, $F$-finite schemes with codimension $n$. Then there is a canonical isomorphism of functors
\[
	\usc \otimes \omega_X^{-1} \circ R^ni^{\flat} \cong i^* \circ \usc \otimes \omega_Y^{-1} 
\]
inducing a corresponding isomorphism of functors of crystals, i.e.\ the diagram
\[
	\xymatrix@R30pt@C70pt{
		\Crys_{\kappa}(Y) \ar[d]^{R^ni^{\flat}} \ar@<1mm>[r]^-{\usc \otimes_{\CO_Y} \omega_Y^{-1}} &  \Crys_{\gamma}(Y) \ar@<1mm>[l]^-{\usc \otimes_{\CO_Y} \omega_Y} \ar[d]^{i^*} \\
		\Crys_{\kappa}(X) \ar@<1mm>[r]^-{\usc \otimes_{\CO_X} \omega_X^{-1}} & \Crys_{\gamma}(X) \ar@<1mm>[l]^-{\usc \otimes_{\CO_X} \omega_X}
	}
\]
is commutative. 
\end{theorem}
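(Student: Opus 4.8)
The plan is to reduce to the affine, local situation and then produce the isomorphism by hand, combining an $\Ext$-version of the decomposition \autoref{flatdecompose} with the fact that $R^ni^{\flat}$ carries $\omega_Y$ to $\omega_X$ compatibly with Cartier structures. Since $i^*$, $R^ni^{\flat}$, the tensor products with the invertible sheaves $\omega_X,\omega_Y$ and the formation of the induced $\gamma$-structures all commute with restriction to open subsets, we may assume $Y=\Spec A$, $X=\Spec B$ with $B=A/I$, $A$ and $B$ regular, and, after shrinking, $I=(f_1,\dots,f_n)$ generated by a regular sequence while $F_{X*}B$ and $F_{Y*}A$ are free; when the codimension is not globally constant one argues componentwise. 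By \autoref{QCrysCartierGamma} it suffices to construct a natural isomorphism of functors $\QCohC(Y)\to\QCohG(X)$ (respecting coherence and local nilpotence), as the descent to crystals is then automatic.

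First I would record two lemmas. \emph{(i) A decomposition.} Using the Koszul resolution of $B$ over $A$ one obtains, for every quasi-coherent $\CO_Y$-module $\CF$, a natural isomorphism $\can\colon R^ni^{\flat}\CO_Y\otimes_{\CO_X}i^*\CF\overset{\sim}{\longrightarrow}R^ni^{\flat}\CF$ — the analogue of \autoref{flatdecompose} for the finite (non-flat) closed immersion $i$ — with $R^ni^{\flat}\CO_Y$ invertible (locally $\cong\wedge^n(I/I^2)^{\vee}$, but globally canonical). \emph{(ii) The Cartier structure of $R^ni^{\flat}\omega_Y$.} Because the Frobenius is natural, $i\circ F_X=F_Y\circ i$, so flatness of Frobenius together with pseudo-functoriality of $(\usc)^{\flat}$ yields a canonical isomorphism $F_X^{\flat}R^ni^{\flat}\cong R^ni^{\flat}F_Y^{\flat}$; applying $R^ni^{\flat}$ to the adjoint $\tilde{\kappa}_Y\colon\omega_Y\to F_Y^{\flat}\omega_Y$ of the Cartier operator and postcomposing with this isomorphism equips $R^ni^{\flat}\omega_Y$ with a Cartier structure. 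Under the canonical identification $R^ni^{\flat}\omega_Y\cong R^ni^{\flat}\CO_Y\otimes i^*\omega_Y\cong\omega_X$ (the second isomorphism being the adjunction formula for the regular immersion $i$) this structure agrees with $\tilde{\kappa}_X$. The cleanest way to see the last point is to pass to the dualizing complexes $\omega_Y^{\bullet},\omega_X^{\bullet}$ (shifts of $\omega_Y,\omega_X$): the Cartier isomorphism is an isomorphism $\kappa^{\bullet}\colon\omega^{\bullet}\to F^!\omega^{\bullet}$ on both, and $i^!$ sends $\kappa_Y^{\bullet}$ to $\kappa_X^{\bullet}$ by pseudo-functoriality of the upper shriek and $i\circ F_X=F_Y\circ i$; passing to the appropriate cohomology sheaf recovers the claim.

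Combining (i) for $\CF=M$ with (ii) gives, on underlying $\CO_X$-modules,
\[
(R^ni^{\flat}M)\otimes\omega_X^{-1}\;\cong\;R^ni^{\flat}\CO_Y\otimes i^*M\otimes\omega_X^{-1}\;\cong\;\omega_X\otimes i^*\omega_Y^{-1}\otimes i^*M\otimes\omega_X^{-1}\;\cong\;i^*(M\otimes\omega_Y^{-1}),
\]
naturally in the Cartier module $M$, where $R^ni^{\flat}\CO_Y\cong\omega_X\otimes i^*\omega_Y^{-1}$ is read off from (ii). It remains to check that this is an isomorphism of $\gamma$-sheaves: the left-hand side carries the $\gamma$-structure induced via \autoref{CartierGammaDef}(a) by the Cartier structure of $R^ni^{\flat}M$ — namely $R^ni^{\flat}$ applied to $\tilde{\kappa}_M$ followed by $F_X^{\flat}R^ni^{\flat}\cong R^ni^{\flat}F_Y^{\flat}$ — and the right-hand side carries $i^*$ of the $\gamma$-structure of $M\otimes\omega_Y^{-1}$, the base change $i^*F_Y^*\cong F_X^*i^*$ on this side being the trivial one coming from functoriality of $(\usc)^*$.

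\textbf{The main obstacle} is precisely this compatibility of the two $\gamma$-structures, and here an asymmetry shows up: on the $\gamma$-side the relevant base change is formal, whereas on the Cartier side it is the isomorphism $F_X^{\flat}R^ni^{\flat}\cong R^ni^{\flat}F_Y^{\flat}$ of (ii), whose interaction with the decomposition $\can$ of (i) must be pinned down. I would verify the compatibility affine-locally by the same kind of explicit computation with dual bases as in the Remark following \autoref{CartierGammaDef}: express $R^ni^{\flat}$ through the Koszul cochain complex, express $F_{Y*}A$ via a basis and its dual basis, and chase the resulting large diagram, which commutes once one knows that $F_X^{\flat}R^ni^{\flat}\cong R^ni^{\flat}F_Y^{\flat}$ is compatible both with the isomorphisms $\can$ of (i) and with the evaluation and permutation isomorphisms appearing in \autoref{CartierGammaDef} and \autoref{ExampleBasic}. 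No new idea beyond those already used for \autoref{CartierGamma} is needed, but the bookkeeping is substantial; \autoref{GenGamma}, \autoref{ExampleBasic} and the exactness and monoidality of $i^*$ reduce it to the commutativity of a handful of elementary squares of natural homomorphisms of modules. Finally, since $R^ni^{\flat}$, $i^*$ and $\usc\otimes\omega^{\pm1}$ preserve coherence and send (locally) nilpotent objects to (locally) nilpotent ones, and induce the functors in the diagram by \autoref{QCrysCartierGamma} and the pull-back results reviewed in Section~1, the constructed isomorphism of functors on the abelian categories descends to the asserted commutative square of categories of crystals.
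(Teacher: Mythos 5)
Your plan is sound and, at its computational core, lands in the same place as the paper: both arguments reduce to an affine complete intersection $I=(f_1,\dots,f_n)\subseteq R$ with $F_*R$ free, compute $R^ni^{\flat}$ via the Koszul complex equipped with the Frobenius lift $r\mapsto r^pf_i^{p-1}$, and verify the matching of structural morphisms by an explicit dual-basis diagram chase. What you do differently is the packaging. The paper works from the $\gamma$-sheaf side: it proves $R^ni^{\flat}(N\otimes\omega_Y)\cong i^*N\otimes\omega_X$ as Cartier modules directly (\autoref{Koszul}, \autoref{localGammapullback}, \autoref{Gammapullback}) and then deduces the stated isomorphism because $\usc\otimes\omega^{\pm 1}$ are equivalences; the dependence of $\Ext_R^n(R/I,M)\cong M/IM$ on the regular sequence is neutralized by the $\det(c_{ij})$ cancellation of \autoref{RegSequenceChange}. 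You instead factor through two intermediate statements: a projection-formula decomposition $R^ni^{\flat}\CF\cong R^ni^{\flat}\CO_Y\otimes i^*\CF$ (which indeed holds for the top Ext of an arbitrary quasi-coherent $\CF$ and is canonical, being induced by the natural pairing on a finite free resolution, so no gluing of sequence-dependent local isomorphisms is needed), and the identification $R^ni^{\flat}(\omega_Y,\kappa_Y)\cong(\omega_X,\kappa_X)$ via $i\circ F_X=F_Y\circ i$ and pseudo-functoriality of $(\usc)^{\flat}$. That is a legitimate and somewhat cleaner global organization, but it does not remove the hard step: the compatibility of $F_X^{\flat}R^ni^{\flat}\cong R^ni^{\flat}F_Y^{\flat}$ with your $\can$ is exactly where the factor $(f_1\cdots f_n)^{p-1}$ from the Koszul Frobenius lift enters, so the deferred "handful of elementary squares" is in substance the computation of \autoref{Koszul} and \autoref{localGammapullback}. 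One point you should make explicit: your step (ii) normalizes $\kappa_X$ to be the Cartier structure induced from $\kappa_Y$ through $i^{\flat}$, whereas the paper's conventions only posit some isomorphism $\kappa_X\colon\omega_X\to F^!\omega_X$; the paper makes the same implicit choice when it sets $\kappa_{R/I}(\overline m)=\overline{\kappa_R(f^{p-1}m)}$, but without fixing this normalization the statement of the theorem is not well posed, so it deserves a sentence.
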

We begin with the affine case. Noting that any closed immersion of regular schemes is a local complete intersection morphism, it suffices to consider the case of a complete intersection, where the pull-back of a Cartier module can be computed by using the Koszul complex. 
\begin{lemma} \label{Koszul}
Let $\underline{f}=f_1,f_2, \dots ,f_n$ be a regular sequence of elements of a commutative ring $R$. Let $I$ be the ideal generated by the $f_i$. Then for every Cartier module $M$ with structural map $\kappa$, there is an isomorphism
\[
	\phi_{\underline{f}}\colon \Ext_R^n(R/I,M) \overset{\sim}{\longrightarrow} M/IM
\]
where $M/IM$ is viewed as an $R/I$-module with the Cartier structure
\begin{align*}
	\kappa_{M/IM}\colon F_*M/IM & \to M/IM \\
	m+IM & \mapsto \kappa_M((f_1\cdot f_2 \cdots f_n)^{p-1}m) + IM.
\end{align*}
\end{lemma}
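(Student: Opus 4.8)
The plan is to reduce everything to an explicit computation with the Koszul complex. Since the assertion is local and, as the surrounding text notes, any closed immersion of regular schemes is a local complete intersection, it suffices to treat $R/I$ with $I=(\underline f)$ generated by a regular sequence, so that the Koszul complex $K_\bullet(\underline f)$ is a finite free resolution of $R/I$. First I would construct $\phi_{\underline f}$ on the level of $R/I$-modules. Applying $\Hom_R(\usc,M)$ to $K_\bullet(\underline f)$ yields a complex computing $\Ext^\bullet_R(R/I,M)$ whose top term is $\Hom_R(K_n,M)\cong M$ via the canonical trivialization $K_n\cong R$ of the top exterior power. The image of the last differential $\Hom_R(K_{n-1},M)\to\Hom_R(K_n,M)=M$ is exactly $IM$, so $\Ext^n_R(R/I,M)=\coker\cong M/IM$; this is $\phi_{\underline f}$, and its naturality in $M$ is immediate.

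The substance of the lemma is that $\phi_{\underline f}$ is an isomorphism of Cartier modules, i.e.\ that it intertwines the Cartier structure which $\Ext^n_R(R/I,M)=R^ni^\flat M$ inherits from the pull-back construction with the explicit operator $\kappa_{M/IM}$. This inherited structure is the map on top cohomology induced by $i^\flat M\xrightarrow{i^\flat\tilde\kappa} i^\flat F_Y^\flat M\xrightarrow{\sim} F_X^\flat i^\flat M$. I would make each ingredient explicit on the Koszul resolution. To handle $F_Y^\flat$ I would use the isomorphism $\can$ of \autoref{flatdecompose}, which for $f=F_Y$ writes $F_Y^\flat M\cong F_Y^\flat\CO_Y\otimes_{\CO_Y}F_Y^*M$; this is the same device used to pass between Cartier and $\gamma$-structures in \autoref{CartierGammaDef}. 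The key geometric input is that the Frobenius pull-back of the Koszul resolution is again a Koszul resolution, namely $F_Y^*K_\bullet(\underline f)=K_\bullet(\underline{f^p})$, reflecting $F_Y^*(R/I)=R/I^{[p]}$, and that the canonical comparison chain map $\lambda\colon K_\bullet(\underline{f^p})\to K_\bullet(\underline f)$ lifting $R/I^{[p]}\twoheadrightarrow R/I$ is, in top degree, multiplication by $\lambda_n=(f_1\cdots f_n)^{p-1}$.

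Threading $\tilde\kappa$ (which de-adjoints to $\kappa_M$) together with $\lambda_n$ through the identification $\phi_{\underline f}$, the induced operator on $M/IM$ becomes $\bar m\mapsto\overline{\kappa_M((f_1\cdots f_n)^{p-1}m)}$, which is the claimed $\kappa_{M/IM}$. It then remains to check the elementary compatibilities: that this assignment is well defined on $M/IM$ (if $m=f_jm'$ then $(f_1\cdots f_n)^{p-1}f_jm'=f_j^p\prod_{i\neq j}f_i^{p-1}m'$, and $\kappa_M(f_j^p\,\usc)=f_j\kappa_M(\usc)\in IM$) and that it is $R/I$-semilinear in the Cartier sense; both follow at once from the defining relation $\kappa_M(r^p\usc)=r\,\kappa_M(\usc)$. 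Passing to crystals then yields the commutative square of \autoref{pullbackcomp}.

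I expect the main obstacle to be the bookkeeping in the middle step: pinning down the canonical isomorphism $i^\flat F_Y^\flat M\cong F_X^\flat i^\flat M$ at the level of the two Koszul resolutions, so that precisely the factor $(f_1\cdots f_n)^{p-1}$ (and not some other product of the $f_i$) survives, with the correct signs coming from the exterior-algebra differentials and the correct Frobenius-twisted $R/I$-module structure. The explicit formulas for $\can$ and its inverse recorded in the proof of \autoref{flatdecompose}, together with the concrete description of the induced structures in the remark following \autoref{CartierGammaDef}, are what make this tractable; I would carry out the whole composite affine-locally, where $F_*R$ may be taken free with an explicit dual basis $\phi_1,\dots,\phi_r$.
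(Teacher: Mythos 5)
Your proposal follows essentially the same route as the paper: resolve $R/I$ by the Koszul complex, identify $\Ext_R^n(R/I,M)$ with $M/IM$ as the cokernel of the last differential, and trace the Cartier structure through a Frobenius lift of the Koszul resolution whose top-degree component is multiplication by $(f_1\cdots f_n)^{p-1}$. The only cosmetic difference is that the paper constructs the $p$-linear chain map $\CK(\underline f)\to F_*\CK(\underline f)$ directly as a tensor product of the rank-one lifts $r\mapsto r^pf_i^{p-1}$, whereas you work with its linearization $K_\bullet(\underline{f^p})\to K_\bullet(\underline f)$ and the adjoint $F^{\flat}$-form of the structural map; these are the same computation.
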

\begin{proof}
By definition we have to compute $\RHom_R(R/I,M)$. The structural morphism $\kappa_{i^{\flat}M}\colon F_*i^{\flat}M \to i^{\flat}M$ equals the composition 
\[
	F_*\RHom_R(R/I,M) \to \RHom_R(F_*R/I,F_*M) \to \RHom_R(R/I,M),
\]
where the first morphism is the canonical one and the second is induced by $F\colon R/I \to F_*R/I$ in the first and $\kappa\colon F_*M \to M$ in the second argument. A free resolution of the $R$-module $R/I$ is given by the Koszul chain complex $\CK(\underline{f})$. It is the total tensor product complex in the sense of \cite[2.7.1]{Weibel} of the following complexes $\CK(f_i)$ 
\[
	0 \longrightarrow R \overset{f_i}{\longrightarrow} R \longrightarrow 0
\]
concentrated in degrees $-1$ and $0$. Each complex $\CK(f_i)$ admits a lift of the Frobenius $F\colon R \to F_*R$ in degree $0$ by mapping $r$ to $r^pf_i^{p-1}$. This means the diagrams
\[
	\xymatrix{
		0 \ar[r] & R \ar[d]^{f_i^{p-1}F} \ar[r]^-{f_i} & R \ar[d]^F \ar[r] & 0 \\
		0 \ar[r] & F_*R \ar[r]^-{f_i} & F_*R \ar[r] & 0
	}
\]
commute. The maps $R \xrightarrow{f_i^{p-1}} F_*R$ give rise to a map of complexes $F\colon \CK(\underline{f}) \to F_* \CK(\underline{f})$ lifting the Frobenius in degree zero: In general, if $\phi\colon M \to F_*M$ and $\psi\colon N \to F_*N$ are $R$-linear maps, it is easy to check that the map
\begin{align*}
	M \otimes_R N &\to M \otimes_R N \\
	m \otimes n &\mapsto \phi(n) \otimes \psi(n)
\end{align*}
of abelian groups is $p$-linear, i.e.\ it is an $R$-linear map $M \otimes N \to F_*(M \otimes N)$. Thus we inductively obtain an $R$-linear morphism of complexes $F\colon \CK(\underline{f}) \to F_* \CK(\underline{f})$. Unwinding the definition of the tensor product of complexes, we see that the left end of this map is the square
\[
	\xymatrix@R30pt@C60pt{
		0 \ar[r] & R \ar[d]_{\prod_{i=1}^nf_i^{p-1}} \ar[r]^-{((-1)^{i+1}f_i)_i} & R^n \ar[d]^{(\prod_{j \neq i}f_j^{p-1})_i} \ar[r] & \cdots \\
		0 \ar[r] & F_*R \ar[r]^-{((-1)^{i+1}f_i)_i} & F_*R^n \ar[r] & \cdots 
  }     
\]
Consequently, the $n$-th degree of the composition 
\[
	F_*\Hom_R(\CK(\underline{f}),M) \xrightarrow{\text{canonical}} \Hom_R(F_*\CK(\underline{f}),F_*M) \xrightarrow{F \circ \usc \circ \kappa} Hom_R(\CK(\underline{f}),M)
\]
maps $m$ to $\kappa((\prod f_i^{p-1})m)$ by the identification $\Hom_R(R,M) \cong M$ via $\phi \mapsto \phi(1)$. The differential $\Hom_R(\CK_{n-1}(\underline{f}),M) \to \Hom_R(\CK_n(\underline{f}),M)$ corresponds to the map 
\begin{align*} 
	M^n &\to M \\
	(m_i)_i &\mapsto \sum f_im_i.
\end{align*}
The image is the submodule $IM$ and thus the n-th cohomology of $\Hom_R(\CK(\underline{f}),M)$ is isomorphic to $M/IM$, equipped with the claimed Cartier structure.   
\end{proof}
\begin{remark} \label{RegSequenceChange}
The isomorphism $\Ext_R^n(R/I,M) \to M/IM$ of the underlying sheaves is not canonical. It depends on the choice of the regular sequence $\underline{f}$. From the construction of this isomorphism we see that if $\underline{g}=g_1, \dots ,g_n$ is another regular sequence of $R$ generating $I$ and $g_i=\sum c_{ij}f_j$, the automorphism $\tau$ on $M/IM$ making the diagram
\[
	\xymatrix{ 
		& M/IM \ar[dd]^{\tau} \\
		\Ext_R^n(R/I,M) \ar[ur]^{\phi_{\underline{f}}} \ar[dr]_{\phi_{\underline{g}}} & \\
		& M/IM
	}
\]
commutative is given by multiplication with $\det(c_{ij})$.
\end{remark}
Nevertheless, interpreting the top-$\Ext$-groups as quotient modules in the case we are interested in, namely $\Ext_R^n(R/I,M) \otimes \Ext_R^n(R/I,\omega_R)^{-1}$, leads to isomorphisms, which are independent of the regular sequence generating $I$, since the correcting factors from both terms cancel.   
\begin{lemma} \label{localGammapullback}
Let $R$ be a commutative ring such that $F_*R$ is finite free, $N$ a $\gamma$-module over $R$ and $I\subseteq R$ an ideal which is generated by a regular sequence of length $n$. There is a canonical isomorphism between Cartier modules 
\[
	\Ext_R^n(R/I,N \otimes \omega_R) \cong N/IN \otimes \omega_{R/I}.
\]
\end{lemma}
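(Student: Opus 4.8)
The plan is to reduce the statement to \autoref{Koszul} applied twice---once to the Cartier module $N \otimes \omega_R$ and once to $\omega_R$ itself---then to match the two resulting Cartier structures using the explicit description of the structure induced by a $\gamma$-structure, and finally to check that the comparison is independent of the chosen regular sequence via \autoref{RegSequenceChange}.

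First I would fix a regular sequence $\underline{f} = f_1, \dots, f_n$ generating $I$. Applying \autoref{Koszul} to the Cartier module $N \otimes \omega_R$ (with the Cartier structure induced by $\gamma_N$ as in \autoref{CartierGammaDef}) identifies $\Ext_R^n(R/I, N \otimes \omega_R)$ with the quotient $(N \otimes_R \omega_R)/I(N \otimes_R \omega_R) \cong N/IN \otimes_{R/I} (\omega_R/I\omega_R)$, carrying the twisted operator $\overline{x} \mapsto \kappa_{N \otimes \omega_R}((f_1 \cdots f_n)^{p-1} x) \bmod I$. Applying the same lemma to $\omega_R$ identifies the dualizing module $\omega_{R/I} = \Ext_R^n(R/I, \omega_R)$ with $\omega_R/I\omega_R$, carrying $\overline{m} \mapsto \kappa_R((f_1 \cdots f_n)^{p-1} m) \bmod I$, which we take as the description of the Cartier operator $\kappa_{R/I}$ on $\omega_{R/I}$. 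Together these two identifications furnish an isomorphism of the underlying $R/I$-modules on the two sides of the claimed equation, and it remains to see that it respects the Cartier structures.

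The crux is this second step. Here I would use the explicit form of the Cartier operator of $N \otimes \omega_R$ recorded just after \autoref{CartierGammaDef}: the map $\kappa_{N \otimes \omega_R}\colon F_*(N \otimes \omega_R) \to N \otimes \omega_R$ applies $\gamma_N$ to the $N$-factor and, via the projection formula, the Cartier operator $\kappa_R\colon F_*\omega_R \to \omega_R$ to the $\omega_R$-factor. Writing a local section as $n \otimes m$ and inserting the twist, $(f_1 \cdots f_n)^{p-1}(n \otimes m) = n \otimes (f_1 \cdots f_n)^{p-1} m$, so the twisted operator pushes $n$ through $\gamma_N$ and sends the $\omega_R$-component to $\kappa_R((f_1 \cdots f_n)^{p-1} m)$. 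Reducing modulo $I$, this $\omega_R$-component is by construction $\kappa_{R/I}(\overline{m})$, while $\gamma_N(n) \bmod I$ is $\gamma_{N/IN}(\overline{n})$, since reduction along $i$ is precisely how the pulled-back structure $i^*\gamma_N$ on $N/IN$ is defined. Hence the transported operator is exactly the one \autoref{CartierGammaDef} produces from the $\gamma$-module $i^*N = N/IN$ and $\kappa_{R/I}$, i.e.\ the Cartier structure on $N/IN \otimes \omega_{R/I}$. The main obstacle lies here: one must carefully track the projection-formula isomorphism and confirm that the scalar $(f_1 \cdots f_n)^{p-1}$ is absorbed entirely into the Cartier operator on the $\omega$-factor, leaving precisely the base-changed $\gamma_N$ on the $N$-factor. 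Should one prefer the adjoint ($F^{\flat}$) description, the same verification can be run through the dual-basis formula $n \otimes m \mapsto (s \mapsto \sum_i \gamma_N(n) \otimes \phi_i(s) \otimes \phi_m(s_i))$ from the remark after \autoref{CartierGammaDef}, which is the route flagged there.

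Finally I would check that the comparison isomorphism is canonical, i.e.\ independent of $\underline{f}$. Replacing $\underline{f}$ by another regular sequence $\underline{g} = g_1, \dots, g_n$ generating $I$, with $g_i = \sum_j c_{ij} f_j$, multiplies the \autoref{Koszul} identification of $\Ext_R^n(R/I, N \otimes \omega_R)$ by $\det(c_{ij})$ by \autoref{RegSequenceChange}; applied to $\omega_R$, the same remark multiplies the identification $\omega_{R/I} \cong \omega_R/I\omega_R$ by the identical factor $\det(c_{ij})$. Since the first factor enters the source of the comparison map and the second its target through the tensor factor $\omega_{R/I}$, the two cancel, so the isomorphism does not depend on the chosen regular sequence. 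This is exactly the cancellation anticipated in the discussion following \autoref{RegSequenceChange}, and it is what upgrades the sequence-dependent identifications into the canonical isomorphism asserted in the statement; it is moreover the affine, local incarnation of \autoref{pullbackcomp}.
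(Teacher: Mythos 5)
Your proposal is correct and follows essentially the same route as the paper's proof: two applications of \autoref{Koszul} (to $N \otimes \omega_R$ and to $\omega_R$), an explicit comparison of the two induced Cartier structures on the common quotient $N/IN \otimes \omega_R/I\omega_R$, and the cancellation of the $\det(c_{ij})$ factors from \autoref{RegSequenceChange} to establish canonicity. The only difference is cosmetic: you phrase the middle comparison via the factorwise (projection-formula) description of $\kappa_{N\otimes\omega_R}$, whereas the paper writes both operators out in a dual basis of $F_*R$ and observes that the resulting formulas literally coincide --- which is precisely the dual-basis verification you flag as the fallback.
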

\begin{proof}
Choose a regular sequence $\underline{f}=f_1, \dots ,f_n$ such that $I$ is generated by the $f_i$. Also choose a basis $r_1,\dots,r_t$ of $R$ viewed as a free $R$-module via the Frobenius. The dual basis $\phi_1, \dots, \phi_t$ is given by $\phi_i(r_j) = \delta_{ij}$. Let $\kappa$ denote the intrinsic Cartier structure of $\Ext_R^n(R/I,N \otimes \omega_R)$ and $\tilde{\kappa}$ denote the Cartier structure of $N/IN \otimes \omega_{R/I}$ as explained in \autoref{CartierGamma}. Identifying $\Ext_R^n(R/I,N \otimes \omega_R)$ with $(N \otimes \omega_R) / I(N \otimes \omega_R)$ via the isomorphism $\phi_{\underline{f}}$ from \autoref{Koszul}, we obtain the map 
\begin{align*}
	\kappa'\colon F_*((N \otimes \omega_R) / I(N \otimes \omega_R)) &\to (N \otimes \omega_R) / I(N \otimes \omega_R) \\
	n \otimes m + I(N \otimes \omega_R) &\mapsto \sum_{i=1}^t\phi_i(1)\gamma_N(n) \otimes \kappa_R(r_if^{p-1}m) + I(N \otimes \omega_R)
\end{align*}
as the induced Cartier structure on $(N \otimes \omega_R) / I(N \otimes \omega_R)$. Here $\gamma_N$ is the $\gamma$-structure of $N$. 

Also identifying $\omega_{R/I} \cong \Ext_R^n(R/I, \omega_R)$ with $\omega_R/I\omega_R$ via $\underline{f}$, its Cartier structure $\kappa_{R/I}$ is given by $m + I\omega_Y \mapsto \kappa_R(f^{p-1}m) + I(\omega_Y)$. From this perspective, the Cartier structure of $\tilde{\kappa}$ of $N/IN \otimes \omega_{R/I}$ induces the structural morphism 
\begin{align*}
	\tilde{\kappa}'\colon F_*(N/IN \otimes \omega_R / I\omega_R) &\to (N/IN \otimes \omega_R / I\omega_R) \\
	\overline{n} \otimes \overline{m} &\mapsto \sum_{i=1}^n\overline{\phi_i(1)} \cdot \overline{\gamma_N(n)} \otimes \overline{\kappa_R(r_if^{p-1}m)} \\
	&= \sum_{i=1}^t\phi_i(1)\gamma_N(n) \otimes \kappa_R(r_if^{p-1}m) + I(N \otimes \omega_R)
\end{align*}
on $N/IN \otimes \omega_R / I\omega_R$. Finally, there is a natural isomorphism 
\[
	\tau\colon (N \otimes_R \omega_R) / I(N \otimes_R \omega_R) \overset{\sim}{\longrightarrow} N/IN \otimes_{R/I} \omega_R/I \omega_R
\]
of $R$-modules, mapping $\overline{n \otimes m}$ to $\overline{n} \otimes \overline{m}$. The explicit formulas for $\kappa'$ and $\tilde{\kappa}'$ show that $\tau$ makes the square in the middle of the diagram
\[
	\xymatrix{
		\Ext_R^n(R/I,N \otimes_R \omega_R) \ar[d]^{\phi_{\underline{f}}} \ar[r]^-{\kappa} & F^{\flat} \Ext_R^n(R/I,N \otimes_R \omega_R) \ar[d]^{F^{\flat}\phi_{\underline{f}}} \\
		(N \otimes \omega_R)/I(N \otimes \omega_R) \ar[r]^-{\kappa'} \ar[d]^{\tau} & F^{\flat}((N \otimes \omega_R)/I(N \otimes \omega_R)) \ar[d]^{F^{\flat} \tau} \\
		N/IN \otimes \omega_R /I \omega_R \ar[r]^-{\tilde{\kappa}'} & F^{\flat}(N/IN \otimes \omega_R /I \omega_R) \\
		i^*N \otimes_R \omega_{R/I} \ar[r]^-{\tilde{\kappa}} \ar[u]_{\id \otimes \phi_{\underline{f}}}&  F^{\flat}(i^*N \otimes_R \omega_{R/I}) \ar[u]_{F^{\flat}(\id \otimes \phi_{\underline{f}})}
	}
\]
commutative. The squares above and below commute by construction. Let $\Phi$ be the composition $(\id \otimes \phi_{\underline{f}}^{\omega_R})^{-1} \circ \tau \circ \phi_{\underline{f}}^{N \otimes \omega_R}$. We have just seen that the diagram
\[
	\xymatrix{
		 \Ext_R^n(R/I,N \otimes_R \omega_R) \ar[d]^{\Phi} \ar[r]^-{\kappa} & F^{\flat} \Ext_R^n(R/I,N \otimes_R \omega_R) \ar[d]^{F^{\flat}\Phi} \\
		i^*N \otimes_R \omega_{R/I} \ar[r]^-{\tilde{\kappa}} &  F^{\flat}(i^*N \otimes_R \omega_{R/I})
	}
\]	
commutes. Furthermore, $\Phi$ is natural: Let $\underline{g} = g_1, \dots ,g_n$ be another regular sequence generating $I$ with $g_i = \sum c_{ij}f_j$. Then, by \autoref{RegSequenceChange}, 
\begin{align*}
	(\id \otimes \phi_{\underline{g}}^{\omega_R})^{-1} \circ \tau \circ \phi_{\underline{g}}^{N \otimes \omega_R} &= \det(c_{ij})^{-1}(\id \otimes  \phi_{\underline{f}}^{\omega_R})^{-1} \circ \tau \circ \det(c_{ij}) \phi_{\underline{f}}^{N \otimes \omega_R} \\
	&= (\id \otimes \phi_{\underline{f}}^{\omega_R})^{-1} \circ \tau \circ \phi_{\underline{f}}^{N \otimes \omega_R}.
\end{align*}
\end{proof}
\begin{proposition} \label{Gammapullback}
Let $i\colon X \into Y$ be a closed immersion of regular, $F$-finite schemes and let $N$ be a $\gamma$-sheaf on $Y$. There is a canonical isomorphism
\[
	\Phi\colon \overline{i}^*\ExtS_{\CO_Y}^n(i_*\CO_X,N \otimes_{\CO_Y} \omega_Y) \overset{\sim}{\longrightarrow} i^*N \otimes_{\CO_X} \omega_X
\]
of Cartier modules, which is functorial in $N$. Here $\overline{i}$ denotes the flat morphism of ringed spaces $(X,\CO_X) \to (Y,i_*\CO_X)$.  
\end{proposition}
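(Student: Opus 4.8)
The plan is to construct $\Phi$ by gluing together the local isomorphisms supplied by \autoref{localGammapullback}, so the first task is to reduce to the affine situation treated there. Since both $X$ and $Y$ are regular, the closed immersion $i$ is a local complete intersection morphism of codimension $n$; hence $Y$ admits a cover by affine opens $\Spec R$ on which the ideal $I$ of $X$ is generated by a regular sequence $\underline{f}=f_1,\dots,f_n$, and since $F_*\CO_Y$ is a locally free $\CO_Y$-module — the Frobenius being flat by Kunz \cite[Theorem 2.1]{Kunz} and finite by assumption — we may refine the cover so that in addition $F_*R$ is finite free. On such a $\Spec R$ the functor $\overline{i}^*\ExtS_{\CO_Y}^n(i_*\CO_X,\usc) = R^n i^{\flat}(\usc)$ restricts, as a functor on Cartier modules, to $M\mapsto\Ext_R^n(R/I,M)$ equipped with the intrinsic Cartier structure exhibited in the proof of \autoref{Koszul}, while $\overline{i}^*$ is simply restriction of scalars along $\CO_Y\twoheadrightarrow i_*\CO_X$; moreover the top Koszul cohomology identifies $\Ext_R^n(R/I,M)$ with $M/IM$. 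Applying this to $M=N\otimes_{\CO_Y}\omega_Y$ and to $M=\omega_Y$ identifies the left-hand side with $\Ext_R^n(R/I,N\otimes_R\omega_R)$ and $\omega_X|_{\Spec R/I}$ with $\Ext_R^n(R/I,\omega_R)=\omega_{R/I}$, so that \autoref{localGammapullback} yields, over each member of the cover, an isomorphism of Cartier modules $\overline{i}^*\ExtS_{\CO_Y}^n(i_*\CO_X,N\otimes_{\CO_Y}\omega_Y)\overset{\sim}{\longrightarrow}i^*N\otimes_{\CO_X}\omega_X$.

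The decisive point is that this local isomorphism does not depend on the auxiliary choices, so that the pieces patch. The construction of \autoref{localGammapullback} involves only the regular sequence $\underline{f}$ and a basis of $F_*R$; the basis is used purely to write down the formulas proving Cartier-linearity, whereas the underlying $\CO_X$-linear map is $\Phi=(\id\otimes\phi_{\underline{f}}^{\omega_R})^{-1}\circ\tau\circ\phi_{\underline{f}}^{N\otimes\omega_R}$, with $\phi_{\underline{f}}$ the Koszul identification of \autoref{Koszul} and $\tau$ the evident natural isomorphism $(N\otimes\omega_R)/I(N\otimes\omega_R)\overset{\sim}{\longrightarrow}N/IN\otimes\omega_R/I\omega_R$. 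By \autoref{RegSequenceChange}, replacing $\underline{f}$ by another regular sequence $\underline{g}=(\sum_j c_{ij}f_j)_i$ generating $I$ multiplies both $\phi_{\underline{f}}^{N\otimes\omega_R}$ and $\phi_{\underline{f}}^{\omega_R}$ by $\det(c_{ij})$, so these correcting factors cancel in $\Phi$ — this is precisely the computation with which the proof of \autoref{localGammapullback} concludes. Hence $\Phi$ is canonical on each affine of the cover; in particular two such isomorphisms agree on any overlap (which may again be covered by affines carrying suitable regular sequences), and they glue to a global isomorphism $\Phi\colon\overline{i}^*\ExtS_{\CO_Y}^n(i_*\CO_X,N\otimes_{\CO_Y}\omega_Y)\overset{\sim}{\longrightarrow}i^*N\otimes_{\CO_X}\omega_X$ of $\CO_X$-modules.

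It then remains to see that $\Phi$ respects the Cartier structures and is functorial in $N$, both of which are local properties already contained in \autoref{localGammapullback}: the structural morphisms on either side are maps of quasi-coherent $\CO_X$-modules, so the square expressing Cartier-linearity of $\Phi$ commutes globally once it commutes on the affine cover, and likewise the naturality square attached to a morphism $N\to N'$ of $\gamma$-sheaves commutes globally because it does so on each affine. The main obstacle will be the middle step: confirming that the locally constructed isomorphism genuinely does not depend on the chosen regular sequence, which is exactly what legitimizes the gluing and is the reason \autoref{RegSequenceChange} was isolated beforehand. A minor bookkeeping point, dispatched by unwinding definitions, is the identification of $\overline{i}^*$ — pullback along the flat morphism of ringed spaces $(X,\CO_X)\to(Y,i_*\CO_X)$ — with restriction of scalars along $\CO_Y\twoheadrightarrow i_*\CO_X$, together with the compatibility of the two descriptions of the Cartier structure on $R^n i^{\flat}(N\otimes\omega_Y)$.
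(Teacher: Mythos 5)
Your proposal follows the paper's proof exactly: cover $Y$ by affine opens on which $X$ is a complete intersection and $F_*\CO_Y$ is free, invoke \autoref{localGammapullback} on each piece, and glue using the canonicity of the local isomorphism (independence of the regular sequence, which is the cancellation-of-determinants computation that closes the proof of \autoref{localGammapullback}). The paper states this in three sentences; your version merely spells out the gluing and the Cartier-linearity/functoriality checks in more detail, but the argument is the same.
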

\begin{proof}
Choose an affine open covering $\{U_k\}_k = \Spec R_k$ of $Y$ such that $i_{U_k}\colon i^{-1}(U_k) \into U_k$ is a complete intersection. By refining the covering we can assume that $F_*R_k$ is free. Let $I_k \subseteq R_k$ be the ideal such that $i|_{U_k}$ corresponds to the ring homomorphism $R_k \to R_k/I_k$. By \autoref{localGammapullback}, we have an isomorphism $\Ext_R^n(R_k/I_k, N|_{U_k}\otimes \omega_{R_k})  \cong i^*N|_{U_k} \otimes \omega_{R_k/I_k}$, which is natural and therefore, we can glue the local isomorphisms to the desired global map $\Phi$. 
\end{proof}
\autoref{Gammapullback} shows that there is a natural isomorphism of functors $R^ni^{\flat} \circ \usc \otimes_{\CO_Y} \omega_Y \cong \usc \otimes_{\CO_X} \omega_X \circ i^*$. This enables us to prove \autoref{pullbackcomp} because $\usc \otimes_{\CO_Y} \omega_Y$ and $\usc \otimes_{\CO_X} \omega_X$ are equivalences of categories.
\begin{proof}[Proof of \autoref{pullbackcomp}]
For every $\gamma$-sheaf $N$ on $Y$, there is an isomorphism
\begin{align*}
	R^ni^{\flat}(N \otimes_{\CO_Y} \omega_Y) &\cong i^{-1}\ExtS_{\CO_Y}^n(i_*\CO_X,N \otimes_{\CO_Y} \omega_Y) \\
	&\cong i^*N \otimes_{\CO_X} \omega_X
\end{align*}
by \autoref{Gammapullback}, which is functorial in $N$. As $\usc \otimes_{\CO_Y} \omega_Y$ and $\usc \otimes_{\CO_X} \omega_X$ are equivalences of categories, even the diagram
\[
	\xymatrix@R30pt@C70pt{
		\Coh_{\kappa}(Y) \ar[d]^{R^ni^!} \ar@<1mm>[r]^-{\usc \otimes_{\CO_Y} \omega_Y^{-1}} &  \Coh_{\gamma}(Y) \ar@<1mm>[l]^-{\usc \otimes_{\CO_Y} \omega_Y} \ar[d]^{i^*} \\
		\Coh_{\kappa}(X) \ar@<1mm>[r]^-{\usc \otimes_{\CO_X} \omega_X^{-1}} & \Coh_{\gamma}(X) \ar@<1mm>[l]^-{\usc \otimes_{\CO_X} \omega_X}
	}
\]
commutes. Passing to crystals finishes the proof.
\end{proof}
Now we turn to open immersions. 
\begin{proposition}\label{Gopencomp}
Let $j\colon U \to X$ be an open immersion of regular, $F$-finite schemes and $M$ a Cartier module on $X$. Then there is a natural isomorphism of $\gamma$-sheaves 
\[
	j^!M \otimes_{\CO_U} \omega_U^{-1} \cong j^*(M \otimes \omega_X^{-1}).
\]
\end{proposition}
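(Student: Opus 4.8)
The plan is to show that restriction along the open immersion $j$ intertwines the equivalence $\usc\otimes\omega_X^{-1}\colon\QCohC(X)\to\QCohG(X)$ of \autoref{CartierGamma} with the corresponding equivalence on $U$. The point is that this equivalence is assembled in \autoref{CartierGammaDef}(a) out of a fixed list of ingredients — the structural map $\kappa_M$, the Cartier isomorphism $\kappa_X$ together with its dual $\kappa_X^\vee$, the isomorphism $\can$ of \autoref{flatdecompose}, the evaluation maps $\ev_{\CL}$, and permutations of tensor factors — each of which is compatible with $j^*$. Applying $j^*$ to the composition defining $\gamma_{M\otimes\omega_X^{-1}}$ and transporting it along the canonical isomorphism $j^*F_X^*\cong F_U^*j^*$ should therefore produce precisely the composition that defines the $\gamma$-structure of $(j^!M)\otimes\omega_U^{-1}$.

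First I would record the elementary compatibilities over $U$. Since $F_X$ is the identity on the underlying topological space, one has $F_X^{-1}(U)=U$, so the commutative square formed by the two copies of $j$ and by $F_X$, $F_U$ is Cartesian; hence $(F_X)_*\CO_X$ and the functor $F_X^\flat=\overline{F_X}^*\SHom_{\CO_X}((F_X)_*\CO_X,\usc)$ restrict along $j$ to $(F_U)_*\CO_U$ and $F_U^\flat$. Consequently $j^*$ commutes with $F^\flat$ compatibly with $\can$, and the base change isomorphism $\bc\colon F_{U*}j^*\cong j^*F_{X*}$ is the canonical one. A dualizing sheaf for $U$ is $j^*\omega_X$ and $\kappa_X$ restricts to the Cartier isomorphism of $U$, so we fix $\omega_U:=j^*\omega_X$, $\kappa_U:=j^*\kappa_X$ and $\omega_U^{-1}=j^*\omega_X^{-1}$; on underlying sheaves this already gives the canonical identification $j^!M\otimes_{\CO_U}\omega_U^{-1}=j^*M\otimes j^*\omega_X^{-1}\cong j^*(M\otimes\omega_X^{-1})$. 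Recall also that, $j$ being essentially \'etale, $j^!M$ is $j^*M$ with structural morphism $F_{U*}j^*M\xrightarrow{\bc}j^*F_{X*}M\xrightarrow{j^*\kappa_M}j^*M$, while the pull-back functor on $\gamma$-sheaves equips $j^*(M\otimes\omega_X^{-1})$ with the $\gamma$-structure obtained from $j^*(\gamma_{M\otimes\omega_X^{-1}})$ via $j^*F_X^*\cong F_U^*j^*$.

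The core of the proof is then a diagram chase through the composition of \autoref{CartierGammaDef}(a) after applying $j^*$: exactness and symmetric monoidality of $j^*$ carry every permutation to a permutation and every $\ev_{\CL}$ to $\ev_{j^*\CL}$, while the previous paragraph handles $\kappa_M$, $\kappa_X^\vee$, $\can$, and the isomorphism $F^\flat\CO\otimes(F^\flat\CO)^{-1}\cong\CO$; term by term this identifies the $\gamma$-structure on $j^*(M\otimes\omega_X^{-1})$ with that of $(j^!M)\otimes_{\CO_U}\omega_U^{-1}$, and naturality in $M$ is automatic since $\can$, $\ev$, $\bc$ and $j^*\omega_X\cong\omega_U$ are all natural. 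The only genuinely nontrivial point — and the step I expect to be the main obstacle — is the compatibility of $j^*\can$ with $\bc$ (equivalently, that the canonical identification $j^*F_X^\flat\cong F_U^\flat j^*$ respects the Frobenius structures). This I would settle in the affine case $X=\Spec R$, $U=\Spec R_g$ a basic open: there $\can$ and its inverse are given by the explicit formulas in the proof of \autoref{flatdecompose}, and localizing a free basis of $F_*R$ (and its dual basis) at $g$ makes the compatibility immediate. Everything else is formal.
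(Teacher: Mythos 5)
Your proposal is correct and follows essentially the same route as the paper's proof: identify $\omega_U$ with $j^*\omega_X$, recall that the Cartier structure of $j^!M$ is $j^*\kappa_M$ transported along the canonical identification of $j^*F_X^{\flat}$ with $F_U^{\flat}j^*$, and then check that $j^*$ applied to the composition of \autoref{CartierGammaDef}(a) agrees term by term with the corresponding composition over $U$. The only difference is one of emphasis — you isolate the compatibility of $\can$ with restriction as the key step and verify it affine-locally, whereas the paper records the whole comparison in a single commutative diagram and leaves that verification implicit.
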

\begin{proof}
One easily checks that, for a Cartier module $M$ on $X$ with structural morphism $\tilde{\kappa}\colon M \to F_X^{\flat}M$, the Cartier structure on $j^*M$ is the composition
\[
	j^*M \xrightarrow{j^*\tilde{\kappa}} j^*F^{\flat}M \overset{\sim}{\longrightarrow} F_U^{\flat}j^*M.
\]
The dualizing sheaf $\omega_U$ of $U$ is given by $j^*\omega_X$. Therefore we have $j^!M \otimes \omega_U^{-1} \cong j^*(M \otimes \omega_X^{-1})$ and the diagram
\[
	\xymatrix{
		j^*M \otimes j^*\omega_X^{-1} \ar[d]^{\sim} \ar[r] & F^{\flat}j^*M \otimes F^{\flat}j^*\omega_X^{-1} \ar[d]^{\sim} \ar[r]^-{\sim} & F^*(j^*M \otimes j^*\omega_X^{-1}) \ar[d]^{\sim} \\
		j^*(M \otimes \omega_X^{-1}) \ar[r] & j^*(F^{\flat}M \otimes F^{\flat} \omega_X^{-1}) \ar[r]^-{\sim} & j^*F^*(M \otimes \omega_X^{-1})
	}
\]
commutes. Here the horizontal arrows are the $\gamma$-structures of $j^*M \otimes_{\CO_U} \omega_U^{-1}$ and $j^*(M \otimes \omega_X^{-1})$. 
\end{proof}

\subsection{Compatibility with push-forward}

For the construction of a push-forward for $\gamma$-sheaves, we follow the construction given in subsection 6.3 of \cite{BliBoe.CartierEmKis}. Then we show that the equivalence between Cartier modules and $\gamma$-sheaves given by tensoring with the dualizing sheaf is compatible with push-forward for morphisms of regular schemes. This proof is also mainly the one given in ibid. By abuse of notation, let $\kappa_X\colon F_{X*}\omega_X \to \omega_X$ be the adjoint of the Cartier structure of $\omega_X$.   
\begin{definition}[\protect{\cite[Definition 6.3.1]{BliBoe.CartierEmKis}}]
Let $f\colon X \to Y$ be a morphism of smooth, $F$-finite $k$-schemes. Let $N$ be a $\gamma$-sheaf on $X$. Then we define the \emph{push-forward $f_+N$} as the twist of the push-forward $f_*$ of Cartier modules, i.e.
\[
	f_+N = f_*(N \otimes_{\CO_X} \omega_X) \otimes_{\CO_Y} \omega_Y^{-1}.
\]
The push-forward for $\gamma$-crystals is the one induced by the just given push-forward of $\gamma$-sheaves.
\end{definition}
By construction, the push-forward for $\gamma$-sheaves is compatible with the push-forward for Cartier modules. In order to show that $f_+$ is compatible with the equivalence between $\gamma$-crystals and lfgu modules, we need a different description of $f_+$ for $\gamma$-sheaves based on the relative Cartier operator. 

We recall two general constructions, which are repeatedly used in this subsection. For this we consider a morphism $f\colon X \to Y$ of arbitrary schemes over $\Spec \ZZ$. Let $\CF$ be a quasi-coherent $\CO_X$-module and $\CE$ a quasi-coherent $\CO_Y$-module. The adjoint of the composition
\[
	f^*(f_*\CF \otimes_{\CO_X} \CE) \cong f^*f_*\CF \otimes f^*\CE \xrightarrow{\widetilde{\ad}_f \otimes \id} \CF \otimes f^*\CE
\]
where $\widetilde{\ad}_f\colon f^*f_* \to \id$ is the counit of the adjunction, yields a natural morphism 
\[
	\operatorname{proj}\colon f_*\CF \otimes_{\CO_X} \CE \to f_*(\CF \otimes f^*\CE).
\]
As a consequence of the projection formula (\cite[Exercise III.8.3]{Hartshorne}), it is an isomorphism if $f$ is quasi-compact and separated and if $\CE$ is locally free. 

For two morphisms $f\colon X \to S$ and $g\colon Y \to S$, let $f'\colon X \times_S Y \to Y$ and $g'\colon X \times_S Y \to X$ be the projections such that the square 
\[
	\xymatrix{
		X \times_S Y \ar[r]^-{f'} \ar[d]^{g'} & Y \ar[d]^g \\
		X \ar[r]^f & S
	}
\]
is cartesian. There is a canonical morphism of functors 
\[
	\bc\colon f^*g_* \to g'_*f'^*
\]
of quasi-coherent sheaves given by the adjoint of the composition
\[
	g_* \xrightarrow{g_* \ad_{f'}} g_*f'_*f'^* \cong f_*g'_*f'^*,
\]
where $\ad_{f'}\colon \id \to f'_*f'^*$ is the unit of the adjunction. If $g$ is affine, $\bc$ is an isomorphism. To see this, we can assume that $S$, $X$ and $Y$ are affine, because $g$ and therefore $g'$ is an affine morphism. Then the claim is a well known property of the tensor product. The morphism $\bc$ is also an isomorphism if $X$ and $Y$ are Noetherian, $f$ is flat and $g$ is separated of finite type (\cite[Proposition III.9.3]{Hartshorne}).   
The next lemma relates these two isomorphisms. 
\begin{lemma} \label{twoprojection}
Let $f\colon X \to S$ and $g\colon Y \to S$ be morphisms of schemes and let $f'\colon X \times_S X \to Y$ and $g'\colon X \times_S Y \to X$ be the projections. Then, for every quasi-coherent $\CO_X$-module $\CF$ and every quasi-coherent $\CO_S$-module $\CE$, the diagram
\[
	\xymatrix{
		f_*\CF \otimes g_*\mathcal E \ar@{=}[r] \ar[d]^{\proj} & f_*\CF \otimes g_*\mathcal E \ar[d]^{\proj} \\
		f_*(\CF \otimes f^*g_*\mathcal E) \ar[d]^{\bc} & g_*(g^*f_*\CF \otimes \mathcal E) \ar[d]^{\bc} \\
		f_*(\CF \otimes g'_*f'^*\mathcal E) \ar[d]^{\proj} & g_*(f'_*g'^*\CF \otimes \mathcal E) \ar[d]^{\proj} \\
		f_*g'_*(g'^*\CF \otimes f'^*\mathcal E) \ar[r]^-{\sim} & g_*f'_*(g'^*\CF \otimes f'^*\mathcal E)
	}
\]
commutes.
\end{lemma}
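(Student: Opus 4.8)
The plan is to reduce the commutativity of this large diagram to local computations on affine schemes, and then to verify the resulting identity of morphisms of modules by a direct, explicit calculation. Since all the morphisms involved --- the projection formula maps $\proj$ and the base-change maps $\bc$ --- are constructed via adjunctions and are compatible with restriction to open subsets, it suffices to check commutativity after restricting to affine open subsets of $S$, $X$ and $Y$. Concretely, I would pick affine opens $\Spec A \subseteq S$, $\Spec B \subseteq X$ mapping to it via $f$, and $\Spec C \subseteq Y$ mapping to it via $g$; then $X \times_S Y$ is covered by the affine opens $\Spec(B \otimes_A C)$, and $\CF$, $\CE$ become a $B$-module $N$ and an $A$-module $L$ respectively.

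Next I would write out each of the four edges of the square and the two vertical columns as explicit $A$-linear (in fact appropriately-linear) maps of modules. Tracing the left column: $\proj$ sends $N \otimes_A L$ (viewed as a $B$-module, i.e.\ $f_*\CF \otimes g_*\CE$ globalized) into $N \otimes_B (B \otimes_A L)$, then $\bc$ rewrites $B \otimes_A L$ as the pushforward along $g'$ of $(B \otimes_A C) \otimes_C L$, i.e.\ as $B \otimes_A L$ again but now tracked through the fiber product, and finally $\proj$ lands in $(B \otimes_A C) \otimes_B \bigl( (B\otimes_A C) \otimes_A L\bigr)$ or rather in $N \otimes_B (B \otimes_A C) \otimes_C ((B\otimes_A C)\otimes_A L)$ suitably interpreted as $g'_*$ of $g'^*N \otimes f'^*L$ which globally is $(N \otimes_B (B\otimes_A C)) \otimes_C (C \otimes_A L)$... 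The point is that on elements, every composite sends a generator $n \otimes \ell$ (with $n \in N$, $\ell \in L$) to the ``same'' element $\bar n \otimes \bar\ell$ of $(N \otimes_B (B\otimes_A C)) \otimes_{B\otimes_A C} ((B\otimes_A C) \otimes_A L) \cong (N\otimes_B(B\otimes_A C)) \otimes_C ((B\otimes_A C)\otimes_A L)$; the right column does the same with the roles of the two factors interchanged, and the bottom isomorphism is exactly the canonical symmetry identifying $f_*g'_*$ with $g_*f'_*$. So once everything is unwound, both routes send $n \otimes \ell$ to $(n \otimes 1) \otimes (1 \otimes \ell)$ inside the common target.

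The only genuine content is bookkeeping: making sure that the identifications $f^*f_* \to \id$ (counit), $\id \to f'_*f'^*$ (unit), and the various canonical isomorphisms $f^*(f_*(-) \otimes -) \cong f^*f_*(-) \otimes f^*(-)$, $g^*f_* \cong f'_*g'^*$, etc., are consistently chosen, and that the adjoint-morphism definitions of $\proj$ and $\bc$ unwind to the elementary formulas I sketched. I would organize this by first recording, in a sentence each, the effect on elements of $\proj$ and of $\bc$ in the affine setting (these follow from their definitions as adjoints of explicit maps, precisely as in the paragraph preceding the lemma), and then simply composing.

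\textbf{The main obstacle} I expect is purely notational: the target module of the bottom row, $f_*g'_*(g'^*\CF \otimes f'^*\CE)$ versus $g_*f'_*(g'^*\CF \otimes f'^*\CE)$, must be identified via the canonical isomorphism $f \circ g' = g \circ f'$ of morphisms to $S$, and one has to be scrupulous that the two vertical composites are being compared as maps into literally the same module after this identification --- not merely into isomorphic modules. Beyond this, there is a mild subtlety that $\proj$ is an isomorphism only under the quasi-compact/separated and local-freeness hypotheses, but since the statement only asserts commutativity of a diagram of morphisms (not that they are isomorphisms), no such hypothesis is needed here and the verification goes through for arbitrary quasi-coherent $\CF$ and $\CE$.
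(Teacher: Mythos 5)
Your strategy is sound but genuinely different from the one in the paper. The paper never touches elements: it observes that the whole diagram is obtained by adjunction (for $(fg')^*\dashv (fg')_*=(gf')_*$) from a diagram built out of the counits $\widetilde{\ad}_f\colon f^*f_*\to\id$, $\widetilde{\ad}_g$, $\widetilde{\ad}_{g'}$ and the canonical isomorphism $g'^*f^*\cong f'^*g^*$, and that adjoint diagram commutes immediately from the very definitions of $\proj$ and $\bc$ as adjoint morphisms. That route costs nothing in bookkeeping and needs no affineness or quasi-coherence reductions. Your element-theoretic computation is also correct in substance --- both composites do send a section $n\otimes\ell$ to $(n\otimes 1)\otimes(1\otimes\ell)$ in the common target, identified via $f\circ g'=g\circ f'$ --- and it has the virtue of making the abstract maps concrete; but one step is stated too loosely.

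The loose step is the reduction: pushforward does not localize over opens of the \emph{source}, so ``restrict to affine opens of $S$, $X$ and $Y$'' is not literally a valid reduction. Restricting to an affine open $\Spec A\subseteq S$ is fine, since every functor in sight is compatible with restriction over opens of the target; but then $f_*\CF\otimes g_*\CE$ involves $\Gamma(f^{-1}(\Spec A),\CF)$ and $\Gamma(g^{-1}(\Spec A),\CE)$, not sections over single affine opens $\Spec B\subseteq X$, $\Spec C\subseteq Y$. The standard patch: equality of two sections of the target $f_*g'_*(g'^*\CF\otimes f'^*\CE)$ over $\Spec A$ can be tested after restricting to the affine opens $\Spec(B_i\otimes_A C_j)$ covering the fibre product, and each composite followed by such a restriction factors through the restrictions of $n$ and $\ell$ to $\Spec B_i$ and $\Spec C_j$; only then does your affine formula apply. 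With that patch (and noting that $\CE$ must be an $\CO_Y$-module --- the ``$\CO_S$'' in the statement is a slip, so your $L$ should be a $C$-module rather than an $A$-module) the argument goes through. Your closing observation that no isomorphism hypotheses on $\proj$ or $\bc$ are needed for mere commutativity is correct.
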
 
\begin{proof}
For a morphism $h$ of schemes, let $\widetilde{\ad}_h$ denote the counit of adjunction $h^*h_* \to \id $. The diagram of which we want to prove the commutativity is obtained by adjunction from the diagram
\[
	\xymatrix{
		g'^*f^*f_*\CF \otimes g'^*f^*g_*\CE \ar[r]^-{\sim} \ar[dd]_{\widetilde{\ad}_f \otimes \id} & f'^*g^*f_*\CF \otimes f'^*g^*g_*\CE \ar[d]^{\bc \otimes \id} \\
		& f'^*f'_*g'^*\CF \otimes f'^*g^*g_*\CE \ar[d]^{\widetilde{\ad}_{f'} \otimes \id} \\
		g'^*\CF \otimes g'^*f^*g_*\CE \ar[r]^-{\sim} \ar[d]_{\id \otimes \bc} & g'^*\CF \otimes f'^*g^*g_*\CE \ar[dd]^{\id \otimes \widetilde{\ad}_g} \\
		g'^*\CF \otimes g'^*g'_*f'^*\CE \ar[d]_{\id \otimes \widetilde{\ad}_{g'}} & \\
		g'^*\CF \otimes f'^*\CE \ar@{=}[r] & g'^*\CF \otimes f'^*\CE.
	}
\]
Both parts of the diagram are commutative by construction of the morphism $\bc$. 
\end{proof}		
We turn back to the situation of a morphism $f\colon X \to Y$ of smooth schemes over a field $k$ containing $\FF_p$. For simplicity, let $\omega_f$ denote the relative dualizing sheaf $\omega_{X/Y}=f^!\CO_Y$.
\begin{lemma} \label{alterforward}
Let $f\colon X \to Y$ be a morphism of smooth, $F$-finite schemes. For every $\gamma$-sheaf $N$ on $X$, there is a natural isomorphism
\[
	f_*(N \otimes_{\CO_X} \omega_X) \otimes_{\CO_Y} \omega_Y^{-1} \to f_*(N \otimes_{\CO_X} \omega_f)
\]
of quasi-coherent sheaves.
\end{lemma}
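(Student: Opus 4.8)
The plan is to obtain the isomorphism formally from the standard formula relating the absolute dualizing sheaves $\omega_X$, $\omega_Y$ to the relative dualizing sheaf $\omega_f = \omega_{X/Y}$, combined with the projection formula isomorphism $\proj$ recalled just above. The starting point is the canonical isomorphism of $\CO_X$-modules
\[
	\omega_X \;\cong\; \omega_f \otimes_{\CO_X} f^*\omega_Y ,
\]
valid for any morphism $f\colon X \to Y$ of smooth $k$-schemes. One way to see this is to take determinants in the transitivity triangle $f^*L_{Y/k} \to L_{X/k} \to L_{X/Y}$ of cotangent complexes (legitimate since $X$ and $Y$ are smooth); alternatively it follows from the compatibility of $(\usc)^!$ with composition applied to $X \xrightarrow{f} Y \to \Spec k$ together with the identity $f^!\omega_Y \cong f^!\CO_Y \otimes_{\CO_X} f^*\omega_Y$, which holds because $\omega_Y$ is invertible. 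Tensoring this isomorphism over $\CO_X$ with $N$ gives a natural isomorphism $N \otimes_{\CO_X} \omega_X \cong (N \otimes_{\CO_X} \omega_f) \otimes_{\CO_X} f^*\omega_Y$.

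Next I would apply $f_*$ and invoke the projection formula. Since $\omega_Y$ is invertible and $f$, being of finite type and separated over $k$, is quasi-compact and separated, $\proj$ is an isomorphism for $\CF = N \otimes_{\CO_X} \omega_f$ and $\CE = \omega_Y$, whence
\[
	f_*(N \otimes_{\CO_X} \omega_X) \;\cong\; f_*\big((N \otimes_{\CO_X} \omega_f) \otimes_{\CO_X} f^*\omega_Y\big) \;\cong\; f_*(N \otimes_{\CO_X} \omega_f) \otimes_{\CO_Y} \omega_Y .
\]
Tensoring with $\omega_Y^{-1}$ over $\CO_Y$ and using the evaluation isomorphism $\omega_Y \otimes_{\CO_Y} \omega_Y^{-1} \cong \CO_Y$ then produces the desired isomorphism
\[
	f_*(N \otimes_{\CO_X} \omega_X) \otimes_{\CO_Y} \omega_Y^{-1} \;\cong\; f_*(N \otimes_{\CO_X} \omega_f) .
\]
Naturality in $N$ is immediate: the first isomorphism is a fixed $\CO_X$-linear isomorphism tensored with $N$, the map $\proj$ is natural in its sheaf argument, and the final cancellation does not involve $N$.

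I do not expect a genuine obstacle in this lemma; it is essentially a bookkeeping exercise with the projection formula. The only delicate point --- and one that belongs to the subsequent lemmas rather than here --- is to pin down the identification $\omega_X \cong \omega_f \otimes_{\CO_X} f^*\omega_Y$ so that it is compatible with the Cartier operators $\kappa_X$, $\kappa_Y$ and the relative Cartier operator $C_{X/Y}$; this is what is ultimately needed to promote the sheaf-level statement to one about Cartier structures (equivalently $\gamma$-structures). Since the present statement only concerns underlying quasi-coherent sheaves, that compatibility may be postponed.
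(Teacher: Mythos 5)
Your proof is correct and follows essentially the same route as the paper: both arguments rest on the identification $\omega_X \cong f^!\omega_Y \cong f^!\CO_Y \otimes_{\CO_X} Lf^*\omega_Y \cong \omega_f \otimes_{\CO_X} f^*\omega_Y$ (your second suggested justification is exactly the paper's, via the graph factorization into an lci closed immersion followed by a smooth, hence quasi-perfect, morphism) combined with the projection formula for the invertible sheaf $\omega_Y^{\pm 1}$. The only cosmetic difference is that the paper applies $\proj$ directly with $\CE = \omega_Y^{-1}$ rather than pulling out $\omega_Y$ and then cancelling, and you are right that the compatibility with the Cartier/$\gamma$-structures is deferred to the subsequent lemmas.
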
 
\begin{proof}
Since $X$ and $Y$ are smooth, any morphism $X \to Y$ is regular, i.e.\ it is a composition of a closed immersion $X \to W$ such that $X$ is a local complete intersection in $W$, followed by a smooth morphism $W \to Y$. (For a smooth morphism $f$, the graph factorization
\[
	X \xrightarrow{(\id,f)} X \times_k Y \xrightarrow{\pr_Y} Y,
\]
where $\pr_Y$ denotes the projection, satisfies this requirement.) For a closed immersion $i$ we have the isomorphism 
\[
	i^{\flat}(\omega_Y) \cong Li^*\omega_Y \otimes \omega_f[-n] 
\]
of \cite[Corollary III.7.3]{HartshorneRD} and a smooth morphism is quasi-perfect, see \autoref{EssentiallyPerfect} later on. Overall, we see that there are natural isomorphisms  
\[
	\omega_X \cong f^!\omega_Y \overset{\sim}{\longrightarrow} f^!\CO_Y \otimes Lf^*\omega_Y \overset{\sim}{\longrightarrow} \omega_f \otimes f^*\omega_Y.
\]
Now we obtain the desired isomorphism as the composition
\[
	f_*(N \otimes \omega_X) \otimes \omega_Y^{-1} \xrightarrow{\proj} f_*(N \otimes \omega_X \otimes f^*\omega_Y^{-1}) \cong f_*(N \otimes \omega_f).
\]
\end{proof}
With the relative Frobenius diagram
\[
	\xymatrix@C50pt{
		X \ar[r]^-{F_{X/Y}} \ar[dr]_f \ar@/^23pt/[rr]^{F_X} & X' \ar[r]^-{F_Y'} \ar[d]^-{f'} & X \ar[d]^-f \\
		& Y \ar[r]^{F_Y} & Y,
	}
\]
see diagram \autoref{relativeFrobenius} for the notation, we can define a $\gamma$-structure $\gamma_{N,f}$ for $f_*(N \otimes \omega_f)$ by the composition
\begin{align*}
	f_*(N \otimes \omega_f) &\overset{\sim}{\longrightarrow} f'_*F_{X/Y*}(N \otimes \omega_f) \\
	&\overset{\gamma_N}{\longrightarrow} f'_*F_{X/Y*}(F_{X/Y}^*F_Y'^*N \otimes \omega_f) \\
	&\xrightarrow{\proj^{-1}} f'_*(F_Y'^*N \otimes F_{X/Y*}\omega_f) \\
	&\xrightarrow{C_{X/Y}} f'_*(F_Y'^*N \otimes F_Y'^*\omega_f) \\
	&\overset{\bc}{\longrightarrow} F_Y^*f_*(N \otimes \omega_f).
\end{align*}
We will show that $\gamma_{N,f}$ is the structural morphism of $f_+N$ via the isomorphism of \autoref{alterforward}. But first, we clarify how the relative Cartier operator is related to $\kappa_X$ and $\kappa_Y$.
\begin{lemma}
With the notation of the preceding lemma, the composition
\begin{align*}
	F_{Y*}(\omega_f \otimes f^*\omega_Y) &\overset{\sim}{\longrightarrow} F_{Y*}'F_{X/Y*}(\omega_f \otimes F_{X/Y}^*f'^*\omega_Y) \\
	&\xrightarrow{\proj^{-1}} F_{Y*}'(F_{X/Y*}\omega_f \otimes f'^*\omega_Y) \\
	&\xrightarrow{C_{X/Y}} F_{Y*}'(F_Y'^*\omega_f \otimes f'^*\omega_Y) \\
	&\xrightarrow{\proj} \omega_f \otimes F_{Y*}'f'^*\omega_Y \\
	&\overset{\bc}{\longrightarrow} \omega_f \otimes f^*F_{Y*}\omega_Y \\
	&\overset{\kappa_Y}{\longrightarrow} \omega_f \otimes f^*\omega_Y
\end{align*}
is compatible with the Cartier structure of $\omega_X$ under the canonical isomorphism $\omega_X \cong \omega_f \otimes f^*\omega_Y$.
\end{lemma}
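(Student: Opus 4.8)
The assertion is that the displayed composite, transported along the canonical isomorphism $\omega_X\cong\omega_f\otimes f^*\omega_Y$, coincides with the adjoint Cartier operator $\kappa_X\colon F_{X*}\omega_X\to\omega_X$. The plan is to recognise both of these as incarnations of the Grothendieck--duality trace for the finite flat absolute Frobenius $F_X$, decomposed along the factorisation $F_X = F_Y'\circ F_{X/Y}$ furnished by the relative Frobenius diagram. Recall that $\kappa_X$ is the adjoint of the canonical isomorphism $\omega_X\overset{\sim}{\longrightarrow} F_X^{\flat}\omega_X$, that $\kappa_Y$ is its analogue on $Y$, and that $C_{X/Y}$ was \emph{defined} as the adjoint, for the $(F_{X/Y*},F_{X/Y}^{\flat})$-adjunction, of the morphism $f^!\CO_Y\to F_{X/Y}^{\flat}F_Y'^*f^!\CO_Y$ built from the Frobenius $\gamma$ on $\CO_Y$. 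Using $f = f'\circ F_{X/Y}$ together with $\omega_X\cong\omega_f\otimes f^*\omega_Y$, and splitting off the factor $f^*\omega_Y$ via the fundamental local isomorphism $\can$ of \autoref{flatdecompose}, the long composite in the statement becomes the trace $F_{X*}F_X^{\flat}\omega_X\to\omega_X$ rewritten through $F_X^{\flat}\cong F_{X/Y}^{\flat}F_Y'^{\flat}$ and through the identification $F_Y'^{\flat}\CO_X\cong f'^*F_Y^{\flat}\CO_Y$ coming from the cartesian face $X' = X\times_{Y,F_Y}Y$ of the diagram. Thus the lemma reduces to two standard compatibilities of the duality trace: its functoriality under the composition of the finite flat maps $F_{X/Y}$ and $F_Y'$, and flat base change around that cartesian square.

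I would then pass to the affine situation. Since $F_X$, $F_Y$ and $F_{X/Y}$ are finite --- hence affine --- and $F_Y'$ is finite flat, every pushforward occurring in the composite commutes with restriction to open subschemes; as the assertion is an equality of two morphisms of quasi-coherent sheaves, it suffices to prove it after restricting to an affine open $\Spec R\subseteq Y$ and a finite affine cover $\Spec S$ of its preimage in $X$. Refining further, one may assume that $F_{R*}R$ and $F_{S*}S$ are free and that $\omega_R,\omega_S$ are free of rank one, so that $\omega_{X/Y}$, $\omega_f$ and their pullbacks are free as well. In this setting every $\flat$-functor in sight is the explicit finite--flat functor $\SHom(F_*(-),-)$ up to the twist $\can$ of \autoref{flatdecompose}, the relative Cartier operator is the concrete adjoint of its defining morphism, and $\kappa_R,\kappa_S$ are the explicit Cartier operators of \autoref{ex.StandardCartierOnOmega}.

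With these identifications the diagram to be checked decomposes into three kinds of squares. The first expresses that the trace of $F_X$ equals $F_{Y*}'$ applied to the trace of $F_{X/Y}$, followed by the trace of $F_Y'$; this is the functoriality of the duality trace for a composite of finite morphisms, see \cite[III.6]{HartshorneRD}, and it is here that $C_{X/Y}$ and $f^*\kappa_Y$ enter. The second expresses that the trace of $F_Y'$ is obtained from the trace of $F_Y$ by flat base change along $f$, which on the level of the isomorphisms $\proj$ and $\bc$ is precisely the content of \autoref{twoprojection} applied to the cartesian face of the relative Frobenius diagram. The remaining squares consist of canonical isomorphisms of tensor products and pullbacks and commute by naturality. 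The only genuine obstacle is bookkeeping: one must organise the many canonical isomorphisms so that the first two squares become visible instances of the cited statements, and take care that $F_{X/Y}$, although the identity on underlying topological spaces, is not an isomorphism of schemes, while $F_Y'$ is merely flat --- so that neither square may be collapsed prematurely. Once the squares are in place, the lemma follows. (Alternatively, one may factor $f$ as a closed immersion followed by a smooth morphism, as in the proof of \autoref{alterforward}, and treat these two cases separately, using the Koszul computation of \autoref{Koszul} for the closed immersion.)
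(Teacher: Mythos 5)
Your argument is correct in substance, but it takes a genuinely different route from the paper. The paper disposes of this lemma in one line: the displayed composite is precisely the composition formula for relative Cartier operators proved in Appendix A.2.3(iii) of \cite{EmKis.Fcrys}, specialized to $X \to Y \to \Spec k$ with the second map the structure morphism, so that $C_{Y/\Spec k}$ becomes $\kappa_Y$, $C_{X/\Spec k}$ becomes $\kappa_X$, and $\omega_{X/\Spec k}$ becomes $\omega_f \otimes f^*\omega_Y$. You instead re-derive that composition formula in the case at hand: you identify $\kappa_X$, $\kappa_Y$ and $C_{X/Y}$ as duality traces (respectively adjoints of duality-theoretic maps) for the finite flat morphisms $F_X$, $F_Y$ and $F_{X/Y}$, decompose $F_X = F_Y' \circ F_{X/Y}$, and reduce to trace functoriality for composites of finite morphisms plus trace compatibility with base change around the cartesian face of the relative Frobenius diagram, checked affine-locally after trivializing $F_*\CO$ and the dualizing sheaves. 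This buys self-containedness --- the reader need not unwind Emerton--Kisin's appendix --- at the cost of the bookkeeping you yourself flag, which is exactly what the citation avoids. Two small cautions: first, \autoref{twoprojection} only governs the interaction of the canonical isomorphisms $\proj$ and $\bc$, whereas the identification of $\tr_{F_Y'}$ with the pullback of $\kappa_Y$ is a separate input (for the finite locally free $F_Y$ it follows from the explicit description of $F_Y^{\flat}$ via \autoref{flatdecompose}, or from \autoref{proptrace}); second, the relevant flatness in that base change is that of $F_Y$, not of $f$, which need not be flat, so the phrase ``flat base change along $f$'' should be read accordingly. Your parenthetical alternative (factoring $f$ through a closed immersion and a smooth map) would itself need the compatibility of $C_{X/Y}$ with composition, so it does not really shortcut the main argument.
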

\begin{proof}
In the appendix A.2.3. (iii) of \cite{EmKis.Fcrys}, Emerton and Kisin explain how the relative Cartier operators $C_{X/Y}$, $C_{Y/Z}$ and $C_{X/Z}$ are related for a composition $X \overset{f}{\longrightarrow} Y \overset{g}{\longrightarrow} Z$ of morphisms. Our lemma is the special case where $Z=\Spec k$ and $g$ is the structural morphism of the $k$-scheme $Y$.   
\end{proof}
\begin{proposition}
Let $f\colon X \to Y$ be a morphism of smooth, $F$-finite schemes over $k$. Let $N$ be a $\gamma$-sheaf on $X$. The canonical isomorphism 
\[
	f_*(N \otimes \omega_f) \overset{\sim}{\longrightarrow} f_*(N \otimes \omega_X) \otimes \omega_Y^{-1}
\]
of quasi-coherent $\CO_Y$-modules from \autoref{alterforward} is an isomorphism of $\gamma$-sheaves.
\end{proposition}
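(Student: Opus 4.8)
The plan is to unwind both $\gamma$-structures explicitly and to identify them by assembling one large commutative diagram. On the source $f_*(N \otimes \omega_f)$ we have $\gamma_{N,f}$, built from $\gamma_N$ and the relative Cartier operator $C_{X/Y}$ along the relative Frobenius diagram \autoref{relativeFrobenius}. On the target $f_+N = f_*(N \otimes_{\CO_X}\omega_X) \otimes_{\CO_Y}\omega_Y^{-1}$ the $\gamma$-structure is, by \autoref{CartierGammaDef}(a), induced by the Cartier structure of the Cartier module $f_*(N \otimes \omega_X)$, and that Cartier structure is the Cartier-module push-forward (via the canonical isomorphism $F_{Y*}f_* \cong f_*F_{X*}$) of the Cartier structure of $N \otimes \omega_X$. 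The comparison therefore rests on three inputs already at hand: the lemma describing the adjoint of the Cartier structure of $N \otimes \omega_X$ as $\gamma_N$ followed by the projection formula and $\kappa_X$; the lemma immediately preceding this proposition, which factors $\kappa_X$ through $\proj^{-1}$, $C_{X/Y}$, $\proj$, $\bc$ and $\kappa_Y$ under the canonical isomorphism $\omega_X \cong \omega_f \otimes f^*\omega_Y$; and \autoref{twoprojection}, which reconciles the two orders in which $\proj$ and $\bc$ interact along a cartesian square.

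First I would rewrite the target $\gamma$-structure after transporting it through \autoref{alterforward}: this replaces $\omega_X$ by $\omega_f \otimes f^*\omega_Y$ and exchanges the outer twist by $\omega_Y^{-1}$ for the projection-formula isomorphism $f_*(N \otimes \omega_X)\otimes\omega_Y^{-1} \cong f_*(N\otimes\omega_f\otimes f^*\omega_Y^{-1})$. Using the adjoint-Cartier-structure lemma, this becomes the composite of $\gamma_N$, the projection formula associated with the factorisation $F_X = F_Y'\circ F_{X/Y}$, and $f_*$ applied to $\kappa_X$. Now substitute the factorisation of $\kappa_X$ coming from the preceding lemma. Because $f = f'\circ F_{X/Y}$ and the base-change morphisms $\bc$ along the cartesian square of \autoref{relativeFrobenius} are isomorphisms ($F_Y$ is flat since $Y$ is regular, and $f$ is separated of finite type), the three projection-formula isomorphisms and the $\bc$-isomorphisms now line up with the ones occurring in the definition of $\gamma_{N,f}$, and the only remaining discrepancy is the bookkeeping of whether $\gamma_N$ is applied before or after the base change along $F_{X/Y}$ and of the order of $\proj$ and $\bc$. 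That discrepancy is removed exactly by \autoref{twoprojection} (applied to the cartesian square $X' \cong X\times_{Y,F_Y}Y$) together with the naturality of $\proj$, $\bc$ and the projection formula.

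Concretely, the resulting master diagram decomposes into: the square of the preceding lemma, tensored with $N$ and pushed forward by $f'_*F_{X/Y*}$; the square describing the adjoint Cartier structure of $N \otimes \omega_X$; the relevant instance of \autoref{twoprojection} along the relative Frobenius square; and a handful of squares commuting by functoriality of $f_*$ and by naturality of the canonical isomorphisms. Commutativity of the whole is the assertion. If the global chase becomes unwieldy, each square may be verified locally on an affine open cover of $Y$, since $f_*$, the Frobenius push-forwards and the projection formula all commute with restriction to opens; this is the same device already used in the proofs of \autoref{twoprojection} and the preceding lemma.

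I expect the principal obstacle to be purely organizational: the sheer number of canonical isomorphisms of types $\proj$, $\bc$, projection formula and the permutations hidden inside \autoref{CartierGammaDef}, each of which must be carried with the correct variance and, in the case of the base-change identity of \autoref{twoprojection}, applied along the correct face of the relative Frobenius diagram. No genuinely new idea beyond the three cited lemmas is needed; the content lies entirely in checking that the assembled diagram commutes.
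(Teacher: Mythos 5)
Your proposal is correct and follows essentially the same route as the paper: both unwind the two structural morphisms via the lemma on the adjoint Cartier structure of $N\otimes\omega_X$ and the preceding lemma factoring $\kappa_X$ through $C_{X/Y}$, $\proj$, $\bc$ and $\kappa_Y$, and both reduce the comparison to a large diagram whose squares commute by functoriality, compatibility of the projection formula with compositions, and \autoref{twoprojection}. The only (cosmetic) difference is that the paper first tensors with $\omega_Y$ to compare Cartier-module structures, which avoids carrying the twist by $\omega_Y^{-1}$ through the chase, whereas you compare the $\gamma$-structures directly.
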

\begin{proof}
As $\usc \otimes \omega_X$ and $\usc \otimes \omega_Y^{-1}$ are equivalences between the categories of $\gamma$-sheaves and Cartier modules on $X$ and on $Y$, it suffices to show that the canonical isomorphism $f_*(N \otimes \omega_f) \otimes \omega_Y \overset{\sim}{\longrightarrow} f_*(N \otimes \omega_X)$ is an isomorphism of Cartier modules on $Y$. The left hand side of the diagram
\[
	\xymatrix@C13pt{
		F_{Y*}(f_*(N \otimes \omega_f) \otimes \omega_Y) \ar[r]^-{\proj} \ar[d]^{\gamma_N} & F_{Y*}f_*(N \otimes \omega_f \otimes f^*\omega_Y) \ar[d]^{\gamma_N} \\
		F_{Y*}(f_*(F_Y^*N \otimes \omega_f) \otimes \omega_Y) \ar[r]^-{\proj} \ar[d]^{\sim} &  F_{Y*}f_*(F_X^*N \otimes \omega_f \otimes f^*\omega_Y) \ar[d]^{\sim} \\
		F_{Y*}(f'_*F_{X/Y*}(F_{X/Y}^*F_Y'^*N \otimes \omega_f) \otimes \omega_Y) \ar@{=}[d] & F_{Y*}f'_*F_{X/Y*}(F_{X/Y}^*F_Y'^*N \otimes F_{X/Y}^*f'^*\omega_Y \otimes \omega_f) \ar[d]^{\sim} \\
		F_{Y*}(f'_*F_{X/Y*}(F_{X/Y}^*F_Y'^*N \otimes \omega_f) \otimes \omega_Y) \ar[d]^{\proj^{-1}} & F_{Y*}f'_*F_{X/Y*}(F_{X/Y}^*(F_Y'^*N \otimes f'^*\omega_Y) \otimes \omega_f) \ar[d]^{\proj^{-1}} \\
		F_{Y*}(f'_*(F_Y'^*N \otimes F_{X/Y*}\omega_f) \otimes \omega_Y) \ar[d]^{C_{X/Y}} \ar[r]^-{\proj} & F_{Y*}f'_*(F_Y'^*N \otimes f'^*\omega_Y \otimes F_{X/Y*}\omega_f) \ar[d]^{C_{X/Y}} \\
		F_{Y*}(f'_*F_Y'^*(N \otimes \omega_f) \otimes \omega_Y) \ar[d]^{\bc^{-1}} \ar[r]^-{\proj} & F_{Y*}f'_*(F_Y'^*(N \otimes \omega_f) \otimes f'^*\omega_Y) \ar[d]^{\sim} \\
		F_{Y*}(F_Y^*f_*(N \otimes \omega_f) \otimes \omega_Y) \ar[dd]^{\proj^{-1}} & f_*F_{Y*}'(F_Y'^*(N \otimes \omega_f) \otimes f'^*\omega_Y) \ar[d]^{\proj^{-1}} \\
		& f_*(N \otimes \omega_f \otimes F_{Y*}'f'^*\omega_Y) \ar[d]^{\bc^{-1}} \\
		f_*(N \otimes \omega_f) \otimes F_{Y*}\omega_Y \ar[d]^{\kappa_Y} \ar[r]^-{\proj} & f_*(N \otimes \omega_f \otimes f^*F_{Y*} \omega_Y) \ar[d]^{\kappa_Y} \\
		f_*(N \otimes \omega_f) \otimes \omega_Y \ar[r]^-{\proj} & f_*(N \otimes \omega_f \otimes f^*\omega_Y)  
	}
\]
is the structural morphism of the Cartier module $f_*(N \otimes \omega_f) \otimes \omega_Y$. It is easy to see that the right hand side is the structural morphism of the Cartier module $f_*(N \otimes \omega_X) \cong f_*(N \otimes \omega_F \otimes f^*\omega_Y)$. Hence we have to show that the diagram above commutes. The three upper squares and the bottom square commute by the functoriality and the compatibility of the projection formula with compositions of morphisms. The commutativity of the fourth square from above follows from \autoref{twoprojection}.    
\end{proof}

\subsection{Cartier crystals and locally finitely generated unit modules}

The category of $\gamma$-sheaves was just an intermediate step on the way to locally finitely generated unit modules. Recall that there is a functorial way of associating a unit $\CO_X[F]$-module to a $\gamma$-sheaf $N$ on $X$. 
\begin{definition}
Let $\mu_{\operatorname{u}}(X)$ denote the category of unit left $\CO_{F,X}$-modules whose underlying $\CO_X$-module is quasi-coherent. For a smooth $k$-scheme $X$, let $\Gen$ be the functor 
\[
	\QCohG(X) \to \mu_{\operatorname{u}}(X)
\]
which assigns to any quasi-coherent $\gamma$-sheaf $N$ with structural morphism $\gamma\colon N \to F^*N$ the direct limit $\CN$ of
\[
	N \overset{\gamma}{\longrightarrow} F^*N \xrightarrow{F^*\gamma} F^{2*}N \xrightarrow{F^{2*\gamma}} \cdots
\]
together with the inverse of the induced isomorphism $\CN \overset{\sim}{\longrightarrow} F^*\CN$. 
\end{definition}
\begin{lemma} \label{QCrysGammaunit}
Let $X$ be a smooth, $F$-finite $k$-scheme. The functor $\Gen$ is essentially surjective and induces an equivalence of categories
\[
	\QCrysG(X) \overset{\sim}{\longrightarrow} \mu_{\operatorname{u}}(X).
\]
\end{lemma}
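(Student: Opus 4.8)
The plan is to realize $\Gen$, after descent along the localization, as an equivalence whose quasi-inverse simply regards a unit module as a $\gamma$-sheaf. First I would record that $\Gen$ is \emph{exact}: since $X$ is regular, $F_X$ is flat and every $F^{i*}$ is exact, and $\Gen N=\varinjlim_n F^{n*}N$ is a filtered colimit, which is exact in the Grothendieck category $\QCohG(X)$. Next I would observe that $\mu_{\operatorname{u}}(X)$ sits inside $\QCohG(X)$ as a full abelian subcategory -- a unit $\CO_{F,X}$-module is precisely a $\gamma$-sheaf with invertible structural morphism, and the two notions of morphism coincide -- and that for such a module all transition maps of the defining system are isomorphisms, so the canonical map $\CM\to\Gen\CM$ is an isomorphism. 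This already yields essential surjectivity of $\Gen$.

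Second, I would check that $\Gen$ annihilates the Serre subcategory of locally nilpotent $\gamma$-sheaves. Writing $\gamma^n\colon N\to F^{n*}N$ for the $n$-fold iterate, if $\gamma^n=0$ then $F^{m*}\gamma^n=0$ for every $m$ by exactness of $F^{m*}$, so every local section of $N$ dies under some transition map and $\Gen N=0$; the locally nilpotent case follows since $\Gen$ commutes with the relevant unions. By the universal property of the quotient $\QCrysG(X)=\QCohG(X)/\operatorname{LNil}_\gamma(X)$, the exact functor $\Gen$ then factors uniquely as $\Gen=\overline{\Gen}\circ T$ through the localization $T$, and $\overline{\Gen}\colon\QCrysG(X)\to\mu_{\operatorname{u}}(X)$ is still exact and essentially surjective.

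To conclude, I would take for a candidate quasi-inverse the functor $S\colon\mu_{\operatorname{u}}(X)\to\QCrysG(X)$ given by the inclusion $\mu_{\operatorname{u}}(X)\hookrightarrow\QCohG(X)$ followed by $T$. Then $\overline{\Gen}\circ S\cong\id$ is immediate from the first paragraph. For $S\circ\overline{\Gen}\cong\id$, the point is that the canonical natural map $\eta_N\colon N\to\Gen N$ into the colimit becomes an isomorphism after applying $T$, i.e.\ that $\ker\eta_N$ and $\coker\eta_N$ lie in $\operatorname{LNil}_\gamma(X)$; naturality of $\eta$ then promotes this to a natural isomorphism. The kernel is harmless: $x\in\ker\eta_N$ if and only if $\gamma^n(x)=0$ for some $n$, and $\gamma^n(x)=0$ forces $\gamma^{n+1}(x)=0$, so $\ker\eta_N=\bigcup_n\ker(\gamma^n)$ is an increasing union of sub-$\gamma$-sheaves on which $\gamma$ is nilpotent.

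The main obstacle is the cokernel. Here the strategy I have in mind is: write $\iota_n\colon F^{n*}N\to\Gen N$ for the structure maps and set $N_n:=\operatorname{im}\iota_n$; exactness of $F^{n*}$ shows the $N_n$ are sub-$\gamma$-sheaves of the unit $\gamma$-sheaf $\Gen N$, they increase with union $\Gen N$, and $N_0=\operatorname{im}\eta_N$, so $\coker\eta_N=\bigcup_n N_n/N_0$. The key computation is that iterating the invertible structural morphism $\gamma_{\Gen N}$ of $\Gen N$ exactly $n$ times carries $N_n$ into $F^{n*}(N_0)$; this follows from the telescoping identities $\iota_k=\iota_{k+1}\circ F^{k*}\gamma$, which give $\gamma_{\Gen N}^{n}\circ\iota_n=F^{n*}(\iota_0)$. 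Hence the $n$-th iterate of the structural morphism of $N_n/N_0$ vanishes, so each $N_n/N_0$ is a nilpotent sub-$\gamma$-sheaf and $\coker\eta_N$ is locally nilpotent. Establishing this identity -- equivalently, that $\Gen$ enlarges $N$ only by a locally nilpotent quotient -- is where essentially all the content lies; everything else is formal. (Alternatively one could quote the corresponding assertions from Emerton-Kisin's and Blickle--B\"ockle's treatment of $\gamma$-sheaves, but the argument above is self-contained.)
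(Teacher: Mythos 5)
Your argument is correct and follows the same route as the paper: essential surjectivity comes from observing that a unit module, viewed as a $\gamma$-sheaf with structural morphism the inverse of its unit structure, is unchanged by $\Gen$, and the equivalence after localization comes from showing that the canonical map $N \to \Gen N$ has locally nilpotent kernel and cokernel. The paper compresses your cokernel computation into the single remark that $\gamma_N\colon N \to F^*N$ is itself a nil-isomorphism (its kernel and cokernel carry the zero structural map), but the underlying idea is the same.
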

\begin{proof}
Let $\Neg\colon \mu_{\operatorname{u}}(X) \to \QCohG(X)$ be the functor which assigns to a quasi-coherent unit $\CO_{F,X}$-module $\CM$ with structural morphism $u\colon F^*\CM \to \CM$ the quasi-coherent $\gamma$-sheaf $\CM$ whose structural morphism is given by the inverse of $u$. Obviously there is a natural isomorphism
\[
	\Gen \circ \Neg \overset{\sim}{\longrightarrow} \id,
\]
whence the surjectivity of $\Gen$.

For a quasi-coherent $\gamma$-sheaf $N$, the corresponding crystals $\CN$ is nil-isomorphic to the corresponding crystal of $\Neg \circ \Gen(N)$. The reason for this is the fact that the structural morphism $\gamma\colon N \to F^*N$ of a quasi-coherent $\gamma$-sheaf  $N$ is a nil-isomorphism: It is immediate that the structural map of the kernel and the cokernel of $\gamma$, interpreted as a morphism of $\gamma$-sheaves, is the zero map.     
\end{proof}
The image of $\Gen$ of the subcategory $\Coh_{\gamma}(X)$ of coherent $\gamma$-sheaves on $X$ is the category $\mu_{\lfgu}(X)$. Indeed, after localizing at nilpotent $\gamma$-sheaves and considering $\gamma$-crystals, $\Gen$ induces an equivalence of categories.
\begin{proposition}[\protect{\cite[Proposition 5.12]{BliBoe.CartierFiniteness}}] \label{Gammalfgu}
For a smooth, $F$-finite $k$-scheme $X$, the functor 
\[
	\Gen_X\colon \Coh_{\gamma}(X) \to \mu_{\lfgu}(X)
\]
factors through $\Crys_{\gamma}(X)$, inducing an equivalence of categories:
\[
	\xymatrix{\Coh_{\gamma}(X) \ar[dr]^{\Gen} \ar[d] & \\
	\Crys_{\gamma}(X) \ar[r]^-{\sim} & \mu_{\lfgu}(X).
	}	
\]
\end{proposition}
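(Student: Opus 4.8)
The plan is to read this off the quasi-coherent equivalence $\Gen\colon\QCrysG(X)\xrightarrow{\sim}\mu_{\operatorname u}(X)$ of \autoref{QCrysGammaunit} by cutting it down to the coherent, resp.\ locally finitely generated, subcategories. First I would check that $\Gen_X$ carries $\CohG(X)$ into $\mu_{\lfgu}(X)$ and annihilates nilpotent $\gamma$-sheaves, so that, being exact, it descends along the Serre quotient $\CohG(X)\to\CrysG(X)$ to an exact functor $\overline{\Gen}_X\colon\CrysG(X)\to\mu_{\lfgu}(X)$. Then I would observe that $\overline{\Gen}_X$ fits into a commuting square with $\Gen\colon\QCrysG(X)\to\mu_{\operatorname u}(X)$ and the two fully faithful inclusions $\CrysG(X)\hookrightarrow\QCrysG(X)$ and $\mu_{\lfgu}(X)\hookrightarrow\mu_{\operatorname u}(X)$; full faithfulness of $\overline{\Gen}_X$ is then automatic, so only essential surjectivity needs a genuine argument.

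For the descent step: that $\Gen_X(N)$ is locally finitely generated over $\CO_{F,X}$ when $N$ is coherent is immediate, since $\Gen_X(N)$ is generated as an $\CO_{F,X}$-module by the image of the coherent sheaf $N$. Exactness of $\Gen_X$ holds because $F^*$ is exact on the regular scheme $X$ and filtered colimits of sheaves of modules are exact. If $N$ is nilpotent, say the $n$-fold iterate $\gamma_N^{(n)}\colon N\to F^{n*}N$ vanishes, then every transition map $F^{e*}N\to F^{(e+n)*}N$ in the colimit $\Gen_X(N)=\varinjlim_e F^{e*}N$ is zero, so $\Gen_X(N)=0$; the universal property of the Serre quotient now produces $\overline{\Gen}_X$. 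For the full-faithfulness input I would record that the nilpotent $\gamma$-sheaves satisfy $\operatorname{Nil}_\gamma(X)=\operatorname{LNil}_\gamma(X)\cap\CohG(X)$ — a coherent, locally nilpotent $\gamma$-sheaf is a finite sum of nilpotent $\gamma$-submodules, and $\gamma$ carries each such submodule into the $F^*$ of itself by flatness, so the sum is nilpotent — and that $\CohG(X)$ is closed under subobjects and quotients in $\QCohG(X)$; the standard quotient-category lemma then gives that $\CrysG(X)\to\QCrysG(X)$ is fully faithful, and combining this with \autoref{QCrysGammaunit} and the commuting square makes $\overline{\Gen}_X$ fully faithful.

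Essential surjectivity is the crux. Given $\CM\in\mu_{\lfgu}(X)$ with unit structure $u\colon F^*\CM\xrightarrow{\sim}\CM$, I would first produce a global $\CO_X$-coherent submodule $N_0\subseteq\CM$ generating $\CM$ over $\CO_{F,X}$: extend local coherent generating submodules to coherent subsheaves of $\CM$ (possible since $\CM$ is quasi-coherent on a Noetherian scheme) and sum them over a finite affine cover. Writing $F^jN_0\subseteq\CM$ for the $\CO_X$-submodule generated by $\{F^j\cdot n:n\in N_0\}$ and $P_i:=\sum_{j=0}^iF^jN_0$, one gets an increasing exhaustion $\CM=\bigcup_iP_i$ by coherent submodules, and the identities $u^{-1}(F^jN_0)=F^*(F^{j-1}N_0)$ for $j\ge1$ yield $u^{-1}(P_i)=u^{-1}(N_0)+F^*(P_{i-1})$. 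Since $u^{-1}(N_0)$ is coherent and $F^*\CM=\bigcup_iF^*(P_i)$, there is $i_0$ with $u^{-1}(N_0)\subseteq F^*(P_{i_0})$, hence for $N:=P_{i_0+1}$ we get $u^{-1}(N)\subseteq F^*(P_{i_0})\subseteq F^*(N)$, so $\gamma:=u^{-1}|_N\colon N\to F^*N$ makes $N$ a coherent $\gamma$-sheaf. The inclusion $N\hookrightarrow\CM$ is compatible with the $\gamma$- and unit-structures and so induces a map $\Gen_X(N)\to\CM$; it is surjective because $N\supseteq N_0$ generates $\CM$ over $\CO_{F,X}$, and injective because on each $F^{e*}N$ it is the composite of the isomorphism $F^{e*}\CM\xrightarrow{\sim}\CM$ with the $F^{e*}$ of the injection $N\hookrightarrow\CM$. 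Thus $\Gen_X(N)\cong\CM$, giving essential surjectivity.

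The hard part is precisely this last step — equivalently, the existence, for every locally finitely generated unit module, of a coherent ``root'' globally, which is the content implicit in Emerton--Kisin's structure theory of lfgu modules; everything else is bookkeeping built on the already-established quasi-coherent equivalence \autoref{QCrysGammaunit}. An alternative would be to invoke that structure theory directly and transport it through \autoref{QCrysGammaunit}, but the hands-on root construction above seems shortest.
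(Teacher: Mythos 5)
Your argument is correct. Note, however, that the paper itself offers no proof of this statement: it is quoted verbatim from \cite[Proposition 5.12]{BliBoe.CartierFiniteness}, which in turn rests on Emerton--Kisin's theory of generators (roots) for lfgu modules. What you have written is therefore not a variant of the paper's proof but a self-contained reconstruction of the cited result, and the comparison is with that source. Your reduction steps (exactness of $\Gen$ via flatness of Frobenius, vanishing on nilpotents, the identity $\operatorname{Nil}_\gamma(X)=\operatorname{LNil}_\gamma(X)\cap\CohG(X)$ together with closure of $\CohG(X)$ under subobjects and quotients to get full faithfulness of $\CrysG(X)\to\QCrysG(X)$, hence of the descended functor via the square with \autoref{QCrysGammaunit}) match the standard bookkeeping. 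You correctly isolate essential surjectivity as the crux, and your construction of a global coherent root --- take a coherent $N_0$ generating $\CM$ over $\CO_{F,X}$, exhaust $\CM$ by $P_i=\sum_{j\le i}F^jN_0$, use coherence of $u^{-1}(N_0)$ and Noetherianity to find $i_0$ with $u^{-1}(P_{i_0+1})\subseteq F^*(P_{i_0+1})$ --- is exactly the Emerton--Kisin root argument; the identity $u^{-1}(F^jN_0)=F^*(F^{j-1}N_0)$ and the injectivity of $\varinjlim F^{e*}N\to\CM$ both use flatness of $F$ and exactness of filtered colimits, which you invoke correctly. The only point I would ask you to spell out is that the image $\bigcup_e u^{(e)}(F^{e*}N)$ of $\Gen_X(N)\to\CM$ is stable under the left $F$-action (so that surjectivity really follows from $N_0$ generating over $\CO_{F,X}$ and not merely over $\CO_X$); this is a one-line check from $F\cdot m=u(1\otimes m)$. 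With that, the proof is complete and buys you independence from the Emerton--Kisin reference that the paper (and Blickle--B\"ockle) lean on.
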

\begin{theorem}
Let $X$ be an $F$-finite, smooth $k$-scheme. Let $\G$ denote the composition of the exact functors $\usc \otimes \omega_X^{-1}$ and $\Gen$. It induces an equivalence of derived categories 
\[
	\G\colon D_{\crys}^b(\QCrysC(X)) \to D_{\lfgu}^b(\CO_{F,X}).
\]
\end{theorem}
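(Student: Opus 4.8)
The plan is to isolate the genuinely non-formal input. Observe that $D_{\crys}^b(\QCrysC(X))$ and $D_{\lfgu}^b(\CO_{F,X})$ each admit a realization functor from the bounded derived category of an abelian category — namely from $D^b(\CrysC(X))$, the coherent Cartier crystals, and from $D^b(\mu_{\lfgu}(X))$ respectively — and $\G$ is, by construction, the functor induced on derived categories by an exact functor $\CrysC(X) \to \mu_{\lfgu}(X)$ of these hearts. So it suffices to prove three things: that this functor of hearts is an equivalence, and that each of the two realization functors is an equivalence.

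\emph{The equivalence of hearts.} On coherent objects, $\usc \otimes_{\CO_X}\omega_X^{-1}$ is an exact equivalence $\CohC(X) \xrightarrow{\sim} \CohG(X)$ taking nilpotent Cartier modules to nilpotent $\gamma$-sheaves; this is what underlies the crystal equivalence of \autoref{QCrysCartierGamma} (cf.\ \autoref{CartierGamma}). The functor $\Gen$ is exact on $\QCohG(X)$ — being a filtered colimit of the exact functors $F^{m*}$, where one uses that $X$ is regular — it annihilates locally nilpotent $\gamma$-sheaves because their defining direct limit vanishes, and by \autoref{Gammalfgu} it restricts to an exact functor $\CohG(X) \to \mu_{\lfgu}(X)$ descending to an equivalence $\CrysG(X) \xrightarrow{\sim} \mu_{\lfgu}(X)$. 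Composing, $\G = \Gen \circ (\usc \otimes \omega_X^{-1})$ is exact on $\CohC(X)$, kills $\NilC(X)$, and induces the asserted exact equivalence $\CrysC(X) \xrightarrow{\sim} \mu_{\lfgu}(X)$; hence $D^b(\CrysC(X)) \xrightarrow{\sim} D^b(\mu_{\lfgu}(X))$.

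\emph{The two realizations.} The category $\QCohC(X)$ is a locally Noetherian Grothendieck category with enough injectives (\autoref{enoughinjectiveskappa}) whose Noetherian objects are the coherent Cartier modules; localizing at $\LNilC(X)$ preserves these features, so $\QCrysC(X)$ is again locally Noetherian with Noetherian objects $\CrysC(X)$, and the standard d\'evissage for such categories shows that $D^b(\CrysC(X)) \to D_{\crys}^b(\QCrysC(X))$ is an equivalence. For the other realization, $\mu_{\lfgu}(X)$ is a Serre subcategory of the category of $\CO_{F,X}$-modules, and the lifting property that d\'evissage requires — every finitely generated $\CO_{F,X}$-submodule of a unit module lies in a locally finitely generated unit submodule — is supplied by the structure theory of Emerton-Kisin and Blickle-B\"ockle (\cite{EmKis.Fcrys}, \cite{BliBoe.CartierEmKis}); hence $D^b(\mu_{\lfgu}(X)) \to D_{\lfgu}^b(\CO_{F,X})$ is an equivalence as well. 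Splicing the three equivalences yields $\G\colon D_{\crys}^b(\QCrysC(X)) \xrightarrow{\sim} D_{\lfgu}^b(\CO_{F,X})$.

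\emph{The main obstacle} is the realization $D^b(\mu_{\lfgu}(X)) \to D_{\lfgu}^b(\CO_{F,X})$. The remaining steps are bookkeeping and a standard locally Noetherian d\'evissage, but this one is not formal: the category of quasi-coherent unit $\CO_{F,X}$-modules is \emph{not} a Serre subcategory of all $\CO_{F,X}$-modules — a submodule of a unit module need not be unit — so one cannot truncate complexes while staying among unit modules, and the comparison between the derived category of the heart $\mu_{\lfgu}(X)$ and the derived category of $\CO_{F,X}$-modules with lfgu cohomology must rely on the finiteness theorems for lfgu modules.
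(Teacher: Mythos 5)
Your proposal is correct in substance but organized differently from the paper's proof. You factor the statement through the hearts: an equivalence $\CrysC(X) \simeq \mu_{\lfgu}(X)$ together with two realization equivalences, $D^b(\CrysC(X)) \simeq D_{\crys}^b(\QCrysC(X))$ and $D^b(\mu_{\lfgu}(X)) \simeq D_{\lfgu}^b(\CO_{F,X})$. The paper instead works with the ambient quasi-coherent categories: combining \autoref{QCrysCartierGamma} with \autoref{QCrysGammaunit} it obtains an equivalence of abelian categories $\QCrysC(X) \simeq \mu_{\operatorname{u}}(X)$, hence an equivalence $D^b(\QCrysC(X)) \simeq D^b(\mu_{\operatorname{u}}(X))$ outright, and then observes that this equivalence matches the full subcategories of complexes with cohomology in $\CrysC(X)$ resp.\ $\mu_{\lfgu}(X)$. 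That maneuver makes your first realization (the locally Noetherian d\'evissage on the Cartier side) unnecessary: the paper never needs to compare $D^b(\CrysC(X))$ with $D_{\crys}^b(\QCrysC(X))$. Both arguments then reduce to the same non-formal input, which you correctly isolate as the main obstacle: the comparison $D^b(\mu_{\lfgu}(X)) \simeq D_{\lfgu}^b(\CO_{F,X})$, which the paper imports as \cite[11.6]{EmKis.Fcrys} and feeds through the factorization of $\mu_{\lfgu}(X) \into \mu(X)$ via $\mu_{\operatorname{u}}(X)$. Your route costs an extra d\'evissage (which does go through, since $\QCrysC(X)$ is a locally Noetherian quotient of the locally Noetherian category $\QCohC(X)$ with Noetherian objects the coherent crystals) but is self-contained on the Cartier side; the paper's route is shorter because the heart equivalence is inherited from an equivalence of the ambient categories.

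One caveat: in your second paragraph you assert that $\mu_{\lfgu}(X)$ is a Serre subcategory of the category of $\CO_{F,X}$-modules, which is false, and your own closing paragraph correctly retracts it -- a sub-$\CO_{F,X}$-module of a unit module need not be unit, so neither unit nor lfgu modules are closed under subobjects in $\mu(X)$. Since the realization equivalence is in any case delegated to Emerton--Kisin, this inconsistency does not break the argument, but the sentence should be deleted: the whole point of \cite[11.6]{EmKis.Fcrys} is that the comparison holds even though $\mu_{\lfgu}(X)$ is not a Serre subcategory, so no formal d\'evissage along a Serre subcategory is available there.
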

\begin{proof}
Combining \autoref{QCrysCartierGamma} and \autoref{QCrysGammaunit}, we see that $\G$ induces an equivalence of abelian categories $\QCrysC(X) \to \mu_{\operatorname{u}}(X)$ and therefore an equivalence of derived categories $D^b(\QCrys(X)) \to D^b(\mu_{\operatorname{u}}(X))$. Since $G$ is exact and restricts to an equivalence $\CrysC(X) \to \mu_{\lfgu}(X)$, we obtain an equivalence $D_{\crys}^b(\QCrysC(X)) \to D_{\lfgu}(\mu_{\operatorname{u}}(X))$. 

It remains to show that $D_{\lfgu}^b(\mu_{\operatorname{u}}(X))$ is naturally equivalent to $D_{\lfgu}^b(\CO_{F,X})$. The inclusion $\mu_{\lfgu}(X) \to \mu(X)$ induces an equivalence $D^b(\mu_{\lfgu}) \to D_{\lfgu}^b(\CO_{F,X})$ (\cite[11.6]{EmKis.Fcrys}). As the inclusion $\mu_{\lfgu}(X) \to \mu(X)$ factors through $\mu_{\operatorname{u}}(X)$, this implies an equivalence $D_{\lfgu}^b(\mu_{\operatorname{u}}(X)) \to D_{\lfgu}^b(\CO_{F,X})$.
\end{proof}     
Finally, we prove that the equivalence $\G$ of derived functors is compatible with pull-backs. Note that for a morphism $f\colon X \to Y$ of smooth schemes, the functor 
\[
	f^!\colon D_{\lfgu}^b(\CO_{F,Y}) \to D_{\lfgu}^b(\CO_{F,X})
\]
is obtained from a right-exact functor of abelian categories. 
\begin{definition}
Let $f\colon X \to Y$ be a morphism of smooth $k$-schemes. The \emph{(underived) pull-back} $f^*\CM$ of an $\CO_{F,Y}$-module $\CM$ is given by
\begin{align*}
	f^*\CM &= \CO_{F,X \rightarrow Y} \otimes_{f^{-1}\CO_{F,Y}} f^{-1}\CM \\
	&= \CO_{F,X} \otimes_{f^{-1}\CO_{F,Y}} f^{-1}\CM,	
\end{align*}
cf.\ \autoref{EmKisPullBack}. The pull-back $f^!$ for complexes $\CM^{\bullet}$ of $\CO_{F,Y}$-modules from Definition 2.3.1 of \cite{EmKis.Fcrys} is the left derived functor of $f^*$, shifted by $d_{X/Y}$:
\[
	f^!\CM^{\bullet} = \CO_{F,X \rightarrow Y} \derotimes_{f^{-1}\CO_{F,Y}} f^{-1}\CM^{\bullet}[d_{X/Y}].
\]
\end{definition}
\begin{corollary} \label{lfgupullbackcomp}
Let $f\colon X \to Y$ be a closed immersion of smooth, $F$-finite $k$-schemes of relative dimension $d_{X/Y}=n$. There is a natural equivalence of functors $\CrysC(Y) \to \mu_{\lfgu}(X)$:
\[
	f^* \circ \G_Y \cong \G_X \circ R^nf^{\flat}.
\]
\end{corollary}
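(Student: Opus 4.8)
The plan is to deduce the statement from two compatibilities into which $\G$ naturally splits. By definition $\G = \Gen \circ (\usc \otimes \omega^{-1})$, so both sides of the asserted isomorphism are composites with $\Gen$; it therefore suffices to establish separately that (i) the equivalence $\usc \otimes \omega^{-1}$ intertwines $R^n f^{\flat}$ on Cartier crystals with the $\gamma$-sheaf pull-back $f^*$ on $\gamma$-crystals, and (ii) the functor $\Gen$ intertwines the $\gamma$-sheaf pull-back $f^*$ with the underived pull-back $\CO_{F,X \rightarrow Y} \otimes_{f^{-1}\CO_{F,Y}} f^{-1}(\usc)$ of unit $\CO_{F}$-modules. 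Now (i) is precisely \autoref{pullbackcomp} on the level of crystals, with $n$ the codimension of the closed immersion $f$; one uses in addition that $R^n f^{\flat}$ is the only non-vanishing cohomology of $f^{!}$ for a codimension-$n$ closed immersion of regular schemes, so that it computes $f^{!}$ on the abelian category of Cartier crystals. Thus nothing new is needed for (i), and the whole difficulty is concentrated in (ii).

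For (ii) I would argue directly from the colimit description of $\Gen$. Given a $\gamma$-sheaf $N$ on $Y$ with structural map $\gamma_N \colon N \to F_Y^* N$, the $\gamma$-structure on $f^* N$ is the composite $f^* N \xrightarrow{f^* \gamma_N} f^* F_Y^* N \overset{\sim}{\longrightarrow} F_X^* f^* N$, where the isomorphism is the canonical base-change isomorphism $f^* F_Y^* \cong F_X^* f^*$ attached to the commuting square $F_Y \circ f = f \circ F_X$. Consequently $\Gen_X(f^* N)$ is the colimit $\varinjlim\bigl(f^* N \longrightarrow F_X^* f^* N \longrightarrow F_X^{2*} f^* N \longrightarrow \cdots\bigr)$, and since $f^*$ is a left adjoint it commutes with this colimit; iterating the base-change isomorphism identifies $\Gen_X(f^* N)$, as an $\CO_X$-module, with $f^*\bigl(\varinjlim(N \longrightarrow F_Y^* N \longrightarrow \cdots)\bigr) = f^*(\Gen_Y N)$, and naturality of the base-change isomorphism shows that the unit structures (the inverses of the induced isomorphisms with the Frobenius pull-backs) match on the two sides. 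What then remains is the bookkeeping identification of this $f^*(\Gen_Y N)$, equipped with the limit unit structure, with the $\CO_{F,X}$-module $\CO_{F,X \rightarrow Y} \otimes_{f^{-1}\CO_{F,Y}} f^{-1}(\Gen_Y N)$ appearing in the definition of the unit-module pull-back: this comes down to the fact, implicit in \cite[Lemma 2.3.2]{EmKis.Fcrys}, that for a unit module $\CM$ the tensor product $\CO_{F,X \rightarrow Y} \otimes_{f^{-1}\CO_{F,Y}} f^{-1}\CM$ is precisely the colimit $\varinjlim\bigl(f^*\CM \longrightarrow F_X^* f^*\CM \longrightarrow \cdots\bigr)$ formed with the pulled-back unit maps. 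Note that (ii) does not use that $f$ is a closed immersion -- only (i) does -- and that $f^*$ sends coherent $\gamma$-sheaves to coherent $\gamma$-sheaves, so $\Gen$ of the result is lfgu, while $f^*$ preserves lfgu unit modules for closed immersions by \autoref{EmKisPullBack}; hence all functors land in the stated categories, and the argument is visibly compatible with the localization at (locally) nilpotent objects.

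Pasting the commuting square of (i) and the commuting square of (ii) along their common vertical arrow, the $\gamma$-sheaf pull-back on $\gamma$-crystals, yields the natural isomorphism $f^* \circ \G_Y \cong \G_X \circ R^n f^{\flat}$ of functors $\CrysC(Y) \to \mu_{\lfgu}(X)$. I expect the main obstacle to be the last bookkeeping point of the preceding paragraph: reconciling the concrete $\gamma$-colimit description of the pull-back of a unit module with the tensor-over-$f^{-1}\CO_{F,Y}$ description inherited from the definition of $f^{!}$, and checking that the natural isomorphism between them respects the Frobenius (unit) structures and not merely the underlying $\CO_X$-modules. Everything else is either cited -- \autoref{pullbackcomp}, \autoref{EmKisPullBack} -- or a formal consequence of $f^*$ being a left adjoint.
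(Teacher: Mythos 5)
Your proposal is correct and follows essentially the same route as the paper: the paper's proof also splits the diagram into the $\usc\otimes\omega^{-1}$ square (handled by \autoref{pullbackcomp}) and the $\Gen$ square (handled by the colimit description of $\Gen$, the fact that $f^*$ commutes with direct limits, and the identification of the underived unit-module pull-back $\CO_{F,X}\otimes_{f^{-1}\CO_{F,Y}}f^{-1}\CM$ with the quasi-coherent pull-back carrying the induced Frobenius structure). The final ``bookkeeping'' point you flag is exactly the step the paper also leaves as a routine check.
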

\begin{proof}
Consider the following diagram of functors:
\[
	\xymatrix@C50pt{
		\CrysC(Y) \ar[r]^-{\usc \otimes \omega_Y^{-1}} \ar[d]^{R^nf^{\flat}} & \Crys_{\gamma}(Y) \ar[r]^-{\Gen_Y} \ar[d]^{f^*} & \mu_{\lfgu}(Y) \ar[d]^{f^*} \\
		\CrysC(X) \ar[r]^-{\usc \otimes \omega_X^{-1}} & \Crys_{\gamma}(X) \ar[r]^-{\Gen_X} & \mu_{\lfgu}(X).
	}
\]
The left square commutes by \autoref{pullbackcomp}. The right square also commutes because there is a natural isomorphism $\Gen_X \circ f^* \cong f^* \circ \Gen_Y$. For a $\gamma$-sheaf $N$ on $Y$, let $\CN$ denote $\Gen_Y(N)$, which is the direct limit $\varinjlim F_Y^{i*}N$. As direct limits commute with pull-back of quasi-coherent sheaves, we have a natural isomorphism
\begin{align*}
	\Gen_X f^*(N) &= \varinjlim(\CO_X \otimes_{f^{-1}\CO_Y} f^{-1}F_Y^{i*}N) \\
	&\overset{\sim}{\longrightarrow} \CO_X \otimes_{f^{-1}\CO_Y} f^{-1}(\varinjlim F_Y^{i*}N) \\
	&= \CO_X \otimes_{f^{-1}\CO_Y} f^{-1}(\CN).
\end{align*}
One checks that for a left $\CO_{F,Y}$-module $\CM$, the underived pullback $f^*\CM = \CO_{F,X} \otimes_{f^{-1}\CO_{F,Y}} f^{-1}\CM$ is the quasi-coherent sheaf $f^*\CM = \CO_X \otimes_{f^{-1}\CO_Y} f^{-1}\CM$ with the natural morphism $F_X^*f^*\CM \to f^*\CM$ induced by the structural morphism $F_Y^*\CM \to \CM$. Hence $\CO_X \otimes_{f^{-1}\CO_Y} f^{-1}(\CN)$ is isomorphic to $f^*\Gen_Y(N)$.   
\end{proof}
\begin{lemma} \label{flatcon}
Let $i\colon X \to Y$ be a closed immersion of smooth, $F$-finite schemes over $k$. Let $P$ be a locally free left $\CO_{F,Y}$-module. Then
\[
	R^n((\usc \otimes \omega_X^{-1}) \circ i^! \circ (\usc \otimes \omega_Y))P = 0 \text{ for all } n \neq -d_{X/Y},
\]
where $(\usc \otimes \omega_X^{-1}) \circ i^! \circ (\usc \otimes \omega_Y)$ is understood as the composition of functors
\[
	D_{\lfgu}^b(\CO_{F,Y}) \xrightarrow{\usc \otimes \omega_Y} D_{\crys}^b(\QCrysC(Y)) \xrightarrow{i^!} D_{\crys}^b(\QCrysC(X)) \xrightarrow{\usc \otimes \omega_X^{-1}} D_{\lfgu}^b(\CO_{F,X}).
\]
\end{lemma}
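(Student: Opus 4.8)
The plan is to reduce everything to the standard fact that, for a regular sequence $\underline f$ and a flat module $M$, the Koszul cohomology $H^\bullet(\underline f;M)$ is concentrated in top degree; the rest is exact bookkeeping through the various equivalences. Put $c:=-d_{X/Y}$, the (locally constant, finite) codimension of $i$. First I would note that the two outer functors in the displayed composition — the passage from $D_{\lfgu}^b(\CO_{F,Y})$ to $D_{\crys}^b(\QCrysC(Y))$ and the passage back from $D_{\crys}^b(\QCrysC(X))$ to $D_{\lfgu}^b(\CO_{F,X})$ — are built from tensoring with the invertible sheaves $\omega_Y$, $\omega_X^{-1}$ and from $\Neg$, $\Gen$ and the localization functors, all of which are \emph{exact} (see \autoref{CartierGamma}, \autoref{QCrysCartierGamma}, \autoref{QCrysGammaunit}, \autoref{Gammalfgu}); hence they neither shift cohomological degree nor change which cohomology objects vanish. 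So it suffices to show that $i^!$ applied to the Cartier module $P\otimes_{\CO_Y}\omega_Y$ has cohomology concentrated in degree $c$; and since the localization $\QCohC(Y)\to\QCrysC(Y)$ (and its counterpart for $X$) is exact, it is enough to see that the underlying complex of $Ri^{\flat}(P\otimes_{\CO_Y}\omega_Y)\in D^+(\QCohC(X))$ has no cohomology in degrees $\ne c$.

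Now $P$, being locally free over $\CO_{F,Y}$, is locally free over $\CO_Y$: locally $\CO_{F,Y}=\bigoplus_{e\ge0}\CO_Y F^e$ is free as a left $\CO_Y$-module, so a local $\CO_{F,Y}$-basis of $P$ is a local $\CO_Y$-basis. Since $\omega_Y$ is invertible, $P\otimes_{\CO_Y}\omega_Y$ is locally free over $\CO_Y$, hence flat. To compute $Ri^{\flat}$ on underlying sheaves, recall $i^{\flat}=\overline i^*\SHom_{\CO_Y}(i_*\CO_X,\usc)$ with $\overline i^*$ exact, and that $\QCohC(Y)$ has enough injectives whose underlying $\CO_Y$-module is injective (\autoref{enoughinjectiveskappa}); resolving by such injectives gives $R^mi^{\flat}\CF\cong\overline i^*\ExtS_{\CO_Y}^m(i_*\CO_X,\CF)$ for any quasi-coherent Cartier module $\CF$, and this sheaf, being supported on $X$, vanishes if and only if $\ExtS_{\CO_Y}^m(i_*\CO_X,\CF)$ does.

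It remains to prove $\ExtS_{\CO_Y}^m(i_*\CO_X,P\otimes_{\CO_Y}\omega_Y)=0$ for $m\ne c$, which is local on $Y$. Since $X$ and $Y$ are regular, $i$ is a local complete intersection (as already used around \autoref{pullbackcomp}): over an affine $\Spec R\subseteq Y$ the ideal $I$ of $X$ is generated by a regular sequence $\underline f=f_1,\dots,f_c$, and the Koszul complex $\CK(\underline f)$ is a free resolution of $R/I$ concentrated in homological degrees $0,\dots,c$. Writing $M$ for the flat $R$-module associated to $P\otimes_{\CO_Y}\omega_Y$, we get $\Ext_R^m(R/I,M)=H^m(\Hom_R(\CK(\underline f),M))=H^m(\underline f;M)$. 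Since $\underline f$ is $R$-regular and $M$ is $R$-flat, $\underline f$ is an $M$-regular sequence, so $H^m(\underline f;M)=0$ for $m<c$, while $H^m=0$ for $m>c$ because the complex has length $c$ (in degree $c$ one recovers $M/IM$ with the Cartier structure of \autoref{Koszul}). Feeding this back through the reductions above yields $R^n\bigl((\usc\otimes\omega_X^{-1})\circ i^!\circ(\usc\otimes\omega_Y)\bigr)P=0$ for all $n\ne c=-d_{X/Y}$.

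I expect the only real subtlety to be the middle step: one has to be certain that $i^!$ on the \emph{localized} category $D_{\crys}^b(\QCrysC(\usc))$ computes the naive $Ri^{\flat}$ on underlying $\CO$-modules, with no shift concealed in the definition, so that the degree count is governed purely by the sheaves $\ExtS_{\CO_Y}^m(i_*\CO_X,\usc)$; this is exactly what \autoref{enoughinjectiveskappa} and the exactness of the localization and of $\usc\otimes\omega_X^{-1}$ are there to provide. The Koszul vanishing itself is entirely standard once one knows that $P\otimes_{\CO_Y}\omega_Y$ is flat.
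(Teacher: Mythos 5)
Your proof is correct and follows essentially the same route as the paper: observe that the outer equivalences are exact, that a locally free left $\CO_{F,Y}$-module is locally free (hence flat) over $\CO_Y$, and that the local $\ExtS_{\CO_Y}^m(i_*\CO_X,\usc)$ of such a module is concentrated in degree $-d_{X/Y}$ by the Koszul computation. The paper merely phrases the last step by reducing to direct sums of copies of $\omega_Y$ and invoking the earlier Koszul lemma, whereas you spell out the flatness/regular-sequence argument explicitly; the content is the same.
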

\begin{proof}
Locally free left $\CO_{F,Y}$-modules are in particular locally free as quasi-coherent $\CO_Y$-modules. Thus we have
\begin{align*}
	R^n((\usc \otimes \omega_X^{-1}) \circ i^! \circ (\usc \otimes \omega_Y))P &\cong ((\usc \otimes \omega_X^{-1}) \circ R^n i^! \circ (\usc \otimes \omega_Y)) \oplus_{j \in J}\CO_Y \\
	&\cong (\usc \otimes \omega_X^{-1}) \circ R^n i^! \oplus_{j \in J}\omega_Y \\
	&\cong 0
\end{align*}
locally for all $n \neq -d_{X/Y}$ on the underlying quasi-coherent sheaves.
\end{proof}
\begin{theorem}
For closed immersions $i\colon X \to Y$ of smooth, $F$-finite $k$-schemes, the equivalences $D_{\crys}^b(\QCrysC(X)) \to D_{\lfgu}^b(\CO_{F,X})$ and $D_{\crys}^b(\QCrysC(Y)) \to D_{\lfgu}^b(\CO_{F,Y})$ of derived categories induced by $G_X$ and $G_Y$ are compatible with the pull-backs $i^!$, i.e.\ we have a canonical isomorphism
\[
	\G_X \circ i^! \cong i^! \circ \G_Y 
\]
of functors from $D_{\crys}^b(\QCrysC(Y))$ to $D_{\lfgu}^b(\CO_{F,X})$.
\end{theorem}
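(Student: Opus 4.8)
The strategy is to reduce everything to the (already proven) compatibility on the level of abelian categories and then boot-strap it to the derived level via suitable adjoint functor arguments and way-out (hyper-cohomology) considerations. The target functors $\G_X\circ i^!$ and $i^!\circ\G_Y$ are both compositions of derived functors between bounded derived categories; the first is a derived functor in the Cartier-crystal world followed by an exact equivalence, while the second is an exact equivalence followed by the left-derived functor of the underived pull-back $i^*$ in the lfgu world. So the first thing I would do is record, via \autoref{Gopencomp} for open immersions and the local Koszul computations (\autoref{Koszul}, \autoref{localGammapullback}, \autoref{Gammapullback}) packaged as \autoref{pullbackcomp}, that on $\CohC(Y)$ (respectively $\CrysC(Y)$) there is a canonical natural isomorphism $(\usc\otimes\omega_X^{-1})\circ R^n i^{\flat}\cong i^*\circ(\usc\otimes\omega_Y^{-1})$; composing with $\Gen$ and using $\Gen_X\circ f^*\cong f^*\circ\Gen_Y$ from the proof of \autoref{lfgupullbackcomp} gives the isomorphism $\G_X\circ R^n i^{\flat}\cong i^*\circ\G_Y$ on abelian categories.

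**Key steps.** Having this, I would argue as follows. By the Kashiwara-type setup, both $i^!\colon D_{\crys}^b(\QCrysC(Y))\to D_{\crys}^b(\QCrysC(X))$ and $i^!\colon D_{\lfgu}^b(\CO_{F,Y})\to D_{\lfgu}^b(\CO_{F,X})$ are way-out functors: they are derived functors of right-exact (resp. the Cartier $i^{\flat}$ carrying cohomology concentrated in one degree $-d_{X/Y}=n$) functors, as made precise by \autoref{flatcon}. Concretely, \autoref{flatcon} shows that for a locally free $\CO_{F,Y}$-module $P$ — these generate $D_{\lfgu}^b(\CO_{F,Y})$ up to quasi-isomorphism, and one may resolve any bounded lfgu complex by a bounded-above complex of such — the composite $(\usc\otimes\omega_X^{-1})\circ i^!\circ(\usc\otimes\omega_Y)$ applied to $P$ is concentrated in degree $-n$, where it agrees with the underived $f^*$ by the abelian-category statement above. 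Then I would invoke a standard way-out lemma (à la Hartshorne, \emph{Residues and Duality}, I.7): two way-out (left) functors between bounded derived categories which agree, compatibly with a given natural transformation, on a generating subcategory of acyclic-for-both objects (here the locally free $\CO_{F,Y}$-modules) agree on the whole bounded derived category. Tracking the natural transformation built from the abelian-level isomorphism through these resolutions then yields the canonical isomorphism $\G_X\circ i^!\cong i^!\circ\G_Y$ of functors $D_{\crys}^b(\QCrysC(Y))\to D_{\lfgu}^b(\CO_{F,X})$.

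**Main obstacle.** The routine parts — that $\G_X,\G_Y$ are exact equivalences of derived categories, and that $i^!$ on the lfgu side is the left derived functor of $i^*$ shifted by $d_{X/Y}$ — are already in hand. The genuinely delicate point is the bookkeeping of \emph{which} resolutions one uses on each side and checking that the abelian-level natural isomorphism is compatible with the transition maps: on the Cartier side $i^!$ is computed by an injective resolution (using that $\QCohC$ has enough injectives with injective underlying $\CO$-module, \autoref{enoughinjectiveskappa}) and then identifying $i^*i^!$ with local cohomology $R\Gamma_X$, whereas on the lfgu side one uses a locally free resolution to compute $\derotimes$; one must verify these two computations of the \emph{same} underlying derived functor of quasi-coherent sheaves are linked by \autoref{Gammapullback} in a way that is natural in the complex. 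I expect this compatibility — essentially a statement that the isomorphism of \autoref{Gammapullback} is compatible with the $\delta$-functor structures on both sides — to be the real work; once it is established, uniqueness of way-out extensions does the rest. Finally, I would remark that the same argument, using \autoref{Gopencomp} in place of \autoref{pullbackcomp}, handles the case of open (hence arbitrary essentially étale) immersions, so that $\G$ is compatible with $i^!$ for all immersions of smooth $F$-finite $k$-schemes.
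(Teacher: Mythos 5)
Your proposal is correct and follows essentially the same route as the paper: the paper also reduces to the abelian-level isomorphism of \autoref{pullbackcomp}, checks via \autoref{flatcon} that locally free left $\CO_{F,Y}$-modules (of which every lfgu module is a quotient) are acyclic for the composite $(\usc\otimes\omega_X^{-1})\circ i^!\circ(\usc\otimes\omega_Y)$ except in degree $-d_{X/Y}$, and then invokes precisely the way-out statement you allude to, namely \cite[Proposition I.7.4]{HartshorneRD}, to identify $RF$ with $Li^*[d_{X/Y}]=i^!$. The compatibility-of-resolutions issue you flag as the main obstacle is absorbed in the paper by the canonical functorial isomorphism $RF_1\overset{\sim}{\longrightarrow}LF_2[-n]$ provided by that proposition, so no further bookkeeping is needed.
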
 
\begin{proof}
This is an application of the following general result concerning derived functors: 
\begin{proposition}[\protect{\cite[Proposition I.7.4]{HartshorneRD}}]
Lat $\CA$ and $\CB$ be abelian categories, where $\CA$ has enough injectives, and let $F_1\colon\CA \to \CB$ be an additive functor which has cohomological dimension $\leq n$ on $\CA$. Let $P$ be the set of objects $X$ of $\CA$ such that $R^iF_1(X)=0$ for all $i \neq n$, and assume that every object of $\CA$ is a quotient of an element of $P$. Let $F_2 = R^nF_1$. Then $RF_1$ and $LF_2$ exist, and there is a functorial isomorphism
\[
	RF_1 \overset{\sim}{\longrightarrow} LF_2[-n].
\]
\end{proposition}
First we have to check the requirements. Let $F$ denote the functor $(\usc \otimes \omega_X^{-1}) \circ R^0i^{\flat} \circ (\usc \otimes \omega_Y)$. As done in the proof of \autoref{Koszul}, the derived functors of $R^0i^{\flat}=\overline{i}^*\SHom_{\CO_Y}(i_*\CO_X,\usc)$ may be computed locally by resolving $i_*\CO_X$ by the Koszul complex. Since this complex has length $-d_{X/Y}$, the cohomological dimension of $F$ is smaller or equal $-d_{X/Y}$. As $Y$ is smooth, every left $\CO_{F,Y}$-module is the quotient of a locally free left $\CO_{F,Y}$-module (\cite[Lemma 1.6.2]{EmKis.Fcrys}). Finally, for every locally free left $\CO_{F,Y}$-module $P$, \autoref{flatcon} states that $R^nF(P)=0$ for all $n \neq -d_{X/Y}$. 

It follows from \cite[Proposition I.7.4]{HartshorneRD} that $RF \cong Li^*[d_{X/Y}]=i^!$ because $i^* \cong R^{-d_{X/Y}}F$ (\autoref{pullbackcomp}). Thus 
\[
	(\usc \otimes \omega_X^{-1}) \circ i^! \circ (\usc \otimes \omega_Y) \cong i^!,
\]
i.e.\ the diagram
\begin{align*}
	\xymatrix@R30pt@C70pt{
		D_{\crys}^b(\QCrysC(Y)) \ar[d]^{i^!} \ar@<1mm>[r]^-{\usc \otimes \omega_Y^{-1}} &  D_{\lfgu}^b(\CO_{F,Y}) \ar@<1mm>[l]^-{\usc \otimes \omega_Y} \ar[d]^{i^!} \\
		D_{\crys}^b(\QCrysC(X)) \ar@<1mm>[r]^-{\usc \otimes \omega_X^{-1}} & D_{\lfgu}^b(\CO_{F,X}) \ar@<1mm>[l]^-{\usc \otimes \omega_X}
	}
\end{align*}
is commutative.		 
\end{proof}
\begin{corollary}
For every closed immersion of smooth, $F$-finite $k$-schemes $i\colon X \to Y$, there is a canonical isomorphism
\[
	\G_Y \circ i_* \cong i_* \circ \G_X.
\]
\end{corollary}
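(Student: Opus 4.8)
The plan is to derive this from the preceding theorem, $\G_X\circ i^!\cong i^!\circ\G_Y$, together with the two Kashiwara equivalences: \autoref{Kashiwara} for Cartier crystals and its analogue for lfgu modules, i.e.\ the derived form of \cite[Theorem 5.10.1]{EmKis.Fcrys}, obtained from that abelian statement and the adjunction of \autoref{EmKisadj} in exactly the way \autoref{Kashiwara} is obtained on the Cartier side. These identify the push-forward along $i$ --- on the $\mu_{\lfgu}$-side this is the Emerton--Kisin functor $i_+$ --- with a quasi-inverse of the corresponding $i^!$ on the subcategory of complexes supported on $X$: one has $i^!\circ i_*\cong\id$ on the source category, and $i_*\circ i^!\cong\id$ on $D_{\crys}^b(\QCrysC(Y))_X$, respectively on $D_{\lfgu}^b(\CO_{F,Y})_X$.

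First I would check that $\G_Y$ sends $D_{\crys}^b(\QCrysC(Y))_X$ into $D_{\lfgu}^b(\CO_{F,Y})_X$. Being supported on $X$ means that the restriction along the open immersion $j\colon Y\setminus X\into Y$ vanishes, and both functors making up $\G_Y$ commute with $j^*$: tensoring with $\omega_Y^{-1}$ does because $\omega_Y^{-1}|_{Y\setminus X}=\omega_{Y\setminus X}^{-1}$, and $\Gen_Y$ does because $j^*$ commutes with the direct limit defining it, as already used in the proof of \autoref{lfgupullbackcomp}. Thus $\G_Y$ restricts to an equivalence $D_{\crys}^b(\QCrysC(Y))_X\to D_{\lfgu}^b(\CO_{F,Y})_X$, and likewise $\G_X$ is compatible with the supports.

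Then, for $\CN^\bullet\in D_{\crys}^b(\QCrysC(X))$, I would note that $i_*\CN^\bullet$ is supported on $X$, so the preceding theorem applies to it and, using the unit of the Cartier Kashiwara equivalence and then that of the lfgu one, gives
\[
	i^!\bigl(\G_Y(i_*\CN^\bullet)\bigr)\;\cong\;\G_X\bigl(i^!i_*\CN^\bullet\bigr)\;\cong\;\G_X(\CN^\bullet)\;\cong\;i^!\bigl(i_*(\G_X\CN^\bullet)\bigr).
\]
Since both $\G_Y(i_*\CN^\bullet)$ and $i_*(\G_X\CN^\bullet)$ lie in $D_{\lfgu}^b(\CO_{F,Y})_X$, where $i_*\circ i^!\cong\id$, applying $i_*$ to this chain yields a canonical isomorphism $\G_Y(i_*\CN^\bullet)\cong i_*(\G_X\CN^\bullet)$, natural in $\CN^\bullet$ because every arrow used is a natural transformation. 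Equivalently and more conceptually: $\G$ commutes with $i^!$ and with the inclusions of the supported subcategories, hence with the quasi-inverses of $i^!$ on those subcategories, which are the respective push-forwards.

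The only points needing care --- and constituting the main, though mild, obstacle --- are the support-preservation of $\G$ recorded above and having the lfgu Kashiwara equivalence in the derived form ``$i_*$ and $i^!$ are mutually inverse equivalences between $D_{\lfgu}^b(\CO_{F,X})$ and $D_{\lfgu}^b(\CO_{F,Y})_X$''. Neither is deep, but both must be put in place before the formal argument goes through; once they are, the statement is a purely formal consequence of the preceding theorem.
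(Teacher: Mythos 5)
Your proof is correct, but it takes a genuinely different route from the paper's. The paper disposes of this corollary in one line: on both sides the push-forward along the closed immersion is the \emph{left adjoint} of $i^!$ (for Cartier crystals by the adjunction theorem in Section 1.2, for lfgu modules by \autoref{EmKisadj}), so since $\G_X$ and $\G_Y$ are equivalences intertwining $i^!$ by the preceding theorem, they automatically intertwine the left adjoints by uniqueness of adjoints. You instead pass through the two Kashiwara equivalences: you restrict everything to the subcategories of complexes supported on $X$, check that $\G_Y$ preserves this support condition, and use that $i_*$ and $i^!$ are mutually quasi-inverse there to transport the isomorphism $\G_X\circ i^!\cong i^!\circ\G_Y$ to one for $i_*$. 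Both arguments are formally valid consequences of the preceding theorem, but yours requires two extra inputs that the paper's does not need at this point: the support-preservation of $\G$ (which the paper only establishes later, in \autoref{singularG}, via \autoref{Gopenresult}) and the derived form of the lfgu Kashiwara equivalence; your sketch of how to obtain the latter from \cite[Theorem 5.10.1]{EmKis.Fcrys} and \autoref{EmKisadj} is plausible but is genuine additional work. What your approach buys is a more explicit description of the isomorphism and of where it lives (entirely inside the supported subcategories); what the adjunction argument buys is brevity and independence from any support bookkeeping.
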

\begin{proof}
This follows formally as $\G_X$ and $\G_Y$ are equivalences of categories and since $i_*$ is uniquely determined as a left adjoint functor of $i^!$.
\end{proof}
\begin{proposition} \label{Gopenresult}
Let $j\colon X \to Y$ be an open immersion of smooth, $F$-finite schemes. Then there are natural isomorphisms
\[
	\G_X \circ j^* \cong j^! \circ \G_Y \text{ and } \G_Y \circ Rj_* \cong j_+ \circ \G_X.
\]
\end{proposition}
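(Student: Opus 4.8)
The plan is to obtain both isomorphisms from results already established: the first by unwinding $\G = \Gen \circ (\usc \otimes \omega^{-1})$ into its two exact factors and invoking the open-immersion compatibilities proved above, and the second from the first by a formal uniqueness-of-adjoints argument.

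For $\G_X \circ j^* \cong j^! \circ \G_Y$ I would first note that for an open immersion every functor in sight is exact: $j^*$ on quasi-coherent sheaves is the restriction, $\usc \otimes \omega_X^{-1}$ and $\usc \otimes \omega_Y^{-1}$ are tensor products with invertible sheaves, and $\Gen$ is exact. Hence it suffices to produce a natural isomorphism of the underlying exact functors between the crystal categories and then extend to $D^b$. On the $\gamma$-side, \autoref{Gopencomp} gives a natural isomorphism $(\usc \otimes_{\CO_X} \omega_X^{-1}) \circ j^* \cong j^* \circ (\usc \otimes_{\CO_Y} \omega_Y^{-1})$, where on the right $j^*$ denotes the pull-back of $\gamma$-sheaves and we use $\omega_X \cong j^*\omega_Y$ together with the identification $j^! = j^*$ for an open immersion. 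On the unit side, the proof of \autoref{lfgupullbackcomp} produces a natural isomorphism $\Gen_X \circ j^* \cong j^* \circ \Gen_Y$ whose argument — filtered colimits commute with pull-back of quasi-coherent sheaves, and the underived pull-back of a left $\CO_{F,Y}$-module along $j$ is the quasi-coherent pull-back with the induced Frobenius structure — applies verbatim to $j$, and for an open immersion $j^! = j^*$ on lfgu modules by the example following \autoref{EmKisPullBack}. Composing these two natural isomorphisms and deriving yields $\G_X \circ j^* \cong j^! \circ \G_Y$ as triangulated functors $D^b_{\crys}(\QCrysC(Y)) \to D^b_{\lfgu}(\CO_{F,X})$.

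For $\G_Y \circ Rj_* \cong j_+ \circ \G_X$ I would argue by uniqueness of adjoints. On $D^b_{\crys}(\QCrysC(Y))$ the functor $j^*$ is left adjoint to $Rj_*$ by the theorem on pull-back functors for essentially \'etale morphisms, and $Rj_*$ does restrict to the bounded crystal category since $j$ is of finite type. On $D^b_{\lfgu}(\CO_{F,Y})$ the functor $j^!$ is left adjoint to $j_+$: taking $R\Gamma$ of the natural isomorphism of the lemma preceding \autoref{EmKisadj} turns it into an adjunction isomorphism on $\Hom$-groups, and both $j^!$ and $j_+$ preserve bounded complexes with lfgu cohomology by \autoref{EmKisPullBack} and the corresponding statement for $j_+$. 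Since $\G_X$ and $\G_Y$ are equivalences of triangulated categories, the first isomorphism transports: $\G_Y \circ Rj_* \circ \G_X^{-1}$ is a right adjoint of $\G_X \circ j^* \circ \G_Y^{-1} \cong j^!$, hence it is canonically isomorphic to $j_+$, the right adjoint of $j^!$; unravelling this is exactly $\G_Y \circ Rj_* \cong j_+ \circ \G_X$.

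The argument uses no new geometric input beyond \autoref{Gopencomp} and the $\Gen$-compatibility in \autoref{lfgupullbackcomp}; the only care required is bookkeeping. One must check that each of $Rj_*$, $j^*$ on crystals, $j_+$, and $j^!$ on lfgu modules restricts to the bounded derived category that $\G$ identifies, so that the two adjunctions are adjunctions of the very pair of categories that $\G_X$ and $\G_Y$ match up; and that the isomorphism of the first step is natural as a transformation of triangulated functors, which holds because it is assembled from isomorphisms of the underlying exact functors of abelian categories and exact functors derive compatibly with composition. I expect this bookkeeping, rather than any substantial computation, to be the only real obstacle.
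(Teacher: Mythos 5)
Your proposal is correct and follows essentially the same route as the paper: the first isomorphism is assembled from \autoref{Gopencomp} and the $\Gen$-compatibility established in the proof of \autoref{lfgupullbackcomp} (valid for flat morphisms), and the second is deduced from the adjunctions of $j^*$/$j^!$ with $Rj_*$/$j_+$ via uniqueness of adjoints. You merely spell out the adjoint-transport and bookkeeping steps that the paper leaves implicit.
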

\begin{proof}
We already have seen that $\usc \otimes {\omega_X}^{-1} \circ j^* \cong j^* \circ \usc \otimes \omega_Y^{-1}$ (\autoref{Gopencomp}) and that $\Gen_X \circ j^* \cong j^! \circ \Gen_Y$ (see the proof of \autoref{lfgupullbackcomp}, this part holds for an arbitrary flat morphism of smooth $k$-schemes). Therefore $\G_X \circ j^* \cong j^! \circ \G_Y$. The rest follows from the adjunction of $Rj_*$ or $j_+$ and $j^*$ or $j^!$. 
\end{proof}
Up to now, we have seen that the equivalence $G$ between Cartier crystals and lfgu modules is compatible with the (derived) push-forward for open and closed immersions by showing the compatibility for the adjoint pull-back functors. In fact, $G$ is compatible with push-forward for arbitrary morphisms of smooth schemes, but we can give a proof only up to the following theorem\footnote{We will not discuss this theorem here as its theoretical background, for example $\infty$-categories, goes beyond the scope of this work. We just note that the requirement that $f_+ \Gen$ lifts to a functor of the corresponding stable $\infty$-categories is satisfied, because $f_+$ is a composition of left and right derived functors, which have this property (\cite[Example 1.3.3.4]{Lurie}).}, which is a result of Lurie, see \cite[Theorem 1.3.3.2]{Lurie}. 
\begin{theorem} \label{metaresult}
Let $F\colon D(\CA) \to D(\CB)$ be a functor between derived categories of abelian categories $\CA$ and $\CB$, which is a morphism of triangulated categories. If $F$ lifts to an exact functor of the stable $\infty$-categories whose homotopy categories are the cohomologically bounded below derived categories $D^+(\CA)$ and $D^+(\CB)$, if $F$ is $t$-left exact for the canonical $t$-structure, i.e.\ $F$ maps $D^{\geq 0}(\CA)$ to $D^{\geq 0}(\CB)$, and if the cohomology of $F(I)$ is concentrated in degree $0$ for every injective object $I$ of $\CA$, then $F$ arises as a right derived functor between the abelian categories $\CA$ and $\CB$.   
\end{theorem}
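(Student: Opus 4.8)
The plan is to realise $F$ as the total right derived functor of a suitable left exact functor between the hearts of the two $t$-structures. Identifying $\CA$ with the heart of the canonical $t$-structure on $D(\CA)$, i.e.\ with the complexes concentrated in degree $0$, I would set $f := H^0 \circ F|_{\CA} \colon \CA \to \CB$. This is well defined because $t$-left exactness gives $F(\CA) \subseteq D^{\geq 0}(\CB)$, so that $H^0$ picks out the lowest cohomology that can be non-zero. To see that $f$ is left exact, apply the triangulated functor $F$ to the distinguished triangle $A' \to A \to A'' \to A'[1]$ attached to a short exact sequence $0 \to A' \to A \to A'' \to 0$ in $\CA$ and pass to cohomology: since $F(A'') \in D^{\geq 0}(\CB)$ one has $H^{-1}F(A'') = 0$, whence $0 \to f(A') \to f(A) \to f(A'')$ is exact. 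Assuming, as holds in all the applications in this paper, that $\CA$ has enough injectives, the total right derived functor $Rf \colon D^+(\CA) \to D^+(\CB)$ then exists, is a morphism of triangulated categories, and is computed via injective resolutions; the goal is to produce a natural isomorphism $F \cong Rf$ on $D^+(\CA)$.

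First I would compare $F$ and $Rf$ on the full subcategory of injective objects of $\CA$. If $I$ is injective, the hypothesis that the cohomology of $F(I)$ is concentrated in degree $0$ lets the canonical truncation morphisms identify $F(I)$ naturally with $H^0 F(I) = f(I)$; and since injectives are $f$-acyclic we also have $Rf(I) = f(I)$. Thus $F$ and $Rf$ are naturally isomorphic on injectives. Since every object of $D^+(\CA)$ is represented by a bounded below complex of injectives $I^{\bullet}$, and $I^{\bullet}$ is the (homotopy) limit $\varprojlim_n \sigma^{\leq n} I^{\bullet}$ of the tower of its brutal truncations, each $\sigma^{\leq n} I^{\bullet}$ being a finite iterated cone of shifted injectives, exactness of both $F$ and $Rf$ shows that they already agree, compatibly, on every term $\sigma^{\leq n} I^{\bullet}$ of the tower.

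The genuinely delicate step — and, I expect, the main obstacle — is then to show that $F$ (and, for the same reason, $Rf$) commutes with the homotopy limit $I^{\bullet} = \varprojlim_n \sigma^{\leq n} I^{\bullet}$, so that the agreement propagates from the tower to $I^{\bullet}$ itself. A bare triangulated functor controls only finite homotopy (co)limits, not towers, which is precisely why the two remaining hypotheses are needed: the lift of $F$ to an \emph{exact} functor of the stable $\infty$-categories underlying $D^+(\CA)$ and $D^+(\CB)$ makes the homotopy limit meaningful, and $t$-left exactness forces, for each fixed degree $i$, the tower $\{H^i F(\sigma^{\leq n} I^{\bullet})\}_n$ to be eventually constant — the fibre of $F(I^{\bullet}) \to F(\sigma^{\leq n} I^{\bullet})$ is $F$ of an object of $D^{\geq n+1}(\CA)$, hence lies in $D^{\geq n+1}(\CB)$ — so that the relevant $\varprojlim^{1}$-obstructions vanish and $F(I^{\bullet}) \to \varprojlim_n F(\sigma^{\leq n} I^{\bullet})$ is an isomorphism. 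Making this $\infty$-categorical bookkeeping precise is exactly the content of \cite[Theorem 1.3.3.2]{Lurie}, whose hypotheses are tailored to the ones assumed here; rather than reconstruct that argument I would invoke it directly to conclude that $F$, restricted to $D^+(\CA)$, is canonically isomorphic to the right derived functor of the left exact functor $f \colon \CA \to \CB$.
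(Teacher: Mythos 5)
The paper does not actually prove this statement: it is quoted as a black box from Lurie, with a footnote explicitly declaring the $\infty$-categorical background beyond the scope of the work, so there is no internal proof to measure your attempt against. Your proposal is consistent with that treatment and adds the natural preliminary reductions that the paper omits: defining $f = H^0\circ F|_{\CA}$, checking its left exactness via the long exact cohomology sequence and $H^{-1}F(A'')=0$, identifying $F$ with $f$ on injectives using the concentration hypothesis, and locating the genuine difficulty in commuting $F$ past the tower $I^{\bullet}=\varprojlim_n \sigma^{\leq n}I^{\bullet}$. All of these steps are correct, and your diagnosis of why a bare triangulated functor is insufficient (no control over infinite homotopy limits) is exactly the right one.

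Be aware, however, that your final step is circular as a proof: the statement being proved \emph{is} (a reformulation of) \cite[Theorem 1.3.3.2]{Lurie}, so invoking that theorem to finish does not prove anything — it reproduces the paper's citation with extra scaffolding. If you wanted an actual proof you would have to carry out the left-completeness argument yourself: show that the $t$-structure on $D^+(\CB)$ is left complete (or at least that $Rf(I^\bullet)\to\varprojlim_n Rf(\sigma^{\leq n}I^\bullet)$ and $F(I^\bullet)\to\varprojlim_n F(\sigma^{\leq n}I^\bullet)$ are both equivalences, using that the successive fibres lie in $D^{\geq n+1}(\CB)$ and that the $\infty$-categorical lift supplies functorial homotopy limits of towers together with the $\varprojlim^1$-vanishing), and then conclude by comparing the two limits termwise. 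Note also that you quietly add the hypothesis that $\CA$ has enough injectives; this is not in the statement as given, but it is needed for $Rf$ to exist and holds in every application in the paper, so flagging it as you did is appropriate rather than a defect.
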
     
\begin{proposition} \label{GammalfguForward}
Let $f\colon X \to Y$ be a morphism of smooth, $F$-finite $k$-schemes. There is a natural isomorphism
\[
	\G_Y \circ Rf_* \to f_+ \circ \G_X  
\]
from $D_{\crys}^b(\QCrysC(X))$ to $D_{\lfgu}^b(\CO_{F,Y})$.
\end{proposition}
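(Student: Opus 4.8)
The plan is to reduce the statement, via the factorisation $\G = \Gen \circ (\usc \otimes \omega^{-1})$, to a compatibility statement for $\Gen$ alone, and then to exhibit both sides as right derived functors of one and the same underived functor between abelian categories, invoking \autoref{metaresult}.

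First I would reduce to $\Gen$. By the very definition of the $\gamma$-push-forward and the propositions of the subsection on compatibility with push-forward, the equivalence $\usc \otimes \omega_Y^{-1}$ carries $Rf_*$ of Cartier crystals to the push-forward $Rf_+^{\gamma}$ of $\gamma$-crystals (the underived statement is the definition $f_+^{\gamma}N = f_*(N\otimes\omega_X)\otimes\omega_Y^{-1}$ together with its identification with the relative-Cartier construction, and it passes to derived categories because the $\omega$-twists are exact equivalences). Composing with the equivalence $\usc\otimes\omega_X^{-1}$, it therefore suffices to construct a natural isomorphism
\[
	\Gen_Y \circ Rf_+^{\gamma} \;\cong\; f_+ \circ \Gen_X
\]
of functors $D_{\crys}^b(\QCrysG(X)) \to D_{\lfgu}^b(\CO_{F,Y})$, where $Rf_+^{\gamma}$ is transported from $Rf_*$ of Cartier crystals.

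Next I would recognise both sides as right derived functors. For the left side: $\Gen_Y$ restricts to the exact equivalence $\CrysG(Y)\cong\mu_{\lfgu}(Y)$ of \autoref{Gammalfgu} (more generally to $\QCrysG\cong\mu_{\operatorname{u}}$), and $Rf_+^{\gamma}$ is the right derived functor of the underived $\gamma$-push-forward (the Blickle--Böckle theorem recalled in the review section, transported through $\usc\otimes\omega$); since $\Gen_Y$ is exact, the Grothendieck composite-functor theorem shows $\Gen_Y\circ Rf_+^{\gamma}$ is canonically $R$ of the underived composite $\Gen_Y\circ f_+^{\gamma}$. For the right side I would apply \autoref{metaresult} to $F:=f_+\circ\Gen_X$: it lifts to an exact functor of the relevant stable $\infty$-categories (the footnote: $\Gen_X$ is exact and $f_+$ is a composite of left and right derived functors); it is $t$-left exact; and $F(I)$ has cohomology concentrated in degree $0$ for injective $I$. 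Granting these, $F\cong R(H^0F)$, and it remains to match the degree-$0$ parts. That matching — that $\Gen$ intertwines the underived $\gamma$-push-forward with $H^0$ of Emerton--Kisin's $f_+$ — is an unwinding of the relative Frobenius diagram: both $\Gen_Y\bigl(f_*(N\otimes\omega_X)\otimes\omega_Y^{-1}\bigr)$ and $H^0\bigl(f_+(\varinjlim F_X^{i*}N)\bigr)$ are assembled, via $Rf_*$ and the direct limit, out of the relative Cartier operator $C_{X/Y}$ that governs the right $\CO_{F,X}$-module structure of $\CO_{F,Y\leftarrow X}$, and one checks they agree (this is essentially the content of \cite{BliBoe.CartierEmKis}). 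Since right derived functors of naturally isomorphic underived functors are naturally isomorphic, this would conclude the proof.

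The hard part will be verifying the hypotheses of \autoref{metaresult} for $F=f_+\circ\Gen_X$, namely $t$-left exactness and the acyclicity of $F$ on injectives. For these I would pass to the graph factorisation $f=\pr_Y\circ\Gamma_f$, with $\Gamma_f$ a closed immersion into the smooth scheme $X\times_k Y$ and $\pr_Y$ smooth: for $\Gamma_f$ one uses that $(\Gamma_f)_*$ is $t$-exact on crystals, so by the corollary giving $\G_Y\circ i_*\cong i_*\circ\G_X$ the composite $(\Gamma_f)_+\circ\Gen$ is $t$-exact and sends the relevant injectives — which by \autoref{enoughinjectiveskappa} have injective underlying $\CO_X$-module — to objects without higher cohomology; the remaining, genuinely delicate, input is the contribution of the smooth factor, where the complex $\CO_{F,Y\leftarrow X}\derotimes_{\CO_{F,X}}\usc$ together with the relative-dimension shift must be controlled. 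This last point requires a careful analysis of $\CO_{F,Y\leftarrow X}$ as a right $\CO_{F,X}$-module (again through the relative Cartier operator) combined with the behaviour of $Rf_*$ on injective sheaves; once that acyclicity is secured, the degree-$0$ identification is a long but formal computation with the relative Frobenius diagram.
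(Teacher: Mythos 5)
Your overall architecture matches the paper's: reduce via the $\omega$-twists to a statement about $\Gen$, observe that $\Gen Rf_+$ is the right derived functor of $\Gen f_+$ because $\Gen$ is exact, invoke \autoref{metaresult} to realise $f_+\circ\Gen$ as a right derived functor, and match the two $H^0$'s. The gap is that the entire substance of the proof lies in verifying the hypotheses of \autoref{metaresult} — left $t$-exactness of $f_+\Gen$ and the concentration of $f_+\Gen I$ in degree $0$ for injective $\gamma$-sheaves $I$ — and you explicitly defer this ("once that acyclicity is secured\dots"). The device that makes it work, and which your proposal does not identify, is the two-term resolution of $\Gen N$ by induced modules,
\[
	0 \longrightarrow \CO_{F,X} \otimes_{\CO_X} N \xrightarrow{\ 1-\beta'\ } \CO_{F,X} \otimes_{\CO_X} N \longrightarrow \Gen N \longrightarrow 0,
\]
from \cite[Proposition 5.3.3]{EmKis.Fcrys}. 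Since $\CO_{F,Y\leftarrow X}$ is locally free as a right $\CO_X$-module and induced modules are acyclic for $\CO_{F,Y\leftarrow X}\derotimes_{\CO_{F,X}}\usc$, one gets $f_+(\CO_{F,X}\otimes_{\CO_X}I)\cong f_*(\CO_{F,Y\leftarrow X}\otimes_{\CO_X}I)$ concentrated in degree $0$ (the tensor product is flasque), so the triangle obtained by applying $f_+$ to the resolution confines the cohomology of $f_+\Gen I$ to degrees $0$ and $1$; the Emerton--Kisin compatibility $\CO_{F,X}\otimes_{\CO_X}f_+I\cong f_+(\CO_{F,X}\otimes_{\CO_X}I)$ intertwining $1-f_+\beta'$ with $f_+(1-\beta')$ then kills $H^1$ and simultaneously gives both the left $t$-exactness and the identification $\Gen f_+\cong H^0(f_+)\Gen$. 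This treats arbitrary $f$ uniformly, with no factorisation of $f$ needed.

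Your proposed route through the graph factorisation $f=\pr_Y\circ\Gamma_f$ does not obviously close the gap: the acyclicity hypothesis of \autoref{metaresult} must be checked for the composite $f_+\circ\Gen$ on injectives of the source category, and after applying $(\Gamma_f)_+\circ\Gen$ the resulting object on $X\times_kY$ is no longer of the form $\Gen$ of an injective $\gamma$-sheaf, so the verification does not split along the factorisation. For the smooth projection you would in any case face exactly the same question — why $\pr_{Y+}$ of a unit module coming from an injective has no higher cohomology — and the answer is again the induced-module resolution, not an analysis of the relative Cartier operator. So the proposal is a correct skeleton with the decisive step missing, and the sketched substitute for that step would need to be replaced by (or rediscover) the paper's argument.
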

\begin{proof}
As $\usc \otimes \omega_Y^{-1} \circ Rf_* \cong Rf_+ \circ \usc \otimes \omega_X^{-1}$ by construction, it suffices to show that there is a natural isomorphism of functors $\Gen Rf_+ \to f_+ \Gen$ from $D_{\crys}^b(\QCrysG(X))$ to $D_{\lfgu}^b(\CO_{F,Y})$. For every complex $N^{\bullet}$ of $\gamma$-sheaves, the complex $\Gen N^{\bullet}$ of quasi-coherent unit $\CO_{F,X}$-modules has a two-term resolution by induced modules, namely the short exact sequence 
\[
	0 \longrightarrow \CO_{F,X} \otimes_{\CO_X} N^{\bullet} \xrightarrow{1 - \beta'} \CO_{F,X} \otimes_{\CO_X} N^{\bullet} \longrightarrow \Gen N^{\bullet} \longrightarrow 0
\]
of \cite[Proposition 5.3.3]{EmKis.Fcrys}. Here $\beta'\colon \CO_{F,X} \otimes_{\CO_X} N^{\bullet} \to \CO_{F,X} \otimes_{\CO_X} N^{\bullet}$ denotes the morphism corresponding to $\beta$ via the identification
\[
	\Hom_{\CO_{F,X}}(\CO_{F,X} \otimes_{\CO_X} A,\CO_{F,X} \otimes_{\CO_X} B) \overset{\sim}{\longrightarrow} \Hom_{\CO_X}(A,\oplus_{n=0}^\infty(F_X^r)^*B)
\]
for $\CO_X$-modules $A$ and $B$ described in 1.7.3 of ibid. 

First we verify that the requirements of \autoref{metaresult} are satisfied. Let $I^{\bullet}$ be a bounded below complex of injective $\gamma$-sheaves with $H^i(I^{\bullet}) = 0$ for $i < 0$. Let $\beta\colon I^{\bullet} \to F_X^*I^{\bullet}$ be the morphism of complexes induced by the structural morphisms of the $I^i$. 

The complex $f_+I^{\bullet}$ represents $Rf_+I^{\bullet}$ and, as explained above, we have a short exact sequence
\[
	0 \longrightarrow \CO_{F,X} \otimes_{\CO_X} f_+I^{\bullet} \xrightarrow{1 - f_+\beta'} \CO_{F,X} \otimes_{\CO_X} f_+I^{\bullet} \longrightarrow \Gen f_+I^{\bullet} \longrightarrow 0.
\]
Applying $f_+$ to the two-term resolution of $\Gen I^{\bullet}$ yields a distinguished triangle
\[
	f_+(\CO_{F,X} \otimes_{\CO_X} I^{\bullet}) \xrightarrow{f_+(1 - \beta')} f_+(\CO_{F,X} \otimes_{\CO_X} I^{\bullet}) \longrightarrow f_+(\Gen I^{\bullet}) \longrightarrow f_+(\CO_{F,X} \otimes_{\CO_X} I^{\bullet})[1].
\]
The sheaf $\CO_{F,Y \leftarrow X}$ is locally free as an $\CO_X$-module. It follows that locally $\CO_{F,Y \leftarrow X} \otimes_{\CO_X} I^{\bullet}$ is a direct sum of flasque sheaves and hence flasque. We have
\begin{align*}
	f_+(\CO_{F,X} \otimes_{\CO_X} I^{\bullet}) &= Rf_*(\CO_{F,Y \leftarrow X} \derotimes_{\CO_{F,X}} (\CO_{F,X} \otimes_{\CO_X} I^{\bullet})) \\
	&\overset{\sim}{\longrightarrow} Rf_*(\CO_{F,Y \leftarrow X} \otimes_{\CO_X} I^{\bullet}) \\
	&\overset{\sim}{\longrightarrow} f_*(\CO_{F,Y \leftarrow X} \otimes_{\CO_X} I^{\bullet}),
\end{align*}
see also \cite[Lemma 3.5.1]{EmKis.Fcrys} and its proof. In particular, the complex $f_+(\CO_{F,X} \otimes_{\CO_X} I^{\bullet})$ is represented by the complex whose $i$-th degree equals the sheaf $f_+(\CO_{F,X} \otimes_{\CO_X} I^i)$.

The canonical isomorphism $\CO_{F,X} \otimes_{\CO_X} f_+I^{\bullet} \overset{\sim}{\longrightarrow} f_+(\CO_{F,X} \otimes_{\CO_X} I^{\bullet})$ of the proof of \cite[Theorem 3.5.3]{EmKis.Fcrys} makes the left hand square of the diagram
\begin{align} \label{lowerplustriangle}
	\xymatrix@C40pt{
		\CO_{F,X} \otimes_{\CO_X} f_+I^{\bullet} \ar[r]^-{1 - f_+\beta'} \ar[d]^{\sim} & \CO_{F,X} \otimes_{\CO_X} f_+I^{\bullet} \ar[r] \ar[d]^{\sim} & \Gen f_+I^{\bullet} \ar@{.>}[d]^{\sim} \\
		f_+(\CO_{F,X} \otimes_{\CO_X} I^{\bullet}) \ar[r]^-{f_+(1 - \beta')} & f_+(\CO_{F,X} \otimes_{\CO_X} I^{\bullet}) \ar[r] & f_+(\Gen I^{\bullet})
	}
\end{align}
commutative (\cite[Proposition 3.6.1]{EmKis.Fcrys}). This shows that the cohomology sheaves of $f_+(\Gen I^{\bullet})$ vanish in negative degrees, i.e.\ $f_+ \Gen$ is left $t$-exact for the canonical $t$-structure of the bounded derived category of $\gamma$-sheaves on $X$. Furthermore, for a single injective $\gamma$-sheaf $I$ on $X$, the upper row of the commutative diagram 
\[
	\xymatrix@C40pt{
		\CO_{F,X} \otimes_{\CO_X} f_+I \ar[r]^-{1 - f_+\beta'} \ar[d]^{\sim} & \CO_{F,X} \otimes_{\CO_X} f_+I \ar[r] \ar[d]^{\sim} & \Gen f_+I \ar@{.>}[d]^{\sim} \\
		f_+(\CO_{F,X} \otimes_{\CO_X} I) \ar[r]^-{f_+(1 - \beta')} & f_+(\CO_{F,X} \otimes_{\CO_X} I) \ar[r] & f_+(\Gen I) 
	}
\]
is a short exact sequence when adding $0$ at the ends. Consequently, the cohomology of $f_+ \Gen I$ is concentrated in degree $0$. 

To see that there is an isomorphism of functors $\Gen f_+ \cong H^0(f_+) \Gen$, let $M$ be a $\gamma$-sheaf on $X$. Choose a resolution $I^{\bullet}$ of $M$ by injective $\gamma$-sheaves. The long exact cohomology sequences for the triangles of the diagram \autoref{lowerplustriangle} yield a unique isomorphism 
\[
	\Gen f_+ M \cong H^0(\Gen Rf_+ M) = H^0(\Gen f_+ I^{\bullet}) \overset{\sim}{\longrightarrow} H^0(f_+ \Gen I^{\bullet}) \cong H^0(f_+ \Gen M).
\]
By \autoref{metaresult}, the functor $f_+ \Gen$ is the right derived functor of $H^0(f_+) \Gen$. Furthermore, as $\Gen$ is exact, $\Gen Rf_+$ is the right derived functor of $\Gen f_+$. Thus, there is a natural equivalence $\Gen Rf_+ \cong f_+ \Gen$ of functors from the bounded derived category of $\gamma$-sheaves on $X$ to the bounded derived category of quasi-coherent unit left $\CO_{F,Y}$-modules. It induces an isomorphism of functors between $D_{\crys}^b(\QCrysG(X))$ and $D_{\lfgu}^b(\CO_{F,Y})$ because $D^b(\mu_{\operatorname{u}}(Y)) \overset{\sim}{\longrightarrow} D_{\lfgu}^b(\CO_{F,Y})$ (\cite[Corollary 17.2.5]{EmKis.Fcrys}). 
\end{proof}

\section{Adjunction for morphisms with proper support}

This section is largely independent from the rest of the paper. We establish an extension of the adjunction between $Rf_*$ and $f^!$ for proper morphisms from Grothendieck-Serre duality to the case where $f$ is only proper over the support of the complexes considered. 

We do not assume that the schemes are of characteristic $p$. However, all schemes we consider are assumed to be Noetherian. For a scheme $X$, we let $D_{\qc}^*(X)$ or $D_{\text{c}}^*(X)$ with $* \in \{+,-,\text{b} \}$ denote the derived category of $\CO_X$-modules with quasi-coherent or coherent cohomology. Here $*=+$ or $*=-$ or $*=b$ means that we require that the cohomology sheaves are bounded below or bounded above or bounded in both directions. 

Recall the classical adjunction:
\begin{theorem}[\protect{\cite[VII, 3.4(c)]{HartshorneRD}}] \label{adjunctionclassic}
Let $f\colon X \to Y$ be a proper morphism between Noetherian schemes with $Y$ admitting a dualizing complex. For $\CF^{\bullet}\in D_{\coh}^-(X)$ and $\CG^{\bullet} \in D_{\coh}^+(Y)$ the composition
\begin{align*} 
	\xymatrix{
		Rf_* \RSHom_{\CO_X}^{\bullet}(\CF^{\bullet},f^!\CG^{\bullet}) \ar[r] &  \RSHom_{\CO_Y}^{\bullet}(Rf_* \CF^{\bullet},Rf_*f^! \CG^{\bullet}) \ar[d]^{\tr_f} \\
		& \RSHom_{\CO_Y}^{\bullet}(Rf_* \CF^{\bullet}, \CG^{\bullet})}
\end{align*}
is an isomorphism. Here the first morphism is the canonical one and the second is the trace map. 
\end{theorem}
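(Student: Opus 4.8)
The plan is to prove this sharp (sheafified) form of Grothendieck--Serre duality by first reducing the sheaf-level assertion to a global $\Hom$-adjunction and then establishing the latter by d\'evissage on $f$. Being an isomorphism in $D(\CO_Y)$ is local on $Y$, and both $Rf_*$ and $\RSHom_{\CO_Y}(-,-)$ are compatible with restriction to opens; moreover, under our hypotheses both sides of the displayed arrow have coherent cohomology. Hence I may assume $Y = \Spec R$ is affine with a dualizing complex and apply $R\Gamma(Y,-)$, which reflects isomorphisms on complexes with quasi-coherent cohomology over an affine scheme. The assertion then reduces to the global duality isomorphism $R\Hom_{\CO_X}(\CF^{\bullet},f^!\CG^{\bullet}) \cong R\Hom_{\CO_Y}(Rf_*\CF^{\bullet},\CG^{\bullet})$, i.e.\ to the statement that $f^!$ is right adjoint to $Rf_*$ on the relevant derived categories.

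For the global duality I argue by d\'evissage on $f$. The three ingredients entering the composition --- $Rf_*$, $f^!$ and the trace $\tr_f$ --- are all compatible with composition: for $f = g \circ h$ one has $Rf_* = Rg_* Rh_*$, $f^! = h^! g^!$ and $\tr_f = \tr_g \circ Rg_*(\tr_h)$, so the isomorphism for $g$ and for $h$ gives it for $f$. It therefore suffices to verify duality for a class of morphisms generating all proper morphisms under composition. Reducing proper morphisms to projective ones via Chow's lemma, and factoring a projective morphism as a closed immersion $X \into \mathbb{P}^n_Y$ followed by the structural projection $\mathbb{P}^n_Y \to Y$, I am left with two base cases: finite (closed) immersions and the projection from projective space.

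For a finite morphism, $f^! = f^{\flat} = \overline{f}^*\SHom_{\CO_Y}(f_*\CO_X,-)$, and the duality is precisely the sheafified adjunction between restriction of scalars $f_*$ and $\SHom_{\CO_Y}(f_*\CO_X,-)$, with $\tr_f$ given by evaluation at $1$; this is the finite duality of \cite[III.6]{HartshorneRD}. For $f\colon \mathbb{P}^n_Y \to Y$ the essential input is the explicit identification $f^!\CO_Y \cong \omega_{\mathbb{P}^n/Y}[n]$ together with the identification of $\tr_f$ with the residue map $R^n f_* \omega_{\mathbb{P}^n/Y} \to \CO_Y$. With these, the displayed composition becomes the classical Serre duality pairing, and I would verify its perfectness by a way-out reduction to the twisting sheaves $\CO(m)$, where the cohomology of $\mathbb{P}^n$ is explicit and the pairing $H^i(\mathbb{P}^n,\CO(m)) \times H^{n-i}(\mathbb{P}^n,\CO(-m-n-1)) \to H^n(\mathbb{P}^n,\omega_{\mathbb{P}^n})$ is non-degenerate.

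Since the base cases are formulated for single coherent sheaves, I would finally extend to arbitrary $\CF^{\bullet} \in D_{\coh}^-(X)$ and $\CG^{\bullet} \in D_{\coh}^+(Y)$ using Hartshorne's way-out functor lemmas \cite[I.7]{HartshorneRD}: both sides of the displayed arrow are way-out in each variable, so an isomorphism on a generating family (the $\CO(m)$, respectively injectives) propagates to the bounded-coherent derived categories. The main obstacle lies in the base cases, not the formal reductions: first, the rigorous construction of $\tr_f$ via the trace $f^{\Delta}E^{\bullet} \to E^{\bullet}$ of residual complexes and the proof that it is independent of the chosen factorization; and second, the explicit residue computation on $\mathbb{P}^n_Y$ establishing perfectness of the Serre pairing. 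The Chow-lemma step from projective to proper likewise requires care to propagate the isomorphism through the way-out machinery.
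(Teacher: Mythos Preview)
The paper does not give its own proof of this statement: it is simply cited as \cite[VII, 3.4(c)]{HartshorneRD} and then immediately followed by a remark that Lipman \cite{LipmanGrothDual} proves it under weaker hypotheses. There is nothing in the paper to compare your proposal against.

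That said, your sketch is a reasonable outline of the classical argument from Hartshorne's \emph{Residues and Duality}: reduce to the affine base, use compatibility of $Rf_*$, $f^!$ and $\tr_f$ with composition to d\'evisser, handle the base cases of finite morphisms (via $f^\flat$ and evaluation-at-$1$) and projective space (via explicit Serre duality on $\mathbb{P}^n$), and glue using the way-out lemmas. One point to flag: your reduction ``assume $Y$ affine and apply $R\Gamma(Y,-)$'' is fine, but the subsequent Chow-lemma step to pass from proper to projective is more delicate than your sketch suggests---in Hartshorne's treatment this is handled through the residual-complex formalism (the trace on $f^\Delta E^\bullet$) rather than by a direct d\'evissage on the duality isomorphism itself, and you acknowledge this in your final paragraph. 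Since the paper only uses the statement as a black box, your level of detail is already more than the paper provides.
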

In \cite{LipmanGrothDual}, Lipman proves this theorem with weaker assumptions. For example, by \cite[Corollary 4.4.2]{LipmanGrothDual}, \autoref{adjunctionclassic} holds even if $\CF^{\bullet}$ and $\CG^{\bullet}$ only have quasi-coherent cohomology.

In this section we relax the properness assumption and show the following:
\begin{theorem} \label{qcohadjunction} 
Let $f\colon X \to Y$ be a separated and finite type morphism of schemes and let $i\colon Z \to Y$ and $i'\colon Z' \to X$ be closed immersions with a proper morphism $f'\colon Z' \to Z$ such that the diagram 
\[
	\xymatrix{
		Z' \ar[r]^-{i'} \ar[d]^-{f'} & X \ar[d]^-f \\
		Z \ar[r]^-i & Y
	}
\]
commutes. Then there is a natural transformation $\tr_f\colon Rf_*R\Gamma_{Z'}f^! \to \id$ such that, for all $\CF^{\bullet} \in D_{\qc}^-(\CO_X)_Z$ and $\CG^{\bullet} \in D_{\qc}^+(\CO_Y)_Z$ (see \autoref{supportdefine}), the composition 
\begin{align*}
	\xymatrix{
		Rf_* \RSHom_{\CO_X}^{\bullet}(\CF^{\bullet},R\Gamma_{Z'}f^!\CG^{\bullet}) \ar[r] &  \RSHom_{\CO_Y}^{\bullet}(Rf_* \CF^{\bullet},Rf_*R\Gamma_{Z'}f^! \CG^{\bullet}) \ar[d]^{\tr_f} \\
		& \RSHom_{\CO_Y}^{\bullet}(Rf_* \CF^{\bullet}, \CG^{\bullet})}
\end{align*}
is an isomorphism. In particular, taking global sections, the functor $Rf_*$ is left adjoint to the functor $R\Gamma_{Z'}f^!$. 
\end{theorem}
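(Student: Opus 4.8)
The plan is to reduce \autoref{qcohadjunction} to the classical adjunction \autoref{adjunctionclassic} by an appropriate compactification and a limit argument over neighborhoods of $Z'$. First I would use Nagata compactification: since $f$ is separated of finite type, factor $f$ as $X \xhookrightarrow{j} \overline{X} \xrightarrow{\overline{f}} Y$ with $j$ an open immersion and $\overline{f}$ proper; since $Y$ admits a dualizing complex, so does $\overline{X}$, and $f^! = j^* \overline{f}^!$ on $D_{\qc}^+$. The key point is that $f'\colon Z' \to Z$ is proper, so the closure $\overline{Z'}$ of $Z'$ inside $\overline{X}$ has the property that $\overline{Z'} \cap X = Z'$ as sets, and more importantly $\overline{f}$ restricted to $\overline{Z'}$ is proper over $Z$ and $\overline{Z'} \setminus Z'$ maps to $Y \setminus Z$ (using properness of $f'$ together with the valuative criterion). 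This is the geometric input that lets the local cohomology functor $R\Gamma_{Z'}$ control the failure of properness.

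Next I would construct the trace map $\tr_f\colon Rf_* R\Gamma_{Z'} f^! \to \id$. On $D_{\qc}^+(\CO_Y)_Z$, one has $R\Gamma_Z \cong \id$, so it suffices to produce a natural transformation $Rf_* R\Gamma_{Z'} f^! \to R\Gamma_Z$. The construction goes via $\overline{f}$: there is the Grothendieck-Serre trace $\tr_{\overline{f}}\colon R\overline{f}_* \overline{f}^! \to \id$, and I would compose it with the natural map $Rf_* R\Gamma_{Z'} j^* \overline{f}^! \to R\overline{f}_* R\Gamma_{\overline{Z'}} \overline{f}^!$ coming from the fact that sections supported on $Z'$ extend (after applying $R\overline{f}_*$) to sections supported on $\overline{Z'}$, which requires the relation $\overline{Z'} \cap X = Z'$ set-theoretically. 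Concretely, $Rf_* R\Gamma_{Z'}\CF \cong R\overline{f}_* Rj_* R\Gamma_{Z'}\CF$, and the excision/extension-by-zero comparison $R\Gamma_{\overline{Z'}} \to R\Gamma_{\overline{Z'}} Rj_* j^* \cong Rj_* R\Gamma_{Z'} j^*$ is an isomorphism when restricted to complexes already supported near $\overline{Z'}$, because the difference is supported on $\overline{Z'} \setminus Z' \subseteq Y \setminus Z$. Then $\tr_f := \tr_{\overline{f}} \circ R\Gamma_Z(\dots)$ and one lands in $R\Gamma_Z \cong \id$ on the target.

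With $\tr_f$ in hand, the isomorphism claim becomes: the composite
\[
	Rf_* \RSHom_{\CO_X}^{\bullet}(\CF^{\bullet}, R\Gamma_{Z'} f^! \CG^{\bullet}) \longrightarrow \RSHom_{\CO_Y}^{\bullet}(Rf_*\CF^{\bullet}, \CG^{\bullet})
\]
is an isomorphism for $\CF^{\bullet}\in D_{\qc}^-(\CO_X)_{Z'}$, $\CG^{\bullet}\in D_{\qc}^+(\CO_Y)_Z$. Here I would use $\CF^{\bullet} = i'_* i'^! \CF^{\bullet}$ (support on $Z'$) to rewrite, via adjunction $(i'_*, i'^!)$ and $R\Gamma_{Z'} \cong i'_* i'^!$, the left side in terms of $\RSHom$ on $Z'$; and on the right side $Rf_* \CF^{\bullet} = Rf_* i'_* i'^!\CF^{\bullet} = i_* Rf'_* i'^!\CF^{\bullet}$ has support on $Z$, so $\RSHom_{\CO_Y}(Rf_*\CF^{\bullet}, \CG^{\bullet}) = \RSHom_{\CO_Y}(i_* Rf'_* i'^!\CF^{\bullet}, R\Gamma_Z\CG^{\bullet})$ and one uses $(i_*, i^!)$-adjunction to push everything to $Z$. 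The upshot is that the statement becomes the classical adjunction \autoref{adjunctionclassic} for the \emph{proper} morphism $f'\colon Z' \to Z$ (after checking that $f^! $ restricted appropriately is compatible with $f'^!$ via $R\Gamma_{Z'}$, i.e.\ $i'^! R\Gamma_{Z'} f^! \CG^{\bullet} \cong f'^! i^! \CG^{\bullet}$, which is base change for $^!$ along the closed immersions combined with the definition of $^!$). The final step is to verify naturality and compatibility of all these identifications, and to check the "in particular" by applying $R\Gamma(Y,-)$ and using that $\RHom$ in the derived category of abelian groups computes morphisms.

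**Main obstacle.** The hard part will be two intertwined issues: (i) constructing $\tr_f$ canonically and proving it is independent of the chosen compactification $\overline{X}$ — this needs the standard but delicate cofinality argument among Nagata compactifications, plus the verification that the Grothendieck trace is compatible with open restriction and with the extension $R\Gamma_{Z'} \to R\Gamma_{\overline{Z'}}$; and (ii) establishing the base-change identity $i'^! R\Gamma_{Z'} f^! \cong f'^! i^!$ on $D_{\qc}^+$, which is where the properness of $f'$ (as opposed to $f$) really enters and which requires care because $f$ itself is not proper, so only the "supported" version of $f^!$ behaves well. I expect (i) to be the genuine technical bulk; once the trace is correctly set up, the isomorphism statement reduces fairly mechanically to \autoref{adjunctionclassic} applied to $f'$.
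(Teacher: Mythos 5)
Your construction of the trace map is essentially the paper's: compactify $f$ as $X \xrightarrow{j} \LX \xrightarrow{\overline{f}} Y$, observe that properness of $i\circ f'$ forces $Z'$ to be \emph{closed} in $\LX$ (so in fact $\overline{Z'}=Z'$ and your set-theoretic condition $\overline{Z'}\cap X = Z'$ is automatic -- this is \autoref{ExerciseHartshorne}), use the resulting isomorphism $R\Gamma_{Z'} \cong Rj_*R\Gamma_{Z'}j^*$ to pass from $Rf_*R\Gamma_{Z'}f^!$ to $R\overline{f}_*R\Gamma_{Z'}\overline{f}^!$, and finish with the Grothendieck--Serre trace for $\overline{f}$; the independence of the compactification is handled exactly as you predict. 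That part is sound.

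The gap is in your proof of the isomorphism statement. You propose to rewrite $\CF^{\bullet} = i'_*i'^!\CF^{\bullet}$ and $R\Gamma_{Z'}\cong i'_*i'^!$ and then reduce to the classical adjunction for the \emph{proper} morphism $f'\colon Z'\to Z$. Neither identification holds in $D_{\qc}(\CO_X)$: for quasi-coherent sheaves, $R\Gamma_{Z'} = \varinjlim_n \RSHom_{\CO_X}^{\bullet}(\CO_X/\CI^n,\usc)$ is genuinely different from $i'_*Ri'^{\flat} = \RSHom_{\CO_X}^{\bullet}(\CO_X/\CI,\usc)$ (already for $X=\Spec k[t]$, $Z'=V(t)$, $\CF=\CO_X$ one gets $k[t,t^{-1}]/k[t]$ versus $k$ in degree $1$), and a complex with cohomology supported on $Z'$ (e.g.\ $\CO_X/\CI^2$) need not lie in the essential image of $i'_*$. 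The Kashiwara-type equivalence you are invoking is available for Cartier crystals, lfgu modules and \'etale sheaves, but precisely \emph{not} for plain quasi-coherent $\CO$-modules, which is the setting of this theorem; consequently the base-change identity $i'^!R\Gamma_{Z'}f^!\cong f'^!i^!$ you flag as the second difficulty is not the right thing to aim for. The reduction that actually works is to the proper map $\overline{f}\colon\LX\to Y$ rather than to $f'\colon Z'\to Z$: one shows that $Rj_*\RSHom_{\CO_X}^{\bullet}(\CF^{\bullet},\CG^{\bullet}) \to \RSHom_{\CO_{\LX}}^{\bullet}(Rj_*\CF^{\bullet},Rj_*\CG^{\bullet})$ is an isomorphism when $\CF^{\bullet}$ is supported on $Z'$ (because then $\CF^{\bullet}\cong j^*Rj_*\CF^{\bullet}$ and one can cancel the adjunction units), that $\RSHom^{\bullet}(\usc,R\Gamma_{Z'}\usc)\cong\RSHom^{\bullet}(\usc,\usc)$ when the first argument is supported on $Z'$, and then one applies the classical sheafified duality for $\overline{f}$ to $Rj_*\CF^{\bullet}$ and $\CG^{\bullet}$. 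Your skeleton for the trace survives, but the second half of the argument needs to be rebuilt along these lines.
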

Note that the properness of $f'$ is equivalent to the properness of $i \circ f'$.

\subsection{Local cohomology}

Let $X$ be a topological space, $Z$ a closed subset and $\CF$ a sheaf of abelian groups on $X$. The subsheaf $\Gamma_Z(\CF)$ of $\CF$ is given by 
\[
	U \mapsto \{s \in \CF(U) \ | \ \text{The support of s is contained in } Z \cap U\},
\]
see the beginning of Section 1 of \cite{HartshorneLC}. The functor $\Gamma_Z$ is left exact, its right derived functors are called \emph{local cohomology sheaves}. Now we consider the more specific situation where $X$ is a Noetherian scheme. Let $i\colon Z \into X$ be a closed immersion and $j\colon U \into X$ the open immersion of the complement $U = X \backslash Z$:
\[
	Z \overset{i}{\hookrightarrow} X \overset{j}{\hookleftarrow} U.
\]   
Let $\CI$ be a sheaf of ideals defining $Z$. A priori, $\Gamma_Z'(\CF):=\underset{n \in \mathbb N}{\varinjlim} \SHom_{\CO_X}(\CO_X/\CI^n,\CF)$ is a subsheaf of $\Gamma_Z(\CF)$ for an $\CO_X$-module $\CF$. If $\CF$ is quasi-coherent, we have $\Gamma_Z'(\CF)=\Gamma_Z(\CF)$ (\cite[Theorem 2.8]{HartshorneLC}). In what follows, we consider local cohomology in the context of derived categories and we restrict to quasi-coherent cohomology. Therefore we make the following   
\begin{definition} \label{deflocalcohomology}
The local cohomology functor $R\Gamma_Z\colon D_{\qc}(X) \to D_{\qc}(X)$ is the derived functor of the left exact functor 
\[
	\Gamma_Z:= \underset{n \in \mathbb N}{\varinjlim} \SHom_{\CO_X}(\CO_X/\CI^n,\usc),
\]
where $\CI$ is any sheaf of ideals defining $Z$.
\end{definition}
A reference for this point of view is \cite{LocalHomology}. For example, \autoref{deflocalcohomology} is equation (0.1) of ibid.
\begin{definition} \label{supportdefine}
We say that a complex $\CF^{\bullet}$ of $\CO_X$-modules has \emph{support in} or \emph{on $Z$} or that $\CF^{\bullet}$ \emph{is supported in} or \emph{on $Z$} if $j^*\CF^{\bullet} = 0$ in $D(X)$. We write $D(X)_Z$, $D_{\qc}(X)_Z$ etc. for the subcategory of objects of $D(X)$, $D_{\qc}(X)$ etc. whose cohomology is supported on $Z$. 
\end{definition}   
The natural inclusion $\Gamma_Z \to \id$ induces a transformation $R\Gamma_Z \to \id$. As pointed out in the proof of \cite[Lemma (0.4.2)]{LocalHomology}, one has the following triangle:
\begin{proposition}
For every $\CF^{\bullet} \in D_{\qc}(X)$, there is a fundamental distinguished triangle
\[
	R\Gamma_Z \CF^{\bullet} \longrightarrow \CF^{\bullet} \longrightarrow Rj_*j^*\CF^{\bullet} \longrightarrow R\Gamma_Z \CF^{\bullet}[1],
\]
where the second map is the natural one from the adjunction of $Rj_*$ and $j^*$. This triangle restricts to the subcategories $D_{\qc}^+(X)$ and $D_{\qc}^b(X)$ because $j^*$ is exact and $Rj_*\colon D_{\qc}^+(U) \to D_{\qc}^+(X)$ has finite cohomological amplitude. 
\end{proposition}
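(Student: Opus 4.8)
The plan is to prove the fundamental distinguished triangle for $R\Gamma_Z$, which is the last stated proposition. First I would recall that $\Gamma_Z = \varinjlim_n \SHom_{\CO_X}(\CO_X/\CI^n, \usc)$ sits inside the identity functor via the natural inclusion, and that there is a left-exact ``sections supported away from $Z$'' comparison with $j_*j^*$. The cleanest route is to produce the triangle first at the level of a $K$-injective (or bounded-below injective) resolution: given $\CF^\bullet \in D_{\qc}^+(X)$, replace it by a bounded-below complex of injective $\CO_X$-modules $\CI^\bullet$. For a single injective $\CO_X$-module $I$, the sequence
\[
	0 \longrightarrow \Gamma_Z(I) \longrightarrow I \longrightarrow j_*j^*I \longrightarrow 0
\]
is exact: left-exactness of $\Gamma_Z$ and of $j_*j^*$ gives exactness except possibly surjectivity on the right, and surjectivity follows because an injective $\CO_X$-module is flasque, so $I \to j_*j^*I = I|_U$ (sections over opens of $U$) is surjective, with kernel exactly the sections whose support misses $U$, i.e.\ $\Gamma_Z(I)$ — using here that on quasi-coherent (hence on injective, by Hartshorne's theorem cited as \cite[Theorem 2.8]{HartshorneLC}) sheaves $\Gamma_Z' = \Gamma_Z$. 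Applying this degreewise to $\CI^\bullet$ yields a short exact sequence of complexes $0 \to \Gamma_Z(\CI^\bullet) \to \CI^\bullet \to j_*j^*\CI^\bullet \to 0$, and the associated distinguished triangle in $D(X)$ is, by definition of the derived functors, the triangle $R\Gamma_Z\CF^\bullet \to \CF^\bullet \to Rj_*j^*\CF^\bullet \to R\Gamma_Z\CF^\bullet[1]$; one checks the middle map is the identity (it is induced by the inclusion $\Gamma_Z \hookrightarrow \id$) and the second is the adjunction unit $\CF^\bullet \to Rj_*j^*\CF^\bullet$ (since $j^*\CI^\bullet$ is a complex of injectives on $U$ computing $j^*\CF^\bullet$, and $j^*I$ injective flasque makes $j_*j^*\CI^\bullet$ compute $Rj_*j^*\CF^\bullet$).

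Next I would extend from $D_{\qc}^+(X)$ to all of $D_{\qc}(X)$ by a standard way-out/colimit argument: both $R\Gamma_Z$ and $Rj_*j^*$ are computed via $K$-injective resolutions on the unbounded derived category (Spaltenstein), and the construction above goes through verbatim once one knows a $K$-injective resolution can be taken degreewise injective and that the degreewise short exact sequence still holds — alternatively, truncate and pass to the homotopy colimit, using that $\Gamma_Z$, $j_*$, $j^*$ all commute with the relevant filtered colimits on quasi-coherent sheaves. The restriction claims in the statement are then immediate: $j^*$ is exact, so preserves $D_{\qc}^+$ and $D_{\qc}^b$; and $Rj_*\colon D_{\qc}^+(U) \to D_{\qc}^+(X)$ has finite cohomological amplitude because $U$ is a Noetherian scheme of finite Krull dimension locally, so $j_*$ (being an affine-local statement once $U$ is covered by affines, or via Grothendieck vanishing) has bounded cohomological dimension; hence the third term of the triangle stays in $D_{\qc}^+(X)$ (resp.\ $D_{\qc}^b(X)$), and therefore so does $R\Gamma_Z\CF^\bullet$ by the triangle.

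The only genuinely delicate point is the surjectivity $I \twoheadrightarrow j_*j^*I$ for $I$ injective, i.e.\ the flasqueness of injective $\CO_X$-modules on a Noetherian scheme, together with the identification of the kernel with $\Gamma_Z(I)$; this is where the Noetherian hypothesis and the quasi-coherence/flasque interplay from \cite{HartshorneLC} are used, and everything else is formal manipulation with derived functors and adjunctions. I would present the argument exactly in this order: (i) the degreewise short exact sequence for injectives, (ii) its exactness via flasqueness, (iii) passage to resolutions and identification of the two maps in the triangle, (iv) the boundedness/finite-amplitude remarks giving the restriction to $D_{\qc}^+$ and $D_{\qc}^b$. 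Since this proposition is quoted from \cite[Lemma (0.4.2)]{LocalHomology}, a brief proof sketch along these lines, with a pointer to loc.\ cit.\ for the unbounded case, is all that is needed.
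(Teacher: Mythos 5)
Your argument is correct, and it is worth noting that the paper itself does not prove this proposition at all: it simply points to the proof of \cite[Lemma (0.4.2)]{LocalHomology}. What you have written out --- the degreewise short exact sequence $0 \to \Gamma_Z(I) \to I \to j_*j^*I \to 0$ for injectives, exactness via flasqueness, passage to a (K-)injective resolution, and identification of the two maps --- is precisely the standard argument underlying that citation, so in substance you have supplied the missing proof rather than taken a different route. Two small points deserve care. First, since the paper \emph{defines} $\Gamma_Z$ as $\varinjlim_n \SHom_{\CO_X}(\CO_X/\CI^n,\usc)$ rather than as sections supported in $Z$, you should resolve by \emph{quasi-coherent} injectives (available on a Noetherian scheme, and still injective as $\CO_X$-modules, hence flasque) so that \cite[Theorem 2.8]{HartshorneLC} identifies the kernel of $I \to j_*j^*I$ with the algebraically defined $\Gamma_Z(I)$; you do flag this, but it is the one place where the identification could silently fail for a non-quasi-coherent injective. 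Second, your justification of the finite cohomological amplitude of $Rj_*$ via Grothendieck vanishing is shaky, since a Noetherian scheme need not have finite Krull dimension; the clean argument is that $U$ is quasi-compact and separated (open in a Noetherian separated scheme), hence covered by finitely many affines, and the \v{C}ech-to-derived spectral sequence bounds the quasi-coherent cohomological dimension of $j_*$ by the size of that cover. For the restriction to $D_{\qc}^+$ alone no amplitude bound is needed, since $Rj_*$ is a right derived functor of a left exact functor; the amplitude only enters for $D_{\qc}^b$. With those two adjustments your proof is complete and is exactly the argument one would expect behind the cited lemma.
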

In particular, $R\Gamma_Z$ only depends on the closed subset $i(Z)$ and not on the scheme structure of $Z$. The fundamental triangle allows another characterization of $D_{\qc}(X)_Z$: 
\begin{corollary}
The subcategory $D_{\qc}(X)_Z$ consists of all complexes $\CF^{\bullet} \in D_{\qc}(X)$ such that the natural map $R\Gamma_Z\CF^{\bullet} \to \CF^{\bullet}$ is an isomorphism.
\end{corollary}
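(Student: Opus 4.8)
The plan is to read the statement off directly from the fundamental distinguished triangle
\[
	R\Gamma_Z \CF^{\bullet} \longrightarrow \CF^{\bullet} \longrightarrow Rj_*j^*\CF^{\bullet} \longrightarrow R\Gamma_Z \CF^{\bullet}[1]
\]
established just above. In any triangulated category a morphism is an isomorphism exactly when its mapping cone vanishes; applied to the first arrow of this triangle, this says that the natural map $R\Gamma_Z\CF^{\bullet} \to \CF^{\bullet}$ is an isomorphism if and only if $Rj_*j^*\CF^{\bullet} = 0$ in $D_{\qc}(X)$.

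So the task reduces to showing that $Rj_*j^*\CF^{\bullet} = 0$ is equivalent to $j^*\CF^{\bullet} = 0$, i.e.\ to $\CF^{\bullet}$ being supported on $Z$ in the sense of \autoref{supportdefine}. One implication is immediate: if $j^*\CF^{\bullet} = 0$, then applying $Rj_*$ gives $Rj_*j^*\CF^{\bullet} = 0$, hence $R\Gamma_Z\CF^{\bullet} \to \CF^{\bullet}$ is an isomorphism. For the converse, the only input beyond the fundamental triangle is the standard fact that, since $j$ is an open immersion, the counit of the adjunction between $j^*$ and $Rj_*$ is a natural isomorphism $j^*Rj_* \overset{\sim}{\longrightarrow} \id$ on $D_{\qc}(U)$ (restricting a pushforward along an open immersion back to the open subset recovers the complex, the higher direct images being zero there). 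Thus, if $Rj_*j^*\CF^{\bullet} = 0$, then applying the exact functor $j^*$ yields
\[
	0 = j^*Rj_*j^*\CF^{\bullet} \cong j^*\CF^{\bullet},
\]
so $\CF^{\bullet}$ lies in $D_{\qc}(X)_Z$.

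Combining the two implications proves the corollary. I do not expect any real obstacle here: the argument is purely formal once the fundamental triangle and the identity $j^*Rj_* \cong \id$ for open immersions are in hand. The one small point worth stating explicitly is that the fundamental triangle is functorial in $\CF^{\bullet}$, so that the characterization is compatible with morphisms, but this is already built into the construction of the triangle from the adjunction unit $R\Gamma_Z \to \id$ and the adjunction counit for $(j^*, Rj_*)$.
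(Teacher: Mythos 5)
Your argument is correct and is exactly the one the paper intends: the corollary is stated as an immediate consequence of the fundamental triangle (the paper gives no separate proof), and your reduction via the vanishing of the cone together with $j^*Rj_*\cong\id$ for the open immersion $j$ is the standard formal deduction. Nothing is missing.
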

\begin{lemma}
If $\CI$ is an injective quasi-coherent sheaf, then also the quasi-coherent sheaf $\Gamma_Z(\CI)$ is injective.
\end{lemma}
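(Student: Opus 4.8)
The plan is to invoke the structure theory of injective quasi-coherent sheaves on a Noetherian scheme and then reduce the statement to a computation on indecomposable pieces. Recall (see \cite{HartshorneRD}) that on the Noetherian scheme $X$ every injective quasi-coherent sheaf splits as a direct sum $\CI \cong \bigoplus_{\alpha} \mathcal{J}(x_{\alpha})$, where for a point $x \in X$ the sheaf $\mathcal{J}(x) := (i_x)_* E(x)$ is the pushforward along the canonical morphism $i_x \colon \Spec \CO_{X,x} \to X$ of an injective hull $E(x)$ of the residue field $k(x)$ over $\CO_{X,x}$; moreover each $\mathcal{J}(x)$ is indecomposable injective, is supported on $\overline{\{x\}}$, and satisfies $\operatorname{Ass}(\mathcal{J}(x)) = \{x\}$, and an arbitrary direct sum of injective quasi-coherent sheaves on $X$ is again injective. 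Since $\CO_X/\CI^n$ is coherent, $\SHom_{\CO_X}(\CO_X/\CI^n, -)$ commutes with arbitrary direct sums, and as filtered colimits commute with direct sums, the functor $\Gamma_Z$ commutes with arbitrary direct sums; so it is enough to compute $\Gamma_Z(\mathcal{J}(x))$.

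Concretely I would show that $\Gamma_Z(\mathcal{J}(x)) = \mathcal{J}(x)$ when $x \in Z$ and $\Gamma_Z(\mathcal{J}(x)) = 0$ when $x \notin Z$; granting this, $\Gamma_Z(\CI) \cong \bigoplus_{x_\alpha \in Z} \mathcal{J}(x_\alpha)$ is a direct sum of injective quasi-coherent sheaves, hence injective, and we are done. If $x \in Z$ then $\overline{\{x\}} \subseteq Z$, so $\mathcal{J}(x)$ is already supported on $Z$ and $\Gamma_Z(\mathcal{J}(x)) = \mathcal{J}(x)$. If $x \notin Z$, recall from \cite[Theorem 2.8]{HartshorneLC} that $\Gamma_Z(\mathcal{J}(x))$ is the largest quasi-coherent subsheaf of $\mathcal{J}(x)$ whose support is contained in $Z$; assume it is nonzero, so there is a nonzero quasi-coherent $\CG \subseteq \mathcal{J}(x)$ with $\Supp \CG \subseteq Z$, and hence $\CG_x = 0$ since $x \notin Z$. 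On the other hand $\CG \hookrightarrow \mathcal{J}(x)$ is an essential inclusion: an injective hull of $\CG$ embeds into the injective $\mathcal{J}(x)$ as a direct summand and therefore coincides with $\mathcal{J}(x)$ by indecomposability. Consequently $\operatorname{Ass}(\CG) = \operatorname{Ass}(\mathcal{J}(x)) = \{x\}$, and since $\CG$ is nonzero on the Noetherian scheme $X$ this forces $x \in \operatorname{Ass}(\CG)$, whence $\CG_x \neq 0$ --- a contradiction.

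There is no real obstacle here beyond correctly invoking the structure theorem for injectives; the two points that deserve a moment's care are that $\Gamma_Z$ commutes with arbitrary direct sums (which is where coherence of $\CO_X/\CI^n$ enters) and that a nonzero subobject of an indecomposable injective in the Grothendieck category $\QCoh(X)$ is essential. One could instead reduce to the affine case --- using that $\Gamma_Z$ preserves quasi-coherence and commutes with restriction to opens --- and cite the commutative-algebra fact that $\Gamma_{\mathfrak a}(E) = \varinjlim_n \Hom_A(A/\mathfrak a^n, E)$ is an injective $A$-module for every injective $A$-module $E$ (which rests on the Matlis decomposition $E \cong \bigoplus E_A(A/\mathfrak p)$ together with $\Supp E_A(A/\mathfrak p) = V(\mathfrak p)$ and $\operatorname{Ass} E_A(A/\mathfrak p) = \{\mathfrak p\}$); but that variant additionally needs that a quasi-coherent sheaf injective on each chart of an affine cover is injective on $X$, so the global argument above is preferable.
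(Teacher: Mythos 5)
Your proof is correct, but it takes a genuinely different route from the one in the paper. The paper's proof is a two-line reduction: injectivity of a quasi-coherent sheaf on a (locally) Noetherian scheme can be checked on an affine cover, and on an affine chart the statement is exactly \cite[Proposition 2.1.4]{Brodmann.LocCoh}, namely that $\Gamma_{\mathfrak a}(E)$ is injective for any injective module $E$ over a Noetherian ring. You instead run the global analogue of that argument directly in $\QCoh(X)$: decompose $\CI$ as a direct sum of the indecomposable injectives $\CJ(x)$ via the structure theorem of \cite{HartshorneRD}, check that $\Gamma_Z$ commutes with arbitrary direct sums (using that $\CO_X/\CI^n$ is coherent), and compute $\Gamma_Z(\CJ(x))$ to be $\CJ(x)$ or $0$ according to whether $x\in Z$, the vanishing in the second case coming from the essentiality of any nonzero subobject of an indecomposable injective together with $\operatorname{Ass}(\CJ(x))=\{x\}$. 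Both arguments ultimately rest on the Matlis classification of injectives over Noetherian rings; the paper's version is shorter because it delegates everything to the cited reference (at the cost of the local-to-global step for injectivity, which you correctly identify as the extra input needed for that reduction), while yours is self-contained, avoids that step, and yields the sharper conclusion $\Gamma_Z(\CI)\cong\bigoplus_{x_\alpha\in Z}\CJ(x_\alpha)$, i.e.\ that $\Gamma_Z(\CI)$ is again a direct sum of indecomposables indexed by the points lying in $Z$. The one point worth stating explicitly in your write-up is that you work with injectivity in the Grothendieck category $\QCoh(X)$ (so that injective hulls exist and injective subobjects split off); this is consistent with the statement of the lemma and with the structure theorem you invoke.
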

\begin{proof}
It suffices to check the injectivity of $\Gamma_Z(\CI)$ locally. Thus the assertion follows from (\cite[Proposition 2.1.4]{Brodmann.LocCoh}).
\end{proof}
As $R\Gamma_Z \circ R\Gamma_Z \cong R\Gamma_Z$, the image of $R\Gamma_Z$ is exactly the subcategory $D_{\qc}(X)_Z$. Furthermore, the functor $R\Gamma_Z$ is right adjoint to the inclusion $D_{\qc}(X)_Z \into D_{\qc}(X)$. This is a consequence of the following proposition, see the proof of \autoref{qcohadjunction}.
\begin{proposition} \label{Gammaadj}
Let $\CG^{\bullet}$ be a complex in $D_{\qc}(X)_Z$. Then there is a functorial isomorphism
\[
	\RSHom_{\CO_X}^{\bullet}(\CG^{\bullet},R\Gamma_Z\CF^{\bullet}) \cong \RSHom_{\CO_X}^{\bullet}(\CG^{\bullet},\CF^{\bullet})
\]
for every $\CF^{\bullet} \in D_{\qc}(X)$.
\end{proposition}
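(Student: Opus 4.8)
The plan is to deduce everything from the fundamental distinguished triangle
\[
	R\Gamma_Z\CF^{\bullet} \longrightarrow \CF^{\bullet} \longrightarrow Rj_*j^*\CF^{\bullet} \longrightarrow R\Gamma_Z\CF^{\bullet}[1]
\]
recorded just above, where $j\colon U \into X$ is the open immersion of the complement of $Z$. Write $T$ for the functor $\RSHom_{\CO_X}^{\bullet}(\CG^{\bullet},\usc)$, which is triangulated in this second argument (in the possibly unbounded setting it is computed via K-injective resolutions, as in the framework of \cite{LocalHomology}). Applying $T$ to the fundamental triangle yields a distinguished triangle
\[
	T(R\Gamma_Z\CF^{\bullet}) \longrightarrow T(\CF^{\bullet}) \longrightarrow T(Rj_*j^*\CF^{\bullet}) \longrightarrow T(R\Gamma_Z\CF^{\bullet})[1]
\]
whose first arrow is precisely the map induced by the natural transformation $R\Gamma_Z \to \id$, and which is functorial in both $\CF^{\bullet}$ and $\CG^{\bullet}$. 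Hence the proposition reduces to showing that the third term, $T(Rj_*j^*\CF^{\bullet})$, vanishes.

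For that vanishing I would invoke the sheaf-theoretic adjunction between $j^*$ and $Rj_*$: for the open immersion $j$ and any $\CH^{\bullet}\in D_{\qc}(U)$ there is a natural isomorphism
\[
	\RSHom_{\CO_X}^{\bullet}(\CG^{\bullet},Rj_*\CH^{\bullet}) \;\cong\; Rj_*\RSHom_{\CO_U}^{\bullet}(j^*\CG^{\bullet},\CH^{\bullet}),
\]
which for an open immersion is elementary: $j^{*}=j^{-1}$ is exact, $j_*$ preserves K-injectives, and the underived identity $\SHom_{\CO_X}(\usc,j_*\usc)\cong j_*\SHom_{\CO_U}(j^*\usc,\usc)$ holds already on complexes (it is checked on an open $V\subseteq X$ by the restriction adjunction). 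Taking $\CH^{\bullet}=j^*\CF^{\bullet}$ and using that $\CG^{\bullet}$ is supported on $Z$, i.e. $j^*\CG^{\bullet}=0$ by \autoref{supportdefine}, the right-hand side becomes $Rj_*\RSHom_{\CO_U}^{\bullet}(0,j^*\CF^{\bullet})=0$. This is the required vanishing, and the proposition follows.

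The one point that needs care — and what I expect to be the main obstacle — is the bookkeeping in the unbounded setting: one must be sure that $R\Gamma_Z$, $Rj_*$ and $\RSHom_{\CO_X}^{\bullet}$ are all well defined on $D_{\qc}(X)$, that the displayed sheaf-$\RSHom$ adjunction holds with no boundedness hypotheses, and that the fundamental triangle is compatible with all of this. This is exactly the setup of \cite{LocalHomology} (Noetherianity of $X$ ensures enough K-injectives for quasi-coherent complexes, and $Rj_*$ has finite cohomological amplitude here), so no new difficulty arises; in the intended application to \autoref{qcohadjunction} the complex $\CF^{\bullet}$ is in any case bounded below, where the classical duality machinery of \cite{HartshorneRD} already suffices.
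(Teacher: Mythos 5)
Your argument is correct, and it is essentially the paper's approach: the paper disposes of this proposition by citing \cite[Lemma (0.4.2)]{LocalHomology}, and what you have written out --- applying $\RSHom_{\CO_X}^{\bullet}(\CG^{\bullet},\usc)$ to the fundamental triangle and killing the term $\RSHom_{\CO_X}^{\bullet}(\CG^{\bullet},Rj_*j^*\CF^{\bullet})$ via the open-immersion adjunction and $j^*\CG^{\bullet}\cong 0$ --- is exactly the standard proof behind that citation. The bookkeeping you flag (K-injective resolutions, $j_*$ preserving K-injectives since $j^*$ is exact) is handled correctly, so no gap remains.
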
    
\begin{proof}
This is \cite[Lemma (0.4.2)]{LocalHomology}. We even do not have to assume that the cohomology sheaves of $\CF^{\bullet}$ and $\CG^{\bullet}$ are quasi-coherent. 
\end{proof}
Next we verify the compatibility of $R\Gamma_Z$ with the derived functors $Rf_*$, $f^!$ and $\derotimes$.
\begin{lemma} \label{Gammacomm}
Let $f\colon X \to Y$ be a morphism of finite type and $i\colon Z \to Y$ a closed immersion. Let $i'\colon Z' \to X$ denote the projection $Z' := Z \times_Y X \to X$.  
\begin{enumerate}
\item There is a natural isomorphism of functors
\[
	Rf_*R\Gamma_{Z'} \cong R\Gamma_ZRf_*.
\]
\item If $f$ is flat, then there is a natural isomorphism of functors
\[
	f^*R\Gamma_Z \cong R\Gamma_{Z'}f^*.
\]
\end{enumerate}
\end{lemma}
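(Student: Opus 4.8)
The plan is to prove both parts of \autoref{Gammacomm} by reducing to local statements and using the explicit description of $R\Gamma_Z$ via the derived functor of $\varinjlim \SHom_{\CO_X}(\CO_X/\CI^n,\usc)$, together with the fundamental triangle $R\Gamma_Z \to \id \to Rj_* j^* \to$.

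\medskip

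\noindent\textbf{Part (a).}
First I would set up the commutative diagram of immersions: let $j\colon U \to Y$ be the open complement of $Z$, let $j'\colon U' \to X$ be the open complement of $Z'$, and let $g\colon U' \to U$ be the induced morphism, so that $f \circ j' = j \circ g$ and $Z' = f^{-1}(Z)$ as a closed subset. The plan is to apply $Rf_*$ to the fundamental triangle for $R\Gamma_{Z'}$ on $X$,
\[
	R\Gamma_{Z'}\CF^{\bullet} \longrightarrow \CF^{\bullet} \longrightarrow Rj'_* j'^* \CF^{\bullet} \longrightarrow R\Gamma_{Z'}\CF^{\bullet}[1],
\]
and compare it with the fundamental triangle for $R\Gamma_Z$ applied to $Rf_*\CF^{\bullet}$,
\[
	R\Gamma_Z Rf_* \CF^{\bullet} \longrightarrow Rf_*\CF^{\bullet} \longrightarrow Rj_* j^* Rf_* \CF^{\bullet} \longrightarrow R\Gamma_Z Rf_*\CF^{\bullet}[1].
\]
The middle terms agree tautologically, so it suffices to produce a compatible isomorphism of the right-hand terms, i.e.\ $Rf_* Rj'_* j'^* \cong Rj_* j^* Rf_*$. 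Now $Rf_* Rj'_* = R(f j')_* = R(jg)_* = Rj_* Rg_*$, so I need $Rg_* j'^* \cong j^* Rf_*$; this is exactly the flat base change isomorphism $\bc$ (recalled in the excerpt, \cite[Proposition III.9.3]{Hartshorne}), valid since $j$ is flat (an open immersion), $f$ is separated of finite type, and the square with vertices $U', U, X, Y$ is cartesian. Having the isomorphism on the outer two vertices of a morphism of triangles, I would check it commutes with the maps to/from the common middle term — this is a diagram chase using the compatibility of the base-change map $\bc$ with the adjunction units, in the style of \autoref{twoprojection} — and conclude by the axioms of a triangulated category that the cones are isomorphic, i.e.\ $Rf_* R\Gamma_{Z'} \cong R\Gamma_Z Rf_*$. (One should also remark this is all functorial in $\CF^{\bullet}$.)

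\medskip

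\noindent\textbf{Part (b).}
For the flat case the argument is parallel but easier. Since $f$ is flat, $f^*$ is exact, and I would use the two fundamental triangles again: $f^*$ applied to $R\Gamma_Z\CG^{\bullet} \to \CG^{\bullet} \to Rj_* j^*\CG^{\bullet}$ versus $R\Gamma_{Z'} f^*\CG^{\bullet} \to f^*\CG^{\bullet} \to Rj'_* j'^* f^*\CG^{\bullet}$. The middle terms match, so it suffices to identify the right-hand terms: $f^* Rj_* j^* \cong Rj'_* g^* j^* = Rj'_* j'^* f^*$, where the first isomorphism is again flat base change $\bc$ for the cartesian square (now with $f$ flat, $j$ separated of finite type) and the second uses $g^* j^* = (jg)^* = (fj')^* = j'^* f^*$. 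A compatibility check with the adjunction maps, exactly as in part (a), then gives the isomorphism of the third vertices as a morphism of triangles, hence $f^* R\Gamma_Z \cong R\Gamma_{Z'} f^*$.

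\medskip

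\noindent\textbf{Main obstacle.}
The genuinely delicate point in both parts is not the existence of the base-change isomorphism on objects, but verifying that it is \emph{compatible with the natural maps in the fundamental triangles}, so that one really obtains a morphism of distinguished triangles and may invoke the five-lemma/cone argument. Concretely, one must check that the base-change isomorphism $\bc\colon j^* Rf_* \overset{\sim}{\to} Rg_* j'^*$ (resp.\ $f^* Rj_* \overset{\sim}{\to} Rj'_* g^*$) intertwines the canonical adjunction morphisms $\id \to Rj_* j^*$ on the two sides — this is where the explicit definition of $\bc$ via the unit and counit of adjunction (as recalled before \autoref{twoprojection}) is used, and it is a somewhat lengthy but formal diagram chase. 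An alternative, perhaps cleaner, route for part (b) would be to work directly with the underived functor $\Gamma_Z' = \varinjlim \SHom_{\CO_X}(\CO_X/\CI^n,\usc)$, observe that $f^*(\CO_Y/\CI^n) = \CO_X/\CI^n\CO_X$ defines the same closed subset $Z'$, use that $f^*$ commutes with the filtered colimit and (by flatness) with $\SHom$ applied to finitely presented modules, and then pass to derived functors using that $f^*$ is exact and sends a $\Gamma_Z$-acyclic resolution to a $\Gamma_{Z'}$-acyclic one; I would mention this as the preferred argument for (b) and keep the triangle argument as the uniform treatment.
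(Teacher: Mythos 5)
Your proposal follows essentially the same route as the paper: compare the two fundamental triangles of local cohomology, identify the third vertices via flat base change for the cartesian square of open complements, and deduce the isomorphism on the first vertices. The paper's proof of part (a) is exactly this, including the same compatibility check that the base-change isomorphism intertwines the adjunction units $\id \to Rj_*j^*$.

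There is, however, one gap in the way you close the argument. You propose to ``conclude by the axioms of a triangulated category that the cones are isomorphic.'' The axiom TR3 only gives the \emph{existence} of a completing morphism on the first vertices; it is neither unique nor functorial, so this does not by itself produce a \emph{natural isomorphism of functors}, which is what the lemma asserts. The paper circumvents this by first constructing the canonical map: it shows that $Rf_*R\Gamma_{Z'}$ is supported on $Z$ (pulling back along the open complement $u\colon U \to Y$ and using flat base change), and then invokes \autoref{Gammaadj} to conclude that the canonical morphism $Rf_*R\Gamma_{Z'} \to Rf_*$ factors \emph{uniquely} through $R\Gamma_Z Rf_*$; this yields a canonical natural transformation $\alpha$, and only then is the morphism-of-triangles argument used to show that this particular $\alpha$ is an isomorphism. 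You should insert the analogous step (and likewise in part (b), where the factorization comes from $j'^*f^*R\Gamma_Z \cong g^*j^*R\Gamma_Z = 0$). Your alternative underived argument for (b) is fine and is consistent with \autoref{underGammaPullback} in the paper, provided you justify that $f^*$ commutes with $\SHom_{\CO_Y}(\CO_Y/\CI^n,\usc)$ for the coherent first argument and that $f^*$ of a $\Gamma_Z$-acyclic (e.g.\ injective quasi-coherent) resolution remains $\Gamma_{Z'}$-acyclic.
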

\begin{proof}
First we show that $Rf_*R\Gamma_{Z'}$ is supported on $Z$. Let $u\colon U \into Y$ and $v\colon V \into X$ be the open immersions of the complements of $Z$ and $Z'$ in $Y$ and $X$. Let $f'$ denote the restriction of $f$ to $V$. We obtain a cartesian square
\[
	\xymatrix{
		V \ar[r]^-{v} \ar[d]^-{f'} & X \ar[d]^-f \\
		U \ar[r]^-u & Y.
	}
\]
Hence $u^*Rf_*R\Gamma_{Z'} \cong Rf'_*v^*R\Gamma_{Z'} = 0$. From \autoref{Gammaadj} we know that the canonical morphism $Rf_*R\Gamma_{Z'} \to Rf_*$ factors through $R\Gamma_ZRf_*$. Let $\alpha$ denote the corresponding morphism $Rf_*R\Gamma_{Z'} \to R\Gamma_ZRf_*$. 

Recall that the natural isomorphism $\bc\colon u^*Rf_* \overset{\sim}{\longrightarrow} Rf'_*v^*$ is the adjoint of the composition 
\[
	Rf_* \xrightarrow{Rf_*\operatorname{ad}_v} Rf_*Rv_*v^* \overset{\sim}{\longrightarrow} Ru_*Rf'_*v^*.
\]
Here $\operatorname{ad}_v$ is the unit of the adjunction between $Rv_*$ and $v^*$. The square
\[
	\xymatrix@C40pt{
		Ru_*u^*Rf_* \ar[r]^-{Ru_*\bc} & Ru_*Rf'_*v^* \\
		Rf_* \ar[u]^-{\ad_uRf_*} \ar[r]^-{Rf_*\ad_v} & Rf_*Rv_*v^* \ar[u]^-{\sim}
	}
\]
commutes because $Ru_*\bc \circ \ad_uRf_*$ is the adjoint of $\bc$ and hence equals the original morphism $Rf_* \to Ru_*Rf'_*v^*$. Let $\beta$ be the composition of the natural isomorphism $Rf_*Rv_*v^* \simeq Ru_*Rf'_*v^*$ with the inverse of $Ru_*\bc$. We have just seen that the right square of the	diagram
\[
	\xymatrix{
		Rf_*R\Gamma_{Z'} \ar[d]^{\alpha} \ar[r] & Rf_* \ar@{=}[d] \ar[r] & Rf_*Rv_*v^* \ar[d]_{\sim}^{\beta} \\ 
		R\Gamma_ZRf_* \ar[r] & Rf_* \ar[r] & Ru_*u^*Rf_* 
	}
\]
commutes. The left square commutes by construction. As the lines are distinguished triangles, $\alpha$ is an isomorphism. This shows (a).

For (b) we proceed similarly. The cartesian square above gives rise to the isomorphism 
\[
	\xymatrix{
		f^*R\Gamma_Z \ar[d]^{\sim} \ar[r] & f^* \ar@{=}[d] \ar[r] & f^*Ru_*u^* \ar[d]_{\sim} \\ 
		R\Gamma_{Z'}f^* \ar[r] & f^* \ar[r] & Rv_*v^*f^* 
	}
\]
of distinguished triangles. 
\end{proof}
\begin{remark} \label{underGammaPullback}
With the notation of part (b) of the preceding lemma, for every quasi-coherent sheaf $\CF$, we even have a natural isomorphism
\[
	f^*\Gamma_Z\CF \cong \Gamma_{Z'}f^*\CF,
\] 
see \cite[Lemma 4.3.1]{Brodmann.LocCoh}. 
\end{remark}
\begin{lemma} \label{RGammaTensor}    
Let $\CF^{\bullet}$ and $\CG^{\bullet}$ be complexes in $D_{\qc}^b(X)$. There are natural isomorphisms
\[
	(R\Gamma_Z\CF^{\bullet}) \derotimes_{\CO_X} \CG^{\bullet} \cong \CF^{\bullet} \derotimes_{\CO_X} (R\Gamma_Z\CG^{\bullet}) \cong R\Gamma_Z(\CF^{\bullet} \derotimes_{\CO_X} \CG^{\bullet})
\]
in $D_{\qc}(X)$.
\end{lemma}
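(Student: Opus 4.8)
The plan is to produce both isomorphisms globally by tensoring the fundamental distinguished triangle of local cohomology with $\CG^{\bullet}$ and comparing the resulting triangle, via a projection formula for the complementary open immersion, with the fundamental triangle of $\CF^{\bullet}\derotimes_{\CO_X}\CG^{\bullet}$; the adjunction recorded in \autoref{Gammaadj} will then pin the comparison map down uniquely and thereby furnish naturality, so that no gluing (and no derived descent) is needed.

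Write $j\colon U\to X$ for the open immersion of the complement of $Z$. First I would apply the triangulated functor $(\usc)\derotimes_{\CO_X}\CG^{\bullet}$ to the fundamental triangle, obtaining a distinguished triangle
\[
	(R\Gamma_Z\CF^{\bullet})\derotimes_{\CO_X}\CG^{\bullet}\longrightarrow \CF^{\bullet}\derotimes_{\CO_X}\CG^{\bullet}\longrightarrow (Rj_*j^*\CF^{\bullet})\derotimes_{\CO_X}\CG^{\bullet}\longrightarrow (R\Gamma_Z\CF^{\bullet})\derotimes_{\CO_X}\CG^{\bullet}[1].
\]
Since $X$ is Noetherian the open immersion $j$ is quasi-compact and separated, so the projection formula for $Rj_*$ applies; combined with the fact that $j^*$ is monoidal it gives a natural isomorphism $(Rj_*j^*\CF^{\bullet})\derotimes_{\CO_X}\CG^{\bullet}\cong Rj_*j^*(\CF^{\bullet}\derotimes_{\CO_X}\CG^{\bullet})$. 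Writing $\CH^{\bullet}:=\CF^{\bullet}\derotimes_{\CO_X}\CG^{\bullet}$, the tensored triangle now has the same second and third terms as the fundamental triangle $R\Gamma_Z\CH^{\bullet}\longrightarrow\CH^{\bullet}\longrightarrow Rj_*j^*\CH^{\bullet}\longrightarrow R\Gamma_Z\CH^{\bullet}[1]$.

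The key step is to check that the morphism $\CH^{\bullet}\to Rj_*j^*\CH^{\bullet}$ agrees in the two triangles: on the tensored side it is the unit $\CF^{\bullet}\to Rj_*j^*\CF^{\bullet}$ tensored with $\CG^{\bullet}$ followed by the projection-formula isomorphism, while on the fundamental side it is the adjunction unit for $\CH^{\bullet}$; this comes down to the standard compatibility of the projection formula with the unit $\id\to Rj_*j^*$. I expect this bookkeeping to be the only real obstacle, everything else being either formal manipulation of distinguished triangles or an immediate consequence of \autoref{Gammaadj}. Once the two maps are identified, the first terms of the two triangles are both shifts of the cone of $\CH^{\bullet}\to Rj_*j^*\CH^{\bullet}$, hence isomorphic; moreover $(R\Gamma_Z\CF^{\bullet})\derotimes_{\CO_X}\CG^{\bullet}$ lies in $D_{\qc}(X)_Z$, since applying $j^*$ yields $(j^*R\Gamma_Z\CF^{\bullet})\derotimes_{\CO_U}j^*\CG^{\bullet}=0$ as $R\Gamma_Z\CF^{\bullet}$ is supported on $Z$. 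By the adjunction of \autoref{Gammaadj} a morphism from an object of $D_{\qc}(X)_Z$ to $\CH^{\bullet}$ factors uniquely through $R\Gamma_Z\CH^{\bullet}$, so the isomorphism $(R\Gamma_Z\CF^{\bullet})\derotimes_{\CO_X}\CG^{\bullet}\overset{\sim}{\longrightarrow}R\Gamma_Z\CH^{\bullet}$ lying over $\CH^{\bullet}$ is unique, and in particular natural in both arguments.

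The remaining isomorphism $\CF^{\bullet}\derotimes_{\CO_X}(R\Gamma_Z\CG^{\bullet})\cong R\Gamma_Z\CH^{\bullet}$ follows by running the same argument with the roles of $\CF^{\bullet}$ and $\CG^{\bullet}$ exchanged, using the commutativity of $\derotimes_{\CO_X}$. As an alternative that some readers may find more transparent, on an affine $\Spec A$ with $Z=V(f_1,\dots,f_r)$ one can model $R\Gamma_Z$ by tensoring with the extended Čech complex $\bigotimes_{i=1}^r(A\longrightarrow A_{f_i})$, a bounded complex of flat $A$-modules, and read off all three isomorphisms directly from the associativity and commutativity of $\derotimes_A$ together with the identification of $R\Gamma_Z$ with $\bigotimes_i(A\longrightarrow A_{f_i})\otimes_A(\usc)$ from \cite{LocalHomology}; here the analogous obstacle is to see that the resulting isomorphisms are compatible with the canonical maps to $\CH^{\bullet}$ and independent of the chosen generators.
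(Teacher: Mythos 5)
Your argument is correct and is essentially the proof the paper gives: the paper also tensors the fundamental triangle of local cohomology with the other complex, identifies the third terms via monoidality of $j^*$ plus the projection formula for $Rj_*$, and deduces the isomorphism on the first terms, using the support observation (your appeal to \autoref{Gammaadj}) to produce the comparison map into $R\Gamma_Z(\CF^{\bullet}\derotimes\CG^{\bullet})$. Your explicit attention to the commutativity of the square involving the adjunction unit, and the remark on uniqueness of the factorization, only make the bookkeeping more careful than the paper's.
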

\begin{proof}
The natural map $\CF^{\bullet} \derotimes R\Gamma_Z\CG^{\bullet} \to \CF^{\bullet} \derotimes \CG^{\bullet}$ factors through $R\Gamma_Z(\CF^{\bullet} \derotimes R\Gamma_Z\CG^{\bullet})$ because 
\[
	j^*(\CF^{\bullet} \derotimes R\Gamma_Z\CG^{\bullet}) \cong j^*\CF^{\bullet} \derotimes j^*R\Gamma_Z\CG^{\bullet} \cong 0.
\]
Let $\rho$ denote the composition of the natural isomorphism 
\[
	Rj_*j^*(\CF^{\bullet} \derotimes_{\CO_X} \CG^{\bullet}) \cong Rj_*(j^*\CF^{\bullet} \derotimes_{\CO_X} j^*\CG^{\bullet})
\]
and the isomorphism from the projection formula 
\[
	Rj_*(j^*\CF^{\bullet} \derotimes_{\CO_X} j^*\CG^{\bullet}) \cong \CF^{\bullet} \derotimes_{\CO_X} Rj_*j^*\CG^{\bullet}.
\]
We obtain a morphism of distinguished triangles 
\[
	\xymatrix{
		\CF^{\bullet} \derotimes R\Gamma_Z\CG^{\bullet} \ar[d] \ar[r] & \CF^{\bullet} \derotimes \CG^{\bullet} \ar@{=}[d] \ar[r] & \CF^{\bullet} \derotimes Rj_*j^*\CG^{\bullet} \ar[d]_-{\sim}^-{\rho} \\
		R\Gamma_Z(\CF^{\bullet} \derotimes \CG^{\bullet}) \ar[r] & \CF^{\bullet} \derotimes \CG^{\bullet} \ar[r] & Rj_*j^*(\CF^{\bullet} \derotimes \CG^{\bullet}).
	}
\]
Therefore, the left vertical arrow is an isomorphism. Analogously, one shows that 
\[
	R\Gamma_Z\CF^{\bullet} \derotimes \CG^{\bullet} \cong R\Gamma_Z(\CF^{\bullet} \derotimes \CG^{\bullet}).
\]
\end{proof}
Finally, for an open immersion $j\colon X \to \LX$, we study the connection between $R\Gamma_Z$ and $R\Gamma_{\overline{Z}}$, where $\overline{Z}$ is the closure of $Z$ in $\LX$.
\begin{definition}
Let $Z$ and $Z'$ be be closed subsets of a scheme $X$. We let $D_{\qc}(X)_Z^{Z'}$ denote the full subcategory of $D_{\qc}(X)_Z$ of complexes $\CF^{\bullet}$ with $R\Gamma_{Z'} \cong 0$
\end{definition}
\begin{proposition} \label{opensupport}
Let $j\colon X \to \LX$ be an open immersion of schemes, $Z \subseteq X$ a closed subset and $\overline{Z}$ the closure of $Z$ in $\LX$. The functors $Rj_*$ and $j^*$ are inverse equivalences 
\[
	\xymatrix{
     D_{\qc}(X)_Z \ar@<.5ex>[r]^-{Rj_*} & D_{\qc}(\LX)_{\overline{Z}}^{\overline{Z} \backslash X}. \ar@<.5ex>[l]^-{j^*}
   }
\] 
\end{proposition}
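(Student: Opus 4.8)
The plan is to verify the four statements that together give the asserted inverse equivalence: that $j^*$ sends $D_{\qc}(\LX)_{\overline{Z}}^{\overline{Z}\setminus X}$ into $D_{\qc}(X)_Z$, that $Rj_*$ sends $D_{\qc}(X)_Z$ into $D_{\qc}(\LX)_{\overline{Z}}^{\overline{Z}\setminus X}$, and that the unit and counit of the adjunction $(j^*,Rj_*)$ restrict to natural isomorphisms $j^*Rj_*\cong\id$ and $Rj_*j^*\cong\id$ on the respective subcategories. Two of these are essentially formal. Since $j$ is an open immersion, $j^*$ is exact, preserves quasi-coherence, and for $\CG^{\bullet}$ supported on $\overline{Z}$ the restriction $j^*\CG^{\bullet}$ is supported on $j^{-1}(\overline{Z})=\overline{Z}\cap X=Z$; thus $j^*$ lands in $D_{\qc}(X)_Z$. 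Likewise the counit $j^*Rj_*\cong\id$ holds on all of $D_{\qc}(X)$ because $j$ is an open immersion, hence in particular on $D_{\qc}(X)_Z$.

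Next I would check that $Rj_*$ lands in $D_{\qc}(\LX)_{\overline{Z}}^{\overline{Z}\setminus X}$ for $\CF^{\bullet}\in D_{\qc}(X)_Z$; note $Rj_*$ preserves quasi-coherence as $j$ is a quasi-compact separated open immersion of Noetherian schemes. First, to see that $Rj_*\CF^{\bullet}$ is supported on $\overline{Z}$, set $W=\LX\setminus\overline{Z}$ with open immersion $k\colon W\hookrightarrow\LX$. Because $\overline{Z}\cap X=Z$ one has $W\cap X=X\setminus Z$, giving a cartesian square of open immersions, so flat base change yields $k^*Rj_*\CF^{\bullet}\cong R(j')_*\bigl(\CF^{\bullet}|_{X\setminus Z}\bigr)=0$, using that $\CF^{\bullet}$ is supported on $Z$. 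Second, to see $R\Gamma_{\overline{Z}\setminus X}Rj_*\CF^{\bullet}\cong 0$, put $T=\overline{Z}\setminus X$ and $V=\LX\setminus T$, so $X\subseteq V$ and $j$ factors as $X\xrightarrow{j_V}V\xrightarrow{k}\LX$ with $j_V,k$ open immersions. Then $Rj_*=Rk_*R(j_V)_*$ lies in the essential image of $Rk_*$, so the unit $Rj_*\CF^{\bullet}\to Rk_*k^*Rj_*\CF^{\bullet}$ is an isomorphism (by the triangle identities together with $k^*Rk_*\cong\id$). The fundamental distinguished triangle for $R\Gamma_T$ then forces $R\Gamma_T Rj_*\CF^{\bullet}\cong 0$, so $Rj_*$ lands in the desired subcategory.

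The main point is the unit isomorphism $Rj_*j^*\cong\id$ on $D_{\qc}(\LX)_{\overline{Z}}^{\overline{Z}\setminus X}$. Here I would apply the fundamental distinguished triangle to the open immersion $j$ itself, whose closed complement is $\LX\setminus X$: for $\CG^{\bullet}$ it reads
\[
	R\Gamma_{\LX\setminus X}\CG^{\bullet}\longrightarrow\CG^{\bullet}\longrightarrow Rj_*j^*\CG^{\bullet}\longrightarrow R\Gamma_{\LX\setminus X}\CG^{\bullet}[1],
\]
so the natural map $\CG^{\bullet}\to Rj_*j^*\CG^{\bullet}$ is an isomorphism exactly when $R\Gamma_{\LX\setminus X}\CG^{\bullet}\cong 0$. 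Since $\CG^{\bullet}$ is supported on $\overline{Z}$ we have $\CG^{\bullet}\cong R\Gamma_{\overline{Z}}\CG^{\bullet}$, and using the standard composition law $R\Gamma_A R\Gamma_B\cong R\Gamma_{A\cap B}$ for closed subsets $A,B$ (which follows from $\Gamma_A\Gamma_B=\Gamma_{A\cap B}$ on quasi-coherent sheaves together with the fact, recorded above, that $\Gamma_B$ preserves injective quasi-coherent sheaves) I obtain
\[
	R\Gamma_{\LX\setminus X}\CG^{\bullet}\cong R\Gamma_{\LX\setminus X}R\Gamma_{\overline{Z}}\CG^{\bullet}\cong R\Gamma_{(\LX\setminus X)\cap\overline{Z}}\CG^{\bullet}=R\Gamma_{\overline{Z}\setminus X}\CG^{\bullet}\cong 0,
\]
the final vanishing being precisely the defining condition of $D_{\qc}(\LX)_{\overline{Z}}^{\overline{Z}\setminus X}$. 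This gives $Rj_*j^*\cong\id$ and completes the proof that $Rj_*$ and $j^*$ are inverse equivalences.

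The only delicate bookkeeping I anticipate is ensuring that the isomorphisms produced via the triangles and via the factorization $j=k\circ j_V$ are exactly those induced by the adjunction unit and counit, as naturality of the equivalence requires, rather than merely abstract isomorphisms; this is precisely what the triangle identities guarantee, just as in the support computation for $Rj_*$ above, so no essential difficulty remains.
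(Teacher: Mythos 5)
Your proof is correct and follows essentially the same route as the paper's: both establish support in $\overline{Z}$ by flat base change to the complement of $\overline{Z}$, kill $R\Gamma_{\overline{Z}\setminus X}$ by factoring $j$ through the open complement of $\overline{Z}\setminus X$ and invoking the triangle identity, and prove $Rj_*j^*\cong\id$ by reducing to $R\Gamma_{\LX\setminus X}\cong R\Gamma_{\LX\setminus X}R\Gamma_{\overline{Z}}\cong R\Gamma_{\overline{Z}\setminus X}\cong 0$ via the fundamental triangle. The only difference is that you spell out two points the paper leaves implicit (that $j^*$ lands in $D_{\qc}(X)_Z$, and the justification of $R\Gamma_A R\Gamma_B\cong R\Gamma_{A\cap B}$ via preservation of injectives), which is harmless.
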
 		
\begin{proof}
Let $u\colon U \to X$ and $u'\colon U' \to \LX$ denote the open immersions of the complements $U$ of $Z$ in $X$ and $U'$ of $\overline{Z}$ in $\LX$. Let $j'$ be the restriction of $j$ to $U$. We obtain a cartesian square
\[
	\xymatrix{
		U \ar[r]^-u \ar[d]^-{j'} & X \ar[d]^-j \\
		U' \ar[r]^-{u'} & \LX.
	}
\]
The natural isomorphism $u'^*Rj_* \overset{\bc}{\longrightarrow} Rj'_*u^*$ shows that the essential image of $D_{\qc}(X)_Z$ under $Rj_*$ is a subcategory of $D_{\qc}(\LX)_{\overline{Z}}$. The inclusion $j$ factors through the open immersions $\sigma\colon X \to U' \cup X$ and $\tau\colon U' \cup X \to \LX$. In particular, we have a natural isomorphism $Rj_* \cong R\tau_* R\sigma_*$. Since the composition
\[
	R\tau_* R\sigma_* \xrightarrow{\id \to R\tau_* \tau^*} R\tau_* \tau^* R\tau_* R\sigma_* \xrightarrow{\tau^* R\tau_* \to \id} R\tau_* R\sigma_*
\]
is the identity and the second morphism is an isomorphism, the first map is an isomorphism too. As $\tau$ is the open immersion of the complement of $\overline{Z} \backslash X$ into $\LX$, it follows from the distinguished triangle 
\[
	R\Gamma_{\overline{Z} \backslash X} \longrightarrow \id \longrightarrow R\tau_* \tau^* \longrightarrow R\Gamma_{\overline{Z} \backslash X}[1]
\]
that $R\Gamma_{\overline{Z} \backslash X} Rj_* j^* = 0$. 

The adjunction morphism $j^*Rj_* \to \id$ is always an isomorphism. It remains to show that the natural map $\id \to Rj_*j^*$ is an isomorphism. For every $\CF^{\bullet}$ in $D_{\qc}(\LX)_{\overline{Z}}^{\overline{Z} \backslash X}$, we have $\CF^{\bullet} \cong R\Gamma_{\overline{Z}} \CF^{\bullet}$ and $R\Gamma_{\overline{Z} \backslash X} \CF^{\bullet} \cong 0$. It follows that 
\begin{align*} 
	R\Gamma_{\LX \backslash X}\CF^{\bullet} &\cong R\Gamma_{\LX \backslash X}R\Gamma_{\overline{Z}}\CF^{\bullet} \\
	&\cong R\Gamma_{\overline{Z} \backslash X}\CF^{\bullet} \\
	&\cong 0.
\end{align*}
Thus the second morphism in the fundamental triangle
\[
	R\Gamma_{\LX \backslash X}\CF^{\bullet} \longrightarrow \CF^{\bullet} \longrightarrow Rj_* j^*\CF^{\bullet} \longrightarrow R\Gamma_{\LX \backslash X}\CF^{\bullet}[1]
\]
is an isomorphism.
\end{proof}
\begin{remark}
In the standard reference \cite[Corollary II.5.11]{HartshorneRD}, Hartshorne proves that for a morphism $f\colon X \to Y$ of schemes, the functor $Lf^*$ from $D_c^-(Y)$ to $D_c^-(X)$ is left adjoint to the functor $Rf_*$ from $D^+(X)$ to $D^+(Y)$. One the one hand, we can relax the coherence assumption because in the case of an open immersion, which is a flat morphism, $f^*$ is exact. On the other hand, Proposition (3.2.1) of the more recent reference \cite{LipmanGrothDual} shows this adjunction generally for ringed spaces and without any boundedness or (quasi-)coherence assumptions on the complexes.  
\end{remark}
\begin{corollary} \label{Gammajung}
If $Z$ is a closed subset of a scheme $X$ and $j\colon X \to \LX$ is an open immersion such that the image of $Z$ in $\LX$ is closed, then there is a natural isomorphism of functors
\[
	\epsilon\colon R\Gamma_Z \overset{\sim}{\longrightarrow} Rj_*R\Gamma_Zj^*.
\]
\end{corollary}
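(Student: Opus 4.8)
The plan is to deduce this directly from \autoref{opensupport} and \autoref{Gammacomm}(b). Write $\overline{Z}$ for the closure in $\LX$ of the image $j(Z)$. By hypothesis $j(Z)$ is already closed, so $\overline{Z} = j(Z)$, and since $j(Z)\subseteq j(X)$ we get $\overline{Z}\backslash X = \emptyset$; here and below I identify $X$ with the open subscheme $j(X)\subseteq\LX$ and use that $R\Gamma_{(-)}$ depends only on the underlying closed set. Because $\Gamma_{\emptyset}=0$, the subcategory $D_{\qc}(\LX)_{\overline{Z}}^{\overline{Z}\backslash X}$ occurring in \autoref{opensupport} equals all of $D_{\qc}(\LX)_{\overline{Z}}$. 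Hence \autoref{opensupport} says that $j^*$ and $Rj_*$ are inverse equivalences between $D_{\qc}(X)_Z$ and $D_{\qc}(\LX)_{\overline{Z}}$; in particular, as established in its proof, the unit $\ad_j\colon \id \to Rj_* j^*$ is an isomorphism on all of $D_{\qc}(\LX)_{\overline{Z}}$.

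Next I would construct $\epsilon$. The open immersion $j$ is flat and satisfies $j^{-1}(\overline{Z}) = Z$, so \autoref{Gammacomm}(b) furnishes a natural isomorphism $j^* R\Gamma_{\overline{Z}} \overset{\sim}{\longrightarrow} R\Gamma_Z j^*$ (applying that lemma to $f=j$ and the closed subset $\overline Z$, whose preimage is $Z$). Applying $Rj_*$ and precomposing with $\ad_j$ produces the natural transformation
\[
	\epsilon\colon R\Gamma_{\overline{Z}} \xrightarrow{\ \ad_j\ } Rj_* j^* R\Gamma_{\overline{Z}} \overset{\sim}{\longrightarrow} Rj_* R\Gamma_Z j^*.
\]

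It remains to check that $\epsilon$ is an isomorphism, which we test on an arbitrary $\CF^{\bullet}\in D_{\qc}(\LX)$. Since $R\Gamma_{\overline{Z}}$ is idempotent, $R\Gamma_{\overline{Z}}\CF^{\bullet}$ lies in $D_{\qc}(\LX)_{\overline{Z}}$, and since $\overline{Z}\backslash X=\emptyset$ it even lies in $D_{\qc}(\LX)_{\overline{Z}}^{\overline{Z}\backslash X}$. By the first paragraph $\ad_j$ is therefore an isomorphism on $R\Gamma_{\overline{Z}}\CF^{\bullet}$, and the second arrow in the display is an isomorphism by construction, so $\epsilon$ is an isomorphism. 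Restricting to $D_{\qc}^+$ or $D_{\qc}^b$ is harmless, as all functors involved preserve these boundedness conditions. I expect the only delicate point to be the bookkeeping of closed subsets versus closed subschemes, ensuring that $R\Gamma_{\overline{Z}}$, $R\Gamma_Z$ and the fibre product $\overline Z\times_{\LX}X$ entering \autoref{Gammacomm} all refer to the same underlying closed sets; all of the homological content is already packaged in \autoref{opensupport}.
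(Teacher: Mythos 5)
Your proposal is correct and follows essentially the same route as the paper: define $\epsilon$ as the unit $R\Gamma_Z \to Rj_*j^*R\Gamma_Z$, which is invertible by \autoref{opensupport} because $R\Gamma_Z\CF^{\bullet}$ lies in $D_{\qc}(\LX)_{\overline{Z}}^{\overline{Z}\backslash X}$ with $\overline{Z}\backslash X=\emptyset$, composed with the flat-pullback isomorphism $j^*R\Gamma_Z \cong R\Gamma_Zj^*$ of \autoref{Gammacomm}(b). Your extra bookkeeping distinguishing $R\Gamma_{\overline{Z}}$ on $\LX$ from $R\Gamma_Z$ on $X$ is a harmless elaboration of what the paper leaves implicit.
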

\begin{proof}
We define $\epsilon$ as the composition of the natural map $R\Gamma_Z \to Rj_*j^*R\Gamma_Z$, which is an isomorphism by \autoref{opensupport}, and the natural isomorphism $Rj_*j^*R\Gamma_Z \overset{\sim}{\longrightarrow} Rj_*R\Gamma_Zj^*$.
\end{proof} 
For example, the condition that $Z$ is also closed in $\LX$ is satisfied if $j\colon X \to \LX$ is an open immersion of $Y$-schemes and $i\colon Z \to X$ is a closed immersion of $Y$-schemes over some base scheme $Y$ such that the structural morphisms $Z \to Y$ and $\LX \to Y$ are proper, see \autoref{ExerciseHartshorne}. When constructing the generalized trace map, we will be exactly in this situation. 
\begin{lemma} \label{opennattrans}
Let $j\colon X \to \LX$ be an open immersion. Let $Z \subset X$ be a closed subset such that $j(Z)$ is closed in $\overline{Z}$. Then for $\CF^{\bullet} \in D_{\qc}^-(X)_Z$ and $\CG^{\bullet} \in D_{\qc}^+(X)$, the natural transformation
\[
	\tau\colon Rj_* \RSHom_{\CO_X}^{\bullet}(\CF^{\bullet},\CG^{\bullet}) \to \RSHom_{\CO_{\LX}}^{\bullet}(Rj_*\CF^{\bullet},Rj_*\CG^{\bullet})
\]
is a functorial isomorphism.  
\end{lemma}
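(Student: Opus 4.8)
The plan is to reduce the assertion to an open cover of $\LX$ together with the identity $j^{*}Rj_{*}\cong\id$. First I would note that the hypothesis forces $j(Z)$, which is dense in its closure $\overline{Z}$ in $\LX$, to be all of $\overline{Z}$; identifying $Z$ with $j(Z)$, the subset $Z$ is then a closed subset of $\LX$ lying in the open set $X$. Write $u\colon U\to X$ and $u'\colon U'\to\LX$ for the open immersions of the complements of $Z$ in $X$ and in $\LX$ respectively, and $j'\colon U\to U'$ for the induced open immersion, so that the square with corners $U,X,U',\LX$ is cartesian and $u'$ is flat. The first step is to show that \emph{both} the source and the target of $\tau$ lie in $D_{\qc}(\LX)_{Z}$. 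For the source, the flat base-change isomorphism $u'^{*}Rj_{*}\overset{\bc}{\longrightarrow}Rj'_{*}u^{*}$, together with $u^{*}\RSHom_{\CO_{X}}^{\bullet}(\CF^{\bullet},\CG^{\bullet})\cong\RSHom_{\CO_{U}}^{\bullet}(u^{*}\CF^{\bullet},u^{*}\CG^{\bullet})$ and $u^{*}\CF^{\bullet}=0$ (since $\CF^{\bullet}\in D_{\qc}^{-}(\CO_{X})_{Z}$), yields $u'^{*}Rj_{*}\RSHom_{\CO_{X}}^{\bullet}(\CF^{\bullet},\CG^{\bullet})=0$. For the target, restriction along the open immersion $u'$ commutes with $\RSHom^{\bullet}$, and $u'^{*}Rj_{*}\CF^{\bullet}\cong Rj'_{*}u^{*}\CF^{\bullet}=0$, so $u'^{*}\RSHom_{\CO_{\LX}}^{\bullet}(Rj_{*}\CF^{\bullet},Rj_{*}\CG^{\bullet})=0$; alternatively $Rj_{*}\CF^{\bullet}$ is supported on $\overline{Z}=Z$ by \autoref{opensupport}.

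Because $Z\subseteq X$, we have $\LX=X\cup U'$, and a morphism in $D(\LX)$ between objects of $D_{\qc}(\LX)_{Z}$ is an isomorphism exactly when its restriction to $X$ is one, the restriction to $U'$ being a morphism between zero objects. Hence it suffices to prove that $j^{*}\tau$ is an isomorphism. Using the counit $j^{*}Rj_{*}\overset{\sim}{\longrightarrow}\id$, which is an isomorphism because $j$ is an open immersion, and the compatibility of $\RSHom^{\bullet}$ with restriction to the open subset $X$, one obtains canonical isomorphisms $j^{*}Rj_{*}\RSHom_{\CO_{X}}^{\bullet}(\CF^{\bullet},\CG^{\bullet})\cong\RSHom_{\CO_{X}}^{\bullet}(\CF^{\bullet},\CG^{\bullet})$ and $j^{*}\RSHom_{\CO_{\LX}}^{\bullet}(Rj_{*}\CF^{\bullet},Rj_{*}\CG^{\bullet})\cong\RSHom_{\CO_{X}}^{\bullet}(j^{*}Rj_{*}\CF^{\bullet},j^{*}Rj_{*}\CG^{\bullet})\cong\RSHom_{\CO_{X}}^{\bullet}(\CF^{\bullet},\CG^{\bullet})$.

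The step I expect to be the main obstacle is then to verify that, under these identifications, $j^{*}\tau$ is the identity of $\RSHom_{\CO_{X}}^{\bullet}(\CF^{\bullet},\CG^{\bullet})$; equivalently, that the canonical transformation $\tau$ is compatible with flat base change, applied here to the cartesian square $X=X\times_{\LX}X$. This is a formal but delicate diagram chase with the construction of $\tau$: by definition $\tau$ is the morphism adjoint, for the $\derotimes$--$\RSHom^{\bullet}$ adjunction on $\LX$, to the composite $Rj_{*}(\ev)\circ\lambda$, where $\ev\colon\RSHom_{\CO_{X}}^{\bullet}(\CF^{\bullet},\CG^{\bullet})\derotimes_{\CO_{X}}\CF^{\bullet}\longrightarrow\CG^{\bullet}$ is evaluation and $\lambda\colon Rj_{*}(-)\derotimes_{\CO_{\LX}}Rj_{*}(-)\longrightarrow Rj_{*}(-\derotimes_{\CO_{X}}-)$ is the canonical lax-monoidality morphism of $Rj_{*}$; applying the monoidal functor $j^{*}$ and using $j^{*}Rj_{*}\cong\id$ turns $\lambda$ into an identity and $Rj_{*}(\ev)$ into $\ev$, so that $j^{*}\tau$ becomes the morphism adjoint to $\ev$, namely the identity. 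Together with the reduction of the previous paragraph this completes the proof; the support and open-cover bookkeeping is routine, and the boundedness hypotheses $\CF^{\bullet}\in D_{\qc}^{-}$, $\CG^{\bullet}\in D_{\qc}^{+}$ serve only to guarantee that the functors involved and the map $\tau$ itself are defined.
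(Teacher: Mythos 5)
Your proof is correct, but it takes a genuinely different route from the paper's. The paper argues globally: it inserts the counit $j^*Rj_*\CF^{\bullet}\to\CF^{\bullet}$ into the first argument, observes that the resulting composite $Rj_*\RSHom^{\bullet}_{\CO_X}(j^*Rj_*\CF^{\bullet},\CG^{\bullet})\to\RSHom^{\bullet}_{\CO_{\LX}}(Rj_*\CF^{\bullet},Rj_*\CG^{\bullet})$ is Lipman's sheafified adjunction isomorphism for $j^*\dashv Rj_*$ (Proposition (3.2.3) of that reference), and uses the support hypothesis only through \autoref{opensupport} and \autoref{Gammajung}, namely to invert the unit $Rj_*\CF^{\bullet}\to Rj_*j^*Rj_*\CF^{\bullet}$. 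You instead argue locally: both source and target of $\tau$ are supported on $Z$, which is closed in $\LX$ and contained in the open set $X$, so on the open cover $\{X,\,\LX\setminus Z\}$ it suffices to check that $\tau$ restricts to an isomorphism; on $\LX\setminus Z$ both sides vanish by flat base change and locality of $\RSHom^{\bullet}$, and on $X$ the counit identifies $j^*\tau$ with the identity. Your route is more elementary in its inputs (no appeal to Lipman's duality theorem), but it shifts the weight onto the verification that $j^*\tau$ really is the identity under the canonical identifications --- the monoidal-adjunction diagram chase you rightly single out as the delicate step, and which is essentially the open-immersion instance of the compatibility that Lipman's result packages; the paper outsources exactly that chase at the cost of citing a stronger external statement. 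Both arguments use the hypothesis that $j(Z)$ is closed in $\LX$ in an essential way, just at different points of the proof.
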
 
\begin{proof}
Consider the following diagram
\[
	\xymatrix{
		Rj_*\RSHom_{\CO_X}^{\bullet}(\CF^{\bullet},\CG^{\bullet}) \ar[r]^-{\sim} \ar[d]^{\tau} & Rj_*\RSHom_{\CO_X}^{\bullet}(j^*Rj_*\CF^{\bullet},\CG^{\bullet}) \ar[d]^{\tau} \\
		\RSHom_{\CO_{\LX}}^{\bullet}(Rj_*\CF^{\bullet},Rj_*\CG^{\bullet}) \ar[r]^-{\sim} \ar@{=}[dr] & \RSHom_{\CO_{\LX}}^{\bullet}(Rj_*j^*Rj_*\CF^{\bullet},Rj_*\CG^{\bullet}) \ar[d] \\
		& \RSHom_{\CO_{\LX}}^{\bullet}(Rj_*\CF^{\bullet},Rj_*\CG^{\bullet}),
	}
\]
where the horizontal arrows are induced by the counit $j^*Rj_* \to \id$ of adjunction -- these maps are isomorphisms by \autoref{Gammajung} since $\CF^{\bullet}$ is supported in $Z$ -- and the arrow to the bottom right corner stems from the unit $\id \to Rj_*j^*$ of adjunction. The upper square commutes because of the functoriality of $\tau$. The triangle on the bottom commutes because the composition of the unit and counit of an adjunction in the manner of the diagram is canonically isomorphic to the identity. Hence the whole diagram is commutative. Finally, the composition of the two vertical arrows on the right is an isomorphism (\cite[Proposition (3.2.3)]{LipmanGrothDual}. It follows that the vertical arrow on the left is an isomorphism.    
\end{proof}

\subsection{Adjunction for quasi-coherent sheaves} 

As pointed out in the introduction of this section, the adjunction between $Rf_*$ and $f^!$ for a proper morphism $f\colon X \to Y$ is based on the \emph{trace map}, which is a natural transformation of functors
\[
	\tr_f\colon Rf_*f^! \to \id.
\]
The classical way to construct the trace is to define it for residual complexes and then ``pull ourselves up by our bootstraps'': As $Y$ is regular, the structure sheaf $\CO_Y$ is a dualizing sheaf, and hence in particular a pointwise dualizing complex. Its Cousin complex $K^{\bullet} := E^{\bullet}(\CO_Y)$, see \cite[IV.2]{HartshorneRD}, is an injective resolution of $\CO_X$ and an example for a \emph{residual complex}. We recall the basic facts from chapter 3.2 of \cite{conrad_grothendieck_2000}.
\begin{definition}
For every $x \in X$, let $i_x$ denote the canonical morphism $\Spec(\CO_{X,x}) \to X$. We set $\CJ(x) = i_{x*}J(x)$, where $J(x)$ is an injective hull of the residue field $k(x)$ considered as a module over $\CO_{X,x}$. A \emph{residual complex} $K^{\bullet}$ on $X$ is a complex in $D_c^+(X)$ of quasi-coherent injectives such that there is an isomorphism of $\CO_X$-modules
\[
	\bigoplus_{n \in \ZZ}K^n \cong \bigoplus_{x \in X}\CJ(x).
\]
\end{definition} 
One can show that, given a residual complex $K^{\bullet}$, there is a unique function $d_{K^{\bullet}}\colon X \to \mathbb N$ such that 
\[
	K^n \cong \bigoplus_{d_{K^{\bullet}}(x)=n}\CJ(x).
\]	
Restricting to residual complexes one can construct a functor $g^{\Delta}\colon \Res(Y) \to \Res(X)$ for every morphism $g\colon X \to Y$ of finite type by gluing the functors $g^{\flat}$ for finite $g$ and $g^{\sharp}$ for separated and smooth $g$. This gives rise to the \emph{twisted} or \emph{exceptional inverse image functor}:
\begin{definition}
Let $g\colon X \to Y$ be a morphism of finite type. We define the functor $g^!\colon D_{\coh}^+(Y) \to D_{\coh}^+(X)$ by
\[
	g^! = D_{g^{\Delta}K^{\bullet}} \circ Lg^* \circ D_{K^{\bullet}},
\]
where $D$ is the duality.
\end{definition}
For proper $f$, we can define a map of complexes $\tr_f(K^{\bullet})\colon f_*f^{\Delta}K^{\bullet} \to K^{\bullet}$ (\cite[VII, Theorem 2.1]{HartshorneRD}), where $f^{\Delta}$ is the functor $f^!$ for residual complexes, see \cite[VI.3.]{HartshorneRD}. With this map in hand one defines the natural transformation $\tr_f\colon Rf_*f^! \to \id$ in the category $D_{\coh}^+(Y)$ as the unique map making the diagram
\begin{align*} 
	\xymatrix{
		Rf_*f^! \ar@{=}[r] \ar@{.>}[dd]^{\tr_f}& Rf_*\RSHom_{\CO_X}^{\bullet}(Lf^* \circ D_{K^{\bullet}}(\usc),f^{\Delta}K^{\bullet}) \ar[d]^{\sim}\\
		& \RSHom_{\CO_Y}^{\bullet}(D_{K^{\bullet}}(\usc),f_*f^{\Delta}K^{\bullet}) \ar[d]^{\tr_f(K^{\bullet})} \\
		\id \ar[r]^-{\sim} & \RSHom_{\CO_Y}^{\bullet}(D_{K^{\bullet}}(\usc),K^{\bullet})
	}
\end{align*}
commutative. Here the first vertical isomorphism on the right is the natural isomorphism from the adjunction of $Rf_*$ and $Lf^*$. Note that $f^{\Delta}(K^{\bullet})$ is injective, hence $f_*f^{\Delta}(K^{\bullet})$ computes $Rf_*f^{\Delta}(K^{\bullet})$.

Instead of constructing the twisted inverse image functor $f^!$ by pasting it from special situations such as smooth and proper maps, Lipman uses a more abstract method, the Special Adjoint Functor Theorem, to obtain a right adjoint of $Rf_*$ under weak assumptions on the morphism $f$. Then he extends this result to a ``sheafified duality'', i.e.\ for $\CF^{\bullet} \in D_{\qc}(X)$, $\CG^{\bullet} \in D_{\qc}^+(Y)$ and quasi-proper $f$, a natural isomorphism
\[
	Rf_*\RSHom_{\CO_X}^{\bullet}(\CF^{\bullet},f^!\CG^{\bullet}) \to \RSHom_{\CO_Y}^{\bullet}(Rf_*\CF^{\bullet},\CG^{\bullet}). 
\]
The compatibility of the approaches of \cite{HartshorneRD} and \cite{LipmanGrothDual} is involved, as pointed out in the introduction of \cite{LipmanGrothDual}.
  
Let us recall some results of the trace for proper morphisms.
\begin{definition}[\protect{{\cite[VI. 5.]{HartshorneRD}}}]
A morphism $f\colon X \to Y$ of schemes is called \emph{residually stable} if it is flat, integral and the fibers of $f$ are Gorenstein.
\end{definition}
\begin{lemma}[\protect{{\cite[Corollary 4.4.3]{LipmanGrothDual}}}] \label{flatcommute}
Let $f\colon X \to Y$ be proper and let $g\colon Y' \to Y$ be flat. Let $f'$ and $g'$ be the projections of $X \times_Y Y'$ such that the square
\[
	\xymatrix{
		Y' \times_Y X \ar[d]^-{f'} \ar[r]^-{g'} & X \ar[d]^-f \\
		Y' \ar[r]^-g & Y
	}
\]
is cartesian. The morphism $\beta\colon g'^*f^! \overset{\sim}{\longrightarrow} f'^!g^*$, defined as the adjoint of the composition
\[
	Rf'_*g'^*f^! \xrightarrow{\bc^{-1} f^!} g^*Rf_*f^! \xrightarrow{g^*\tr_f} g^*,
\]
is an isomorphism.
\end{lemma}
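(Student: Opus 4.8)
The statement is \cite[Corollary 4.4.3]{LipmanGrothDual} and in the write-up one simply cites it; below I sketch how I would reconstruct the argument. First I would check that $\beta$ is well defined. Since $f$ is proper and properness is stable under base change, $f'$ is proper, so Grothendieck--Serre duality makes $f'^!$ right adjoint to $Rf'_*$ on the relevant bounded-below derived categories, and $\beta$ is by definition the mate of the displayed composite $Rf'_* g'^* f^! \to g^*$. This composite is legitimate because $g$ and $g'$ are flat, hence $g^*$ and $g'^*$ are exact, and $\bc\colon g^* Rf_* \to Rf'_* g'^*$ is the flat base change isomorphism (\cite[Proposition III.9.3]{Hartshorne}). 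As the formation of $f^!$, $f'^!$, $\tr_f$, $\bc$ and $\beta$ is local on $Y$ and on $Y'$, I would then assume $Y=\Spec A$ and $Y'=\Spec A'$ affine.

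Next I would reduce to the standard building blocks. Zariski-locally on $X$ the finite type morphism $f$ factors as a closed immersion $X\into\mathbb{A}^n_Y$ followed by the projection $\mathbb{A}^n_Y\to Y$, so using the transitivity isomorphisms $(g\circ h)^!\cong h^! g^!$ and $\tr_{g\circ h}\cong\tr_g\circ(Rg_*\tr_h)$, together with the compatibility of $\bc$ with composition of morphisms, it suffices to verify the claim when $f$ is finite, when $f$ is smooth, and (for the compactification step) when $f$ is an open immersion. If $f$ is finite then $f^!\CG^\bullet\cong\overline f^*\RSHom_{\CO_Y}(f_*\CO_X,\CG^\bullet)$ and $\tr_f$ is evaluation at $1$; since $f_*\CO_X$ is coherent and $g$ is flat, flat base change commutes both with $\RSHom$ against $f_*\CO_X$ and with $\overline f^*$, and the evaluation map is visibly stable under pulling back along $g$, so $\beta$ is an isomorphism here. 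If $f$ is smooth of relative dimension $d$ then $f^!\CG^\bullet\cong Lf^*\CG^\bullet\otimes_{\CO_X}\omega_{X/Y}[d]$; base change for $Lf^*$, for $\omega_{X/Y}$ and for the shift is immediate, and the required compatibility of $\tr_f$ with $g^*$ is the classical fact that the residue (integration-along-the-fibres) map defining the trace of a smooth proper morphism commutes with flat base change. An open immersion is smooth of relative dimension $0$ with $j^!=j^*$, so base change for it is the trivial identity.

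Finally I would paste these together: by Nagata compactification (or directly from the local factorization) an arbitrary proper morphism is assembled from finite morphisms, smooth proper morphisms and open immersions, and one concludes that the functor $f^\Delta$ on residual complexes together with its trace is compatible with $\bc$; hence $\beta$, a composite of isomorphisms, is an isomorphism. The genuinely laborious step, which I would not reproduce, is precisely this last one: checking that the pasting data gluing $(\usc)^\flat$ and $(\usc)^\sharp$ into $(\usc)^\Delta$, and the corresponding gluing of traces, are compatible with the flat base change morphism. This is carried out in \cite[Chapters VI--VII]{HartshorneRD} and \cite[Chapter 4]{LipmanGrothDual}, and in the paper I would simply invoke \cite[Corollary 4.4.3]{LipmanGrothDual}.
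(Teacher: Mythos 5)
The paper offers no proof of this lemma at all: it is stated purely as a citation of \cite[Corollary 4.4.3]{LipmanGrothDual}, so your decision to invoke that reference matches the paper exactly. Your supplementary sketch is a reasonable reconstruction, but it follows the classical Hartshorne residual-complex route (factor $f$ locally through an affine or projective space, verify base change separately for $f^{\flat}$ in the finite case and $f^{\sharp}$ in the smooth case, then check the gluing data), whereas Lipman's actual proof of the cited corollary proceeds quite differently: $f^!$ is produced abstractly as a right adjoint of $Rf_*$ via the special adjoint functor theorem, and the base change isomorphism is established by way-out and \v{C}ech-type reductions without ever touching residual complexes. This difference is not merely aesthetic: the residual-complex argument requires $Y$ to admit a dualizing complex and only yields the statement for complexes with \emph{coherent} cohomology, while Lipman's version holds for quasi-coherent cohomology, which is the generality the paper actually uses downstream (e.g.\ in the proof of the TRA~4 compatibility and of \autoref{traceres}). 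Two smaller points where your sketch is loose: the projection $\mathbb{A}^n_Y \to Y$ is not proper, so the ``integration along the fibres'' description of the trace does not directly apply to it --- one must work with the graded trace on residual complexes, which is defined for arbitrary embeddable morphisms and is a map of complexes only when $f$ is proper; and the compatibility of the gluing of $(\usc)^{\flat}$ and $(\usc)^{\sharp}$ with flat base change, which you rightly defer, is exactly where the bulk of the work lies. None of this is a gap in the paper's logic, since the paper never attempts the proof.
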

Let us recall two compatibilities of the trace, which usually are known as ``TRA 1'' and ``TRA 4''.
\begin{lemma} \label{proptrace}
Let $f\colon X \to Y$ be a proper morphism of schemes.
\begin{enumerate}
	\item (TRA 1) If $g\colon Y \to Z$ is another proper morphism, then there is a commutative diagram
	\[
		\xymatrix{
			R(gf)_*(gf)^! \ar[r]^-{\tr_{g \circ f}} \ar[d]^{\sim} & \id \\
			Rg_*Rf_*f^!g^! \ar[r]^-{\tr_f} & Rg_*g^! \ar[u]_{\tr_g}
		}
	\]
	where the first vertical arrow is the natural isomorphism. 
	\item (TRA 4) For a flat morphism $g\colon Y' \to Y$, there is a commutative diagram
	\[
		\xymatrix@C45pt{
			g^*Rf_*f^! \ar[r]^-{g^* \tr_f} \ar[d]^{\bc}_{\sim} & g^* \\
			Rf'_*g'^*f^! \ar[r]^-{Rf'_*\beta}_-{\sim} & Rf'_*f'^!g^*, \ar[u] _{\tr_{f'}g^*}
		}
	\]
where $g'$ and $f'$ are the two projections of $X \times_Y Y'$.
\end{enumerate}
\end{lemma}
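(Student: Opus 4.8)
The strategy is to exploit the fact — built into \autoref{adjunctionclassic}, or \cite{LipmanGrothDual} if one wants to drop hypotheses on dualizing complexes — that $\tr_f\colon Rf_*f^!\to\id$ is the counit of the adjunction exhibiting $Rf_*$ as left adjoint to $f^!$; both (TRA 1) and (TRA 4) are then essentially formal consequences of this together with the analogous facts at the level of residual complexes. Indeed, applying $H^0R\Gamma(Y,\usc)$ to the isomorphism of \autoref{adjunctionclassic} with $\CF^\bullet=f^!\CG^\bullet$ and arbitrary $\CG^\bullet$, one checks that the class of $\id_{f^!\CG^\bullet}$ is carried to $\tr_f$, so that $\tr_f$ is indeed this counit — equivalently, one reads this off the defining square of $\tr_f$ recalled above.

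For \textbf{(TRA 1)}, the canonical identifications $(g\circ f)^\Delta\cong f^\Delta\circ g^\Delta$ of \cite[VI.3]{HartshorneRD}, hence $(g\circ f)^!\cong f^!\circ g^!$, together with $R(g\circ f)_*\cong Rg_*\circ Rf_*$, exhibit $(R(g\circ f)_*,(g\circ f)^!)$ as the composite of the adjoint pairs $(Rf_*,f^!)$ and $(Rg_*,g^!)$. The counit of such a composite is
\[
	\tr_g\circ Rg_*(\tr_f)\,g^!\colon\; Rg_*Rf_*f^!g^!\longrightarrow Rg_*g^!\longrightarrow\id,
\]
which is precisely the lower path of the square in question; since $\tr_{g\circ f}$ is \emph{the} counit of $(R(g\circ f)_*,(g\circ f)^!)$ and counits are unique, the square commutes. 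The only thing to verify is that the isomorphisms $(g\circ f)^!\cong f^!g^!$ and $R(g\circ f)_*\cong Rg_*Rf_*$ used in the statement are the canonical (pseudofunctorial) ones that are compatible with the adjunction data; this is exactly the transitivity of the trace at the level of residual complexes, \cite[VII]{HartshorneRD} (cf.\ also \cite{conrad_grothendieck_2000}).

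For \textbf{(TRA 4)} there is almost nothing left once \autoref{flatcommute} is read carefully: the isomorphism $\beta\colon g'^*f^!\xrightarrow{\sim}f'^!g^*$ is defined there as the morphism adjoint — for the adjunction $(Rf'_*,f'^!)$ with counit $\tr_{f'}$ — to the composition $g^*\tr_f\circ(\bc^{-1}f^!)\colon Rf'_*g'^*f^!\to g^*$. Unwinding ``adjoint'' gives $\tr_{f'}(g^*)\circ Rf'_*(\beta)=g^*\tr_f\circ(\bc^{-1}f^!)$, which, after composing with $\bc$ on the right, is exactly the commutativity of the (TRA 4) square, provided the base change isomorphism $\bc\colon g^*Rf_*\xrightarrow{\sim}Rf'_*g'^*$ occurring in the square is the same one used in the definition of $\beta$. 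Hence, as in (TRA 1), the genuine work — and the step I expect to be the main obstacle — is not a computation but a coherence check: ensuring that a single canonical $\bc$ is in play (a standard compatibility for cartesian squares, in the spirit of \autoref{twoprojection}) and that all the identifications of iterated functors ($(g\circ f)^!\cong f^!g^!$, $L(g\circ f)^*\cong Lf^*Lg^*$, and so on) are the standard ones, so that ``counit of a composite adjunction'' and ``adjoint of the obvious map'' may be taken at face value. In the setting relevant to the paper these compatibilities are already packaged into the residual-complex variance properties of \cite{HartshorneRD} and into \cite{LipmanGrothDual}; the present lemma is their $D_{\coh}^+$-shadow.
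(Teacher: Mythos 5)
Your proposal is correct and matches the paper's own proof: part (a) is disposed of by deferring to the transitivity of the trace for residual complexes in \cite[Corollary VII.3.4]{HartshorneRD} (your counit-of-a-composite-adjunction gloss is just an explicit phrasing of that citation), and part (b) is exactly the paper's argument that $\tr_{f'}g^*\circ Rf'_*\beta$ is the adjoint of the adjoint of $g^*\tr_f\circ\bc^{-1}f^!$, hence the square commutes by the construction of $\beta$ in \autoref{flatcommute}. Your explicit caveat that a single canonical $\bc$ must be in play is the right coherence point and is implicitly assumed in the paper as well.
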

\begin{proof}
(a) is \cite[Corollary VII.3.4]{HartshorneRD}. The diagram in (b) commutes by construction of $\beta$: The composition $\tr_{f'}g^* \circ Rf'_*\beta$ is the adjoint of the adjoint of the composition $u^* \tr_f \circ \bc^{-1}$, see \autoref{flatcommute}. 
\end{proof}
\begin{remark}
Part (b) of the preceding lemma holds under the milder assumption that $f$ is of finite Tor-dimension, see \cite[Corollary 4.4.3]{LipmanGrothDual}. In this more general case one considers the left derived functors $Lf^*$ and $Lf'^*$. However, we will need the compatibility of the trace with pullback only for flat morphisms.
\end{remark} 
From now on we do not assume that $f$ is proper. We are interested in the case where $f\colon X \to Y$ is a separated morphism of finite type of smooth Noetherian schemes and $i\colon Z \to Y$ and $i'\colon Z' \to X$ are closed immersions with a proper morphism $f'\colon Z' \to Z$ such that $f \circ i' = i \circ f'$. The compactification theorem of Nagata (\cite{Nagata}, see also \cite{Luet} for a more recent proof) states that there exists a factorization of $f$ into an open immersion $j\colon X \to \LX$ and a proper morphism $\overline{f} \colon \LX \to Y$.
\begin{lemma} \label{ExerciseHartshorne}
Let $f\colon X \to Y$ be a morphism of schemes that factors through an open immersion $j\colon X \to \LX$ followed by a proper morphism $\overline{f}\colon \LX \to Y$. Then for every closed immersion $i\colon Z \to X$ such that $f \circ i$ is proper, the composition $j \circ i$ is also a closed immersion. 
\end{lemma}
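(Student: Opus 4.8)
The plan is to check that the composite $j\circ i\colon Z\to\LX$ is simultaneously an \emph{immersion} and a \emph{proper} morphism, and then to observe that a proper immersion is necessarily a closed immersion. That $j\circ i$ is an immersion is immediate from the definition: an immersion is by definition a closed immersion followed by an open immersion, and here $i$ is a closed immersion while $j$ is an open immersion. In particular $j\circ i$ is a monomorphism, and it identifies $Z$ with a locally closed subscheme $V$ of $\LX$, i.e.\ with a closed subscheme of some open $W\subseteq\LX$.

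To see that $j\circ i$ is proper I would invoke the cancellation property for properness. We have $\overline{f}\circ(j\circ i)=f\circ i$, which is proper by hypothesis, and $\overline{f}$ is separated, being proper; hence $j\circ i$ is proper (cf.\ \cite[Corollary II.4.8]{Hartshorne}, the finite type hypothesis being automatic since all the schemes here are Noetherian).

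It remains to promote ``proper immersion'' to ``closed immersion''. Since $j\circ i$ is proper it is universally closed, so its set-theoretic image --- the underlying set of $V$ --- is a closed subset of $\LX$. Thus $V$ is a closed subset of $\LX$ contained in the open $W$, on which it is cut out by a quasi-coherent ideal sheaf; covering $\LX$ by $W$ and $\LX\setminus V$ (the empty closed subscheme on the latter), these ideal sheaves agree on the overlap and glue to a quasi-coherent ideal sheaf on $\LX$ with vanishing locus $V$, so that $V\into\LX$, and hence $j\circ i$, is a closed immersion. (One could also simply quote that a proper monomorphism is a closed immersion, e.g.\ EGA IV, 18.12.6.) I do not expect a real obstacle: the argument is formal once the calculus of proper and separated morphisms is available. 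The only point deserving care is this last scheme-theoretic bookkeeping --- verifying that the locally closed subscheme structure supplied by the immersion genuinely extends to a closed subscheme structure on $\LX$ once the image is known to be closed --- which the gluing above handles.
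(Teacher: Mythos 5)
Your proof is correct and follows essentially the same route as the paper: both apply the cancellation property for properness (\cite[Corollary II.4.8]{Hartshorne}) to $\overline{f}\circ(j\circ i)=f\circ i$ to conclude that $j\circ i$ is proper, hence has closed image, and then use that an immersion with closed image is a closed immersion. The only difference is that you spell out this last step (the gluing of ideal sheaves), whereas the paper simply cites it as Exercise II.4.4 of Hartshorne.
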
 
\begin{proof}
We have to show that $j(i(Z))$ is closed in $\LX$ (which is a special case of the first part of exercise II.4.4 of \cite{Hartshorne}). By assumption the composition $\overline{f} \circ j \circ i = f \circ i$ is proper and $\overline{f}$ is proper, in particular $\overline{f}$ is separated. Hence by \cite[Corollary II.4.8]{Hartshorne}, $j \circ i$ is proper, which implies that the image $j(i(Z))$ is closed.
\end{proof}
The following generalization of the trace map stems from \cite{Ruelling}. Our construction is similar to the morphism $\operatorname{Tr}_f$ from Corollary 1.7.6 of ibid. 
\begin{definition} \label{tracedef}
For a morphism $f\colon X \to Y$ of finite type and closed immersions $i\colon Z \to Y$ and $i'\colon Z' \to X$ with a proper morphism $f'\colon Z' \to Z$ such that $f \circ i' = i\circ f'$, choose a compactification, i.e.\ an open immersion $j\colon X \to \LX$ and a proper morphism $\overline{f}\colon \LX \to X$ with $\overline{f} \circ j = f$. We obtain the following commutative diagram:
\[
	\xymatrix{
		Z' \ar@^{(->}[r]^-{i'} \ar[d]^-{f'} & X \ar[r]^-j \ar[d]^-f & \LX \ar[dl]^{\overline{f}} \\
		Z \ar@^{(->}[r]^-i & Y &
	}
\]
We define the trace of $f$ as the morphism of functors 
\[
	\tr_{f,Z} = \tr_f\colon Rf_*R\Gamma_{Z'}f^! \to \id 
\]
on $D_{\qc}^+(\CO_Y)$ given by the composition 
\begin{align*} \label{trace}
	Rf_*R\Gamma_{Z'}f^! \overset{\sim}{\longrightarrow} R\overline{f}_*Rj_*R\Gamma_{Z'}j^*\overline{f}^! \xrightarrow{R\overline{f}_*\epsilon^{-1}\overline{f}^!} R\overline{f}_*R\Gamma_{Z'}\overline{f}^! \xrightarrow{R\Gamma_{Z'} \to \id}R\overline{f}_*\overline{f}^! \xrightarrow{tr_{\overline{f}}} \id,
\end{align*}
where $\epsilon$ is the isomorphism of \autoref{Gammajung} and the last morphism is the classical Grothendieck-Serre trace for the proper map $\overline{f}$. 
\end{definition} 
Because $Rf_*R\Gamma_{Z'}f^! \cong R\Gamma_ZRf_*f^!$ (\autoref{Gammacomm}), the complex $Rf_*R\Gamma_{Z'}f^!$ is supported on $Z$. By \autoref{Gammaadj}, $\tr_f$ factors through $R\Gamma_Z$, i.e.\ there is a commutative diagram
\[
	\xymatrix{
		Rf_*R\Gamma_{Z'}f^! \ar[rr]^-{\tr_f} \ar[dr]_-{\widetilde{\tr}_f} & & \id, \\
		& R\Gamma_Z \ar[ur] &
	}
\]
where $\widetilde{\tr}_f$ is induced by $\tr_f$ and the map $R\Gamma_Z \to \id$ is the natural one. We will not distinguish between $\widetilde{\tr}_f$ and $\tr_f$. For a residual complex $E^{\bullet}$, the trace is a morphism of complexes because $f^{\Delta}E^{\bullet}$ and $\overline{f}^{\Delta}E^{\bullet}$ are residual complexes and $\Gamma_Z$ preserves injectives.  

Of course we have to show that $\tr_f$ is well-defined, i.e.\ it does not depend on the choice of a compactification. The next lemma prepares the proof of this independence. 
\begin{lemma} \label{opentrace}
Let $f\colon X \to Y$ be an open immersion. Let $Z \subseteq X$ be a closed subset such that $f(Z)$ is closed in $Y$. Then for every compactification $X \xrightarrow{j} \LX \xrightarrow{\overline{f}} Y$, the map $\tr_f$ equals the inverse $Rf_*R\Gamma_Zf^* \cong R\Gamma_Z$ of the isomorphism of \autoref{Gammajung} followed by the natural morphism $R\Gamma_Z \to \id$.  
\end{lemma}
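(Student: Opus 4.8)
The plan is to unwind the construction of the generalized trace (\autoref{tracedef}) in the case of an open immersion and compare it, term by term, with the candidate morphism. For an open immersion $f$ one has $f^{!}=f^{*}$, and one may take $Z'=Z$ in \autoref{tracedef}: the induced $f'\colon Z\to f(Z)$ is an isomorphism, hence proper. Write $W:=f(Z)$, closed in $Y$ by hypothesis, and $\overline{Z}:=j(Z)$, closed in $\LX$ by \autoref{ExerciseHartshorne} (the composite $f\circ i'=i\circ f'$ being proper). Set $\epsilon_{f}\colon R\Gamma_{W}\overset{\sim}{\longrightarrow}Rf_{*}R\Gamma_{Z}f^{*}$ for the isomorphism of \autoref{Gammajung} applied to $f$, and let $\nu_{W}\colon R\Gamma_{W}\to\id$ denote the canonical morphism; the goal is $\tr_{f}=\nu_{W}\circ\epsilon_{f}^{-1}$. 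First I would record a formal reduction: since $f$ is an open immersion and $W$ is closed, $f^{-1}(W)=Z$, so by \autoref{Gammacomm}(a) the endofunctor $Rf_{*}R\Gamma_{Z}f^{!}$ of $D_{\qc}^{+}(Y)$ takes values in complexes supported on $W$. Hence both $\tr_{f}$ (by the commutative triangle following \autoref{tracedef}) and $\nu_{W}\circ\epsilon_{f}^{-1}$ factor through $\nu_{W}$, and by \autoref{Gammaadj} composition with $\nu_{W}$ is bijective on the relevant morphism sets; precomposing with $\epsilon_{f}$, it therefore suffices to prove the identity $\tr_{f}\circ\epsilon_{f}=\nu_{W}$ of natural transformations of endofunctors of $D_{\qc}^{+}(Y)$.

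Next I would substitute the definition of $\tr_{f}$. The composite of $\epsilon_{f}$ with the first two maps appearing in \autoref{tracedef} — the identification $Rf_{*}R\Gamma_{Z}f^{!}\cong R\overline{f}_{*}Rj_{*}R\Gamma_{Z}j^{*}\overline{f}^{!}$ (using $Rf_{*}=R\overline{f}_{*}Rj_{*}$ and $f^{!}\cong j^{*}\overline{f}^{!}$) followed by $R\overline{f}_{*}\epsilon_{j}^{-1}\overline{f}^{!}$ — is an isomorphism, being a composite of isomorphisms; call it $\Psi_{0}\colon R\Gamma_{W}\overset{\sim}{\longrightarrow}R\overline{f}_{*}R\Gamma_{\overline{Z}}\overline{f}^{!}$. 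With this notation the lemma becomes the identity
\[
	\nu_{W}=\bigl(R\Gamma_{W}\xrightarrow{\ \Psi_{0}\ }R\overline{f}_{*}R\Gamma_{\overline{Z}}\overline{f}^{!}\longrightarrow R\overline{f}_{*}\overline{f}^{!}\xrightarrow{\ \tr_{\overline{f}}\ }\id\bigr),
\]
the middle arrow being induced by $R\Gamma_{\overline{Z}}\to\id$. This is a statement purely about the classical Grothendieck--Serre trace of the proper morphism $\overline{f}$, restricted via $R\Gamma_{\overline{Z}}$ to complexes supported on the closed subset $\overline{Z}\subseteq\LX$; the crucial geometric input is that $\overline{Z}=j(Z)$ lies in the open locus $j(X)\subseteq\LX$, on which $\overline{f}$ coincides with the open immersion $f$, and that $\overline{f}$ maps $\overline{Z}$ isomorphically onto $W$. (Unwinding $\epsilon_{f}$ and $\epsilon_{j}$ as a unit of adjunction followed by the commutation isomorphism of \autoref{Gammacomm}(b), together with \autoref{opensupport} for $j$ — legitimate since $\overline{Z}\subseteq j(X)$, so $\overline{Z}\setminus X=\varnothing$ — also identifies $\Psi_{0}$ explicitly as the canonical isomorphism $R\Gamma_{W}\overset{\sim}{\to}Rf_{*}R\Gamma_{Z}f^{*}$ rewritten via $Rf_{*}=R\overline{f}_{*}Rj_{*}$ and $f^{*}\cong j^{*}\overline{f}^{!}$.)

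The main obstacle, and the heart of the proof, is this last trace identity: one has to show that the Grothendieck trace of the proper morphism $\overline{f}$ localizes near the support $\overline{Z}$. I would establish it using the compatibility of the trace with flat base change, TRA~4 (\autoref{proptrace}(b), equivalently \autoref{flatcommute}), applied to the proper morphism $\overline{f}$ and the flat morphism $f\colon X\to Y$ itself. This exhibits, after pulling back along $f$, the proper morphism $\overline{f}'\colon\LX':=\LX\times_{Y}X\to X$; since $f$ is an open immersion, $\LX'=\overline{f}^{-1}(f(X))$ is an open subscheme of $\LX$ containing $j(X)$, the projection $g'\colon\LX'\to\LX$ is an open immersion, and $j$ factors as $X\xrightarrow{\ j''\ }\LX'\xrightarrow{\ g'\ }\LX$ with $j''$ an open immersion satisfying $\overline{f}'\circ j''=\id_{X}$. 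As $\overline{Z}=j(Z)=j''(Z)\subseteq j(X)=j''(X)\subseteq\LX'$, every complex occurring is restricted from $\LX'$ along $g'$ (\autoref{opensupport}) and every relevant complex on $\LX'$ is in turn restricted from $j''(X)$ along $j''$, on which $\overline{f}'$ is the identity $\id_{X}$; combining this with $(j'')^{!}\overline{f}'^{!}=(\overline{f}'\circ j'')^{!}=\id$ and \autoref{Gammacomm}(b), the composite of $\Psi_{0}$ with $R\overline{f}_{*}\overline{f}^{!}\xrightarrow{\tr_{\overline{f}}}\id$ is matched, under $f^{*}$, with the corresponding (tautological) expression for $\id_{X}$ supported on $Z$, which is $\nu_{Z}$; since a morphism out of a complex supported on $W\subseteq f(X)$ is determined by its image under $f^{*}$ (again \autoref{Gammaadj} and \autoref{opensupport}), this yields the trace identity and hence $\tr_{f}\circ\epsilon_{f}=\nu_{W}$. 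All the substance lies in keeping track of the base-change isomorphisms $\bc$ and $\beta$ of \autoref{flatcommute} and the commutation isomorphisms of \autoref{Gammacomm}, \autoref{Gammajung} and \autoref{Gammaadj}; everything else is formal.
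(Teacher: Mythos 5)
Your overall strategy coincides with the paper's: reduce via \autoref{Gammaadj} and \autoref{Gammajung} to an identity involving only the classical trace of the proper map $\overline{f}$, then base-change $\overline{f}$ along the flat open immersion $f$ itself and invoke the flat base-change compatibility of the trace (\autoref{flatcommute}, i.e.\ TRA~4 of \autoref{proptrace}). Up to the point where $\tr_{\overline{f}'}$ enters (the trace of the base-changed proper map $\overline{f}'\colon \LX\times_Y X \to X$), your reductions are sound and run parallel to the paper's diagram chase.

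The one step I cannot accept as written is the assertion that what remains is ``the corresponding (tautological) expression for $\id_X$ supported on $Z$, which is $\nu_Z$.'' After TRA~4 the proper map in play is $\overline{f}'$, not $\id_X$; to replace $\tr_{\overline{f}'}$ by the identity you must pass through the section $j''$, and what you then need is exactly the statement that the generalized trace of $\id_X$ computed via the nontrivial compactification $X\xrightarrow{\ j''\ }\LX'\xrightarrow{\ \overline{f}'\ }X$ agrees with the one computed via the trivial compactification. That is an instance of the independence-of-compactification lemma, which the paper proves \emph{after}, and \emph{using}, \autoref{opentrace}; invoking it here would be circular, and it is not tautological. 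It is, however, provable directly: since $\overline{f}'\circ j''=\id_X$ is proper and $\overline{f}'$ is separated, the cancellation argument of \autoref{ExerciseHartshorne} shows $j''$ is a \emph{proper} open immersion, so $j''(X)$ is clopen in $\LX'$; applying TRA~1 (\autoref{proptrace}) to the factorization $\id_X=\overline{f}'\circ j''$ and identifying $\tr_{j''}$ with the projection onto the clopen summand then yields the needed identity on complexes supported in $j''(Z)$. The paper sidesteps this entirely by never evaluating $\tr_{\overline{f}'}$: it closes the diagram using only naturality of the adjunction units, the square from the proof of \autoref{Gammacomm}, and TRA~4. Either route works, but your write-up needs that extra argument spelled out; the word ``tautological'' is hiding the actual content of the step.
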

\begin{proof}
Let $\alpha\colon R\Gamma_Z \to \id$ denote the canonical morphism of functors. The claim of the lemma is the commutativity of the diagram 
\[
	\xymatrix{
		Rf_* R\Gamma_Z f^* \ar[r]_-{\sim} \ar[d]_{\sim}^{\epsilon^{-1}} & R\overline{f}_* Rj_* R\Gamma_Z j^* \overline{f}^! \ar[d]_{\sim}^{R\overline{f}_* \epsilon^{-1} \overline{f}^!} \\
		R\Gamma_Z \ar[d]^{\alpha} & R\overline{f}_* R\Gamma_Z \overline{f}^! \ar[d]^{R\overline{f}_* \alpha \overline{f}^*} \\
		\id & R\overline{f}_* \overline{f}^!, \ar[l]_-{\tr_{\overline{f}}}
	}
\]
where $\epsilon$ is the isomorphism of \autoref{Gammajung}. Let $Z'$ be the closed subset $\overline{f}^{-1}(Z)$, which contains $Z$. Let $\phi$ denote the composition 
\[
	R\overline{f}_* R\Gamma_Z \overline{f}^! \to R\overline{f}_*R\Gamma_{Z'}\overline{f}^!\overset{\sim}{\longrightarrow} R\Gamma_Z R\overline{f}_*\overline{f}^! \xrightarrow{\tr_{\overline{f}}} R\Gamma_Z
\]
of canonical transformations obtained from the natural transformation $R\Gamma_Z \to R\Gamma_{Z'}$, the isomorphism of \autoref{Gammacomm} and the trace. It fits into the commutative diagram
\[
	\xymatrix{
		R\Gamma_Z \ar[d]^-{\alpha} & R\overline{f}_* R\Gamma_Z \overline{f}^! \ar[d]^-{R\overline{f}_* \alpha \overline{f}^!} \ar[l]_-{\phi} \\
		\id & R\overline{f}_* \overline{f}^!. \ar[l]_-{\tr_{\overline{f}}}
	}
\]
Hence it suffices to show that
\[
	\xymatrix{
		Rf_* R\Gamma_Z f^* \ar[r]^-{\sim} \ar[d]_{\sim}^{\epsilon^{-1}} & R\overline{f}_* Rj_* R\Gamma_Z j^* \overline{f}^! \ar[d]_{\sim}^{R\overline{f}_* \epsilon^{-1} \overline{f}^!} \\
		R\Gamma_Z & R\overline{f}_* R\Gamma_Z \overline{f}^! \ar[l]_-{\phi} 
	}
\]
commutes. This diagram can be extracted from the following bigger diagram:
\[
	\xymatrix{
		Rf_* R\Gamma_Z f^* \ar[r]^-{\sim} \ar[d]_{\sim} & R\overline{f}_*Rj_* R\Gamma_Z j^*\overline{f}^! \ar[d]_{\sim} \ar[dr]^{\sim} & \\
		R\Gamma_Z Rf_* f^* \ar[r]^-{\sim} & R\Gamma_Z R\overline{f}_* Rj_* j^* \overline{f}^! & R\overline{f}_* R\Gamma_{Z'} Rj_*j^*\overline{f}^! \ar[l]_-{\sim} \\
		R\Gamma_Z \ar[u]_{\ad_f}^{\sim} & R\Gamma_Z R\overline{f}_* \overline{f}^! \ar[u]_{\ad_j}^{\sim} \ar[l]_-{\tr} & R\overline{f}_* R\Gamma_{Z'} \overline{f}^!. \ar[u]_{\ad_j}^{\sim} \ar[l]_-{\sim} 
	}
\]
Here $\ad_f$ and $\ad_j$ denote the units of adjunction as in the proof of \autoref{Gammacomm}. The only part whose commutativity is not obvious is the bottom left square. For this it is enough to show that the diagram 
\begin{align} \label{opendiagram}
	\xymatrix@C50pt{
		& R\overline{f}_* \overline{f}^! \ar[r]^-{\tr} \ar[d]^{\ad_f} \ar[ddl]_{\ad_j} & \id \ar[d]^{\ad_f} \\
		& Rf_* f^* R\overline{f}_* \overline{f}^! \ar[r]^-{Rf_*f^*\tr_{\overline{f}}} \ar[d]^{\bc} & Rf_* f^* \ar@{=}[dd] \\
		R\overline{f}_* Rj_* j^* \overline{f}^! \ar[d]^-{\sim} & Rf_* Rf'_* j^* \overline{f}^! \ar[l]_-{\sim} \ar[d]^{\sim} & \\
		R\overline{f}_* Rj_* f'^* f^* & Rf_* Rf'_* f'^* f^* \ar[l]_-{\sim} \ar[r]^-{Rf_*\tr_{f'}f^*} & Rf_*f^*
	}
\end{align}
commutes. Here the morphism $\bc$ is the base change morphism with respect to the cartesian square
\[
	\xymatrix{
		X' \ar[r]^j \ar[d]^-{f'} & \overline{X} \ar[d]^-{\overline{f}} \\
		X \ar[r]^f & Y.
	}
\]
The commutativity of the upper left triangle of the diagram \autoref{opendiagram} was part of the proof of \autoref{Gammacomm}. The upper square of this diagram commutes by naturality of $\ad_f$ and the commutativity of the square below follows from \autoref{flatcommute}. Finally, the bottom left square commutes by naturality of the transformation $Rf_* Rf'_* \to R\overline{f}_* Rj_*$.    
\end{proof} 
\begin{lemma} 
The map $\tr_f$ is well-defined, i.e.\ it is independent of the choice of the compactification $j\colon X \into \LX$. 
\end{lemma}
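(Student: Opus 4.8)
The plan is to show that two compactifications give the same trace by reducing, via a standard dominating-compactification argument, to the case where one compactification factors through the other. First I would recall that Nagata's theorem gives compactifications $j_1\colon X \hookrightarrow \overline{X}_1$, $j_2\colon X \hookrightarrow \overline{X}_2$ of $f$; by the usual trick one can find a third compactification $j_3\colon X \hookrightarrow \overline{X}_3$ together with proper $Y$-morphisms $p_k\colon \overline{X}_3 \to \overline{X}_k$ ($k=1,2$) with $p_k \circ j_3 = j_k$ (take $\overline{X}_3$ to be the closure of the image of $X$ in $\overline{X}_1 \times_Y \overline{X}_2$). Since $\tr_f$ being independent of the choice is a transitive relation, it suffices to treat the case of a single proper $Y$-morphism $p\colon \overline{X}' \to \overline{X}$ with $p \circ j' = j$, where $X \xrightarrow{j} \overline{X} \xrightarrow{\overline{f}} Y$ and $X \xrightarrow{j'} \overline{X}' \xrightarrow{\overline{f}'} Y$ are the two compactifications, so $\overline{f}' = \overline{f} \circ p$.

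The heart of the argument is then a diagram chase. Writing $\overline{Z}$ (resp.\ $\overline{Z}'$) for the closure of $Z'$ in $\overline{X}$ (resp.\ in $\overline{X}'$), I would first note that since $f' \circ$ (the closed immersion $Z' \hookrightarrow X$) is proper over $Y$, \autoref{ExerciseHartshorne} shows $Z'$ is closed in both $\overline{X}$ and $\overline{X}'$, so $\overline{Z} = i_{\overline{X}}(Z')$, $\overline{Z}' = i_{\overline{X}'}(Z')$, and moreover $\overline{Z}' = p^{-1}(\overline{Z})$ as sets since $p$ restricts to an isomorphism on $Z'$ (it is proper and restricts to $\id$ on the dense-in-fibre locus — more precisely $p|_{\overline{Z}'}$ is proper with a section over the closed set $Z'$, hence an isomorphism onto $\overline{Z}$). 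Now \autoref{Gammajung} applied to $j' = p \circ j$ gives $R\Gamma_{Z'} \cong R(j')_* R\Gamma_{Z'} (j')^* \cong Rp_* Rj_* R\Gamma_{Z'} j^* p^*$ compatibly with the analogous isomorphism for $j$, because $R(j')_* = Rp_* \circ Rj_*$ and $(j')^* = j^* \circ p^*$. Combining this with the transitivity of the classical Grothendieck--Serre trace \autoref{proptrace}(a), namely $\tr_{\overline{f}'} = \tr_{\overline{f}} \circ R\overline{f}_*(\tr_p \circ (R\Gamma_{\overline{Z}'} \to \id))\overline{f}^!$ suitably interpreted on the supported complexes, reduces the claim to the commutativity of a diagram built from: the base-change isomorphism $\bc$ for the cartesian square with corners $\overline{X}', \overline{X}, X$ (note $p^{-1}(X) = X$ since $p$ is an isomorphism over $X$, because both $j, j'$ are open immersions with the same image), the isomorphism $\epsilon$ of \autoref{Gammajung}, and the classical traces $\tr_p$, $\tr_{\overline{f}}$.

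The key verification — and the step I expect to be the main obstacle — is showing that the generalized-trace construction is compatible with this factorization through $p$; concretely, that $\tr_p \colon Rp_* R\Gamma_{\overline{Z}'} p^! \to \id$ (which is again a \emph{classical} trace composed with $R\Gamma_{\overline{Z}'} \to \id$, since $p$ is proper) interacts correctly with $\epsilon$ and with restriction to the open set $X$. Here I would invoke \autoref{opentrace}: the restriction of $p$ to $X$ is the identity open immersion, so the "trace" of this restriction is literally the inverse of the isomorphism of \autoref{Gammajung} followed by $R\Gamma_{Z'} \to \id$, and \autoref{opentrace} says precisely that this matches what the general construction produces. Feeding this into the diagram, together with (TRA 1) for the composite $\overline{f}' = \overline{f} \circ p$ and (TRA 4) / \autoref{flatcommute} for the base change along the open immersion $X \hookrightarrow \overline{X}$, collapses everything: the two candidate definitions of $\tr_f$ — one via $(\overline{X}, \overline{f})$ and one via $(\overline{X}', \overline{f}')$ — are seen to be equal as morphisms $Rf_* R\Gamma_{Z'} f^! \to \id$. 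Since the general case follows by applying this twice through the dominating compactification $\overline{X}_3$, the map $\tr_f$ is well-defined.
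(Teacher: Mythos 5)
Your proposal matches the paper's proof: both reduce via the fibre product $\LX_1 \times_Y \LX_2$ to the case of a proper morphism $g$ between compactifications with $g \circ j_1 = j_2$, and then settle the resulting diagram by functoriality of the natural maps, (TRA 1) for the classical trace, and --- as the only non-formal step --- \autoref{opentrace} applied to the open immersion compactified through the dominating model. Your side claims that $p^{-1}(j(X)) = j'(X)$ and that $\overline{Z}'$ is the full preimage of $\overline{Z}$ are neither justified as stated nor actually needed, since \autoref{opentrace} is formulated for an arbitrary compactification of the open immersion and already absorbs that base-change subtlety.
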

\begin{proof}
Let $j_1\colon X \to \LX_1$ and $j_2\colon X \to \LX_2$ be two open immersions with proper morphisms $f_1\colon \LX_1 \to Y$ and $f_2\colon \LX_2 \to Y$ such that $f = f_1 \circ j_1 = f_2 \circ j_2$. By considering $\LX_1 \times_Y \LX_2$ we can reduce to the case that there is a proper morphism $g\colon \LX_1 \to \LX_2$ such that $g \circ j_1 = j_2$, i.e.\ the diagram 
\[
	\xymatrix{
		& X \ar[dl]_{j_1} \ar[dr]^{j_2} \ar[dd]_/-15pt/f & \\
		\LX_1\ \ar[dr]_{f_1} \ar[rr]_/15pt/g & & \LX_2 \ar[dl]^{f_2} \\
		& Y & 
	}
\]
commutes. That $\tr_f$ is well-defined means exactly that the following diagram of functors is commutative:
\[
	\xymatrix{
		& Rf_*R\Gamma_Zf^! \ar[dl]_-{\sim} \ar[d]_-{\sim} \ar[dr]^-{\sim} & \\
		R{f_1}_*R{j_1}_*R\Gamma_Zj_1^!f_1^!  \ar[d] & R{f_2}_*Rg_*R{j_1}_*R\Gamma_Zj_1^!g^!f_2^! \ar[l]_-{\sim} \ar[r]^-{\sim} \ar[d] & R{f_2}_*{j_2}_* R\Gamma_Z j_2^!f_2^! \ar[d] \\
		R{f_1}_*R\Gamma_Zf_1^! \ar[d]	& R{f_2}_*Rg_*R\Gamma_Zg^!f_2^! \ar[l]_-{\sim} \ar[d] & R{f_2}_*R\Gamma_Zf_2^! \ar[d] \\
		R{f_1}_*f_1^! \ar[dr]_-{\tr_{f_1}} & R{f_2}_*Rg_*g^!f_2^! \ar[l]_-{\sim} \ar[r]^-{{Rf_2}_* \tr_g f_2^!} \ar[d]^{\tr_{f_2 \circ g}} & R{f_2}_*Rf_2^! \ar[dl]^-{\tr_{f_2}} \\
		& \id. & \\
	}
\]	
Here the six vertical arrows in the middle are the natural maps occurring in the definition of $\tr_f$. The only part of which the commutativity is not obvious is the bigger rectangle on the right hand side, which follows from \autoref{opentrace} after canceling $R{f_2}_*$ and $f_2^!$ from the edges of the terms.    
\end{proof}
Note that the independence of $tr_f$ of the chosen compactification implies that $\tr_f$ equals the classical trace map whenever $f$ is proper.
\begin{proposition} \label{traceres}
The map $\tr_f$ is compatible with residually stable base change: For a residually stable morphism $g\colon S \to Y$, let $f'$ and $g'$ be the projections of $S \times_Y X$. Furthermore, let $Z_S$ and $Z_S'$ be the preimages of $Z$ and $Z'$ in $S$ and $S \times_Y X$. Then the diagram
\[	
	\xymatrix@C55pt{
		g^*Rf_*R\Gamma_{Z'}f^! \ar[d]_-{\bc}^-{\sim} \ar[r]^-{g^* \tr_f} & g^* \\
		Rf'_*g'^*R\Gamma_{Z'}f^! \ar[d]^-{\sim} & \\
		Rf'_*R\Gamma_{Z_S'}g'^*f^! \ar[r]^-{Rf'_*R\Gamma_{Z_S'}\beta} & Rf'_*R\Gamma_{Z_S'}f'^!g^* \ar[uu]_-{\tr_{f'}g^*} 
	}
\]
commutes. Here $\beta$ is the isomorphism of \autoref{flatcommute}.  
\end{proposition}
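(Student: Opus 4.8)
The plan is to reduce the statement to a single compactification and then verify the compatibility with base change separately for each of the four morphisms out of which $\tr_f$ is built. Fix a compactification $X \xrightarrow{j} \LX \xrightarrow{\overline{f}} Y$ of $f$ as in \autoref{tracedef}, and let $\overline{f}_S\colon S\times_Y\LX \to S$ and $g''\colon S\times_Y\LX \to \LX$ be the two projections. Since open immersions and proper morphisms are stable under base change, the induced maps $S\times_Y X \xrightarrow{j_S} S\times_Y\LX \xrightarrow{\overline{f}_S} S$ form a compactification of the projection $f'\colon S\times_Y X \to S$; as the trace $\tr_{f'}$ does not depend on the chosen compactification, we compute it from this one. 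Likewise we take $\beta\colon g'^*f^! \overset{\sim}{\longrightarrow} f'^!g^*$ to be the isomorphism obtained by pasting the flat base change isomorphism of \autoref{flatcommute} for the proper morphism $\overline{f}$ with the trivial one for the open immersion $j$, under the identifications $f^!\cong j^!\overline{f}^!$ and $f'^!\cong j_S^!\overline{f}_S^!$.

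By \autoref{tracedef}, $\tr_f$ factors as the composition of (i) the canonical isomorphism $Rf_*R\Gamma_{Z'}f^! \cong R\overline{f}_*Rj_*R\Gamma_{Z'}j^*\overline{f}^!$; (ii) the map $R\overline{f}_*\epsilon^{-1}\overline{f}^!$, where $\epsilon$ is the isomorphism of \autoref{Gammajung} (applicable since $j\circ i'$ is a closed immersion by \autoref{ExerciseHartshorne}); (iii) the map induced by the natural transformation $R\Gamma_{Z'}\to\id$; and (iv) the classical Grothendieck--Serre trace $\tr_{\overline{f}}$ of the proper morphism $\overline{f}$. Carrying out the analogous factorization of $\tr_{f'}$ and inserting the base change isomorphisms $\bc$ (for $Rf_*$ along $g$) and $\beta$ on the two sides produces a ladder whose outer boundary is the diagram of the proposition and whose rungs are four rectangles, one for each of (i)--(iv). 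It therefore suffices to check that each of these four rectangles commutes.

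Rectangles (i), (ii) and (iii) are formal. Rectangle (iii) commutes by naturality of $R\Gamma_{Z'}\to\id$ together with the compatibility of local cohomology with flat pullback (\autoref{Gammacomm}(b), \autoref{underGammaPullback}) applied to the flat morphism $g'$. Rectangle (ii) reduces to the compatibility of $\epsilon$ with base change: writing $\epsilon$ as the composite of the natural map $R\Gamma_{Z'}\to Rj_*j^*R\Gamma_{Z'}$ with the isomorphism $Rj_*j^*R\Gamma_{Z'}\cong Rj_*R\Gamma_{Z'}j^*$, and using that each of $Rj_*$, $j^*$ and $R\Gamma$ commutes with base change via the standard flat base change isomorphisms and \autoref{Gammacomm}, this is a routine diagram chase. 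Rectangle (i) unwinds to the compatibility of the identifications $f^!\cong j^!\overline{f}^!$, $Rf_*\cong R\overline{f}_*Rj_*$ and $R\Gamma_{Z'}j^*\cong j^*R\Gamma_{Z'}$ with base change, which again follows from the usual base change isomorphisms together with \autoref{flatcommute} and \autoref{Gammacomm}; this is precisely the kind of bookkeeping already carried out in the proof of \autoref{opentrace}.

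The only substantive point is rectangle (iv): it asserts that the classical trace $\tr_{\overline{f}_S}\colon R\overline{f}_{S*}\overline{f}_S^!\to\id$ of the proper morphism $\overline{f}_S$ is obtained from $\tr_{\overline{f}}$ by base change along $g$. For flat $g$ this is exactly the compatibility ``TRA 4'' of \autoref{proptrace}(b) applied to the proper morphism $\overline{f}$; the residual stability of $g$ is what makes the underlying residual-complex arguments go through, namely that $g^\Delta$ of a residual complex is again a residual complex with the pulled-back codimension function, so that the identification of $f^!$ (and of $f'^!$) and the base change compatibility of $\tr_{\overline{f}}$ hold on the nose as identities of morphisms of complexes (\cite[VI.5, VII.3]{HartshorneRD}, see also \cite[Corollary 4.4.3]{LipmanGrothDual}). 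Since for a residual complex $E^{\bullet}$ both $\epsilon$ and the transformation $R\Gamma_{Z'}\to\id$ are morphisms of complexes of quasi-coherent injectives ($\Gamma_{Z'}$ preserving injectivity), the passage between the residual-complex description of the trace and its derived-category description is harmless, and combining the four rectangles proves the commutativity of the diagram. I expect the main obstacle to be not conceptual but the careful matching, in rectangles (i)--(iii), of the composites of base change isomorphisms along the four sides of the proposition's diagram with those implicit in \autoref{tracedef}.
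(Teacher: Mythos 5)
Your proposal is correct and follows essentially the same route as the paper: fix a compactification, observe that its base change compactifies $f'$, split the trace into its open-immersion part and the classical proper trace, handle the open-immersion rectangles by formal adjunction/base-change compatibilities (the paper does this by noting that the units of adjunction for $u$ and $u'$ are intertwined by $\bc$ and become invertible after applying $R\Gamma_Z$), and invoke TRA~4 for the proper rectangle. The only cosmetic difference is that the paper packages your rectangles (ii) and (iii) as a single statement about the trace of an open immersion (via \autoref{opentrace}) before assembling the ladder.
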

\begin{proof}
First we treat the case of an open immersion $u\colon U \to Y$. Let $h\colon S \to Y$ be a residually stable morphism and let $u'$ and $h'$ denote the projections of $S \times_Y U$. Again, we let $\ad_u$, $\ad_{u'}$, $\ad_{h'}$ and $\ad_{h \circ u'}$ denote the units of adjunction. The natural isomorphisms $Ru_*Rh'_* \cong Rh_*Ru'_*$ and $h'^*u^* \cong u'^*h^*$ are compatible with the adjunction of $(u \circ h')^*$ and $R(u \circ h')_*$, i.e.\ the diagram
\[
	\xymatrix{
		\id \ar[r]^-{\ad_{h \circ u'}} \ar[d]_-{\ad_u} & Rh_*Ru'_*u'^*h^* \ar[r]^-{\sim} & Rh_*Ru'_*h'^*u^* \ar@{=}[d] \\
		Ru_*u^* \ar[r]^-{\ad_{h'}} & Ru_*Rh'_*h'^*u^* \ar[r]^-{\sim} & Rh_*Ru'_*h'^*u^* 
	}
\]
of natural maps commutes. Passing to the adjoint maps we see that the square
\[
	\xymatrix{
	h^* \ar[r]^-{\ad_{u'}} \ar[d]_-{\ad_u} & Ru'_*u'^*h^* \\
	h^*Ru_*u^* \ar[r]^-{\bc} & Ru'_*h'^*u^* \ar[u]_-{\sim}
	}
\]	
is commutative. Applying the derived local cohomology functor and taking the inverse of the now invertible units of adjunction (\autoref{Gammajung}), we obtain the commutative diagram
\[
	\xymatrix{
		h^*Ru_*R\Gamma_{Z'}u^* \ar[r]^-{h^* \tr_u} \ar[d]_-{\bc} \ar@{.>}[dr]^-{d}& h^* \\
		Ru'_*h'^*R\Gamma_{Z'}u^* \ar[r]^-{\sim} & Ru'_*R\Gamma_{Z_U'}u'^*h^*, \ar[u]_-{\tr_{u'} h^*} 
	}
\]
where $d$ denote the composition $h^*Ru_*R\Gamma_{Z'}u^* \overset{\bc}{\longrightarrow} Ru'_*h'^*R\Gamma_{Z'}u^* \overset{\sim}{\longrightarrow} Ru'_*R\Gamma_{Z_U'}u'^*h^*$.

Now we choose a compactification $X \xrightarrow{j} \LX \xrightarrow{\overline{f}} Y$ of $f$. Then $S \times_Y X \xrightarrow{j'} S \times_Y \LX \xrightarrow{\overline{f'}} S$ is a compactification of $f'$ where $j' := \id \times j$ and $\overline{f'}$ is the projection. Let $\overline{g'}\colon S \times_Y \LX \to \LX$ denote the projection onto $\LX$. The three squares in the commutative diagram
\[
	\xymatrix{
		& S \times_Y \LX \ar[rr]^-{\overline{g'}} \ar@/^4mm/[dddl]^{\overline{f'}} & & \LX \ar@/^4mm/[dddl]^{\overline{f}} \\
		S \times_Y X \ar[ur]^{j'} \ar[rr]^/5mm/{g'} \ar[dd]_{f'} & & X \ar[ur]^j \ar[dd]_f & \\
		& & & \\
		S \ar[rr]^-g & & Y &
	}
\]	
are cartesian. It suffices to show the commutativity of 
\[
	\xymatrix{
		g^*\overline{f}_*j_*\Gamma_{Z'}j^*\overline{f}^! \ar[r]^-{\bc} \ar[d]_{\sim}^{\tr_j} & \overline{f'}_*\overline{g'}^*j_*\Gamma_{Z'}j^*\overline{f}^! \ar[r]^-{d} \ar[d]_{\sim}^{\tr_j} & \overline{f'}_*j'_*\Gamma_{Z_S'}j'^*\overline{g'}^*\overline{f}^! \ar[r]^-{\beta} \ar[d]_{\sim}^{\tr_{j'}} & \overline{f'}_*j'_*\Gamma_{Z_S'}j'^*\overline{f'}^!g^* \ar[d]_{\sim}^{\tr_{j'}} \\
		g^*\overline{f}_*\Gamma_{Z'}\overline{f}^! \ar[r]^-{\bc} \ar[d]^{\tr_{\overline{f}}} & \overline{f'}_*\overline{g'}^*\Gamma_{Z'} \overline{f}^! \ar[r]^-{\sim} & \overline{f'}_*\Gamma_{Z_S'}\overline{g'}^*\overline{f}^! \ar[r]^-{\beta} & \overline{f'}_* \Gamma_{Z_S'} \overline{f'}^! g^* \ar[d]^{\tr_{\overline{f'}}} \\
		g^* \ar[rrr]^-{\id} & & & g^*,
	}
\]
where we left out the $R$ indicating derived functors to streamline the notation. The first and the third upper square commute because of the naturality of $\tr_j$ and $\tr_{j'}$. The commutativity of the upper square in the middle is the case of an open immersion, which we have already seen. Finally, the commutativity of the bottom rectangle is the case of a proper morphism (\autoref{proptrace}). 
\end{proof}
\begin{proposition} \label{tracecomp}
Let $f\colon X \to Y$ and $g\colon Y \to S$ be separated and finite type morphisms of schemes. Assume that $i\colon Z \to S$, $i'\colon Z' \to Y$ and $i''\colon Z'' \to X$ are closed immersions with proper morphisms $f'\colon Z'' \to Z'$ and $g'\colon Z' \to Z$ making the diagram
\[
	\xymatrix{
		Z'' \ar[r]^-{i''} \ar[d]^-{f'} & X \ar[d]^-f \\
		Z' \ar[r]^-{i'} \ar[d]^-{g'} & Y \ar[d]^-g \\
		Z \ar[r]^-i & S
	}
\]
commutative. Then there is a commutative diagram:  
\[
	\xymatrix@C40pt@R30pt{
		R(g \circ f)_*R\Gamma_{Z''}(g \circ f)^! \ar[d]^{\sim} \ar[drr]^-{\tr_{g \circ f}} & & \\ 
		Rg_*Rf_*R\Gamma_{Z''}f^!g^! \ar[r]_-{Rg_* \tr_f g^!} & Rg_*R\Gamma_{Z'}g^! \ar[r]_-{\tr_g} & \id.
	}
\]	
\end{proposition}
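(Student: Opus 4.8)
The plan is to factor the outer morphism, by Nagata's compactification theorem, as $g=\overline{g}\circ k$ with $k$ an open immersion and $\overline{g}$ proper, and thereby reduce \autoref{tracecomp} to two special cases: (A) $g$ proper, and (B) $g$ an open immersion. Once (A) and (B) are available, the general case is a short formal manipulation: applying (A) to $\overline{g}\circ(k\circ f)$ and then (B) to $k\circ f$ expresses $\tr_{g\circ f}$ as $\tr_{\overline{g}}\circ R\overline{g}_*\bigl((\tr_k\circ Rk_*(\tr_f\,k^!))\,\overline{g}^!\bigr)$, while applying (A) to $\overline{g}\circ k$ gives $\tr_g=\tr_{\overline{g}}\circ R\overline{g}_*(\tr_k\,\overline{g}^!)$; substituting the latter, together with $Rg_*\cong R\overline{g}_*Rk_*$ and $g^!\cong k^!\overline{g}^!$, into $\tr_g\circ Rg_*(\tr_f\,g^!)$ produces the same expression. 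At each step one checks, using \autoref{ExerciseHartshorne} and the properness of $f'$ and $g'$, that the intermediate morphisms carry closed subsets defining the relevant generalized traces; here one uses that $f\circ i''$, $g\circ i'$ and $(g\circ f)\circ i''$ are proper.

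For case (A), compactify the inner morphism $f$ as $X\into[j]\overline{X}\to[\overline{f}]Y$ with $\overline{f}$ proper. Since $g$ is proper, $g\circ\overline{f}\colon\overline{X}\to S$ is proper, so $g\circ f=(g\circ\overline{f})\circ j$ is a compactification of $g\circ f$. Writing $\tr_{g\circ f}$ out from \autoref{tracedef} for this compactification, its last (classical) trace is $\tr_{g\circ\overline{f}}$, which by TRA~1 for proper morphisms (\autoref{proptrace}(a)) factors as $\tr_g\circ Rg_*(\tr_{\overline{f}}\,g^!)$; substituting this and using $Rf_*\cong R\overline{f}_*Rj_*$ and $f^!\cong j^*\overline{f}^!$, one recognizes the remaining part of the composite as exactly $\tr_f$ (defined via the compactification $\overline{X}\to Y$) evaluated at $g^!(-)$, whence $\tr_{g\circ f}=\tr_g\circ Rg_*(\tr_f\,g^!)$. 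The only non-formal input here is \autoref{proptrace}(a).

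Case (B) is the crux. Here $\tr_g$ is, by \autoref{opentrace}, the canonical morphism $R\Gamma_{Z'}\to\id$ transported along the isomorphism $R\Gamma_{Z'}\cong Rg_*R\Gamma_{Z'}g^*$ of \autoref{Gammajung}, which applies since $g\circ i'=i\circ g'$ is proper and hence $g(Z')$ is closed in $S$. Compactify $f$ as $X\into[j]\overline{X}\to[\overline{f}]Y$ and then compactify $g\circ\overline{f}\colon\overline{X}\to S$ as $\overline{X}\into[j']W\to[h]S$ with $h$ proper, so that $g\circ f$ is compactified by $X\into[j'j]W\to[h]S$; writing $\tr_{g\circ f}$ out for this datum, one must transform it into $\tr_g\circ Rg_*(\tr_f\,g^*)$. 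The tools are: the compatibility of the isomorphism $\epsilon$ of \autoref{Gammajung} with the composite open immersion $j'j$; the base-change isomorphisms of \autoref{Gammacomm} used to move the local-cohomology functors past $Rj_*$, $Rj'_*$, $Rh_*$ and the pullbacks; and the flat-base-change compatibility of the classical trace $\tr_h$ along the open immersion $Y\hookrightarrow S$, obtained by restricting $h\colon W\to S$ to the proper morphism $h^{-1}(Y)\to Y$ (through which $\overline{f}$ factors) and invoking \autoref{flatcommute} and \autoref{proptrace}(b). Assembled into a single diagram, every constituent square is a naturality square or an instance of \autoref{Gammacomm}, \autoref{Gammajung}, \autoref{opentrace} or flat base change, so the whole diagram commutes.

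The main obstacle is case (B): it is a long diagram chase, and the delicate point is keeping track of the various copies of $R\Gamma_{Z}$ as they are pushed through $Rf_*$, $f^!$, the isomorphisms $\epsilon$, and the units and counits of adjunction, exactly in the style of the proofs of \autoref{opentrace}, \autoref{traceres} and the well-definedness of $\tr_f$. No genuinely new ingredient beyond \autoref{proptrace}, \autoref{opentrace}, \autoref{Gammajung}, \autoref{Gammacomm} and \autoref{flatcommute} is required.
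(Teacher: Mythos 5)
Your argument is correct, but it organizes the proof differently from the paper. You factor the \emph{outer} morphism as $g=\overline{g}\circ k$ and reduce to the two special cases ($g$ proper, $g$ an open immersion), whereas the paper never makes this case split: it compactifies $g$ as $Y\xrightarrow{j_Y}\overline{Y}\xrightarrow{\overline{g}}S$, then compactifies $j_Y\circ f$ as $X\xrightarrow{j_X}\LX\xrightarrow{f'}\overline{Y}$, and uses the fiber product $Y\times_{\overline{Y}}\LX$ to factor $f$ itself as an open immersion $h$ followed by the proper projection $\pr_Y$; everything is then assembled into a single commutative diagram whose ingredients are TRA~1 for $f'$ and $\overline{g}$ and the flat base change of $\tr_{f'}$ along $j_Y$ (\autoref{flatcommute}, \autoref{proptrace}). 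The toolbox is identical — \autoref{proptrace}, \autoref{opentrace}, \autoref{Gammajung}, \autoref{Gammacomm}, \autoref{flatcommute} and the well-definedness of $\tr$ — so neither route is more powerful, but the trade-offs differ. Your two-case reduction is more modular: case (A) is an almost immediate consequence of TRA~1 and the definition in \autoref{tracedef}, and the instance ``(A) applied to $\overline{g}\circ k$'' is literally the definition of $\tr_g$ combined with \autoref{opentrace}, so all the real work is isolated in case (B). The price is an extra layer of compactification in case (B) (compactifying $g\circ\overline{f}$ after $f$) and the gluing step, where you must chain the \emph{reduced} traces $\widetilde{\tr}$ (the factorizations through $R\Gamma_{Z'}$, resp.\ $R\Gamma_{k(Z')}$) rather than the maps to $\id$; this is harmless because such factorizations are unique by \autoref{Gammaadj}, but it deserves an explicit sentence. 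The paper's fiber-product construction avoids both the case split and the $\widetilde{\tr}$ bookkeeping at the cost of a less transparent single diagram. Your case (B) is, as you say, the crux, and your list of inputs for it is the right one; carried out in full it would closely resemble the paper's proofs of \autoref{opentrace} and of the well-definedness of $\tr_f$.
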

\begin{proof}
Choose a compactification $Y \xrightarrow{j_Y} \overline{Y} \xrightarrow{\overline{g}} S$ of $g$, then choose a compactification $X \xrightarrow{j_X} \LX \xrightarrow{f'} \overline{Y}$ of the composition $j_Y \circ f$. The morphisms $f$ and $j_X$ induce a morphism $h\colon X \to Y \times_{\overline{Y}}\LX$. The projection $\pr_Y\colon Y \times_{\overline{Y}} \LX \to Y$ is proper because it is the base change of the proper morphism $f'$. The projection $\pr_{\LX}\colon X \times_{\overline{Y}}\LX \to \LX$ is a base change of $j_Y$ and hence an open immersion. We obtain the following commutative diagram:
\[
	\xymatrix{
		X \ar[d]_-f \ar[r]^-{h} & Y \times_{\overline Y} \LX \ar[dl]^-{\pr_{Y}} \ar[r]^-{\pr_{\LX}} & \LX \ar[dl]^-{f'} \\
		Y \ar[d]_-g \ar[r]_-{j_Y} & \overline{Y} \ar[dl]^-{\overline{g}} & \\
		S.
	}
\]
Because $\pr_{\LX} \circ h$ equals the open immersion $j_X$, it follows that $h$ is an open immersion too. Applying the compatibility of the classical trace with compositions to the proper morphisms $f'$ and $\overline{g}$ and using \autoref{traceres} for $f'$ and the open immersion $j_Y$, we see that the following diagram commutes: 
\[
	\xymatrix@C3pt{
	(g \circ f)_*\Gamma_{Z''}(g \circ f)^! \ar[rr]^-{\sim} \ar[d]^{\sim} & &  (\overline{g} \circ f' \circ \pr_{\LX} \circ h)_* \Gamma_{Z''} (\overline{g} \circ f' \circ \pr_{\LX} \circ h)^! \ar[d]_{\sim} \\
	\overline{g}_*j_{Y*}\pr_{Y*}h_*\Gamma_{Z''}h^*\pr_Y^!j_Y^*\overline{g}^! \ar[rr]^-{\sim} \ar[d]_-{\epsilon_h^{-1}}^{\sim} & & \overline{g}_*f'_*\pr_{\LX *}h_*\Gamma_{Z''}h^*\pr_{\LX}^*f'^!\overline{g}^! \ar[d]_{\sim}^{\epsilon_h^{-1}} \\
	\overline{g}_*j_{Y*}\pr_{Y*}\Gamma_{Z''}\pr_Y^!j_Y^*\overline{g}^! \ar[r]^-{\sim}_-{\psi} \ar[d]^{\tr_{\pr_Y}} & \overline{g}_*j_{Y*}j_Y^*f'_* \Gamma_{Z''}f'^! \overline{g}^! \ar[d]^{\tr_{f'}} \ar[dr]_-{\epsilon_{j_Y}^{-1}}^-{\sim} & \overline{g}_*f'_*\pr_{\LX *}\Gamma_{Z''}\pr_{\LX}^*f'^!\overline{g}^! \ar[l]_-{\sim} \ar[d]_{\sim}^{\epsilon_{\pr_{\LX}}^{-1}} \\
	\overline{g}_*j_{Y*}\Gamma_{Z'}j_Y^*\overline{g}^! \ar[r]^-{\sim} \ar[dr]_-{\epsilon_{j_Y}^{-1}}^-{\sim} & \overline{g}_*j_{Y*}j_Y^*\Gamma_{Z'}\overline{g}^! \ar[d]_{\sim}^{\epsilon_{j_Y}^{-1}} & \overline{g}_*f'_*\Gamma_{Z''}f'^!\overline{g}^! \ar[dl]_-{\tr_{f'}} \ar[ddl]^-{\tr_{(\overline{g} \circ f')}} \\
	& \overline{g}_*\Gamma_{Z'}\overline{g}^! \ar[d]^{\tr_{\overline{g}}} & \\
	& \id. &
	}	
\]
Again we left out the $R$ for derived functors to streamline the notation. Here, for an open immersion $s$, $\epsilon_s$ denotes the isomorphism of \autoref{Gammajung} or simple modifications, such as a canonical isomorphism $R\Gamma_{Z'} \overset{\epsilon_{j_Y}}{\longrightarrow} Rj_{Y*}j^*R\Gamma_{Z'}$, and $\psi$ is obtained from the composition
\[
	\pr_{Y*}\pr_Y^!j_Y^* \xrightarrow{\beta^{-1}} \pr_{Y*}\pr_{\LX}^*f'^! \xrightarrow{\bc^{-1}} j_Y^*f'_*f'^!. 
\]
The morphism $\tr_g \circ (Rg_* \tr_f g^!)$ is obtained by following the vertical and diagonal arrows on the left, while $\tr_{g \circ f}$ is the composition of the outer right morphisms. 
\end{proof}
\begin{proof}[Proof of \autoref{qcohadjunction}]
Consider the commutative diagram
\[
	\xymatrix{
		Rf_*\RSHom_{\CO_X}^{\bullet}(\CF^{\bullet},R\Gamma_{Z'}f^!\CG^{\bullet}) \ar[d] \ar[dr] & \\
		R\overline{f}_*\RSHom_{\CO_{\LX}}^{\bullet}(Rj_*\CF^{\bullet},Rj_*R\Gamma_{Z'}j^*\overline{f}^!\CG^{\bullet}) \ar[r] \ar[d]^{\epsilon^{-1}} & \RSHom_{\CO_Y}^{\bullet}(Rf_*\CF^{\bullet},R\overline{f}_*Rj_*R\Gamma_{Z'}j^*\overline{f}^!\CG^{\bullet}) \ar[d] \\
		R\overline{f}_*\RSHom_{\CO_{\LX}}^{\bullet}(Rj_*\CF^{\bullet},R\Gamma_{Z'}\overline{f}^!\CG^{\bullet}) \ar[r] \ar[d] & \RSHom_{\CO_Y}^{\bullet}(Rf_*\CF^{\bullet},R\overline{f}_*R\Gamma_{Z'}\overline{f}^!\CG^{\bullet}) \ar[d] \\
		R\overline{f}_*\RSHom_{\CO_{\LX}}^{\bullet}(Rj_*\CF^{\bullet},\overline{f}^!\CG^{\bullet}) \ar[dr] \ar[r] & \RSHom_{\CO_Y}^{\bullet}(Rf_*\CF^{\bullet},R\overline{f}_*\overline{f}^!\CG^{\bullet}) \ar[d] \\
		& \RSHom_{\CO_Y}^{\bullet}(Rf_*\CF^{\bullet},\CG^{\bullet})
	}
\]
of natural morphisms. The vertical arrows on the left are isomorphisms by \autoref{opennattrans}, \autoref{Gammajung} and \autoref{Gammaadj}. The diagonal morphism to the lower right corner is the well-known isomorphism from the adjunction for the proper morphism $\overline{f}$. Hence the composition of the first diagonal morphism and the vertical morphisms on the right is an isomorphism.  

Finally, for the adjunction of $Rf_*$ and $R\Gamma_{Z'}f^!$, we apply the degree zero cohomology of the right derived functor of global sections $H^0R\Gamma$ to both sides of the just proven isomorphism 
\[
	\RSHom_{\CO_Y}^{\bullet}(Rf_*\CF^{\bullet},\CG^{\bullet}) \overset{\sim}{\longrightarrow} Rf_*\RSHom_{\CO_X}^{\bullet}(\CF^{\bullet},R\Gamma_{Z'}f^!\CG^{\bullet}).
\]
Then we use the natural isomorphisms 
\begin{align*}
	H^0R\Gamma\RSHom_{\CO_Y}^{\bullet}(Rf_*\CF^{\bullet},\CG^{\bullet}) &\overset{\sim}{\longrightarrow} H^0\RHom_{\CO_Y}^{\bullet}(Rf_*\CF^{\bullet},\CG^{\bullet}) \\
	&\overset{\sim}{\longrightarrow} \Hom_{D(\CO_Y)}(Rf_*\CF^{\bullet},\CG^{\bullet})
\end{align*}
of Proposition II.5.3 and Theorem I.6.4 of \cite{HartshorneRD} and similarly for 
\[
	H^0R\Gamma Rf_*\RSHom_{\CO_X}^{\bullet}(\CF^{\bullet},R\Gamma_{Z'}f^!\CG^{\bullet}), 
\]
where we additionally use the isomorphism $R\Gamma(X,\usc) \overset{\sim}{\longrightarrow} R\Gamma(Y,Rf_*(\usc))$ of Proposition II.5.2 of ibid.
\end{proof}
We conclude this section with a statement which, under certain hypothesis, allows us to recover the trace $\tr_f$ by its application to the structure sheaf $\CO_Y$. 
\begin{definition} \label{EssentiallyPerfect}
For a separated morphism $f\colon X \to Y$ of finite type, a compactification $X \xrightarrow{j} \LX \xrightarrow{\overline{f}} Y$ and $\CF^{\bullet} \in D_{\qc}^+(Y)$, let 
\[
	\chi_{\CF^{\bullet}}^f\colon f^!\CO_Y \derotimes_{\CO_X} Lf^*\CF^{\bullet} \to f^!\CF^{\bullet}
\]
be the morphism $j^*\phi$, where $\phi\colon \overline{f}^!\CO_Y \derotimes_{\CO_X} L\overline{f}^*\CF^{\bullet} \to \overline{f}^!\CF^{\bullet}$ is the adjoint of the composition
\[
	R\overline{f}_*(\overline{f}^!\CO_Y \derotimes_{\CO_X} L\overline{f}^*\CF^{\bullet}) \xrightarrow{\rho} R\overline{f}_*(\overline{f}^!\CO_Y) \derotimes_{\CO_Y} \CF^{\bullet} \xrightarrow{\tr_{\overline{f}}} \CO_Y \otimes_{\CO_Y} \CF^{\bullet}.
\]
Here $\rho$ denotes the isomorphism of the projection formula. The morphism $\chi_{\CF^{\bullet}}^f$ is independent of the choice of the compactification \cite[Proposition 5.8]{Na.CompEFT}. If $\chi^f$ is an isomorphism of functors, then the morphism $f$ is called \emph{essentially perfect}. 
\end{definition}
Theorem 5.9 of \cite{Na.CompEFT} Nayak gives various characterizations of essentially perfect morphisms. For example, smooth morphisms between smooth schemes are essentially perfect.
\begin{proposition} \label{Nayak}
Let $f$ be an essentially perfect map. Under the assumptions and with the notation of \autoref{qcohadjunction}, there is a commutative diagram
\[
	\xymatrix{
		Rf_*R\Gamma_{Z'}(f^!\CO_Y \derotimes_{\CO_X} Lf^*\CF^{\bullet}) \ar[r]^-{\chi_{\CF^{\bullet}}^f} \ar[d]^-{\rho} & Rf_*R\Gamma_{Z'}f^!\CF^{\bullet} \ar[d]^-{\tr_f} \\
		Rf_*R\Gamma_{Z'}f^!\CO_Y \derotimes_{\CO_Y} \CF^{\bullet} \ar[r]^-{\tr_f \otimes \id} & \CF^{\bullet}
	}
\]
for every $\CF^{\bullet} \in D_{\qc}^+(Y)$. Here $\rho$ denotes the isomorphism of the projection formula and \autoref{RGammaTensor}.
\end{proposition}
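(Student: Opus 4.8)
The plan is to reduce the statement to the analogous compatibility for the classical Grothendieck--Serre trace of the proper morphism $\overline{f}$ in a chosen compactification $X \xrightarrow{j} \LX \xrightarrow{\overline{f}} Y$, and then transport it through the local cohomology functors using the definition of $\tr_f$ from \autoref{tracedef}. First I would recall that for an essentially perfect map $f$, the morphism $\chi^f_{\CF^{\bullet}}$ is by \autoref{EssentiallyPerfect} the pullback $j^*\phi$ of the adjoint of $\tr_{\overline{f}} \circ \rho$ on $\LX$, and that for the proper morphism $\overline{f}$ the corresponding square (with $R\Gamma_{Z'}$ replaced by the identity and $\CF^{\bullet}$ pulled back to $\LX$) commutes essentially by the construction of $\phi$ together with the projection formula $\rho$. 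So the content of the proposition is that the passage $R\overline{f}_*\overline{f}^! \rightsquigarrow Rf_*R\Gamma_{Z'}f^!$ built into $\tr_{f}$ is compatible with tensoring by $\CF^{\bullet}$.

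Next I would assemble the big diagram. On the left, $Rf_*R\Gamma_{Z'}(f^!\CO_Y \derotimes Lf^*\CF^{\bullet})$ is rewritten, using \autoref{Gammajung} and \autoref{RGammaTensor}, as $R\overline{f}_*Rj_*R\Gamma_{Z'}j^*(\overline{f}^!\CO_Y \derotimes L\overline{f}^*\CF^{\bullet})$, and I would move $R\Gamma_{Z'}$ past $j^*$ and past the derived tensor by the natural isomorphisms already established in \autoref{Gammacomm}(b), \autoref{RGammaTensor}, and \autoref{Gammajung}. This lets me split the left column of the proposition's square into the composite of (i) the canonical isomorphisms identifying $Rf_*R\Gamma_{Z'}(-)$ with $R\overline{f}_*R\Gamma_{Z'}(-)$ on the relevant objects, (ii) the map $R\Gamma_{Z'}\to\id$, and (iii) $\tr_{\overline{f}}$. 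The key point is that steps (i) and (ii) are the same whether one tensors before or after, because $\rho$ (the projection formula isomorphism composed with the $R\Gamma_{Z'}$-tensor isomorphism of \autoref{RGammaTensor}) is natural and commutes with $j^*$, with $R\overline{f}_*$, and with the transformation $R\Gamma_{Z'}\to\id$. Then the remaining inner square — the one involving only $\tr_{\overline{f}}$, $\chi$, and $\rho$ on $\LX$ — is exactly the proper-morphism statement, which holds by the very definition of $\chi$ via the adjoint of $\tr_{\overline{f}}\circ\rho$.

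The main obstacle I anticipate is bookkeeping: one must check that the several natural isomorphisms (projection formula, $R\Gamma_{Z'}$ commuting with $\derotimes$, with $j^*$, and with $Rj_*$) are mutually compatible, i.e.\ that the relevant hexagons commute, so that the reduction in the previous paragraph is legitimate. I would handle this by expressing every arrow as an adjunction unit/counit or a base-change morphism $\bc$ and invoking the coherence already used in \autoref{twoprojection}, \autoref{opentrace}, and \autoref{tracecomp} — in particular the compatibility of $\rho$ with the unit $\id\to Rj_*j^*$, which underlies \autoref{Gammajung}. A secondary point is independence of the compactification: since $\chi^f_{\CF^{\bullet}}$ is independent of the choice by \cite[Proposition 5.8]{Na.CompEFT} and $\tr_f$ is independent by the lemma following \autoref{tracedef}, the whole square is well-defined, so it suffices to verify commutativity for one compactification. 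Modulo this diagram-chase, the proof is essentially the proper case of \cite{Na.CompEFT} propagated through $R\Gamma_{Z'}$ and $Rj_*$, with no genuinely new input required.
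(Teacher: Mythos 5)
Your proposal matches the paper's proof: the paper likewise unfolds $\tr_f$ through a compactification $X \xrightarrow{j} \LX \xrightarrow{\overline{f}} Y$, proves the upper part of the resulting diagram by checking that the projection formula is compatible with the unit $\id \to Rj_*j^*$ (your "key point"), and disposes of the lower rectangle by the very construction of $\chi_{\CF^{\bullet}}^{\overline{f}}$ as the adjoint of $\tr_{\overline{f}} \circ \rho$. No substantive difference in route or in the lemmas invoked.
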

\begin{proof}
We have to verify that the diagram
\begin{align} \label{NayakDiagram}
	\xymatrix{
		Rf_*R\Gamma_{Z'}(f^!\CO_Y \derotimes_{\CO_X} Lf^*\CF^{\bullet}) \ar[r]^-{\chi_{\CF^{\bullet}}^f} \ar[d]^-{\sim} & Rf_*R\Gamma_{Z'}f^!\CF^{\bullet} \ar[d]_-{\sim} \\
		R\overline{f}_*Rj_*R\Gamma_{Z'}(j^*\overline{f}^!\CO_Y \derotimes j^*L\overline{f}^*\CF^{\bullet}) \ar[d]^-{\rho} & R\overline{f}_*Rj_*R\Gamma_{Z'}j^*\overline{f}^!\CF^{\bullet} \ar[dd]_-{\sim}^{\epsilon^{-1}} \\
		R\overline{f}_*(Rj_*R\Gamma_{Z'}j^*\overline{f}^!\CO_Y \derotimes L\overline{f}^*\CF^{\bullet}) \ar[d]_-{\epsilon^{-1}}^-{\sim} & \\
		R\overline{f}_*(R\Gamma_{Z'}\overline{f}^!\CO_Y \derotimes L\overline{f}^*\CF^{\bullet}) \ar[d]^-{\rho} \ar[r]^-{\phi} & R\overline{f}_*R\Gamma_{Z'}\overline{f}^!\CF^{\bullet} \ar[dd]^-{\tr_{\overline{f}}} \\
		Rf_*R\Gamma_{Z'}f^!\CO_Y \derotimes_{\CO_Y} \CF^{\bullet} \ar[d]^{\tr_f \otimes \id} & \\
		\CO_Y \otimes \CF^{\bullet} \ar[r]^-{\sim} & \CF^{\bullet}
	}
\end{align}
commutes. The upper rectangle commutes because the projection formula is compatible with the unit $\id \to Rj_*j^*$ of adjunction, which we denote by $\ad_j$. More precisely, it follows from the commutativity of the diagram
\[
	\xymatrix@C40pt{
	\overline{f}^!\CO_Y \derotimes L\overline{f}^* \CF^{\bullet} \ar[r] \ar[d] & Rj_*j^*(\overline{f}^!\CO_Y \derotimes L\overline{f}^* \CF^{\bullet}) \ar[d] \\
	(Rj_*j^*\overline{f}^!\CO_Y) \derotimes L\overline{f}^* \CF^{\bullet} \ar[r] \ar[d]^-{\proj} & Rj_*j^*((Rj_*j^*\overline{f}^!\CO_Y) \derotimes L\overline{f}^* \CF^{\bullet}) \ar[d]_-{\sim} \\
	Rj_*(j^*\overline{f}^!\CO_Y \derotimes j^*L\overline{f}^* \CF^{\bullet}) & Rj_*(j^*(Rj_*j^*\overline{f}^!\CO_Y) \derotimes j^*L\overline{f}^* \CF^{\bullet}), \ar[l]_-{\widetilde{\ad}_j}
	}
\]
where the maps of the upper square stem from $\ad_j$ -- this square commutes by the naturality of the unit of adjunction --, where $\proj$ is the isomorphism from the projection formula and where the lower horizontal arrow is obtained from the counit of adjunction $\widetilde{\ad}_j\colon j^*Rj_* \to \id$. The lower rectangle commutes by construction of $\proj$. The composition
\[
	j^* \xrightarrow{j^* \ad_j} j^*Rj_*j^* \xrightarrow{\widetilde{\ad}_j j^*} j^*
\]
is the identity. Therefore, the composition of the vertical arrows on the right hand side and the lower horizontal arrow equals $Rj_*$ applied to the natural isomorphism 
\[
	j^*(\overline{f}^!\CO_Y \derotimes L\overline{f}^* \CF^{\bullet}) \to j^*\overline{f}^!\CO_Y \derotimes j^*L\overline{f}^* \CF^{\bullet}.
\]
The bottom rectangle of the diagram \autoref{NayakDiagram} commutes by construction of $\chi_{\CF^{\bullet}}^{\overline{f}}$.  
\end{proof}

\section{Locally finitely generated unit modules on singular schemes}

For a proper map $f\colon X \to Y$ of smooth $k$-schemes, Emerton and Kisin proved that there is a natural isomorphism
\[
	\RSHom_{\CO_{F,Y}}^{\bullet}(f_+\CM^{\bullet},\CN^{\bullet}) \overset{\sim}{\longrightarrow} Rf_*\RSHom_{\CO_{F,X}}^{\bullet}(\CM^{\bullet},f^!\CN^{\bullet})
\]
for $\CM^{\bullet} \in D_{\qc}^b(\CO_{F,X})$ and $\CN^{\bullet} \in D_{\qc}^b(\CO_{F,Y})$ (\cite[Theorem 4.4.1]{EmKis.Fcrys}) by constructing a trace map acting as the counit of adjunction. We generalize this trace map to separated and finite type morphisms $f\colon X \to Y$ between smooth $k$-schemes sitting in a commutative diagram
\[
	\xymatrix{
		Z' \ar[r]^{i'} \ar[d]^-{f'} & X \ar[d]^-f \\
		Z \ar[r]^i & Y,
	}
\]
where $i$ and $i'$ are closed immersions and $f'$ is proper. This generalized trace map induces an adjunction between $f_+$ and $R\Gamma_{Z'}f^!$ considered as functors between the derived categories $D_{\lfgu}^b(\CO_{F,X})_{Z'}$ and $D_{\lfgu}^b(\CO_{F,Y})_Z$ of complexes whose cohomology sheaves are supported in $Z'$ or $Z$. 

\subsection{Generalization of Emerton-Kisin's adjunction}

\begin{proposition}  
Let $f\colon X \to Y$ be a separated and finite type morphism of smooth $k$-schemes and let $i\colon Z \to Y$ and $i'\colon Z' \to X$ be closed immersions with a proper morphism $f'\colon Z' \to Z$ such that $f \circ i' = i \circ f'$. 
\begin{enumerate}
\item There is a natural morphism
\[
	\tr_{F,f}\colon f_+R\Gamma_{Z'}\CO_{F,X}[d_{X/Y}] \to \CO_{F,Y}
\]
of $(\CO_{F,Y},\CO_{F,Y})$-bimodules which, as a morphism of left $\CO_{F,Y}$-modules, is the trace
\[
	\CO_{F,Y} \otimes_{\CO_Y} Rf_*R\Gamma_{Z'}\omega_{X/Y}[d_{X/Y}] \to \CO_{F,Y}
\]
of \autoref{tracedef}. 
\item For every $\CM^{\bullet} \in D_{\qc}^b(\CO_{F,Y})$, the trace map $\tr_{F,f}$ induces a morphism 
\[
	\tr_{F,f}(\CM^{\bullet})\colon f_+R\Gamma_{Z'}f^!\CM^{\bullet} \to \CM^{\bullet}
\]
in $D_{\qc}^b(\CO_{F,Y})$. 
\end{enumerate}
\end{proposition}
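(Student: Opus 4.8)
The plan is to reproduce the construction of Emerton and Kisin's trace map from the proof of \autoref{EmKisadj} (see \cite[4.4]{EmKis.Fcrys} and \cite[6.3]{BliBoe.CartierEmKis}), with the generalised trace $\tr_f$ of \autoref{tracedef} and its residually-stable base-change behaviour from \autoref{traceres} replacing the classical Grothendieck--Serre trace and its base-change property.

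For part (a) I would first settle the underlying left-module picture. Since $f^!\CO_Y\cong\omega_{X/Y}[d_{X/Y}]$ for a morphism of smooth $k$-schemes and $\CO_{F,Y \leftarrow X}=f^{-1}\CO_{F,Y}\otimes_{f^{-1}\CO_Y}\omega_{X/Y}$, the formula $f_+(\usc)=Rf_*(\CO_{F,Y \leftarrow X}\derotimes_{\CO_{F,X}}\usc)$, together with \autoref{RGammaTensor} and the projection formula (cf.\ \cite[Lemma 3.5.1]{EmKis.Fcrys}, using that $\CO_{F,Y}$ is a direct sum of copies of $\CO_Y$ as a left $\CO_Y$-module), gives a natural isomorphism of left $\CO_{F,Y}$-modules between $f_+R\Gamma_{Z'}\CO_{F,X}[d_{X/Y}]$ and $\CO_{F,Y}\otimes_{\CO_Y}Rf_*R\Gamma_{Z'}\omega_{X/Y}[d_{X/Y}]$. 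I would then \emph{define} $\tr_{F,f}$ to be $\id_{\CO_{F,Y}}\otimes\tr_f(\CO_Y)$ under this identification. By construction this is a morphism of left $\CO_{F,Y}$-modules whose underlying map is the generalised trace of \autoref{tracedef}, so the entire substance of (a) is the claim that $\tr_{F,f}$ is in addition right $\CO_{F,Y}$-linear, i.e.\ a bimodule morphism; right $\CO_Y$-linearity being formal, this reduces to compatibility with right multiplication by $F$.

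The crux, and what I expect to be the main obstacle, is exactly this Frobenius compatibility. The right $\CO_{F,X}$-module structure on $\CO_{F,Y \leftarrow X}$ --- hence the right $F$-action on the source of $\tr_{F,f}$ --- is built, following \cite[1.10, 3.3, A.2]{EmKis.Fcrys}, from the relative Cartier operator $C_{X/Y}$ of the relative Frobenius diagram \autoref{relativeFrobenius}, while the right $F$-action on the target $\CO_{F,Y}$ is the one induced by the Frobenius $\gamma_Y\colon\CO_Y\to F_Y^*\CO_Y$. Now $F_Y\colon Y\to Y$ is residually stable: $Y$ is regular and $F$-finite, so $F_Y$ is flat with Gorenstein fibres. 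Applying \autoref{traceres} with $g=F_Y$ --- so that its base-change data $f'$, $g'$ are precisely $f'\colon X'\to Y$ and $F_Y'\colon X'\to X$ of \autoref{relativeFrobenius}, with $Z_S=Z$ and $Z_S'$ the preimage of $Z'$ in $X'$ --- produces the commutative square expressing compatibility of $\tr_f$ with this base change; pulling it back once more along the relative Frobenius $F_{X/Y}$ (which satisfies $F_Y'\circ F_{X/Y}=F_X$ and $f'\circ F_{X/Y}=f$) and unwinding the definition of $C_{X/Y}$ and of the right $F$-action --- the same bookkeeping as in the proof of \autoref{EmKisadj} --- yields the right $F$-linearity of $\tr_{F,f}$. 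Throughout, the presence of $R\Gamma_{Z'}$ is harmless: $\Gamma_{Z'}$ is a subfunctor of the identity preserving quasi-coherent injectives, and $F_X^*R\Gamma_{Z'}\cong R\Gamma_{Z'}F_X^*$ because $F_X$ is flat and fixes $Z'$ set-theoretically (\autoref{Gammacomm}), so the various identifications commute with the Frobenius pullbacks.

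For part (b), given $\CM^{\bullet}\in D_{\qc}^b(\CO_{F,Y})$ I would write $f^!\CM^{\bullet}=\CO_{F,X \rightarrow Y}\derotimes_{f^{-1}\CO_{F,Y}}f^{-1}\CM^{\bullet}[d_{X/Y}]$ and combine \autoref{RGammaTensor} with an $\CO_{F}$-module projection formula of the type of \cite[Lemma 4.4.7]{EmKis.Fcrys} to obtain a natural isomorphism $f_+R\Gamma_{Z'}f^!\CM^{\bullet}\cong(f_+R\Gamma_{Z'}\CO_{F,X}[d_{X/Y}])\derotimes_{\CO_{F,Y}}\CM^{\bullet}$ in $D_{\qc}(\CO_{F,Y})$. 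Tensoring the bimodule morphism $\tr_{F,f}$ of (a) with $\CM^{\bullet}$ on the right over $\CO_{F,Y}$ and composing with the unit isomorphism $\CO_{F,Y}\derotimes_{\CO_{F,Y}}\CM^{\bullet}\cong\CM^{\bullet}$ then defines $\tr_{F,f}(\CM^{\bullet})$; that it lands in $D_{\qc}^b(\CO_{F,Y})$ is clear, since $f^!$ and $R\Gamma_{Z'}$ preserve $D_{\qc}^b$ and $R\Gamma_{Z'}f^!\CM^{\bullet}$ is supported on $Z'$, over which $f$ is proper (applying $f_+$ factors through $i_*\circ Rf'_*$ of the restriction, using $f\circ i'=i\circ f'$).
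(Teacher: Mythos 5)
Your proposal is correct and follows essentially the same route as the paper: the left-module identification via the projection formula, the reduction of (a) to right $F$-linearity checked through the relative Frobenius diagram using \autoref{traceres} for the residually stable map $F_Y$ together with the factorization $f=f'\circ F_{X/Y}$ (i.e.\ \autoref{tracecomp}), and for (b) the decomposition $f_+R\Gamma_{Z'}f^!\CM^{\bullet}\cong f_+R\Gamma_{Z'}\CO_{F,X}[d_{X/Y}]\derotimes_{\CO_{F,Y}}\CM^{\bullet}$ via \autoref{RGammaTensor} and the projection formula of \cite[Lemma 4.4.7]{EmKis.Fcrys}. The only point the paper makes more explicit is that the Frobenius compatibility is verified at the level of honest complexes by working with the residual complex $\Gamma_{Z'}f^{\Delta}E^{\bullet}$, which your remark that $\Gamma_{Z'}$ preserves injectives already anticipates.
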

\begin{proof}
(a) This is an analogue of \cite[Proposition 4.4.9 (i)]{EmKis.Fcrys}. A careful reading of the proof shows that we can adopt it. Consider the relative Frobenius diagram (diagram \autoref{relativeFrobenius} on page \pageref{relativeFrobenius}):
\[
	\xymatrix{
		X \ar[r]^-{F_{X/Y}} \ar[dr]_f & X' \ar[r]^-{F_Y'} \ar[d]^-{f'} & X \ar[d]^-f \\
		& Y \ar[r]^-{F_Y} & Y.
	}
\]
Since $X$ and $Y$ are assumed to be smooth $k$-schemes, we still have flatness of the Frobenius $F_Y$ and therefore of $F_Y'$ because flatness is stable under base change. Note that $F_{X/Y}$ is finite (\cite[A.2]{EmKis.Fcrys}). First Emerton and Kisin explain how the relative Cartier operator 
\[
	C_{X/Y}\colon F_{X/Y*}\omega_{X/Y} \to F_Y'^*\omega_{X/Y} 
\]
is realized for the residual complex $f^{\Delta}E^{\bullet}$. Here $E^{\bullet}$ denotes the Cousin complex $E^{\bullet}(\CO_X)$. For our result we replace $f^{\Delta}E^{\bullet}$ by the subcomplex $\Gamma_{Z'}f^{\Delta}E^{\bullet}$ of flasque sheaves which computes $R\Gamma_{Z'}\omega_{X/Y}$. We obtain the \emph{relative Cartier operator with support on $Z'$}:
\[
	C_{X/Y}^Z\colon F_{X/Y*}\Gamma_{Z'}f^{\Delta}E^{\bullet} \to F_Y'^*\Gamma_{Z'}f^{\Delta}E^{\bullet}. 
\]   
By Proposition-Definition 1.10.1 of \cite{EmKis.Fcrys}, $f^{-1}\CO_{F,Y} \otimes_{f^{-1}\CO_Y} \Gamma_{Z'}f^{\Delta}E^{\bullet}$ is equipped with a $(f^{-1}\CO_{F,Y},\CO_{F,X})$-bimodule structure or, after restricting scalars via the natural map $f^{-1}\CO_Y[F] \to \CO_X[F]$, with a $(f^{-1}\CO_{F,Y},f^{-1}\CO_{F,Y})$-bimodule structure. Finally this endows $f_*(f^{-1}\CO_{F,Y} \otimes_{f^{-1}\CO_Y} \Gamma_{Z'}f^{\Delta}E^{\bullet})$ with the structure of a $(\CO_{F,Y},\CO_{F,Y})$-bimodule, the one from the definition of $f_+R\Gamma_{Z'}\CO_{F,X}$.  

But there is another way to look at this bimodule: The map $C_{X/Y}^Z$ gives rise to a morphism 
\[
	\sigma\colon f_*\Gamma_{Z'}f^{\Delta}E^{\bullet} \to F_Y^*f_*\Gamma_{Z'}f^{\Delta}E^{\bullet}
\]
by the composition 
\[
	f_*\Gamma_{Z'}f^{\Delta}E^{\bullet} \overset{\sim}{\longrightarrow} f'_*{F_{X/Y}}_*\Gamma_{Z'}f^{\Delta}E^{\bullet} \xrightarrow{C_{X/Y}^Z}f'_*F_Y'^*\Gamma_{Z'}f^{\Delta}E^{\bullet} \overset{\bc^{-1}}{\longrightarrow} F_Y^*f_*\Gamma_{Z'}f^{\Delta}E^{\bullet},
\]
where the first isomorphism is deduced from $f=f' \circ F_{X/Y}$ and the last isomorphism is flat base change. Now Proposition-Definition 1.10.1 of ibid.\ in the special case of the morphism $\id_Y$ yields a $(\CO_{F,Y},\CO_{F,Y})$-bimodule structure on $\CO_{F,Y} \otimes_{\CO_Y} f_*\Gamma_{Z'}f^{\Delta}E^{\bullet}$. The isomorphism 
\[
	f_*(f^{-1}\CO_{F,Y} \otimes_{f^{-1}\CO_Y} \Gamma_{Z'}f^{\Delta}E^{\bullet}) \cong \CO_{F,Y} \otimes_{\CO_Y} f_*\Gamma_{Z'}f^{\Delta}E^{\bullet}
\]
stemming from the projection formula is compatible with the constructed bimodule structure for both complexes by Lemma 1.10.6 of ibid. Hence it suffices to show that $\tr_{F,f}$ induces a morphism between the $(\CO_{F,Y},\CO_{F,Y})$-bimodule $\CO_{F,Y} \otimes_{\CO_Y} f_*\Gamma_{Z'}f^{\Delta}E^{\bullet}$ and $E^{\bullet}$ equipped with the structure of a  $(\CO_{F,Y},\CO_{F,Y})$-bimodule via the canonical isomorphism $E^{\bullet} \overset{\sim}{\longrightarrow} F_Y^*E^{\bullet}$ induced from the Frobenius $\CO_Y \to F_Y^* \CO_Y$. Lemma 1.10.2 of ibid.\ applied to the identity morphism on $Y$ reduces to the commutativity of the diagram  
\[
	\xymatrix@C50pt{
		f_*\Gamma_{Z'}f^{\Delta}E^{\bullet} \ar[r]^-{\tr_{F,f}} \ar[d]^{\sigma} & E^{\bullet} \ar[d]^{\sim} \\
		F_Y^*f_*\Gamma_{Z'}f^{\Delta}E^{\bullet} \ar[r]^-{F_Y^*\tr_{F,f}} & F_Y^*E^{\bullet}
	}
\]
of complexes. For this we have to see that the following bigger diagram commutes: 
\[
	\xymatrix@C40pt{
		f_*\Gamma_{Z'}f^{\Delta}E^{\bullet} \ar[rr]^-{\tr_f} \ar[d]^{\sim} & & E^{\bullet} \ar@{=}[d] \\
		f'_*F_{X/Y*}\Gamma_{Z'}F_{X/Y}^{\Delta}f'^{\Delta}E^{\bullet} \ar[r]^-{\tr_{F_{X/Y}}} \ar[d]^{\sim} & f'_*\Gamma_{Z'}f'^{\Delta}E^{\bullet} \ar[r]^-{\tr_{f'}}  \ar[d]^{\sim} & E^{\bullet}\ar[d]^{\sim} \\
		f'_*F_{X/Y*}\Gamma_{Z'}F_{X/Y}^{\Delta}f'^{\Delta}F_Y^*E^{\bullet} \ar[r]^-{\tr_{F_{X/Y}}} \ar[d]_{\beta}^{\sim} & f'_*\Gamma_{Z'}f'^{\Delta}F_Y^*E^{\bullet} \ar[r]^-{\tr_{f'}} \ar[d]_{\beta}^{\sim} & F_Y^*E^{\bullet} \ar@{=}[ddd] \\
		f'_*F_{X/Y*}\Gamma_{Z'}F_{X/Y}^{\Delta}F_Y'^*f^{\Delta}E^{\bullet} \ar[r]^-{\tr_{F_{X/Y}}} & f'_*\Gamma_{Z'}F_Y'^*f^{\Delta}E^{\bullet} \ar[d]^{\sim} & \\
		& f'_*F_Y'^*\Gamma_{Z'}f^{\Delta}E^{\bullet} \ar[d]_{\bc^{-1}}^{\sim} & \\
		& F_Y^*f_*\Gamma_{Z'}f^{\Delta}E^{\bullet} \ar[r]^-{\tr_f} & F_Y^*E^{\bullet}.
	}
\]
The commutativity of the top rectangle follows from \autoref{tracecomp}. The two squares below it and the lower left square commute by functoriality of the trace maps $\tr_{F_{X/Y}}$ and $\tr_{f'}$. The commutativity of the bottom right rectangle is the compatibility of the trace with base change by the residually stable map $F_Y$ (\autoref{traceres}). \footnote{Here we have to show that we have commutative diagrams of complexes, which is not exactly the claim of \autoref{tracecomp} and \autoref{traceres}. However, one can check that the diagrams in the proofs of these propositions commute as diagrams of complexes when applied to residual complexes. For this one should keep in mind that for a complex $K^{\bullet}$, $\Gamma_{Z'}K^{\bullet}$ is a subcomplex of $K^{\bullet}$ and that there is a natural isomorphism of functors $\Gamma_{Z'}F_Y'^* \overset{\sim}{\longrightarrow} F_Y'^*\Gamma_{Z'}$  between abelian categories because $F_Y'$ is flat, see \autoref{underGammaPullback}.}

(b) Once we know that $\tr_f$ is a morphism in $D_{\qc}^b(\CO_{F,Y})$, we can define $\tr_f(\CM^{\bullet})$ as the following composition:
\begin{align*}
	f_+R\Gamma_{Z'}f^!\CM^{\bullet} &\overset{\sim}{\longrightarrow} f_+(\CO_{F,X} \otimes_{\CO_{F,X}} R\Gamma_{Z'}f^!\CM^{\bullet}) \\
	&\longrightarrow f_+(R\Gamma_{Z'}\CO_{F,X} \derotimes_{\CO_{F,X}} f^!\CM^{\bullet}) \\
	&\longrightarrow f_+R\Gamma_{Z'}\CO_{F,X}[d_{X/Y}] \derotimes_{\CO_{F,Y}} \CM^{\bullet} \\
	&\xrightarrow{\tr_f \otimes \id} \CO_{F,Y} \otimes_{\CO_{F,Y}} \CM^{\bullet} \\
	&\overset{\sim}{\longrightarrow} \CM^{\bullet}.
\end{align*}
Here the second morphism is the one of \autoref{RGammaTensor} and the third morphism is the one of \cite[Lemma 4.4.7]{EmKis.Fcrys}.
\end{proof}
\begin{lemma} \label{traceforward}
We keep the notation of the preceding proposition. For an open immersion $j\colon U \to Y$, let $f'$ and $j'$ denote the projections of $U' = U \times_Y X$. Assume that $Z$ and $Z'$ are the closures of the locally closed subsets $Z_U= Z \cap U$ and $Z'_{U'} = Z' \cap U'$ in $Y$ and in $X$.
\begin{enumerate}
\item There is a functorial isomorphism $e_{j,f}\colon f_+R\Gamma_Zf^!j_+ \overset{\sim}{\longrightarrow} j_+f'_+R\Gamma_{Z'}f'^!$ such that the diagram
\[
	\xymatrix{
		f_+R\Gamma_Zf^!j_+ \ar[r]^-{e_{j,f}} \ar[dr]_-{\tr_f j_+} & j_+f'_+R\Gamma_{Z'}f'^! \ar[d]^{j_+\tr_{f'}} \\
		& j_+
	}
\]
commutes.
\item Let $\ctr_f$ denote the unit $\id \to R\Gamma_Zf^!f_+$ of the adjunction. Then there is a functorial isomorphism $e'_{j,f}\colon R\Gamma_Zf^!f_+j'_+ \overset{\sim}{\longrightarrow} j'_+R\Gamma_{Z'}f'^!f'_+$ such that the diagram
\[
	\xymatrix{
		R\Gamma_Zf^!f_+j'_+ \ar[r]^-{e_{j,f}} & j'_+R\Gamma_{Z'}f'^!f'_+ \\
		& j'_+ \ar[ul]^-{\ctr_f j'_+} \ar[u]_{j'_+\ctr_{f'}} 
	}
\]
commutes.
\end{enumerate}
\end{lemma}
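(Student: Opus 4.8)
The plan is to establish everything by base-change arguments along the cartesian square with $j$, $j'$, $f$, $f'$, reducing all assertions to the compatibilities of the generalized trace map already proven in Section~3. First I would observe that since $j$ is an open immersion, $U' = U\times_Y X \to X$ is also an open immersion, and $j'^*f^! \cong f'^! j^*$ by flat base change for the twisted inverse image (this is the $F$-equivariant refinement of \autoref{flatcommute}, valid because $j$ is flat and all schemes are smooth $k$-schemes). Combined with $j^*j_+ \cong \id$ and $\CO_{F,U' \to U} \cong \CO_{F,U'}$ for the open immersion $j'$ (cf.\ the example following \autoref{EmKisPullBack}), one gets $f^!j_+ \cong j'_+ f'^! j^* j_+ \cong j'_+ f'^!$ on the relevant subcategories, where I also use that $j_+ = Rj_*$ for an open immersion. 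Applying $R\Gamma_Z$ and using $R\Gamma_Z j'_+ \cong j'_+ R\Gamma_{Z'} \cdot (\text{something})$---more precisely the isomorphism $\epsilon$ of \autoref{Gammajung} together with \autoref{opensupport}, which applies precisely because $Z$ and $Z'$ are assumed to be the closures of $Z_U$ and $Z'_{U'}$---yields $R\Gamma_Z f^! j_+ \cong j'_+ R\Gamma_{Z'} f'^!$. Finally $f_+ j'_+ \cong j_+ f'_+$ is the compatibility of the $\CO_{F,-}$-module push-forward with the cartesian square (this is the $F$-equivariant version of \autoref{Gammacomm}(a), i.e.\ $Rf_* R\Gamma_{Z'} \cong R\Gamma_Z Rf_*$, twisted by the relative dualizing sheaves; for open immersions the bimodule $\CO_{F,Y\leftarrow X}$ restricts compatibly as in \autoref{openforward}). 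Stringing these together defines $e_{j,f}$.

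For the commutativity of the triangle in (a), I would trace through the construction of $\tr_f$ from \autoref{tracedef}: it factors through a compactification $X\xrightarrow{j_X}\LX\xrightarrow{\overline f}Y$ of $f$, and one can choose a compactible compactification $U'\to \overline{U'}\to U$ of $f'$ together with a proper morphism $\overline{U'}\to\LX$ making everything a cube of cartesian-or-open squares. The key input is then \autoref{tracecomp} (compatibility of the trace with composition) applied to $f = \overline f\circ j_X$ versus $f = j \circ f' \cdot (\text{identification})$, and \autoref{traceres} (compatibility with flat, in fact residually stable, base change along the open immersion $j$), exactly as was already used inside the proof of part~(a) of the preceding proposition. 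The point is that both $\tr_f j_+$ and $j_+\tr_{f'}\circ e_{j,f}$ unwind to the same composite built from the classical Grothendieck--Serre trace for $\overline f$ and the $\epsilon$-isomorphisms, so they agree. Statement (b) is then formal: $\ctr_f$ is the unit of the adjunction between $f_+$ and $R\Gamma_Z f^!$ furnished by \autoref{qcohadjunction} (in its $\CO_{F,-}$-equivariant form), and applying the adjunction's triangle identities together with the already-established compatibility $e_{j,f}$ of $\tr$ with the open immersion $j$ determines $e'_{j,f}$ uniquely and makes the triangle commute; concretely one defines $e'_{j,f}$ so that it is adjoint to $e_{j,f}$ under the two adjunctions, and the triangle in (b) is the adjoint transpose of the triangle in (a).

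I expect the main obstacle to be bookkeeping rather than conceptual: one must verify that the chosen compactifications of $f$ and $f'$ can be arranged into a single diagram in which every square is either cartesian or consists of open immersions, so that \autoref{tracecomp} and \autoref{traceres} apply on the nose, and that the closure hypotheses on $Z$, $Z'$ are exactly what is needed for the $\epsilon$-isomorphisms of \autoref{Gammajung} to be available at each stage (i.e.\ that $j(Z_U)$ has closure $Z$ and likewise for $Z'$, so that $R\Gamma_Z$ and $R\Gamma_{Z'}$ genuinely commute past $j_+$ and $j'_+$). A secondary technical point is to check that these isomorphisms and the trace maps can be represented by honest morphisms of complexes when one works with residual complexes---as in the footnote to the preceding proof---so that all the diagrams commute on the nose and not merely up to homotopy; once that is granted, the verification is a diagram chase of the same flavor as the proof of part~(a) above.
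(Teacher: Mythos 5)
Your proposal follows essentially the same route as the paper: $e_{j,f}$ is assembled from the flat base-change isomorphisms for the cartesian square together with \autoref{opensupport}, the closure hypotheses on $Z$ and $Z'$ being exactly what puts $R\Gamma_{Z'}f^!j_+(\usc)$ into the essential image of $j'_+$; the triangle in (a) is reduced to \autoref{traceres}; and (b) is deduced formally by adjunction. Two remarks on where you differ or need care. First, your unadorned claim $f^!j_+\cong j'_+f'^!$ does not follow from $j'^*f^!\cong f'^!j^*$ and $j^*j_+\cong\id$ alone -- you still need the unit $\id\to Rj'_*j'^*$ to be invertible on $f^!j_+(\usc)$, which here is tor-independent base change along the flat map $j$ (or, as the paper arranges it, one only asserts the isomorphism after applying $R\Gamma_{Z'}$ and invokes \autoref{opensupport}); the conclusion is true, but the justification should be reordered. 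Second, for the triangle in (a) the paper does not re-run the compactification argument or use \autoref{tracecomp}: it first uses the projection-formula identification $f'_+R\Gamma_{Z'}f'^!\CM^\bullet\cong f'_+R\Gamma_{Z'}f'^!\CO_{F,U'}\derotimes j^!\CM^\bullet$ via \cite[Lemma 4.4.7]{EmKis.Fcrys} to reduce everything to the trace evaluated on the structure sheaf, and only then applies \autoref{traceres} once for the open immersion $j$. Your compactification bookkeeping would also work but duplicates effort already encapsulated in the preceding proposition; the paper's reduction is the cleaner way to import the quasi-coherent compatibilities into the $\CO_{F}$-module setting.
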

\begin{proof}
Let $Z_U$ and $Z'_{U'}$ denote the closed subsets $U \cap Z$ and $U' \cap Z'$ of $U$ and $U'$. From \autoref{opensupport} we know that the functors $j_+$ and $j'_+$ are equivalences 
\[
	D_{\lfgu}^b(\CO_{F,U})_{Z_U} \overset{\sim}{\longrightarrow} D_{\lfgu}^b(\CO_{F,Y})_Z^{Z \backslash U} \text{ and } D_{\lfgu}^b(\CO_{F,U'})_{Z'_{U'}} \overset{\sim}{\longrightarrow} D_{\lfgu}^b(\CO_{F,X})_{Z'}^{Z' \backslash U'}.
\]
Furthermore, there are natural isomorphisms
\[
	f_+j'_+ \overset{\sim}{\longrightarrow} j_+f'_+ \text{ and } R\Gamma_{Z'}f^!j_+ \overset{\sim}{\longrightarrow} j'_+R\Gamma_{Z'_{U'}}f'^!,
\]
where the second one is obtained from the composition  
\[
	j'^!R\Gamma_{Z'}f^! \overset{\sim}{\longrightarrow} R\Gamma_{Z'_{U'}}j'^!f^! \overset{\sim}{\longrightarrow} R\Gamma_{Z'_{U'}}f'^!j^!
\]
of natural isomorphisms. Moreover, together with the canonical isomorphism $f'_+j'^! \cong j^!f_+$ of \cite[Proposition 3.8]{EmKis.Fcrys}, this composition yields a canonical isomorphism
\[
	\tilde{e}_{j,f}\colon f'_+R\Gamma_{Z'}f'^!j^! \to j^!f_+R\Gamma_Zf^!.
\]
For $\CM^{\bullet} \in D_{\qc}^b(\CO_{F,Y})$, the diagram
\[
	\xymatrix{
		f'_+R\Gamma_{Z'}f'^!j^!\CM^{\bullet} \ar[r]^-{\tilde{e}_{j,f}} \ar[d]^-{\sim} & j^!f_+R\Gamma_Zf^!\CM^{\bullet} \ar[d]^-{\sim} \\
		f'_+(\CO_{F,U'} \otimes_{\CO_{F,U'}} R\Gamma_{Z'}f'^!j^!\CM^{\bullet}) \ar[r]^-{\sim} \ar[d]^-{\sim} & j^!f_+(\CO_{F,X} \otimes_{\CO_{F,X}} R\Gamma_Zf^!\CM^{\bullet}) \ar[d]^-{\sim} \\
		f'_+R\Gamma_{Z'}f'^!\CO_{F,U'} \derotimes_{\CO_{F,U}} j^!\CM^{\bullet} \ar[r]^-{\tilde{e}_{j,f}} \ar[d]_-{\tr_{f'}} & j^!f_+R\Gamma_Zf^!\CO_{F,X} \derotimes_{\CO_{F,U}} j^!\CM^{\bullet} \ar[dl]^-{\tr_f} \\
		j^!\CM^{\bullet} &
	}
\]
of natural isomorphisms and the trace commutes: While the first square commutes simply by functoriality, the commutativity of the second square follows from \cite[Lemma 4.4.7 (ii)]{EmKis.Fcrys}. For the triangle on the bottom we apply \autoref{traceres}. In summary the diagram
\[
	\xymatrix{
		f'_+R\Gamma_{Z'}f'^!j^! \ar[r]^-{\tilde{e}_{j,f}} \ar[d]_-{\tr_{f'}j^!} & j^!f_+R\Gamma_Zf^! \ar[dl]^-{j^! \tr_f} \\
			j^! &
	}
\]
is commutative. Since $j^!$ and $j'^!$ are quasi-inverses of $j_+$ and $j'_+$ and $f_+$ and $R\Gamma_{Z'}f^!$ restrict to the functors $f'_+$ and $R\Gamma_{Z'_{U'}}f'^!$ between $D_{\lfgu}^b(\CO_{F,U})_{Z_U}$ and $D_{\lfgu}^b(\CO_{F,U'})_{Z'_{U'}}$ with respect to the equivalences $j^!$ and $j'^!$, the claims of the lemma are formal consequences.  
\end{proof}
\begin{theorem} \label{Theorem}
Let $f\colon X \to Y$ be a separated and finite type morphism of smooth schemes and let $i\colon Z \to Y$ and $i'\colon Z' \to X$ be closed immersions with a morphism $f'\colon Z' \to Z$ such that the diagram
\[
	\xymatrix{
		Z' \ar[r]^-{i'} \ar[d]^-{f'} & X \ar[d]^-f \\
		Z \ar[r]^-i & Y
	}
\]
commutes. Then, for any $\CM^{\bullet} \in D_{\qc}^b(\CO_{F,X})_{Z'}$ and any $\CN^{\bullet} \in D_{\qc}^b(\CO_{F,Y})_Z$, there is a natural isomorphism
\[
	\RSHom_{\CO_{F,Y}}^{\bullet}(f_+\CM^{\bullet},\CN^{\bullet}) \overset{\sim}{\longrightarrow} Rf_*\RSHom_{\CO_{F,X}}^{\bullet}(\CM^{\bullet},R\Gamma_{Z'}f^!\CN^{\bullet}).
\]
In particular, $f_+\colon D_{\qc}^b(\CO_{F,X})_{Z'} \to D_{\qc}^b(\CO_{F,Y})_Z$ is left adjoint to $R\Gamma_{Z'}f^!$.
\end{theorem}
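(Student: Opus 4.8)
The plan is to reduce \autoref{Theorem} to the corresponding statement for quasi-coherent $\CO_X$- and $\CO_Y$-modules, namely \autoref{qcohadjunction}, by imitating the strategy Emerton and Kisin use to deduce their adjunction \autoref{EmKisadj} from the classical Grothendieck--Serre adjunction \autoref{adjunctionclassic}. The generalized trace $\tr_{F,f}\colon f_+R\Gamma_{Z'}\CO_{F,X}[d_{X/Y}] \to \CO_{F,Y}$ of the preceding proposition, together with its induced maps $\tr_{F,f}(\CN^\bullet)\colon f_+R\Gamma_{Z'}f^!\CN^\bullet \to \CN^\bullet$, is already in hand and will act as the counit. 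The desired natural transformation is then the composition
\[
	Rf_*\RSHom_{\CO_{F,X}}^{\bullet}(\CM^{\bullet},R\Gamma_{Z'}f^!\CN^{\bullet}) \longrightarrow \RSHom_{\CO_{F,Y}}^{\bullet}(f_+\CM^{\bullet},f_+R\Gamma_{Z'}f^!\CN^{\bullet}) \xrightarrow{\tr_{F,f}(\CN^\bullet)} \RSHom_{\CO_{F,Y}}^{\bullet}(f_+\CM^{\bullet},\CN^{\bullet}),
\]
where the first arrow is the natural map of \cite[Proposition 4.4.2]{EmKis.Fcrys} (valid for arbitrary $f$ of finite type, not just proper ones). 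So the content is entirely the claim that this composition is an isomorphism.

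First I would handle the case of a proper morphism $f$, where $Z' = f^{-1}(Z)$ and $R\Gamma_{Z'}f^! = f^!R\Gamma_Z$ up to the canonical isomorphisms of \autoref{Gammacomm}; here one combines Emerton--Kisin's \autoref{EmKisadj} with \autoref{Gammaadj} applied on both sides, checking compatibility of the two traces. Next, the key geometric input: using Nagata compactification, factor $f = \overline f \circ j$ with $j\colon X \to \LX$ an open immersion and $\overline f\colon \LX \to Y$ proper; by \autoref{ExerciseHartshorne} the image of $Z'$ in $\LX$ is again closed, and $Z'$ is the closure of $Z' = Z' \cap X$ in $\LX$, so the hypotheses of \autoref{traceforward} are met. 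That lemma already records that $\tr_{F,f}$ factors compatibly as $j_+$ of $\tr_{F,\overline f}$ (in the notation there, applied to the open immersion $j$ and the proper morphism $\overline f$), so the proper case propagates through $j_+$. Concretely I would form the big commutative diagram of natural morphisms exactly as in \autoref{qcohadjunction}: apply $Rf_* \cong R\overline f_* Rj_*$, slide $\RSHom$ past $Rj_*$ using \autoref{opennattrans}, replace $Rj_* R\Gamma_{Z'} j^* \overline f^!$ by $R\Gamma_{Z'}\overline f^!$ via $\epsilon^{-1}$ (\autoref{Gammajung}), drop $R\Gamma_{Z'}$ into the proper adjunction for $\overline f$, and finally use \autoref{Gammaadj} on the target side since $f_+\CM^\bullet \in D_{\qc}^b(\CO_{F,Y})_Z$; the diagonal arrow landing in the bottom corner is the $\CO_{F,*}$-version of the adjunction isomorphism for the proper $\overline f$, which forces the composite on the right to be an isomorphism as well.

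The main obstacle I anticipate is bookkeeping the $\CO_{F,X}$-module (as opposed to merely $\CO_X$-module) structures through all of these reductions: one must verify that $\epsilon$, the projection-formula isomorphisms, and the $\RSHom$-pushforward map of \cite[Proposition 4.4.2]{EmKis.Fcrys} are all morphisms of complexes of $\CO_{F,*}$-modules, and that the two candidate counits --- $\tr_{F,f}(\CN^\bullet)$ built from $\tr_{F,f}$, versus the one obtained by transporting Emerton--Kisin's trace for $\overline f$ through $j_+$ --- literally coincide. This is precisely where the compatibility statements of the preceding proposition (part (a): $\tr_{F,f}$ is, as a left $\CO_{F,Y}$-module map, the quasi-coherent trace $\tr_f$ of \autoref{tracedef}) and of \autoref{traceforward}(a) do the work. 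Once that identification is pinned down, everything else is a diagram chase over isomorphisms already established, and the final sentence --- that $f_+$ is left adjoint to $R\Gamma_{Z'}f^!$ on the relevant subcategories --- follows by taking $H^0R\Gamma$ and invoking Proposition II.5.2, Proposition II.5.3 and Theorem I.6.4 of \cite{HartshorneRD}, exactly as in the proof of \autoref{qcohadjunction}, together with the fact that $f_+$ and $R\Gamma_{Z'}f^!$ preserve the subcategories of complexes supported on $Z$, respectively $Z'$ (\autoref{Gammacomm} and the lfgu-preservation statements of Section 1).
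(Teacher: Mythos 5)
Your setup of the candidate natural transformation (the map of \cite[Proposition 4.4.2]{EmKis.Fcrys} followed by $\tr_{F,f}(\CN^{\bullet})$) and your final reduction via $H^0R\Gamma$ agree with the paper, but the core of your argument has a genuine gap: you propose to run the compactification argument of \autoref{qcohadjunction} directly in the category of $\CO_{F}$-modules, factoring $f = \overline{f} \circ j$ with $j\colon X \to \LX$ an open immersion and $\overline{f}$ proper, and then to invoke the proper adjunction \autoref{EmKisadj} for $\overline{f}$. Nagata's theorem gives no control over the singularities of $\LX$, and Emerton--Kisin's formalism --- the functors $\overline{f}^{!}$ and $\overline{f}_{+}$ for $\CO_{F}$-modules, the relative Cartier operator, and in particular the adjunction \autoref{EmKisadj} --- is only developed for proper morphisms of \emph{smooth} $k$-schemes. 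This is exactly the obstruction the paper identifies (see the discussion at the end of Section 5: with resolution of singularities one could choose $\LX$ smooth and argue as you suggest, but this is open in positive characteristic). Your appeal to \autoref{traceforward} is also a misapplication: that lemma concerns base change along an open immersion $j\colon U \to Y$ of the \emph{target}, not a compactification $X \to \LX$ of the source, so it does not record a factorization of $\tr_{F,f}$ through a trace for $\overline{f}$.

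The paper circumvents this by performing the compactification once and for all at the level of quasi-coherent sheaves (Section 3, \autoref{qcohadjunction}), where Grothendieck--Serre duality requires no smoothness of $\LX$, and then transporting the result to $\CO_{F}$-modules without ever compactifying in that category: after the graph factorization (which you also use) one may assume $f$ essentially perfect, one replaces $\CM^{\bullet}$ by a bounded above complex of induced modules $\CO_{F,X} \otimes_{\CO_X} M$ and reduces to a single such module by the Lemma on Way-out Functors, and then a large diagram identifies the $\CO_{F}$-module transformation $\tau$ with the quasi-coherent transformation $t$ of \autoref{qcohadjunction} (an isomorphism by \autoref{Nayak} and \autoref{qcohadjunction}) via the adjunction $\Hom_{\CO_X}(M,\usc) \cong \Hom_{\CO_{F,X}}(\CO_{F,X} \otimes_{\CO_X} M,\usc)$ and the projection-formula description of $f_{+}$ on induced modules. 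If you want to repair your argument, this reduction to induced modules is the missing idea.
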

\begin{proof}
The morphism $f$ factors through the graph morphism $X \times_k Y$, which is a closed immersion, followed by the projection $X \times_k Y \to Y$, which is smooth. Therefore, we may assume that $f$ is an essentially perfect morphism. We show that the natural transformation $\tau$ given by the composition 
\[
	\xymatrix{
		Rf_*\RSHom_{\CO_{F,X}}^{\bullet}(\CM^{\bullet},R\Gamma_{Z'}f^!\CN^{\bullet}) \ar[r] & \RSHom_{\CO_{F,Y}}^{\bullet}(f_+\CM^{\bullet},f_+R\Gamma_{Z'}f^!\CN^{\bullet}) \ar[d]^{\tr_{F,f}} \\
		& \RSHom_{\CO_{F,Y}}^{\bullet}(f_+\CM^{\bullet},\CN^{\bullet})
		}
\]
is an isomorphism in $D^+(X,\ZZ/p\ZZ)$. Here the horizontal arrow is the natural morphism of \cite[Proposition 4.4.2]{EmKis.Fcrys}. Let $\CO_{F,f}$ denote the $(f^{-1}\CO_{F,Y},\CO_{F,X})$-bimodule $\CO_{F,Y \leftarrow X}$ and let $\omega_f$ denote the $\CO_X$-module $\omega_{X/Y}$. We set $d = d_{X/Y}$. First we replace $\CM^{\bullet}$ by a bounded above complex of quasi-coherent induced left $\CO_{F,X}$-modules, i.e.\ left $\CO_{F,X}$-modules of the form $\CO_{F,X} \otimes_{\CO_X} M$ with quasi-coherent $\CO_X$-modules $M$, see Definition 1.7 and Lemma 1.7.1 of \cite{EmKis.Fcrys}. Now by the Lemma on Way-out Functors (\cite[Proposition I.7.1]{HartshorneRD}), we reduce to the case of a single sheaf $\CM^{\bullet} = \CO_{F,X} \otimes_{\CO_X} M$. For such an induced module we have an isomorphism
\begin{align} \label{inducedPushforward}
	f_+\CM \overset{\sim}{\longrightarrow} \CO_{F,Y} \otimes_{\CO_Y} Rf_*(\omega_{X/Y} \otimes_{\CO_X} M),
\end{align}
which is based on the projection formula, see the proof of \cite[Theorem 3.5.3]{EmKis.Fcrys}. Note that in this proof $f^!$ always denotes Emerton-Kisin's pull-back of left $\CO_{F,Y}$-modules, sometimes considered as an $\CO_Y$-module. It is connected to the functor $f^!$ for quasi-coherent sheaves by the canonical isomorphisms 
\[
	f^!\CN^{\bullet} \overset{\sim}{\longrightarrow} Lf^*\CN^{\bullet}[d]\overset{\sim}{\longrightarrow} \omega_{X/Y}^{-1} \otimes_{\CO_X} \text{`}{f^!}\text{'} \CN^{\bullet}
\] 
in $D_{\qc}(X)$, where `$f^!$' denotes the classical $f^!$ as in section 3. We will show that there is a commutative diagram 
\begin{align*}
	\xymatrix{
		Rf_*\RSHom_{\CO_X}^{\bullet}(M,R\Gamma_{Z'}f^!\CN^{\bullet}) \ar[d]^{t} \ar[r]^-{\sim} & Rf_*\RSHom_{\CO_{F,X}}^{\bullet}(\CM,R\Gamma_{Z'}f^!\CN^{\bullet}) \ar[d]^{\tau} \\
		\RSHom_{\CO_Y}^{\bullet}(Rf_*(\omega_{X/Y} \otimes_{\CO_X} M), \CN^{\bullet}) \ar[r]^-{\sim} & \RSHom_{\CO_{F,Y}}^{\bullet}(f_+\CM,\CN^{\bullet})
	}
\end{align*}
with an isomorphism $t$ and where the horizontal arrows are the natural isomorphisms induced by the isomorphism 
\[
	\Hom_{\CO_X}(M, \usc) \overset{\sim}{\longrightarrow} \Hom_{\CO_{F,X}}(\CO_{F,X} \otimes_{\CO_X} M, \usc)
\]
of \cite[1.7.2]{EmKis.Fcrys} and \autoref{inducedPushforward}. For this we consider the bigger diagram of natural maps on page \pageref{bigdiagram}. Let $t$ be the composition of the left vertical arrows. It is an isomorphism by \autoref{Nayak} and \autoref{qcohadjunction}. Recall that $\CO_{F,Y \leftarrow X}$ is locally free as a right $\CO_X$-module and that $\CO_{F,Y \leftarrow X} \derotimes_{\CO_{F,X}} \CM \cong \CO_{F,Y \leftarrow X} \otimes_{\CO_X} M$, which is computed in the proof of Lemma 3.5.1 of \cite{EmKis.Fcrys}. In particular, induced modules are acyclic for the functor $\CO_{F,Y \leftarrow X} \otimes_{\CO_{F,X}} \usc$. For the first square, we consider the diagram without the outer $Rf_*$, resolve $M$ by a complex $P^{\bullet}$ of locally free $\CO_X$-modules and $R\Gamma_{Z'}f^!\CN^{\bullet}$ by a complex $\CJ^{\bullet}$ of left $\CO_{X,F}$-modules which are acyclic for the functor $\CO_{F,Y \leftarrow X} \derotimes_{\CO_{F,X}} \usc$, as in the proof of Proposition 4.4.2 of ibid.\ Now $\mathcal P^{\bullet}=\CO_{F,X} \otimes_{\CO_X} P^{\bullet}$ is a complex of locally free $\CO_{F,X}$-modules. We obtain a commutative diagram
\[
	\xymatrix@C12pt{
		\RSHom_{\CO_X}^{\bullet}(P^{\bullet},\CJ^{\bullet}) \ar@{.>}[r]^-{\sim} \ar[d]^{\sim} & \RSHom_{\CO_{F,X}}^{\bullet}(\mathcal P^{\bullet},\CJ^{\bullet}) \ar[d]^{\sim} \\
		\SHom_{\CO_X}^{\bullet}(P^{\bullet},\CJ^{\bullet}) \ar[r]^-{\sim} \ar[d]^{\sim} & \SHom_{\CO_{F,X}}^{\bullet}(\mathcal P^{\bullet},\CJ^{\bullet}) \ar[d] \\
		\SHom_{\CO_X}^{\bullet}(\omega_f \otimes_{\CO_X} P^{\bullet},\omega_f \otimes_{\CO_X} \CJ^{\bullet}) \ar[d]^{\sim} \ar[r] & \SHom_{f^{-1}\CO_{F,Y}}^{\bullet}(\CO_{F,f} \otimes_{\CO_{F,X}} \mathcal P^{\bullet},\CO_{F,f} \derotimes_{\CO_{F,X}} \CJ^{\bullet}) \ar[d] \\
		\RSHom_{\CO_X}^{\bullet}(\omega_f \otimes_{\CO_X} P^{\bullet},\omega_f \otimes_{\CO_X} \CJ^{\bullet}) \ar@{.>}[r] & \RSHom_{f^{-1}\CO_{F,Y}}^{\bullet}(\CO_{F,f} \otimes_{\CO_{F,X}} \mathcal P^{\bullet},\CO_{F,f} \derotimes_{\CO_{F,X}} \CJ^{\bullet})
	}
\]
of canonical maps. The last two vertical arrows are the canonical morphisms from a functor to its right derived functor. Here the left one is an isomorphism because $\omega_{X/Y} \otimes P^{\bullet}$ is a locally free $\CO_X$-module. 

For the second square, we check that the natural map
\[
	\xymatrix{
		\RSHom_{\CO_X}^{\bullet}(\omega_f \otimes_{\CO_X} M,\omega_f \otimes_{\CO_X} R\Gamma_{Z'}\CN^{\bullet}) \ar[d] \\
		\RSHom_{f^{-1}\CO_{F,Y}}^{\bullet}(\CO_{F,f} \otimes_{\CO_X} M,\CO_{F,f} \derotimes_{\CO_{F,X}} R\Gamma_{Z'}\CN^{\bullet})
	}
\]
factors through $\RSHom_{f^{-1}\CO_Y}^{\bullet}(\omega_f \otimes_{\CO_X} M,\omega_f \otimes_{\CO_X} R\Gamma_{Z'}\CN^{\bullet})$.
For this we replace $\omega_f \otimes_{\CO_X} R\Gamma_{Z'}f^!\CN^{\bullet}$ by a complex $\CI^{\bullet}$ of injective $f^{-1}\CO_Y$-modules and $\CO_{F,f} \derotimes_{\CO_{F,X}} R\Gamma_{Z'}f^!\CN^{\bullet}$ by a complex $\tilde{\CI}^{\bullet}$ of injective $f^{-1}\CO_{F,Y}$-modules. The functor $f^{-1}\CO_{F,Y} \otimes_{f^{-1}\CO_Y} \usc$ is exact because the right $\CO_Y$-module $\CO_{F,Y}$ is free (\cite[Lemma 1.3.1]{EmKis.Fcrys}). Furthermore, it is left adjoint to the forgetful functor from $f^{-1}\CO_{F,Y}$-modules to $f^{-1}\CO_Y$-modules. Hence the latter functor preserves injectives. This implies that $\tilde{\CI}^{\bullet}$ is a complex of injective $f^{-1}\CO_Y$-modules and the canonical morphism $\omega_f \otimes_{\CO_X} R\Gamma_{Z'}f^!\CN^{\bullet} \to \CO_{F,f} \derotimes_{\CO_{F,X}} R\Gamma_{Z'}f^!\CN^{\bullet}$ yields a map $\CI^{\bullet} \to \tilde{\CI}^{\bullet}$. After replacing  $M$ by a complex $P^{\bullet}$ of locally free $\CO_X$-modules as above we have reduced the three $\RSHom$ to $\SHom$ and the claimed factorization is trivial. 

We return to the second square of the diagram on page \pageref{bigdiagram}, where we replace $M$ by a complex $F^{\bullet}$ of flasque $\CO_X$-sheaves. The complexes $\omega_f \otimes_{\CO_X} F^{\bullet}$ and $\CO_{F,f} \otimes_{\CO_X} F^{\bullet}$ are also flasque because locally they are direct sums of flasque sheaves. Hence $f_*(\omega_f \otimes F^{\bullet})$ and $f_*(\CO_{F,f} \otimes F^{\bullet})$ represent $Rf_*(\omega_f \otimes F^{\bullet})$ and $Rf_*(\CO_{F,f} \otimes F^{\bullet})$. As above, we resolve $\omega_f \otimes_{\CO_X} R\Gamma_{Z'}f^!\CN^{\bullet}$ by $\CI^{\bullet}$ and $\CO_{F,f} \derotimes_{\CO_{F,X}} R\Gamma_{Z'}f^!\CN^{\bullet}$ by $\tilde{\CI}^{\bullet}$. The injectivity of $\CI^{\bullet}$ and $\tilde{\CI}^{\bullet}$ implies that $\SHom_{f^{-1}\CO_Y}^{\bullet}(\omega_f \otimes_{\CO_X} F^{\bullet},\CI^{\bullet})$ and $\SHom_{f^{-1}\CO_{F,Y}}^{\bullet}(\CO_{F,f} \otimes_{\CO_X} F^{\bullet},\tilde{\CI}^{\bullet})$ are flasque (\cite[Lemme II.7.3.2]{Godement}) and hence may be used to compute $Rf_*$. As $f_*$ is right adjoint to the exact functor $f^{-1}$, the complex $f_*\CI^{\bullet}$ is a complex of injective $\CO_Y$-modules and $f_*\tilde{\CI}^{\bullet}$ is a complex of injective $\CO_{F,Y}$-modules. Therefore 
\[
	\RSHom_{\CO_X}^{\bullet}(\usc,f_*\CI^{\bullet}) \cong \SHom_{\CO_X}^{\bullet}(\usc,f_*\CI^{\bullet})
\]
and 
\[
	\RSHom_{\CO_{F,X}}^{\bullet}(\usc,f_*\tilde{\CI}^{\bullet}) \cong \SHom_{\CO_{F,X}}^{\bullet}(\usc,f_*\tilde{\CI}^{\bullet}).
\]
This finishes the proof of the commutativity of the second square because the diagram
\[
	\xymatrix{
		f_*\SHom_{f^{-1}\CO_Y}^{\bullet}(\omega_f \otimes_{\CO_X} F^{\bullet},\CI^{\bullet}) \ar[dd] \ar[r] & f_*\SHom_{f^{-1}\CO_{F,Y}}^{\bullet}(\CO_{F,f} \otimes_{\CO_X} F^{\bullet},\tilde{\CI}^{\bullet}) \ar[d] \\
		 & \SHom_{\CO_{F,Y}}^{\bullet}(f_*(\CO_{F,f} \otimes_{\CO_X} F^{\bullet}),f_*\tilde{\CI}^{\bullet}) \ar[d] \\
		\SHom_{\CO_Y}^{\bullet}(f_*(\omega_f \otimes_{\CO_X} F^{\bullet}),f_*\CI^{\bullet}) \ar[r] & \SHom_{\CO_{F,Y}}^{\bullet}(\CO_{F,Y} \otimes_{\CO_Y} f_*(\omega_f \otimes_{\CO_X} F^{\bullet}),f_*\tilde{\CI}^{\bullet})
	}
\]
of natural morphisms commutes. 

The commutativity of the third and the fifth square can be shown similarly. The fourth square commutes by the functoriality of the corresponding horizontal isomorphisms.

For the adjunction of $f_+$ and $R\Gamma_{Z'}f^!$ we proceed as in the proof of \autoref{qcohadjunction}.
\end{proof}
\begin{landscape} \label{bigdiagram}
\small
\begin{align*} 
	\xymatrix@C10pt{
		Rf_*\RSHom_{\CO_X}^{\bullet}(M,R\Gamma_{Z'}f^!\CN^{\bullet}) \ar[r] \ar[d]^{\sim} & Rf_*\RSHom_{\CO_{F,X}}^{\bullet}(\CM,R\Gamma_{Z'}f^!\CN^{\bullet}) \ar[d] \\
		Rf_*\RSHom_{\CO_X}^{\bullet}(\omega_{X/Y} \otimes_{\CO_X} M,\omega_{X/Y} \otimes_{\CO_X} R\Gamma_{Z'}f^!\CN^{\bullet}) \ar[r] \ar[dd] & Rf_*\RSHom_{f^{-1}\CO_{F,Y}}^{\bullet}(\CO_{F,Y \leftarrow X} \derotimes_{\CO_{F,X}} \CM,\CO_{F,Y \leftarrow X} \derotimes_{\CO_{F,X}} R\Gamma_{Z'}f^!\CN^{\bullet}) \ar[d] \\
		& \RSHom_{\CO_{F,Y}}^{\bullet}(Rf_*(\CO_{F,Y \leftarrow X} \derotimes \CM),Rf_*(\CO_{F,Y \leftarrow X} \derotimes R\Gamma_{Z'}f^!\CN^{\bullet})) \ar[d] \\
		\RSHom_{\CO_Y}^{\bullet}(Rf_*(\omega_{X/Y} \otimes M),Rf_*(\omega_{X/Y} \otimes R\Gamma_{Z'}f^!\CN^{\bullet})) \ar[r] \ar[d] & \RSHom_{\CO_{F,Y}}^{\bullet}(\CO_{F,Y} \otimes_{\CO_Y}Rf_*(\omega_{X/Y} \otimes_{\CO_X} M),Rf_*(\CO_{F,Y \leftarrow X} \derotimes R\Gamma_{Z'}f^!\CN^{\bullet})) \ar[d] \\
		\RSHom_{\CO_Y}^{\bullet}(Rf_*(\omega_{X/Y} \derotimes_{\CO_X} M),Rf_*R\Gamma_{Z'}\omega_{X/Y}[d] \derotimes_{\CO_Y} \CN^{\bullet}) \ar[r] \ar[d]^-{\tr} & \RSHom_{\CO_{F,Y}}^{\bullet}(\CO_{F,Y} \otimes_{\CO_Y} Rf_*(\omega_{X/Y} \derotimes_{\CO_X} M),f_+R\Gamma_{Z'}\CO_{F,X}[d] \derotimes_{\CO_{F,Y}} \CN^{\bullet}) \ar[d]^-{\tr} \\
		\RSHom_{\CO_Y}^{\bullet}(Rf_*(\omega_{X/Y} \derotimes_{\CO_X} M),\CO_Y \otimes_{\CO_Y} \CN^{\bullet}) \ar[r] \ar[d]^-{\sim} & \RSHom_{\CO_{F,Y}}^{\bullet}(\CO_{F,Y} \otimes_{\CO_Y} Rf_*(\omega_{X/Y} \derotimes_{\CO_X} M),\CO_{F,Y} \otimes_{\CO_{F,Y}} \CN^{\bullet}) \ar[d]^-{\sim} \\
		\RSHom_{\CO_Y}^{\bullet}(Rf_*(\omega_{X/Y} \derotimes_{\CO_X} M),\CN^{\bullet}) \ar[r] & \RSHom_{\CO_{F,Y}}^{\bullet}(f_+\CM,\CN^{\bullet})
	}
\end{align*}
\end{landscape}

\subsection{Definition of lfgu modules on singular schemes}

As mentioned earlier, for a regular scheme $X$, the Frobenius $F_X\colon X \to X$ is a flat morphism and hence $F_X^*$ is exact (\cite[Theorem 2.1]{Kunz}). For varieties, the exactness of $F_X^*$ plays an important role in the definition of (locally finitely generated) unit $\CO_{F,X}$-modules. For example, it implies that the category of unit $\CO_{F,X}$-modules is abelian. In this section we define the abelian category $\mu_{\lfgu}(X)$ of locally finitely generated unit $\CO_{F,X}$-modules for schemes $X$ which admit a closed immersion $i\colon X \to Y$ into a smooth $k$-scheme as a certain subcategory of $\mu_{\lfgu}(Y)$. Note that this definition generally works for unit $\CO_{F,X}$-modules. We restrict to locally finitely generated modules due to our application to Cartier crystals and perverse constructible \'etale $p$-torsion sheaves. For the rest of this section we assume that the base field $k$ is perfect.

For the motivation of our approach to $\mu_{\lfgu}(X)$ for embeddable $X$, recall the Kashiwara equivalence:
\begin{theorem} \label{Kashiwaraunit}
Let $i\colon Z \to X$ be a closed immersion of smooth $k$-schemes. If $\CM$ is a unit $\CO_{F,X}$-module supported on $Z$, the adjunction $i_+i^!\CM \to \CM$ is an isomorphism. Consequently, $H^0(i^!\CM) \overset{\sim}{\longrightarrow} i^!\CM$ and the functors $i_+$ and $i^!$ are equivalences between the categories of unit $\CO_{F,Z}$-modules and unit $\CO_{F,X}$-modules supported on $Z$.   
\end{theorem}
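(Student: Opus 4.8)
This theorem is \cite[Theorem 5.10.1]{EmKis.Fcrys} (indeed it was already quoted above as a cited result); I indicate the shape of the argument one gives, now that the adjunction framework of Section 3 and \autoref{EmKisadj} are available. Since a closed immersion is proper, $i_+$ is already left adjoint to $i^!$ by \autoref{EmKisadj} --- equivalently, this is the case $Z'=Z$, $f'=\id_Z$ of \autoref{Theorem}. Thus both the unit $\id \to i^!i_+$ and the counit $i_+i^! \to \id$ are defined, and everything reduces to showing that, on the abelian subcategories in question, these two natural transformations are isomorphisms; the identity $H^0(i^!)\CM \cong i^!\CM$ for $\CM$ supported on $Z$ will then fall out as a by-product.

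The first step is to reduce to a smooth divisor, using that the assertions are local on $X$. Zariski-locally $Z$ is cut out of the smooth scheme $X$ by a regular sequence $t_1,\dots,t_n$ with $n=\operatorname{codim}(Z,X)$; interpolating the smooth subschemes $V(t_1) \supset V(t_1,t_2) \supset \cdots \supset Z$ and invoking the transitivity of $i_+$ and of $i^!$ along a composite of closed immersions (the \textrm{TRA}-type compatibilities underlying \autoref{EmKisPullBack} and \autoref{EmKisadj}, together with \autoref{tracecomp}), one reduces to $n=1$. So assume $Z=V(t)\subset X$ with $t$ a local coordinate. Here $\omega_{Z/X}$ is invertible and $i^!\CM$ is, up to that twist and the shift $[d_{Z/X}]$ with $d_{Z/X}=-1$, the two-term complex $[\CM \xrightarrow{\ t\ } \CM]$; its cohomology is therefore controlled by $\ker(t\colon\CM\to\CM)$ in degree $0$ and by $\CM/t\CM$ in degree $1$, so that $i^!\CM$ is concentrated in degree $0$ precisely when $\CM$ is $t$-divisible, i.e.\ $\CM=t\CM$.

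The heart of the matter --- and what I expect to be the main obstacle --- is exactly this $t$-divisibility, which is the one place where the \emph{unit} hypothesis is essential (for an arbitrary $\CO_{F,X}$-module supported on $Z$, such as $i_*\CO_Z$, it fails). The inputs are the relation $F_X^*(t)=t^p$, the unit isomorphism $F_X^*\CM \xrightarrow{\sim} \CM$, and the fact that $\CM$ is a union of coherent $\CO_X$-submodules supported on $V(t)$, hence is killed by fixed powers of $t$ on such a submodule; chasing a coherent $\CO_{F,X}$-generator $\CM_0$ of $\CM$ (killed by some $t^N$) through the directed system $\CM_0 \to F_X^*\CM_0 \to F_X^{2*}\CM_0 \to \cdots$ whose limit is $\CM$, one checks that every element of $\CM$ already lies in $t\CM$. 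Granting $\CM=t\CM$, the complex $i^!\CM$ is concentrated in degree $0$, equal to $\ker(t\colon\CM\to\CM)$ twisted by $\omega_{Z/X}$, which carries a canonical unit $\CO_{F,Z}$-structure; a direct local computation --- using the local-cohomology description of $i_+$ of such a ``delta'' module --- identifies $i_+i^!\CM$ with $\CM$, so the counit is an isomorphism. The unit $\id \to i^!i_+$ is an isomorphism by the analogous, easier, local computation, and essential surjectivity of $i_+$ onto unit $\CO_{F,X}$-modules supported on $Z$ is exactly the counit statement. Combining these, $i_+$ and $i^!$ are mutually inverse equivalences between unit $\CO_{F,Z}$-modules and unit $\CO_{F,X}$-modules supported on $Z$, and the identity $H^0(i^!)\CM \cong i^!\CM$ in the statement is the degree-$0$ concentration just established.
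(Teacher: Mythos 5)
The paper offers no proof of this statement beyond the citation ``This is Theorem 5.10.1 of \cite{EmKis.Fcrys}'', so your proposal is an attempt to supply an argument where the paper gives none. Your architecture is the standard one and is essentially sound: the adjunction between $i_+$ and $i^!$ is available from \autoref{EmKisadj} (no circularity there), the assertions are local and one factors the closed immersion through smooth divisors, and for $Z=V(t)$ your degree bookkeeping is correct: $H^0(i^!\CM)=\ker(t\colon\CM\to\CM)$, $H^1(i^!\CM)=\CM/t\CM$, so the clause $H^0(i^!\CM)\overset{\sim}{\longrightarrow} i^!\CM$ of the theorem is \emph{literally} the assertion $\CM=t\CM$. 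You are also right that this is exactly where the unit hypothesis must enter.

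The gap is that this $t$-divisibility is asserted, not proved, and the chase you sketch does not deliver it. Writing $m=\psi_e\bigl(\sum_i a_i\otimes m_i\bigr)$ with $\psi_e\colon F^{e*}\CM\to\CM$ the unit isomorphism and $t^N m_i=0$ only yields $t^{Np^e}m=0$; since $t\cdot(a\otimes m)=ta\otimes m$ and $t$ is not a $p$-th power, there is no visible way to extract a factor of $t$ from a tensor $1\otimes m_i$. The structural identity one does get, $F^*(t\CM)=t^pF^*\CM$, gives $F^*(\CM/t\CM)\cong\CM/t^p\CM$ and hence a surjection $F^*(\CM/t\CM)\to\CM/t\CM$, but that alone does not force $\CM/t\CM=0$ (the non-unit module $\CO_X/t$ admits such a surjection), so injectivity of the unit map must be exploited in some essential, and so far unspecified, way. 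In addition, your chase presupposes a coherent $\CO_{F,X}$-generator killed by $t^N$, i.e.\ the lfgu case, whereas the statement concerns arbitrary quasi-coherent unit modules. So the ``one checks'' conceals the entire content of the theorem; as written, the proposal reduces the statement to an unproved claim that is part of the statement itself. To close it you would either have to reproduce Emerton--Kisin's actual argument for 5.10.1, or (at least for lfgu coefficients) transport the Kashiwara equivalence \autoref{Kashiwara} for Cartier crystals through the compatibilities of Section~2.
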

\begin{proof}
This is Theorem 5.10.1 of \cite{EmKis.Fcrys}.
\end{proof}
Hence, keeping the notation of the preceding theorem, we can canonically interpret unit $\CO_{F,Z}$-modules as a certain subcategory of unit $\CO_{F,X}$-modules, namely the subcategory of unit $\CO_{F,X}$-modules with support on (the image of) $Z$. If $Z$ is not smooth this subcategory still exists because it may be characterized as the subcategory of unit $\CO_{F,X}$-modules $\CM$ with $j^!\CM \cong 0$, where $j$ is the immersion of the open complement of $Z$ in $X$. This motivates the definition of unit $\CO_{F,Z}$-modules for $Z$ possibly not smooth but embeddable into a smooth scheme. But first we introduce some notation.
\begin{definition}
We call a $k$-scheme $X$ \emph{embeddable} if there is a closed immersion $i\colon X \to Y$ of $k$-schemes where $Y$ is smooth. We call $X$ \emph{$F$-finite embeddable} if there is a closed immersion $i\colon X \to Y$ of $k$-schemes where $Y$ is smooth and $F$-finite.
\end{definition}
\begin{example}
Let $X = \Spec k[x_1,\dots,x_n]/I$ be an affine variety. Then $X$ is embeddable into the affine space $\mathbb A_k^n$ by the closed immersion corresponding to the canonical projection
\[
	k[x_1,\dots,x_n] \to k[x_1,\dots,x_n]/I.
\]
\end{example}
\begin{example}
Let $X$ be a quasi-projective $k$-scheme. By definition, there exists an open immersion $j\colon X \to Z$ and a projective morphism $p\colon Z \to \Spec k$ such that $f = p \circ j$. In turn, the morphism $p$ factors into a closed immersion $i\colon Z \to \mathbb P_k^n$ followed by the natural morphism $\mathbb P_k^n \to \Spec k$. Let $U$ be an open subset of $\mathbb P_k^n$ such that $U \cap i(Z) = i(j(X))$. Then $X \cong U \times_{\mathbb P_n^k} Z$ and the projection $X \to U$ is a closed immersion of $X$ into an open subset of the projective space. Thus $X$ is embeddable.
\end{example}         
\begin{definition}  
Assume that $k$ is perfect. Let $X$ be an embeddable $k$-scheme. Let $i\colon X \to Y$ be a closed immersion into a smooth $k$-scheme $Y$. The category of lfgu $\CO_{F,X}$-modules is defined as the full subcategory of lfgu $\CO_{F,Y}$-modules $\CM$ supported on the image of $X$, i.e.\ $j^!\CM \cong 0$, where $j\colon Y \backslash X \to X$ is the open immersion of the complement of $X$.

The category $D_{\lfgu}^b(\CO_{F,X})$ is the full subcategory $D_{\lfgu}^b(\CO_{F,Y})_X$ of those objects in $D_{\lfgu}^b(\CO_{F,Y})$ whose cohomology sheaves are supported on $X$. 
\end{definition}
\begin{remark} \label{reducedclosed}
With the notation of the preceding definition, let $\CM$ be an lfgu module on $Y$. Whether $\CM$ is supported in $X$ only depends on the closed subset $i(X)$ in $Y$. For example, the preceding definition does not distinguish between the categories $D_{\lfgu}^b(\CO_{F,X})$ and $D_{\lfgu}^b(\CO_{F,X_{\operatorname{red}}})$, where $X_{\operatorname{red}}$ is the unique closed subscheme of $X$ whose underlying topological space equals the one of $X$ and which is reduced.   
\end{remark}
By \autoref{Kashiwaraunit}, it is clear that this definition generalizes the already existing notion of lfgu $\CO_{F,X}$-modules for smooth $X$. Of course the crucial point is to see that the definition for not-necessarily smooth $X$ is -- up to natural equivalence -- independent of a chosen embedding into a smooth scheme. 
\begin{theorem} \label{adjunctionequiv}
Assume that $k$ is a perfect field. Let $f\colon X \to Y$ be a flat morphism between smooth $k$-schemes and let $i_X\colon Z \to X$ and $i_Y\colon Z \to Y$ be closed immersions of $k$-schemes such that the diagram
\[
	\xymatrix{
		Z \ar[r]^{i_X} \ar[dr]_{i_Y} & X \ar[d]^f \\
		& Y
	}
\]
commutes. Then there are natural isomorphisms of functors
\begin{enumerate}[(i)]
	\item $f_+ \circ R\Gamma_Z f^! \cong \id_{D_{\lfgu}^b(\CO_{F,Y})_Z}$,
	\item $R\Gamma_Z f^! \circ f_+ \cong \id_{D_{\lfgu}^b(\CO_{F,X})_Z}$.
\end{enumerate}
\end{theorem}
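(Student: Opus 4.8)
We deduce both isomorphisms from the adjunction already available in \autoref{Theorem}, by a d\'evissage on $Z$ that reduces everything to the case where $Z$ is smooth. Taking $Z'=Z$ and $f'=\id_Z$ (which is proper) in \autoref{Theorem} gives that $f_+$ is left adjoint to $R\Gamma_Z f^!$ as functors between $D_{\qc}^b(\CO_{F,X})_Z$ and $D_{\qc}^b(\CO_{F,Y})_Z$; write $\eta$ and $\varepsilon$ for the unit and counit. The first thing to check is that both functors restrict to the subcategories $D_{\lfgu}^b(\CO_{F,X})_Z$ and $D_{\lfgu}^b(\CO_{F,Y})_Z$: the functors $f_+$ and $f^!$ preserve lfgu cohomology (\autoref{EmKisPullBack} and the push-forward proposition following \autoref{openforward}), $R\Gamma_Z$ does so by the fundamental triangle $R\Gamma_Z\to\id\to Rj_*j^*$ together with the facts that $Rj_*=j_+$ preserves lfgu for the open immersion $j$ of the complement of $Z$ and has finite cohomological amplitude, and the support conditions follow from \autoref{Gammacomm}. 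Thus $f_+\dashv R\Gamma_Zf^!$ as functors $D_{\lfgu}^b(\CO_{F,X})_Z\rightleftarrows D_{\lfgu}^b(\CO_{F,Y})_Z$, and assertions (i), (ii) say exactly that $\varepsilon$ and $\eta$ are isomorphisms. Since $R\Gamma_Z$ and the two subcategories depend only on the underlying closed set of $Z$ (\autoref{reducedclosed}), I may and will assume $Z$ reduced.

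\emph{The case of smooth $Z$.} Then $i_X\colon Z\to X$ and $i_Y\colon Z\to Y$ are closed immersions of smooth $k$-schemes, so by \autoref{Kashiwaraunit} (and its extension to the bounded derived categories) the functors $i_{X+}$ and $i_{Y+}$ are equivalences $D_{\lfgu}^b(\CO_{F,Z})\overset{\sim}{\to}D_{\lfgu}^b(\CO_{F,X})_Z$ and $D_{\lfgu}^b(\CO_{F,Z})\overset{\sim}{\to}D_{\lfgu}^b(\CO_{F,Y})_Z$, with quasi-inverses $i_X^!$ and $i_Y^!$. Since $f\circ i_X=i_Y$, functoriality of $(\cdot)_+$ gives $f_+\circ i_{X+}\cong i_{Y+}$, so $f_+$ is an equivalence $D_{\lfgu}^b(\CO_{F,X})_Z\to D_{\lfgu}^b(\CO_{F,Y})_Z$; as $R\Gamma_Zf^!$ is its right adjoint it is automatically a quasi-inverse, and therefore $\eta$ and $\varepsilon$ are isomorphisms. (Concretely, $i_X^!R\Gamma_Zf^!i_{Y+}\cong i_X^!f^!i_{Y+}=(f\circ i_X)^!i_{Y+}=i_Y^!i_{Y+}\cong\id$, using $i_X^!R\Gamma_Z\cong i_X^!$.)

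\emph{The general case, by induction on $\dim Z$.} Since $k$ is perfect and $Z$ is reduced, the non-smooth locus $Z_{\operatorname{sing}}\subseteq Z$ is closed with $\dim Z_{\operatorname{sing}}<\dim Z$, and $Z_{\operatorname{reg}}$ is dense in $Z$; the case $Z=\emptyset$ is trivial. Put $V=Y\setminus i_Y(Z_{\operatorname{sing}})$, with open immersion $j_V\colon V\to Y$. Then $i_X(Z)\cap f^{-1}(V)=i_X(Z_{\operatorname{reg}})$ and $i_Y(Z)\cap V=i_Y(Z_{\operatorname{reg}})$, and $i_X(Z)$, $i_Y(Z)$ are the closures of $i_X(Z_{\operatorname{reg}})$, $i_Y(Z_{\operatorname{reg}})$, so the hypotheses of \autoref{traceforward} hold for the open immersion $j_V$ and $f^{-1}(V)=V\times_Y X$. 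For $\CN^\bullet\in D_{\lfgu}^b(\CO_{F,Y})_Z$, apply the triangulated natural transformation $\varepsilon$ to the fundamental triangle
\[
	R\Gamma_{i_Y(Z_{\operatorname{sing}})}\CN^\bullet\longrightarrow\CN^\bullet\longrightarrow Rj_{V*}j_V^*\CN^\bullet\longrightarrow R\Gamma_{i_Y(Z_{\operatorname{sing}})}\CN^\bullet[1]
\]
to obtain a morphism of distinguished triangles. On the left-hand term, supported on $i_Y(Z_{\operatorname{sing}})$, the natural identification $R\Gamma_Zf^!f_+\cong R\Gamma_{Z_{\operatorname{sing}}}f^!f_+$ valid on such complexes — it rests on $i_X(Z)\cap f^{-1}(i_Y(Z_{\operatorname{sing}}))=i_X(Z_{\operatorname{sing}})$ — carries $\varepsilon$ to the counit of the adjunction for the datum $(f,Z_{\operatorname{sing}})$, which is an isomorphism by the inductive hypothesis. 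On the right-hand term, writing $Rj_{V*}j_V^*\CN^\bullet=j_{V+}j_V^!\CN^\bullet$ and applying \autoref{traceforward}(a), $\varepsilon$ is carried to $j_{V+}$ of the counit of the adjunction for the base-changed datum $(f^{-1}(V)\to V,\,Z_{\operatorname{reg}})$, which is an isomorphism by the smooth case just treated. Hence $\varepsilon_{\CN^\bullet}$ is an isomorphism, giving (i). Statement (ii) is proved in the same way, applying $\eta$ to the fundamental triangle on $X$ along the open immersion $f^{-1}(V)\to X$ and invoking \autoref{traceforward}(b) for the open stratum (alternatively, (i) and (ii) imply one another via the conservativity of $i_X^!$, resp. $i_Y^!$, on $Z$-supported complexes).

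The main obstacle I anticipate is the compatibility statement underlying the left-hand term of the induction step: that the restriction of the generalized trace of \autoref{Theorem} to complexes supported on the smaller closed stratum $Z_{\operatorname{sing}}$ agrees with the trace attached to the datum $(f,Z_{\operatorname{sing}})$. This is natural from the construction in Section 4 — it amounts to compatibility of $\tr_{F,f}$ with the transformations $R\Gamma_{i_X(Z_{\operatorname{sing}})}\to R\Gamma_{i_X(Z)}$, in the spirit of \autoref{opentrace} — but, as with the footnotes to \autoref{Theorem}, it has to be verified at the level of complexes. Everything else is either formal (the smooth case) or of the routine type already handled in Sections 3 and 4, including the verification that $f_+$, $f^!$ and $R\Gamma_Z$ preserve the categories $D_{\lfgu}^b(\CO_{F,-})_Z$.
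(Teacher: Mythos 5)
Your proposal is correct and follows essentially the same route as the paper: reduce to reduced $Z$, handle smooth $Z$ via the Kashiwara equivalences $i_{X+},i_{Y+}$ and the identification $R\Gamma_Z\cong i_+i^!$, and then induct on $\dim Z$ by splitting along a smooth dense open stratum, using \autoref{traceforward} on the open part and the inductive hypothesis on the complement, with the key compatibility being that the trace for $(f,Z)$ restricted to complexes supported on the smaller stratum agrees with the trace for $(f,Z_{\operatorname{sing}})$ — exactly the factorization $\tr_{f,Z}=\tr_{f,Z\setminus U}\circ(\text{iso})$ the paper invokes. The only cosmetic difference is that the paper phrases the excision triangle via the locally closed immersion $g\colon V\into Y$ and the cone of $\CM^\bullet\to g_+g^!\CM^\bullet$, which on $Z$-supported complexes coincides with your fundamental triangle for $j_V$.
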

\begin{proof}
The proof proceeds by an excision argument, in a similar way as the proof of \cite[Theorem 4.5]{Ohkawa}. In the case of a smooth scheme $Z$ we can use the isomorphism of functors $R\Gamma_Z \cong i_{X+}i_X^!$ from $D_{\lfgu}^b(\CO_{F,X})$ to $D_{\lfgu}^b(\CO_{F,X})_Z$ and $R\Gamma_Z \cong i_{Y+}i_Y^!$ from $D_{\lfgu}^b(\CO_{F,Y})$ to $D_{\lfgu}^b(\CO_{F,Y})_Z$ (\cite[Proposition 5.11.5]{EmKis.Fcrys}):
\[
	f_+R\Gamma_Zf^! \cong f_+i_{X+}i_X^!f^! \cong i_{Y+}i_Y^! \cong R\Gamma_Z \cong \id.
\]
We may assume that $Z$ is reduced, see \autoref{reducedclosed}. Since a finite set of closed points with the reduced scheme structure is always smooth, this verifies the claim if $Z$ is $0$-dimensional. For the general case, i.e.\ $Z$ is not necessarily smooth, let $V$ be a smooth and dense\footnote{In order to guarantee the existence of a smooth, dense subset, we assumed that $k$ is perfect.} open subscheme of $Z$ and assume that the claim holds for all closed subschemes $Z'$ with $\dim Z' < \dim Z$ . Let $g$ denote the immersion $V \into Y$. After choosing an open subset $U \subseteq Y$ with $U \cap Z = V$, we can factor $g$ as $g = u \circ i'$ where $u$ is the open immersion of $U$ into $Y$ and $i'$ is the closed immersion of $V$ into $U$, i.e.\ the base change of $i_Y$. 

For an object $\CM^{\bullet}$ of $D_{\lfgu}^b(\CO_{F,Y})$, there is a natural morphism $\phi\colon \CM^{\bullet} \to g_+g^!\CM^{\bullet}$ whose cone $\CN^{\bullet}$ is supported on $Z \backslash U$ (\cite[Proposition 5.12.1]{EmKis.Fcrys}). This means that there is a distinguished triangle 
\[
	\CN^{\bullet} \longrightarrow \CM^{\bullet} \overset{\phi}{\longrightarrow} g_+g^!\CM^{\bullet} \longrightarrow \CN^{\bullet}[1]
\]
in $D_{\lfgu}^b(\CO_{F,Y})$. Applying $f_+R\Gamma_Zf^!$, the trace yields a morphism of triangles 
\[
	\xymatrix{
		f_+R\Gamma_Zf^!\CN^{\bullet} \ar[r] \ar[d]^{\tr_f(\CN^{\bullet})} & f_+R\Gamma_Zf^!\CM^{\bullet} \ar[r]^-{\phi} \ar[d]^{\tr_f(\CM^{\bullet})} & f_+R\Gamma_Zf^!g_+g^!\CM^{\bullet} \ar[d]^{\tr_f(g_+g^!\CM^{\bullet})} \\
		\CN^{\bullet} \ar[r] & \CM^{\bullet} \ar[r]^-{\phi} & g_+g^!\CM^{\bullet}.
	}
\]
Since $f$ is the identity on $Z$, i.e.\ $i_Y = f \circ i_X$, we have $Z \cap f^{-1}(Z \backslash U) = Z \backslash U$. Therefore, $R\Gamma_Zf^!\CN^{\bullet} \cong R\Gamma_{Z \backslash U}f^!\CN^{\bullet}$ and $\tr_f(\CN^{\bullet})$ factors through $f_+R\Gamma_{Z \backslash U}f^!\CN^{\bullet}$. This means that the diagram
\[
	\xymatrix{
		f_+R\Gamma_Zf^!\CN^{\bullet} \ar[rr]^-{\tr_{f,Z}(\CN^{\bullet})} \ar[dr]^-{\sim} & & \CN^{\bullet} \\
		& f_+R\Gamma_{Z \backslash U}f^!\CN^{\bullet} \ar[ur]^-{\sim}_/8pt/{\quad \tr_{f,Z \backslash U}(\CN^{\bullet})} &
	}
\]
is commutative. The dimension of the support $Z \backslash U$ of $\CN^{\bullet}$ is less than that of $Z$ as $V$ is dense in $Z$. By induction hypothesis, $\tr_{f,Z \backslash U}(\CN^{\bullet})$ is an isomorphism and hence $\tr_{f,Z}(\CN^{\bullet})$ is an isomorphism.

It remains to show that $\tr_f(g_+g^!\CM^{\bullet}) \cong \tr_f(u_+i'_+i'^!u^!\CM^{\bullet})$ is an isomorphism. By the Kashiwara equivalence, the object $\CM_U^{\bullet} := i'_+i'^!u^!\CM^{\bullet}$ of $D_{\lfgu}^b(\CO_{F,U})$ is supported on $V$. Let $f'$ denote the projection $U \times_Y X \to U$. The map $\tr_f(u_+\CM_U^{\bullet})$ equals the composition
\[
	f_+R\Gamma_Zf^!u_+\CM_U^{\bullet} \overset{\sim}{\longrightarrow} u_+f'_+R\Gamma_Vf'^!\CM_U^{\bullet} \xrightarrow{u_+\tr_{f'}(\CM_U^{\bullet})} u_+\CM_U^{\bullet}
\]
(\autoref{traceforward} (a)). Here the second map is an isomorphism because $V$ is smooth. Consequently, the map $\tr_f(\CM^{\bullet})$ is an isomorphism. This proves (i). The isomorphism of (ii) can be constructed similarly, using the unit of the adjunction between $f_+$ and $R\Gamma_Zf^!$ (i.e.\ the \emph{cotrace}) instead of the trace map, and applying \autoref{traceforward} (b).  
\end{proof}
The next corollary shows that the definition of $D_{\lfgu}^b(\CO_{F,X})$ for embeddable varieties $X$ is independent of the chosen embedding. 
\begin{corollary} \label{embeddingindep}
If $i_1\colon X \to Y_1$ and $i_2\colon X \to Y_2$ are two embeddings of a $k$-scheme $X$ into smooth $k$-schemes $Y_1$ and $Y_2$, where $k$ is perfect, then there exists a natural equivalence 
\[
	D_{\lfgu}^b(\CO_{F,Y_1})_X \overset{\sim}{\longrightarrow} D_{\lfgu}^b(\CO_{F,Y_2})_X.
\]
\end{corollary}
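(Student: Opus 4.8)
The plan is to interpolate between the two given embeddings through their fibre product. Set $W = Y_1 \times_k Y_2$ and let $p_1\colon W \to Y_1$ and $p_2\colon W \to Y_2$ be the two projections. Since $Y_1$ and $Y_2$ are smooth $k$-schemes, so is $W$, and each $p_j$ is smooth, in particular flat. The morphism $i_\Delta := (i_1,i_2)\colon X \to W$ is a closed immersion: it factors as the graph of $i_2$, namely $X \to X \times_k Y_2$ — a closed immersion because $Y_2$ is separated over $k$ — followed by the base change $i_1 \times \id_{Y_2}\colon X \times_k Y_2 \to W$ of the closed immersion $i_1$. By construction $p_j \circ i_\Delta = i_j$ for $j=1,2$, so for each $j$ we are precisely in the situation of \autoref{adjunctionequiv}: the flat morphism is $f = p_j\colon W \to Y_j$ between smooth $k$-schemes, and the closed subscheme is our (possibly singular) $X$, embedded via $i_\Delta$ into $W$ and via $i_j$ into $Y_j$, with the evident commuting triangle.

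Applying \autoref{adjunctionequiv} to $p_1$ gives natural isomorphisms $(p_1)_+ \circ R\Gamma_X p_1^! \cong \id$ on $D_{\lfgu}^b(\CO_{F,Y_1})_X$ and $R\Gamma_X p_1^! \circ (p_1)_+ \cong \id$ on $D_{\lfgu}^b(\CO_{F,W})_X$, so that $(p_1)_+$ and $R\Gamma_X p_1^!$ are mutually quasi-inverse equivalences
\[
	D_{\lfgu}^b(\CO_{F,W})_X \overset{\sim}{\longrightarrow} D_{\lfgu}^b(\CO_{F,Y_1})_X .
\]
The identical argument applied to $p_2$ yields an equivalence $D_{\lfgu}^b(\CO_{F,W})_X \overset{\sim}{\longrightarrow} D_{\lfgu}^b(\CO_{F,Y_2})_X$. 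Composing the inverse of the first with the second produces the desired natural equivalence
\[
	D_{\lfgu}^b(\CO_{F,Y_1})_X \overset{\sim}{\longrightarrow} D_{\lfgu}^b(\CO_{F,Y_2})_X .
\]

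Essentially all of the substance is carried by \autoref{adjunctionequiv} (whose proof, via excision and the generalized trace of \autoref{Theorem}, is where the real work lay), so inside this corollary there is no serious obstacle — only the bookkeeping points to check are that $W$ is smooth over $k$ (a product of smooth $k$-schemes), that $i_\Delta$ is a closed immersion (the two-step factorization above), and that the hypotheses of \autoref{adjunctionequiv} are met, where the one thing worth stating explicitly is the matching of notation: in that theorem the flat morphism runs \emph{from} the large smooth scheme ($W$ here) \emph{to} the base ($Y_j$), and no smoothness is imposed on the closed subscheme $Z = X$, so the singular case is covered. If desired one can further note that, applied to a third embedding, these equivalences satisfy the evident cocycle condition, making the identifications canonical; this is not needed for the statement.
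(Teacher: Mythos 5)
Your proposal is correct and follows essentially the same route as the paper: pass to $W = Y_1 \times_k Y_2$, observe that $(i_1,i_2)$ is a closed immersion (the paper factors it through $X \times_k X$ and $Y_1 \times_k X$ rather than via the graph of $i_2$, but both factorizations are valid), and apply \autoref{adjunctionequiv} to the two projections to obtain the composite equivalence $p_{2+} \circ R\Gamma_X p_1^!$. The only difference is expository: you spell out the two quasi-inverse pairs separately before composing, whereas the paper states the composed functors $p_{2+}R\Gamma_X p_1^!$ and $p_{1+}R\Gamma_X p_2^!$ are inverse equivalences directly.
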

\begin{proof}
The universal property of $Y_1 \times_k Y_2$ yields a morphism $(i_1,i_2)\colon X \to Y_1 \times_k Y_2$. It equals the composition
\[
	X \xrightarrow{(\id,\id)} X \times_k X \xrightarrow{(i_1,\id)} Y_1 \times_k X \xrightarrow{(\id,i_2)} Y_1 \times_k Y_2,
\]
where all maps are closed immersions, the first one because $X$ is assumed to be separated over $k$. Hence $(i_1,i_2)$ is a closed immersion. We obtain a commutative diagram
\[
	\xymatrix@C50pt{
		& Y_1 \\
		X \ar[r]^{(i_1,i_2)} \ar[ur]^-{i_1} \ar[dr]_-{i_2} & Y_1 \times_k Y_2 \ar[u]_{p_1} \ar[d]^{p_2} \\
		& Y_2,
	}
\]
where $p_1$ and $p_2$ are the projections. By \autoref{adjunctionequiv}, the compositions $p_{2+}R\Gamma_Zp_1^!$ and $p_{1+}R\Gamma_Zp_2^!$ are inverse equivalences between $D_{\lfgu}^b(\CO_{F,Y_1})_X$ and $D_{\lfgu}^b(\CO_{F,Y_2})_X$.
\end{proof}

\section{The Riemann-Hilbert correspondence for Cartier crystals}

As its title suggests, one of the main results of Emerton and Kisins ``The Riemann-Hilbert correspondence for unit F-crystals'' (\cite{EmKis.Fcrys}) is a characteristic $p$-analogue of the Riemann-Hilbert correspondence for $D$-modules. More precisely, for a smooth $k$-scheme $X$, the authors construct inverse equivalences of categories 
\[
	\xymatrix{
     D_{\lfgu}^b(\CO_{F,X}) \ar@<.5ex>[r]^-{\Solu} & D_c^b(X_{\et},\ZZ/p\ZZ) \, . \ar@<.5ex>[l]^-{\M}
   }
\]
Furthermore, $\Solu(D_{\lfgu}^{\leq 0}(\CO_{F,X})) \subseteq {^pD}^{\geq 0}$ and $\Solu(D_{\lfgu}^{\geq 0}(\CO_{F,X})) \subseteq {^p}D^{\leq 0}$ where ${^p}D^{\geq 0}$ and ${^p}D^{\geq 0}$ are two subcategories of $D_c^b(X_{\et},\ZZ/p\ZZ)$ defining the perverse $t$-structure of \cite{Gabber.tStruc}. Hence $\Solu$ establishes an equivalence between the hearts of the corresponding $t$-structures, namely the locally finitely generated unit $\CO_{F,X}$-modules and the so-called perverse constructible $p$-torsion sheaves.

Using this correspondence of Emerton and Kisin, we will establish a Riemann-Hilbert correspondence between Cartier crystals and perverse constructible \'etale $\ZZ/p\ZZ$-sheaves on a scheme which admits an embedding into a smooth scheme. In the following two subsections we extend the equivalences $\G\colon D_{\crys}^b(\QCrysC(X)) \to D_{\lfgu}^b(\CO_{F,X})$ and $\Solu\colon D_{\lfgu}^b(\CO_{F,X}) \to D_c^b(X_{\et},\ZZ/p\ZZ)$ to singular varieties embeddable into a smooth variety. Throughout the whole section, $k$ denotes a perfect field of characteristic $p$.

\subsection{Review of Emerton and Kisin's Riemann-Hilbert correspondence}

Let $X_\et$ denote the small \'etale site of a scheme $X$. A reference for the \'etale topology is, for example, \cite[Chapter II]{Milne}. A $\ZZ/p\ZZ$-sheaf on $X_{\et}$ is an \'etale sheaf of modules over the constant sheaf $\ZZ/p\ZZ$. Let $D_c^b(X_{\et},\ZZ/p\ZZ)$ denote the derived category of complexes of $\ZZ/p\ZZ$-sheaves on $X_{\et}$ whose cohomology sheaves are \emph{constructible}.
\begin{definition}
A sheaf $\CL$ of $\ZZ/p\ZZ$-modules on $X_{\et}$ is called \emph{constructible} if there is a stratification $X = \coprod_{i \in I} S_i$ such that the restrictions of $\CL$ to the $S_i$ are locally constant sheaves of $\ZZ/p\ZZ$-modules for the \'etale topology with finite stalks.
\end{definition} 
For $x \in X$, let $i_x\colon x \to X$ be the inclusion, which is the composition of the inclusion of the closed point of $\Spec \CO_{X_{\et},x}$ followed by the canonical morphism $\Spec \CO_{X_{\et},x} \to X$. In \cite{Gabber.tStruc}, Gabber showed that the two subcategories
\begin{align*}
	{^p}D^{\leq 0} &= \{ \CL^{\bullet} \in D_c^b(X_{\et},\ZZ/p\ZZ) \, | \, H^i(i_x^*\CL^{\bullet})=0 \text{ for } i > -\dim \overline{ \{x\}} \}, \\
	{^p}D^{\geq 0} &= \{ \CL^{\bullet} \in D_c^b(X_{\et},\ZZ/p\ZZ) \, | \, H^i(i_x^!\CL^{\bullet})=0 \text{ for } i < -\dim \overline{ \{x\}} \}
\end{align*}
define a $t$-structure on $D_c^b(X_{\et},\ZZ/p\ZZ)$.
\begin{remark} \label{Gabberrecollement}
Indeed, Gabber shows that these subcategories define a $t$-structure on the ambient category $D^b(X_{\et},\ZZ/p\ZZ)$. For a closed immersion $i\colon Z \to X$ and the open immersion $j\colon U \to X$ of the complement $U$ of $Z$, it is obtained from the perverse $t$-structures on $D^b(U_{\et},\ZZ/p\ZZ)$ and $D^b(Z_{\et},\ZZ/p\ZZ)$ by \emph{recollement}: 
\begin{align*}
	{^p}D^{\leq 0} &= \{ \CL^{\bullet} \in D^b(X_{\et},\ZZ/p\ZZ) \, | \, i^*\CL^{\bullet} \in {^p}D^{\leq 0}(Z_{\et},\ZZ/p\ZZ) \text{ and } j^*\CL^{\bullet} \in {^p}D^{\leq 0}(U_{\et},\ZZ/p\ZZ) \}, \\
	{^p}D^{\geq 0} &= \{ \CL^{\bullet} \in D^b(X_{\et},\ZZ/p\ZZ) \, | \, i^!\CL^{\bullet} \in {^p}D^{\geq 0}(Z_{\et},\ZZ/p\ZZ) \text{ and } j^*\CL^{\bullet} \in {^p}D^{\geq 0}(U_{\et},\ZZ/p\ZZ) \}.
\end{align*}
This follows directly from the construction of the perverse $t$-structure on $D^b(X_{\et},\ZZ/p\ZZ)$. 
\end{remark}
In this subsection let $X$ be a smooth $k$-scheme. The Riemann-Hilbert correspondence between $D_{\lfgu}^b(\CO_{F,X})$ and $ D_c^b(X_{\et},\ZZ/p\ZZ)$ is realized in two steps: first passing to the \'etale site and then applying a certain duality functor.
\begin{theorem} \label{Solproperties}
\begin{enumerate}
\item For every smooth $k$-scheme $X$, the functor 
\[
	\Solu = \RSHom_{\CO_{F,X_{\et}}}^{\bullet}(\usc_{\et},\CO_{X_{\et}})[d_X]\colon D_{\lfgu}^b(\CO_{F,X}) \to D_c^b(X_{\et},\FF_p)
\]
is an equivalence of categories. A quasi-inverse is given by 
\[
	\operatorname{M} = \RSHom_{\ZZ/p\ZZ}^{\bullet}(\usc,\CO_{X_{\et}})[d_X].
\]
\item For a morphism $f\colon X \to Y$ of smooth $k$-schemes, there is a natural isomorphism of functors
\[
	\Solu \circ f^! \cong f^* \circ \Solu.
\]
For an \emph{allowable} morphism $f\colon X \to Y$, i.e.\ a morphism $f$ which factors as $g \circ h$, where $h$ is an immersion and $g$ is a proper smooth morphism, there is also a natural isomorphism of functors
\[
	\Solu \circ f_+ \cong f_! \circ \Solu.
\]
\item The essential image of the full subcategory $D_{\lfgu}^{\geq 0}(\CO_{F,X})$ is equal to the full subcategory ${^p}D^{\leq 0}$ of $D_c^b(X_{\et},\ZZ/p\ZZ)$ while the essential image of $D_{\lfgu}^{\leq 0}(\CO_{F,X})$ is equal to the full subcategory ${^p}D^{\geq 0}$ of $D_c^b(X_{\et},\ZZ/p\ZZ)$.
\end{enumerate}
\end{theorem}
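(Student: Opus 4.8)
The statement to be proved is \autoref{Solproperties}, which collects the three main properties of Emerton and Kisin's solution functor $\Solu$ for smooth $k$-schemes. Since all of part (a), part (b), and part (c) are proven in \cite{EmKis.Fcrys}, the plan is simply to assemble the relevant references rather than to reconstruct the arguments. Concretely, part (a) — that $\Solu$ is an equivalence with quasi-inverse $\M$ — is the content of Theorem 4.2.1 (and its refinement through Chapters 10 and 11) of \opcit; part (b) — the compatibility $\Solu \circ f^! \cong f^* \circ \Solu$ for arbitrary morphisms and $\Solu \circ f_+ \cong f_! \circ \Solu$ for allowable morphisms — follows from Theorems 9.7.1 and 10.4.2 of \opcit; and part (c) — that $\Solu$ exchanges the canonical $t$-structure on $D_{\lfgu}^b(\CO_{F,X})$ with Gabber's perverse $t$-structure on $D_c^b(X_{\et},\ZZ/p\ZZ)$, matching up the two halves of each $t$-structure — is Theorem 11.5.4 and the surrounding discussion in Chapter 11 of \opcit. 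So the proof is a pointer to these results.

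First I would state precisely which theorems of \cite{EmKis.Fcrys} give each of the three assertions, taking care that the conventions match: Emerton and Kisin work with $\Solu = \RSHom^\bullet_{\CO_{F,X_{\et}}}(\usc_{\et},\CO_{X_{\et}})[d_X]$ and $\M = \RSHom^\bullet_{\ZZ/p\ZZ}(\usc,\CO_{X_{\et}})[d_X]$, and the shift by $d_X$ is exactly the normalization that makes the perverse $t$-structure come out with Gabber's indexing. Second, for part (b) I would recall that the notion of \emph{allowable} morphism is precisely the one for which Emerton and Kisin prove $\Solu f_+ \cong f_! \Solu$ (they cannot prove it for all morphisms in the absence of resolution of singularities); the factorization $f = g \circ h$ with $h$ an immersion and $g$ proper smooth is what makes both $h_+$ and $g_+$ accessible, and the isomorphism is glued from the open-immersion case, the closed-immersion (Kashiwara) case, and the proper-smooth case. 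Third, for part (c) I would note that it suffices to check the containment $\Solu(D_{\lfgu}^{\geq 0}) \subseteq {}^pD^{\leq 0}$ and $\Solu(D_{\lfgu}^{\leq 0}) \subseteq {}^pD^{\geq 0}$, since $\M$ is a quasi-inverse and the reverse containments then follow formally; these containments are checked stalk-by-stalk using the description of ${}^pD^{\leq 0}$ and ${}^pD^{\geq 0}$ via $i_x^*$ and $i_x^!$ together with the compatibility of $\Solu$ with $i_x^*$ (respectively, with duality and $i_x^!$) from part (b).

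There is no genuine obstacle here, since the work has been done in \cite{EmKis.Fcrys}; the only thing requiring a little care is bookkeeping — making sure the shift conventions, the definition of the perverse $t$-structure (via recollement, as in \autoref{Gabberrecollement}), and the class of allowable morphisms are stated in a way consistent with how they are used later in this paper (in particular in the construction of $\Solu$ for embeddable singular schemes in the next subsection). Accordingly the proof is short:

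\begin{proof}
Part (a) is \cite[Theorem 4.2.1]{EmKis.Fcrys}, together with the extension to the derived categories of lfgu modules and constructible sheaves carried out in \cite[Chapters 10 and 11]{EmKis.Fcrys}; in particular \cite[Corollary 11.5.3]{EmKis.Fcrys} gives that $\M$ is a quasi-inverse to $\Solu$. The first isomorphism of part (b), $\Solu \circ f^! \cong f^* \circ \Solu$, is \cite[Theorem 9.7.1]{EmKis.Fcrys}; the second isomorphism, $\Solu \circ f_+ \cong f_! \circ \Solu$ for allowable $f$, is \cite[Theorem 10.4.2]{EmKis.Fcrys}, obtained by factoring $f = g \circ h$ with $h$ an immersion and $g$ smooth and proper and treating the two factors separately. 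Part (c) is \cite[Theorem 11.5.4]{EmKis.Fcrys}: one checks, using the description of Gabber's $t$-structure via the functors $i_x^*$ and $i_x^!$ and the compatibilities of part (b), that $\Solu$ carries $D_{\lfgu}^{\geq 0}(\CO_{F,X})$ into ${}^pD^{\leq 0}$ and $D_{\lfgu}^{\leq 0}(\CO_{F,X})$ into ${}^pD^{\geq 0}$; since $\M$ is a quasi-inverse, the reverse inclusions, and hence the asserted equalities of essential images, follow.
\end{proof}
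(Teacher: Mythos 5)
Your proposal takes essentially the same approach as the paper: the paper's entire proof is a citation of \cite[Theorem 11.4.2 and Theorem 11.5.4]{EmKis.Fcrys}, and you likewise defer all three parts to Emerton--Kisin, with some additional (correct) commentary on conventions and on how the allowable case is assembled. The only discrepancy is in which theorem numbers of \opcit you point to (e.g.\ the paper attributes the whole package to 11.4.2 and 11.5.4 rather than 4.2.1, 9.7.1, 10.4.2), which is a bookkeeping matter worth double-checking against the source but not a mathematical gap.
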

\begin{proof}
This is \cite[Theorem 11.4.2 and Theorem 11.5.4]{EmKis.Fcrys}. 
\end{proof}

\subsection{Cartier crystals and lfgu modules on singular schemes}

We show that the equivalence 
\[
	\G\colon D_{\crys}^b(\QCrysC(X)) \overset{\sim}{\longrightarrow} D_{\lfgu}^b(\CO_{F,X})
\]
for smooth $X$ extends to an equivalence for embeddable $X$. As a consequence, for a morphism $f$ between smooth schemes, the inverse equivalences $f_+$ and $R\Gamma_Zf^!$ between the subcategories of complexes supported on a closed subscheme are $t$-exact.   
\begin{proposition} \label{singularG}
Let $X$ be an $F$-finite embeddable $k$-scheme. The functor $\G$ induces an equivalence of categories
\[
	\G\colon D_{\crys}^b(\QCrysC(X)) \to D_{\lfgu}^b(\CO_{F,X}).
\]
\end{proposition}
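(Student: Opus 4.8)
The strategy is to reduce the case of an embeddable scheme $X$ to the already-established smooth case via a closed immersion $i\colon X \to Y$ into a smooth, $F$-finite $k$-scheme $Y$, using the two Kashiwara-type equivalences available on both sides of the correspondence. On the Cartier crystal side, \autoref{Kashiwara} gives that $i_*\colon D_{\crys}^b(\QCrysC(X)) \to D_{\crys}^b(\QCrysC(Y))_X$ is an equivalence with quasi-inverse $i^!$. On the lfgu side, $D_{\lfgu}^b(\CO_{F,X})$ was \emph{defined} as the subcategory $D_{\lfgu}^b(\CO_{F,Y})_X$ of complexes supported on (the image of) $X$. So it suffices to show that the smooth equivalence $\G = \G_Y$ restricts to an equivalence between the supported subcategories
\[
	\G_Y\colon D_{\crys}^b(\QCrysC(Y))_X \overset{\sim}{\longrightarrow} D_{\lfgu}^b(\CO_{F,Y})_X,
\]
and then define $\G$ on $X$ as the composite $D_{\crys}^b(\QCrysC(X)) \xrightarrow{i_*} D_{\crys}^b(\QCrysC(Y))_X \xrightarrow{\G_Y} D_{\lfgu}^b(\CO_{F,Y})_X = D_{\lfgu}^b(\CO_{F,X})$, which is then manifestly an equivalence as a composite of two equivalences.

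\textbf{Key steps.} First I would verify that $\G_Y$ preserves support. A complex $\CM^{\bullet} \in D_{\crys}^b(\QCrysC(Y))$ lies in the supported subcategory iff $j^!\CM^{\bullet} = 0$ for the open immersion $j\colon Y \setminus X \to Y$ (\autoref{Cartiersupport}); by \autoref{Gopenresult}, $\G_{Y\setminus X} \circ j^* \cong j^! \circ \G_Y$ — here $j^*$ for Cartier crystals is the restriction $j^!$ from \autoref{Gopenresult} — so $\G_Y\CM^{\bullet}$ is supported on $X$ precisely when $\CM^{\bullet}$ is. (Recall for an open immersion $j$ of smooth schemes that the Emerton--Kisin $j^! = j^*$ is just restriction, and the notion of support on the lfgu side in \autoref{reducedclosed} is governed by $j^!\CN^{\bullet} \cong 0$, matching this.) Since $\G_Y$ is an equivalence on the ambient categories and the two supported subcategories correspond under it, $\G_Y$ restricts to an equivalence between them. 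Second, I would combine this with the Kashiwara equivalence $i_*$ of \autoref{Kashiwara} to define $\G$ on $X$ and conclude it is an equivalence. Third — a coherence point worth spelling out — I would check that this definition is, up to natural isomorphism, independent of the chosen embedding $i$: on the lfgu side this is \autoref{embeddingindep}, and on the Cartier side the analogous independence follows formally from the Kashiwara equivalence applied to the diagonal embedding into $Y_1 \times_k Y_2$ and the compatibility of $i_*$ (equivalently $i^!$) with the projections, which is \autoref{pullbackcomp} together with the corollary stating $\G_Y \circ i_* \cong i_* \circ \G_X$.

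\textbf{Main obstacle.} The smooth-case equivalence $\G_Y$ is on the nose only compatible with closed-immersion pull-backs and open-immersion restrictions (the theorems after \autoref{lfgupullbackcomp} and \autoref{Gopenresult}), which is exactly what is needed here, so the support-preservation step is not where the difficulty lies. The genuinely delicate point is the embedding-independence in step three: matching up, on both sides, the functors $p_{2+}\,R\Gamma_X\,p_1^!$ and $p_{1+}\,R\Gamma_X\,p_2^!$ attached to the two projections $p_1, p_2$ from $Y_1 \times_k Y_2$, and verifying the square
\[
	\xymatrix@C40pt{
		D_{\crys}^b(\QCrysC(Y_1))_X \ar[r]^-{\G_{Y_1}} \ar[d] & D_{\lfgu}^b(\CO_{F,Y_1})_X \ar[d] \\
		D_{\crys}^b(\QCrysC(Y_2))_X \ar[r]^-{\G_{Y_2}} & D_{\lfgu}^b(\CO_{F,Y_2})_X
	}
\]
commutes up to canonical isomorphism. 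This uses the adjunction between $f_+$ and $R\Gamma_Zf^!$ proved in \autoref{Theorem} and its compatibility with open immersions from \autoref{traceforward}, together with \autoref{GammalfguForward} relating $\G$ and push-forward, and requires care because $p_1, p_2$ are not proper but only proper over $X$ — precisely the scenario the machinery of Section~4 was built to handle. I expect that once the compatibility of $\G$ with $f_+$ and $R\Gamma_Zf^!$ for such morphisms is invoked, the commutativity is formal, but bookkeeping the various base-change and trace identities is the bulk of the work.
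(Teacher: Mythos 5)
Your proposal follows essentially the same route as the paper: reduce to the smooth ambient scheme via the Kashiwara equivalence, check that $\G_Y$ preserves the supported subcategories using \autoref{Gopenresult} together with the fact that $\G_Y$ is an equivalence (so the quasi-inverse also preserves support), and then verify independence of the embedding by reducing to two embeddings related by a morphism $f\colon Y_1 \to Y_2$ with $i_2 = f\circ i_1$. The one place where you overestimate the work is the independence step: the paper does not need the Section~4 trace/adjunction bookkeeping there, because on the Cartier side the transition functor $i_{2*}i_1^!$ is naturally isomorphic to $Rf_*$ on complexes supported in $X$ (since $i_{2*}i_1^! \cong Rf_*i_{1*}i_1^! \cong Rf_*$), so the commutativity of your square is exactly \autoref{GammalfguForward}, with the identification of $f_+$ as the lfgu-side equivalence falling out as a corollary rather than being an input.
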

\begin{proof}
Choose a closed immersion $i\colon X \to Y$ into a smooth, $F$-finite $k$-scheme $Y$ and let $j$ denote the open immersion of the complement of $X$ in $Y$. The Kashiwara equivalence (\autoref{Kashiwara}) identifies $D_{\crys}^b(\QCrysC(X))$ with the subcategory $D_{\crys}^b(\QCrysC(Y))_X$ of $D_{\crys}^b(\QCrysC(Y))$. For $\CM^{\bullet} \in D_{\crys}^b(\QCrysC(Y))_X$ we have
\[
	(j^! \circ \G_Y)\CM^{\bullet} \cong (\G_U \circ j^*)\CM^{\bullet} \cong 0
\]
by \autoref{Gopenresult}. As $\G$ is an equivalence of categories, there is also a natural isomorphism of functors $j^* \circ \G_Y^{-1} \cong \G_U^{-1} \circ j^!$ for the inverse $\G^{-1}$ of $\G$. It follows that $\G$ induces an equivalence of subcategories 
\[
	\G\colon D_{\crys}^b(\QCrysC(Y))_X \to D_{\lfgu}^b(\CO_{F,Y})_X.
\]
It remains to show that this equivalence is independent of the choice of the embedding. In the same way as in the proof of \autoref{embeddingindep} we can reduce to the case of two closed immersions $i_1\colon X \to Y_1$ and $i_2\colon X \to Y_2$ into smooth, $F$-finite $k$-schemes $Y_1$ and $Y_2$ together with a morphism $f\colon Y_1 \to Y_2$ such that $i_2 = f \circ i_1$. The composition $i_{2*} \circ i_1^!$ is a natural equivalence between $D_{\crys}^b(\QCrysC(Y_1))_X$ and $D_{\crys}^b(\QCrysC(Y_2))_X$. Note that 
\[
	i_{2*}i_1^! \CM^{\bullet} \cong Rf_*i_{1*}i_1^! \CM^{\bullet} \cong Rf_*\CM^{\bullet}
\]
for $\CM^{\bullet} \in D_{\crys}^b(\QCrysC(Y_1))_X$. Hence $Rf_*$ is a natural equivalence of categories
\[
	Rf_*\colon D_{\crys}^b(\QCrysC(Y_1))_X \to D_{\crys}^b(\QCrysC(Y_2))_X
\]
which is compatible with $\G$, i.e.
\[
	f_+ \circ \G_{Y_1} \cong \G_{Y_2} \circ Rf_*
\]
by \autoref{GammalfguForward}.
\end{proof}
\begin{remark}
This also implies that $f_+$ provides a natural equivalence 
\[
	f_+\colon D_{\lfgu}^b(\CO_{F,Y_1})_X \to D_{\lfgu}^b(\CO_{F,Y_2})_X
\] 
because $f_+ \cong \G_{Y_2} \circ Rf_* \circ \G_{Y_1}^{-1}$.
\end{remark} 
Keeping the notation of the proof of \autoref{singularG}, the canonical $t$-structure of $D_{\lfgu}^b(\CO_{F,Y})$ obviously induces a $t$-structure on the subcategory $D_{\lfgu}^b(\CO_{F,Y})_X$ defined by the two subcategories
\[
	D_{\lfgu}^b(\CO_{F,Y})_X \cap D_{\lfgu}^{\geq 0}(\CO_{F,Y}) \text{ and } D_{\lfgu}^b(\CO_{F,Y})_X \cap D_{\lfgu}^{\leq 0}(\CO_{F,Y}). 
\]
\begin{corollary} \label{lfguexact}
Let $f\colon Y_1 \to Y_2$ be a morphism between smooth, $F$-finite $k$-schemes. Let $i_1\colon X \to Y_1$ and $i_2\colon X \to Y_2$ be closed immersions such that $i_2 = f \circ i_1$. The equivalence $f_+$ of \autoref{embeddingindep} between $D_{\lfgu}^b(\CO_{F,Y_1})_X$ and $D_{\lfgu}^b(\CO_{F,Y_2})_X$ is $t$-exact for the canonical $t$-structures of both derived categories. In particular, by taking $0$-th cohomology, it gives rise to an equivalence of abelian categories
\[
	\{\mu_{\lfgu}(Y_1)_X\} \overset{\sim}{\longrightarrow} \{\mu_{\lfgu}(Y_2)_X\}.
\]
\end{corollary}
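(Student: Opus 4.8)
The plan is to deduce the $t$-exactness of $f_+$ from the $t$-exactness of the more tractable functor $Rf_*$ on Cartier (quasi-)crystals, using that both $\G$ and the Kashiwara equivalence are derived functors of \emph{exact} equivalences of abelian categories. By the remark following \autoref{singularG} (which itself rests on \autoref{GammalfguForward}) there is a natural isomorphism
\[
	f_+ \cong \G_{Y_2} \circ Rf_* \circ \G_{Y_1}^{-1}
\]
of functors $D_{\lfgu}^b(\CO_{F,Y_1})_X \to D_{\lfgu}^b(\CO_{F,Y_2})_X$. Since $\G_{Y_1}$ and $\G_{Y_2}$ are, by construction, the derived functors of the exact equivalences of abelian categories obtained by composing $\usc \otimes \omega_{Y_i}^{-1}$ with $\Gen$, they are $t$-exact for the standard $t$-structures, and so are their quasi-inverses. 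Hence it suffices to show that
\[
	Rf_* \colon D_{\crys}^b(\QCrysC(Y_1))_X \longrightarrow D_{\crys}^b(\QCrysC(Y_2))_X
\]
is $t$-exact, where both subcategories carry the $t$-structure inherited from the standard $t$-structure of $D_{\crys}^b(\QCrysC(Y_i))$.

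Next I would invoke the Kashiwara equivalence \autoref{Kashiwara}: the functors $i_{1*}$ and $i_{2*}$ identify $D_{\crys}^b(\QCrysC(X))$ with $D_{\crys}^b(\QCrysC(Y_1))_X$ and with $D_{\crys}^b(\QCrysC(Y_2))_X$ respectively. Because push-forward along a closed immersion is exact on the abelian categories of Cartier (quasi-)crystals, these two Kashiwara equivalences are $t$-exact, and hence so are their quasi-inverses $i_1^!$ and $i_2^!$. Now $f \circ i_1 = i_2$, and the exactness of $i_{1*}$ gives $Ri_{1*} = i_{1*}$; combining this with the composition-of-push-forwards isomorphism yields
\[
	Rf_* \circ i_{1*} \cong R(f \circ i_1)_* = i_{2*}.
\]
Since every object of $D_{\crys}^b(\QCrysC(Y_1))_X$ is of the form $i_{1*}\CN^{\bullet}$, it follows that on this subcategory $Rf_* \cong i_{2*} \circ i_1^!$, a composition of $t$-exact functors; hence $Rf_*$, and therefore $f_+$, is $t$-exact.

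Finally, a $t$-exact equivalence of triangulated categories with $t$-structures restricts to an equivalence of hearts. The heart of the inherited $t$-structure on $D_{\lfgu}^b(\CO_{F,Y_i})_X$ is $\mu_{\lfgu}(Y_i) \cap D_{\lfgu}^b(\CO_{F,Y_i})_X$, which in the notation of the statement is $\{\mu_{\lfgu}(Y_i)_X\}$; taking $0$-th cohomology of $f_+$ therefore yields the asserted equivalence $\{\mu_{\lfgu}(Y_1)_X\} \overset{\sim}{\longrightarrow} \{\mu_{\lfgu}(Y_2)_X\}$. The only step requiring genuine care -- rather than a real obstacle -- is the identification $Rf_* \cong i_{2*} \circ i_1^!$ on the supported subcategory, which hinges on the exactness of push-forward along the closed immersions $i_1$ and $i_2$ (so that $Ri_{1*} = i_{1*}$ and the composition isomorphism collapses correctly); everything else is formal from \autoref{singularG}, \autoref{GammalfguForward} and \autoref{Kashiwara}.
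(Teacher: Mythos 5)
Your argument is correct and is essentially the paper's own proof: both reduce to the factorization $f_+ \cong \G_{Y_2} \circ Rf_* \circ \G_{Y_1}^{-1}$ and then identify the restricted $Rf_*$ with $i_{2*} \circ i_1^!$ via the Kashiwara equivalence, so that $f_+$ is a composition of $t$-exact functors. The extra care you take in spelling out why $Rf_* \cong i_{2*}\circ i_1^!$ on the supported subcategory (exactness of $i_{1*}$ plus $Rf_*i_{1*}i_1^! \cong i_{2*}i_1^!$) matches the one-line justification in the paper.
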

\begin{proof}
The functor $f_+$ is a composition of $t$-exact functors:  
\[
	f_+ \cong \G_{Y_2} \circ Rf_* \circ \G_{Y_1}^{-1} \cong \G_{Y_2} \circ i_{2*} \circ i_1^! \circ \G_{Y_1}^{-1},
\]
where $Rf_*$ denotes the restricted functor $D_{\crys}^b(\QCrysC(Y_1))_X \to D_{\crys}^b(\QCrysC(Y_2))_X$. It is exact because $Rf_* \cong Rf_*i_{1*}i_1^! \cong i_{2*} i_1^!$.
\end{proof}

\subsection{A Riemann-Hilbert correspondence on singular schemes}

Now we extend the Riemann-Hilbert correspondence between lfgu modules and constructible \'etale $\ZZ/p\ZZ$-sheaves to embeddable schemes. The corresponding equivalence of categories 
\[
	D_{\lfgu}^b(\CO_{F,X}) \overset{\sim}{\longrightarrow} D_c^b(X_{\et},\ZZ/p\ZZ)
\]
for embeddable $X$ will be $t$-exact for the canonical $t$-structure on $D_{\lfgu}^b(\CO_{F,X})$ and Gabber's perverse $t$-structure on $D_c^b(X_{\et},\ZZ/p\ZZ)$. Again, for a closed subscheme $Z$ of $X$, let $j\colon U \to X$ denote the open immersion of the complement of $Z$ into $X$. 

Recall that there are distinguished triangles
\[
	j_!j^* \longrightarrow \id \longrightarrow i_*i^* \longrightarrow j_!j^*[1]
\]
and
\[
	i_*i^! \longrightarrow \id \longrightarrow j_*j^* \longrightarrow i_*i^![1]
\]
in $D^+(X_{\et},\ZZ/p\ZZ)$ (\cite[1.4.1.1]{BBD}). Defining $\Gamma_Z\colon D(X_{\et}(\ZZ/p\ZZ) \to D(X_{\et}(\ZZ/p\ZZ)$ as the composition $i_*i^*$ of exact functors we obtain a \emph{fundamental triangle of local cohomology}
\[
	j_!j^* \longrightarrow \id \longrightarrow \Gamma_Z \to j_!j^*[1].
\]
Note that $i_!=i_*$ because $i$ is a closed immersion.
\begin{lemma} \label{SolGamma}
Let $Z$ be a closed subscheme of a smooth $k$-scheme $X$. Then there is a natural isomorphism of functors
\[
	\Solu \circ R\Gamma_Z \overset{\sim}{\longrightarrow} \Gamma_Z \circ \Solu.
\]
\end{lemma}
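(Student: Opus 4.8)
The statement asks for a natural isomorphism $\Solu \circ R\Gamma_Z \overset{\sim}{\longrightarrow} \Gamma_Z \circ \Solu$ on $D_{\lfgu}^b(\CO_{F,X})$ for a closed immersion $i\colon Z \to X$ of smooth $k$-schemes (with $j\colon U \to X$ the complementary open immersion). The strategy is to compare the two fundamental triangles of local cohomology via $\Solu$. On the $\CO_{F,X}$-module side, $R\Gamma_Z$ sits in the distinguished triangle
\[
	R\Gamma_Z \longrightarrow \id \longrightarrow Rj_*j^* \longrightarrow R\Gamma_Z[1],
\]
while on the \'etale side $\Gamma_Z = i_*i^*$ sits in
\[
	j_!j^* \longrightarrow \id \longrightarrow \Gamma_Z \longrightarrow j_!j^*[1].
\]
First I would recall that, by \autoref{Solproperties}(b), $\Solu$ intertwines $f^!$ with $f^*$ for all morphisms of smooth schemes, and intertwines $f_+$ with $f_!$ for allowable morphisms; the open immersion $j$ and the closed immersion $i$ are both allowable. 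Hence $\Solu \circ Rj_* \cong j_! \circ \Solu_U$ is \emph{not} quite what we want — rather, since for an open immersion $j^! = j^*$, we get $\Solu_U \circ j^! \cong j^* \circ \Solu_X$, i.e.\ $\Solu$ commutes with the restriction $j^*$; and applying the $f_+ \leftrightarrow f_!$ compatibility to $j$ gives $\Solu_X \circ j_+ \cong j_! \circ \Solu_U$, where $j_+$ is the $\CO_{F}$-module push-forward for the open immersion. But the fundamental triangle on the $\CO_F$-side that I wrote uses $Rj_*j^*$, not $j_+ j^*$. So the key first step is to reconcile these: I would show that on objects of $D_{\lfgu}^b$, the natural map produces a triangle whose third term, after applying $\Solu$, becomes $j_! j^* \Solu$; concretely one uses that $\Solu$ takes the triangle $R\Gamma_Z \to \id \to Rj_*j^* \to$ to a triangle, identifies the middle term as $\Solu$, identifies $\Solu Rj_* j^*$ with $j_! j^* \Solu$ (this is where one must be careful about which push-forward appears — I expect that for lfgu modules $Rj_*$ and $j_+$ agree up to the relevant shift, or that the statement \cite[Theorem 11.5.1 or similar]{EmKis.Fcrys} already packages $\Solu \circ Rj_* \cong j_! \circ \Solu$ since $Rj_*$ is the relevant derived push-forward here), and then concludes by the five-lemma / triangulated-category uniqueness of cones that the first terms are isomorphic, naturally.

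The cleaner route, which I would actually pursue, is to avoid the push-forward subtlety entirely by working with the \emph{other} fundamental triangle and the restriction functor, which $\Solu$ handles cleanly. Namely: apply $\Solu$ to $R\Gamma_Z \CM^\bullet \to \CM^\bullet \to Rj_* j^* \CM^\bullet \to$; this yields a distinguished triangle in $D_c^b(X_\et, \ZZ/p\ZZ)$ with middle term $\Solu(\CM^\bullet)$ and third term $\Solu(Rj_*j^*\CM^\bullet)$. Now $j^*\Solu(\CM^\bullet) \cong \Solu_U(j^!\CM^\bullet) = \Solu_U(j^*\CM^\bullet)$ by \autoref{Solproperties}(b), and by the allowable case of the same theorem applied to $Rj_*$ (which for the open immersion $j$ is computed by $j_+$ on $D_{\lfgu}^b$, cf.\ \autoref{Gopenresult} and \autoref{openforward}), $\Solu(Rj_*j^*\CM^\bullet) \cong j_!\, \Solu_U(j^*\CM^\bullet) \cong j_!\, j^*\Solu(\CM^\bullet)$. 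Thus $\Solu \circ R\Gamma_Z$ is the cone of $j_! j^* \Solu \to \Solu$ shifted by $[-1]$, which is exactly $\Gamma_Z \circ \Solu$ by the first \'etale triangle above. The naturality of this identification follows from naturality of all the intertwining isomorphisms of \autoref{Solproperties} and from the fact that a morphism of distinguished triangles which is an isomorphism on two of the three vertices is an isomorphism on the third; to upgrade ``is an isomorphism'' to ``there is a \emph{canonical} natural isomorphism'' one uses that $R\Gamma_Z$ (resp.\ $\Gamma_Z$) is characterized as a functorial cone and that the comparison map is induced by the universal property of the adjunction unit/counit $\id \to Rj_*j^*$ (resp.\ $j_!j^* \to \id$), which $\Solu$ respects.

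\textbf{Main obstacle.} The delicate point is the middle-term identification $\Solu \circ Rj_* \cong j_! \circ \Solu$: one must check that the derived push-forward $Rj_*$ appearing in the fundamental triangle of \autoref{deflocalcohomology} (for $\CO_{F,X}$-modules) is the same functor to which the allowable-morphism clause of \autoref{Solproperties}(b) applies — i.e.\ that $Rj_*$ agrees with Emerton–Kisin's $j_+$ on $D_{\lfgu}^b(\CO_{F,U})$ for the open immersion $j$. This is essentially the content of \cite[Lemma 3.5.1 and §4.3]{EmKis.Fcrys} together with \autoref{openforward} of the present paper, but it needs to be invoked carefully, paying attention to the absence of a shift (since $d_{U/X} = 0$ for an open immersion). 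Once that compatibility is in hand, the rest is a formal manipulation of distinguished triangles and adjunction units, and the naturality is automatic. I would also remark that, because everything in sight is supported set-theoretically and $R\Gamma_Z$, $\Gamma_Z$ depend only on the closed subset $i(Z)$ (not its scheme structure), there is no loss in assuming $Z$ reduced, which slightly simplifies bookkeeping but is not logically necessary.
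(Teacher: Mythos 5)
Your proposal is correct and takes essentially the same route as the paper: the paper proves the equivalent statement $\M\circ\Gamma_Z\overset{\sim}{\to}R\Gamma_Z\circ\M$ for the quasi-inverse $\M$, constructing the comparison map by observing that $\M(\Gamma_Z\CL^{\bullet})$ is supported on $Z$ (so the map to $\M(\CL^{\bullet})$ factors through $R\Gamma_Z\M(\CL^{\bullet})$ by \autoref{Gammaadj}) and then comparing the two fundamental triangles, with the third vertical arrow being exactly your identification $\M(j_!j^*)\cong j_+j^!\M$ coming from \autoref{Solproperties} and \autoref{openforward}. The only slip is a shift-bookkeeping error caused by treating $\Solu$ as covariant: since $\Solu$ is an anti-equivalence it reverses the triangle, so $\Solu(R\Gamma_Z\CM^{\bullet})$ sits as the \emph{cone} of $j_!j^*\Solu(\CM^{\bullet})\to\Solu(\CM^{\bullet})$ with no $[-1]$, matching $\Gamma_Z\Solu(\CM^{\bullet})$ on the nose.
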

\begin{proof}
We show that there is a natural isomorphism 
\[
	\M \circ \Gamma_Z \overset{\sim}{\longrightarrow} R\Gamma_Z \circ \M,
\] 
where $\M$ is the quasi-inverse of $\Solu$, see \autoref{Solproperties}. The natural isomorphism $j^! \circ \M_X \cong \M_U \circ j^*$ implies that $\M(\Gamma_Z\CL^{\bullet})$ is supported on $Z$ for every complex $\CL^{\bullet}$. Consequently, the morphism $\M(\Gamma_Z\CL^{\bullet}) \to \M(\CL^{\bullet})$ induced by the natural map $\CL^{\bullet} \to \Gamma_Z\CL^{\bullet}$, which is defined by the fundamental triangle of local cohomology above, factors through $R\Gamma_Z\M(\CL^{\bullet})$. This gives rise to a morphism of distinguished triangles
\[
	\xymatrix{
		\M(\Gamma_Z\CL^{\bullet}) \ar[r] \ar[d] & \M(\CL^{\bullet}) \ar@{=}[d] \ar[r] & M(j_!j^*\CL^{\bullet}) \ar[d]^{\sim} \\
		R\Gamma_Z\M(\CL^{\bullet}) \ar[r] & \M(\CL^{\bullet}) \ar[r] & j_+j^!\M(\CL^{\bullet}),
	}
\]
where the horizontal arrows are the natural morphisms and the second and third vertical arrow is an isomorphism. Hence the vertical arrow on the left is an isomorphism.
\end{proof} 
\begin{lemma}
For a closed subscheme $Z$ of a scheme $X$, Gabber's perverse $t$-structure on $D_c^b(X_{\et},\ZZ/p\ZZ)$ induces a $t$-structure on $D_c^b(X_{\et},\ZZ/p\ZZ)_Z$ given by
\begin{align*}
	{^p}D^{\geq 0}(X_{\et})_Z = D_c^b(X_{\et},\ZZ/p\ZZ)_Z \cap {^p}D^{\geq 0}(X_{\et}), \\
	{^p}D^{\leq 0}(X_{\et})_Z = D_c^b(X_{\et},\ZZ/p\ZZ)_Z \cap {^p}D^{\leq 0}(X_{\et}).
\end{align*}
\end{lemma}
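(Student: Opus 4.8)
The plan is to obtain this as a special case of the general fact that a $t$-structure on a triangulated category $\mathcal{D}$ restricts to any full triangulated subcategory $\mathcal{D}'$ which is stable under the truncation functors of $\mathcal{D}$. So the first task is to check that $D_c^b(X_{\et},\ZZ/p\ZZ)_Z$ is such a subcategory of $D_c^b(X_{\et},\ZZ/p\ZZ)$. It is full and closed under isomorphism and under shifts by construction, and it is closed under cones: if $\CF^{\bullet} \to \CG^{\bullet}$ is a morphism between objects annihilated by $j^*$, then applying the exact functor $j^*$ to a distinguished triangle on this morphism kills the third vertex as well, so that vertex is again supported on $Z$.

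The one nontrivial ingredient is that $j^*$ is $t$-exact for Gabber's perverse $t$-structures on $D_c^b(X_{\et},\ZZ/p\ZZ)$ and $D_c^b(U_{\et},\ZZ/p\ZZ)$. I would read this off the recollement description recalled in \autoref{Gabberrecollement}: membership of $\CL^{\bullet}$ in ${^p}D^{\leq 0}(X_{\et})$ requires $j^*\CL^{\bullet} \in {^p}D^{\leq 0}(U_{\et})$, and membership in ${^p}D^{\geq 0}(X_{\et})$ requires $j^*\CL^{\bullet} \in {^p}D^{\geq 0}(U_{\et})$, so $j^*$ is simultaneously right and left $t$-exact. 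In particular the perverse truncation functors commute with $j^*$ up to natural isomorphism.

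Given this, verifying the $t$-structure axioms for the pair $({^p}D^{\leq 0}(X_{\et})_Z,\,{^p}D^{\geq 0}(X_{\et})_Z)$ is routine. The shift inclusions ${^p}D^{\leq 0}(X_{\et})_Z[1]\subseteq{^p}D^{\leq 0}(X_{\et})_Z$ and ${^p}D^{\geq 0}(X_{\et})_Z[-1]\subseteq{^p}D^{\geq 0}(X_{\et})_Z$, as well as the orthogonality $\Hom(\CF^{\bullet},\CG^{\bullet})=0$ for $\CF^{\bullet}\in{^p}D^{\leq 0}(X_{\et})_Z$ and $\CG^{\bullet}\in{^p}D^{\geq 1}(X_{\et})_Z$, are inherited directly from the ambient perverse $t$-structure. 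For the decomposition axiom, given $\CF^{\bullet}\in D_c^b(X_{\et},\ZZ/p\ZZ)_Z$ I would take its canonical perverse truncation triangle ${^p}\tau^{\leq 0}\CF^{\bullet}\to\CF^{\bullet}\to{^p}\tau^{\geq 1}\CF^{\bullet}\to{^p}\tau^{\leq 0}\CF^{\bullet}[1]$ in $D_c^b(X_{\et},\ZZ/p\ZZ)$; applying $j^*$ and using its $t$-exactness gives $j^*({^p}\tau^{\leq 0}\CF^{\bullet})\cong{^p}\tau^{\leq 0}(j^*\CF^{\bullet})=0$ and $j^*({^p}\tau^{\geq 1}\CF^{\bullet})=0$, so both outer vertices are supported on $Z$ and the triangle already lives in $D_c^b(X_{\et},\ZZ/p\ZZ)_Z$. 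Since Gabber's $t$-structure is known to restrict to $D_c^b$, no constructibility or boundedness is lost along the way.

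There is no serious obstacle here; the only point demanding care is the $t$-exactness of $j^*$, which is why I would derive it explicitly from the recollement in \autoref{Gabberrecollement} rather than merely invoke it, recording along the way that for the open immersion $j$ one has $j^*=j^!$, so that this single functor being $t$-exact is precisely what makes the intersection a $t$-structure on the subcategory supported in $Z$.
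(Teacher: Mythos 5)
Your proof is correct, but it takes a genuinely different route from the one in the paper. The paper proves that ${^p}\tau_{\leq 0}$ preserves the support condition by unwinding Gabber's explicit construction of the perverse truncation functor: starting from the ordinary truncation $\tau_{\leq c}$ with $c=-\dim X$ and iteratively building $\tau_{\leq p_d}\CL^{\bullet}$ as subcomplexes of the total complex of Godement resolutions, checking at each stage (and in the direct limit) that support on $Z$ is not lost. You instead obtain the same key fact formally: $j^*$ is $t$-exact for the perverse $t$-structures (read off from the recollement description in \autoref{Gabberrecollement}, or directly from the stalk/costalk definition, since for $x\in U$ the functors $i_x^*$ and $i_x^!$ factor through $j^*=j^!$), hence $j^*$ commutes with perverse truncation, hence $j^*\CF^{\bullet}=0$ forces $j^*({^p}\tau^{\leq 0}\CF^{\bullet})\cong{^p}\tau^{\leq 0}(j^*\CF^{\bullet})=0$; the remaining $t$-structure axioms for the intersection are then inherited exactly as you say. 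Your argument is shorter and independent of the internals of Gabber's construction, but it leans on the recollement description relative to the pair $(Z,U)$ (which the paper records as a separate remark and justifies only by appeal to the construction), whereas the paper's proof is self-contained at the level of that construction. Both establish the same intermediate claim --- that the ambient truncation functors preserve $D_c^b(X_{\et},\ZZ/p\ZZ)_Z$ --- from which the lemma follows by the standard restriction criterion you invoke.
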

\begin{proof}
We consider the construction of the perverse truncation functor ${^p}\tau_{\leq 0}$ in \cite{Gabber.tStruc} in more detail. It will turn out that ${^p}\tau_{\leq 0}\CL^{\bullet}$ is supported on $Z$ for all $\CL^{\bullet} \in D_c^b(X_{\et},\ZZ/p\ZZ)_Z$. For simplicity we write $\tau_{\leq p}$ for ${^p}\tau_{\leq 0}$ where $p$ is a perversity function, see the first section of \cite{Gabber.tStruc}. For a complex $\CF^{\bullet}$, $C(\CF^{\bullet})$ denotes the total complex of the double complex $C^{\bullet}(\CF^{\bullet})$, where $C^{\bullet}(\CF^n)$ is the Godement resolution of $\CF^n$.   

Let $c = -\dim X$. It is a lower bound for the perversity function $p(x) = -\dim \overline{\{x\}}$. For a complex $\CL^{\bullet}$, $d \geq c$ and $p_d(x)=\min (d,p(x))$, Gabber iteratively constructs a direct system $\tau_{\leq p_d}\CL^{\bullet}$ and defines $\tau_{\leq p}\CL^{\bullet}$ as the direct limit. We start with $p_c = c$ and the usual truncation $\tau_{p_c}\CL^{\bullet}=\tau_{\leq c}\CL^{\bullet}$. Clearly, if $\CL^{\bullet}$ is supported on $Z$, then so is $\tau_{p_c}\CL^{\bullet}$. Now for $\tau_{\leq p_d}\CF^{\bullet}$ of some complex $\CF^{\bullet}$, we construct $\tau_{^\leq p_{d+1}}\CF^{\bullet}$ as a subcomplex of $C(\CF^{\bullet})$. By the construction of the Godement resolution, $C(\CF^{\bullet})$ is supported on $Z$ if $\CF^{\bullet}$ is supported on $Z$. It follows that for every $d \geq c$, the complex $\tau_{\leq p_d}\CL^{\bullet}$ is supported on $Z$ and therefore the direct limit $\tau_{\leq p}\CL^{\bullet}$ is supported on $Z$. 
\end{proof}      
\begin{proposition} \label{constructibleKashiwara}
Let $i\colon Z \to X$ be a closed immersion of schemes. 
\begin{enumerate}
\item The exact functors $i_*$ and $i^*$ are inverse equivalences of categories
\[
	\xymatrix{
		D_c^b(Z_{\et},\ZZ/p\ZZ) \ar@<.5ex>[r]^-{i_*} & D_c^b(X_{\et},\ZZ/p\ZZ)_Z. \ar@<.5ex>[l]^-{i^*}
	}
\]
\item These functors $i_*\colon D_c^b(Z_{\et},\ZZ/p\ZZ) \to D_c^b(X_{\et},\ZZ/p\ZZ)_Z$ and $i^*\colon D_c^b(X_{\et},\ZZ/p\ZZ)_Z \to D_c^b(Z_{\et},\ZZ/p\ZZ)$ are also $t$-exact with respect to the perverse $t$-structures of both categories.
\end{enumerate}
\end{proposition}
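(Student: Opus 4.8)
The plan is to deduce both statements directly from the corresponding statements for Cartier crystals (the Kashiwara equivalence \autoref{Kashiwara}) together with the $t$-exact Riemann--Hilbert equivalence $\Solu$ and the compatibility lemma \autoref{SolGamma}. First I would prove part (a). The functors $i_*$ and $i^*$ are exact functors of abelian sheaves on \'etale sites, so they descend to the derived categories without deriving, and they preserve constructibility (constructibility of $i^*\CL^\bullet$ is immediate from the definition, and for $i_*$ one uses that a sheaf on $Z_\et$ pushed forward is again constructible since $i$ is a closed immersion). By the distinguished triangles $j_!j^* \to \id \to i_*i^* \to j_!j^*[1]$ recalled just before \autoref{SolGamma}, a complex $\CL^\bullet \in D_c^b(X_\et,\ZZ/p\ZZ)$ lies in $D_c^b(X_\et,\ZZ/p\ZZ)_Z$ if and only if $j^*\CL^\bullet = 0$, if and only if the natural map $i_*i^*\CL^\bullet \to \CL^\bullet$ is an isomorphism; this gives the counit isomorphism $i_*i^* \cong \id$ on $D_c^b(X_\et,\ZZ/p\ZZ)_Z$. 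The unit $\id \to i^*i_*$ is an isomorphism on all of $D_c^b(Z_\et,\ZZ/p\ZZ)$ because $i$ is a closed immersion ($i^*i_* \cong \id$ for sheaves on the \'etale site of a closed subscheme). Hence $i_*$ and $i^*$ are inverse equivalences, establishing (a).

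For part (b), I would argue that $i^*$ and $i_*$ are $t$-exact for the perverse $t$-structures. One clean route: the perverse $t$-structure on $D_c^b(X_\et,\ZZ/p\ZZ)$ is obtained by recollement (\autoref{Gabberrecollement}) from those on $D_c^b(U_\et,\ZZ/p\ZZ)$ and $D_c^b(Z_\et,\ZZ/p\ZZ)$, so by definition $\CL^\bullet \in {}^pD^{\leq 0}$ iff $i^*\CL^\bullet \in {}^pD^{\leq 0}(Z_\et)$ and $j^*\CL^\bullet \in {}^pD^{\leq 0}(U_\et)$, and similarly for ${}^pD^{\geq 0}$ with $i^!$ in place of $i^*$. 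Restricting to $D_c^b(X_\et,\ZZ/p\ZZ)_Z$, the condition $j^*\CL^\bullet = 0$ forces the $U$-conditions to hold trivially, so on this subcategory $\CL^\bullet \in {}^pD^{\leq 0}(X_\et)_Z$ iff $i^*\CL^\bullet \in {}^pD^{\leq 0}(Z_\et)$; for the $\geq 0$ side I would observe that on the subcategory supported in $Z$ one has $i^!\CL^\bullet \cong i^*\CL^\bullet$ (since $i_*i^!\CL^\bullet \to \CL^\bullet$ and $i_*i^*\CL^\bullet \to \CL^\bullet$ are both isomorphisms there, and $i_*$ is fully faithful), so the $\geq 0$ condition likewise reduces to $i^*\CL^\bullet \in {}^pD^{\geq 0}(Z_\et)$. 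Together these say precisely that $i^*$ (restricted to $D_c^b(X_\et,\ZZ/p\ZZ)_Z$) is $t$-exact, and since $i_*$ is its quasi-inverse, $i_*$ is $t$-exact as well. Alternatively, and perhaps more in the spirit of the surrounding text, one can transport along $\Solu$: the composite $\Solu \circ i^! \cong i^* \circ \Solu$ (\autoref{Solproperties}) together with $\Solu \circ i_* \cong i_! \circ \Solu = i_* \circ \Solu$ and the already-established $t$-exactness of $i_*, i^!$ on the Cartier-crystal side (\autoref{lfguexact} via the Kashiwara equivalence, or directly \autoref{Kashiwara}) forces $t$-exactness on the \'etale side, using that $\Solu$ exchanges the canonical $t$-structure on $D_{\lfgu}^b$ with the perverse one.

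The main obstacle I anticipate is purely bookkeeping: making precise the identification $i^! \cong i^*$ on the subcategory $D_c^b(X_\et,\ZZ/p\ZZ)_Z$ and checking that the recollement description of the perverse $t$-structure restricts correctly to the subcategory of objects supported on $Z$ — in particular that the perverse truncation functors preserve ``support in $Z$'', which is exactly the content of the lemma proved just above (via the explicit Godement-resolution construction of ${}^p\tau_{\leq 0}$). With that lemma in hand, the induced $t$-structure on $D_c^b(X_\et,\ZZ/p\ZZ)_Z$ is well-defined, and the $t$-exactness of $i_*$ and $i^*$ is then formal. No genuinely new estimate or construction is needed; the argument is a combination of the recollement formalism, the Kashiwara-type equivalence of part (a), and the compatibilities already recorded in \autoref{SolGamma} and \autoref{Solproperties}.
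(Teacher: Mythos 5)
Your proof of part (a) coincides with the paper's: both rest on the triangle $j_!j^* \to \id \to i_*i^*$ and the isomorphism $\id \to i^*i_*$ for a closed immersion. For part (b) your primary argument is correct but follows a different route. The paper works directly with Gabber's pointwise definition of the perverse $t$-structure: for $\CL^{\bullet} \in {^p}D^{\leq 0}(Z_{\et})$ it computes $H^n(i_x^*i_*\CL^{\bullet}) \cong H^n(\tilde{i}_x^*\CL^{\bullet}) = 0$ stalk by stalk (and similarly for the $\geq 0$ side via the isomorphism $i^!i_* \cong i^*i_*$), concluding that $i_*$ is $t$-exact and hence so is its quasi-inverse. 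You instead invoke the recollement description of \autoref{Gabberrecollement}, observe that the $U$-conditions are vacuous on objects supported in $Z$, and reduce the $\geq 0$-condition to the $\leq 0$-shape via $i^! \cong i^*$ on $D_c^b(X_{\et},\ZZ/p\ZZ)_Z$ --- the same isomorphism $i^!i_* \cong i^*i_*$ the paper uses, just packaged differently. Your version is more formal and yields the slightly stronger ``if and only if'' statement (that $i^*$ both is $t$-exact and reflects the $t$-structure) in one stroke; the paper's version is more self-contained in that it only uses the stalk characterization actually stated in its definition of ${^p}D^{\leq 0}$ and ${^p}D^{\geq 0}$, whereas yours leans on the recollement remark. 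One small slip: the counit/unit direction for $i^*$ is $\CL^{\bullet} \to i_*i^*\CL^{\bullet}$, not $i_*i^*\CL^{\bullet} \to \CL^{\bullet}$; this does not affect the argument.

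Your second, alternative route (transporting $t$-exactness along $\Solu$ from the Cartier-crystal side) should be discarded: the proposition is stated for an arbitrary closed immersion of schemes, with no smoothness or embeddability hypotheses, so the Riemann--Hilbert machinery is not available in that generality; moreover this proposition is itself an input to \autoref{singularSol} and to the well-definedness of the perverse $t$-structure on singular schemes, so arguing via $\Solu$ would be circular. Stick with the recollement argument.
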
 
\begin{proof}
(a) This is a formal consequence of the distinguished triangle
\[
	j_!j^* \longrightarrow \id \longrightarrow i_*i^* \longrightarrow j_!j^*[1]
\]
in $D^b(X_{\et},\ZZ/p\ZZ)$ and the fact that the natural map $\id \to i^*i_*$ is always an isomorphism.

(b) It suffices to show that $i_*$ is $t$-exact with respect to the perverse $t$-structures, i.e.\ the essential image of ${^p}D^{\leq 0}(Z_{\et})$ under $i_*$ is contained in ${^p}D^{\leq 0}$ and the essential image of ${^p}D^{\geq 0}(Z_{\et})$ under $i_*$ is contained in ${^p}D^{\geq 0}$. For $x \in Z$ let $\tilde{i}_x$ denote the composition  
\[
	\{x\} \to \Spec \CO_{Z_{\et},x} \to Z
\] of canonical morphisms. For $\CL^{\bullet}$ in ${^p}D^{\leq 0}(Z_{\et})$, i.e.\ $H^n(\tilde{i}_x^*\CL^{\bullet})=0$ for every $x \in Z$ and every $n > -\dim \overline{\{x\}}$, we have 
\begin{align*}
	H^n(i_x^*i_*\CL^{\bullet}) &\cong	H^n(\tilde{i}_x^*i^*i_*\CL^{\bullet}) \\
	&\cong H^n(\tilde{i}_x^*\CL^{\bullet}) \\
	&\cong 0
\end{align*}
for every $x \in Z$ and every $n>-\dim \overline{\{x\}}$. For $x \in U=X \backslash Z$, we even have $i_x^*i_*\CL^{\bullet} \cong 0$ because $j^*i_*\CL^{\bullet} \cong 0$ and hence $\tau^*i_*\CL^{\bullet} \cong 0$, where $\tau\colon \Spec \CO_{X_{\et},x} \to X$ is the natural morphism.

Now let $\CL^{\bullet}$ be in ${^p}D^{\geq 0}(Z_{\et})$, i.e.\ $H^n(\tilde{i}_x^!\CL^{\bullet})=0$ for every $x \in Z$ and every $n<-\dim \overline{\{x\}}$. There is a natural isomorphism of functors
\[
	i^!i_* \cong i^*i_*
\]
given by the composition of the natural isomorphisms $i^!i_* \to \id$ and $\id \to i^*i_*$ of \cite[1.4.1.2]{BBD}. Whence 
\begin{align*}
	H^n(i_x^!i_*\CL^{\bullet}) &\cong H^n(\tilde{i}_x^!i^!i_*\CL^{\bullet}) \\
	&\cong H^n(\tilde{i}_x^!i^*i_*\CL^{\bullet}) \\
	&\cong H^n(\tilde{i}_x^!\CL^{\bullet}) \\
	&\cong 0
\end{align*}
for every $x \in Z$ and every $n<-\dim \overline{\{x\}}$. We have already seen that there is nothing to show for $x \in U$.
\end{proof}
\begin{definition}
For a $k$-scheme $X$ which admits an embedding into a smooth scheme $Y$, we define
\begin{align*}
	D_{\lfgu}^{\geq 0}(\CO_{F,X}) &= D_{\lfgu}^b(\CO_{F,Y})_X \cap D_{\lfgu}^{\geq 0}(\CO_{F,Y}), \\
	D_{\lfgu}^{\leq 0}(\CO_{F,X}) &= D_{\lfgu}^b(\CO_{F,Y})_X \cap D_{\lfgu}^{\leq 0}(\CO_{F,Y}). 
\end{align*}
These subcategories of $D_{\lfgu}^b(\CO_{F,X})$ form a natural $t$-structure. 
\end{definition}
The independence of these subcategories of the embedding into a smooth scheme follows from the fact that for a morphism $f\colon Y_1 \to Y_2$ between two smooth schemes over $k$, together with closed immersions $i_1\colon X \to Y_1$ and $i_2\colon X \to Y_2$ with $i_2 = f \circ i_1$, the equivalence
\begin{align*}
	R\Gamma_Xf^! &\cong \operatorname{M} \circ \Sol \circ R\Gamma_Xf^!\\
	&\cong \operatorname{M} \circ \Gamma_Xf^* \circ \Sol \\
	&\cong \operatorname{M} \circ i_{1*}i_1^*f^*i_{2*}i_2^* \circ \Sol \\
	&\cong \operatorname{M} \circ i_{1*}i_2^* \circ \Sol
\end{align*}
is a composition of $t$-exact functors, where $D_c^b(Y_{1,\et},\ZZ/p\ZZ)$ and $D_c^b(Y_{2,\et},\ZZ/p\ZZ)$ are equipped with the perverse $t$-structures (\autoref{Solproperties} and \autoref{constructibleKashiwara}). Therefore, $R\Gamma_Xf^!$ is $t$-exact.
\begin{theorem} \label{singularSol}
Let $X$ be an embeddable $k$-scheme.
\begin{enumerate}
\item The equivalence $\Solu$ for smooth schemes induces an anti-equivalence of categories 
\[
	\Solu\colon D_{\lfgu}^b(\CO_{F,X}) \to D_c^b(X_{\et},\ZZ/p\ZZ)
\]
\item The essential image of $D_{\lfgu}^{\geq 0}(\CO_{F,X})$ under this equivalence equals ${^p}D^{\leq 0}$ and the essential image of $D_{\lfgu}^{\leq 0}(\CO_{F,X})$ equals ${^p}D^{\geq 0}$. 
\end{enumerate}
\end{theorem}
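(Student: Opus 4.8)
The plan is to reduce everything to the smooth case via the Kashiwara equivalences already established in the excerpt. Fix an embedding $i\colon X \to Y$ into a smooth $k$-scheme $Y$, and let $j\colon U \to Y$ be the open immersion of the complement of $X$. By definition, $D_{\lfgu}^b(\CO_{F,X})$ is the subcategory $D_{\lfgu}^b(\CO_{F,Y})_X$, and by \autoref{constructibleKashiwara} the functor $i_*\colon D_c^b(X_{\et},\ZZ/p\ZZ) \to D_c^b(Y_{\et},\ZZ/p\ZZ)_X$ is an equivalence. So for (a) it suffices to show that Emerton and Kisin's functor $\Solu\colon D_{\lfgu}^b(\CO_{F,Y}) \to D_c^b(Y_{\et},\ZZ/p\ZZ)$ restricts to an equivalence between the subcategories of objects supported on $X$. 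This is where \autoref{SolGamma} does the work: an object $\CM^{\bullet} \in D_{\lfgu}^b(\CO_{F,Y})$ is supported on $X$ if and only if $R\Gamma_X\CM^{\bullet} \to \CM^{\bullet}$ is an isomorphism, and applying $\Solu$ together with the natural isomorphism $\Solu \circ R\Gamma_X \cong \Gamma_X \circ \Solu$ shows this is equivalent to $\Gamma_X\Solu(\CM^{\bullet}) \to \Solu(\CM^{\bullet})$ being an isomorphism, i.e.\ to $\Solu(\CM^{\bullet})$ being supported on $X$. The same argument applied to the quasi-inverse $\M$ (using the dual statement $\M \circ \Gamma_X \cong R\Gamma_X \circ \M$ proved inside \autoref{SolGamma}) gives the reverse inclusion. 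Hence $\Solu$ and $\M$ restrict to inverse (anti-)equivalences between $D_{\lfgu}^b(\CO_{F,Y})_X$ and $D_c^b(Y_{\et},\ZZ/p\ZZ)_X$, and composing with $i^*$ (equivalently $i_*^{-1}$) yields the desired anti-equivalence $\Solu\colon D_{\lfgu}^b(\CO_{F,X}) \to D_c^b(X_{\et},\ZZ/p\ZZ)$.

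For the independence of the embedding, I would invoke the argument already sketched just before the theorem statement: given two embeddings $i_1\colon X \to Y_1$, $i_2\colon X \to Y_2$, reduce via $Y_1 \times_k Y_2$ to the case of a morphism $f\colon Y_1 \to Y_2$ of smooth schemes with $i_2 = f \circ i_1$, and then the identification $R\Gamma_X f^! \cong \M \circ i_{1*}i_2^* \circ \Solu$ of $t$-exact functors shows that the induced equivalence is canonical and compatible with $\Solu$. So the constructed anti-equivalence does not depend on the chosen $Y$.

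For part (b), I would again transport the statement through $i_*$ and $i^*$. By \autoref{Solproperties}(c), on the smooth scheme $Y$ the functor $\Solu$ sends $D_{\lfgu}^{\geq 0}(\CO_{F,Y})$ to ${}^pD^{\leq 0}(Y_{\et})$ and $D_{\lfgu}^{\leq 0}(\CO_{F,Y})$ to ${}^pD^{\geq 0}(Y_{\et})$. Intersecting with the subcategories supported on $X$ and using that both the canonical $t$-structure on $D_{\lfgu}^b(\CO_{F,Y})_X$ and Gabber's perverse $t$-structure on $D_c^b(Y_{\et},\ZZ/p\ZZ)_X$ are the restrictions of the ambient ones, we get that $\Solu$ exchanges $D_{\lfgu}^{\geq 0}(\CO_{F,X})$ and ${}^pD^{\leq 0}(Y_{\et})_X$, and $D_{\lfgu}^{\leq 0}(\CO_{F,X})$ and ${}^pD^{\geq 0}(Y_{\et})_X$. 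Finally, by \autoref{constructibleKashiwara}(b) the equivalence $i^*\colon D_c^b(Y_{\et},\ZZ/p\ZZ)_X \to D_c^b(X_{\et},\ZZ/p\ZZ)$ is $t$-exact for the perverse $t$-structures, so it carries ${}^pD^{\leq 0}(Y_{\et})_X$ to ${}^pD^{\leq 0}(X_{\et})$ and ${}^pD^{\geq 0}(Y_{\et})_X$ to ${}^pD^{\geq 0}(X_{\et})$. Composing, the essential image of $D_{\lfgu}^{\geq 0}(\CO_{F,X})$ under $\Solu$ is ${}^pD^{\leq 0}(X_{\et})$ and that of $D_{\lfgu}^{\leq 0}(\CO_{F,X})$ is ${}^pD^{\geq 0}(X_{\et})$.

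The only genuinely delicate point is making sure the compatibility of $\Solu$ with local cohomology (\autoref{SolGamma}) and with the functors $i_*, i^*$ is coherent enough that the various reductions glue — in particular that the equivalence produced is canonical and, in part (b), that the $t$-structures on the subcategories really are induced from the ambient ones so that no boundary terms are lost. But all the necessary ingredients (\autoref{SolGamma}, \autoref{constructibleKashiwara}, \autoref{Solproperties}, and the embedding-independence argument) are in place, so I expect the proof to be short and essentially formal once these are assembled.
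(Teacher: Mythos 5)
Your proposal is correct and follows essentially the same route as the paper: restrict Emerton--Kisin's $\Solu$ to the supported subcategories over a smooth ambient $Y$ (the paper does this via the compatibility with $j^*$ as in the proof of \autoref{singularG}, which is interchangeable with your use of \autoref{SolGamma} via the fundamental triangle), prove independence of the embedding by reducing to $i_2 = f\circ i_1$ and the isomorphism $\Solu\circ R\Gamma_X f^! \cong \Gamma_X f^*\circ\Solu$, and deduce (b) by intersecting the smooth-case statement of \autoref{Solproperties} with the supported subcategories and transporting through the $t$-exact equivalence of \autoref{constructibleKashiwara}. No gaps.
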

\begin{proof}
After choosing a closed immersion $i\colon X \to Y$ into a smooth $k$-scheme $Y$, we see that $\Solu$ restricts to an anti-equivalence
\[
	\Solu\colon D_{\lfgu}^b(\CO_{F,Y})_X \to D_c^b(Y_{\et},\ZZ/p\ZZ)_X
\] 
in the same way as in the proof of \autoref{singularG}. For the proof of the independence of the choice of an embedding we again reduce to the situation of two closed immersions $i_1\colon X \to Y_1$ and $i_2\colon X \to Y_2$ together with a morphism $f\colon Y_1 \to Y_2$ such that $i_2=f \circ i_1$. We obtain natural equivalences of categories
\[
	R\Gamma_Xf^!\colon D_{\lfgu}^b(\CO_{F,Y_2})_X \to D_{\lfgu}^b(\CO_{F,Y_1})_X
\]
and
\[
	f^{-1}\Gamma_X\colon D_c^b(Y_{2,\et},\ZZ/p\ZZ)_X \to  D_c^b(Y_{1,\et},\ZZ/p\ZZ)_X,
\]
which are compatible with $\Solu$ because
\[
	\Solu \circ R\Gamma_Xf^! \overset{\sim}{\longrightarrow} \Gamma_X \circ \Solu \circ f^! \overset{\sim}{\longrightarrow} \Gamma_X f^* \circ \Solu
\]
by \autoref{Solproperties} and \autoref{SolGamma}. This proves (a).

The equivalence $\Solu$ not only restricts to an equivalence between $D_{\lfgu}^b(\CO_{F,Y})_X$ and $D_c^b(Y_{\et},\ZZ/p\ZZ)_X$ but also between $D_{\lfgu}^{\geq 0}(\CO_{F,Y})$ and ${^p}D^{\leq 0}(Y_{\et})$ (\autoref{Solproperties}). Therefore, $\Solu$ induces an equivalence
\[
	D_{\lfgu}^b(\CO_{F,Y})_X \cap D_{\lfgu}^{\geq 0}(\CO_{F,Y}) \overset{\sim}{\longrightarrow} D_c^b(Y_{\et},\ZZ/p\ZZ)_X \cap {^p}D^{\leq 0}(Y_{\et}).
\]
By \autoref{constructibleKashiwara}, $D_c^b(Y_{\et},\ZZ/p\ZZ)_X \cap {^p}D^{\leq 0}(Y_{\et})$ is canonically equivalent to ${^p}D^{\leq 0}(X_{\et})$. 

Analogously, one can show that the essential image of $D_{\lfgu}^{\leq 0}(\CO_{F,X})$ equals ${^p}D^{\geq 0}$.
\end{proof}
\begin{theorem} \label{RHCorrespondenceCrystals}
Let $X$ be an $F$-finite embeddable $k$-scheme.
\begin{enumerate}
\item The equivalences $\G$ and $\Solu$ for smooth schemes induce equivalences 
\[
	\G\colon D_{\crys}^b(\QCrysC(X)) \to D_{\lfgu}^b(\CO_{F,X}) \text{ and } \Solu\colon D_{\lfgu}^b(\CO_{F,X}) \to D_c^b(X_{\et},\ZZ/p\ZZ).
\]
This means that for a closed immersion $i\colon X \to Y$ with $Y$ smooth, there is a commutative diagram
\[
	\xymatrix{
		& D_c^b(Y_{\et},\ZZ/p\ZZ) & D_c^b(X_{\et},\ZZ/p\ZZ) \ar@_{(->}[l] \\
		D_{\crys}^b(\QCrysC(Y)) \ar[r]^-{\G_Y} & D_{\lfgu}^b(\CO_{F,Y}) \ar[u]^{\Solu_Y} & \\
		D_{\crys}^b(\QCrysC(X)) \ar@^{(->}[u] \ar[rr]^-{\G_X} & & D_{\lfgu}^b(\CO_{F,X}). \ar@_{(->}[ul] \ar[uu]^{\Solu_X}
	}
\]
\item Let $\Sol$ be the composition $\Solu \circ \G$. The essential image of $D_{\crys}^{\geq 0}(\QCrysC(X))$ under $\Sol$ equals the subcategory ${^p}D^{\leq 0}$, while the essential image of $D_{\crys}^{\leq 0}(\QCrysC(X))$ equals the subcategory ${^p}D^{\geq 0}$.
\item If $h\colon W \to X$ is an open or a closed immersion, then there are natural isomorphisms of functors
\[
	\Sol \circ Rh_* \overset{\sim}{\longrightarrow} h_! \circ \Sol \text{ and } \Sol \circ h^! \overset{\sim}{\longrightarrow} h^* \circ \Sol
\]
where $h^!$ denotes the functor $h^*$ if $h$ is an open immersion and $Rh_* = h_*$ for a closed immersion. 
\end{enumerate}
\end{theorem}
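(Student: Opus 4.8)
The plan is to reduce all three parts to the already-treated smooth case, using the Kashiwara equivalences \autoref{Kashiwara}, \autoref{Kashiwaraunit}, \autoref{constructibleKashiwara} and the extension results \autoref{singularG}, \autoref{singularSol}. Part (a) is essentially a reformulation of those two results. After choosing a closed immersion $i\colon X\to Y$ into a smooth, $F$-finite $k$-scheme, the functors $\G_X$ and $\Solu_X$ were \emph{defined} in \autoref{singularG} and \autoref{singularSol} as the restrictions of $\G_Y$ and $\Solu_Y$ to the subcategories of complexes supported on the image of $X$, the three categories over $X$ being identified with those subcategories by the respective Kashiwara equivalences. Under these identifications the three downward maps in the diagram are the inclusions of the supported subcategories (one uses that $\G_Y$ and $\Solu_Y$ preserve the condition of being supported on $X$, as was checked in the proofs of \autoref{singularG} and \autoref{singularSol}), and with this the two squares commute tautologically.

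For part (b) the first step is to observe that $\G$ is $t$-exact. Over $Y$ the functor $\G_Y$ is induced by the exact equivalence of abelian categories $\QCrysC(Y)\to\mu_{\operatorname{u}}(Y)$, hence is $t$-exact for the canonical $t$-structures; since truncation commutes with the exact functor $j^*$, it preserves the property of being supported on $X$, so $\G$ restricts to a $t$-exact equivalence between the $X$-supported subcategories, which carry precisely the $t$-structures inherited from $Y$ that define $D_{\crys}^{\geq 0}$, $D_{\crys}^{\leq 0}$ and $D_{\lfgu}^{\geq 0}(\CO_{F,X})$, $D_{\lfgu}^{\leq 0}(\CO_{F,X})$. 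Hence $\G$ carries $D_{\crys}^{\geq 0}(\QCrysC(X))$ onto $D_{\lfgu}^{\geq 0}(\CO_{F,X})$ and $D_{\crys}^{\leq 0}(\QCrysC(X))$ onto $D_{\lfgu}^{\leq 0}(\CO_{F,X})$, and the claim for $\Sol=\Solu\circ\G$ follows by feeding this into \autoref{singularSol}(b).

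For part (c) I would treat open and closed immersions separately and in each case prove only the first isomorphism $\Sol\circ Rh_*\cong h_!\circ\Sol$; the second, $\Sol\circ h^!\cong h^*\circ\Sol$, then comes for free. Indeed, for an open immersion $h^!=h^*$ is left adjoint to $Rh_*$ on Cartier crystals (an open immersion is essentially \'etale) while $h^*$ is right adjoint to $h_!$ on constructible sheaves, and for a closed immersion $h^!$ is right adjoint to $Rh_*=h_*$ while $h^*$ is left adjoint to $h_!=h_*$; since $\Sol$ is an anti-equivalence, passing to the adjoint dual to $Rh_*$ on both sides of $\Sol_X\circ Rh_*\cong h_!\circ\Sol_W$ yields $\Sol_W\circ h^!\cong h^*\circ\Sol_X$. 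For $h$ \emph{closed}, choose $i\colon X\to Y$ with $Y$ smooth and $F$-finite; then $g:=i\circ h$ is a closed immersion, so $W$ is again $F$-finite embeddable, and the identities $i_*\circ Rh_*=g_*$ and $i_{\et*}\circ h_{\et!}\cong g_{\et*}$ (using $h_!=h_*$) show that under the Kashiwara identifications both $Rh_*$ and $h_!$ become the inclusion of the $W$-supported subcategory into the $X$-supported one over $Y$; the desired isomorphism is then just the commutativity of the diagram of part (a) for the two embeddings $i$ and $g$.

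For $h\colon W\to X$ an \emph{open} immersion, fix $i\colon X\to Y$ with $Y$ smooth and $F$-finite and set $U:=Y\setminus i(X\setminus W)$, which is open (hence smooth and $F$-finite) in $Y$ and satisfies $i^{-1}(U)=W$; thus $i$ restricts to a closed immersion $i'\colon W\to U$, giving a cartesian square with $i\circ h=j\circ i'$ for the open immersion $j\colon U\to Y$, and $W$ is $F$-finite embeddable via $i'$. Functoriality of derived push-forward gives $i_*\circ Rh_*\cong Rj_*\circ i'_*$. Applying $\Sol_Y$, using part (a) for $i$ on the left, and on the right the chain
\[
\Sol_Y\circ Rj_*=\Solu_Y\circ\G_Y\circ Rj_*\cong\Solu_Y\circ j_+\circ\G_U\cong j_!\circ\Solu_U\circ\G_U=j_!\circ\Sol_U
\]
(the first isomorphism by \autoref{Gopenresult}, the second by \autoref{Solproperties}(b), since an open immersion is allowable) followed by part (a) for $i'$, one obtains $i_{\et*}\circ\Sol_X\circ Rh_*\cong j_!\circ i'_{\et*}\circ\Sol_W$. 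Finally $j_!\circ i'_{\et*}=j_!\circ i'_{\et!}\cong(j\circ i')_{\et!}=(i\circ h)_{\et!}\cong i_{\et*}\circ h_{\et!}$ by composition of the lower-shriek functors together with $i_{\et!}=i_{\et*}$ and $i'_{\et!}=i'_{\et*}$, and since $i_{\et*}$ is fully faithful this yields $\Sol_X\circ Rh_*\cong h_!\circ\Sol_W$, hence, with the adjunction remark above, part (c). I expect the main obstacle here not to be any single deep step but the coherent bookkeeping in this last argument: one must check that the several identifications in play — the Kashiwara equivalences, the intertwiners \autoref{Gopenresult} and \autoref{Solproperties}(b), the commutativities supplied by part (a), and the elementary \'etale base change for the cartesian square — are mutually compatible, and, as in the earlier sections, track them as isomorphisms of complexes where necessary.
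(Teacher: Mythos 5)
Your proposal is correct and follows essentially the same route as the paper: (a) and (b) are read off from \autoref{singularG} and \autoref{singularSol} together with the $t$-exactness of $\G$, and (c) is proved by reducing to a cartesian square $W\cong X\times_Y U$ inside a smooth ambient $Y$ and invoking \autoref{Gopenresult}, \autoref{Solproperties} and the adjunctions to transfer push-forward compatibility to pull-back compatibility (the closed-immersion case being the easy one). The only substantive point you leave implicit is that the isomorphism for open immersions, built from one chosen embedding $i\colon X\to Y$, is independent of that choice — the paper verifies this explicitly by running the construction for two embeddings $Y_1, Y_2$ linked by a morphism $f$ and checking the resulting cube of equivalences commutes via \autoref{GammalfguForward}.
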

\begin{proof}
(a) and (b) follow from \autoref{singularG} and \autoref{singularSol}. 

It remains to prove (c). Let $h\colon W \to X$ be an open immersion. We will construct the natural transformations by choosing embeddings of $X$. Since we have to make sure that this construction is independent of the embedding, we will consider two closed immersions $i_1\colon X \to Y_1$ and $i_2\colon X \to Y_2$ of $X$ into $F$-finite, smooth $k$-schemes ab initio. We may assume that there is a morphism $f\colon Y_1 \to Y_2$ with $i_2 = f \circ i_1$, see the proof of \autoref{embeddingindep}. Let $h_2'\colon V_2 \to Y_2$ be an open immersion such that $(i_2 \circ h)(W) = h_2'(V_2) \cap i_2(X)$ and hence $W \cong X \times_{Y_2} V_2$. Let $i_2'\colon W \to V_2$ be the closed immersion induced by $i_2$, i.e.\ the projection $X \times_{Y_2} V_2 \to V_2$. We have a natural equivalence $Rh_{2*}' i_{2*}' \cong i_{2*} Rh_*$ of functors from $D_{\crys}^b(\QCrysC(W))$ to $D_{\crys}^b(\QCrysC(Y_2))_X$. Moreover, we have a natural equivalence $h_{2+}' \G_{V_2} \cong \G_{Y_2} Rh_{2*}'$ of functors from $D_{\crys}^b(\QCrysC(V_2))_W$ to $D_{\lfgu}^b(\CO_{F,Y_2})_X$. Therefore, $Rh_*\colon D_{\crys}^b(\QCrysC(W)) \to D_{\crys}^b(\QCrysC(X))$ induces a functor $h_+\colon D_{\lfgu}^b(\CO_{F,W}) \to D_{\lfgu}^b(\CO_{F,X})$ such that $\G_X h_* \cong h_+ \G_W$. Note that this functor $h_+$ does not depend on the choice of $V_2$. In particular, to show the independence of the embedding of $X$, we may choose an open immersion $h_1'\colon V_1 \to Y_1$ with $(i_1 \circ h)(W) = h_1'(V_1) \cap i_1(X)$ and such that $(f \circ h_1')(V_1) \subseteq h_2'(V_2)$. Here we set $V_1 = f^{-1}(V_2)$, which means $V_1 \cong Y_1 \times_{Y_2} V_2$. Let $f'\colon V_1 \to V_2$ be the projection and let $i_1'$ be the projection of $W \cong  X \times_{Y_2} V_2 \cong X \times_{Y_1} (Y_1 \times_{Y_2} V_2)$ to $V_1 \cong Y_1 \times_{Y_2} V_2$. It is a closed immersion because it is the base change of the morphism $i_1$. We obtain the following commutative diagram: 
\[
	\xymatrix{
		W \ar[d]^-h \ar[r]^-{i_1'}  & V_1 \ar[d]^-{h_1'} \ar[r]^-{f'} & V_2 \ar[d]^-{h_2'} \\
		X \ar[r]^-{i_1} & Y_1 \ar[r]^f & Y_2.
	}
\]
The following cube demonstrates the natural equivalences, which we have by \autoref{GammalfguForward}:
\[
	\xymatrix@C15pt{
		&  D_{\crys}^b(\QCrysC(V_2))_W \ar[rr]^-{\G_{V_2}} \ar[dd]^/20pt/{Rh_{2*}'}  &  &  D_{\lfgu}^b(\CO_{F,V_2})_W \ar[dd]^-{h_{2+}'}  \\
		D_{\crys}^b(\QCrysC(V_1))_W \ar[rr]^/40pt/{\G_{V_1}} \ar[dd]^-{Rh_{1*}'} \ar[ru]^-{Rf'_*}  &  &  D_{\lfgu}^b(\CO_{F,V_1})_W \ar[dd]_/20pt/{h_{1+}'} \ar[ru]^-{f'_+}  &  \\ 
		&   D_{\crys}^b(\QCrysC(Y_2))_X \ar[rr]^/40pt/{\G_{Y_2}}  &  &  D_{\lfgu}^b(\CO_{F,Y_2})_X  \\
		D_{\crys}^b(\QCrysC(Y_1))_X \ar[rr]^-{\G_{Y_1}} \ar[ru]^-{Rf_*}  &  &  D_{\lfgu}^b(\CO_{F,Y_1})_X \ar[ru]^-{f_+}  &
  }
\]
Here every cube face indicates a natural equivalence, for example, the front refers to the isomorphism of functors $\G_{Y_1} \circ Rh_{1*}' \cong h_{1+}' \circ \G_{V_1}$. This shows the independence of the isomorphism of functors $\G_X h_* \cong h_+ \G_W$ from the chosen embedding of $X$. By adjunction, we obtain a canonical isomorphism $\G_W h^* \cong h^! G_X$ as well. 

Similarly, one shows that we have a natural isomorphism $\Solu h^! \cong h^* \Solu$, using the fact that $\Solu$ commutes with the local cohomology functors (\autoref{SolGamma}) and with pull-backs for morphisms between smooth schemes (\autoref{Solproperties}). Again, by adjunction, we obtain a natural isomorphism $\Solu h_+ \cong h_! \Solu$. Composing these isomorphisms of functors yields the desired one:
\[
	\Sol \circ Rh_* \cong \Solu \circ \G_X \circ Rh_* \cong \Solu \circ h_+ \circ \G_W \cong h_! \circ \Solu \circ \G_W \cong h_! \circ \Sol
\]
and analogously for $\Sol \circ h^* \cong h^* \circ \Sol$. 

If $h\colon W \to X$ is a closed immersion, we proceed similarly, but the proof is simpler because a closed immersion $i\colon X \to Y$ of $X$ into a smooth, $F$-finite $k$-scheme $Y$ yields a closed immersion of $W$ into $Y$ by composing $h$ and $i$.
\end{proof}
\begin{definition}
The abelian category $\Perv_c(X_{\et},\ZZ/p\ZZ)$ of \emph{perverse constructible \'etale $p$-torsion sheaves} is the heart ${^p}D^{\leq 0} \cap {^p}D^{\geq 0}$ of the perverse $t$-structure on $D_c^b(X_{\et},\ZZ/p\ZZ)$.
\end{definition}
\begin{corollary}
For an $F$-finite embeddable $k$-scheme $X$, the functor $\Sol$ induces an anti-equivalence
\[
	\CrysC(X) \to \Perv_c(X_{\et},\ZZ/p\ZZ)
\]
between the abelian categories of Cartier crystals on $X$ and perverse constructible $\ZZ/p\ZZ$-sheaves on $X_ {\et}$. 
\end{corollary}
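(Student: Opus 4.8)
This is a formal consequence of \autoref{RHCorrespondenceCrystals}. The plan is to observe that both $\CrysC(X)$ and $\Perv_c(X_{\et},\ZZ/p\ZZ)$ are the hearts of $t$-structures on the two triangulated categories related by $\Sol$, that $\Sol$ is a contravariant equivalence which exchanges the two aisles of these $t$-structures, and that such a functor automatically restricts to an anti-equivalence of the hearts.

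First I would identify $\CrysC(X)$ with the heart of the standard $t$-structure on $D_{\crys}^b(\QCrysC(X))$. By \autoref{defCartCrys}, $\CrysC(X)$ is the localization of $\LNilCohC(X)$ at $\LNilC(X)$; since $\LNilC(X) \subseteq \LNilCohC(X)$, this is precisely the essential image $\LNilCrysC(X)$ of $\LNilCohC(X)$ in $\QCrysC(X)$. As $D_{\crys}^b(\QCrysC(X))$ consists by definition of the bounded complexes with cohomology in $\LNilCrysC(X)$, the functor $\CM \mapsto \CM[0]$ identifies $\CrysC(X)$ with the intersection $D_{\crys}^{\geq 0}(\QCrysC(X)) \cap D_{\crys}^{\leq 0}(\QCrysC(X))$, i.e.\ with the heart of the standard $t$-structure. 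On the other side, $\Perv_c(X_{\et},\ZZ/p\ZZ) = {^p}D^{\leq 0} \cap {^p}D^{\geq 0}$ holds by definition.

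Now I would invoke \autoref{RHCorrespondenceCrystals}: part~(a) gives that $\Sol = \Solu \circ \G$ is an anti-equivalence of triangulated categories $D_{\crys}^b(\QCrysC(X)) \to D_c^b(X_{\et},\ZZ/p\ZZ)$, and part~(b) says that the essential image of $D_{\crys}^{\geq 0}(\QCrysC(X))$ under $\Sol$ is ${^p}D^{\leq 0}$ while that of $D_{\crys}^{\leq 0}(\QCrysC(X))$ is ${^p}D^{\geq 0}$. An equivalence of categories carries the intersection of two (replete) full subcategories onto the intersection of their essential images, so $\Sol$ restricts to an anti-equivalence
\[
	\CrysC(X) = D_{\crys}^{\geq 0}(\QCrysC(X)) \cap D_{\crys}^{\leq 0}(\QCrysC(X)) \overset{\sim}{\longrightarrow} {^p}D^{\leq 0} \cap {^p}D^{\geq 0} = \Perv_c(X_{\et},\ZZ/p\ZZ).
\]
To see that this restriction is genuinely an anti-equivalence of \emph{abelian} categories (contravariant and exact, reversing short exact sequences), one uses the standard fact that a contravariant $t$-exact equivalence of triangulated categories induces, via the zeroth cohomology functors of the respective $t$-structures, mutually quasi-inverse exact contravariant functors between the hearts; the required $t$-exactness (with the aisles swapped) is exactly the content of part~(b).

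The proof carries essentially no obstacle of its own: all the real work — the extension of $\G$ and $\Solu$ to embeddable schemes, their independence of the chosen embedding, and the compatibility with the $t$-structures — is already absorbed into \autoref{RHCorrespondenceCrystals}. The only steps that need a moment's attention are the identification $\CrysC(X) \cong \LNilCrysC(X)$ of the heart on the source side and the purely formal passage from a contravariant $t$-exact equivalence to an exact anti-equivalence of hearts; both are routine.
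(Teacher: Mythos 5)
Your argument is correct and is exactly the (implicit) argument of the paper, which states this corollary without proof as an immediate consequence of \autoref{RHCorrespondenceCrystals}: the two categories are the hearts of the respective $t$-structures, and parts (a) and (b) of that theorem say precisely that $\Sol$ is an anti-equivalence exchanging the aisles, hence restricts to an anti-equivalence of hearts. Your two flagged points — the identification $\CrysC(X)\cong\LNilCrysC(X)$ (already recorded in \autoref{defCartCrys}) and the formal passage from a $t$-exact contravariant equivalence to an exact anti-equivalence of hearts — are indeed the only details to check, and both are handled correctly.
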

We conclude this section with an explanation, how resolution of singularities would simplify the establishment of lfgu modules on singular schemes. If one had resolution of singularities in positive characteristic, then any morphism $f\colon X \to Y$ between smooth $k$-schemes would admit a factorization into an open immersion $X \to \LX$ into a \emph{smooth} scheme $\LX$ followed by a proper morphism $\overline{f}\colon \LX \to Y$. 

One possibility would be to define $D_{\lfgu}^b(\CO_{F,Z})$ for embeddable schemes $Z$ in a slightly different way. Choose a closed immersion of $Z$ into a smooth $k$-scheme $X$ and an \emph{open} immersion $j\colon X \to \LX$ into a smooth complete variety, this means a smooth scheme $\LX$ whose structural morphism $\LX \to \Spec k$ is proper. This is possible by Nagata's compactification theorem applied to the structural morphism of the $k$-scheme $Z$ and, of course, the assumption of resolution of singularities. Let $\tilde{Z}$ be a closed subset of $\LX$ such that $\tilde{Z} \cap j(X) = Z$. Then define $D_{\lfgu}^b(\CO_{F,Z})$ as the subcategory of $D_{\lfgu}^b(\CO_{F,\LX})$ given by those complexes $\CM^{\bullet}$ that are supported in $\tilde{Z}$ and with $R\Gamma_{\LX \backslash X} \CM^{\bullet} \cong 0$. For the independence of the chosen immersion we could proceed as in section 4 of \cite{Ohkawa}, where the adjunction between $\overline{f}_+$ and $\overline{f}^!$ for proper morphisms of \cite[Theorem 4.4.1]{EmKis.Fcrys} is used. 

Another way to prove \autoref{adjunctionequiv} would be to use the equivalence $\Solu$ and work with the derived categories of constructible sheaves. First we show that (under the assumption of resolution of singularities) every morphism $f\colon X \to Y$ between smooth schemes is allowable, see \autoref{Solproperties} for the definition of allowable. Choose a compactification $X \xrightarrow{j} \LX \xrightarrow{\overline{s}_X} \Spec k$ of the structural morphism $s_X\colon X \to \Spec k$ with a smooth scheme $\LX$. Then $f$ factors through $(j,f)\colon X  \to \LX \times_k Y$ and the projection $\pr_Y\colon\LX \times_k Y \to Y$. Furthermore, $(j,f)$ is an immersion because it is the composition of the closed immersion $(\id,f)\colon X \to X \times_k Y$ (i.e.\ the graph embedding) and the open immersion $j \times \id\colon X \times_k Y \to \LX \times_k Y$. The projection $\pr_Y$ is a proper smooth morphism since these properties are stable under base change and the morphism $\overline{s}_X$ is proper and smooth. Thus $f$ is allowable. 

Let $i_X\colon Z \to X$ and $i_Y\colon Z \to Y$ be closed immersions with $i_Y = f \circ i_X$. Roughly speaking, the functors $f_+$ and $R\Gamma_Z f^!$ are inverse equivalences between $D_{\lfgu}^b(\CO_{F,X})_Z$ and $D_{\lfgu}^b(\CO_{F,Y})_Z$ because the corresponding functors $f_!$ and $\Gamma_Zf^*$ are inverse equivalences between $D_c^b(X_{\et},\ZZ/p\ZZ)_Z$ and $D_c^b(Y_{\et},\ZZ/p\ZZ)_Z$. The following diagram illustrates the categories and functors that we work with. 
\[
	\xymatrix{
		D_{\lfgu}^b(\CO_{F,X})_Z \ar@<.5ex>[rrr]^{\Solu} \ar@<-0.5ex>[dd]_-{f_+} & & & D_c^b(X_{\et},\ZZ/p\ZZ)_Z \ar@<.5ex>[lll]^-{M} \ar@<.5ex>[dr]^-{i_X^*} \ar@<-0.5ex>[dd]_-{f_!} & \\
		& & & & D_c^b(Z_{\et},\ZZ/p\ZZ) \ar@<.5ex>[ul]^-{i_{X*}} \ar@<.5ex>[dl]^-{i_{Y*}} \\
		D_{\lfgu}^b(\CO_{F,Y})_Z \ar@<-0.5ex>[uu]_-{R\Gamma_Zf^!} \ar@<.5ex>[rrr]^{\Solu} & & & D_c^b(Y_{\et},\ZZ/p\ZZ)_Z \ar@<.5ex>[lll]^-{M} \ar@<-0.5ex>[uu]_-{\Gamma_Zf^*} \ar@<.5ex>[ur]^-{i_Y^*} &
	}
\]
We have a natural equivalence $f_+R\Gamma_Zf^! \to \id$ of endofunctors on $D_{\lfgu}^b(\CO_{F,Y})_Z$ by the composition of the natural equivalences
\begin{align*}
	f_+R\Gamma_Zf^! &\overset{\sim}{\longrightarrow} M \circ \Solu \circ f_+R\Gamma_Zf^! \\
	&\overset{\sim}{\longrightarrow} M \circ f_!\Gamma_Zf^* \circ \Solu \\
	&\overset{\sim}{\longrightarrow} M \circ f_!i_{X*}i_X^*f^* \circ \Solu \\
	&\overset{\sim}{\longrightarrow} M \circ i_{Y*}i_Y^* \circ \Solu \\
	&\overset{\sim}{\longrightarrow} M \circ \id_{D_c^b(Y_{\et},\ZZ/p\ZZ)_Z} \circ \Solu \\
	&\overset{\sim}{\longrightarrow} \id_{D_{\lfgu}^b(\CO_{F,Y})_Z},
\end{align*}
where the second one is obtained from the isomorphisms $\Solu f_+ \overset{\sim}{\longrightarrow} f_! \Solu$ and $\Solu f^! \overset{\sim}{\longrightarrow} f^* \Solu$ of \autoref{Solproperties} and the isomorphism $\Solu R\Gamma_Z \overset{\sim}{\longrightarrow} \Gamma_Z \Solu$ of \autoref{SolGamma}.
 
Moreover, there is a natural equivalence $\id \to R\Gamma_Zf^!f_+$ of endofunctors on $D_{\lfgu}^b(\CO_{F,X})_Z$ by the composition of the natural equivalences
\begin{align*}
	\id_{D_{\lfgu}^b(\CO_{F,X})_Z} &\overset{\sim}{\longrightarrow} M \circ \id_{D_c^b(X_{\et},\ZZ/p\ZZ)_Z} \circ \Solu \\
	&\overset{\sim}{\longrightarrow} M \circ i_{X*}i_X^* \circ \Solu \\
	&\overset{\sim}{\longrightarrow} M \circ i_{X*} i_Y^*i_{Y*}i_X^* \circ \Solu \\
	&\overset{\sim}{\longrightarrow} M \circ i_{X*}i_Y^*f_!i_{X*}i_X^* \circ \Solu \\
	&\overset{\sim}{\longrightarrow} M \circ i_{X*}i_Y^*f_! \circ \Solu \\
	&\overset{\sim}{\longrightarrow} M \circ i_{X*}i_X^*f^*f_! \circ \Solu \\
	&\overset{\sim}{\longrightarrow} M \circ \Gamma_Z f^*f_! \circ \Solu \\
	&\overset{\sim}{\longrightarrow} M \circ \Solu \circ R\Gamma_Z f^!f_+ \\
	&\overset{\sim}{\longrightarrow} R\Gamma_Z f^!f_+,
\end{align*} 
where the last but one isomorphism is obtained from the inverses of the isomorphisms $\Solu f_+ \overset{\sim}{\longrightarrow} f_! \Solu$ and $\Solu f^! \overset{\sim}{\longrightarrow} f^* \Solu$ of \autoref{Solproperties} and $\Solu R\Gamma_Z \overset{\sim}{\longrightarrow} \Gamma_Z \Solu$ of \autoref{SolGamma}.

\section{Intermediate extensions}

In this last section we define the intermediate extension for Cartier crystals. First we recall the general definition of the intermediate extension $j\me$ for an open immersion $j$ in \cite{BBD}. This definition uses the extension by zero functor $j_!$, which we do not have defined for Cartier crystals. However, there are characterizations which also make sense for the categories of Cartier crystals and which will be used for our definition of the intermediate extension in the context of these categories. In the last part of the section we show that the equivalence between Cartier crystals and perverse constructible \'etale $p$-torsion sheaves established in the previous section is compatible with the intermediate extension functors of both categories.

\subsection{Review of the intermediate extension}

Suppose that $D$, $D_U$ and $D_Z$ are triangulated categories and $i_*\colon D_Z \to D$ and $j^*\colon D \to D_U$ are exact functors fulfilling the following axioms:
\begin{enumerate}[(1)]
	\item The functor $i_*$ admits a left adjoint $i^*$ and a right adjoint $i^!$. Likewise $j^*$ admits a left adjoint $j_!$ and a right adjoint $j_*$. 
	\item We have $j^*i_*=0$. By adjunction, this is equivalent to $i^*j_!=0$ or equivalent to $i^!j_*=0$.
	\item The functors $i_*$, $j_*$ and $j_!$ are full and faithful. Hence the adjunction morphisms $i^*i_* \to \id \to i^!i_*$ and $j^*j_* \to \id \to j^*j_!$ are isomorphisms.  
	\item For every $F \in D$, there are distinguished triangles $i_*i^!F \longrightarrow F \longrightarrow j_*j^*F \longrightarrow i_*i^!F[1]$ and $j_!j^*F \longrightarrow F \longrightarrow i_*i^*F \longrightarrow j_!j^*F[1]$.
\end{enumerate}
Furthermore, let $(D_Z^{\leq 0},D_Z^{\geq 0})$ and $(D_U^{\leq 0},D_U^{\geq 0})$ be $t$-structures on $D_Z$ and $D_U$. Then the subcategories
\begin{align*}
	D^{\leq 0} &= \{ F \in D \ | \ i^*F \in D_Z^{\leq 0} \text{ and } j^*F \in D_U^{\leq 0}\}, \\
	D^{\geq 0} &= \{ F \in D \ | \ i^!F \in D_Z^{\geq 0} \text{ and } j^*F \in D_U^{\geq 0}\}
\end{align*}
define a $t$-structure on $D$ (\cite[Th\'eor\`eme 1.4.10]{BBD}). We say that this $t$-structure on $D$ is obtained from the $t$-structures on $D_Z$ and $D_U$ by \emph{recollement}. The functors $j^*$ and $i_*$ remain exact with respect to this $t$-structure. Let $C$, $C_Z$ and $C_U$ be the hearts of this $t$-structures on $D$, $D_Z$ and $D_U$, i.e.\ for example $C = D^{\leq 0} \cap D^{\geq 0}$. By Th\'eor\`eme 1.3.6 of ibid., we know that the heart of a $t$-structure is an abelian category. 

Recall that for any triangulated category $\mathcal T$ which is equipped with a $t$-structure, the inclusion $\mathcal T^{\leq 0} \to \mathcal T$ admits a right adjoint $\tau_{\leq 0}$ and the inclusion $\mathcal T^{\geq 0} \to \mathcal T$ admits a left adjoint $\tau_{\geq 0}$ (\cite[Proposition 1.3.3]{BBD}). Let $H^0\colon \mathcal T \to \mathcal T^{\leq 0} \cap \mathcal T^{\geq 0}$ be the composition $\tau_{\geq 0} \tau_{\leq 0}$ as in Th\'eor\`eme 1.3.6 of ibid. If $T$ is one of the functors $j_!$, $j^*$, $j_*$, $i^*$, $i_*$, $i^!$, we write ${^p}T$ for the composition $H^0 \circ T \circ \epsilon$, where $\epsilon$ denotes the respective inclusion $C \to D$, $C_Z \to D_Z$ or $C_U \to D_U$. By \cite[Proposition 1.4.16 (ii)]{BBD}, there are the same adjunction relations for the functors ${^p}T$ as there are for the original functors $T$. A functor $T\colon \CA \to \CB$ between triangulated categories with $t$-structures is called \emph{left $t$-exact} if it restricts to a functor $\CA^{\geq 0} \to \CB^{\geq 0}$, \emph{right $t$-exact} if it restricts to a functor $\CA^{\leq 0} \to \CB^{\leq 0}$ and \emph{$t$-exact} if it is left and right $t$-exact. We have the following
\begin{proposition} \cite[Proposition 1.4.16, Proposition 1.4.17]{BBD} \label{propertiesRecollement}
The functors $j_!$ and $i^*$ are right $t$-exact, the functors $j^*$ and $i_*$ are $t$-exact and the functors $j_*$ and $i^!$ are left $t$-exact. The compositions ${^p}j^*{^p}i_*$, ${^p}i^*{^p}j_!$ and ${^p}i^!{^p}j_*$ are isomorphic to the zero functor. Furthermore, the functors ${^p}i_*$, ${^p}j_!$ and ${^p}j_*$ are full and faithful, the adjunction morphisms ${^p}i^*{^p}i_* \to \id \to {^p}i^!{^p}i_*$ and ${^p}j^*{^p}j_* \to \id \to {^p}j^*{^p}j_!$ are isomorphisms. 
\end{proposition}
The functor $i_*$ identifies $D_Z$ with a thick subcategory of $D$ and $j^*$ identifies the quotient category $D/D_Z$ with $D_U$ (\cite[Remarque 1.4.8]{BBD}). Furthermore, ${^p}i_*$ identifies $C_Z$ with a thick subcategory of $C$, which has the following characterization: An object $M$ of $C$ is contained in the essential image of ${^p}i_*$ if and only if ${^p}j^*M=0$ (\cite[Amplification 1.4.17.1]{BBD}). Following the situation of sheaves, we will say that an object $M$ of $C$ is \emph{supported on $Z$} if and only if ${^p}j^*M=0$.
   
An \emph{extension} of an object $B \in D_U$ to $D$ is an object $A$ of $D$ together with an isomorphism $j^*A \simeq B$. Let $a\colon {^p}j^*{^p}j_* \to \id$ and $\tilde{b}\colon {^p}j_!{^p}j^* \to \id$ denote the adjunction morphisms. The composition 
\[
	{^p}j_! \xrightarrow{{^p}j_!a^{-1}} {^p}j_!{^p}j^*{^p}j_* \xrightarrow{\tilde{b} \, {^p}j_*} {^p}j_*
\]
yields a natural transformation $\eta\colon {^p}j_! \to {^p}j_*$.
\begin{definition} \cite[Definition 1.4.22]{BBD}
For an object $B \in C_U$, the intermediate extension $j\me B$ is the image of the natural morphism $\eta(B)\colon {^p}j_!B \to {^p}j_*B$.
\end{definition} 
Let $f\colon A \to B$ be a morphism of objects in $C_U$. The natural transformation $\eta\colon {^p}j_! \to {^p}j_*$ yields a commutative diagram
\[
	\xymatrix{
		{^p}j_!A \ar[r]^-{{^p}j_!f} \ar[d]^-{\eta(A)} & {^p}j_!B \ar[d]^-{\eta(B)} \\
		{^p}j_*A \ar[r]^-{{^p}j_*f} & {^p}j_*B.
	}
\]
It follows that $j\me$ is a functor between the abelian categories $C_U$ and $C$.  
\begin{lemma} \label{restricteta}
The natural transformation ${^p}j^*\eta$ is an isomorphism of functors. (In the situation of derived categories of sheaves on a ringed space $X$ with an open immersion $j\colon U \into X$, this means that the restriction of $\eta$ to $U$ is an isomorphism of functors.) 
\end{lemma}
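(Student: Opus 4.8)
The statement is \autoref{restricteta}: the natural transformation ${}^pj^*\eta\colon {}^pj^*\,{}^pj_! \to {}^pj^*\,{}^pj_*$ is an isomorphism of functors $C_U \to C_U$. The plan is to unwind the definition of $\eta$ and exploit the fact that the adjunction morphisms ${}^pj^*\,{}^pj_* \to \id$ and ${}^pj^*\,{}^pj_! \to \id$ are already known to be isomorphisms by \autoref{propertiesRecollement} (this is exactly the statement that ${}^pj_!$ and ${}^pj_*$ are full and faithful). Recall that $\eta$ is defined as the composition
\[
	{}^pj_! \xrightarrow{{}^pj_!\,a^{-1}} {}^pj_!\,{}^pj^*\,{}^pj_* \xrightarrow{\tilde b\,{}^pj_*} {}^pj_*,
\]
where $a\colon {}^pj^*\,{}^pj_* \to \id$ and $\tilde b\colon {}^pj_!\,{}^pj^* \to \id$ are the adjunction counits.

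\textbf{Key steps.} First I would apply ${}^pj^*$ to the defining composition, obtaining
\[
	{}^pj^*\eta\colon {}^pj^*\,{}^pj_! \xrightarrow{{}^pj^*\,{}^pj_!\,a^{-1}} {}^pj^*\,{}^pj_!\,{}^pj^*\,{}^pj_* \xrightarrow{{}^pj^*(\tilde b\,{}^pj_*)} {}^pj^*\,{}^pj_*.
\]
The second step is to observe that the second arrow, ${}^pj^*\tilde b\,{}^pj_*$, factors through the composition ${}^pj^*\tilde b$, which when composed appropriately is the identity: the standard triangle identity for the adjunction $({}^pj_!,{}^pj^*)$ states that the composite ${}^pj^* \xrightarrow{{}^pj^*\!\cdot\,\text{unit}} {}^pj^*\,{}^pj_!\,{}^pj^* \xrightarrow{{}^pj^*\tilde b} {}^pj^*$ is the identity. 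So applying this to the functor ${}^pj_*$ in place of a general object, the arrow ${}^pj^*(\tilde b\,{}^pj_*)$ is, up to the unit of $({}^pj_!,{}^pj^*)$, invertible — more precisely, I would show that ${}^pj^*(\tilde b\,{}^pj_*)$ composed with ${}^pj^*(\text{unit})\,{}^pj_*$ gives the identity on ${}^pj^*\,{}^pj_*$. The third step is to handle the first arrow ${}^pj^*\,{}^pj_!\,a^{-1}$: since $a$ is an isomorphism by \autoref{propertiesRecollement}, so is $a^{-1}$, hence so is ${}^pj^*\,{}^pj_!\,a^{-1}$. Combining, ${}^pj^*\eta$ is a composition of (iso) $\circ$ (a map which, by the triangle identity, is part of an inverse pair), hence an isomorphism. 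In practice the cleanest route is: both $a$ and $\tilde b$ restrict to isomorphisms after applying ${}^pj^*$ (for $a$ this is immediate since $a$ is already globally an iso; for $\tilde b$ this is the triangle-identity argument above, which shows ${}^pj^*\tilde b$ has a two-sided inverse given by the unit), so ${}^pj^*\eta$ is a composition of two isomorphisms.

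\textbf{Main obstacle.} There is no deep obstacle here — this is a formal diagram chase in the abstract recollement setup of \cite{BBD}, and all the needed inputs (${}^pj_!,{}^pj_*$ fully faithful, the perverse adjunctions, the triangle identities) are already available in \autoref{propertiesRecollement}. The only thing requiring care is bookkeeping: making sure that the perverse functors ${}^pj_!$, ${}^pj_*$, ${}^pj^*$ satisfy the same triangle identities as the underived functors, which is precisely what \autoref{propertiesRecollement} (citing \cite[Proposition 1.4.16]{BBD}) guarantees, and correctly tracking which unit/counit is being used so that the triangle identity is applied in the right variable. I would therefore keep the proof short: restrict the two-step composition defining $\eta$, cite that $a$ is an isomorphism, invoke the triangle identity for $({}^pj_!,{}^pj^*)$ to see that ${}^pj^*\tilde b$ is invertible, and conclude.
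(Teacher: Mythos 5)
Your proposal is correct and uses exactly the same two ingredients as the paper's proof: the fact that the unit $\id \to {}^pj^*{}^pj_!$ and counit ${}^pj^*{}^pj_* \to \id$ are isomorphisms (from \autoref{propertiesRecollement}), and the triangle identity for the adjunction $({}^pj_!,{}^pj^*)$ whiskered with ${}^pj_*$. The paper packages this as a single commutative diagram exhibiting ${}^pj^*\eta$ as conjugate to the identity via $a$ and $b$, whereas you show each of the two factors of ${}^pj^*\eta$ is invertible separately — a purely presentational difference.
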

\begin{proof}
As the adjunction morphisms $a\colon {^p}j^*{^p}j_* \to \id$ and $b\colon \id \to {^p}j^*{^p}j_!$ are isomorphisms, it suffices to show that the diagram
\[
	\xymatrix@C50pt{
		\id \ar[d]^b & {^p}j^*{^p}j_* \ar[l]_-a \ar[d]^{b({^p}j^*{^p}j_*)} \ar[r]^-a & \id \\
		{^p}j^*{^p}j_! & {^p}j^*{^p}j_!{^p}j^*{^p}j_* \ar[l]_-{({^p}j^*{^p}j_!)a} \ar[r]^-{{^p}j^*\tilde{b} \, {^p}j_*} & {^p}j^*{^p}j_* \ar[u]^a
	}
\]
commutes. Here $\tilde{b}$ denotes the adjunction map ${^p}j_!{^p}j^* \to \id$. The commutativity of the left hand square follows from the naturality of $b$. For the second square, we note that ${^p}j^*\tilde{b} \, {^p}j_* \circ b({^p}j^*{^p}j_*) \cong \id$, because for an adjunction, it is required that the composition
\[
	{^p}j^* \xrightarrow{b \, {^p}j^*} {^p}j^*{^p}j_!{^p}j^* \xrightarrow{{^p}j^*\tilde{b}} {^p}j^*
\]
is the identity.
\end{proof}
\begin{lemma} \label{subobquot}
Let $B \in D_U$. There are no non-trivial subobjects of ${^p}j_*B$ supported on $Z$ and no non-trivial quotients of ${^p}j_!B$ supported on $Z$.
\end{lemma}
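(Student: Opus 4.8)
The statement is a standard consequence of the recollement axioms, and I would prove it by adjunction. Recall that $\eta\colon {^p}j_! \to {^p}j_*$ is defined as the composition ${^p}j_! \xrightarrow{{^p}j_!a^{-1}} {^p}j_!{^p}j^*{^p}j_* \xrightarrow{\tilde{b}\,{^p}j_*} {^p}j_*$, and that by \autoref{propertiesRecollement} the composition ${^p}j^*{^p}i_*$ is zero, so that an object $M$ of $C$ is supported on $Z$ (i.e.\ lies in the essential image of ${^p}i_*$) exactly when ${^p}j^*M = 0$.

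First I would treat the statement about quotients of ${^p}j_!B$. Let $q\colon {^p}j_!B \to Q$ be an epimorphism in $C$ with $Q$ supported on $Z$, say $Q \cong {^p}i_* N$ for some $N \in C_Z$. By adjunction, $\Hom_C({^p}j_!B, {^p}i_*N) \cong \Hom_{C_U}(B, {^p}j^*{^p}i_*N) = \Hom_{C_U}(B,0) = 0$, using ${^p}j^*{^p}i_* = 0$. Hence $q = 0$; since $q$ is also an epimorphism, $Q = 0$. This shows there are no non-trivial quotients of ${^p}j_!B$ supported on $Z$. Dually, for the subobject statement, let $m\colon M \hookrightarrow {^p}j_*B$ be a monomorphism in $C$ with $M \cong {^p}i_*N$ supported on $Z$. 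By adjunction, $\Hom_C({^p}i_*N, {^p}j_*B) \cong \Hom_{C_U}({^p}j^*{^p}i_*N, B) = \Hom_{C_U}(0,B) = 0$, so $m = 0$, and being a monomorphism forces $M = 0$.

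The only subtlety is justifying that ${^p}j_!$ and ${^p}j_*$ are genuinely adjoint to ${^p}j^*$ \emph{as functors between the abelian hearts}, so that the $\Hom$-adjunction isomorphisms above are available in $C$, $C_U$ rather than just in the triangulated categories; this is exactly \autoref{propertiesRecollement} (citing \cite[Proposition 1.4.16]{BBD}), which records both the adjunctions ${^p}j^*{^p}j_* \to \id$, $\id \to {^p}j^*{^p}j_!$ and the vanishing ${^p}j^*{^p}i_* = 0$. I do not expect any real obstacle here — the proof is a two-line adjunction argument in each direction — but I would make sure to phrase the support condition consistently with the paper's convention (``$M$ is supported on $Z$ iff ${^p}j^*M = 0$'') so that the vanishing ${^p}j^*{^p}i_* = 0$ applies verbatim.
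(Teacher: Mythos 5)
Your proof is correct and rests on the same ingredient as the paper's, namely the orthogonality relations of \autoref{propertiesRecollement} together with the fact that the adjunctions descend to the hearts. The only (inessential) difference is the formulation: you show the relevant morphism is itself zero via the $\Hom$-adjunction and the vanishing ${^p}j^*{^p}i_* \cong 0$, whereas the paper applies ${^p}i_*{^p}i^!$ (resp.\ ${^p}i_*{^p}i^*$) to the monomorphism (resp.\ epimorphism) and uses the adjoint vanishings ${^p}i^!{^p}j_* \cong 0$ and ${^p}i^*{^p}j_! \cong 0$ — these are just adjoint-transposed versions of one another.
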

\begin{proof} 
Let $\beta\colon A \to {^p}j_*B$ be a monomorphism with $A$ supported on $Z$. Applying the left exact functor ${^p}i^!$ and then the exact functor ${^p}i_*$ yields a monomorphism
\[
	{^p}i_*{^p}i^!\beta\colon i_*{^p}i^!A \to {^p}i_*{^p}i^!{^p}j_*B. 
\]
But ${^p}i_*{^p}i^!A$ is isomorphic to $A$ because $A$ is in the essential image of ${^p}i_*$. Moreover, ${^p}i_*{^p}i^!{^p}j_*B \cong 0$ because ${^p}i^!{^p}j_* \cong 0$ (\autoref{propertiesRecollement}). It follows that $A \cong 0$.

For an epimorphism ${^p}j_!B \to C$ with $C$ supported on $Z$, we apply ${^p}i_*{^p}i^*$ and obtain an exact sequence
\[
	{^p}i_*{^p}i^*{^p}j_!B \longrightarrow {^p}i_*{^p}i^*C \longrightarrow 0.
\]
Since ${^p}i^*{^p}j_! \cong 0$ (\autoref{propertiesRecollement}), we have $C \cong {^p}i_*{^p}i^*C \cong 0$.
\end{proof}
\begin{lemma} \label{extension}
An object $A \in C$ is an extension of an object $B \in C_U$ to $D$ if and only if there are morphisms $f\colon {^p}j_!B \to A$ and $g\colon A \to {^p}j_*B$ making the diagram
\[
	\xymatrix{
		{^p}j_!B \ar[dr]_-f \ar[rr]^-{\eta} & & {^p}j_*B \\
		& A \ar[ur]_-g & 
	}
\]
commutative and ${^p}j^*f$ is epic or ${^p}j^*g$ is monic. 
\end{lemma}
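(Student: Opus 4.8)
\textbf{Proof strategy for \autoref{extension}.}

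The plan is to prove the two implications separately, exploiting the factorization $\eta = \tilde b\,{}^p j_* \circ {}^p j_! a^{-1}$ and the adjunction $({}^p j_!,\ {}^p j^*,\ {}^p j_*)$ together with \autoref{restricteta} and \autoref{subobquot}. First I would treat the easy direction. Suppose $A$ is an extension of $B$, i.e.\ we are given an isomorphism $\theta\colon {}^p j^* A \overset{\sim}{\longrightarrow} B$. Then by the adjunction between ${}^p j_!$ and ${}^p j^*$, the morphism $\theta^{-1}$ corresponds under $\Hom({}^p j^* A, B) \cong \Hom(A, {}^p j_* B)$ to a map $g\colon A \to {}^p j_* B$ and under $\Hom({}^p j^* A, B) \cong \Hom({}^p j_! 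B, A)$ (using the inverse $\theta$) to a map $f\colon {}^p j_! B \to A$. Applying ${}^p j^*$ and using that the adjunction morphisms $a\colon {}^p j^*{}^p j_* \to \id$ and $b\colon \id \to {}^p j^*{}^p j_!$ are isomorphisms (\autoref{propertiesRecollement}), one checks that ${}^p j^* f$ and ${}^p j^* g$ are both isomorphisms; in particular ${}^p j^* f$ is epic. That $g \circ f = \eta(B)$ is precisely the commutativity statement of \autoref{restricteta} applied after unwinding the definitions: both $g\circ f$ and $\eta(B)$ are the canonical map ${}^p j_! B \to {}^p j_* B$, since a map ${}^p j_! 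B \to {}^p j_* B$ is determined by its image under ${}^p j^*$ (the unit ${}^p j^* \to {}^p j^*{}^p j_!{}^p j^* \to {}^p j^*$ being the identity, as in the proof of \autoref{restricteta}), and both become $\theta^{-1}\circ\theta = \id_B$ there after the canonical identifications.

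For the converse, suppose we are given $f\colon {}^p j_! B \to A$ and $g\colon A \to {}^p j_* B$ with $g\circ f = \eta(B)$ and, say, ${}^p j^* f$ epic (the case ${}^p j^* g$ monic is dual). Applying the exact functor ${}^p j^*$ and using \autoref{restricteta}, the composite ${}^p j^* g \circ {}^p j^* f = {}^p j^*\eta(B)$ is an isomorphism; since ${}^p j^* f$ is epic and its post-composition with ${}^p j^* g$ is invertible, ${}^p j^* f$ is also monic, hence an isomorphism, and therefore ${}^p j^* g$ is an isomorphism as well. Composing ${}^p j^* f$ (or the inverse of ${}^p j^* g$) with the canonical identification ${}^p j^*{}^p j_! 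B \cong B$ from the adjunction morphism $b$ produces the required isomorphism ${}^p j^* A \cong B$, exhibiting $A$ as an extension of $B$.

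The only mildly delicate point — and the place I expect to spend the most care — is verifying that the two maps $f$, $g$ produced in the first implication genuinely satisfy $g\circ f = \eta(B)$ on the nose rather than just up to ${}^p j^*$. The clean way around this is to observe that $\Hom_C({}^p j_! B,\ {}^p j_* B) \overset{\sim}{\longrightarrow} \Hom_{C_U}({}^p j^*{}^p j_! B,\ B) \overset{\sim}{\longrightarrow} \Hom_{C_U}(B,B)$ via the adjunction unit and the isomorphism $b$, so a morphism ${}^p j_! B \to {}^p j_* B$ is pinned down by a single endomorphism of $B$; both $g\circ f$ and $\eta(B)$ correspond to $\id_B$ under this dictionary, the latter by the very computation carried out in the proof of \autoref{restricteta}. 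Granting this bookkeeping lemma, the rest is a formal diagram chase using only \autoref{propertiesRecollement}, \autoref{restricteta} and the elementary fact that in an abelian category a morphism whose composite with another is an isomorphism, and which is itself epic (resp.\ monic), is an isomorphism. I would state the bookkeeping identity as a one-line remark inside the proof and then run the two implications as above.
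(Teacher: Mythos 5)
Your proof is correct and follows essentially the same route as the paper: the maps $f$ and $g$ are the same adjoint transposes of the isomorphism ${^p}j^*A \cong B$ and its inverse, and both implications rest on \autoref{propertiesRecollement}, \autoref{restricteta} and the triangle identities, exactly as in the paper's argument. The one point where you diverge is the verification of $g\circ f=\eta(B)$: the paper runs an explicit diagram chase, whereas you observe that the composite bijection $\Hom_C({^p}j_!B,{^p}j_*B)\cong\Hom_{C_U}(B,B)$ (coming from the adjunction together with the isomorphisms $a$ and $b$) pins down a morphism ${^p}j_!B\to{^p}j_*B$ by its restriction to $C_U$, and that both $g\circ f$ and $\eta(B)$ restrict to $\id_B$ --- a legitimate and slightly cleaner packaging of the same computation. (A cosmetic slip: it is $\theta$, not $\theta^{-1}$, that lives in $\Hom({^p}j^*A,B)$, but the adjoint transposes you intend are clearly the correct ones.)
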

\begin{proof}
Let $A$ be an extension of $B$, i.e.\ there is an isomorphism $\tau\colon {^p}j^*A \to B$. With the notation from \autoref{restricteta}, we consider the following diagram: 
\[
	\xymatrix@C50pt{
		{^p}j_!B \ar[r]^-{{^p}j_!a^{-1}} & {^p}j_!{^p}j^*{^p}j_*B \ar[r]^-{\tilde{b} \, {^p}j_*} & {^p}j_*B \\
		{^p}j_!{^p}j^*A \ar[u]^-{\tau} \ar[r]^-{{^p}j_!a^{-1} \, {^p}j^*} \ar@/_3mm/[dr]_{\tilde{b}} & {^p}j_!{^p}j^*{^p}j_*{^p}j^*A \ar[u]^-{\tau} \ar[r]^-{\tilde{b} \, {^p}j_*{^p}j^*} & {^p}j_*{^p}j^*A \ar[u]^-{\tau} \\
		& A. \ar@/_3mm/[ur]_{\tilde{a}} & 
	}
\]
Here $\tilde{a}\colon \id \to {^p}j_*{^p}j^*$ denotes the unit of adjunction. The three squares at the top commute by functoriality. We show that the lower part of the diagram is commutative. Then we may define $f$ as the composition of ${^p}j_!\tau^{-1}$ and the adjunction morphism ${^p}j_!{^p}j^*A \to A$ and $g$ as the composition of the adjunction morphism $A \to {^p}j_*{^p}j^*A$ and ${^p}j_*\tau$. We have a commutative square 
\[
	\xymatrix@C40pt{
		{^p}j_!{^p}j^*{^p}j_*{^p}j^*A \ar[r]^-{\tilde{b} \, {^p}j_*{^p}j^*} & {^p}j_*{^p}j^*A \\
		{^p}j_!{^p}j^*A \ar[u]_-{{^p}j_!{^p}j^* \tilde{a}} \ar[r]^-{\tilde{b}} & A. \ar[u]_-{\tilde{a}}
	}
\]
Therefore, it suffices to show that the two morphisms ${^p}j_!{^p}j^*\tilde{a}$ and ${^p}j_! a \, {^p}j^*$ are equal. This is nothing but the requirement that the composition
\[
	{^p}j^* \xrightarrow{{^p}j^*\tilde{a}} {^p}j^*{^p}j_*{^p}j^* \xrightarrow{a \, {^p}j^*} {^p}j^*
\]
of the adjunction morphisms is the identity. Another consequence of this requirement is the commutativity of the diagram
\[
	\xymatrix@C60pt{
		{^p}j^*A \ar[r]^-{b \, {^p}j^*}_-{\sim} \ar[dr]_-{\id} & {^p}j^*{^p}j_!{^p}j^*A \ar[d]^-{{^p}j^*\tilde{b}} \\
		& {^p}j^*A.
	}
\]
Hence the adjunction morphism ${^p}j_!{^p}j^* \to \id$ applied to ${^p}j^*A$ and thus, also ${^p}j^*f$ is an isomorphism. It follows that ${^p}j^*g$ is an isomorphism since ${^p}j^*\eta(B)$ is an isomorphism (\autoref{restricteta}) and ${^p}j^*\eta(B) = {^p}j^*g \circ {^p}j^*f$.
	
Conversely, let $f\colon {^p}j_!B \to A$ and $g\colon A \to {^p}j_*B$ be morphisms with $g \circ f = \eta(B)$. Recall that the adjunction morphisms $a$ and $b$ are isomorphisms. By a similar reasoning as above, we see that the diagram
\[
	\xymatrix@C50pt{
		B \ar[r]_-{\sim}^-{a^{-1}} \ar[d]_-b^-{\sim} \ar@/^6mm/[rr]^{\id} & {^p}j^*{^p}j_*B \ar[r]_-{\sim}^-a \ar[d]_-{\sim}^-{b({^p}j^*{^p}j_*)} & B \ar[d]_{\sim}^{a^{-1}} \\
		{^p}j^*{^p}j_!B \ar[r]_-{\sim}^-{({^p}j^*{^p}j_!)a^{-1}} \ar@/_3mm/[dr]_-{{^p}j^*f} & {^p}j^*{^p}j_!{^p}j^*{^p}j_*B \ar[r]^-{{^p}j^* \tilde{b} \, {^p}j_*} & {^p}j^*{^p}j_*B \\
		& {^p}j^*A \ar@/_3mm/[ur]_{{^p}j^*g} &
	}
\]
is commutative. In fact, we have already encountered the two squares in the middle in the proof of \autoref{restricteta}. Therefore, ${^p}j^*f$ is monic and ${^p}j^*g$ is epic. If in addition ${^p}j^*f$ is epic or ${^p}j^*g$ is monic, then ${^p}j^*f$ and ${^p}j^*g$ are isomorphisms, inverse to each other. Consequently, $A$ is an extension of $B$ to $D$.
\end{proof}
\begin{definition}
Let $\CA$ be an abelian category, let $A$ be an object of $\CA$ and let $A_1,\dots,A_n$ be a finite set of subobjects of $A$. The \emph{intersection} $A_1 \cap A_2 \cap \dots \cap A_n$ is the kernel of the map
\[
	A \to \prod_{i=1}^n A/A_i
\]
given by the projections $A \to A/A_i$ for $1 \leq i \leq n$.
\end{definition}
\begin{lemma}  \label{abelianintersection}
Let $A$ be an object of $C$. Let $A_1,\dots,A_n$ be subobjects of $A$ such that ${^p}j^*A_i\cong B$ for some object $B$ of $C_U$ and every $1 \leq i \leq n$. Then ${^p}j^*(A_1 \cap \dots \cap A_n) \cong B$.
\end{lemma}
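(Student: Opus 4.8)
The statement is about subobjects $A_1,\dots,A_n$ of an object $A$ in the heart $C$, each restricting to the same $B\in C_U$ under ${}^p j^*$. The plan is to reduce to the case $n=2$ by induction, and to use the fact that ${}^p j^*$ is an exact functor between abelian categories (it is $t$-exact by \autoref{propertiesRecollement}, hence restricts to an exact functor on the hearts). The key point is that an exact functor commutes with finite intersections, because the intersection $A_1\cap\cdots\cap A_n$ is by definition the kernel of the map $A\to\prod_{i=1}^n A/A_i$, and kernels, finite products and quotients are all preserved by an exact functor.

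First I would set up the $n=2$ case. Let $A_1,A_2\hookrightarrow A$ be subobjects with ${}^p j^*A_1\cong B\cong {}^p j^*A_2$ compatibly with the inclusions into ${}^p j^*A$; more precisely, since ${}^p j^*$ is exact, ${}^p j^*A_1$ and ${}^p j^*A_2$ are subobjects of ${}^p j^*A$, and the hypothesis is that both equal the same subobject $B\subseteq {}^p j^*A$. Applying the exact functor ${}^p j^*$ to the defining exact sequence
\[
	0 \longrightarrow A_1\cap A_2 \longrightarrow A \longrightarrow A/A_1 \times A/A_2
\]
yields an exact sequence
\[
	0 \longrightarrow {}^p j^*(A_1\cap A_2) \longrightarrow {}^p j^*A \longrightarrow {}^p j^*(A/A_1) \times {}^p j^*(A/A_2),
\]
using that ${}^p j^*$ preserves finite products. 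Now ${}^p j^*(A/A_i)\cong {}^p j^*A / {}^p j^*A_i = {}^p j^*A / B$, so ${}^p j^*(A_1\cap A_2)$ is the kernel of ${}^p j^*A \to ({}^p j^*A/B)\times({}^p j^*A/B)$, which is exactly $B\cap B = B$ inside ${}^p j^*A$. Hence ${}^p j^*(A_1\cap A_2)\cong B$.

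For the general case I would proceed by induction on $n$, the base case $n=1$ being trivial. Write $A_1\cap\cdots\cap A_n = (A_1\cap\cdots\cap A_{n-1})\cap A_n$; one checks directly from the definition of the iterated intersection that this identification of subobjects of $A$ holds (the kernel of $A\to\prod_{i=1}^n A/A_i$ equals the intersection of the kernel of $A\to\prod_{i=1}^{n-1}A/A_i$ with the kernel of $A\to A/A_n$). By the induction hypothesis ${}^p j^*(A_1\cap\cdots\cap A_{n-1})\cong B$, and ${}^p j^*A_n\cong B$, so the $n=2$ case applied to the two subobjects $A_1\cap\cdots\cap A_{n-1}$ and $A_n$ of $A$ gives ${}^p j^*(A_1\cap\cdots\cap A_n)\cong B$, completing the induction.

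\textbf{Main obstacle.} The only genuinely delicate point is bookkeeping: one must be careful that the various isomorphisms ${}^p j^*A_i\cong B$ are all \emph{compatible with the inclusions into ${}^p j^*A$}, i.e.\ that they identify the $B$'s as the same subobject of ${}^p j^*A$, rather than merely abstractly isomorphic objects — otherwise the statement ``$B\cap B=B$ inside ${}^p j^*A$'' is not meaningful. This should be read into the hypothesis (it is the natural interpretation in the applications, where the $A_i$ are subcrystals of a fixed crystal restricting to a fixed $\CM$ on $U$). Granting that, the argument is a routine diagram chase using exactness of ${}^p j^*$; I expect no substantial difficulty beyond making the identifications precise.
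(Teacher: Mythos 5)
Your proof is correct and follows essentially the same route as the paper: apply the exact functor ${}^p j^*$ to the defining exact sequence $0 \to \bigcap_i A_i \to A \to \prod_i A/A_i$, use that exactness gives commutation with finite products and quotients, and identify the resulting kernel with $B$ inside ${}^p j^*A$. The paper does this in one step for general $n$ rather than by induction on $n$, and leaves implicit the compatibility of the identifications ${}^p j^*A_i \cong B$ as subobjects of ${}^p j^*A$ that you rightly flag, but these are only presentational differences.
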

\begin{proof}
As an exact functor, ${^p}j^*$ commutes with finite products. Hence the exact sequence
\[
	0 \longrightarrow {^p}j^*\bigcap_{i=1}^n A_i \longrightarrow {^p}j^* A \longrightarrow {^p}j^*\prod_{i=1}^n A/A_i
\]
yields an exact sequence
\[
	0 \longrightarrow {^p}j^*\bigcap_{i=1}^n A_i \longrightarrow {^p}j^* A \longrightarrow \prod_{i=1}^n {^p}j^*A/B.
\]
This shows that ${^p}j^*\bigcap_{i=1}^n A_i$ is isomorphic to $B$ because the kernel of the natural map ${^p}j^* A \to \prod_{i=1}^n {^p}j^*A/B$ equals the kernel of the projection ${^p}j^* A \to {^p}j^*A/B$. 
\end{proof}
There are different characterizations of the intermediate extension summarized in the next proposition.
\begin{proposition} \label{ienewchar}
Let $B$ be an object of $C_U$. For $A \in C$, the following characterizations (up to isomorphism) are equivalent:
\begin{enumerate}[(i)]
	\item The object $A$ is the image of the natural morphism ${^p}j_!B \to {^p}j_*B$, in other words, $A = j\me B$.
	\item The object $A$ is an extension of $B$ to $D$ without non-trivial subobjects or quotients in the essential image of $i_*$.
	\item The object $A$ is the smallest subobject of ${^p}j_*B$ such that the canonical morphism ${^p}j^*A \to B$ is an isomorphism. Here the \emph{canonical morphism} ${^p}j^*A \to B$ is the 	morphism obtained from the inclusion $A \into {^p}j_*B$ by the adjunction between ${^p}j^*$ and ${^p}j_*$.
\end{enumerate}
In particular, an object $A$ of $D$ satisfying (ii) is unique.
\end{proposition}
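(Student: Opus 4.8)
The plan is to prove the equivalence of (i), (ii) and (iii) by a short cycle of implications, using the lemmas just established (\autoref{restricteta}, \autoref{subobquot}, \autoref{extension}, \autoref{abelianintersection}) together with the standard properties of recollement (\autoref{propertiesRecollement}).

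\emph{(i) $\Rightarrow$ (ii).} Suppose $A = j\me B$ is the image of $\eta(B)\colon {^p}j_!B \to {^p}j_*B$. Write the factorization ${^p}j_!B \xrightarrow{f} A \xrightarrow{g} {^p}j_*B$ with $f$ epic and $g$ monic. Applying the exact functor ${^p}j^*$ and using that ${^p}j^*\eta(B)$ is an isomorphism (\autoref{restricteta}), we see that ${^p}j^*f$ and ${^p}j^*g$ are mutually inverse isomorphisms; in particular ${^p}j^*f$ is epic, so by \autoref{extension} $A$ is an extension of $B$ to $D$. Now let $C \hookrightarrow A$ be a subobject supported on $Z$, i.e.\ in the essential image of $i_*$. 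Composing with the monomorphism $g$ gives a subobject of ${^p}j_*B$ supported on $Z$, which is zero by \autoref{subobquot}; hence $C \cong 0$. Dually, if $A \twoheadrightarrow C$ is a quotient supported on $Z$, precomposing with the epimorphism $f$ gives a quotient of ${^p}j_!B$ supported on $Z$, again zero by \autoref{subobquot}. So $A$ has no non-trivial subobjects or quotients in the essential image of $i_*$, which (since the essential image of $i_*$ and of ${^p}i_*$ agree) is exactly (ii).

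\emph{(ii) $\Rightarrow$ (iii).} Let $A$ be an extension of $B$ with no non-trivial sub- or quotient object supported on $Z$. By \autoref{extension} there is a monomorphism $g\colon A \to {^p}j_*B$ with ${^p}j^*g$ an isomorphism; via $g$ we regard $A$ as a subobject of ${^p}j_*B$, and the canonical morphism ${^p}j^*A \to B$ is precisely ${^p}j^*g$, hence an isomorphism. Suppose $A' \subseteq {^p}j_*B$ is another subobject with ${^p}j^*A' \to B$ an isomorphism. By \autoref{abelianintersection}, applied to the two subobjects $A$ and $A'$ of ${^p}j_*B$, the intersection $A \cap A'$ still satisfies ${^p}j^*(A \cap A') \cong B$, so the inclusion $A\cap A' \hookrightarrow A$ has quotient $A/(A\cap A')$ with ${^p}j^*(A/(A\cap A')) \cong 0$, i.e.\ this quotient is supported on $Z$. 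By hypothesis on $A$ it vanishes, so $A \cap A' = A$, that is $A \subseteq A'$. Thus $A$ is the smallest such subobject, which is (iii); this argument also shows uniqueness of an object satisfying (iii), and hence of one satisfying (ii).

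\emph{(iii) $\Rightarrow$ (i).} It remains to check that $j\me B$, which by the already-proven implication (i) $\Rightarrow$ (ii) $\Rightarrow$ (iii) satisfies (iii), is the unique object satisfying (iii); but we have just verified that (iii) characterizes $A$ up to unique isomorphism, so any $A$ satisfying (iii) is isomorphic to $j\me B$. This closes the cycle and proves the proposition. The main point to get right is the interplay in (ii) $\Rightarrow$ (iii): one must use \autoref{extension} to realize $A$ as a subobject of ${^p}j_*B$ with the correct restriction, and then \autoref{abelianintersection} to make the "smallest subobject'' statement precise via intersections — the remaining steps are formal manipulations with the recollement adjunctions.
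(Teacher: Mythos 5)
Your argument is essentially correct and uses the same ingredients as the paper (\autoref{restricteta}, \autoref{subobquot}, \autoref{extension}, \autoref{abelianintersection}), but it is organized differently: you close a single cycle (i) $\Rightarrow$ (ii) $\Rightarrow$ (iii) $\Rightarrow$ (i), whereas the paper proves the two biconditionals (i) $\Leftrightarrow$ (ii) and (ii) $\Leftrightarrow$ (iii) separately. The paper does this deliberately: its direct proof of (ii) $\Leftrightarrow$ (iii) never mentions ${^p}j_!$, so it transfers verbatim to the category of Cartier crystals in the next subsection, where no extension-by-zero functor is available. Your route for (iii) $\Rightarrow$ (ii) passes through (i), hence through ${^p}j_!$, so it proves the proposition as stated but would not serve the later purpose; the paper even flags this as the reason for its ``seemingly pedestrian approach.'' Your closing step --- uniqueness of the smallest subobject plus the fact that $j\me B$ satisfies (iii) --- is a perfectly valid and slightly slicker way to finish the cycle.

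One small imprecision in (ii) $\Rightarrow$ (iii): \autoref{extension} only produces a morphism $g\colon A \to {^p}j_*B$ with ${^p}j^*g$ an isomorphism; it does not assert that $g$ is a monomorphism. To regard $A$ as a subobject of ${^p}j_*B$ you must first observe that ${^p}j^*(\ker g) \cong 0$ (exactness of ${^p}j^*$ and ${^p}j^*g$ monic), so $\ker g$ is supported on $Z$ and vanishes by the hypothesis of (ii). This is exactly the step the paper carries out in its proof of (ii) $\Rightarrow$ (i) and then reuses; you should insert the one-line argument rather than attribute the monomorphism to \autoref{extension}.
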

\begin{proof}
We will proof (i) $\Leftrightarrow$ (ii) and (ii) $\Leftrightarrow$ (iii). The reason for this seemingly pedestrian approach is that later on the proof of (ii) $\Leftrightarrow$ (iii) can be directly transferred to Cartier crystals, where we do not have the extension by zero $j_!$.
 
(i) $\Rightarrow$ (ii). Let $f\colon {^p}j_!B \to A$ be the natural epimorphism and let $g\colon A \to {^p}j_*B$ be the natural monomorphism. By definition, $g \circ f = \eta(B)$. Since ${^p}j^*$ is exact, ${^p}j^*f$ is epic and ${^p}j^*g$ is monic. It follows from \autoref{extension} that $A$ is an extension of $B$ to $D$. By construction, $j\me B$ is a subobject of ${^p}j_*B$ and a subquotient of ${^p}j_!B$. This implies that $j\me B$ does not have any non-trivial subobjects or quotients supported on $Z$ (\autoref{subobquot}).  

(ii) $\Rightarrow$ (i). Let $f\colon {^p}j_!B \to A$ and $g\colon A \to {^p}j_*B$ be the adjoint morphisms of the given isomorphism $\tau\colon {^p}j^*A \overset{\sim}{\longrightarrow} B$. By \autoref{extension} we have $g \circ f = \eta(B)$. Applying the exact functor ${^p}j^*$ to the exact sequence
\[
	0 \longrightarrow \ker g  \longrightarrow A \overset{g}{\longrightarrow} {^p}j_*B,
\]
we see that ${^p}j^*\ker g \simeq 0$: By construction the composition of ${^p}j^*g$ and the natural isomorphism ${^p}j^*{^p}j_*B \to B$ equals $\tau$. Therefore ${^p}j^*g$ is an isomorphism. By assumption, ${^p}j^* \ker g \cong 0$ implies $\ker g \cong 0$. Similarly, we see that $f$ is epic. This means that $A$ is the image of $\eta(B)$. 

(ii) $\Rightarrow$ (iii). In the proof of (ii) $\Rightarrow$ (i) we have already seen that $A$ is a subobject of ${^p}j_*B$. It remains to show that it is the smallest subobject of ${^p}j_*B$ such that the natural morphism ${^p}j^*A \to B$ is an isomorphism. Suppose that $E$ is a subobject of ${^p}j_*B$ with ${^p}j^*E \cong B$. By \autoref{abelianintersection} we may assume that $E$ is a subobject of $A$. It follows that 
\[
	{^p}j^*(A/E) \cong {^p}j^*A/{^p}j^*E \cong 0,
\]
which means that the quotient $A/E$ is in the essential image of the functor ${^p}i_*$. By assumption, this quotient is trivial, in other words $E \cong A$.

(iii) $\Rightarrow$ (ii). By assumption, $A$ is an extension of $B$ to $D$. Every subobject of $A$ is a subobject of ${^p}j_*B$ and ${^p}j_*B$ has no non-trivial subobjects supported on $Z$ (\autoref{subobquot}). Now let $F$ be a subobject of $A$ such that the quotient $A/F$ is in the essential image of ${^p}i_*$. This implies ${^p}j^*(A/F) \cong 0$ and, because of ${^p}j^*(A/F) \cong {^p}j^*A / {^p}j^*F$, it follows that ${^p}j^*A \cong {^p}j^*F$ and hence ${^p}j^*F \cong B$. This contradicts the minimality of $A$.
\end{proof}
\begin{remark} \label{iebyzeroext}
Similar to the characterization (iii) of the preceding proposition, the intermediate extension $j\me B$ is a quotient of ${^p}j_!B$ such that the adjoint map $B \to {^p}j^*j\me B$ is an isomorphism and with the following universal property: For every epimorphism ${^p}j_!B \to Q$ such that the adjoint $B \to {^p}j^*Q$ is an isomorphism, the canonical epimorphism ${^p}j_!B \to j\me B$ factors uniquely through $Q$. In other words, $j\me B$ is the quotient of ${^p}j_!B$ by the maximal subobject supported on $Z$. 

For the proof, let $p\colon {^p}j_!B \to Q$ be an epimorphism such that the adjoint $B \to {^p}j^*Q$ is an isomorphism. Then there is a morphism $q\colon Q \to {^p}j_*B$ making the diagram
\[
	\xymatrix{
		{^p}j_!B \ar[r]^-{\eta} \ar[d]_p & {^p}j_*B \\
		Q \ar[ur]_-q & 
	}
\]
commutative (\autoref{extension}). Thus $q$ induces an epimorphism $q'\colon Q \to j\me B$ such that the canonical epimorphism ${^p}j_!B \to j\me B$ equals the composition $q' \circ p$.  
\end{remark}  
With the intermediate extension functor we can describe the simple objects of $C$ in dependence of the simple objects of $C_U$ and $C_Z$. 
\begin{proposition}[\protect{{\cite[Proposition 1.4.26]{BBD}}}] \label{simpleobjects}
An object of $C$ is simple if and only if it is of the form $i_*T$ for some simple object of $C_Z$ or if it is of the form $j\me T$ for some simple object $T$ in $C_U$.
\end{proposition}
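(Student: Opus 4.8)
The plan is to classify a simple object $M \in C$ by the behaviour of ${^p}j^*M$, and conversely to check that the two displayed constructions always yield simple objects. Throughout I would use that ${^p}j^*$ is exact, that ${^p}i_* = i_*$ and ${^p}j^* = j^*$ on the hearts (both being $t$-exact, \autoref{propertiesRecollement}), and that $M$ is supported on $Z$ --- i.e.\ in the essential image of ${^p}i_*$ --- if and only if ${^p}j^*M = 0$. The case split is: ${^p}j^*M = 0$ versus ${^p}j^*M \neq 0$.

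Suppose first that $M$ is simple with ${^p}j^*M = 0$. Then $M \cong i_*T$ with $T := {^p}i^*M \in C_Z$. Any subobject $M' \subseteq M$ again satisfies ${^p}j^*M' = 0$ by exactness, hence lies in the essential image of ${^p}i_*$; since ${^p}i_*$ is exact and fully faithful (hence conservative), the inclusion $M' \hookrightarrow i_*T$ is ${^p}i_*$ applied to some monomorphism $T' \hookrightarrow T$, and simplicity of $M$ forces $T' \in \{0, T\}$; so $T$ is simple. Conversely, if $T \in C_Z$ is simple then every subobject of $i_*T$ is supported on $Z$, hence of the form $i_*T'$ with $T' \subseteq T$, so $i_*T$ is simple.

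Now suppose $M$ is simple with $T := {^p}j^*M \neq 0$. Since ${^p}j^*M \cong T$, the object $M$ is an extension of $T$ to $D$, so \autoref{extension} (see its proof, which shows that ${^p}j^*$ of both maps is invertible) provides $f\colon {^p}j_!T \to M$ and $g\colon M \to {^p}j_*T$ with $g \circ f = \eta(T)$ and ${^p}j^*f$, ${^p}j^*g$ isomorphisms. Applying ${^p}j^*$ to $\operatorname{im} f$ gives ${^p}j^*(\operatorname{im} f) \cong T \neq 0$, so $\operatorname{im} f = M$ by simplicity and $f$ is epic; dually ${^p}j^*(\ker g) = 0$, hence $\ker g$ is supported on $Z$, hence $\ker g \neq M$, hence $\ker g = 0$ and $g$ is monic. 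Thus ${^p}j_!T \twoheadrightarrow M \hookrightarrow {^p}j_*T$ is the epi-mono factorization of $\eta(T)$, i.e.\ $M \cong \operatorname{im}\eta(T) = j\me T$. That $T$ is simple then follows: a subobject $T' \subsetneq T$ with $T' \neq 0$ would give the subobject $M \cap {^p}j_*T' \subseteq M$ whose image under ${^p}j^*$ is $T \cap T' = T'$; but simplicity of $M$ makes this subobject $0$ or $M$, forcing $T' = 0$ or $T' = T$, a contradiction. Conversely, if $T \in C_U$ is simple, set $M := j\me T$ (so $M \neq 0$ as ${^p}j^*M \cong T$); a nonzero subobject $M' \subseteq M \subseteq {^p}j_*T$ has ${^p}j^*M'$ a subobject of the simple object ${^p}j^*M \cong T$, so either ${^p}j^*M' = 0$, making $M'$ supported on $Z$ and contradicting \autoref{subobquot} (as $j\me T \subseteq {^p}j_*T$), or ${^p}j^*M' = {^p}j^*M$, making the canonical morphism ${^p}j^*M' \to T$ an isomorphism so that \autoref{ienewchar}(iii) gives $j\me T \subseteq M'$, i.e.\ $M' = M$; hence $j\me T$ is simple. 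Together, the two directions prove the statement.

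I expect the crux to be the third paragraph: realizing a simple $M$ with ${^p}j^*M \neq 0$ as the intermediate extension of ${^p}j^*M$. This rests on assembling the factorization ${^p}j_!T \to M \to {^p}j_*T$ of $\eta(T)$ from the unit and counit of the recollement via \autoref{extension} (and \autoref{restricteta}), and then feeding exactness of ${^p}j^*$ --- which controls images, kernels and the intersection $M \cap {^p}j_*T'$ (cf.\ \autoref{abelianintersection}) --- into the simplicity arguments. The rest, notably that a fully faithful exact functor reflects simplicity and that the subcategory of objects supported on $Z$ is closed under subobjects, is routine given \autoref{propertiesRecollement} and the cited facts of \cite{BBD}.
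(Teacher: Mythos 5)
Your proof is correct and follows essentially the same strategy as the paper: split on whether ${^p}j^*M$ vanishes, use exactness of ${^p}j^*$ and the support characterization for the $i_*$ case, and identify a simple $M$ with ${^p}j^*M \neq 0$ as $j\me({^p}j^*M)$ via the factorization ${^p}j_!T \to M \to {^p}j_*T$ and the absence of sub/quotient objects supported on $Z$. The only notable difference is that your intersection argument $M \cap {^p}j_*T'$ for the simplicity of $T = {^p}j^*M$ is spelled out more carefully than the paper's one-line remark, and your converse uses the minimality characterization of $j\me$ in place of the paper's short-exact-sequence argument; both are valid.
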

\begin{proof}
If an object $S$ is contained in the essential image of $i_*$, then $S$ is a simple object of $C$ if and only if  $S$ is simple in $i_*C_Z$ because this subcategory is thick in $C$. 

Now we suppose that $T = {^p}j^*S \ncong 0$. Let $S$ be a simple object of $C$. It follows that $T$ is a simple object of $C_U$: if there would exist a proper subobject $T'$ of $T$, then ${^p}j_*T'$ would be a proper subobject of ${^p}j_*T$ because ${^p}j_*$ is left exact and ${^p}j^*{^p}j_* \cong \id$. By definition, $S$ is an extension of $T$ to $D$ and $S$ has no non-trivial subobjects, in particular no non-trivial subobjects or quotients supported on $Z$. Hence $S=j\me T$ (\autoref{ienewchar}).

Conversely, let $T$ be a simple object of $C_U$. To see that $S=j\me T$ is a simple object, consider an arbitrary short exact sequence
\[
	0 \longrightarrow A \overset{\alpha}{\longrightarrow} S \overset{\beta}{\longrightarrow} B \longrightarrow 0
\]
in $C$. The sequence
\[
	0 \longrightarrow {^p}j^*A \xrightarrow{{^p}j^*\alpha} T \xrightarrow{{^p}j^*\beta} {^p}j^*B \longrightarrow 0
\]
is also exact and by assumption ${^p}j^*A \cong 0$ or ${^p}j^*B \cong 0$. But $S$ neither has non-trivial subobjects nor non-trivial quotients supported on $Z$. Therefore, $A \cong 0$ or $B \cong 0$. 
\end{proof} 
\begin{lemma} \label{endexact}
We have the following exactness properties of the intermediate extension:
\begin{enumerate}[(i)]
	\item Let $0 \longrightarrow A \overset{i}{\longrightarrow} B$ be an exact sequence in $C_U$. Then the sequence $0 \longrightarrow j\me A \xrightarrow{j\me(i)} j\me B$ is also exact. 
	\item Let $B \overset{p}{\longrightarrow} C \longrightarrow 0$ be an exact sequence in $C_U$. Then the sequence $j\me B \xrightarrow{j\me(p)} j\me C \longrightarrow 0$ is also exact.
\end{enumerate}
\end{lemma}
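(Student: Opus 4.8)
The statement asserts left-exactness (in sense (i)) and right-exactness (in sense (ii)) of the intermediate extension functor $j\me\colon C_U \to C$. The plan is to reduce both parts to the analogous statements for the functors ${}^pj_!$ and ${}^pj_*$ together with the characterizations of $j\me B$ from \autoref{ienewchar} and the no-subobject/no-quotient property of \autoref{subobquot}. Recall from \autoref{propertiesRecollement} that ${}^pj_*$ is left $t$-exact, hence as a functor on the hearts ${}^pj_*\colon C_U \to C$ is left exact, and dually ${}^pj_!$ is right exact on hearts.

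For part (i): given an exact sequence $0 \to A \xrightarrow{\iota} B$ in $C_U$, apply the left-exact functor ${}^pj_*$ to get $0 \to {}^pj_*A \xrightarrow{{}^pj_*(\iota)} {}^pj_*B$ exact in $C$. Now $j\me A$ is a subobject of ${}^pj_*A$ by \autoref{ienewchar}(iii), and $j\me B$ is a subobject of ${}^pj_*B$. The morphism $j\me(\iota)$ is by functoriality the restriction of ${}^pj_*(\iota)$, so it is a composite of monomorphisms $j\me A \hookrightarrow {}^pj_*A \hookrightarrow {}^pj_*B$ whose image lands in $j\me B$; hence $j\me(\iota)$ is monic. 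Applying the exact functor ${}^pj^*$ recovers $\iota$ (using ${}^pj^*j\me \cong \id$, which follows from \autoref{ienewchar}(iii) and the fact that ${}^pj^*{}^pj_* \cong \id$), so the sequence $0 \to j\me A \to j\me B$ is indeed exact.

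For part (ii): given $B \xrightarrow{p} C \to 0$ exact in $C_U$, use the description of $j\me$ as a quotient of ${}^pj_!$ from \autoref{iebyzeroext}. The right-exact functor ${}^pj_!$ yields ${}^pj_!B \xrightarrow{{}^pj_!(p)} {}^pj_!C \to 0$ exact in $C$. Composing with the canonical epimorphism ${}^pj_!C \twoheadrightarrow j\me C$ gives an epimorphism ${}^pj_!B \to j\me C$. By functoriality this factors through the canonical epimorphism ${}^pj_!B \twoheadrightarrow j\me B$ followed by $j\me(p)\colon j\me B \to j\me C$; since the composite ${}^pj_!B \to j\me B \to j\me C$ is epic, $j\me(p)$ is epic. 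Again applying ${}^pj^*$ shows $j\me C$ is the image of $B\to C$ pulled back, confirming the sequence $j\me B \to j\me C \to 0$ is exact.

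The main obstacle is keeping the bookkeeping of \autoref{ienewchar} and \autoref{iebyzeroext} straight: one must be careful that the morphism $j\me(f)$ induced by a morphism $f$ in $C_U$ really is compatible with the inclusions $j\me(-) \hookrightarrow {}^pj_*(-)$ and the surjections ${}^pj_!(-) \twoheadrightarrow j\me(-)$, which is where the commutative square for $\eta$ displayed just before the lemma is used. Everything else is a formal consequence of exactness of ${}^pj^*$ and the one-sided exactness of ${}^pj_!$, ${}^pj_*$, so no genuinely new idea is needed beyond these compatibilities.
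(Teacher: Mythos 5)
Your proof is correct, but it takes a different route from the paper's, which is considerably shorter. The paper simply applies the exact functor ${^p}j^*$ to $j\me(i)$ and $j\me(p)$: since ${^p}j^*j\me(i) \cong i$ is monic and ${^p}j^*j\me(p) \cong p$ is epic, the kernel of $j\me(i)$ and the cokernel of $j\me(p)$ are supported on $Z$; but the former is a subobject of $j\me A$ and the latter a quotient of $j\me C$, and by \autoref{ienewchar}(ii) the intermediate extension has no non-trivial subobjects or quotients supported on $Z$, so both vanish. Your argument instead invokes the one-sided exactness of ${^p}j_*$ and ${^p}j_!$ on hearts together with the realizations of $j\me$ as a subobject of ${^p}j_*$ (\autoref{ienewchar}(iii)) and as a quotient of ${^p}j_!$ (\autoref{iebyzeroext}); the compatibility of $j\me(f)$ with these inclusions and projections, which you correctly flag as the point needing care, does follow from the naturality of $\eta$. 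Both proofs are valid in the recollement setting, but the paper's version has a concrete advantage that matters later in the text: it uses only $j^*$, $j_*$ and the support characterization, so it transfers verbatim to the category of Cartier crystals, where no extension-by-zero functor $j_!$ is available. Your proof of part (ii) leans on ${^p}j_!$ and would therefore not carry over to that setting without modification.
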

\begin{proof} 
We have ${^p}j^*j\me(i) \cong i$ and ${^p}j^*j\me(p) \cong p$. This implies that the kernel of $j\me(i)$ and the cokernel of $j\me(p)$ are supported on $Z$. Hence $\ker j\me(i) \cong 0$ and $\coker j\me(p) \cong 0$ because $\ker j\me(i)$ is a subobject of $j\me A$ and $\coker j\me(p)$ is a quotient of $j\me C$. 
\end{proof}

\subsection{The intermediate extension of Cartier crystals}

For the rest of this section let $X$ be an $F$-finite, locally Noetherian scheme of characteristic $p$. Furthermore, let $j\colon U \to X$ be an open immersion and let $i\colon Z \to X$ be a closed immersion such that $j(U)$ is the complement of $i(Z)$. If one had a left adjoint $j_!$ for the exact functor $j^*\colon \CrysC(X) \to \CrysC(U)$, one could directly apply the formalism of Beilinson, Bernstein and Deligne from the previous subsection to define the intermediate extension of Cartier crystals. However, we can draw inspiration from \autoref{ienewchar} and define the intermediate extension of Cartier crystals by a universal property. Then we show the existence.
\begin{definition} 
Let $\CM$ be a Cartier crystal on $U$. The \emph{intermediate extension} $j\me \CM$ of $\CM$ is the smallest subcrystal $\CN$ of $j_*\CM$ such that $j^!\CN \cong \CM$, i.e.\ $\Phi\colon j\me \CM \into j_*\CM$ is a subcrystal with $j^!j\me\CM \cong \CM$ and the following universal property: For every inclusion $\alpha\colon \CN \into j_*\CM$ with $j^*\CN \cong \CM$, there is a unique inclusion $\tilde{\alpha}\colon j\me\CM \into \CN$ such that $\Phi=\alpha \circ \tilde{\alpha}$.
\end{definition}     
\begin{theorem} 
Let $\CM$ be a Cartier crystal on $U$. Then the intermediate extension $j\me \CM$ exists and it is unique up to unique isomorphism.
\end{theorem}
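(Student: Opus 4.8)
The plan is to reduce the statement to the situation of the recollement formalism from the preceding subsection, applied not directly to Cartier crystals (where no $j_!$ is available) but to the category of perverse constructible \'etale $p$-torsion sheaves, which \emph{does} sit in a recollement. More concretely, first I would fix an $F$-finite embedding of $X$ into a smooth, $F$-finite $k$-scheme (using that the question is local we may even assume $X$ affine, hence embeddable), so that the anti-equivalence $\Sol\colon \CrysC(X) \to \Perv_c(X_{\et},\ZZ/p\ZZ)$ of \autoref{RHCorrespondenceCrystals} is available, together with its compatibility $\Sol\circ Rj_* \cong j_!\circ \Sol$ and $\Sol\circ j^! \cong j^*\circ \Sol$ for the open immersion $j\colon U\to X$. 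Since $\Sol$ is an anti-equivalence of abelian categories, it sends subobjects to quotient objects and reverses the order of the lattice of subobjects; thus the universal property defining $j\me\CM$ (smallest subcrystal $\CN\hookrightarrow j_*\CM$ with $j^!\CN\cong\CM$) transports to: the largest quotient $\CQ$ of ${}^pj_!(\Sol\CM)$ with ${}^pj^*\CQ\cong\Sol\CM$. By \autoref{iebyzeroext}, that largest quotient is exactly the intermediate extension $j\me(\Sol\CM)$ in the sense of \cite{BBD}, which exists. Transporting back through the anti-equivalence $\M=\Sol^{-1}$ yields an object $\M(j\me\Sol\CM)$ of $\CrysC(X)$, and I claim this object, together with the monomorphism obtained by applying $\M$ to the canonical epimorphism ${}^pj_!(\Sol\CM)\twoheadrightarrow j\me(\Sol\CM)$ and composing appropriately, satisfies the universal property defining $j\me\CM$.

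The key steps, in order, are: (1) Reduce to the embeddable case and record the two compatibility isomorphisms $\Sol\, Rj_*\cong j_!\,\Sol$ and $\Sol\, j^!\cong j^*\,\Sol$, noting that under $\Sol$ the functor $j_* = Rj_*$ on $\CrysC$ corresponds to $j_!$ on $\Perv_c$ and $j^!$ corresponds to $j^*$. (2) Verify the dictionary between the two universal properties: a monomorphism $\CN\hookrightarrow j_*\CM$ in $\CrysC(X)$ with $j^!\CN\cong\CM$ corresponds under the contravariant $\Sol$ to an epimorphism ${}^pj_!\Sol\CM = \Sol(j_*\CM)\twoheadrightarrow\Sol\CN$ with ${}^pj^*\Sol\CN\cong\Sol\CM$; here one must check that $\Sol$ genuinely interchanges the posets of subobjects of $j_*\CM$ and quotients of ${}^pj_!\Sol\CM$, which is formal for an (anti-)equivalence of abelian categories. (3) Invoke \autoref{iebyzeroext} to identify the terminal such quotient with $j\me(\Sol\CM)$, which exists by the \cite{BBD} formalism since $\Perv_c(X_{\et},\ZZ/p\ZZ)$ is the heart of a recollement $t$-structure. (4) Apply $\M$ to transport the resulting universal object and universal map back, obtaining $j\me\CM := \M(j\me\Sol\CM)$ with its canonical monomorphism into $j_*\CM$, and check that the universal property of \autoref{iebyzeroext} is carried by $\M$ to precisely the defining property of $j\me\CM$. (5) Uniqueness up to unique isomorphism is then automatic: it follows formally from the universal property, exactly as terminal objects in a category are unique up to unique isomorphism, or equivalently it is inherited from the uniqueness of $j\me(\Sol\CM)$ under $\M$.

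The main obstacle I anticipate is not conceptual but bookkeeping: making sure that the recollement data on $\Perv_c(X_{\et},\ZZ/p\ZZ)$ — the functors ${}^pj_!$, ${}^pj^*$, ${}^pj_*$ and their adjunctions — are correctly matched with $j_*$, $j^!$ on $\CrysC(X)$ under the \emph{contravariant} equivalence $\Sol$, since contravariance swaps left and right adjoints and swaps ``subobject'' with ``quotient''. In particular one must be careful that $\Sol\circ j^! \cong j^*\circ \Sol$ really gives ${}^pj^*$ (the perverse pullback, i.e. $H^0\circ j^*\circ\epsilon$) on the level of hearts — this is where \autoref{Solproperties}(iii) and the $t$-exactness statements in \autoref{RHCorrespondenceCrystals}(b) enter, guaranteeing that $\Sol$ restricts to an anti-equivalence of hearts and intertwines the perverse truncations. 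A secondary, minor point is to confirm that $j_*\CM$ (which a priori is computed via $Rj_*$ on the quasi-crystal level) lands in $\CrysC(X)$ when $\CM\in\CrysC(U)$ — but this is precisely the finite-type part of the push-forward theorem cited earlier, together with the fact that $H^0 Rj_* \CM$ is the relevant object, so no new work is needed. Once the dictionary is pinned down, everything else is a direct transport of the \cite{BBD} existence and uniqueness statements through $\Sol$ and $\M$.
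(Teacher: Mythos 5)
There is a genuine gap, and it is one of scope rather than of internal logic. The theorem is stated under the standing hypotheses of this subsection: $X$ is an arbitrary $F$-finite, locally Noetherian scheme of characteristic $p$. Your argument routes everything through the anti-equivalence $\Sol\colon \CrysC(X)\to\Perv_c(X_{\et},\ZZ/p\ZZ)$, which is only available when $X$ is an $F$-finite \emph{embeddable} $k$-scheme over a \emph{perfect} field $k$. Your proposed reduction --- ``the question is local, so we may assume $X$ affine, hence embeddable'' --- does not close this gap: an affine $F$-finite locally Noetherian $\FF_p$-scheme need not be of finite type over a field, so ``affine $\Rightarrow$ embeddable'' fails outside the variety setting; and even where it holds, you would still need to glue the locally constructed intermediate extensions and verify that the minimality property is local on $X$, neither of which you address. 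A secondary point: the BBD recollement lives on $D^b(X_{\et},\ZZ/p\ZZ)$, not on $D_c^b$, so the existence of $j\me(\Sol\CM)$ \emph{inside} $\Perv_c$ requires the extra constructibility input (Gabber's Corollary 12.4 together with $j_!$ preserving constructibility), which the paper only supplies in the following subsection --- where, moreover, the compatibility $j\me\circ\Sol\cong\Sol\circ j\me$ is deduced \emph{from} the present theorem, so your route inverts the paper's logical order and turns that corollary into a definition.

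The intended argument is far more elementary and works in full generality: starting from $j_*\CM$ itself, iteratively choose proper subcrystals $\CN_0\supsetneq\CN_1\supsetneq\cdots$ with $j^*\CN_i\cong\CM$; by the descending chain condition for Cartier crystals on $F$-finite schemes (\autoref{crystalfinite}, i.e.\ \cite[Corollary 4.7]{BliBoe.CartierFiniteness}) this chain stabilizes at some $\CN_m$, and \autoref{abelianintersection} shows that for any other subcrystal $\CN\subseteq j_*\CM$ with $j^*\CN\cong\CM$ one has $j^*(\CN_m\cap\CN)\cong\CM$, forcing $\CN_m\cap\CN=\CN_m$ by the choice of $\CN_m$; hence $\CN_m$ is the smallest such subcrystal. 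Uniqueness up to unique isomorphism then follows formally from the universal property, as you correctly note. The finiteness theorem for Cartier crystals is the essential ingredient here, and it is precisely what your transport through $\Sol$ cannot replace in the stated generality.
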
 
\begin{proof}
We can construct a descending sequence of Cartier crystals
\[
	j_*\CM=:\CN_0 \supsetneq \CN_1 \supsetneq \CN_2 \supsetneq \dots
\]
such that $j^*\CN_i \cong \CM$ by iteratively choosing proper subcrystals. The descending chain condition for Cartier crystals on $F$-finite schemes (\cite[Corollary 4.7]{BliBoe.CartierFiniteness}) ensures that this sequence stabilizes at some crystal $\CN_m$. 

We claim that $\CN_m$ satisfies the universal porperty of $j\me \CM$. Let $\CN \subseteq j_*\CM$ be a Cartier crystal such that $j^*\CN \cong \CM$. But then $j^*(\CN_m \cap \CN) \cong \CM$ (\autoref{abelianintersection}) and by construction of $\CN_m$ it follows that $\CN_m \cap \CN = \CN_m$. Hence there is a monomorphism $\CN_m \into \CN$. The uniqueness of $j\me \CM$ is an immediate consequence of its universal property.        
\end{proof}
\begin{definition}
Let $\CA$ be an abelian category. Let $g\colon A \to B$ be a morphism in $\CA$. For a subobject $C$ of $B$, the \emph{preimage} $g^{-1}C$ is the kernel of the composition
\[
	A \overset{g}{\longrightarrow} B \overset{\pr}{\longrightarrow} B/C,
\]
where $\pr$ is the natural projection.
\end{definition}
\begin{proposition}
The assignment
\[
	\CM \mapsto j\me \CM
\]
defines a functor $j\me\colon \CrysC(U) \to \CrysC(X)$.
\end{proposition}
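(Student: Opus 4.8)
The plan is to construct the action of $j\me$ on morphisms using the universal property that defines $j\me\CM$ as a subobject of $j_*\CM$, and then check the two functor axioms (identity and composition) by the uniqueness clause of that universal property.

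First I would fix a morphism $\phi\colon \CM \to \CM'$ in $\CrysC(U)$ and produce a morphism $j\me\phi\colon j\me\CM \to j\me\CM'$. Since $j_*$ is a functor, we have $j_*\phi\colon j_*\CM \to j_*\CM'$, and by definition $j\me\CM \subseteq j_*\CM$, $j\me\CM' \subseteq j_*\CM'$. Consider the composition $j\me\CM \into j_*\CM \xrightarrow{j_*\phi} j_*\CM'$ and let $\CN$ be its image, or more precisely the preimage construction: set $\CN := (j_*\phi)^{-1}(j\me\CM')$ intersected with $j\me\CM$ inside $j_*\CM$, i.e. the largest subcrystal of $j\me\CM$ that $j_*\phi$ carries into $j\me\CM'$. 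Applying the exact functor $j^*=j^!$ and using $j^*j_*\CM\cong\CM$, $j^*j\me\CM\cong\CM$ (and likewise for $\CM'$), together with \autoref{abelianintersection} applied in $\CrysC(X)$ with the open immersion $j$, one sees that $j^*\CN\cong\CM$. By the universal property (minimality) of $j\me\CM$ inside $j_*\CM$, the inclusion $\CN\into j\me\CM$ must be an isomorphism, so $j_*\phi$ already carries all of $j\me\CM$ into $j\me\CM'$. This gives the desired $j\me\phi$ as the (co)restriction of $j_*\phi$.

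Next I would verify functoriality. For the identity, $j_*(\id_\CM) = \id_{j_*\CM}$ restricts to $\id_{j\me\CM}$, which is forced since $j\me\CM\into j_*\CM$ is a monomorphism. For composition, given $\phi\colon \CM\to\CM'$ and $\psi\colon\CM'\to\CM''$, both $j\me(\psi\circ\phi)$ and $(j\me\psi)\circ(j\me\phi)$ are restrictions of $j_*(\psi\circ\phi)=j_*\psi\circ j_*\phi$ along the monomorphisms $j\me\CM\into j_*\CM$ and $j\me\CM''\into j_*\CM''$; since a morphism of crystals into $j\me\CM''\into j_*\CM''$ is determined by its composite with the monomorphism $j\me\CM''\into j_*\CM''$, the two agree. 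Thus $j\me$ is a functor $\CrysC(U)\to\CrysC(X)$.

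The main obstacle is the argument that $j_*\phi$ actually maps $j\me\CM$ into $j\me\CM'$ rather than merely into $j_*\CM'$; this is exactly where one must invoke minimality of $j\me\CM$ together with the fact that $j^*$ is exact and preserves the relevant intersections (\autoref{abelianintersection}), and one should be slightly careful that the preimage $(j_*\phi)^{-1}(j\me\CM')$ is taken inside $j_*\CM$ and then intersected with $j\me\CM$, so that its image under $j^*$ is computed correctly as a subobject of $\CM$. Everything else is formal manipulation with monomorphisms in the abelian category $\CrysC(X)$, using that $j^*=j^!$ is exact and that $j\me$ was defined by a universal property that pins down the relevant subobjects uniquely.
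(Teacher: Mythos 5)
Your proposal is correct and takes essentially the same approach as the paper: the paper also obtains $j\me\phi$ as the restriction of $j_*\phi$, by showing via exactness of $j^*$ that the preimage $(j_*\phi)^{-1}(j\me\CM')$ is a subcrystal of $j_*\CM$ restricting to $\CM$ and hence contains $j\me\CM$ by minimality. Your extra step of first intersecting that preimage with $j\me\CM$ is an equivalent reformulation, and the identity/composition checks via the monomorphisms into $j_*\CM''$ match the paper's argument.
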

\begin{proof}
Let $\phi\colon \CM \to \CN$ be a morphism of Cartier crystals on $U$. We claim that $j_*\phi$ restricts to a morphism on the subcrystals $j\me\phi\colon j\me \CM \to j\me \CN$. For this we have to show that $\phi(j\me \CM) \subseteq j\me \CN$, which is equivalent to $\phi^{-1}(j\me \CN) \supseteq j\me \CM$. By definition of $j\me \CM$, it suffices to show that $j^*\phi^{-1}(j\me \CN) \cong \CM$. This is clear after applying the exact functor $j^*$ to the exact sequence
\[
	0 \longrightarrow \phi^{-1}(j\me \CN) \overset{i}{\longrightarrow} j_*\CM \overset{\alpha}{\longrightarrow} j_*\CN / j\me \CN,
\]
where $i$ is the natural inclusion and $\alpha$ is the composition of $j_*\phi$ and the projection $j_*\CN \to j_*\CN/j\me\CN$. 

By construction, it is clear that $j\me(\id_{\CM})=\id_{j\me \CM}$ for every Cartier crystal $\CM$ on $U$. For two morphisms $\phi\colon \CM_1 \to \CM_2$ and $\psi\colon \CM_2 \to \CM_3$, we have $j\me(\psi \circ \phi)=j\me \psi \circ j\me \phi$ because the diagram
\[
	\xymatrix{
		j_* \CM_1 \ar@/^15pt/[rr]^{j_*(\psi \circ \phi)} \ar[r]_{j_* \phi} & j_* \CM_2 \ar[r]_{j_* \psi} & j_* \CM_3 \\
		j\me \CM_1 \ar@/_15pt/[rr]_{j\me(\psi \circ \phi)} \ar@^{(->}[u] \ar[r]^{j\me \phi} & j\me \CM_2 \ar@^{(->}[u] \ar[r]^{j\me \psi} & j\me \CM_3. \ar@^{(->}[u]
	}	
\]		
commutes. Here the vertical maps are the natural monomorphisms. 
\end{proof}
Now we show that the intermediate extension of Cartier crystals is compatible with compositions of open immersions. In a recollement situation, this follows from the natural isomorphisms ${^p}(v \circ u)_! \cong {^p}v_! \circ {^p}u_!$ and ${^p}(v \circ u)_* \cong {^p}v_* \circ {^p}u_*$.
\begin{lemma}
Let $u\colon U \to V$ and $v\colon V \to X$ be open immersions. Then there is a natural isomorphism of functors
\[
	v\me \circ u\me \cong (v \circ u)\me.
\]
\end{lemma}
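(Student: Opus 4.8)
The plan is to exhibit $v\me u\me\CM$ as a subcrystal of $(v\circ u)_*\CM = v_*u_*\CM$ satisfying the universal property that characterizes $(v\circ u)\me\CM$, and to do so by repeatedly exploiting the characterization of the intermediate extension as the \emph{smallest} subobject of the total pushforward restricting correctly. Write $w = v\circ u$, so $w_* = v_*u_*$ and $w^* = u^*v^*$ (and recall $j^! = j^*$ for open immersions). First I would observe that $u\me\CM \hookrightarrow u_*\CM$ is a subcrystal on $V$ with $u^*u\me\CM\cong\CM$, and then that $v\me(u\me\CM)\hookrightarrow v_*(u\me\CM)\hookrightarrow v_*u_*\CM = w_*\CM$, where the second inclusion is $v_*$ applied to the monomorphism $u\me\CM\hookrightarrow u_*\CM$ (and $v_*$ is left exact, hence preserves monos). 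This realizes $v\me u\me\CM$ as a subcrystal of $w_*\CM$. Its restriction along $w^*$ is $u^*v^*v\me u\me\CM \cong u^*(u\me\CM)\cong\CM$, using $v^*v\me\CN\cong\CN$ (the defining property of $v\me$) with $\CN = u\me\CM$. So $v\me u\me\CM$ is a candidate subcrystal for $w\me\CM$.

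Next I would show minimality, i.e.\ that $v\me u\me\CM$ is the \emph{smallest} subcrystal $\CN\subseteq w_*\CM$ with $w^*\CN\cong\CM$. Let $\CN\subseteq w_*\CM = v_*u_*\CM$ be such a subcrystal. Consider the subcrystal $v^*\CN \subseteq v^*v_*u_*\CM = u_*\CM$ on $V$: since $v^*$ is exact and $u^*(v^*\CN) = w^*\CN\cong\CM$, the object $v^*\CN$ is a subcrystal of $u_*\CM$ restricting correctly, so by the universal property of $u\me\CM$ we get $u\me\CM\subseteq v^*\CN$. I then want to transport this back across $v_*$. Applying $v_*$ and using that $v_*$ is left exact, $v_*u\me\CM\hookrightarrow v_*v^*\CN$; combining with the adjunction unit $\CN\to v_*v^*\CN$ — which, since $v_*v^*$ and the counit behave well in a recollement and $\CN$ is supported on the relevant open set, I expect to be a monomorphism, so that $\CN$ is identified with a subcrystal of $v_*v^*\CN$ — and intersecting inside $w_*\CM$, I can use \autoref{abelianintersection} to replace $\CN$ by $\CN\cap v_*u\me\CM$ without changing its $w^*$-restriction, reducing to the case $\CN\subseteq v_*u\me\CM$. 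Now $\CN$ is a subcrystal of $v_*(u\me\CM)$ with $v^*\CN\cong u\me\CM$ (the restriction $v^*\CN$ sits inside $v^*v_*u\me\CM = u\me\CM$ and has the correct further restriction $u^*v^*\CN\cong\CM$, which forces $v^*\CN = u\me\CM$ by minimality of $u\me\CM$), so the universal property of $v\me(u\me\CM)$ gives $v\me u\me\CM\subseteq\CN$. This is exactly the minimality required for $w\me\CM$, so by uniqueness $v\me u\me\CM\cong w\me\CM$.

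Finally I would check functoriality of the isomorphism: a morphism $\phi\colon\CM\to\CM'$ in $\CrysC(U)$ induces $w_*\phi\colon w_*\CM\to w_*\CM'$, and both $v\me u\me\phi$ and $w\me\phi$ are the restrictions of $w_*\phi$ to the respective subcrystals; since these subcrystals coincide under the isomorphism just constructed and the ambient map is the same, the square commutes. This uses only that $v\me u\me$ and $w\me$ are both \emph{constructed as restrictions of the pushforward}, which is how functoriality was established for $j\me$ in the preceding proposition, so no new argument is needed.

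The main obstacle I anticipate is the bookkeeping around the adjunction unit $\CN\to v_*v^*\CN$: to run the intersection/minimality argument cleanly one needs that this unit is a monomorphism for subcrystals $\CN$ of $w_*\CM$ (equivalently, that such $\CN$ have no nonzero subcrystal killed by $v^*$, i.e.\ supported on the complement of $V$). This is the analogue of \autoref{subobquot} in the present setting and should follow from the Kashiwara equivalence (\autoref{Kashiwara}) together with the triangle of \autoref{Cartiertriangle} — a subcrystal of $w_*\CM = v_*(u_*\CM)$ supported on $X\setminus V$ would be a subcrystal of $v_*$ of something, and left-exactness of $v_*$ plus $v^*v_* \cong \id$ forces it to vanish — but making this precise for Cartier crystals (rather than quoting the BBD recollement formalism) is the step that requires genuine care.
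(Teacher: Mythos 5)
Your proof is correct, and its first half (realizing $v\me u\me \CM$ as a subcrystal of $(v\circ u)_*\CM$ via left exactness of $v_*$ and computing $(v\circ u)^*(v\me u\me\CM)\cong u^*u\me\CM\cong\CM$) is exactly the paper's argument. Where you diverge is the minimality step. The paper argues in the other direction: it takes a subcrystal $\CN\subseteq v\me u\me\CM$ with $(v\circ u)^*\CN\cong\CM$, observes that $v^*\CN$ is then a subcrystal of $u\me\CM$ restricting to $\CM$, hence equals $u\me\CM$ by minimality of $u\me$, and then invokes the defining property of $v\me(u\me\CM)$ to force $\CN = v\me u\me\CM$; strictly read, this only rules out proper subobjects of $v\me u\me\CM$ itself, and the passage to arbitrary subcrystals of $(v\circ u)_*\CM$ is left to the implicit intersection argument from the existence theorem. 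You instead verify the universal property head-on: given any $\CN\subseteq (v\circ u)_*\CM$ restricting to $\CM$, you get $u\me\CM\subseteq v^*\CN$ from the universal property of $u\me$, intersect with $v_*u\me\CM$ inside $(v\circ u)_*\CM$, and apply the universal property of $v\me(u\me\CM)$ to the result. This is slightly more work but makes the reduction explicit. One remark: the obstacle you anticipate at the end --- needing the unit $\CN\to v_*v^*\CN$ to be a monomorphism, i.e.\ an analogue of \autoref{subobquot} --- is not actually needed. You can form $\CN\cap v_*u\me\CM$ directly as subobjects of $(v\circ u)_*\CM$; since $v^*$ is exact it commutes with this intersection, giving $v^*(\CN\cap v_*u\me\CM) = v^*\CN\cap u\me\CM = u\me\CM$ because $u\me\CM\subseteq v^*\CN$, and \autoref{abelianintersection} is not even required in the form stated. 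So the detour through the adjunction unit, and hence through Kashiwara and \autoref{Cartiertriangle}, can be dropped entirely.
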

\begin{proof}
Let $\CM$ be a Cartier crystal on $U$. By the left exactness of $v_*$, we have natural inclusions
\[
	v\me u\me \CM \subseteq v_* u\me \CM \subseteq v_*u_*\CM \cong (v \circ u)_*\CM.
\]
This means that $v\me u\me \CM$ is a subcrystal of $(v \circ u)_*\CM$. Moreover,
\[
	(v \circ u)^*(v\me u\me \CM) \cong u^*v^*v\me u\me \CM \cong u^*u\me \CM \cong \CM.
\]
It remains to show that $v\me u\me \CM$ is the smallest subcrystal of $(v \circ u)_*\CM$ with this property. Let $i\colon \CN \to v\me u\me \CM$ be an injective morphism of Cartier crystals on $X$ such that $(v \circ u)^*i$ is an isomorphism. We obtain an inclusion $v^*i\colon v^*\CN \to u\me \CM$. Since $u^*v^*\CN \cong \CM$, the universal property of $u\me \CM$ implies that $v^*i$ is an isomorphism. By the defining property of $v\me (u\me \CM)$, the inclusion $i$ must be an isomorphism.
\end{proof}
Further properties of the intermediate extension in a recollement situation remain valid in the situation of Cartier crystals, even with the same proofs if we replace ${^p}j^*$ and ${^p}j_*$ by the functors $j^*$ and $j_*$ of Cartier crystals. By an \emph{extension of a Cartier crystal $\CM$ on $U$ to $X$} we mean a Cartier crystal $\CN$ on $X$ together with an isomorphism $j^*\CN \cong \CM$. In other words, we set $D_U = D_{\crys}^+(\QCrysC(U))$, $D_Z = D_{\crys}^+(\QCrysC(Z))$ and $D = D_{\crys}^+(\QCrysC(X))$, each equipped with the canonical $t$-structure.
\begin{proposition}
Let $\CM$ be a Cartier crystal on $U$. The intermediate extension $j\me \CM$ is the unique extension of $\CM$ to $X$ which neither has non-trivial subcrystals nor quotients supported on $Z$.
\end{proposition}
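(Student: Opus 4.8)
The plan is to follow the template of the recollement formalism, in particular the equivalence (ii) $\Leftrightarrow$ (iii) of \autoref{ienewchar}, whose proof was explicitly set up to transfer verbatim to the Cartier setting once one replaces ${}^pj^*$ and ${}^pj_*$ by the exact functor $j^*=j^!$ and the left exact functor $j_*$ on Cartier crystals. The statement to prove has three parts bundled together: that $j\me\CM$ is an extension of $\CM$ (already immediate from the definition, since $j^!j\me\CM\cong\CM$); that it has no non-trivial subcrystals or quotients supported on $Z$; and that it is the \emph{unique} such extension. The first of these is definitional. For the second and third I would mimic the earlier arguments, using the two tools that are already available on $\CrysC(X)$: \autoref{abelianintersection} (whose proof only used exactness of the restriction functor and finite products, hence applies with $j^*$ in place of ${}^pj^*$), and the characterization of ``supported on $Z$'' as $j^!(-)\cong 0$, together with the Kashiwara equivalence \autoref{Kashiwara} identifying crystals supported on $Z$ with $\CrysC(Z)$ via $i_*,i^!$.

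The key steps, in order. First, record that $j\me\CM\hookrightarrow j_*\CM$ and $j^!j\me\CM\cong\CM$ by definition, so $j\me\CM$ is an extension. Second, show $j\me\CM$ has no non-trivial subcrystal supported on $Z$: any subcrystal $\CA\subseteq j\me\CM\subseteq j_*\CM$ with $j^!\CA\cong 0$ is in particular a subcrystal of $j_*\CM$; but a subobject $\CA$ of $j_*\CM$ with $j^!\CA=0$ must vanish, because applying $j^!$ (exact) to $0\to\CA\to j_*\CM$ and using $j^!j_*\cong\id$ shows $\CA\to j_*\CM$ becomes zero after $j^!$, and an object supported on $Z$ which maps monomorphically to an object with no $Z$-supported subobjects is zero — here one needs the sub-lemma that $j_*\CM$ itself has no non-trivial $Z$-supported subcrystal, which is exactly the Cartier analogue of the first half of \autoref{subobquot}: apply $i_*i^!$ to $\CA\hookrightarrow j_*\CM$, use exactness of $i_*$ and left-exactness of $i^!$, and $i^!j_*\cong 0$ (from the distinguished triangle $i_*i^!\to\id\to Rj_*j^!\to$ of \autoref{Cartiertriangle}, equivalently $j^!i_*=0$ by adjunction). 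Third, show no non-trivial quotient supported on $Z$: if $j\me\CM\twoheadrightarrow\CC$ with $j^!\CC\cong 0$, then the kernel $\CN$ is a subcrystal of $j\me\CM\subseteq j_*\CM$ with $j^!\CN\cong\CM$ (apply exact $j^!$ to $0\to\CN\to j\me\CM\to\CC\to 0$); by the minimality in the definition of $j\me\CM$, the inclusion $\CN\hookrightarrow j\me\CM$ is an isomorphism, hence $\CC\cong 0$. Fourth, uniqueness: suppose $\CN$ is any extension of $\CM$ with no non-trivial $Z$-supported sub- or quotient crystals. Using the adjunction counit, form the canonical map $\CN\to j_*\CM$ adjoint to the isomorphism $j^!\CN\cong\CM$; its kernel and cokernel are killed by $j^!$, hence supported on $Z$, hence zero by hypothesis on $\CN$ (for the kernel) — actually one must check the map is monic, which follows since $\ker$ is a $Z$-supported subcrystal of $\CN$ — so $\CN\hookrightarrow j_*\CM$ identifies $\CN$ with a subcrystal of $j_*\CM$ with $j^!\CN\cong\CM$. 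Then $j\me\CM\hookrightarrow\CN$ by the universal property, and $\CN/j\me\CM$ is $Z$-supported (apply $j^!$) hence a trivial quotient of... wait: it is a quotient of $\CN$, and by hypothesis $\CN$ has no non-trivial $Z$-supported quotient, so $\CN/j\me\CM\cong 0$, giving $\CN\cong j\me\CM$.

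The main obstacle I anticipate is not conceptual but bookkeeping: verifying cleanly that the adjunction $(j^!,j_*)$-style arguments used in \autoref{subobquot} and \autoref{extension} genuinely carry over, because in the recollement axioms of the previous subsection one had a full six-functor package $(j_!,j^*,j_*,i^*,i_*,i^!)$ with all four distinguished triangles, whereas for Cartier crystals we only have $j^!=j^*$, $Rj_*$, $i_*$, $i^!$ and the single triangle of \autoref{Cartiertriangle}; in particular there is no $j_!$ and no $i^*$. So I must be careful to only invoke the half of each earlier lemma that uses $j_*$ and $i^!$ (the ``$j_*B$ has no $Z$-supported subobject'' half of \autoref{subobquot}), never the $j_!$/$i^*$ half, and to replace every appeal to ``$A=\mathrm{im}({}^pj_!B\to{}^pj_*B)$'' by the minimal-subobject-of-$j_*\CM$ definition we actually adopted. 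I also need the elementary fact, used implicitly above, that a crystal which is simultaneously $Z$-supported and a subobject of a crystal with no $Z$-supported subobjects is zero — this is purely formal once ``$Z$-supported'' is a Serre-type condition, which it is since it is the kernel of the exact functor $j^!$. Modulo this careful transcription, the proof is a routine adaptation of \autoref{ienewchar}(ii)$\Leftrightarrow$(iii) and its supporting lemmas.
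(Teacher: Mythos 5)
Your proof is correct and takes essentially the same route as the paper, whose proof is precisely to transfer the argument for (ii) $\Leftrightarrow$ (iii) of \autoref{ienewchar} to Cartier crystals: your steps 2--3 are the (iii)$\Rightarrow$(ii) direction and your step 4 is (ii)$\Rightarrow$(iii), using the same supporting facts (the Cartier analogue of the first half of \autoref{subobquot}, \autoref{abelianintersection}, and the adjoint map $\CN \to j_*\CM$ of the isomorphism $j^!\CN \cong \CM$). Your explicit bookkeeping about invoking only the $(j^!, Rj_*, i_*, i^!)$ half of the recollement package is exactly the point the paper flags when it says the proof of (ii) $\Leftrightarrow$ (iii) was set up to carry over.
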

\begin{proof}
The proof of the equivalence $(ii) \Leftrightarrow (iii)$ of \autoref{ienewchar} can be adopted. 
\end{proof}
\begin{proposition}
A Cartier crystal on $X$ is simple if and only if 
\begin{enumerate}[(i)]
\item it is of the form $i_*\CN$ for some simple Cartier crystal $\CN$ on $Z$ or 
\item it is of the form $j\me \CN$ for some simple Cartier crystal $\CN$ on $U$.
\end{enumerate}
\end{proposition}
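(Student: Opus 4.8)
The plan is to mimic the proof of Proposition 1.4.26 (\autoref{simpleobjects}) from \cite{BBD}, using the characterizations of the intermediate extension for Cartier crystals that we have already established instead of the recollement formalism. The recollement axioms we need are available: we have the exact functors $i_*$ and $j^*$ (and $j_*$ as a right adjoint of $j^*$), the triangle of \autoref{Cartiertriangle}, the Kashiwara equivalence (\autoref{Kashiwara}) which shows that $i_*$ is fully faithful and identifies $\CrysC(Z)$ with a thick (Serre) subcategory of $\CrysC(X)$, and the preceding proposition characterizing $j\me\CM$ as the unique extension of $\CM$ without non-trivial subcrystals or quotients supported on $Z$. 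Recall also that a Cartier crystal $\CM$ on $X$ is supported on $Z$, in the sense of \autoref{Cartiersupport}, precisely when $j^*\CM = 0$, and that $j^*$ is exact.

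First I would treat the case where the crystal $\CS$ is supported on $Z$. By the Kashiwara equivalence, $\CS \cong i_*\CN$ for $\CN = i^!\CS$, and since $i_*$ embeds $\CrysC(Z)$ as a thick subcategory of $\CrysC(X)$ — in particular, subobjects and quotients of $i_*\CN$ in $\CrysC(X)$ are again of the form $i_*(-)$ — the object $i_*\CN$ is simple in $\CrysC(X)$ if and only if $\CN$ is simple in $\CrysC(Z)$. This disposes of alternative (i). Next I would assume $\CT := j^*\CS \neq 0$ and show that if $\CS$ is simple then $\CT$ is simple and $\CS \cong j\me\CT$. For the simplicity of $\CT$: given a proper subcrystal $\CT' \subsetneq \CT$, the subcrystal $j_*\CT' \subseteq j_*\CT$ satisfies $j^*j_*\CT' \cong \CT'$, hence $\CS \cap j_*\CT'$ (formed inside $j_*\CT$ after noting $\CS \hookrightarrow j_*\CT$ via the adjunction unit, which is monic because $\CS$ has no subcrystal supported on $Z$) is a subcrystal of $\CS$ whose restriction to $U$ is $\CT' \neq \CT$, so it is proper, contradicting simplicity of $\CS$. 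Then $\CS$ is an extension of the simple crystal $\CT$ to $X$ with no non-trivial subcrystals at all, so in particular none supported on $Z$ and no quotients supported on $Z$; by the preceding proposition, $\CS \cong j\me\CT$.

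For the converse I would take a simple Cartier crystal $\CT$ on $U$ and show $\CS := j\me\CT$ is simple. Given any short exact sequence $0 \to \CA \to \CS \to \CB \to 0$ in $\CrysC(X)$, applying the exact functor $j^*$ yields $0 \to j^*\CA \to \CT \to j^*\CB \to 0$, so by simplicity of $\CT$ either $j^*\CA = 0$ or $j^*\CB = 0$; that is, $\CA$ is a subcrystal of $\CS$ supported on $Z$ or $\CB$ is a quotient of $\CS$ supported on $Z$. Since $j\me\CT$ has neither non-trivial subcrystals nor non-trivial quotients supported on $Z$ (the preceding proposition), we get $\CA = 0$ or $\CB = 0$, proving $\CS$ simple. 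Combining the two directions gives the claimed characterization.

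The main obstacle I anticipate is ensuring that the argument in the recollement proof which invokes the functor $j_!$ — used in \cite{BBD} to see that a subobject of the simple $\CS$ restricting to all of $\CT$ must be all of $\CS$ — is replaced by something legitimate here, since $j_!$ is not available for Cartier crystals. The fix is exactly the one already used elsewhere in this subsection: work with $j_*$ and intersections, using \autoref{abelianintersection} (which transfers verbatim, its proof only using exactness of $j^*$) to replace an arbitrary extension-type subcrystal by one contained in a fixed one, together with the established uniqueness of $j\me\CM$ as the minimal such subcrystal of $j_*\CM$. A secondary point to check carefully is that the adjunction unit $\CS \to j_*j^*\CS$ is a monomorphism when $\CS$ has no subcrystal supported on $Z$, since its kernel is supported on $Z$; this is needed to view $\CS$ as a subcrystal of $j_*\CT$ in the simplicity-of-$\CT$ step, but it follows immediately from exactness of $j^*$ and the fundamental triangle \autoref{Cartiertriangle}.
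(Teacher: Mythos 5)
Your proposal is correct and follows essentially the same route as the paper, whose proof simply says to proceed as in the recollement case (\autoref{simpleobjects}) using that $i_*\CrysC(Z)$ is thick; your substitute for the $j_!$-step — embedding the simple crystal $\CS$ into $j_*\CT$ via the (monic) adjunction unit and intersecting with $j_*\CT'$ using \autoref{abelianintersection} — is exactly the intended adaptation, and if anything spells out a point the paper's model proof leaves implicit.
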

\begin{proof}
We proceed as in the proof of \autoref{simpleobjects}. Recall that $i_*\CrysC(Z)$ is a thick subcategory because a Cartier crystal $\CM$ on $X$ is contained in the essential image of $i_*$ if and only if $j^*\CM=0$ (see \autoref{Cartiersupport}) and the functor $j^*$ is exact.
\end{proof}   
\begin{lemma}
The intermediate extension of Cartier crystals preserves injections and surjections, i.e.\ we have the following properties:
\begin{enumerate}[(i)]
	\item Let $0 \longrightarrow \CA \overset{i}{\longrightarrow} \CB$ be an exact sequence in $\CrysC(U)$. Then the sequence $0 \longrightarrow j\me \CA \xrightarrow{j\me(i)} j\me \CB$ is also exact. 
	\item Let $\CB \overset{p}{\longrightarrow} \CC \longrightarrow 0$ be an exact sequence in $\CrysC(U)$. Then the sequence $j\me \CB \xrightarrow{j\me(p)} j\me \CC \longrightarrow 0$ is also exact.
\end{enumerate}
\end{lemma}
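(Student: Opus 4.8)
The plan is to transcribe the proof of \autoref{endexact} into the setting of Cartier crystals, with the functors $j^*$ and $j_*$ on $\CrysC$ playing the roles of ${}^pj^*$ and ${}^pj_*$. The two structural facts that make this go through are already available: the functor $j^*\colon \CrysC(X)\to\CrysC(U)$ is exact (it is $j^!$ for the open immersion, computed simply by restriction), and by the defining property of the intermediate extension together with the functoriality of $j\me$ established above one has $j^*j\me\cong\id$ and, more precisely, $j^*(j\me\phi)\cong\phi$ naturally for every morphism $\phi$ of Cartier crystals on $U$; this last identification follows from the construction of $j\me$ as a subfunctor of $j_*$ and from $j^*j_*\cong\id$. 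I will also use the characterization of $j\me\CM$ proved above as the extension of $\CM$ to $X$ having no non-trivial subcrystal and no non-trivial quotient supported on $Z$, together with the fact (\autoref{Cartiersupport}) that a Cartier crystal $\CK$ on $X$ with $j^*\CK=0$ is supported on $Z$.

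For part (i): set $\CK:=\ker\bigl(j\me(i)\bigr)$, which is a subcrystal of $j\me\CA$. Applying the exact functor $j^*$ and using $j^*\bigl(j\me(i)\bigr)\cong i$ gives $j^*\CK\cong\ker(i)=0$, so $\CK$ is supported on $Z$; since $j\me\CA$ has no non-trivial subcrystal supported on $Z$, we conclude $\CK=0$, i.e.\ $j\me(i)$ is a monomorphism. For part (ii): set $\CK:=\coker\bigl(j\me(p)\bigr)$, which is a quotient of $j\me\CC$. Applying $j^*$ and using $j^*\bigl(j\me(p)\bigr)\cong p$ gives $j^*\CK\cong\coker(p)=0$, so $\CK$ is supported on $Z$; since $j\me\CC$ has no non-trivial quotient supported on $Z$, we conclude $\CK=0$, i.e.\ $j\me(p)$ is an epimorphism.

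I do not expect a serious obstacle: the argument is purely formal once the minimality characterization of $j\me$ and the exactness of $j^*$ are in place. The only points needing a little care are that $j^*$ commutes with forming kernels and cokernels in $\CrysC$ (which is exactly its exactness) and that the isomorphisms $j^*\bigl(j\me(i)\bigr)\cong i$ and $j^*\bigl(j\me(p)\bigr)\cong p$ are natural enough to identify $j^*\ker\bigl(j\me(i)\bigr)$ with $\ker(i)$ and $j^*\coker\bigl(j\me(p)\bigr)$ with $\coker(p)$; both are immediate from the construction of the functor $j\me$ recorded above. As in the recollement situation one should not expect $j\me$ to be exact in the middle, so statements (i) and (ii) are the sharpest that hold.
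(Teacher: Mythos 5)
Your proposal is correct and is essentially the paper's own proof: the paper simply says the argument of the recollement version (\autoref{endexact}) can be adopted, and that argument is precisely the one you carry out — apply the exact functor $j^*$ to $\ker j\me(i)$ and $\coker j\me(p)$, conclude they are supported on $Z$, and invoke the characterization of $j\me$ as having no non-trivial subcrystals or quotients supported on $Z$. Your explicit use of that characterization for the cokernel (rather than the $j_!$-based argument of \autoref{subobquot}, which is unavailable for Cartier crystals) is exactly the right adaptation.
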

\begin{proof}
We can adopt the proof of \autoref{endexact}.
\end{proof}

\subsection{Intermediate extensions of Cartier crystals and perverse sheaves}

As mentioned in \autoref{Gabberrecollement}, the perverse $t$-structure of $D^b(X_{\et},\ZZ/p\ZZ)$ is obtained by recollement from the perverse $t$-structures on $D^b(U_{\et},\ZZ/p\ZZ)$ and $D^b(Z_{\et},\ZZ/p\ZZ)$. Moreover, the functors $j_!$, $j_*$, $j^*$, $i^*$, $i^*$ and $i^!$ have finite cohomological amplitude. Hence the formalism of recollement applies for these categories. 
\begin{proposition}
The intermediate extension $j\me$, defined as the image of the canonical morphism ${^p}j_! \to {^p}j_*$, restricts to a functor $\Perv_c(U_{\et},\ZZ/p\ZZ) \to \Perv_c(X_{\et},\ZZ/p\ZZ)$. 
\end{proposition}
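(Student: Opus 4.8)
The plan is to apply the recollement formalism of the previous subsection verbatim, but with the bounded constructible derived categories in place of the ambient ones. One sets $D_U = D_c^b(U_{\et},\ZZ/p\ZZ)$, $D_Z = D_c^b(Z_{\et},\ZZ/p\ZZ)$ and $D = D_c^b(X_{\et},\ZZ/p\ZZ)$, each equipped with its perverse $t$-structure. First I would check that axioms (1)--(4) of the recollement setup hold for this triple: the six functors $j_!$, $j^*$, $j_*$, $i^*$, $i_*$, $i^!$ all preserve constructibility and have finite cohomological amplitude (the finiteness theorems for \'etale cohomology with torsion coefficients, recalled just above the statement), so they restrict to functors between the corresponding $D_c^b$'s; the adjunctions, the vanishings $j^*i_* = 0$ and so on, the full faithfulness of $i_*$, $j_!$, $j_*$, and the two fundamental triangles are inherited from $D^b(X_{\et},\ZZ/p\ZZ)$, where they hold by \autoref{Gabberrecollement} and \cite[1.4.1.1]{BBD}. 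Moreover, by \autoref{constructibleKashiwara} together with Gabber's original construction, the perverse $t$-structure on $D^b(X_{\et},\ZZ/p\ZZ)$ restricts to the one on $D_c^b(X_{\et},\ZZ/p\ZZ)$, with heart $\Perv_c(X_{\et},\ZZ/p\ZZ)$, and likewise over $U$ and $Z$.

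Granting this, the general machinery reviewed in the previous subsection produces, for every $B \in \Perv_c(U_{\et},\ZZ/p\ZZ)$, objects ${}^pj_!B$ and ${}^pj_*B$ in the abelian category $\Perv_c(X_{\et},\ZZ/p\ZZ)$ together with the canonical morphism $\eta(B)\colon {}^pj_!B \to {}^pj_*B$ there; consequently $j\me B$, being the image of $\eta(B)$, automatically lies in $\Perv_c(X_{\et},\ZZ/p\ZZ)$, and functoriality is obtained exactly as in the discussion following \cite[Definition 1.4.22]{BBD}, so that $j\me\colon \Perv_c(U_{\et},\ZZ/p\ZZ) \to \Perv_c(X_{\et},\ZZ/p\ZZ)$ is well defined. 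Finally I would note that this $j\me$ agrees with the restriction of the intermediate extension already defined on $\Perv(X_{\et},\ZZ/p\ZZ)$: the inclusion $D_c^b(X_{\et},\ZZ/p\ZZ) \hookrightarrow D^b(X_{\et},\ZZ/p\ZZ)$ commutes with $j_!$ and $Rj_*$ (these preserve $D_c^b$) and with the perverse truncation functors (by the restriction statement just cited), hence with ${}^pj_!$, ${}^pj_*$, and with the formation of images, so the two constructions of $j\me B$ coincide.

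The essentially only nontrivial ingredient is the preservation of constructibility and bounded amplitude by $j_!$, $j_*$, $i^*$, $i^!$, i.e.\ the \'etale finiteness theorems; but this is precisely what is invoked in the paragraph preceding the statement in order to make the recollement formalism available for $D_c^b$, so here it may simply be cited. Everything else is formal bookkeeping, transporting \autoref{propertiesRecollement}, \autoref{ienewchar} and \autoref{endexact} from the ambient categories to their constructible full subcategories. Thus the main obstacle does not really lie in this proof: it was already handled upstream, and what remains is to observe that the intermediate extension, built entirely out of the recollement functors and the perverse truncations, never leaves the constructible world.
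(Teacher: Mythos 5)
There is a genuine gap. Your argument rests on the assertion that all six recollement functors, in particular $j_*$ and $i^!$, preserve constructibility, so that the recollement can be set up inside $D_c^b$ and ${}^pj_*B$ lies in $\Perv_c(X_{\et},\ZZ/p\ZZ)$. For $p$-torsion \'etale sheaves on schemes of characteristic $p$ this is false: already for the open immersion $j\colon \mathbb G_m \into \mathbb A^1_k$ the stalk of $R^1j_*\FF_p$ at the origin is $H^1_{\et}(\Spec K,\FF_p)\cong K/\wp(K)$ (Artin--Schreier), where $K$ is the fraction field of the strict henselization at $0$ and $\wp(x)=x^p-x$; this is an infinite $\FF_p$-vector space (the classes of $t^{-n}$, $n$ prime to $p$, are independent), so $R^1j_*\FF_p$ is not constructible in the sense of the definition used here. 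Via the triangle $i_*i^!\to\id\to Rj_*j^*$ the same failure propagates to $i^!$. This wild-ramification phenomenon is exactly why the Emerton--Kisin correspondence only intertwines half of the six operations, and it is why the paragraph preceding the statement sets up the recollement on the \emph{ambient} categories $D^b(\usc_{\et},\ZZ/p\ZZ)$, asserting only finite cohomological amplitude there --- not preservation of constructibility. So the ``nontrivial ingredient'' you propose to cite is not available, and the whole strategy of transporting the formalism verbatim to $D_c^b$ breaks at axiom (1).

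The proposition is nevertheless true, and the proof given in the paper is designed precisely to avoid ${}^pj_*$. One uses only the functor $j_!$, which is exact extension by zero and visibly preserves constructibility (\cite[8.3]{EmKis.Fcrys}), so that ${}^pj_!\CL\in\Perv_c(X_{\et},\ZZ/p\ZZ)$; one then writes $j\me\CL$ as the quotient of ${}^pj_!\CL$ by the kernel $K$ of the canonical surjection ${}^pj_!\CL\onto j\me\CL$, and invokes Gabber's result (\cite[Corollary 12.4]{Gabber.tStruc}) that a perverse subobject of a constructible perverse sheaf is again constructible, i.e.\ that $\Perv_c$ is a Serre subcategory of $\Perv$. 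That closure-under-subquotients statement is the genuinely nontrivial input replacing the (false) constructibility of $Rj_*$; without it, or some equivalent, the constructibility of $j\me\CL$ does not follow.
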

\begin{proof}
Let $\CL$ be an object of $\Perv_c(U_{\et},\ZZ/p\ZZ)$. The functor $j_!$ restricts to a functor $D_c^b(U_{\et},\ZZ/p\ZZ) \to D_c^b(X_{\et},\ZZ/p\ZZ)$ (\cite[8.3]{EmKis.Fcrys}). Thus ${^p}j_!\CL \in \Perv_c(X_{\et},\ZZ/p\ZZ)$. Let $K$ be the kernel of the natural surjection ${^p}j_!\CL \to j\me\CL$. Since $j\me\CL$ is a perverse sheaf, we know that $K$ is an object of $\Perv(X_{\et},\ZZ/p\ZZ)$. Moreover, by \cite[Corollary 12.4]{Gabber.tStruc}, $K$ has constructible cohomology sheaves because as a perverse sheaf it is a subobject of an object in the subcategory $\Perv_c(X_{\et},\ZZ/p\ZZ)$. It follows that ${^p}j_!\CL/K \cong j\me \CL \in \Perv_c(X_{\et},\ZZ/p\ZZ)$. 
\end{proof}
\begin{corollary}
Assume that $k$ is a perfect field of characteristic $p$. Let $X$ be an $F$-finite embeddable $k$-scheme. Let $j\colon U \to X$ be an open immersion. There is a natural isomorphism of functors
\[
	j\me \circ \Sol \to \Sol \circ j\me.
\]
\end{corollary}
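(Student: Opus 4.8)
The plan is to reduce the statement to the two characterizations of the intermediate extension that have already been proven to transfer between the two settings. Recall that on the perverse side $j\me$ is defined via $\eta\colon {^p}j_! \to {^p}j_*$, while for Cartier crystals we defined $j\me\CM$ as the smallest subcrystal $\CN \subseteq j_*\CM$ with $j^!\CN \cong \CM$; moreover \autoref{ienewchar}(iii) (and its Cartier-crystal analogue, whose proof we noted transfers verbatim) says that on both sides the intermediate extension is the smallest subobject of ${}^pj_*$ (resp.\ $j_*$) whose canonical restriction to $U$ is the identity. So the first step is to fix an open immersion $j\colon U \to X$ with closed complement $i\colon Z \to X$, to set $\Sol = \Solu \circ \G$ as in \autoref{RHCorrespondenceCrystals}, and to recall from that theorem that $\Sol$ is an anti-equivalence $\CrysC(U) \to \Perv_c(U_{\et},\ZZ/p\ZZ)$ compatible with $j^!$ (which it sends to $j^*$) and with $Rj_* = j_*$ (which it sends to $j_!$), and likewise on $X$.

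Next I would translate the characterization. Since $\Sol$ is an \emph{anti}-equivalence, it reverses the direction of morphisms and exchanges sub- and quotient objects. Thus a monomorphism $\CN \into j_*\CM$ in $\CrysC(X)$ with $j^!\CN \cong \CM$ is sent to an epimorphism $\Sol(j_*\CM) \onto \Sol(\CN)$ in $\Perv_c(X_{\et},\ZZ/p\ZZ)$, i.e.\ using $\Sol \circ j_* \cong j_! \circ \Sol$ (on the nose, by \autoref{RHCorrespondenceCrystals}(c)) an epimorphism $j_!(\Sol\CM) \onto \Sol(\CN)$ with the property that applying $j^*$ gives back $\Sol\CM$. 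By \autoref{iebyzeroext}, the intermediate extension $j\me(\Sol\CM)$ is precisely the \emph{largest} such quotient of ${}^pj_!(\Sol\CM)$ — equivalently the quotient of ${}^pj_!(\Sol\CM)$ by its maximal subobject supported on $Z$. Dually, $j\me\CM$ is characterized by being the subcrystal of $j_*\CM$ that is \emph{minimal} among those restricting to $\CM$. An anti-equivalence turns "smallest subobject with restriction $\CM$" into "largest quotient with restriction $\Sol\CM$", so $\Sol$ carries $j\me\CM$ to an object satisfying exactly the universal property of \autoref{iebyzeroext} for $j\me(\Sol\CM)$; by uniqueness in that universal property there is a natural isomorphism $\Sol(j\me\CM) \cong j\me(\Sol\CM)$. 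Naturality follows because all the intermediate isomorphisms ($\Sol j_* \cong j_! \Sol$, $\Sol j^! \cong j^* \Sol$, and the universal maps themselves) are natural.

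The one genuine subtlety — and the step I expect to be the main obstacle — is matching up the ambient objects correctly. On the Cartier side I work inside $j_*\CM = Rj_*\CM$, which corresponds under $\Sol$ to $j_!(\Sol\CM)$, whereas the perverse $j\me$ is defined as a subobject of ${}^pj_*(\Sol\CM) = H^0(Rj_*(\Sol\CM))$ and a quotient of ${}^pj_!(\Sol\CM) = H^0(j_!(\Sol\CM))$. To make the argument clean I need to stay at the level of the hearts: since $\Sol$ is $t$-exact for the canonical $t$-structure on $\CrysC(X)$ and the perverse $t$-structure on $D_c^b(X_{\et},\ZZ/p\ZZ)$ (\autoref{RHCorrespondenceCrystals}(b)), and since $j_*$ on Cartier crystals is left $t$-exact so that $H^0$ of $Rj_*\CM$ is already $j_*\CM$, the perverse ${}^pj_!$ corresponds under $\Sol$ to the honest functor $j_*$ on Cartier crystals — this is exactly the content needed, and it should be extracted carefully from the compatibility of $\Sol$ with $Rj_*$ together with left $t$-exactness of $j_*$. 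Once this dictionary entry is nailed down, I would invoke the transferred proof of (ii)$\Leftrightarrow$(iii) from \autoref{ienewchar} on the Cartier side, apply $\Sol$, and conclude by the uniqueness clause of \autoref{iebyzeroext} on the perverse side; the rest is bookkeeping of natural transformations.
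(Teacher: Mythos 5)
Your proposal is correct and follows essentially the same route as the paper: transport the minimality characterization of $j\me\CM \subseteq j_*\CM$ through the anti-equivalence $\Sol$, identify $\Sol(j_*\CM)$ with ${^p}j_!(\Sol\CM)$ via $t$-exactness of $\Sol$ together with $j_* = H^0(Rj_*)$ and ${^p}j_! = {^p}H^0(j_!)$, and conclude by the uniqueness in the universal property of \autoref{iebyzeroext}. The "main obstacle" you flag is exactly the point the paper handles with its displayed chain of isomorphisms $\Sol(j_*\CM) \cong \Sol(H^0Rj_*\CM) \cong {^p}H^0\Sol(Rj_*\CM) \cong {^p}j_!\Sol(\CM)$.
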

\begin{proof}
Let $\CM$ be a Cartier crystal on $U$. Let $\CL$ denote the perverse constructible \'etale sheaf $\Sol(\CM)$. Let $\phi\colon j\me \CM \to j_*\CM$ be the natural monomorphism. Applying $\Sol$ to $\phi$, we obtain an epimorphism $\psi\colon {^p}j_!\CL \to \Sol(j\me \CM)$ because, by \autoref{RHCorrespondenceCrystals}, we have natural isomorphisms 
\[
	\Sol(j_*\CM) \cong \Sol(H^0Rj_*\CM) \cong {^p}H^0 \Sol(Rj_*\CM) \cong {^p}H^0 j_!\Sol(\CM) = {^p}j_! \Sol(\CM).
\]
It suffices to show that $\Sol(j\me \CM)$ verifies the universal property of \autoref{iebyzeroext}. Let $q\colon {^p}j_!\CL \to \mathcal{Q}$ be an epimorphism in $\Perv_c(X_{\et},\ZZ/p\ZZ)$ such that the adjoint $\CL \to {^p}j^*\mathcal{Q}$ is an isomorphism. Under the equivalence $\Sol$, the epimorphism $q$ corresponds to a monomorphism $p\colon \CN \to j_*\CM$ for a Cartier crystal $\CN$ on $U$ with $\Sol(\CN) \cong \mathcal{Q}$. Furthermore, the adjoint $\tilde{p}\colon j^*\CN \to \CM$ is an isomorphism. For this we consider the following diagram:
\[
	\xymatrix@C50pt{
		\Sol(\CM) \ar[r]_-{\sim}^-{\Sol(a)} \ar@{=}[dd] & \Sol(j^*j_*\CM) \ar[r]^-{\Sol(j^*p)} \ar[d]^{\sim} & \Sol(j^*\CN) \ar[d]^{\sim} \\
		& j^*\Sol(j_*\CM) \ar[r]^-{j^*\Sol(p)} \ar[d]^{\sim} & j^*\Sol(\CN) \ar@{=}[d] \\
		\Sol(\CM) \ar[r]_-{\sim}^-{b} & j^*{^p}j_!\Sol(\CM) \ar[r]^-{j^*q} & j^*\Sol(\CN),
	}
\]
where $a\colon j^*j_* \to \id$ and $b\colon \id \to j^*{^p}j_!$ are the adjunction morphisms. The rectangle on the left commutes because $\Sol$ is an equivalence of categories and the counit and unit of adjunction are unique. The naturality of the isomorphism $\Sol \circ j^* \cong j^* \circ \Sol$ implies the commutativity of the upper right square and the square below commutes by construction of the morphism $p$. The composition of the lower horizontal morphisms is the adjoint of $q$ and therefore an isomorphism by assumption. It follows that the composition of the morphisms of the top row is an isomorphism. But this composition equals $\Sol(\tilde{p})$. Hence $\tilde{p}$ is an isomorphism.

The universal property of $j\me \CM$ yields a unique monomorphism $p'\colon j\me \CM \to \CN$ such that $\phi = p \circ p'$. Applying $\Sol$ we obtain a unique epimorphism $q'\colon \mathcal{Q} \to \Sol(j\me\CM)$ making the diagram
\[
	\xymatrix{
		{^p}j_!\CL \ar[d]_q \ar[r]^-{\psi} & \Sol(j\me \CM) \\
		\mathcal{Q} \ar[ur]_-{q'} &
	}
\]
commutative.
\end{proof}
\bibliographystyle{amsalpha}
\bibliography{Bibliography}
\end{document}